\numberwithin{section}{part}
\definecolor{aqua}{rgb}{0, 1.0, 1.0}
\definecolor{fuschia}{rgb}{1.0, 0, 1.0}
\definecolor{gray}{rgb}{0.502, 0.502, 0.502}
\definecolor{lime}{rgb}{0, 1.0, 0}
\definecolor{maroon}{rgb}{0.502, 0, 0}
\definecolor{navy}{rgb}{0, 0, 0.502}
\definecolor{olive}{rgb}{0.502, 0.502, 0}
\definecolor{purple}{rgb}{0.502, 0, 0.502}
\definecolor{silver}{rgb}{0.753, 0.753, 0.753}
\definecolor{teal}{rgb}{0, 0.502, 0.502}
\definecolor{cite}{RGB}{143, 113, 218}
\definecolor{url}{RGB}{218, 113, 136}
\theoremstyle{plain}
\newtheorem{theorem}{Theorem}[section]
\newtheorem{lemma}[theorem]{Lemma}
\newtheorem{proposition}[theorem]{Proposition}
\newtheorem{corollary}[theorem]{Corollary}
\newtheorem{thmdef}[theorem]{Theorem/Definition}
\theoremstyle{definition}
\newtheorem{definition}[theorem]{Definition}
\newtheorem{example}[theorem]{Example}
\newtheorem{remark}[theorem]{Remark}
\newtheorem{construction}[theorem]{Construction}
\newcommand{\dslash}{/\!\!/}
\renewcommand{\owedge}{\varowedge}
\numberwithin{equation}{section}
\providecommand{\keywords}[1]{{\small{\textit{Keywords:}} #1}}
\begin{document}

%

\title{Strict algebraic models for rational parametrised spectra I}
\author{V.~Braunack-Mayer\footnote{Mathematics, Division of Science, New York University Abu Dhabi, UAE.\newline\indent\indent
\emph{Email address:} \href{mailto://v.braunackmayer@gmail.com}{\tt v.braunackmayer@gmail.com}}}
\date{2020}
\maketitle

\begin{abstract}
Building on Quillen's rational homotopy theory, we obtain algebraic models for the rational homotopy theory of parametrised spectra.
For any simply-connected space $X$ there is a dg Lie algebra $\Lambda_X$ and a (coassociative cocommutative) dg coalgebra $C_X$ that model the rational homotopy type.
In this article, we prove that the rational homotopy type of an $X$-parametrised spectrum is completely encoded by a $\Lambda_X$-representation  and also by a $C_X$-comodule.
The correspondence between rational parametrised spectra and algebraic data is obtained by means of symmetric monoidal equivalences of homotopy categories that vary pseudofunctorially in the parameter space $X$.

Our results establish a comprehensive dictionary enabling the translation of topological constructions into homological algebra using Lie representations and comodules, and conversely.
For example, the fibrewise smash product of parametrised spectra is encoded by the derived tensor product of dg Lie representations and also by the derived cotensor product of dg comodules.
As an application, we obtain novel algebraic descriptions of rational homotopy classes of fibrewise stable maps, providing new tools for the study of section spaces.
\end{abstract}
\keywords{rational homotopy theory, parametrized spectra, Whitehead products, Koszul duality}

\tableofcontents

\section{Introduction}
In the 1950's Serre introduced the idea of doing homotopy theory modulo a class of groups.
Working modulo the class of torsion groups amounts to studying the free parts of homotopy groups in isolation or, equivalently, to studying the rationalised homotopy groups $\pi_\ast (X)\otimes_\mathbb{Z}\mathbb{Q}$.
As the rational homotopy groups of spheres are so simple, rational homotopy theory turns out to be considerably simpler than ordinary homotopy theory while at the same time retaining a great deal of useful information.

What makes rational homotopy theory so tractable is that, under mild assumptions, it is completely encoded by algebraic data.
In his foundational work \cite{quillen_rational_1969}, Quillen proved that the rational homotopy category of simply-connected spaces is equivalent to the homotopy theory of reduced differential graded (dg) Lie algebras over $\mathbb{Q}$ and also to the homotopy theory of 2-reduced dg cocommutative coalgebras over $\mathbb{Q}$.
A powerful consequence of Quillen's result is that topological constructions involving simply-connected spaces (for instance homotopy limits and colimits) can be carried out in relatively simple algebraic categories, provided that we are willing to work modulo torsion.

In this article, we extend Quillen's algebraic models for rational homotopy theory to homotopy categories of parametrised spectra.
Given a space $X$, an $X$-parametrised spectrum is a homotopically-coherent spectrum-valued local system $P\colon x\mapsto P_x$ on $X$.
Parametrised spectra encode a subtle mixture of stable and unstable homotopy theory, and ought to be regarded as stable representations of the homotopy type of the parameter space.
This idea is perhaps easiest to see when the base space $X$ is connected, in which case homotopy types of $X$-parametrised spectra correspond to homotopy types of stable $\Omega X$-modules (for a fixed but ultimately immaterial choice of basepoint).
The analogy between parametrised stable homotopy theory and representation theory is further borne out by the identification of parametrised spectra with $\infty$-categorical Beck modules  \cite[Chapter 7]{lurie_higher_2017}.

For each space $X$, there is a stable $\infty$-category $\mathrm{Sp}_X$ of $X$-parametrised spectra.
Consequently, for any pair of $X$-parametrised spectra $P$, $Q$ there is a $\mathbb{Z}$-graded abelian group $\{P,Q\}_X^\ast$ of homotopy classes of fibrewise stable maps $P\to Q$.
A map of $X$-parametrised spectra $f\colon P\to Q$ is an equivalence precisely if for all $X$-parametrised spectra $R$, the induced map of homotopy classes of fibrewise stable maps
$ 
{\{f, R\}_X^\ast}
\colon
{\{Q, R\}_X^\ast}
\to
{\{P, R\}_X^\ast}
$
is an isomorphism of graded abelian groups.
This condition is equivalent to the requirement that $f$ induces stable equivalences of fibre spectra $f_x\colon P_x \xrightarrow{\sim} Q_x$ at all points $x$ of $X$.
Working modulo torsion, $f$ is then a \emph{rational equivalence} if for all $X$-parametrised spectra $R$, the induced map
\[
{\{f, R\}_X^\ast}\otimes_\mathbb{Z}\mathbb{Q}
\colon
{\{Q, R\}_X^\ast}\otimes_\mathbb{Z}\mathbb{Q}
\longrightarrow
{\{P, R\}_X^\ast}\otimes_\mathbb{Z}\mathbb{Q}
\]
is an isomorphism of graded rational vector spaces;
this is equivalent to the condition that $f$ induces rational stable equivalences on fibre spectra.

If $X$ is simply-connected, Quillen identifies the rational homotopy type of $X$ with the homotopy type of a reduced dg Lie algebra $\Lambda_X$ and also with the homotopy type of a 2-reduced cocommutative coassociative dg  coalgebra $C_X$.
The homology of $\Lambda_X$ computes the Whitehead Lie algebra $\pi^\mathbb{Q}_{\ast+1}(X):=\pi_{\ast+1}(X)\otimes_\mathbb{Z}\mathbb{Q}$, whereas the homology of $C_X$ recovers the coalgebra structure of $H_\bullet (X;\mathbb{Q})$.
Both $\Lambda_X$ and $C_X$ have a natural notion of representation, namely dg Lie representations and dg comodules.
The main result of this article is that the homotopy categories of $\Lambda_X$-representations and $C_X$-comodules are both equivalent to the rational homotopy category of $X$-parametrised spectra.
More precisely we prove the following
\begin{theorem}
\label{thm:Main1}
For a simply-connected space $X$ with Quillen models $\Lambda_X$ and $C_X$, there are equivalences of categories
\[
Ho(\mathrm{Sp}_X)_\mathbb{Q}
\xrightarrow{\;\;\sim\;\;}
Ho(\Lambda_X\mathrm{-Rep})
\xrightarrow{\;\;\sim\;\;}
Ho(C_X\mathrm{-Comod}_{(t)})\,.
\]
\end{theorem}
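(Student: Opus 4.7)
The plan is to establish both equivalences as derived equivalences arising from explicit Quillen equivalences of underlying model categories, chained together. The overall strategy factors the proof in two halves: first identifying $Ho(\mathrm{Sp}_X)_\mathbb{Q}$ with $Ho(\Lambda_X\mathrm{-Rep})$ via the loopspace and Quillen's rational equivalence, then matching $\Lambda_X$-representations with $C_X$-comodules via Koszul duality.

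For the first equivalence, I would proceed as follows. Since $X$ is simply-connected, choose a basepoint and use the loopspace recognition principle to identify $\mathrm{Sp}_X$ with the category of module spectra over the suspension spectrum $\Sigma^\infty_+ \Omega X$, viewed as an $E_1$-ring in spectra. Rationalising by smashing with $H\mathbb{Q}$ yields an equivalence between $(\mathrm{Sp}_X)_\mathbb{Q}$ and the homotopy category of dg modules over the singular chain dg algebra $C_\ast(\Omega X;\mathbb{Q})$. By Quillen's central theorem in \cite{quillen_rational_1969}, this chain algebra is quasi-isomorphic as a dg algebra to the universal enveloping algebra $U(\Lambda_X)$, and the induced Quillen equivalence of module categories, composed with the classical identification $U(\Lambda_X)\mathrm{-Mod} \simeq \Lambda_X\mathrm{-Rep}$, gives the first equivalence. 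Symmetric monoidality is inherited because each of these intermediate equivalences is monoidal when $\Omega X$ (or $\Lambda_X$) is regarded as a cocommutative bialgebra.

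For the second equivalence, I would invoke Koszul duality between reduced dg Lie algebras and 2-reduced cocommutative dg coalgebras. The Chevalley--Eilenberg construction $\Lambda \mapsto CE(\Lambda)$ extends naturally to a functor $\Lambda\mathrm{-Rep}\to CE(\Lambda)\mathrm{-Comod}$, and on homotopy categories this induces an equivalence onto a subcategory satisfying the tameness/conilpotency condition indicated by the subscript $(t)$. With $C_X\simeq CE(\Lambda_X)$ under Quillen's dictionary, this yields the desired equivalence $Ho(\Lambda_X\mathrm{-Rep}) \simeq Ho(C_X\mathrm{-Comod}_{(t)})$.

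The hard part will be establishing the equivalences at the level of genuine model categories (rather than merely $\infty$-categorical equivalences) while preserving the symmetric monoidal structure. Three specific obstacles stand out. First, one must choose the right stable model structure on $\mathrm{Sp}_X$ so that rationalisation is symmetric monoidal Quillen; this typically requires a careful setup using parametrised orthogonal spectra or a suitable presentable model. Second, the quasi-isomorphism $C_\ast(\Omega X;\mathbb{Q}) \simeq U(\Lambda_X)$ is only a map of $E_1$-algebras a priori, and transferring module categories in a genuinely monoidal way requires constructing a zig-zag of quasi-isomorphisms of honest dg bialgebras (or working in a cofibrant-replacement model where strictification is automatic). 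Third, pinning down the correct model structure on $C_X\mathrm{-Comod}_{(t)}$ so that Koszul duality becomes a Quillen equivalence, rather than a Quillen adjunction inducing an equivalence only on bounded-below objects, is the most delicate step; the $(t)$ condition must be chosen precisely to cut out the essential image of the representation-to-comodule functor. Pseudofunctoriality in $X$ is an additional constraint that forces all choices to be made functorially in the Quillen model of $X$.
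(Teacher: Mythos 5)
Your architecture matches the paper's: the proof really does proceed by (i) identifying $X$-spectra with modules over the loop space, (ii) rationalising by smashing with $H\mathbb{Q}$, (iii) rectifying to dg modules over a chain model of $\Omega X$ identified with $\mathcal{U}\Lambda_X$, and (iv) passing to $C_X$-comodules by bar--cobar Koszul duality (your ``Chevalley--Eilenberg construction with coefficients'' is exactly the twisted coextension $t_\ast$ along the universal twisting chain, using $\Omega C_X\cong\mathcal{U}\mathcal{L}C_X$ and the counit quasi-isomorphism $\mathcal{L}C_X\to\Lambda_X$). You have also correctly located the three technical pressure points. The paper's implementation differs in that it avoids $E_1$-rings entirely: it works with Kan's simplicial loop group $\mathbb{G}X$, symmetric spectra in simplicial rational vector spaces, and Shipley's stable Dold--Kan correspondence, precisely so that every comparison is a zig-zag of strict (weakly or strongly) monoidal Quillen equivalences.

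Two of your steps conceal genuine gaps. First, the assertion that $C_\ast(\Omega X;\mathbb{Q})$ is quasi-isomorphic to $\mathcal{U}(\Lambda_X)$ ``by Quillen's central theorem'' is not a citation one can make: Quillen's theorem is an equivalence of homotopy categories of spaces, Lie algebras and coalgebras, not a chain-level multiplicative statement about loop spaces. Producing the required zig-zag of dg algebra quasi-isomorphisms is the bulk of the actual work; it needs (a) that completion at the augmentation ideal $\mathbb{Q}[\mathbb{G}X]\to\widehat{\mathbb{Q}}[\mathbb{G}X]$ is an equivalence, (b) a comparison $\widehat{\mathcal{U}}\mathfrak{g}_X\to\widehat{\mathbb{Q}}[\mathbb{G}X]$ proved via the augmentation-ideal spectral sequence for free complete Hopf algebras, and (c) a natural multiplicative and comultiplicative quasi-isomorphism $\mathcal{U}N\mathfrak{g}\to N\mathcal{U}\mathfrak{g}$ built from shuffle maps and PBW. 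Moreover the completed Hopf algebras appearing here are \emph{not} Hopf algebra objects in simplicial vector spaces (the coproduct lands in a completed tensor product), so the monoidal comparison of module categories cannot be run through them directly and has to be arranged by a separate resolution argument. Second, your description of $C_X\mathrm{-Comod}_{(t)}$ as the essential image of the representation-to-comodule functor, cut out by a conilpotency condition on objects, is not how the construction goes: the $t$-local category contains \emph{all} unbounded comodules, and what changes is the class of weak equivalences --- $f$ is inverted iff the twisted extension $t^! f$ is a quasi-isomorphism of $\Omega C_X$-modules. This is a strictly finer equivalence relation than quasi-isomorphism in the unbounded range (they agree on bounded-below comodules), and it is by left-transferring a model structure along $t^!$, not by restricting to a subcategory, that the bar--cobar adjunction is promoted to a Quillen equivalence.
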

\medskip
Some clarifying remarks are in order. 
Firstly, the subscript \lq\lq$(t)$'' indicates that we consider unbounded comodules up to \emph{$t$-equivalence}, which is a finer notion of equivalence than quasi-isomorphism (for bounded below comodules these notions coincide). 
This complication stems from the familiar fact that important spectral sequences fail to converge in the unbounded setting.

Secondly, we must clarify how Theorem \ref{thm:Main1} behaves with respect to algebraic invariants.
Let $P$ be an $X$-parametrised spectrum, then for any point $x\colon\ast \to X$ the fibre spectrum $P_x$ is a $\Omega_x X$-module spectrum
$
\Omega_x X_+ \wedge P_x \to P_x
$.
Passing to stable rational homotopy groups furnishes $\pi^\mathrm{st}_\ast (P_x)\otimes_\mathbb{Z}\mathbb{Q}$ with the structure of a $H_\bullet(\Omega X;\mathbb{Q})$-module.
As
$X$ is simply-connected, the Milnor--Moore theorem identifies the rational homology of $\Omega X$ with the universal enveloping algebra of the rational Whitehead Lie algebra $\pi^\mathbb{Q}_{\ast+1}(X) = \pi_{\ast +1} (X)\otimes_\mathbb{Z}\mathbb{Q}$.
Any parametrised $X$-spectrum $P$ thus determines a $\pi^\mathbb{Q}_{\ast+1}(X)$-representation $\mathbold{\theta}_\ast (P) = \pi^\mathrm{st}_\ast (P_x)\otimes_\mathbb{Z}\mathbb{Q}$.
\begin{theorem}
\label{thm:Main2}
The diagram of functors
\[
\begin{tikzcd}
Ho(\mathrm{Sp}_X)_\mathbb{Q}
\ar[r, "\sim"]
\ar[dr, bend right =15, "\mathbold{\theta}_\ast"']
&
Ho(\Lambda_X\mathrm{-Rep})
\ar[d, "\mathrm{homology}"]
\\
&
\pi^\mathbb{Q}_{\ast+1}(X)\mathrm{-Rep}
\end{tikzcd}
\]
commutes up to natural isomorphism.
\end{theorem}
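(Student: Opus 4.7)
The plan is to reduce the commutativity to two classical inputs together with a tracing argument for the equivalence of Theorem \ref{thm:Main1}. The first input is Quillen's identification $H_\bullet(\Lambda_X)\cong \pi^\mathbb{Q}_{\ast+1}(X)$ as graded Lie algebras. The second is the Milnor--Moore identification $\pi^\mathrm{st}_\ast(\Omega_x X_+)_\mathbb{Q}\cong U(\pi^\mathbb{Q}_{\ast+1}(X))$, so that the topological $\Omega X$-module structure on the fibre and the algebraic Lie-representation structure on $H_\bullet(M_P)$ live over the same associative ring once one passes to homology.

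First I would unwind both functors on the underlying graded vector space. By its definition $\theta_\ast$ is rational stable homotopy of the fibre $P_x$, equipped with its $\pi^\mathrm{st}_\ast(\Omega_x X_+)_\mathbb{Q}$-action. On the algebraic side, the equivalence of Theorem \ref{thm:Main1} assigns to $P$ a dg Lie representation $M_P$ of $\Lambda_X$, whose homology should compute $\pi^\mathrm{st}_\ast(P_x)_\mathbb{Q}$. I expect this identification of underlying graded vector spaces to be extractable either directly from the construction of the equivalence (which presumably produces $M_P$ from a fibrewise loop-space model) or by reduction to the basepoint case $X=\ast$ via naturality along $\{x\}\hookrightarrow X$ combined with the standard compatibility of Quillen models with base change; either way, this is a bookkeeping step that unpacks the definition of the equivalence.

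The second and main step is to verify that the two actions of $\pi^\mathbb{Q}_{\ast+1}(X)$ on the identified graded vector space $\theta_\ast(P)\cong H_\bullet(M_P)$ coincide. On the topological side the action comes from Samelson/Whitehead brackets mediated by the $\Omega_x X$-module structure on $P_x$; on the algebraic side it is the action induced on homology by the dg Lie module structure on $M_P$. The natural strategy is to reduce to a universal case — e.g.~free $\Lambda_X$-representations, or the fibrewise sphere spectrum — where the actions can be computed explicitly, and then invoke Quillen's original identification of the generators of $\Lambda_X$ in degree $n$ with a basis of $\pi_{n+1}(X)_\mathbb{Q}$ carrying the Whitehead bracket. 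Functoriality of the equivalence then propagates the comparison to arbitrary $P$.

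The main obstacle is precisely this matching of actions, because it requires tracing Lie generators through the equivalence and recognising that they act on $H_\bullet(M_P)$ by the very classes in $H_\bullet(\Omega X;\mathbb{Q})$ they correspond to under Milnor--Moore. I expect the cleanest route is to enhance Theorem \ref{thm:Main1} to an equivalence of module structures over $U(\Lambda_X)\simeq C_\bullet(\Omega X;\mathbb{Q})$ (chains on the loop space), so that the compatibility of the two $\pi^\mathbb{Q}_{\ast+1}(X)$-actions becomes a formal consequence of a single universal comparison at the level of the loop space, rather than requiring a separate verification for each generator.
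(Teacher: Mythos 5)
Your overall route is essentially the paper's: the comparison of actions is not verified generator-by-generator but is absorbed into the construction of the equivalence itself, which (as you anticipate in your final paragraph) is built as a chain of Quillen equivalences between module categories over successive models of $C_\bullet(\Omega X;\mathbb{Q})$ --- namely $\mathbb{G}X_+$-module spectra, $\mathbb{Q}[\mathbb{G}X]$- and $\widehat{\mathbb{Q}}[\mathbb{G}X]$-modules, $\mathcal{U}\mathfrak{g}_X$- and $\mathcal{U}N\mathfrak{g}_X$-modules, and finally $\mathcal{U}\Lambda_X$-modules, linked by algebra quasi-isomorphisms ($\kappa_X$, $\theta_X$, $\chi_{\mathfrak{g}}$, $\mathcal{U}N\varrho_X$) that realise the Milnor--Moore/PBW comparison on homology. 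So your ``main obstacle'' is already dissolved by Theorem \ref{thm:Main1} as constructed. Where the details differ: the paper does not reduce to the basepoint via naturality (its pseudonaturality statement concerns the left adjoints $f_!$, not the fibre functors $x^\ast$); instead it identifies the underlying object directly, by computing that a fibrewise suspension spectrum $\Sigma^\infty_X Y$ is sent to the explicit $\mathbb{Q}[\mathbb{G}X]$-module $\Sigma^\infty\widetilde{\mathbb{Q}}[\mathbb{P}X_!\pi^\ast_X Y]$, whose homotopy is the rational homology of the homotopy fibre with its loop-space action (Lemma \ref{lem:FibSusSpecTheta}), and then tracing this single object through the remaining equivalences.

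The one genuinely soft spot in your plan is the propagation step. A natural isomorphism between two functors out of $Ho(\mathrm{Sp}_X)_\mathbb{Q}$ cannot be \emph{constructed} by specifying it on a single compact generator (or on free representations) and then invoking ``functoriality'': extensions in a triangulated category are not functorial, so values on generators do not determine a natural transformation. What the paper does instead is establish the comparison on the entire image of $\Sigma^\infty_X\colon Ho(\mathrm{sSet}_{\dslash X})\to Ho(\mathrm{Sp}_X)_\mathbb{Q}$, naturally in the retractive space, and then uses that $Ho(\mathrm{Sp}_X)$ is generated by fibrewise suspension spectra under shifts and \emph{sequential homotopy colimits}, with which both $\mathbold{\theta}_\ast$ and homology commute. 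If you replace ``propagate by functoriality from the fibrewise sphere spectrum'' with this kind of argument on a generating subcategory closed under the relevant operations, your proof goes through.
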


On the other hand, pushforward along the terminal map $X\to \ast$ sends the $X$-parametrised spectrum to a $X_+$-comodule spectrum $X_! P$.
Taking stable rational homotopy groups we find that $\pi^\mathrm{st}_\ast (X_! P) \otimes_\mathbb{Z}\mathbb{Q}$ is naturally a $H_\bullet(X;\mathbb{Q})$-comodule.
Writing $\mathbold{H}^X_\bullet (P)$ for this rational homology comodule, we prove the following
\begin{theorem}
\label{thm:Main3}
The diagram of functors
\[
\begin{tikzcd}
Ho(\mathrm{Sp}_X)_\mathbb{Q}
\ar[r, "\sim"]
\ar[dr, bend right =15, "\mathbold{H}_\bullet^X"']
&
Ho(C_X\mathrm{-Comod}_{(t)})
\ar[d, "\mathrm{homology}"]
\\
&
H_\bullet(X;\mathbb{Q})\mathrm{-Comod}
\end{tikzcd}
\]
commutes up to natural isomorphism.
\end{theorem}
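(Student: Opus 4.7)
The plan is to deduce Theorem \ref{thm:Main3} from a single compatibility statement between the equivalence $\Phi \colon Ho(\mathrm{Sp}_X)_\mathbb{Q} \xrightarrow{\sim} Ho(C_X\mathrm{-Comod}_{(t)})$ of Theorem \ref{thm:Main1} and the fibrewise pushforward $X_!$. Concretely, I would establish that the forgetful functor $U$ from $C_X$-comodules to rational chain complexes corresponds, under $\Phi$ together with the standard identification of rational spectra with chain complexes (in the $t$-equivalence setting), to $X_!$ followed by rationalisation. Once this is in place, the underlying chain complex of $\Phi(P)$ computes $\pi_\ast^{\mathrm{st}}(X_! P) \otimes \mathbb{Q}$ on homology, which is the underlying graded vector space of $\mathbold{H}_\bullet^X(P)$.

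To match the $H_\bullet(X;\mathbb{Q})$-comodule structures I would invoke the symmetric monoidality of $\Phi$ guaranteed by Theorem \ref{thm:Main1}. The $H_\bullet(X;\mathbb{Q})$-coaction on $\pi_\ast^{\mathrm{st}}(X_! P) \otimes \mathbb{Q}$ is induced by the canonical $\Sigma^\infty X_+$-comodule structure on $X_! P$ arising from the diagonal $X \to X \times X$; by symmetric monoidality of $X_!$, this coaction is obtained by applying $X_!$ to the tautological cocommutative coalgebra structure on $\mathbb{S}_X$ in $\mathrm{Sp}_X$. Under $\Phi$, the fibrewise sphere $\mathbb{S}_X$ corresponds to $C_X$ (as a comodule over itself via its own comultiplication), so the coalgebra structure on $\mathbb{S}_X$ transports to the comultiplication of $C_X$, and consequently the $\Sigma^\infty X_+$-coaction on $X_! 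P$ transports to the tautological $C_X$-coaction on $\Phi(P)$. Taking homology (which is lax symmetric monoidal over $\mathbb{Q}$) then matches the two $H_\bullet(X;\mathbb{Q})$-comodule structures, and naturality in $P$ is automatic from the naturality of each constituent functor.

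The principal obstacle is verifying the compatibility of $U$ with $X_!$ claimed in the first paragraph. Since $\Phi$ is presumably assembled from a zig-zag of Quillen equivalences rather than as a single explicit functor, direct verification would require tracing pushforward through each stage. A cleaner route, which I would pursue first, is to observe that both $U$ and $X_!$ are left adjoints (to the trivial-comodule functor and to the constant parametrised-spectrum functor, respectively) and to show instead that their right adjoints correspond under $\Phi$; then $U$ and $X_!$ agree up to the standard rational-spectra--chain-complex equivalence by uniqueness of left adjoints. This reduction should be more tractable because the trivial/constant functors admit simple universal characterisations that transport cleanly along symmetric monoidal equivalences. Some additional care is needed with the $t$-equivalence versus quasi-isomorphism distinction and with cofibrancy issues for unbounded comodules, but these technicalities should not obstruct the comparison of comodule structures once the derived compatibility is in place.
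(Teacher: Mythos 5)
The outline breaks down at the point where you match the $H_\bullet(X;\mathbb{Q})$-coactions, and the gap there is not a technicality. The $\Sigma^\infty X_+$-comodule structure on $X_!P$ does \emph{not} arise from a coalgebra structure on $\mathbb{S}_X$ with respect to the fibrewise smash product: $\mathbb{S}_X$ is the monoidal unit of $(\mathrm{Sp}_X,\wedge_X)$, so its only ``tautological'' comultiplication is the unitor isomorphism, which carries no information. The coaction on $X_!P$ is an \emph{external} structure: it comes from the diagonal $\Delta\colon X\to X\times X$ and the external smash product, via $X_!P\simeq (X\times X)_!\,\Delta_! P\to X_!\mathbb{S}_X\wedge X_!P$. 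Consequently Theorem \ref{thm:Main5}, which only identifies $\wedge_X$ with $\square^{\mathbf{R}}_{C_X}$, cannot transport this structure; you would additionally need a compatibility of the whole family of equivalences with external products and with base change along $\Delta$ --- in particular an identification $C_{X\times X}\simeq C_X\otimes C_X$ under which $C_\Delta$ becomes the comultiplication of $C_X$, coherently with the equivalences over $X$ and over $X\times X$. None of that is supplied by Theorems \ref{thm:Main1}, \ref{thm:Main4} or \ref{thm:Main5}, and establishing it is essentially as hard as the statement you are trying to prove. (A smaller slip: the right adjoint of the forgetful functor $U\colon C_X\mathrm{-Comod}\to\mathrm{Ch}$ is the \emph{cofree} comodule functor $C_X\otimes(-)$, not the trivial-comodule functor; with that correction your first step --- matching $X^\ast$ with $C_X\otimes(-)$ and hence $X_!$ with $U$ by uniqueness of left adjoints --- is viable and close in spirit to how the paper identifies the underlying chain complex.)

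The paper avoids the monoidality route entirely. It works with fibrewise suspension spectra $\Sigma^\infty_X Y$ of $2$-reduced retractive spaces, where everything can be made explicit: Brown--Szczarba twisting chains give a zig-zag of dg algebra quasi-isomorphisms $\Omega N_\bullet X\to N\widehat{\mathbb{Q}}[\mathbb{G}X]\leftarrow \Omega C_X$ (Lemma \ref{lem:LoopSpaceModels}); bar--cobar duality upgrades this to a zig-zag of coalgebra quasi-isomorphisms $N_\bullet X\to \mathbf{B}X\leftarrow C_X$; and a comparison of two-sided cobar constructions and twisted coextensions (Lemmas \ref{lem:2SidedCobar} and \ref{lem:RatHomComodules2Red}) identifies the $C_X$-comodule attached to $\Sigma^\infty_X Y$ with $\overline{C}_{Y/X}$, whose homology is $\widetilde{H}_\bullet(Y/X)$ with its correct coaction. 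The general case then follows by double fibrewise suspension (to reach the $2$-reduced setting) and the fact that $Ho(\mathrm{Sp}_X)$ is generated by fibrewise suspension spectra under shifts and sequential homotopy colimits. If you want to salvage your approach, the missing ingredient is precisely the external-monoidal/diagonal compatibility described above.
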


Much of the power and flexibility of parametrised stable homotopy theory stems from the base change adjunctions
\[
(f_!
\dashv 
f^\ast
\dashv 
f_\ast)
\colon
\begin{tikzcd}
\mathrm{Sp}_X
\ar[rr, shift left=2.2ex]
\ar[rr, leftarrow, "\bot"]
\ar[rr, "\bot", shift left=-2.2ex]
&&
\mathrm{Sp}_Y
\end{tikzcd}
\]
associated to a map of parameter spaces $f\colon X\to Y$.
These adjunctions facilitate the transport of information between different parametrised contexts, and between the parametrised and unparametrised settings (as we have already seen for fibre and pushforward spectra).
If $X$ and $Y$ are simply-connected with Quillen models $\Lambda_X, \Lambda_Y$ and $C_X, C_Y$ respectively, then $f\colon X\to Y$ is represented in rational homotopy theory by a map of dg Lie algebras $\Lambda_f \colon \Lambda_X\to \Lambda_Y$ and also by a map of dg coalgebras $C_f \colon C_X\to C_Y$.
The latter maps give rise to derived adjunctions
\[
\begin{tikzcd}
Ho(\Lambda_X\mathrm{-Rep})
\ar[r, shift left =1.1ex, "\mathbf{L}(\Lambda_f)_!"]
\ar[r, leftarrow, shift left =-1.1ex, "\bot", "\mathbf{R}\Lambda_f^\ast"']
&
Ho(\Lambda_Y\mathrm{-Rep})
\;\;
\mbox{ and }
\;\;
Ho(C_X\mathrm{-Comod}_{(t)})
\ar[r, shift left =1.1ex, "\mathbf{L}(C_f)_!"]
\ar[r, leftarrow, shift left =-1.1ex, "\bot", "\mathbf{R}C_f^\ast"']
&
Ho(C_Y\mathrm{-Comod}_{(t)})
\end{tikzcd}
\] 
which are identified with $
(\mathbf{L}f_! \dashv \mathbf{R}f^\ast)\colon Ho(\mathrm{Sp}_X)_\mathbb{Q}\to Ho(\mathrm{Sp}_Y)_\mathbb{Q}$ by Theorem \ref{thm:Main1}.
If $f$ is a rational homotopy equivalence then all of these adjunctions are adjoint equivalences of homotopy categories.
More precisely, we prove the following
\begin{theorem}
\label{thm:Main4}
The equivalences of categories 
$
Ho(\mathrm{Sp}_X)_\mathbb{Q}
\xrightarrow{\;\sim\;}
Ho(\Lambda_X\mathrm{-Rep})
\xrightarrow{\;\sim\;}
Ho(C_X\mathrm{-Comod}_{(t)})
$
are pseudonatural in $X$; that is for each map of simply-connected spaces $f\colon X\to Y$ with Quillen models $\Lambda_f \colon \Lambda_X \to \Lambda_Y$ and $C_f\colon C_X\to C_Y$, there is a diagram of categories that commutes up to natural isomorphism
\[
\begin{tikzcd}
Ho(\mathrm{Sp}_X)_\mathbb{Q}
\ar[d, "\mathbf{L}f_!"]
\ar[r]
&
Ho(\Lambda_X\mathrm{-Rep})
\ar[d, "\mathbf{L}(\Lambda_f)_!"]
\ar[r]
&
Ho(C_X\mathrm{-Comod}_{(t)})
\ar[d, "\mathbf{L}(C_f)_!"]
\\
Ho(\mathrm{Sp}_Y)_\mathbb{Q}
\ar[r]
&
Ho(\Lambda_Y\mathrm{-Rep})
\ar[r]
&
Ho(C_Y\mathrm{-Comod}_{(t)})
\end{tikzcd}
\]
in which the horizontal arrows are equivalences of categories.
If $f$ is a rational homotopy equivalence then all arrows in the above diagram are equivalences of categories.
\end{theorem}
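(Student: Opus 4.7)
The plan is to lift both equivalences of Theorem \ref{thm:Main1} to the model-categorical level and then to verify that the base-change adjunctions involved intertwine, up to natural weak equivalence of left Quillen functors, with the corresponding algebraic base-change adjunctions. Concretely, the equivalences of Theorem \ref{thm:Main1} should be induced by underlying Quillen equivalences $\mathrm{Sp}_X \rightleftarrows \Lambda_X\mathrm{-Rep}$ and $\Lambda_X\mathrm{-Rep}\rightleftarrows C_X\mathrm{-Comod}_{(t)}$ established earlier in the paper. Pseudonaturality then amounts to exhibiting, for each $f\colon X\to Y$ and coherently in $f$, a natural isomorphism between the two composite left derived functors filling each square.

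First I would identify all three vertical arrows as left Quillen functors. The pushforward $f_!$ is left Quillen for the projective-type model structure on parametrised spectra; on the algebraic sides, $(\Lambda_f)_!$ is the extension of scalars along the dg Lie map $\Lambda_f$ (left Quillen for the representation model structure), and $(C_f)_!$ is the coextension of scalars along $C_f$ (left Quillen for the $t$-model structure on comodules). I would then construct explicit natural transformations at the level of the underlying 1-categories relating the two composite left adjoints in each square. For the Lie square, this should follow by inspection from the fact that the comparison functor $\mathrm{Sp}_X \to \Lambda_X\mathrm{-Rep}$ is built as a left adjoint (of bar/cobar type) whose defining universal property is manifestly natural in the parameter space; strict commutativity of left adjoints then follows from uniqueness of left Kan extensions. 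The comodule square is handled analogously, using the coalgebra-level comparison and the coextension of scalars along $C_f$.

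Next I would derive. Since every arrow in each square is left Quillen and the natural transformations constructed above are weak equivalences on cofibrant objects, they descend to natural isomorphisms of total left derived functors, yielding the desired pseudonatural squares. Coherence in $f$ (the pseudofunctor 2-cells for composable pairs and for identities) is automatic from the coherence of the 1-categorical natural transformations together with the universal properties of total derived functors. As a shortcut, once one square is shown pseudonatural, the other follows from Theorem \ref{thm:Main1} combined with a 2-out-of-3 argument for the composite.

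For the final assertion, if $f$ is a rational homotopy equivalence then by construction $\Lambda_f$ is a quasi-isomorphism of dg Lie algebras and $C_f$ is a $t$-equivalence of dg coalgebras. I would then invoke the base-change-along-a-weak-equivalence principle in each algebraic category: extension of scalars along a quasi-isomorphism of (cofibrant) dg Lie algebras is a Quillen equivalence of representation categories, and coextension of scalars along a $t$-equivalence of dg coalgebras is a Quillen equivalence of comodule categories. Hence $\mathbf{L}(\Lambda_f)_!$ and $\mathbf{L}(C_f)_!$ are equivalences, and by the pseudonaturality already established, $\mathbf{L}f_!$ is an equivalence as well. The main obstacle I anticipate is the comodule case: the $t$-model structure is nonstandard, and verifying that coextension along a $t$-equivalence is a left Quillen equivalence likely requires a direct argument---perhaps via explicit cobar resolutions or the Koszul duality setup hinted at in the introduction---rather than a black-box appeal to classical results for modules over dg algebras.
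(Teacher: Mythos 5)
Your proposal follows essentially the same route as the paper: lift the equivalences to (zig-zags of) Quillen equivalences, verify that each square of left Quillen functors commutes up to natural isomorphism at the point-set level via naturality of the constructions and uniqueness of adjoints, derive, and for the last claim invoke base change along the quasi-isomorphisms $\Lambda_f$ and $C_f$ (with the comodule case handled, as you anticipate, by transporting through the Koszul duality square $t^!$). The only caveat is that the comparison $Ho(\mathrm{Sp}_X)_\mathbb{Q}\simeq Ho(\Lambda_X\mathrm{-Rep})$ is not a single Quillen equivalence but a chain of six or so, so your verification must be carried out link by link exactly as in the paper's Section \ref{sec:Equivalences}, and note that $C_f$ is a quasi-isomorphism of coalgebras ($t$-equivalence is a notion for comodules, not coalgebra maps).
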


The fibrewise smash product of $X$-parametrised spectra, $(P, Q)\mapsto P\wedge_X Q$ gives rise to a closed symmetric monoidal structure on the stable homotopy category $Ho(\mathrm{Sp}_X)$ and its rationalisation.
$Ho(\Lambda_X\mathrm{-Rep})$ and $Ho(C_X\mathrm{-Comod}_{(t)})$ have symmetric monoidal structures $\otimes^\mathbf{L}$ and $\square^\mathbf{R}_C$ defined by deriving the tensor and cotensor products respectively.
Completing the dictionary between rational parametrised stable  homotopy theory and categories of (co)algebraic representations, we prove the following
\begin{theorem}
\label{thm:Main5}
The equivalences of categories
\[
\begin{tikzcd}
(Ho(\mathrm{Sp}_X)_\mathbb{Q}, \wedge_X)
\ar[r, "\sim"]
&
(Ho(\Lambda_X\mathrm{-Rep}), \otimes^\mathbf{L})
\ar[r, "\sim"]
&
(Ho(C_X\mathrm{-Comod}_{(t)}), \square^\mathbf{R}_C)
\end{tikzcd}
\]
are strongly symmetric monoidal.
\end{theorem}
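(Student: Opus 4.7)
The plan is to lift the equivalences of Theorem~\ref{thm:Main1} to Quillen equivalences between symmetric monoidal model categories and then verify that the underlying left Quillen functors are strong symmetric monoidal on cofibrant objects; deriving then yields strongly monoidal equivalences of homotopy categories. A natural starting point is the decomposition of the fibrewise smash as
\[
P\wedge_X Q \;\simeq\; \mathbf{R}\Delta^\ast\bigl(P\owedge Q\bigr),
\]
where $\Delta\colon X\to X\times X$ is the diagonal and $\owedge$ denotes the external smash, together with the analogous decompositions of the algebraic tensor and cotensor products via the codiagonals $\Lambda_\Delta\colon \Lambda_X\to \Lambda_{X\times X}$ and $C_\Delta\colon C_X\to C_{X\times X}$. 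Since Quillen's constructions send products of simply-connected spaces to coproducts of reduced dg Lie algebras and tensor products of 2-reduced dg coalgebras --- i.e.\ $\Lambda_{X\times Y}\simeq \Lambda_X\oplus \Lambda_Y$ and $C_{X\times Y}\simeq C_X\otimes C_Y$ --- restriction along the diagonal recovers the standard tensor product of Lie representations and cotensor product of comodules. By the pseudonaturality of Theorem~\ref{thm:Main4} (applied to $\Delta$ and passed to right adjoints), the theorem reduces to the compatibility of the equivalences with \emph{external} smash products indexed by pairs of simply-connected parameter spaces.

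For this external compatibility I would argue as follows. The external smash $\owedge\colon Ho(\mathrm{Sp}_X)_\mathbb{Q}\times Ho(\mathrm{Sp}_Y)_\mathbb{Q}\to Ho(\mathrm{Sp}_{X\times Y})_\mathbb{Q}$ preserves homotopy colimits in each variable; the external tensor of Lie representations and the external cotensor of comodules enjoy the same preservation property. Moreover, the rational categories in question are generated under homotopy colimits by parametrised sphere spectra supported over single points $(x,y)\colon\ast\to X\times Y$, which under the equivalences of Theorem~\ref{thm:Main1} correspond to trivial Lie representations (respectively cofree comodules) of rank one. Hence monoidal compatibility reduces to the generators, where it follows from a direct matching of fibres: the fibre of the external smash over $(x,y)$ is $P_x\wedge Q_y$, and this matches both the tensor product of fibre Lie representations and the corresponding external cotensor of comodule fibres, using the chain-level Milnor--Moore identification of $C_\bullet(\Omega X;\mathbb{Q})$ with $U(\Lambda_X)$ as a dg Hopf algebra.

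Combining the two stages yields the chain of natural isomorphisms
\[
\Phi_X(P\wedge_X Q)
\;\simeq\;
\mathbf{R}(\Lambda_\Delta)^\ast\Phi_{X\times X}(P\owedge Q)
\;\simeq\;
\mathbf{R}(\Lambda_\Delta)^\ast\bigl(\Phi_X(P)\boxtimes \Phi_X(Q)\bigr)
\;\simeq\;
\Phi_X(P)\otimes^\mathbf{L} \Phi_X(Q),
\]
where $\Phi_X$ denotes the equivalence to $\Lambda_X\mathrm{-Rep}$ and $\boxtimes$ the external tensor; the analogous chain handles the comodule side. Compatibility with the monoidal unit is immediate, since the fibrewise sphere spectrum over $X$ corresponds to the trivial $\Lambda_X$-representation $\mathbb{Q}$ and to $C_X$ viewed as a comodule over itself (the unit for $\square^\mathbf{R}_C$). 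The symmetry constraint matches because the cocommutativity of $U(\Lambda_X)$ and of $C_X$ incarnate, via Milnor--Moore, the topological symmetry of the smash product.

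The main obstacle is the external compatibility step for comodules in the unbounded ($t$-equivalence) setting. Classical Koszul-duality arguments give external monoidal compatibility on \emph{bounded-below} comodules, where the cobar spectral sequence converges strongly, but the extension to unbounded comodules requires verifying that the external cotensor is homotopical with respect to $t$-equivalences in each variable, and that the resulting natural isomorphism descends to the $t$-localised homotopy category. This is likely to demand an explicit class of cofibrant replacements --- perhaps a variant of the Koszul resolution adapted to the $t$-filtration --- whose external cotensor is well-behaved, together with careful tracking of convergence across the bar/cobar equivalence that implements the Lie--comodule correspondence.
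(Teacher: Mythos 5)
Your strategy (factor $\wedge_X$ through the external smash and the diagonal, use the K\"unneth decompositions of $\Lambda_{X\times X}$ and $C_{X\times X}$, and check external compatibility on point-supported generators) is genuinely different from the paper's, but as written it has two gaps I would call genuine. First, ``monoidal compatibility reduces to the generators'' is not sufficient: a strong monoidal structure is a coherent \emph{natural} isomorphism $\Phi(P)\boxtimes\Phi(Q)\to\Phi(P\owedge Q)$, and agreement of the two sides on generators only helps once such a transformation has been constructed globally (after which a localising-subcategory argument lets you test it on generators). Your proposal never constructs this map; ``a direct matching of fibres'' produces isomorphisms object by object, not a natural transformation. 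The paper avoids the issue by never leaving the internal setting: the fibrewise smash is modelled by the smash of $\mathbb{G}X_+$-modules with the \emph{diagonal} action (Remarks \ref{rem:FibSmashviaModules} and \ref{rem:FibrewiseRatSmash}) --- note that this, and not $\mathbf{R}\Delta^\ast(-\owedge-)$, is the paper's definition of $\wedge_X$, so your starting identity would itself need to be reconciled with it --- and each stage of the chain of Quillen equivalences in Section \ref{sec:Equivalences} is exhibited as a weakly or strongly monoidal Quillen equivalence (Lemma \ref{lem:RectGrpRing}, Theorem \ref{thm:SimpLieMonoidalEquivs}, Lemma \ref{lem:QIsodgLieBChange}), with the lax/oplax structure maps supplying exactly the natural transformations your argument lacks; that these are weak equivalences is then checked by cellular induction from free modules, which is the rigorous form of your ``check on generators''.

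Second, you explicitly leave the comodule half open, and the difficulty you anticipate (convergence of cobar-type spectral sequences for unbounded comodules, bespoke cofibrant replacements) is not where the content lies. In the paper the $t$-local model structure on $C_X\mathrm{-Comod}$ is left-transferred from $\Omega C_X\mathrm{-Mod}$ along $t^!$ (Theorem \ref{thm:KozulQuillen}), and the derived cotensor $\square^{\mathbf{R}}_{C}$ is \emph{defined} by transporting the derived tensor product of $\Omega C_X$-modules through this Quillen equivalence, using the strict isomorphism $t_\ast(A\otimes B)\cong t_\ast A\,\square_{C}\,t_\ast B$ (Theorem \ref{thm:DerivedCotensor}). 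With that definition the second equivalence is strongly monoidal essentially by construction and no convergence questions arise; what remains is only to check that this derived cotensor computes $\mathrm{Cotor}$. So the correct move at your ``main obstacle'' is not to build better comodule resolutions but to define the comodule-side monoidal structure through Koszul duality in the first place.
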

 
When $P= \Sigma^\infty_X Y_+$ and $Q= \Sigma^\infty_X Z_+$ are the fibrewise suspension spectra of maps $f\colon Y\to X$ and $g\colon Z\to X$, this result recovers the Eilenberg--Moore theorem.
Indeed, we have
\[
H_\bullet (Y\times_X Z; \mathbb{Q})\cong
\pi^\mathrm{st}_{\ast}\big(X_! (P\wedge_X Q)\big)\otimes_\mathbb{Z}\mathbb{Q} 
\cong 
H_\bullet (M \,\square_{C_X}^\mathbf{R}\, N) 
\cong 
\mathrm{Cotor}^{C_X}_\bullet (M,N)
\]
where $P$ and $Q$ are identified with $C_X$-comodules $M$ and $N$ presenting $H_\bullet(X;\mathbb{Q})$-coactions on $H_\bullet(Y;\mathbb{Q})$ and $H_\bullet(Z;\mathbb{Q})$ respectively.

An interesting consequence of Theorem \ref{thm:Main5} is the classification of rational homotopy classes of fibrewise stable maps in terms of $\mathrm{Ext}$ and $\mathrm{Coext}$ groups.
This allows for the computation of rationalised twisted cohomology purely in terms of algebraic data, generalising and contextualising previous results obtained using Sullivan's approach to rational homotopy theory in \cite{felix_fibrewise_2010}.
We show that under some additional hypotheses, rational homotopy classes of fibrewise stable maps can be computed either  by means of a hyper-$\mathrm{Ext}$ or a coalgebraic twisted Atiyah--Hirzebruch spectral sequence.

The approach to rational parametrised stable homotopy theory presented in this article, since it is based on Quillen's original arguments, is necessarily fairly involved.
Theorem \ref{thm:Main1} is proven by means of a long chain of intermediate equivalences, making specific identifications (of an $X$-parametrised spectrum with a $\Lambda_X$-representation, say) rather difficult.
We have worked out a dual approach to rational parametrised stable homotopy theory based on Sullivan's rational PL de Rham theory.
When some finiteness conditions are met, the dual approach identifies the rational homotopy type of an $X$-parametrised spectrum with a module over the Sullivan algebra $\mathcal{A}_X$ of $X$.
More specifically, the $X$-parametrised spectrum $P$ is identified with an $\mathcal{A}_X$-module $M$ for which $H_\bullet (M) \cong \mathrm{Hom}(\pi^\mathrm{st}_\ast (X_!P), \mathbb{Q})$ as $H^\bullet(X;\mathbb{Q})$-modules. This correspondence between dg modules and rational parametrised spectra is the subject of the  forthcoming article \cite{braunack-mayer_strict_2019}.
We emphasise that the relative simplicity of the dual setting is ideal for applications; for example in \cite{braunack-mayer_gauge_2019} where the rational twisted $K$-theory charge of $D$-branes in Type IIA string theory is derived directly from $M$-brane charge using parametrised stable homotopy theory. 
That computational ease comes at a price, however, as the dual approach does not capture as much information about parametrised homotopy theory as the methods considered here.

\paragraph*{Organisation.}
The article is organised as follows.
In Section \ref{sec:ParamSpecLoopspace} we begin by recalling the definitions and main properties of parametrised symmetric spectra from \cite{braunack-mayer_combinatorial_2019}. For a reduced simplicial set $X$ we prove a $\mathrm{Sp}^\Sigma$-enriched Quillen equivalence between the model categories of $X$-parametrised symmetric spectra and $\mathbb{G}X_+$-module spectra, where $\mathbb{G}X$ is Kan's simplicial loop group.
Working with a natural model of the Eilenberg--Mac Lane spectrum $H\mathbb{Q}$ as a commutative symmetric ring spectrum, we pass to $X$-parametrised $H\mathbb{Q}$-module spectra and $(\mathbb{G}X_+, H\mathbb{Q})$-bimodule spectra in order to model the rational homotopy theories of $X$-parametrised spectra and stable $\Omega X$-modules respectively.
Along the way, we prove that for any $\mathrm{Sp}^\Sigma$-model category $\mathcal{M}$ with a set of compact generators, the passage to rational homotopy theory is implemented by the functor $x\to H\mathbb{Q}\wedge x$; which is to say, \lq\lq rationalisation in $\mathcal{M}$ is smashing''.
We also record a proof of the fact that for any simplicial group $G$, the model category of $G_+$-module spectra and its rationalisation are symmetric monoidal model categories.
In the case that $G= \mathbb{G}X$ is the Kan simplicial loop group of $X$, this symmetric monoidal model structure is identified with the fibrewise smash product under the equivalence of homotopy categories $Ho(\mathrm{Sp}_X)\cong Ho(\Omega X\mathrm{-Mod})$.

Section \ref{sec:Rect} contains the first steps of our rectification program.
For any simplicial group $G$,
we prove $\mathrm{Sp}^\Sigma$-enriched Quillen equivalences between model categories of symmetric $(G_+,H\mathbb{Q})$-bimodule spectra and symmetric spectrum objects in the category of representations over the rational group ring $\mathbb{Q}[G]$.
For any simplicial rational Hopf algebra $H$, the model category of $H$-modules is symmetric monoidal, as is symmetric stabilisation.
When $H= \mathbb{Q}[G]$ is a rational group ring, we argue that the Quillen equivalence between $(G_+, H\mathbb{Q})$-bimodule spectra and $\mathbb{Q}[G]$-module spectra is strongly symmetric monoidal.

Based on work of Shipley in the integral case \cite{shipley_HZ_2007}, in Section \ref{sec:LieRep} we record a zig-zag of (weakly) monoidal Quillen equivalences between unbounded rational chain complexes on the one hand and symmetric spectra in simplicial rational vector spaces on the other.
This result is a stabilised version of the Dold--Kan correspondence relating rational simplicial vector spaces and connective rational chain complexes.
Using this stabilised Dold--Kan correspondence, for any simplicial rational Lie algebra $\mathfrak{g}$, we prove a zig-zag of (weakly) monoidal Quillen equivalences between the categories of symmetric spectra in $\mathfrak{g}$-representations and unbounded differential graded representation over the Lie algebra of normalised chains $N\mathfrak{g}$.
The main technical result of this section is Lemma \ref{lem:Chi}.

The subject of Section \ref{sec:Koszul} is Koszul duality between comodules over a 2-reduced coassociative cocommutative coalgebra $C$ and modules over the cobar construction $\Omega C$.
By means of familiar spectral sequences, we prove Quillen equivalences between the model categories of bounded below $C$-comodules and bounded below $\Omega C$-comodules.
In the unbounded case, we circumvent issues of non-convergent spectral sequences by working with $C$-comodules up to $t$-equivalence; two $C$-comodules are \emph{$t$-equivalent} precisely if the corresponding $\Omega C$-modules are quasi-isomorphic.
Any $t$-equivalence is a quasi-isomorphism and the two notions coincide for bounded below comodules.
The resulting \emph{$t$-local derived category} of $C$-comodules is equivalent to the derived category of $\Omega C$-modules.
We conclude the section with a brief study of derived cotensor products on the $t$-local derived category, which compute the $\mathrm{Cotor}$ functors of Eilenberg and Moore.

Section \ref{sec:RatStabParamHom} contains the proofs of our main results.
Following a brief summary of Quillen's approach to rational homotopy theory, we  prove Theorems \ref{thm:Main1}, \ref{thm:Main4}, and \ref{thm:Main5} by synthesising the results of Sections \ref{sec:ParamSpecLoopspace}--\ref{sec:Koszul}.
A careful examination of these arguments allows us to prove Theorem \ref{thm:Main2} by analysing fibrewise suspension spectra.
In contrast, the proof of Theorem \ref{thm:Main3} is rather involved and makes use of Brown--Szczarba twisting chains and bar-cobar duality.
Our translation of topological into algebraic data is concluded by demonstrating that rational homotopy classes of fibrewise stable maps (including the torsion-free part of twisted cohomology) is computed by $\mathrm{Ext}$-groups of dg Lie representations and also by $\mathrm{Coext}$ groups of dg comodules.
In good situations these algebraic invariants can be calculated by hyper-$\mathrm{Ext}$ and $\mathrm{Coext}$ spectral sequences.

\paragraph*{Notation and conventions.}
\begin{itemize}
  \item Our results are proven using the language of model categories, which are frequently enriched or monoidal (see \cite{hovey_model_1999} for a textbook account, as well as the next bullet point).
  
  \item 
  If $(\mathcal{M}, \otimes)$ is a monoidal category, the \emph{pushout-product} of morphisms $i\colon A\to B$ and $j\colon X\to Y$ is the induced morphism
  \[
  i\,\square\, j\colon (B\otimes X)\coprod_{(A\otimes X)}(A\otimes Y)\longrightarrow
  B\otimes Y\,,
  \]
  provided it exists.
  In the case that $\mathcal{M}$ is a model category, $\otimes$ is a \emph{Quillen bifunctor} if for all cofibrations $i$ and $j$, the pushout-product $i\,\square\, j$ is a cofibration which is moreover acyclic if either $i$ or $j$ is.
  A monoidal model category is a model category $\mathcal{M}$ equipped with a closed monoidal structure $\otimes$ which is a Quillen bifunctor, subject to the additional condition that for any cofibrant replacement of the monoidal unit $QI\to I$ and any cofibrant object $X\in \mathcal{M}$, the morphism $QI\otimes X\to I\otimes X\cong X$ is a weak equivalence. 
  We use the notions of weakly and strongly symmetric monoidal Quillen equivalences from \cite{schwede_monoidal_2003}---in both cases the induced equivalence of homotopy categories is strongly symmetric monoidal.

  \item We work with simplicial sets as models for homotopy types throughout. Stable homotopy types are modelled by symmetric spectra, which form a model category $\mathrm{Sp}^\Sigma$ \cite{hovey_symmetric_2000}.
  The symmetric sphere spectrum is denoted $\mathbb{S}$ and has underlying symmetric sequence 
  \[
  \mathbb{S}(n) = S^n = \underbrace{S^1\wedge \dotsb \wedge S^1}_{\text{$n$ factors}}\,,
  \]
  where $S^1 = \Delta^1/\partial
   \Delta^1$ is the simplicial circle and $\Sigma_n$ acts on $S^n$ by permuting smash factors.
 When working with symmetric spectra it is necessary to take fibrant replacements before computing stable homotopy groups (see \cite[Section 3.1]{hovey_symmetric_2000}) and we shall take this for granted throughout.
  More generally, we implement the passage to stable homotopy theory using the symmetric stabilisation machine \cite{hovey_spectra_2001}.
  

  \item The category of simplicial rational vector spaces is denoted $\mathrm{sVect}_\mathbb{Q}$.
  It is a combinatorial model category with fibrations and weak equivalences created by forgetful functor to the Kan model structure on simplicial sets.
  The suspension endofunctor is $\Sigma_\mathbb{Q}\colon V\mapsto \widetilde{\mathbb{Q}}[S^1]\otimes V$, with $\widetilde{\mathbb{Q}}[S^1]$ is the simplicial rational vector generated by the non-basepoint simplices of $S^1 = \Delta^1/\partial\Delta^1$.
  The suspension spectrum functor is written $\Sigma^\infty_\mathbb{Q}\colon \mathrm{sVect}_\mathbb{Q}\to \mathrm{Sp}^\Sigma(\mathrm{sVect}_\mathbb{Q})$.
  
  \item The category of rational chain complexes is denoted $\mathrm{Ch}$. It is a cofibrantly generated model category for which weak equivalences are quasi-isomorphisms and fibrations are degreewise epimorphisms.
  $\mathrm{Ch}$ is a symmetric monoidal model category with respect to the tensor product of chain complexes.
  The \emph{shift} or \emph{suspension} endofunctor on $\mathrm{Ch}$ is determined by its action on objects is $s\colon M\mapsto M[-1] = M\otimes \mathbb{Q}[-1]$ and the \emph{desuspension} endofunctor is given by $s^{-1}\colon M\mapsto M[1]=M\otimes \mathbb{Q}[1]$, where $\mathbb{Q}[n]$ is the chain complex consisting of $\mathbb{Q}$ concentrated in degree $-n$.
  The symmetric suspension spectrum functor for chain complexes is $s^\infty\colon \mathrm{Ch}\to \mathrm{Sp}^\Sigma(\mathrm{Ch})$.
  
  \item 
  For $\mathcal{M}$ a $\mathrm{Sp}^\Sigma$-model category, write $\underline{\mathcal{M}}(x,y)$ for the symmetric spectrum of maps $x\to y$.
  When $x$ is cofibrant and $y$ is fibrant, the $\mathbb{Z}$-graded abelian group of homotopy classes of maps from $x$ to $y$ is defined as
  \[
  \{x,y\}_{\mathcal{M}}^\ast =
  \pi^\mathrm{st}_\ast \underline{\mathcal{M}}(x,y)\,.
  \]
  A morphism of cofibrant objects $f\colon x\to x'$ is a weak equivalence in $\mathcal{M}$ precisely if 
  \[
  \{f,y\}_{\mathcal{M}}^\ast\colon \{x',y\}_{\mathcal{M}}^\ast\longrightarrow \{x,y\}_{\mathcal{M}}^\ast
  \]
  is an isomorphism of graded abelian groups for all fibrant objects $y$. The dual statement characterising weak equivalences between fibrant objects also holds.
  
  A morphism of cofibrant objects $f\colon x\to x'$ is defined to be a \emph{rational stable equivalence} if
  \[
  \{f,y\}_{\mathcal{M}}^\ast
  \otimes_\mathbb{Z}\mathbb{Q}\colon \{x',y\}_{\mathcal{M}}^\ast
  \otimes_\mathbb{Z}\mathbb{Q}\longrightarrow
  \{x,y\}_{\mathcal{M}}^\ast
  \otimes_\mathbb{Z}\mathbb{Q}
  \]
  is an isomorphism of graded rational vector spaces for all fibrant objects $y$.
  
  For $\mathcal{M}=\mathrm{Sp}^\Sigma_X$ a model category of parametrised symmetric spectra we write $\{P,Q\}_X^\ast$ instead of $\{P,Q\}_{\mathrm{Sp}^\Sigma_X}^\ast$ 
  
  \item When working in a model category $\mathcal{M}$ we often and without comment use superscripts to denote fibrant or cofibrant replacements; for example $x\to x^f$ and $x^c\to x$ for, respectively, a fibrant and cofibrant replacement of $x$.
  
  \item Unless explicitly stated otherwise all actions and coactions are \lq\lq from the left'' and complexes are homologically graded.
  The ground field is $\mathbb{Q}$.
  
  \item Differential graded (dg) Lie algebras $L= L_\bullet$ are always connective ($L_{\bullet <0} =0$) and usually reduced $(L_0 = 0)$.
  Dg coalgebras $C= C_\bullet$ are always connective and strictly coassociative, frequently strictly cocommutative, and often 2-reduced ($C_0 =\mathbb{Q}$, $C_1 =0$).
  
  \item For $k\in \mathbb{Z}$, the \emph{$k$-connective cover} of a chain complex $M$ is the chain complex
  \[
  (c_k M)_n
  =
  \begin{cases}
  \qquad\quad  M_n & n>k \\
  \mathrm{ker}(M_k \xrightarrow{\partial} M_{k-1}) & n=k \\
  \qquad \quad \;0 & n< k
  \end{cases} 
  \]
  with differential inherited from $M$.
  The natural map $c_k M\to M$ is a homology isomorphism in degrees $\geq k$.
\end{itemize}

\paragraph*{Acknowledgements.}
This article is the second of a series \cite{braunack-mayer_combinatorial_2019, braunack-mayer_strict_2019} based on my PhD
thesis written under the supervision of Alberto Cattaneo and Urs Schreiber. I would like to thank 
Kathryn Hess, Christoph Schweigert, and Hisham Sati for useful comments and encouragement at various
stages during the writing of this article.
I would also like to thank the anonymous referee for their thoughtful and useful suggestions to improve the readability of this article.
I acknowledge the support of the Deutsche Forschungsgemeinschaft
RTG 1670 \lq\lq Mathematics inspired by String Theory and Quantum Field Theory''.

\section{Parametrised spectra and stable loop space modules}
\label{sec:ParamSpecLoopspace}
The main objects of study in this article are parametrised spectra.
An $X$-parametrised spectrum (for some space $X$) is a homotopically-coherent spectrum-valued local system on $X$.
Such objects have a natural homotopy theory, and the first goal of this section is to prove that for connected spaces $X$, the homotopy theory of $X$-parametrised spectra is equivalent to the homotopy theory of stable $\Omega X$-modules.
Throughout this article we study the homotopy theory of parametrised spectra using the combinatorial model categories of \cite{braunack-mayer_combinatorial_2019}, which are brielfy recalled in Section \ref{ss:Comb}).

In Section \ref{S:ratsmash} we study the rational homotopy theory of $X$-parametrised spectra.
In terms of the combinatorial models, passage to rational homotopy theory is implemented by taking smash products with the Eilenberg--Mac Lane spectrum $H\mathbb{Q}$ at each fibre over the parameter space.
We prove that the equivalence of homotopy theories between $X$-parametrised spectra and stable $\Omega X$-modules persists after rationalisation.
This is a specialisation of a general result characterising the rationalisation of any $\mathrm{Sp}^\Sigma$-model category in terms of $H\mathbb{Q}$-module objects.

\subsection{Combinatorial parametrised spectra}
\label{ss:Comb}
We briefly recall the combinatorial models for unstable and stable parametrised homotopy types from \cite{braunack-mayer_combinatorial_2019}.

\begin{definition}
For any simplicial set $X$, a \emph{retractive space over $X$} is a diagram
\[
\begin{tikzcd}
X
\ar[r, "i"]
\ar[rr, bend left=-20, "\mathrm{id}_X"']
&
Y
\ar[r, "r"]
&
X
\end{tikzcd}
\]
exhibiting $X$ as a retract. 
By abuse of notation, such a retractive space is often denoted simply by $Y$, omitting the projection $r$ and section $i$ from the notation.
A morphism of retractive spaces $\psi\colon Y\to Y'$ over $X$ is the data of a commuting diagram
\[
\begin{tikzcd}
X
\ar[r, "i'"]
\ar[d, "i"']
&
Y'
\ar[d, "r'"]
\\
Y
\ar[ur, "\psi"]
\ar[r, "r"]
&
X\,.
\end{tikzcd}
\]
The category of retractive spaces over $X$ is $\mathrm{sSet}_{\dslash X}$.
\end{definition}

Retractive spaces are combinatorial models for unstable parametrised homotopy types that are equipped with a choice of coherently parametrised basepoint.
For any simplicial set $X$, the category $\mathrm{sSet}_{\dslash X}$ is furnished with a combinatorial model structure for which the cofibrations, fibrations, and weak equivalences are all created by the forgetful functor $\mathrm{sSet}_{\dslash X}\to \mathrm{sSet}$.

The category $\mathrm{sSet}_{\dslash X}$ is also enriched, tensored, and powered over the symmetric monoidal category $(\mathrm{sSet}_\ast, \wedge)$.
The $\mathrm{sSet}_\ast$-tensor of $K\in \mathrm{sSet}_\ast$ and $Y\in \mathrm{sSet}_{\dslash X}$ is the colimit (computed in $\mathrm{sSet}$)
\[
K\owedge_X Y :=
\mathrm{colim}
\left\{
\begin{tikzcd}[sep=small]
X
\ar[r]
\ar[d]
&
X\times K
\ar[dd]
\ar[dr]
\\
Y
\ar[dr]
\ar[rr, crossing over]
&&
Y\times_X (X\times K)
\\
&
X
\end{tikzcd}
\right\}\,,
\]
equipped with obvious structure maps as a retractive space over $X$.
$\mathrm{sSet}_{\dslash X}$ is a $\mathrm{sSet}_\ast$-model category with respect to this $\mathrm{sSet}_\ast$-tensoring.

\begin{remark}
Consider the simplicial circle $S^1 := \Delta^1/\partial \Delta^1$.
The \emph{fibrewise suspension} on $\mathrm{sSet}_{\dslash X}$ is the endofunctor $\Sigma_X := S^1 \owedge_X(-)$ given by forming $\mathrm{sSet}_\ast$-tensors with $S^1$.
For each simplicial set $X$, the fibrewise suspension $\Sigma_X$ is a left $\mathrm{sSet}_\ast$-Quillen functor; the right adjoint $\Omega_X$ is the \emph{fibrewise loop space functor}.
\end{remark}

Much of the versatility of parametrised homotopy theory stems from the existence of base change functors, which mediate transfer between different parametrised contexts.
For each map of simplicial sets $f\colon X\to X'$ there is an associated triple of base change adjunctions
\begin{equation}
\label{eqn:BaseChangeRetSpace}
(f_!\dashv f^\ast \dashv f_\ast)
\colon 
\begin{tikzcd}
\mathrm{sSet}_{\dslash X}
\ar[rr, shift left=2.2ex]
\ar[rr, leftarrow, "\bot"]
\ar[rr, shift left=-2.2ex, "\bot"]
&&
\mathrm{sSet}_{\dslash X'}\,.
\end{tikzcd}
\end{equation}
The functor $f^\ast\colon \mathrm{sSet}_{\dslash X'}\to \mathrm{sSet}_{\dslash X}$ is defined by pulling back along $f$; its left adjoint $f_!$ is obtained by pushout along $f$.
Both $f^\ast$ and $f_!$ preserve $\mathrm{sSet}_\ast$-tensors and we have the following
\begin{proposition}
For any map of simplicial sets $f\colon X\to X'$, $(f_!\dashv f^\ast)\colon \mathrm{sSet}_{\dslash X}\to \mathrm{sSet}_{\dslash X'}$ is a $\mathrm{sSet}_\ast$-enriched Quillen adjunction.
Moreover
\begin{itemize}
  \item If $f$ is a weak equivalence then $(f_!\dashv f^\ast)$ is a $\mathrm{sSet}_\ast$-Quillen equivalence.
  
  \item If $f$ is a fibration or a projection to a factor of a product then $(f^\ast\dashv f_\ast)$ is a $\mathrm{sSet}_\ast$-Quillen adjunction.
  
  \item If $f$ is an acyclic fibration then $(f^\ast\dashv f_\ast)$ is a $\mathrm{sSet}_\ast$-Quillen equivalence.
\end{itemize}
\end{proposition}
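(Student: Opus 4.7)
The plan is to verify the four claims in turn, leveraging the fact that cofibrations, fibrations, and weak equivalences in $\mathrm{sSet}_{\dslash X}$ are all created by the forgetful functor to $\mathrm{sSet}$, together with left and right properness of the Kan model structure. I will use throughout that $f^\ast$ is computed as a simplicial pullback and $f_!$ as a simplicial pushout, and that the $\mathrm{sSet}_\ast$-enrichment of the base change functors can be established from the enriched adjunction structure by standard Yoneda arguments (checking, for instance, that $f^\ast(K\owedge_{X'} W)\cong K\owedge_X f^\ast W$).

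For the unqualified Quillen adjunction $(f_!\dashv f^\ast)$: since $f^\ast$ is a pullback, it preserves fibrations and acyclic fibrations because Kan fibrations and acyclic Kan fibrations are pullback-stable in $\mathrm{sSet}$. For the second bullet with $f$ a weak equivalence, my strategy is a 2-of-3 argument connecting an adjoint pair. For cofibrant $Y\in\mathrm{sSet}_{\dslash X}$, the map $Y\to f_!Y = X'\sqcup_X Y$ is the pushout of $f$ along the cofibration $X\to Y$, hence a weak equivalence by left properness of $\mathrm{sSet}$; dually, for fibrant $W\in\mathrm{sSet}_{\dslash X'}$, the map $f^\ast W = X\times_{X'} W\to W$ is the pullback of $f$ along the Kan fibration $W\to X'$, hence a weak equivalence by right properness. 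Combined with naturality of the $(f_!\dashv f^\ast)$ adjunction, these observations show that $f_!Y\to W$ is a weak equivalence if and only if its adjoint $Y\to f^\ast W$ is, establishing the Quillen equivalence.

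For the third bullet, $f^\ast$ always preserves cofibrations since monomorphisms are pullback-stable. When $f$ is a Kan fibration, right properness also gives that $f^\ast$ preserves weak equivalences, so $f^\ast$ is left Quillen and $(f^\ast\dashv f_\ast)$ a Quillen adjunction. When $f$ is a projection $Y\times Z\to Z$, the functor $f^\ast$ is naturally isomorphic to the endofunctor $Y\times(-)$, which is left Quillen (by the pushout-product axiom in $\mathrm{sSet}$) and preserves all weak equivalences by Ken Brown's lemma together with the fact that every simplicial set is cofibrant. The fourth bullet follows by combination: when $f$ is an acyclic fibration, bullet 3 produces a Quillen adjunction $(f^\ast\dashv f_\ast)$, while bullet 2 ensures that $f^\ast$ induces an equivalence of homotopy categories. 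Since $f^\ast$ preserves all weak equivalences, its total left derived functor as part of $(f^\ast\dashv f_\ast)$ coincides with this equivalence, yielding the Quillen equivalence.

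The main technical delicacy lies in the second bullet, where one must carefully identify the image of a morphism under the adjunction isomorphism with its concrete description as a map of simplicial sets, so that the 2-of-3 argument connects the adjoint pair appropriately; this in particular requires verifying that the factorisation $Y\to f^\ast W\to W$ accounts for all the retract structure correctly. The enrichment checks and the bootstrap in the fourth bullet are essentially formal once this identification is in hand.
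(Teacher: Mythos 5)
Your argument is correct and is the standard one: the paper itself states this proposition without proof (it is recalled from the companion paper \cite{braunack-mayer_combinatorial_2019}), and the intended justification is exactly the properness argument you give --- pullback-stability of (acyclic) fibrations and monomorphisms for the Quillen adjunctions, left/right properness of $\mathrm{sSet}$ plus the unit/counit square for the Quillen equivalence in the second bullet, and the bootstrap via the already-established equivalence for the fourth. Two cosmetic points: every object of $\mathrm{sSet}_{\dslash X}$ is cofibrant (the section is automatically a monomorphism), so the cofibrancy hypothesis in your 2-of-3 argument is vacuous; and for the enrichment of $(f_!\dashv f^\ast)$ the compatibility you should cite is $f_!(K\owedge_X Y)\cong K\owedge_{X'}f_!Y$ (preservation of tensors by the \emph{left} adjoint), the isomorphism $f^\ast(K\owedge_{X'}W)\cong K\owedge_X f^\ast W$ being the one needed for $(f^\ast\dashv f_\ast)$ --- both hold, by universality of colimits in $\mathrm{sSet}$.
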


The combinatorial models for parametrised spectra used in this article are obtained from retractive spaces using Hovey's symmetric stabilisation machine \cite{hovey_spectra_2001}.
\begin{definition}
A \emph{symmetric $X$-spectrum} is a sequence of pairs $P=\{(P(n), \sigma_n)\}_{n\geq 0}$ where each $P(n)$ is a retractive space over $X$ equipped with a (left) $\Sigma_n$-action, each $\sigma_n\colon S^1\owedge_X P(n) \to P(n+1)$ is a $\Sigma_n$-equivariant morphism of retractive spaces over $X$, and for each $p,q\geq 0$ the composite
\[
\begin{tikzcd}[column sep=large]
S^p\owedge_X P(q)
\ar[r, "S^{p-1}\owedge_X \sigma_q"]
&
S^{p-1}\owedge_X P(q+1)
\ar[r, "S^{p-2}\owedge_X \sigma_{q+1}"]
&
\dotsb
\ar[r, "\sigma_{p+q-1}"]
&
P(p+q)
\end{tikzcd}
\]
is $(\Sigma_p\times \Sigma_q)$-equivariant.
A morphism of symmetric $X$-spectra $f\colon P\to Q$ is a sequence of equivariant maps of retractive spaces $f(n)\colon P(n)\to Q(n)$ commuting with the structure maps.
The category of symmetric $X$-spectra is denoted $\mathrm{Sp}^\Sigma_X$.
\end{definition}

For each $k\geq 0$ there is an adjunction
\begin{equation}
\label{eqn:StabAdjunctions}
\begin{tikzcd}
\mathrm{sSet}_{\dslash X}
\ar[rr, shift left=1.1ex, "\Sigma^{\infty-k}_X"]
\ar[rr, leftarrow, shift right=1.1ex, "\widetilde{\Omega}_X^{\infty-k}"', "\bot"]
&&
\mathrm{Sp}^\Sigma_X
\end{tikzcd}
\end{equation}
relating symmetric $X$-spectra and retractive spaces over $X$\footnote{These adjunctions are denoted $(\mathbf{\Sigma}^{\infty-k}_X\dashv \widetilde{\mathbf{\Omega}}_X^{\infty-k})$ in \cite{braunack-mayer_combinatorial_2019} to distinguish from similar functors for sequential parametrised spectra.}.
The right adjoint
$
\widetilde{\Omega}^{\infty-k}_X
$
sends the symmetric $X$-spectrum $P$ to its $k$-th term $P(k)$, forgetting the $\Sigma_k$-action.
The left adjoint $\Sigma^{\infty-k}_X$ sends the retractive space $Y$ over $X$ to the symmetric $X$-spectrum
\[
\Sigma^{\infty-k}_X (n) =
\begin{cases}
\qquad\qquad\quad  X & n<k
\\
\big((\Sigma_n)_+ \owedge_X S^{n-k}\owedge_X Y\big)\big/ \Sigma_{n-k} & n\geq k\,,
\end{cases}
\]
where $\Sigma_{n-k}$ acts via the canonical inclusion on $\Sigma_n$ and on $S^{n-k} = S^1\wedge \dotsb \wedge S^1$ by permuting smash factors.

A morphism $f\colon P\to Q$ of symmetric $X$-spectra is a levelwise fibration or weak equivalence if each $f(n) \colon P(n)\to Q(n)$ is a fibration or weak equivalence respectively.
The levelwise fibrations and weak equivalences together determine the \emph{projective model structure} on $\mathrm{Sp}^\Sigma_X$.

The category $\mathrm{Sp}^\Sigma_X$ is enriched, tensored, and powered over the symmetric monoidal category $(\mathrm{Sp}^\Sigma, \wedge)$ of symmetric spectra \cite{hovey_symmetric_2000}.
The $\mathrm{Sp}^\Sigma$-tensor of $K\in \mathrm{Sp}^\Sigma$ and $P\in \mathrm{Sp}^\Sigma_X$ is denoted $K\owedge_X P$\footnote{This is denoted $K\odot_X P$ in the notation of \cite{braunack-mayer_combinatorial_2019}.}; tensoring with $\Sigma^\infty S^1$ gives rise to a left Quillen endofunctor $\Sigma_X:= \Sigma^\infty S^1 \owedge_X (-)$ of the projective model structure on $\mathrm{Sp}^\Sigma_X$.
The left Quillen endofunctor $\Sigma_X$ models suspension on the homotopy category $Ho(\mathrm{Sp}^\Sigma_X)_\mathrm{proj}$ and has right adjoint $\Omega_X$.

The \emph{stable model structure} on $\mathrm{Sp}^\Sigma_X$ is obtained as a left Bousfield localisation of the projective model structure.
We record the salient properties of the stable model structure on $\mathrm{Sp}^\Sigma_X$:
\begin{itemize}
  \item The stable model structure on $\mathrm{Sp}^\Sigma_X$ is a left proper combinatorial $\mathrm{Sp}^\Sigma$-model structure. With respect to the stable model, the adjunction  $(\Sigma_X\dashv \Omega_X)\colon \mathrm{Sp}^\Sigma_X\to \mathrm{Sp}^\Sigma_X$ is a Quillen equivalence. Since the derived functor of $\Sigma_X$ is suspension on the homotopy category, $\mathrm{S}^\Sigma_X$ is a stable model category.
  
  \item The fibrant objects of $\mathrm{Sp}^\Sigma_X$ are the \emph{fibrant $\Omega_X$-spectra}; the symmetric $X$-spectra $A$ such that each $A(n)\to X$ is a fibration and the adjoint structure maps $\sigma_n^\vee \colon A(n)\to \Omega_X A(n+1)$ are weak equivalences.
  The stable weak equivalences of fibrant $\Omega_X$-spectra are precisely the levelwise weak equivalences.
  
  \item For each $k\geq 0$, the adjunction $(\Sigma^{\infty-k}_X\dashv \widetilde{\Omega}^{\infty-k}_X)$ of \eqref{eqn:StabAdjunctions} is a $\mathrm{sSet}_\ast$-Quillen adjunction.
\end{itemize}
The base change adjunctions between categories of retractive spaces \eqref{eqn:BaseChangeRetSpace} prolong to categories of symmetric parametrised spectra.
For each map of simplicial spaces $f\colon X\to X'$ there is an associated triple of base change adjunctions
\[
(f_!\dashv f^\ast \dashv f_\ast)
\colon 
\begin{tikzcd}
\mathrm{Sp}^\Sigma_{X}
\ar[rr, shift left=2.2ex]
\ar[rr, leftarrow, "\bot"]
\ar[rr, shift left=-2.2ex, "\bot"]
&&
\mathrm{Sp}^\Sigma_{ X'}\,.
\end{tikzcd}
\]
The base change functors $f_!$ and $f^\ast$ preserve $\mathrm{Sp}^\Sigma$-tensors and we have the following
\begin{proposition}
For any map of simplicial sets $f\colon X\to X'$, $(f_!\dashv f^\ast)\colon \mathrm{Sp}^\Sigma_X\to \mathrm{Sp}^\Sigma_{X'}$ is a $\mathrm{Sp}^\Sigma$-enriched Quillen adjunction between stable model structures.
Moreover
\begin{itemize}
  \item If $f$ is a weak equivalence then $(f_!\dashv f^\ast)$ is a $\mathrm{Sp}^\Sigma$-Quillen equivalence.
  
  \item If $f$ is a fibration or a projection to a factor of a product then $(f^\ast\dashv f_\ast)$ is a $\mathrm{Sp}^\Sigma$-Quillen adjunction.
  
  \item If $f$ is an acyclic fibration then $(f^\ast\dashv f_\ast)$ is a $\mathrm{Sp}^\Sigma$-Quillen equivalence.
\end{itemize}
\end{proposition}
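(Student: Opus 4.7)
The plan is to promote the analogous proposition for retractive spaces (stated immediately before this one) to symmetric parametrised spectra via Hovey's symmetric stabilisation machine \cite{hovey_spectra_2001}. Since the base change functors $f_!$ and $f^\ast$ on retractive spaces preserve $\mathrm{sSet}_\ast$-tensors (as noted in the excerpt), they prolong termwise to functors $\mathrm{Sp}^\Sigma_X \to \mathrm{Sp}^\Sigma_{X'}$ and $\mathrm{Sp}^\Sigma_{X'}\to \mathrm{Sp}^\Sigma_X$ respectively that commute with the fibrewise suspension $\Sigma_X = S^1 \owedge_X(-)$ and respect the $\Sigma_n$-actions. Consequently, the adjunctions descend at the level of symmetric spectra.

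First, I would establish the statement in the projective model structures. Because projective cofibrations, fibrations, and weak equivalences on $\mathrm{Sp}^\Sigma_X$ are all created levelwise from $\mathrm{sSet}_{\ds X}$, the projective Quillen adjunction $(f_!\dashv f^\ast)$ follows tautologically from the retractive space version applied in each degree; the $\mathrm{Sp}^\Sigma$-enrichment is automatic from the termwise commutation with fibrewise smashing. The analogous projective statement for $(f^\ast \dashv f_\ast)$ holds whenever the unstable one does. Second, I would descend to the stable model structures using the fact that the stable structure is the left Bousfield localisation of the projective structure at the standard stabilisation maps. For $(f_! \dashv f^\ast)$ it suffices to show that $f^\ast$ preserves fibrant $\Omega_{X'}$-spectra; this follows from two facts: $f^\ast$ preserves levelwise fibrations (already established), and $f^\ast$ commutes naturally with the fibrewise loop functor $\Omega$, which is the right-adjoint shadow of its commutation with $\mathrm{sSet}_\ast$-tensors, or equivalently a consequence of pullback-stability of limits in $\mathrm{sSet}_{\ds X'}$. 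Hence $f^\ast$ sends fibrant $\Omega_{X'}$-spectra to fibrant $\Omega_X$-spectra and the adjunction localises. A dual argument handles $(f^\ast \dashv f_\ast)$, using that $f_\ast$ (when it exists as a right adjoint on retractive spaces) commutes with $\Omega$ as well.

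Third, for the equivalence statements, I would use the characterisation of stable weak equivalences between fibrant $\Omega$-spectra as levelwise weak equivalences. When $f$ is a weak equivalence of simplicial sets, the unstable Quillen equivalence on retractive spaces implies that the derived unit and counit of the prolonged adjunction are levelwise weak equivalences on fibrant-cofibrant symmetric spectra, which is exactly the condition for a stable Quillen equivalence. The case of $(f^\ast \dashv f_\ast)$ when $f$ is an acyclic fibration is handled analogously, appealing now to the second bullet of the unstable proposition to secure the Quillen adjunction and then to the third bullet to upgrade to an equivalence. The main obstacle is the careful verification that $f^\ast$ commutes up to natural isomorphism with the fibrewise loop functor $\Omega_{X'}$; this base change formula is what secures preservation of $\Omega$-spectra and hence the descent from the projective to the stable model structure. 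Once this is in hand, all remaining statements reduce formally to their unstable counterparts applied level by level.
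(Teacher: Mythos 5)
Your proposal is correct and matches the route the paper takes: the proposition is recalled from \cite{braunack-mayer_combinatorial_2019}, where it is obtained exactly as you describe, by prolonging the unstable base change adjunctions through Hovey's symmetric stabilisation machine, using that $f_!$ and $f^\ast$ preserve $\mathrm{sSet}_\ast$-tensors and that the right adjoints commute with the fibrewise loop functor, so that fibrant $\Omega$-spectra are preserved and the adjunctions descend through the left Bousfield localisation. The one step to phrase more carefully is the Quillen-equivalence claim: the derived unit involves a \emph{stable} fibrant replacement, which is not computed levelwise, so one should either invoke Hovey's theorem that Quillen equivalences prolong to Quillen equivalences of stable model structures, or argue via the derived counit (which is levelwise, since every retractive space is cofibrant and each level of a fibrant $\Omega$-spectrum is fibrant) together with the fact that $f^\ast$ reflects levelwise weak equivalences between fibrant $\Omega$-spectra.
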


To connect with the picture of $X$-parametrised spectra as homotopically-coherent spectrum-valued local systems that was given in the introduction, we record the following
\begin{proposition}
\label{prop:StableFibreEquiv}
A map of symmetric $X$-spectra $f\colon P\to Q$ is a stable weak equivalence precisely if $\mathbf{R}x^\ast(f)\colon \mathbf{R}x^\ast (P)\to \mathbf{R}x^\ast(Q)$ is an isomorphism in the stable homotopy category for every $x\colon \ast \to X$.
\end{proposition}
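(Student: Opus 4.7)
The plan is to treat the two implications separately and, for the harder backward direction, to reduce via fibrant replacement to the classical fact that a morphism of Kan fibrations which is a weak equivalence on every strict fiber is a weak equivalence.

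For the forward implication, observe that for any vertex $x\colon\ast\to X$ the functor $x^\ast$ is right adjoint to $x_!$, and the base-change proposition identifies $(x_!\dashv x^\ast)\colon \mathrm{Sp}^\Sigma_X\to\mathrm{Sp}^\Sigma$ as a $\mathrm{Sp}^\Sigma$-Quillen adjunction between stable model structures. Hence $\mathbf{R}x^\ast$ descends to a functor of homotopy categories and sends any stable weak equivalence to an isomorphism in $Ho(\mathrm{Sp}^\Sigma)$.

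For the backward implication, I would first replace $f$ by a map $g\colon A\to B$ between fibrant $\Omega_X$-spectra in $\mathrm{Sp}^\Sigma_X$. Each $A(k)\to X$ and $B(k)\to X$ is then a Kan fibration, so for every vertex $x$ of $X$ the strict pullback $x^\ast A$ has each level a Kan complex and in particular computes $\mathbf{R}x^\ast P$; because $x^\ast$ is a right adjoint it commutes with $\Omega_X$, so $x^\ast A$ is itself a fibrant $\Omega$-spectrum, and similarly for $x^\ast B$. Between fibrant $\Omega$-spectra a stable weak equivalence is automatically a levelwise weak equivalence, so the hypothesis translates into the assertion that for every $x\in X$ and every $k\geq 0$ the map on strict fibers $x^\ast g(k)\colon x^\ast A(k)\to x^\ast B(k)$ is a weak equivalence of simplicial sets.

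The remaining task is to argue that this levelwise fiberwise-equivalence property forces each $g(k)\colon A(k)\to B(k)$ to be a weak equivalence, for then $g$ is levelwise a weak equivalence of fibrant $\Omega_X$-spectra and hence a stable weak equivalence representing $f$. This last step is the standard fiberwise detection principle for maps between Kan fibrations: working one connected component $X_\alpha\subseteq X$ at a time and choosing a basepoint $x\in X_\alpha$, the five lemma applied to the map of long exact sequences of homotopy groups attached to the fibrations $x^\ast A(k)\to A(k)|_{X_\alpha}\to X_\alpha$ and its $B$-analogue shows that $g(k)|_{X_\alpha}$ is a weak equivalence, and assembling over components gives the result. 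I expect the main obstacle not to be the simplicial detection result itself, which is classical, but rather the careful verification that strict pullback along points already models the derived pullback for fibrant $\Omega_X$-spectra; once this rigidity is established, the proposition dissolves into the fiberwise principle just indicated.
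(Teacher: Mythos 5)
Your argument is correct. Note that the paper itself only \emph{records} Proposition \ref{prop:StableFibreEquiv} without proof, deferring to the companion paper \cite{braunack-mayer_combinatorial_2019}; there the natural route is via the compact generators $x_!\mathbb{S}$ of $Ho(\mathrm{Sp}^\Sigma_X)$ (cf.\ the use of \cite[Remark 2.18]{braunack-mayer_combinatorial_2019} in the proof of Lemma \ref{lem:XSpecisSmashinglyRat}), whereas you reduce to fibrant $\Omega_X$-spectra and invoke the classical fibrewise Whitehead theorem for Kan fibrations. Both are valid; your version is more elementary and self-contained, at the cost of the simplicial bookkeeping. Two points deserve slightly more care. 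First, ``$x^\ast$ is a right adjoint, so it commutes with $\Omega_X$'' is not quite the right justification: being a right adjoint does not make a functor commute with another right adjoint. What you actually need is that $x^\ast$ preserves $\mathrm{sSet}_\ast$-powers (equivalently, that its left adjoint $x_!$ preserves tensors, as recorded in Section \ref{ss:Comb}), together with Ken Brown's lemma to see that $x^\ast$ carries the levelwise weak equivalences $A(n)\to\Omega_X A(n+1)$ between fibrations over $X$ to weak equivalences. Second, in the five-lemma step the long exact sequence based at a single fibre over your chosen $x\in X_\alpha$ only controls homotopy groups at basepoints lying in that fibre; you must add that every component of $A(k)|_{X_\alpha}$ meets this fibre (path lifting over a connected base), and handle $\pi_0$ via the identification $\pi_0(E)\cong\pi_0(E_x)/\pi_1(X_\alpha,x)$ rather than by the five lemma alone. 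Both are standard and do not affect the validity of the argument.
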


\subsection{Stable loop space modules}
\label{SS:StableLoopSpace}
Stable equivalences of parametrised spectra are detected by homotopy fibers (Proposition \ref{prop:StableFibreEquiv}) so that for any space $X$ there is a faithful embedding
\[
\begin{tikzcd}
\mathrm{fib}\colon Ho(\mathrm{Sp}_X)
\ar[r, hookrightarrow]
& 
\displaystyle\prod_{\pi_0(X)} Ho(\mathrm{Sp})\,.
\end{tikzcd}
\]
When $X$ is connected, $X$-parametrised spectra are equivalently $\Omega X_+$-module spectra in such a manner that the above embedding is identified with the forgetful functor.
The goal of this section is to give a precise proof of this fact using the combinatorial models.

\begin{remark}[$r$-reduced simplicial sets]
\label{rem:RedsSet}
Since we work with simplicial sets, we are able to present $k$-connected homotopy types by combinatorial models that are \lq\lq minimal'' in dimensions $\leq k$.
Recall that a simplicial set $X$ is \emph{$r$-reduced} if it has a unique $n$-simplex for $0\leq n< r$.
In the case $r=1$, we simply say that $X$ is \emph{reduced}.
The category of $r$-reduced simplicial sets is written $\mathrm{sSet}_{\geq r}$
and for each $r\geq 1$ the evident inclusion $\mathrm{sSet}_{\geq r}\hookrightarrow \mathrm{sSet}_\ast$ participates in a Quillen adjunction
\begin{equation}
\label{eqn:RedSset}
\begin{tikzcd}
\mathrm{sSet}_{\geq 1}
\ar[rr, hookrightarrow, shift left=1.1ex]
\ar[rr, leftarrow, shift right=1.1ex, "E_r"', "\bot"]
&&
\mathrm{sSet}_\ast\,.
\end{tikzcd}
\end{equation}
The right adjoint $E_r$ sends a pointed simplicial set $(X,x)$ to its \emph{$r$-th Eilenberg subcomplex}, which is the pullback 
\[
\begin{tikzcd}
E_r(X,x)
\ar[r]
\ar[d]
\ar[dr, phantom, "\ulcorner", very near start]
&
X
\ar[d]
\\
\ast 
\ar[r, "x"]
&
\mathrm{cosk}_{r-1} X\,.
\end{tikzcd}
\]
An $r$-reduced simplicial set is $(r-1)$-connected and any $(r-1)$-connected simplicial set is weakly equivalent to an $r$-reduced simplicial set.
The derived adjunction of \eqref{eqn:RedSset} exhibits $Ho(\mathrm{sSet}_{\geq r})$ as the full subcategory of $Ho(\mathrm{sSet}_\ast)$ on the pointed $(r-1)$-connected homotopy types.
\end{remark}

\begin{construction}
For a reduced simplicial set $X$, we use Kan's simplicial loop group $\mathbb{G}X$ to model the based loop space $\Omega X$ (see \cite{goerss_simplicial_2009} for a contemporary account).
By construction, $\mathbb{G}X$ is a strict simplicial group and the assignment $X\mapsto \mathbb{G}X$ is the left adjoint of a Quillen equivalence
\[
\begin{tikzcd}
\mathrm{sSet}_{\geq 1}
\ar[rr, shift left=1.1ex, "\mathbb{G}"]
\ar[rr, leftarrow, shift right=1.1ex, "\overline{W}"', "\bot"]
&&
\mathrm{sGrp}
\end{tikzcd}
\]
between reduced simplicial sets and simplicial groups.
The right adjoint computes Kan's model of the classifying space, that is $\overline{W} G\cong BG$ for any simplicial group $G$.
$\overline{W}G$ is the quotient of an acyclic free $G$-space $WG$ and the fibration $WG\to \overline{W}G$ models the universal principal $G$-fibration $EG\to BG$.
The unit and counit of the $(\mathbb{G}\dashv \overline{W})$-adjunction are natural weak equivalences, so that the pullback
\[
\begin{tikzcd}
\mathbb{P}X \ar[d, twoheadrightarrow, "\pi_X"']\ar[r] 
\arrow[dr, phantom, "\ulcorner", very near start]
&
 W \mathbb{G}X\ar[d, twoheadrightarrow]
\\
X\ar[r, "\eta_X"]
& 
\overline{W}\mathbb{G}X
\end{tikzcd}
\]
furnishes a model $\pi_X \colon \mathbb{P}X\to X$ for the path fibration of $X$.
The map $\pi_X$ is natural in $X$, and $\mathbb{P}X$ is a particularly nice model for the path space; it is a free $\mathbb{G}X$-space such that $\mathbb{P}X/\mathbb{G}X$ is isomorphic to $X$.
\end{construction}

For a simplicial group $G$, adjoining a disjoint basepoint $G\mapsto G_+= G\coprod \ast$ produces a monoid in $(\mathrm{sSet}_\ast, \wedge)$.
Write $G_+\mathrm{-Mod}_u$ for the category of left $G_+$-modules in pointed simplicial sets,  equivalently $G$-spaces with a distinguished $G$-fixed point (such objects are unstable $G_+$-modules, hence the subscript \lq\lq$u$'').
The free-forgetful adjunction
\[
\begin{tikzcd}
\mathrm{sSet}_{\ast}
\ar[rr, shift left=1.1ex, "G_+\wedge(-)"]
\ar[rr, leftarrow, shift right=1.1ex, "U"', "\bot"]
&&
G_+\mathrm{-Mod}_u
\end{tikzcd}
\]
induces a proper combinatorial $\mathrm{sSet}_\ast$-model structure. The weak equivalences and fibrations in $G_+\mathrm{-Mod}_u$ are created by $U$ and $\mathrm{sSet}_\ast$-tensors are computed in terms of underlying pointed simplicial sets.

\begin{remark}
For any morphism of simplicial groups $\psi\colon G\to H$ there is an induced $\mathrm{sSet}_\ast$-Quillen adjunction $(\psi_!\dashv \psi^\ast)\colon G_+\mathrm{-Mod}_u\to  H_+\mathrm{-Mod}_u$.
The right adjoint $\psi^\ast$ simply regards the $H_+$-module $L$ as a $G_+$-module via $\psi_+\colon G_+\to H_+$, whereas the left adjoint sends
\[
K\longmapsto \psi_! K = H_+ \bigwedge_{G_+}K\,, 
\]  
with $G_+$ acting to the right on $H_+$ via $\psi_+$.
By Quillen invariance for $\mathrm{sSet}_\ast$ (cf~\cite{schwede_monoidal_2003}), $(\psi_!\dashv \psi^\ast)$ is a Quillen equivalence if $\psi\colon G\to H$ is a weak equivalence of simplicial groups.
\end{remark} 

When the simplicial group under consideration is the Kan simplicial loop group $\mathbb{G}X$ of a reduced simplicial set $X$, there is a Quillen equivalence relating $\mathbb{G}X_+$-modules and  retractive spaces over $X$.
\begin{lemma}
\label{lem:RetSpModUn}
For any reduced simplicial set $X$, there is a $\mathrm{sSet}_\ast$-Quillen equivalence
\[
\begin{tikzcd}
\mathrm{sSet}_{\dslash X}
\ar[rr, shift left=1.1ex, "\mathfrak{f}_X"]
\ar[rr, leftarrow, shift right=1.1ex, "\mathfrak{b}_X"', "\bot"]
&&
\mathbb{G}X_+\mathrm{-Mod}_u\,.
\end{tikzcd}
\]
For any map $f\colon X\to Y$ of reduced simplicial sets, there is a natural isomorphism of left $\mathrm{sSet}_\ast$-Quillen functors $\mathbb{G}f_! \circ \mathfrak{f}_X \cong \mathfrak{f}_Y\circ f_!$.
\end{lemma}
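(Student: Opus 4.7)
The plan is to construct the adjoint pair from the principal $\mathbb{G}X$-fibration $\pi_X\colon \mathbb{P}X \to X$, check that it is a Quillen adjunction by formal means, and then derive the Quillen equivalence from contractibility of $\mathbb{P}X$.

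First I would define the functors. For a retractive space $Y$ over $X$, pulling back along $\pi_X$ produces a retractive $\mathbb{G}X$-space $\mathbb{P}X \to \mathbb{P}X \times_X Y \to \mathbb{P}X$; collapsing the $\mathbb{G}X$-invariant section subcomplex yields a pointed $\mathbb{G}X$-space, which I take to be $\mathfrak{f}_X(Y)$. Dually, for a pointed $\mathbb{G}X$-space $K$ the Borel construction $\mathfrak{b}_X(K) := \mathbb{P}X \times_{\mathbb{G}X} K$ fibres over $X = \mathbb{P}X/\mathbb{G}X$ and receives a natural section from the $\mathbb{G}X$-fixed basepoint of $K$. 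The adjunction $\mathfrak{f}_X \dashv \mathfrak{b}_X$ follows from a universal property chase: pointed $\mathbb{G}X$-equivariant maps $\mathfrak{f}_X(Y) \to K$ are the same as $\mathbb{G}X$-equivariant maps $\mathbb{P}X \times_X Y \to K$ that collapse $\mathbb{P}X$ to the basepoint, which transpose to maps of retractive spaces $Y \to \mathfrak{b}_X(K)$ over $X$. Compatibility with the $\mathrm{sSet}_\ast$-tensoring is similarly formal, since both functors are constructed from colimits and pullbacks that commute with smash products.

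Next I would verify that $(\mathfrak{f}_X \dashv \mathfrak{b}_X)$ is a Quillen adjunction. Cofibrations and weak equivalences in both $\mathrm{sSet}_{\dslash X}$ and $\mathbb{G}X_+\mathrm{-Mod}_u$ are created by the forgetful functors to $\mathrm{sSet}$, so it suffices to check that $\mathfrak{f}_X$ preserves monomorphisms and weak equivalences. Both properties reduce to the facts that pullback along the fibration $\pi_X$ preserves monomorphisms and (by right properness of $\mathrm{sSet}$) weak equivalences between cofibrations, and that collapsing the $\mathbb{G}X$-invariant subcomplex $\mathbb{P}X$ is compatible with both.

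The main obstacle is proving the Quillen equivalence, and here contractibility of $\mathbb{P}X$ is essential. For the unit at a cofibrant retractive space $Y$, I would observe that the projection $\mathbb{P}X \times_{\mathbb{G}X}(\mathbb{P}X \times_X Y) \to Y$ has contractible fibres (this is modelled on the universal weak equivalence $\mathbb{P}X \times_{\mathbb{G}X}\mathbb{P}X \to X$), hence is a weak equivalence. A pushout comparison using right properness of $\mathrm{sSet}$ and the weak equivalence $\mathbb{P}X \times_{\mathbb{G}X}\mathbb{P}X \to X$ then identifies the homotopy type of $\mathfrak{b}_X\mathfrak{f}_X(Y)$ with that of $Y$, showing the unit is a weak equivalence. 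The counit at a fibrant pointed $\mathbb{G}X$-space $K$ is handled dually via the $\mathbb{G}X$-equivariant isomorphism $\mathbb{P}X \times_X (\mathbb{P}X \times_{\mathbb{G}X} K) \cong \mathbb{P}X \times K$ (with diagonal action), again reducing to contractibility of $\mathbb{P}X$. Finally, for naturality I would directly compare $\mathbb{G}f_!\mathfrak{f}_X(Z) = \mathbb{G}Y_+ \wedge_{\mathbb{G}X_+}\big((\mathbb{P}X \times_X Z)/\mathbb{P}X\big)$ with $\mathfrak{f}_Y(f_! Z) = (\mathbb{P}Y \times_Y Z)/(\mathbb{P}Y \times_Y X)$; the key input is the natural isomorphism $\mathbb{G}Y \times_{\mathbb{G}X}\mathbb{P}X \cong \mathbb{P}Y \times_Y X$ of $\mathbb{G}Y$-principal bundles over $X$, arising from the functoriality of Kan's $W$-construction applied to $\mathbb{G}f$, after which a routine chase of balanced products yields the claimed natural isomorphism of left $\mathrm{sSet}_\ast$-Quillen functors.
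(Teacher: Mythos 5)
Your functors $\mathfrak{f}_X$ and $\mathfrak{b}_X$ are exactly the ones the paper uses (pull back along $\pi_X$ and collapse the section $\mathbb{P}X$; Borel construction $(\mathbb{P}X\times K)/\mathbb{G}X$), and your naturality argument via the isomorphism $\mathbb{G}Y\times_{\mathbb{G}X}\mathbb{P}X\cong \mathbb{P}Y\times_Y X$ of principal $\mathbb{G}Y$-bundles over $X$ is the same mechanism as the paper's (which constructs the comparison map from naturality of $\pi_X$ and then checks it is a map of $\mathbb{G}Y$-free spaces inducing the identity on quotients). Note that the paper itself only recalls the definitions and cites its companion article for the Quillen equivalence, so your sketch of the unit and counit (trivialisation $\mathbb{P}X\times_X\mathfrak{b}_X(K)\cong \mathbb{P}X\times K$, contractibility of $\mathbb{P}X$, gluing along the acyclic fibration $\mathbb{P}X\times_{\mathbb{G}X}\mathbb{P}X\to X$) goes beyond what is written here; that part of your outline is sound.

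There is, however, one concrete error in your verification that $(\mathfrak{f}_X\dashv\mathfrak{b}_X)$ is a Quillen adjunction. You assert that cofibrations in $\mathbb{G}X_+\mathrm{-Mod}_u$ are created by the forgetful functor to $\mathrm{sSet}$, and conclude that it suffices for $\mathfrak{f}_X$ to preserve monomorphisms. This is false: the model structure on $\mathbb{G}X_+\mathrm{-Mod}_u$ is \emph{transferred} along the free--forgetful adjunction from $\mathrm{sSet}_\ast$, so only the weak equivalences and fibrations are created by $U$; the cofibrations are retracts of relative free cell complexes, which form a strictly smaller class than the monomorphisms of pointed $\mathbb{G}X$-spaces (a monomorphism whose complement carries a non-free action is not a cofibration). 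Preserving monomorphisms therefore does not make $\mathfrak{f}_X$ left Quillen. The repair is the freeness observation that the paper records and that you yourself invoke later for naturality: $\pi_X^\ast Z=\mathbb{P}X\times_X Z$ is a free $\mathbb{G}X$-space because $\mathbb{P}X$ is, so for a monomorphism $Z\rightarrowtail Z'$ of retractive spaces the induced map $\mathfrak{f}_X(Z)\to\mathfrak{f}_X(Z')$ is a monomorphism which is $\mathbb{G}X$-free away from its image, i.e.\ a relative free cell complex and hence a cofibration of $\mathbb{G}X_+$-modules. With that substitution (and the standard remark that, since $\mathfrak{f}_X$ and $\mathfrak{b}_X$ both preserve all weak equivalences, the underived unit and counit suffice to detect the Quillen equivalence), your argument is complete.
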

\begin{proof}
The first part of the result is \cite[Lemma 1.31]{braunack-mayer_combinatorial_2019}; let us recall the definitions of $\mathfrak{f}_X$ and $\mathfrak{b}_X$.
The left adjoint $\mathfrak{f}_X$ sends the retractive space $Z$ over $X$ to the pushout of $\mathbb{G}X$-spaces
\[
\begin{tikzcd}
\mathbb{P}X \ar[d]
\ar[r] 
\arrow[dr, phantom, "\lrcorner", very near end]
&
 \ast
\ar[d]
\\
\pi_X^\ast Z \ar[r, ]
& 
\mathbb{P}X_! \pi^\ast_X Z\,.
\end{tikzcd}
\]
Note that $\pi^\ast_X Z$ is a free $\mathbb{G}X$-space ($\mathbb{P}X$ is a retract), so that $\mathbb{P}X_! \pi^\ast_X Z$ is free away from the basepoint.
The right adjoint sends the $\mathbb{G}X_{+}$-module $K$, regarded as a diagram of $\mathbb{G}X$-spaces $\ast\to K\to \ast$, to the retractive space over $X$
\[
\mathbb{P}X/\mathbb{G}X\cong X  \longrightarrow 
(K\times \mathbb{P}X)/\mathbb{G}X
\longrightarrow 
\mathbb{P}X/\mathbb{G}X \cong X\,,
\]
where $K\times \mathbb{P}X$ is equipped with the diagonal $\mathbb{G}X$-action.

For a morphism of reduced simplicial sets $f\colon X\to Y$, recall that $f_!$ is computed by taking pushouts along $f$.
For any $Z\in \mathrm{sSet}_{\dslash X}$ there is a natural commuting diagram
\[
\begin{tikzcd}
\mathbb{P}X 
\ar[d] 
\ar[r]
&
\pi^\ast_Y X 
\ar[r]
\ar[d]
\ar[dr, phantom, "\lrcorner", very near end]
&
\mathbb{P}Y
\ar[r]
\ar[d]
\ar[dr, phantom, "\lrcorner", very near end]
&
\ast
\ar[d]
\\
\pi^\ast_X Z
\ar[r]
&
\pi^\ast_Y Z
\ar[r]
&
\pi^\ast_Y f_! Z
\ar[r]
&
\mathbb{P}Y_! \pi^\ast_Y f_! Z\,.
\end{tikzcd}
\]
The left-hand square comes from naturality of $\pi_X$ in $X$, the middle square is a pushout since colimits in $\mathrm{sSet}$ are universal (that is, stable under base change), and the right-hand square is a pushout by definition.
This diagram determines a natural morphism $\mathfrak{f}_X Z\to (\mathbb{G}f^\ast\mathfrak{f}_Y f_!)(Z)$ in $\mathbb{G}X_+\mathrm{-Mod}_u$, and taking adjoints we have a natural map of pointed $\mathbb{G}Y$-space $(\mathbb{G}f_! \circ \mathfrak{f}_X) (Z) \to( \mathfrak{f}_Y\circ  f_! )(Z)$.
By the above, the $\mathbb{G}Y$-actions on both domain and codomain are free away from the basepoint and a careful unravelling of the definitions shows that taking quotients by the $\mathbb{G}Y$-action yields the identity map on $X_! Z \cong Z/X$.
It follows that $\mathbb{G}f_!\circ \mathfrak{f}_X\Rightarrow \mathfrak{f}_Y\circ f_!$ is a natural isomorphism.
\end{proof}

We now implement the passage to stable homotopy theory by applying the symmetric stabilisation machine \cite{hovey_spectra_2001}. 
Since the Quillen adjunctions of Lemma \ref{lem:RetSpModUn} preserve $\mathrm{sSet}_\ast$-tensors, symmetric stabilisation produces stable model categories and Quillen adjunctions enriched over symmetric spectra.
This enrichment over $\mathrm{Sp}^\Sigma$ is crucial to our arguments below.

Applying the symmetric stabilisation machine to the $\mathrm{sSet}_\ast$-Quillen adjunctions of Lemma \ref{lem:RetSpModUn} proves the following
\begin{theorem}
\label{thm:ParamSpecandLoop}
For any reduced simplicial set $X$, there is a $\mathrm{Sp}^\Sigma$-Quillen equivalence
\[
\begin{tikzcd}
\mathrm{Sp}^\Sigma_X
\ar[rr, shift left=1.1ex, "\mathfrak{f}_X"]
\ar[rr, leftarrow, shift right=1.1ex, "\mathfrak{b}_X"', "\bot"]
&&
\mathbb{G}X_+\mathrm{-Mod}\,.
\end{tikzcd}
\]
For any map $f\colon X\to Y$ of reduced simplicial sets, there is a natural isomorphism of left $\mathrm{Sp}^\Sigma$-Quillen functors $\mathbb{G}f_! \circ \mathfrak{f}_X\cong \mathfrak{f}_Y\circ f_!$.
\end{theorem}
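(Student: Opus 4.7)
The plan is to obtain the theorem by applying Hovey's symmetric stabilisation machine \cite{hovey_spectra_2001} to the unstable $\mathrm{sSet}_\ast$-enriched Quillen equivalence of Lemma \ref{lem:RetSpModUn}. The key input is that both $\mathfrak{f}_X$ and $\mathfrak{b}_X$ are $\mathrm{sSet}_\ast$-enriched functors, hence in particular preserve $\mathrm{sSet}_\ast$-tensors (up to coherent natural isomorphism in the case of the left adjoint). Together with the fact that $(\mathfrak{f}_X\dashv \mathfrak{b}_X)$ is a Quillen equivalence, this is precisely the input required to invoke Hovey's machine.

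First, I would recall that the category $\mathbb{G}X_+\mathrm{-Mod}$ of symmetric $\mathbb{G}X_+$-module spectra is by definition the category of symmetric spectrum objects $\mathrm{Sp}^\Sigma(\mathbb{G}X_+\mathrm{-Mod}_u)$ obtained by stabilising the proper combinatorial $\mathrm{sSet}_\ast$-model category $\mathbb{G}X_+\mathrm{-Mod}_u$ with respect to $S^1$; similarly $\mathrm{Sp}^\Sigma_X = \mathrm{Sp}^\Sigma(\mathrm{sSet}_{\dslash X})$ with respect to $\Sigma_X = S^1\owedge_X(-)$. Since $\mathfrak{f}_X$ commutes with $\mathrm{sSet}_\ast$-tensors, it intertwines these two suspension functors, i.e.\ $\mathfrak{f}_X \circ \Sigma_X \cong \Sigma\circ \mathfrak{f}_X$ (where $\Sigma = S^1\wedge(-)$ on pointed $\mathbb{G}X_+$-modules). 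Thus the prolongation of $\mathfrak{f}_X$ to symmetric sequences lifts to a functor on symmetric spectrum objects, with right adjoint the levelwise prolongation of $\mathfrak{b}_X$. Hovey's stabilisation theorems now assert that $(\mathfrak{f}_X\dashv \mathfrak{b}_X)$ is a Quillen adjunction between the projective stable model structures, and that it is a Quillen equivalence whenever the unstable adjunction is, which is guaranteed by Lemma \ref{lem:RetSpondModUn}. The $\mathrm{Sp}^\Sigma$-enrichment of the resulting adjunction is also supplied by the stabilisation machine, provided one checks that the $\mathrm{Sp}^\Sigma$-tensoring on both sides is constructed from the underlying $\mathrm{sSet}_\ast$-enrichment in the same manner—this is automatic, since it is the universal property of symmetric stabilisation that produces the $\mathrm{Sp}^\Sigma$-enrichment.

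For the naturality statement, given $f\colon X\to Y$, Lemma \ref{lem:RetSpModUn} provides a natural isomorphism $\mathbb{G}f_!\circ \mathfrak{f}_X \cong \mathfrak{f}_Y\circ f_!$ of $\mathrm{sSet}_\ast$-Quillen functors between $\mathrm{sSet}_{\dslash X}$ and $\mathbb{G}Y_+\mathrm{-Mod}_u$. All four functors commute with $\mathrm{sSet}_\ast$-tensors, so they prolong levelwise to symmetric spectrum objects, and the natural isomorphism prolongs levelwise as well. This gives the desired natural isomorphism $\mathbb{G}f_!\circ \mathfrak{f}_X\cong \mathfrak{f}_Y\circ f_!$ of left $\mathrm{Sp}^\Sigma$-Quillen functors.

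The only genuine obstacle is the verification that Hovey's stabilisation machine preserves Quillen equivalences in the required generality, with $\mathrm{Sp}^\Sigma$-enriched structure on the output—but this is exactly what \cite{hovey_spectra_2001} provides, together with the general principles recorded in Section \ref{ss:Comb}. There is no serious calculation; the content is entirely in the unstable Lemma \ref{lem:RetSpModUn}, and the present theorem is its formal stabilised shadow.
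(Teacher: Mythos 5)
Your proposal is correct and follows essentially the same route as the paper, which likewise obtains the theorem by applying Hovey's symmetric stabilisation machine to the $\mathrm{sSet}_\ast$-enriched Quillen equivalence of Lemma \ref{lem:RetSpModUn}, using preservation of $\mathrm{sSet}_\ast$-tensors to get the $\mathrm{Sp}^\Sigma$-enrichment and prolonging the unstable natural isomorphism levelwise for the naturality clause. (Note only the typographical slip \lq\lq RetSpondModUn'' in one of your citations of that lemma.)
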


\begin{remark}
\label{rem:GModSpecvsGSpecInMod}
There is a potential ambiguity in how one ought to to define the stable model category of $G_+$-modules; we can consider either $G_+$-modules in symmetric spectra (identifying $G_+$ with the suspension sepctrum $\Sigma^\infty G_+$) or symmetric spectrum objects in $G_+\mathrm{-Mod}_u$.
It is not difficult to verify that the two approaches yield categories that are canonically isomorphic.
\end{remark}
\begin{remark}
\label{rem:FibSpec}
For a cofibrant symmetric $X$-spectrum $P$, the underlying symmetric spectrum of $\mathfrak{f}_X (P)$ is a cofibrant model for the homotopy fibre of $P$.
For \emph{sequential} parametrised spectra this is a consequence of the proof of \cite[Lemma 2.17]{braunack-mayer_combinatorial_2019} and essentially the same argument applies for symmetric parametrised spectra.
\end{remark}

\begin{example}
\label{exm:FibSuSpectraAndLoopModules}
A map of simplicial sets $p\colon Y\to X$ gives rise to a retractive space over $X$
\[
Y_{+X} = \left( X\longrightarrow X\coprod Y\longrightarrow X\right)\,.
\]
The fibrewise suspension spectrum $\Sigma^\infty_X Y_{+X}$ is then a symmetric $X$-spectrum whose homotopy fibres are the suspensions of the homotopy fibres of $p$.
If $X$ is reduced, by the previous remark
 $\mathfrak{f}_X\big(\Sigma^\infty_X Y_{+Y}\big)$ is stably equivalent to $\Sigma^\infty \mathrm{fib}(p)_+$.
We highlight two particular examples:
\begin{enumerate}[label=(\arabic*)]
  \item For $p = \mathrm{id}_X\colon X\to X$ we obtain a cofibrant $\mathbb{G}X_+$-module spectrum modelling the trivial $\Omega X_+$-action on the sphere spectrum $\mathbb{S}$.
  
  \item For $p = x\colon \ast \to X$ the unique point inclusion, we obtain $\mathbb{G}X_+$ as a left module over itself.
\end{enumerate}
\end{example}

\subsection{Rationalisation is smashing}
\label{S:ratsmash}
It is well-known that smashing with the Eilenberg--Mac Lane spectrum $H\mathbb{Q}$ localises the classical stable homotopy category at rational stable homotopy equivalences.
Modelling $H\mathbb{Q}$ as a fibrant-cofibrant commutative symmetric ring spectrum we present stable rational homotopy theories in terms of strict $H\mathbb{Q}$-module objects.
We work in some detail as it plays an important role in the rectification arguments to follow.
\begin{construction}
There is an adjunction
\begin{equation}
\label{eqn:psSetQVect}
\begin{tikzcd}
\mathrm{sSet}_\ast
\ar[rr, shift left=1.1ex, "\widetilde{\mathbb{Q}}"]
\ar[rr, leftarrow, shift right=1.1ex, "U"', "\bot"]
&&
\mathrm{sVect}_\mathbb{Q}\,,
\end{tikzcd}
\end{equation}
in which the left adjoint $\widetilde{\mathbb{Q}}$ sends the pointed simplicial set $(K,k)$ to the simplicial rational vector space with $n$-simplices
\[
\widetilde{\mathbb{Q}}[K]_n := \mathbb{Q}[K_n \setminus s_0^n(k)] \cong \mathbb{Q}[K_n] \big/ \mathbb{Q}[s_0^n(k)]
\]
the free vector space on the non-basepoint $n$-simplices of $K$.
The forgetful functor $U$ simply regards a simplicial rational vector space as a simplicial set pointed by the zero vector.
The adjunction \eqref{eqn:psSetQVect} equips $\mathrm{sVect}_\mathbb{Q}$ with a proper combinatorial $\mathrm{sSet}_\ast$-model structure.
$\mathrm{sVect}_\mathbb{Q}$ is moreover a symmetric monoidal model category with respect to the degreewise tensor product of simplicial rational vector spaces, and $\widetilde{\mathbb{Q}}\colon (\mathrm{sSet}_\ast, \wedge)\to (\mathrm{sVect}_\mathbb{Q}, \otimes)$ is a strongly monoidal Quillen functor.

The symmetric Eilenberg--Mac Lane spectrum is $H\mathbb{Q} := U \widetilde{\mathbb{Q}}[\mathbb{S}]$, with underlying symmetric sequence
\[
H\mathbb{Q}(n) = \widetilde{\mathbb{Q}}\big[\underbrace{S^1\wedge \dotsb \wedge S^1}_{\text{$n$ factors}}\big]\,,
\] 
with $\Sigma_n$-action permuting the smash factors.
The symmetric spectrum structure maps are the composites
\[
S^1\wedge H\mathbb{Q}(n) \longrightarrow \widetilde{\mathbb{Q}}[S^1] \otimes \widetilde{\mathbb{Q}}[H\mathbb{Q}(n)] \longrightarrow \widetilde{\mathbb{Q}}[S^1] \otimes H\mathbb{Q}(n) \cong H\mathbb{Q}(n+1)
\]
of the adjunction unit followed by the adjunction counit in the second tensor factor.
The symmetric spectrum $H\mathbb{Q}$ inherits the structure of symmetric right spectrum from the sphere spectrum $\mathbb{S}$, where the unit $\mathbb{S}\to H\mathbb{Q}$ is the unit of the adjunction \eqref{eqn:psSetQVect}.
Using the Dold--Kan correspondence, it is straightforward to check that $H\mathbb{Q}$ is a fibrant $\Omega$-spectrum with a single stable homotopy group $\mathbb{Q}$ concentrated in degree $0$.
Using the characterisation of cofibrant symmetric spectra given in \cite[Proposition 3.4.2]{hovey_symmetric_2000}, one checks that $H\mathbb{Q}$ is moreover cofibrant.
\end{construction}

We write $H\mathbb{Q}\mathrm{-Mod}$ for the category of symmetric $H\mathbb{Q}$-module spectra.
The free-forgetful adjunction
\[
\begin{tikzcd}
\mathrm{Sp}^\Sigma
\ar[rr, shift left=1ex, "H\mathbb{Q}\wedge (-)"]
\ar[rr, shift left=-1ex, leftarrow,  "\bot"]
&&
H\mathbb{Q}\mathrm{-Mod}
\end{tikzcd}
\]
furnishes $H\mathbb{Q}\mathrm{-Mod}$ with the structure of a symmetric monoidal $\mathrm{Sp}^\Sigma$-model category such that the forgetful functor creates weak equivalences and fibrations (cf~Construction \ref{con:Rationalisation}).

The following result is well known, however we provide a proof in order to clarify the results of this section.
\begin{proposition}[Rationalisation is smashing]
\label{prop:RatisSmashing}
The free-forgetful 
$\mathrm{Sp}^\Sigma$-Quillen adjunction
\[
\begin{tikzcd}
\mathrm{Sp}^\Sigma
\ar[rr, shift left=1ex, "H\mathbb{Q}\wedge (-)"]
\ar[rr, shift left=-1ex, leftarrow,  "\bot"]
&&
H\mathbb{Q}\mathrm{-Mod}
\end{tikzcd}
\]
exhibits $Ho(H\mathbb{Q}\mathrm{-Mod})$ as the localisation of $Ho(\mathrm{Sp})$ at rational stable homotopy equivalences.
\end{proposition}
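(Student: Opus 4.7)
The plan is to show that the derived left adjoint descends to an equivalence of homotopy categories $Ho(\mathrm{Sp})_\mathbb{Q} \xrightarrow{\sim} Ho(H\mathbb{Q}\mathrm{-Mod})$. This requires two steps: (a) verifying that $\mathbf{L}(H\mathbb{Q}\wedge -)$ sends rational stable equivalences to weak equivalences of $H\mathbb{Q}$-modules, so that it factors through the localisation, and (b) checking that both the derived unit and counit of the induced adjunction on homotopy categories are natural isomorphisms.

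For (a) the key ingredient is the natural identification
\[
\pi_\ast^\mathrm{st}(H\mathbb{Q}\wedge X) \;\cong\; H_\ast(X;\mathbb{Q}) \;\cong\; \pi_\ast^\mathrm{st}(X)\otimes_\mathbb{Z}\mathbb{Q}
\]
for cofibrant $X \in \mathrm{Sp}^\Sigma$, under which the unit $\eta_X\colon X \to UH\mathbb{Q}\wedge X$ realises the canonical rationalisation on stable homotopy. The first isomorphism is $H\mathbb{Q}$-homology by definition, while the second is the rational stable Hurewicz theorem, reflecting the fact that every spectrum splits rationally as a wedge of suspensions of $H\mathbb{Q}$. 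From this it follows immediately that $f$ is a rational stable equivalence iff $H\mathbb{Q}\wedge f$ is a weak equivalence, and that the derived unit $\eta_X$ is always a rational stable equivalence. Hence $\mathbf{L}(H\mathbb{Q}\wedge-)$ factors through $Ho(\mathrm{Sp})_\mathbb{Q}$; the right adjoint $U$ trivially preserves weak equivalences, giving the sought-after adjunction on homotopy categories.

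For the derived counit I would exploit the rational idempotency of $H\mathbb{Q}$: the multiplication $\mu\colon H\mathbb{Q}\wedge H\mathbb{Q}\to H\mathbb{Q}$ is a weak equivalence, because the unit $H\mathbb{Q} \to H\mathbb{Q}\wedge H\mathbb{Q}$ is a weak equivalence by (a) (since $\pi_\ast^\mathrm{st}(H\mathbb{Q})$ is already rational) and its composition with $\mu$ is the identity. Given any cofibrant-fibrant $H\mathbb{Q}$-module $M$, consider the composite $M\cong \mathbb{S}\wedge M \xrightarrow{\eta\wedge 1} H\mathbb{Q}\wedge M \xrightarrow{\mathrm{act}} M$, which is the identity on $M$. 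The first arrow is a weak equivalence because the $H\mathbb{Q}$-action forces $\pi_\ast^\mathrm{st}(M)$ to be a rational vector space, so two-out-of-three gives that the counit (action map) $H\mathbb{Q}\wedge UM\to M$ is a weak equivalence of symmetric spectra, hence of $H\mathbb{Q}$-modules.

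The main technical obstacle is ensuring that the rational Hurewicz identification underlying (a) is natural enough in $X$ to track $\eta_X$ explicitly; the classical statement involves non-canonical splittings. This can be resolved either by reducing to the sphere spectra $\mathbb{S}^n$ via a cell-induction argument (using that $H\mathbb{Q}\wedge -$ commutes with homotopy colimits and flatness of $\mathbb{Q}$), or by exploiting the explicit model of $H\mathbb{Q}$ as $U\widetilde{\mathbb{Q}}[\mathbb{S}]$ together with the Dold--Kan correspondence to produce the isomorphism functorially for cofibrant $X$.
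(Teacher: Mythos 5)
Your outline matches the paper's proof in all essentials: the natural comparison map $\pi^{\mathrm{st}}_\ast(X)\otimes_{\mathbb{Z}}\mathbb{Q}\to\pi^{\mathrm{st}}_\ast(H\mathbb{Q}\wedge X)$ is constructed from the smash pairing (so naturality is not an issue) and shown to be an isomorphism by exactly the reduction you propose in your last paragraph, namely a localising-subcategory/cell-induction argument off the compact generator $\mathbb{S}$ using Serre's theorem; and your argument that the action map $H\mathbb{Q}\wedge M\to M$ is an equivalence because $\pi^{\mathrm{st}}_\ast(M)$ is already rational is the paper's argument for full faithfulness of $Ho(H\mathbb{Q}\mathrm{-Mod})\to Ho(\mathrm{Sp}^\Sigma)$.

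The one point you elide is definitional but not vacuous. In this paper a \emph{rational stable equivalence} $f\colon A\to A'$ is defined (see the conventions) by requiring $\{f,y\}^\ast\otimes_{\mathbb{Z}}\mathbb{Q}$ to be an isomorphism for all fibrant $y$, not by requiring $\pi^{\mathrm{st}}_\ast(f)\otimes_{\mathbb{Z}}\mathbb{Q}$ to be an isomorphism. Your proof establishes the proposition for the homotopy-group notion; the second half of the paper's proof is devoted precisely to showing the two notions coincide, via the isomorphism $\underline{\mathrm{Sp}}^\Sigma(E,N)\cong\underline{H\mathbb{Q}\mathrm{-Mod}}(H\mathbb{Q}\wedge E,N)$ for $H\mathbb{Q}$-modules $N$ and a further localising-subcategory argument showing $\{E,F\}^\ast\otimes_{\mathbb{Z}}\mathbb{Q}\to\{E,F_{\mathbb{Q}}\}^\ast$ is an isomorphism for every $E$ and fibrant $F$. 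This bridge matters later (the same mapping-spectrum definition is used verbatim in $\mathrm{Sp}^\Sigma_X$ and $G_+\mathrm{-Mod}$, where fibre homotopy groups alone do not obviously control mapping groups into arbitrary targets), so you should either prove the equivalence of the two definitions or state explicitly which one you are localising at.
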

\begin{proof}
As $H\mathbb{Q}$ is a cofibrant symmetric ring spectrum, $H\mathbb{Q}\wedge (-)$ preserves stable weak equivalences \cite[Corollary 5.3.10]{hovey_symmetric_2000} so that $H\mathbb{Q}\wedge E$ is equivalent to the derived smash product $H\mathbb{Q}\wedge^\mathbf{L}E$ for any symmetric spectrum $E$.
The pairing $\pi^\mathrm{st}_k(E)\otimes_\mathbb{Z}\pi^\mathrm{st}_l(F)\to \pi^\mathrm{st}_{k+l}(E\wedge^\mathbf{L} F)$ on (true) stable homotopy groups gives rise to a natural transformation $\rho_E\colon \mathbb{Q}\otimes_\mathbb{Z} \pi^\mathrm{st}_k(E)\to \pi_k^\mathrm{st}(H\mathbb{Q}\wedge E)$.
Let $\mathcal{E}\hookrightarrow Ho(\mathrm{Sp}^\Sigma)$ be the full subcategory of objects $E$ for which $\rho_E$ is an isomorphism.
Then $\mathcal{E}$ is a triangulated subcategory that is moreover localising (ie~closed under arbitrary copoducts) and contains the sphere spectrum by classical results of Serre.
Since the sphere spectrum is a compact generator of the classical stable homotopy category we must have $\mathcal{E}= Ho(\mathrm{Sp}^\Sigma)$.

For any symmetric $H\mathbb{Q}$-module spectrum $N$, the image of the action $H\mathbb{Q}\wedge N\to N$ under taking stable homotopy groups shows that the $\pi^\mathrm{st}_\ast (N)$ form a $\mathbb{Z}$-graded rational vector space.
The component of the derived counit at $N$ is stably equivalent to the action $H\mathbb{Q}\wedge N\to N$, hence is a stable weak equivalence.
It follows that $Ho(H\mathbb{Q}\mathrm{-Mod})\to Ho(\mathrm{Sp}^\Sigma)$ is fully-faithful and hence that $Ho(\mathrm{Sp}^\Sigma)$ is a reflective localisation.

For a morphism of cofibrant symmetric spectra $f\colon A\to B$, the following are equivalent:
\begin{enumerate}[label=(\roman*)]
  \item $f$ is a rational stable homotopy equivalence,
  \item $H\mathbb{Q}\wedge f$ is a stable weak equivalence,
  \item $f$ induces an isomorphism $\pi^\mathrm{st}_\ast(A)\otimes_\mathbb{Z}\mathbb{Q}\to \pi^\mathrm{st}_\ast(B)\otimes_\mathbb{Z}\mathbb{Q}$.
\end{enumerate}
The equivalence (ii)$\Leftrightarrow$(iii) is easily extracted from the above.
To show (i)$\Rightarrow$(ii) we use that the free functor $\mathrm{Sp}^\Sigma\to H\mathbb{Q}\mathrm{-Mod}$ commutes with $\mathrm{Sp}^\Sigma$-tensors, which implies a natural isomorphism of enriched hom spectra
\begin{equation}
\label{eqn:HQHomSpec}
\underline{\mathrm{Sp}}^\Sigma\big(E, N\big)
\xrightarrow{\;\;\cong\;\;}
\underline{H\mathbb{Q}\mathrm{-Mod}}\big(H\mathbb{Q}\wedge E, N\big)
\end{equation}
for $E\in \mathrm{Sp}^\Sigma$ and $N\in H\mathbb{Q}\mathrm{-Mod}$.
The $H\mathbb{Q}$-action on $N$ induces a $H\mathbb{Q}$-action on the domain as the smash-hom adjunct of the composite
\[
H\mathbb{Q}\wedge \underline{\mathrm{Sp}}^\Sigma\big(E, N\big)\wedge E
\longrightarrow
H\mathbb{Q}\wedge N
\longrightarrow
N\,.
\]
In particular, for $N$ fibrant and $E$ cofibrant we have $\{E,N\}^\ast\otimes_\mathbb{Z}\mathbb{Q}\cong \{E, N\}^\ast \cong \pi_\ast^\mathrm{st}\underline{\mathrm{Sp}}^\Sigma(E,N)$.
Assuming (i), it now follows from \eqref{eqn:HQHomSpec} that
\[
\underline{H\mathbb{Q}\mathrm{-Mod}}\big(H\mathbb{Q}\wedge B, N\big)
\longrightarrow
\underline{H\mathbb{Q}\mathrm{-Mod}}\big(H\mathbb{Q}\wedge A, N\big)
\]
is a stable weak equivalence for all fibrant $N$ so that (ii) holds.
Conversely, supposing (ii) holds let $F$ be a fibrant symmetric spectrum with $H\mathbb{Q}\wedge F\to F_\mathbb{Q}$ a fibrant replacement in $H\mathbb{Q}\mathrm{-Mod}$ (hence also in $\mathrm{Sp}^\Sigma$).
The free functor $\mathrm{Sp}^\Sigma\to H\mathbb{Q}\mathrm{-Mod}$ induces maps of enriched hom spectra
\[
\underline{\mathrm{Sp}}^\Sigma\big(A, F\big)
\longrightarrow 
\underline{H\mathbb{Q}\mathrm{-Mod}}\big(H\mathbb{Q}\wedge A, H\mathbb{Q}\wedge F\big)
\longrightarrow
\underline{H\mathbb{Q}\mathrm{-Mod}}\big(H\mathbb{Q}\wedge A, F_\mathbb{Q}\big) \cong
\underline{\mathrm{Sp}}^\Sigma\big(A, F_\mathbb{Q}\big)
\]
and similarly for $B$.
Taking stable homotopy groups, we get a commuting diagram of $\mathbb{Z}$-graded rational vector spaces
\[
\begin{tikzcd}[row sep=small]
\{A, F\}^\ast\otimes_\mathbb{Z}\mathbb{Q}
\ar[r]
\ar[d]
&
\{A, F_\mathbb{Q}\}^\ast
\ar[d, "\cong"]
\\
\{B, F\}^\ast\otimes_\mathbb{Z}\mathbb{Q}
\ar[r]
&
\{B, F_\mathbb{Q}\}^\ast
\end{tikzcd}
\]
in which the right-hand vertical arrow is an isomorphism by \eqref{eqn:HQHomSpec} and the hypothesis on $f$.
The horizontal arrows are also isomorphisms (consider the localising triangulated subcategory $\mathcal{E}_F\hookrightarrow Ho(\mathrm{Sp}^\Sigma)$ of symmetric spectra $E$ for which $\{E, F\}^\ast\otimes_\mathbb{Z}\mathbb{Q}\to  \{E, F_\mathbb{Q}\}^\ast$ is an isomorphism;
then $\mathcal{E}_F=Ho(\mathrm{Sp}^\Sigma)$ as it contains the compact generator $\mathbb{S}$).
As the fibrant symmetric spectrum $F$ was arbitrary we conclude that $f$ is a rational stable homotopy equivalence and so (ii)$\Rightarrow$(i).
The equivalence (i)$\Leftrightarrow$(ii) shows that $Ho(\mathrm{Sp}^\Sigma)\to Ho(H\mathbb{Q}\mathrm{-Mod})$ is the localisation at rational stable homotopy equivalences.
\end{proof}

\begin{construction}
\label{con:Rationalisation}
Let $\mathcal{M}$ be a cofibrantly generated $\mathrm{Sp}^\Sigma$-model category.
Since $H\mathbb{Q}$ is a cofibrant symmetric ring spectrum, tensoring with $H\mathbb{Q}$ defines a monad $T_\mathbb{Q}$ whose underlying functor is left Quillen.
The free-forgetful adjunction
\[
\begin{tikzcd}
\mathcal{M}
\ar[rr, shift left=1.1ex, "H\mathbb{Q}\wedge(-)"]
\ar[rr, shift right=1.1ex, "\bot" , leftarrow]
&&
T_\mathbb{Q}\mathrm{-Alg}(\mathcal{M})
\end{tikzcd}
\]
satisfies the conditions of Kan's transfer theorem (see \cite[Theorem 11.3.2]{hirschhorn_model_2003})  so that $T_\mathbb{Q}\mathrm{-Alg}(\mathcal{M})$ is a cofibrantly generated model category with weak equivalences and fibrations created by the forgetful functor to $\mathcal{M}$.
Since $H\mathbb{Q}$ is cofibrant and tensoring preserves colimits in both variables, the forgetful functor preserves colimits and cofibrations.
$T_\mathbb{Q}\mathrm{-Alg}(\mathcal{M})$ inherits $\mathrm{Sp}^\Sigma$-tensors from $\mathcal{M}$, that is for $E\in \mathrm{Sp}^\Sigma$ and $x\in T_\mathbb{Q}\mathrm{-Alg}(\mathcal{M})$, the object $E\wedge x$ of $\mathcal{M}$ becomes a $T_\mathbb{Q}$-algebra with respect to the structure map
\[
H\mathbb{Q}\wedge E\wedge x\cong E\wedge H\mathbb{Q}\wedge x\longrightarrow E\wedge x
\]
defined using the symmetry of the smash product in $\mathrm{Sp}^\Sigma$ and the $T_\mathbb{Q}$-algebra structure of $x$.
For $x,y \in T_\mathbb{Q}\mathrm{-Alg}(\mathcal{M})$, the enriched hom is the symmetric spectrum $\underline{\mathcal{M}}_{T_\mathbb{Q}}(x,y)$ defined as the equaliser of the two maps $
\begin{tikzcd}[cramped, sep=small]
\underline{\mathcal{M}}(x,y)
\ar[r, shift left=1]
\ar[r, shift left=-1]
&
\underline{\mathcal{M}}(H\mathbb{Q}\wedge x, y)
\end{tikzcd}
$; the first map is induced by the $T_\mathbb{Q}$-algebra structure map $H\mathbb{Q}\wedge x\to x$ on $x$ and the second is the composite of $\underline{\mathcal{M}}(x,y)\to \underline{\mathcal{M}}(H\mathbb{Q}\wedge x, H\mathbb{Q}\wedge y)$ followed by the $T_\mathbb{Q}$-algebra structure map of $y$.
The reader can check that these tensors and internal homs make $T_\mathbb{Q}\mathrm{-Alg}(\mathcal{M})$ a $\mathrm{Sp}^\Sigma$-model category.

The assignment $M\mapsto T_\mathbb{Q}\mathrm{-Alg}(\mathcal{M})$ is pseudofunctorial, that is any $\mathrm{Sp}^\Sigma$-Quillen adjunction of cofibrantly generated $\mathrm{Sp}^\Sigma$-model categories $(L\dashv R)\colon \mathcal{M}\to \mathcal{N}$ gives rise to a $\mathrm{Sp}^\Sigma$-Quillen adjunction
\[
\begin{tikzcd}
T_\mathbb{Q}\mathrm{-Alg}(\mathcal{M})
\ar[rr, shift left=1.1ex, "L_\mathbb{Q}"]
\ar[rr, shift right=1.1ex, "\bot" , "R_\mathbb{Q}"', leftarrow]
&&
T_\mathbb{Q}\mathrm{-Alg}(\mathcal{N})\,.
\end{tikzcd}
\]
The functors $L_\mathbb{Q}$ and $R_\mathbb{Q}$ coincide with $L$ and $R$ on underlying objects
\end{construction}

\begin{remark}
\label{rem:TQAlgHom}
If $(y,\rho_y)$ is a $T_\mathbb{Q}$-algebra in $\mathcal{M}$, the symmetric spectrum $\underline{\mathcal{M}}(x,y)$ is a $H\mathbb{Q}$-module spectrum.
The action $H\mathbb{Q}\wedge \underline{\mathcal{M}}(x,y)\to \underline{\mathcal{M}}(x,y)$ corresponds to
\[
\begin{tikzcd}
H\mathbb{Q}\wedge \underline{\mathcal{M}}(x,y) \wedge x
\ar[r, "\mathrm{ev}_x"]
&
H\mathbb{Q}\wedge y
\ar[r, "\rho_y"]
&
y
\end{tikzcd}
\]
under the natural isomorphisms $\mathrm{Sp}^\Sigma (E,\underline{\mathcal{M}}(x,y))\cong \mathcal{M}(E\wedge x, y)$.
For $m,n \in T_\mathbb{Q}\mathrm{-Alg}(\mathcal{M})$, the enriched hom $\underline{\mathcal{M}}_{T_\mathbb{Q}}(m,n)$ inherits a $H\mathbb{Q}$-action from that of $\underline{\mathcal{M}}(m,n)$. 
It is not too hard to show that $T_\mathbb{Q}\mathrm{-Alg}(\mathcal{M})$ is a $H\mathbb{Q}\mathrm{-Mod}$-model category, but we do not need this.
\end{remark}

\begin{construction}
For any objects $x,y\in \mathcal{M}$, the composite
\[
\underline{\mathcal{M}}(x,y)\wedge H\mathbb{Q} \wedge x
\cong 
H\mathbb{Q}\wedge \underline{\mathcal{M}}(x,y)\wedge x
\xrightarrow{\;\;H\mathbb{Q}\wedge \mathrm{ev}_x\;\;}
H\mathbb{Q}\wedge y
\]
determines a natural morphism $\underline{\mathcal{M}}(x,y)\to \underline{\mathcal{M}}_{T_\mathbb{Q}}(H\mathbb{Q}\wedge x, H\mathbb{Q}\wedge y)$, where the codomain is a $H\mathbb{Q}$-module spectrum by Remark \ref{rem:TQAlgHom}.

Supposing now that $x$ is cofibrant and $y$ is fibrant, let 
\begin{equation}
H\mathbb{Q}\wedge y\longrightarrow y_\mathbb{Q}
\end{equation}
be a functorial replacement in $T_\mathbb{Q}\mathrm{-Alg}(\mathcal{M})$ (hence also in $\mathcal{M}$).
Since the codomain is a $H\mathbb{Q}$-module spectrum, the composite morphism
$
\underline{\mathcal{M}}(x,y)
\to
\underline{\mathcal{M}}_{T_\mathbb{Q}}(H\mathbb{Q}\wedge x, H\mathbb{Q}\wedge y)
\to
\underline{\mathcal{M}}_{T_\mathbb{Q}}(H\mathbb{Q}\wedge x, y_\mathbb{Q})
$
gives rise to a morphism of $\mathbb{Z}$-graded rational vector spaces
\begin{equation}
\kappa_{x,y}\colon
\{x,y\}_\mathcal{M}^\ast \otimes_\mathbb{Z}\mathbb{Q}
\longrightarrow 
\{H\mathbb{Q}\wedge x, y_\mathbb{Q} \}^\ast_{T_\mathbb{Q}\mathrm{-Alg}(\mathcal{M})}\,,
\end{equation}
after taking stable homotopy groups.
\end{construction}

\begin{definition}
\label{defn:SmashinglyRat}
We say that a cofibrantly generated $\mathrm{Sp}^\Sigma$-model category $\mathcal{M}$ is \emph{smashingly rational} if $\kappa_{x,y}$ is a natural isomorphism for all cofibrant $x$ and fibrant $y$ in $\mathcal{M}$.
\end{definition}

The ur-example of a smashingly rational model category is $\mathrm{Sp}^\Sigma$ (Proposition \ref{prop:RatisSmashing}) and it turns out that this condition is satisfied in a very large class of examples (Theorem \ref{thm:RatSmash}).
The hypothesis that $\mathcal{M}$ is smashingly rational guarantees that passing to $T_\mathbb{Q}$-algebras by tensoring (or \emph{smashing}) with $H\mathbb{Q}$ implements rationalisation in the homotopy category.
\begin{lemma}
\label{lem:SmashRat}
If $\mathcal{M}$ is smashingly rational, the free-forgetful Quillen adjunction
\[
\begin{tikzcd}
\mathcal{M}
\ar[rr, shift left=1.1ex, "H\mathbb{Q}\wedge(-)"]
\ar[rr, shift right=1.1ex, "\bot" , leftarrow]
&&
T_\mathbb{Q}\mathrm{-Alg}(\mathcal{M})
\end{tikzcd}
\]
exhibits $Ho(T_\mathbb{Q}\mathrm{-Alg}(\mathcal{M}))$ as the localisation of $Ho(\mathcal{M})$ at rational stable homotopy equivalences.
\end{lemma}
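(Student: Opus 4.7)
The plan is to adapt the argument of Proposition \ref{prop:RatisSmashing}, which handles the case $\mathcal{M}=\mathrm{Sp}^\Sigma$. Two things must be established: (A) the derived counit of the Quillen adjunction is a natural weak equivalence, so that $\mathbf{R}U$ is homotopically fully faithful; and (B) a morphism $f$ between cofibrant objects of $\mathcal{M}$ is inverted by $H\mathbb{Q}\wedge(-)$ precisely when it is a rational stable equivalence. Together (A) and (B) realize $Ho(T_\mathbb{Q}\mathrm{-Alg}(\mathcal{M}))$ as the reflective localization of $Ho(\mathcal{M})$ at the rational stable equivalences.

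For (A), let $N$ be a fibrant $T_\mathbb{Q}$-algebra. It suffices to show the action $\mu_N\colon H\mathbb{Q}\wedge U(N)\to U(N)$ is a weak equivalence in $\mathcal{M}$, since the derived counit at $N$ factors through $\mu_N$ via a cofibrant replacement of $U(N)$. By Remark \ref{rem:TQAlgHom}, the mapping spectrum $\underline{\mathcal{M}}(x,U(N))$ inherits a $H\mathbb{Q}$-module structure, so $\{x,U(N)\}^\ast_\mathcal{M}$ is already a rational vector space for every cofibrant $x\in \mathcal{M}$; the tensor $\otimes_\mathbb{Z}\mathbb{Q}$ is then redundant on the domain of $\kappa_{x,U(N)}$. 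The smashingly rational hypothesis combined with the free-forgetful adjunction produces a natural isomorphism $\{x, U(N)\}^\ast_\mathcal{M}\cong \{x, U(N)_\mathbb{Q}\}^\ast_\mathcal{M}$ induced by postcomposition with the composite $U(N)\xrightarrow{\eta} H\mathbb{Q}\wedge U(N) \to U(N)_\mathbb{Q}$, where $U(N)_\mathbb{Q}$ is the $T_\mathbb{Q}$-fibrant replacement of $H\mathbb{Q}\wedge U(N)$ (whose underlying $\mathcal{M}$-object is a fibrant replacement of $H\mathbb{Q}\wedge U(N)$ in $\mathcal{M}$). Yoneda forces this composite to be a weak equivalence in $\mathcal{M}$; since its second factor is already a weak equivalence, 2-of-3 shows that the unit $\eta$ is a weak equivalence. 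Finally, $\mu_N\circ \eta=\mathrm{id}_{U(N)}$ by the $T_\mathbb{Q}$-algebra unit axiom, and a further application of 2-of-3 gives $\mu_N$ a weak equivalence.

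For (B) the forward implication is a Yoneda computation: if $f$ is a rational stable equivalence of cofibrant objects and $N$ is any fibrant $T_\mathbb{Q}$-algebra, then the free-forgetful adjunction together with the rationality of $\{-,U(N)\}^\ast_\mathcal{M}$ yields $\{H\mathbb{Q}\wedge f, N\}^\ast_{T_\mathbb{Q}\mathrm{-Alg}(\mathcal{M})}\cong \{f, U(N)\}^\ast_\mathcal{M}\cong\{f, U(N)\}^\ast_\mathcal{M}\otimes_\mathbb{Z}\mathbb{Q}$, which is an isomorphism since $U(N)$ is fibrant in $\mathcal{M}$. Conversely, for any fibrant $F\in \mathcal{M}$ the smashingly rational hypothesis identifies $\{f, F\}^\ast_\mathcal{M}\otimes_\mathbb{Z}\mathbb{Q}$ with $\{H\mathbb{Q}\wedge f, F_\mathbb{Q}\}^\ast_{T_\mathbb{Q}\mathrm{-Alg}(\mathcal{M})}$; if $H\mathbb{Q}\wedge f$ is a weak equivalence in $T_\mathbb{Q}\mathrm{-Alg}(\mathcal{M})$ this is an isomorphism for every such $F$, so $f$ is a rational stable equivalence.

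The principal subtlety is the diagram chase in (A), unravelling the definition of $\kappa$ together with the free-forgetful adjunction to identify the resulting isomorphism $\{x, U(N)\}^\ast_\mathcal{M}\cong \{x, U(N)_\mathbb{Q}\}^\ast_\mathcal{M}$ with the explicit postcomposition by $U(N)\to U(N)_\mathbb{Q}$. Once this concrete representation is in hand, the rest of the argument reduces to routine applications of Yoneda and the 2-of-3 property.
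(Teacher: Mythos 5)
Your proof is correct and follows the same two-step outline as the paper's (homotopical full faithfulness of the forgetful functor, plus the identification of the morphisms inverted by $H\mathbb{Q}\wedge(-)$), but your treatment of step (A) takes a genuinely different route. The paper first establishes the zig-zag of isomorphisms \eqref{eqn:SmashinglyRatCounitMapFree} for \emph{free} $T_\mathbb{Q}$-algebras $H\mathbb{Q}\wedge x$ with $x$ cofibrant, and then extends to arbitrary cofibrant $T_\mathbb{Q}$-algebras by resolving them with the bar construction $B_\bullet(T_\mathbb{Q};m)$ and passing to homotopy limits of mapping spectra. You instead argue directly at the object $U(N)$: unravelling $\kappa_{x,U(N)}$ as postcomposition with $U(N)\to U(N)_\mathbb{Q}$ (your identification is correct, and is exactly what the paper's diagram \eqref{eqn:SmashRatTechnical1} encodes), invoking the rationality of $\{x,U(N)\}^\ast_\mathcal{M}$ from Remark \ref{rem:TQAlgHom}, and then using the detection of weak equivalences between fibrant objects to conclude that the monad unit $\eta_{U(N)}$ is a weak equivalence; the algebra unit axiom and 2-out-of-3 then deliver $\mu_N$. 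This bypasses the bar resolution entirely and is shorter; what the paper's longer route buys is the stronger statement that $\underline{\mathcal{M}}_{T_\mathbb{Q}}(m,n_\mathbb{Q})\to\underline{\mathcal{M}}_{T_\mathbb{Q}}(m,n^f)$ is a stable equivalence for \emph{every} cofibrant $m$, at the level of mapping spectra rather than a single counit component. Your part (B) coincides with the paper's argument in both directions.

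One imprecision to repair in (A): for a merely fibrant $N$ the derived counit is the composite $H\mathbb{Q}\wedge U(N)^c\to H\mathbb{Q}\wedge U(N)\xrightarrow{\mu_N}U(N)$, and knowing that $\mu_N$ is a weak equivalence does not control the first arrow, since $H\mathbb{Q}\wedge(-)$ is only guaranteed to preserve weak equivalences between cofibrant objects. The cheap fix is to evaluate the derived counit at cofibrant--fibrant $N$: then $U(N)$ is already cofibrant (the forgetful functor preserves cofibrations and the initial object), no replacement is needed, and the derived counit \emph{is} $\epsilon_N$ with underlying map $\mu_N$. Since every object of $Ho(T_\mathbb{Q}\mathrm{-Alg}(\mathcal{M}))$ is isomorphic to such an $N$ and the counit is natural, this suffices for full faithfulness. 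With that adjustment the argument is complete.
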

\begin{proof}
A straightforward adjointness argument shows that there is an isomorphism of symmetric spectra $\underline{\mathcal{M}}(x,n)\cong \underline{\mathcal{M}}_{T_\mathbb{Q}}(H\mathbb{Q}\wedge x, n)$ for any $x\in \mathcal{M}$ and $n\in T_\mathbb{Q}\mathrm{-Alg}(\mathcal{M})$.
There is a diagram of $H\mathbb{Q}$-module spectra
\begin{equation}
\label{eqn:SmashRatTechnical1}
\begin{tikzcd}[row sep=small]
\underline{\mathcal{M}}(x,n)
\ar[r]
\ar[dr, bend right=15, "\cong"']
&
\underline{\mathcal{M}}_{T_\mathbb{Q}}(H\mathbb{Q}\wedge x, H\mathbb{Q}\wedge n)
\ar[d]
\\
&
\underline{\mathcal{M}}_{T_\mathbb{Q}}(H\mathbb{Q}\wedge x,n)
\end{tikzcd}
\end{equation}
in which the right-hand vertical arrow is induced by the adjunction counit.

Suppose that $x\in \mathcal{M}$ is cofibrant and that $n\in T_\mathbb{Q}\mathrm{-Alg}(\mathcal{M})$ is fibrant.
Since $T_\mathbb{Q}\mathrm{-Alg}(\mathcal{M})$ has functorial fibrant replacements there are fibrant objects $n_\mathbb{Q}, n^f \in T_\mathbb{Q}\mathrm{-Alg}(\mathcal{M})$ together with a commuting diagram of morphisms of $T_\mathbb{Q}$-algebras
\begin{equation}
\label{eqn:FibReplacementTQAlgCounit}
\begin{tikzcd}[row sep=small]
H\mathbb{Q}\wedge n
\ar[r, "\sim"]
\ar[d]
&
n_\mathbb{Q}
\ar[d]
\\
n
\ar[r, "\sim"]
&
n^f
\end{tikzcd}
\end{equation}
in which the horizontal arrows are weak equivalences.
Building upon the diagram \eqref{eqn:SmashRatTechnical1}, we obtain a commuting diagram of symmetric spectra
\begin{equation}
\label{eqn:SmashRatTechnical2}
\begin{tikzcd}[row sep=small]
\underline{\mathcal{M}}(x,n)
\ar[r]
\ar[dr, bend right=15, "\cong"']
&
\underline{\mathcal{M}}_{T_\mathbb{Q}}(H\mathbb{Q}\wedge x, H\mathbb{Q}\wedge n)
\ar[d]
\ar[r]
&
\underline{\mathcal{M}}_{T_\mathbb{Q}}(H\mathbb{Q}\wedge x,n_\mathbb{Q})
\ar[d]
\\
&
\underline{\mathcal{M}}_{T_\mathbb{Q}}(H\mathbb{Q}\wedge x,n)
\ar[r]
&
\underline{\mathcal{M}}_{T_\mathbb{Q}}(H\mathbb{Q}\wedge x,n^f)\,.
\end{tikzcd}
\end{equation}
As $H\mathbb{Q}\wedge x$ is cofibrant in $T_\mathbb{Q}\mathrm{-Alg}(\mathcal{M})$, $\underline{\mathcal{M}}_{T_\mathbb{Q}}(H\mathbb{Q}\wedge x,n)\to \underline{\mathcal{M}}_{T_\mathbb{Q}}(H\mathbb{Q}\wedge x,n^f)$ is a weak equivalence of symmetric spectra.
Passing to stable homotopy groups, \eqref{eqn:SmashRatTechnical2} therefore gives rise to a commuting diagram of graded rational vector spaces
\[
\begin{tikzcd}[row sep=small]
\{x,y\}_\mathcal{M}^\ast \cong \{x,y\}_\mathcal{M}^\ast\otimes_\mathbb{Z}\mathbb{Q}
\ar[r, "\kappa_{x,n}"]
\ar[d]
&
\{H\mathbb{Q}\wedge x, n_\mathbb{Q}\}^\ast_{T_\mathbb{Q}\mathrm{-Alg}(\mathcal{M})}
\ar[d]
\\
\{H\mathbb{Q}\wedge x, n\}^\ast_{T_\mathbb{Q}\mathrm{-Alg}(\mathcal{M})}
\ar[r]
&
\{H\mathbb{Q}\wedge x, n^f\}^\ast_{T_\mathbb{Q}\mathrm{-Alg}(\mathcal{M})}\,,
\end{tikzcd}
\]
where $\kappa_{x,n}$ is an isomorphism by hypothesis on $\mathcal{M}$.
Since the left-hand vertical and bottom horizontal arrows are also isomorphisms, it follows that for all cofibrant $x\in \mathcal{M}$ and fibrant $n\in T_\mathbb{Q}\mathrm{-Alg}(\mathcal{M})$ we have a zig-zag of isomorphisms
\begin{equation}
\label{eqn:SmashinglyRatCounitMapFree}
\{H\mathbb{Q}\wedge x, n_\mathbb{Q}\}_{T_\mathbb{Q}\mathrm{-Alg}(\mathcal{M})}^\ast \longrightarrow
\{H\mathbb{Q}\wedge x,n^f\}_{T_\mathbb{Q}\mathrm{-Alg}(\mathcal{M})}^\ast
\longleftarrow
\{H\mathbb{Q}\wedge x,n\}_{T_\mathbb{Q}\mathrm{-Alg}(\mathcal{M})}^\ast
\end{equation}
that is natural in $x$.

For any cofibrant $T_\mathbb{Q}$-algebra $m$, the bar construction of the monad $T_\mathbb{Q}$ gives rise to a Reedy cofibrant augmented simplicial object
\[
B_\bullet(T_\mathbb{Q};m)=\left(
\!
\begin{tikzcd}
\dotsb 
\ar[r]
\ar[r, shift left=1.6ex]
\ar[r, shift left=-1.6ex]
&
H\mathbb{Q}\wedge
H\mathbb{Q}\wedge m
\ar[r, shift left=0.8ex]
\ar[r, shift left=-0.8ex]
&
H\mathbb{Q}\wedge m
\end{tikzcd}
\!\right)
\xrightarrow{\;\;\rho_m\;\;}
m
\]
with face maps arising from the monoid structure of $H\mathbb{Q}$ and degeneracies given by inserting the unit of $H\mathbb{Q}$ into the different smash factors.
The unit $m \to H\mathbb{Q}\wedge m$ gives rise to a system of extra degeneracies, so that $B_\bullet(T_\mathbb{Q};m)$ is contractible as an augmented simplicial object and hence $\mathrm{colim}_{\Delta^\mathrm{op}} B_\bullet(T_\mathbb{Q};m)\to m$ is a weak equivalence of $T_\mathbb{Q}$-algebras.
Observe that $m$ is also cofibrant when regarded as an object of $\mathcal{M}$ and associated to the fibrant replacement \eqref{eqn:FibReplacementTQAlgCounit} of the adjunction counit there is a commuting diagram of symmetric spectra
\[
\begin{tikzcd}[row sep=small]
\underline{\mathcal{M}}_{T_\mathbb{Q}}(m,n_\mathbb{Q})
\ar[r]
\ar[d, "\sim"']
&
\underline{\mathcal{M}}_{T_\mathbb{Q}}(m, n^f)
\ar[d, "\sim"]
\\
\underset{\Delta}{\mathrm{holim}}\,
\underline{\mathcal{M}}_{T_\mathbb{Q}}(B_\bullet(T_\mathbb{Q};m),n_\mathbb{Q})
\ar[r]
&
\underset{\Delta}{\mathrm{holim}}\,\underline{\mathcal{M}}_{T_\mathbb{Q}}(B_\bullet(T_\mathbb{Q};m), n^f)
\end{tikzcd}
\]
with stable weak equivalences as marked.
In light of the natural isomorphisms \eqref{eqn:SmashinglyRatCounitMapFree}, we get that $\underline{\mathcal{M}}_{T_\mathbb{Q}}(B_\bullet(T_\mathbb{Q};m),n_\mathbb{Q})
\to\underline{\mathcal{M}}_{T_\mathbb{Q}}(B_\bullet(T_\mathbb{Q};m), n^f)$ is a levelwise weak equivalence of Reedy fibrant cosimplicial symmetric spectra. 
Taking limits, it follows that $\underline{\mathcal{M}}_{T_\mathbb{Q}}(m,n_\mathbb{Q})\to \underline{\mathcal{M}}_{T_\mathbb{Q}}(m,n^f)$ is a stable weak equivalence for any cofibrant $m$ and fibrant $n$.
It follows that the derived counit of the adjunction is a natural isomorphism in $Ho(T_\mathbb{Q}\mathrm{-Alg}(\mathcal{M}))$ and hence that $ Ho(T_\mathbb{Q}\mathrm{-Alg}(\mathcal{M}))\to Ho(\mathcal{M})$ is fully-faithful.

To complete the proof it is sufficient to show that a morphism $f\colon x\to x'$ of cofibrant objects in $\mathcal{M}$ is sent to an isomorphism in $Ho(T_\mathbb{Q}\mathrm{-Alg}(\mathcal{M}))$  if and only if it is a rational stable homotopy equivalence.
The \lq\lq if'' direction does not require the  assumption that $\mathcal{M}$ is smashingly rational; if $f$ is rational stable homotopy equivalence then the natural isomorphism  $\underline{\mathcal{M}}(x,n)\cong \underline{\mathcal{M}}_{T_\mathbb{Q}}(H\mathbb{Q}\wedge x, n)$ for $n\in T_\mathbb{Q}\mathrm{-Alg}(\mathcal{M})$ together with the fact that $\{x,n\}^\ast_\mathcal{M}$ is already a graded rational vector space imply that the induced map $\underline{\mathcal{M}}_{T_\mathbb{Q}}(H\mathbb{Q}\wedge x', n)\to \underline{\mathcal{M}}_{T_\mathbb{Q}}(H\mathbb{Q}\wedge x, n)$ is a stable weak equivalence for all fibrant $n\in H\mathbb{Q}\mathrm{-Mod}$, hence $f$ is sent to an isomorphism in $Ho(T_\mathbb{Q}\mathrm{-Alg}(\mathcal{M}))$.

For the converse, suppose that $f\colon x\to x'$ is sent to an isomorphism in $Ho(T_\mathbb{Q}\mathrm{-Alg}(\mathcal{M})$.
Fix a fibrant object $y\in \mathcal{M}$ and choose a fibrant replacement $H\mathbb{Q}\wedge y\to y_\mathbb{Q}$ in $T_\mathbb{Q}\mathrm{-Alg}(\mathcal{M})$ (hence also in $\mathcal{M}$).
Then there is a commuting diagram of graded rational vector spaces
\[
\begin{tikzcd}[row sep=small]
\{x',y\}^\ast_\mathcal{M}\otimes_\mathbb{Z}\mathbb{Q}
\ar[r]\ar[d]
&
\{H\mathbb{Q}\wedge x', y_\mathbb{Q}\}^\ast_{T_\mathbb{Q}\mathrm{-Alg}(\mathcal{M})}\cong \{ x', y_\mathbb{Q}\}^\ast_{\mathcal{M}}
\ar[d]
\\
\{x,y\}^\ast_\mathcal{M}\otimes_\mathbb{Z}\mathbb{Q}
\ar[r]
&
\{H\mathbb{Q}\wedge x, y_\mathbb{Q}\}^\ast_{T_\mathbb{Q}\mathrm{-Alg}(\mathcal{M})}
\cong \{ x, y_\mathbb{Q}\}^\ast_{\mathcal{M}}\,.
\end{tikzcd}
\]
The right-hand vertical arrow is an isomorphism by the hypothesis on $f$ whereas the horizontal arrows are isomorphisms since $\mathcal{M}$ is smashingly rational.
The left-hand vertical arrow is thus an isomorphism, and since $y$ was arbitrary it follows that $f\colon x\to x'$ is a rational stable homotopy equivalence.
\end{proof}
\begin{remark}
One can show that the converse to Lemma \ref{lem:SmashRat} holds; namely for a $\mathrm{Sp}^\Sigma$-model category $\mathcal{M}$, if $Ho(\mathcal{M})\to Ho(T_\mathbb{Q}\mathrm{-Alg}(\mathcal{M}))$ is the localisation at rational stable homotopy equivalences then $\mathcal{M}$ is smashingly rational.
\end{remark}

We are primarily interested in two families of examples:
\begin{definition}For any simplicial set $X$, write $H\mathbb{Q}\mathrm{-Mod}_X$ for the model category of $T_\mathbb{Q}$-algebras in $\mathrm{Sp}^\Sigma_X$.
  The objects of $H\mathbb{Q}\mathrm{-Mod}_X$ are thus symmetric $X$-spectra whose fibres are equipped with a coherent left $H\mathbb{Q}$-action.
\end{definition}
\begin{definition}
For $G$ a simplicial group, we write $(G_+,H\mathbb{Q})\mathrm{-Bimod}$ for the model category of $T_\mathbb{Q}$-algebras in $G_+\mathrm{-Mod}$.
  The category $(G_+,H\mathbb{Q})\mathrm{-Bimod}$ is isomorphic to the category of $(G_+,H\mathbb{Q})$-bimodules in symmetric spectra, justifying our mild abuse of notation.
\end{definition}
\begin{lemma}
\label{lem:XSpecisSmashinglyRat}
$\mathrm{Sp}^\Sigma_{X}$ is smashingly rational for any simplicial set $X$.
\end{lemma}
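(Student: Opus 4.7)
Plan: I would imitate the proof of Proposition \ref{prop:RatisSmashing}, reducing the claim to a check on a set of compact generators via a localising subcategory argument. A key preliminary observation is that being smashingly rational is invariant under $\mathrm{Sp}^\Sigma$-Quillen equivalences between cofibrantly generated $\mathrm{Sp}^\Sigma$-model categories: the map $\kappa_{x,y}$ is constructed entirely from the derived $\mathrm{Sp}^\Sigma$-enriched hom and the monad $T_\mathbb{Q}$, both of which transport across such equivalences. Using this, we may first replace $\mathrm{Sp}^\Sigma_X$ by a more tractable model. Decomposing over $\pi_0(X)$ reduces to connected $X$ (smashingly rational is trivially preserved by products). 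Choosing a basepoint and applying $\overline{W}\mathbb{G}(-)$ yields a zig-zag of weak equivalences to a reduced simplicial set, and base change along weak equivalences is a $\mathrm{Sp}^\Sigma$-Quillen equivalence of stable parametrised spectra, so we may further assume that $X$ is reduced. By Theorem \ref{thm:ParamSpecandLoop}, $\mathrm{Sp}^\Sigma_X$ is then $\mathrm{Sp}^\Sigma$-Quillen equivalent to $\mathbb{G}X_+\mathrm{-Mod}$, so it suffices to prove that $G_+\mathrm{-Mod}$ is smashingly rational for an arbitrary simplicial group $G$.

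For $\mathcal{M} = G_+\mathrm{-Mod}$, the $\mathrm{Sp}^\Sigma$-enriched free-forgetful adjunction supplies a natural isomorphism
\[
\underline{G_+\mathrm{-Mod}}(G_+ \wedge E,\, N) \;\cong\; \underline{\mathrm{Sp}^\Sigma}(E,\, UN)
\]
for all $E \in \mathrm{Sp}^\Sigma$ and $N \in G_+\mathrm{-Mod}$. The forgetful functor $U$ preserves $\mathrm{Sp}^\Sigma$-tensoring and therefore takes $T_\mathbb{Q}$-algebras in $G_+\mathrm{-Mod}$ to $T_\mathbb{Q}$-algebras in $\mathrm{Sp}^\Sigma$, and commutes with the functorial fibrant replacements used to build the map $\kappa$. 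Consequently, for any cofibrant $E \in \mathrm{Sp}^\Sigma$ and fibrant $N \in G_+\mathrm{-Mod}$, the map $\kappa_{G_+ \wedge E,\, N}$ is identified through this natural isomorphism with $\kappa_{E,\, UN}$, which is an isomorphism by Proposition \ref{prop:RatisSmashing}. In particular $\kappa$ is an isomorphism on every object of the form $G_+ \wedge E$ with $E$ a compact generator of $Ho(\mathrm{Sp}^\Sigma)$.

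To finish, let $\mathcal{E} \subset Ho(G_+\mathrm{-Mod})$ be the full subcategory of (homotopy types of cofibrant) objects $P$ for which $\kappa_{P,N}$ is an isomorphism for every fibrant $N$. Since $\kappa$ is a natural transformation of exact functors of $P$ valued in the triangulated category of graded rational vector spaces, and both source and target take coproducts in $P$ to products of graded vector spaces that match via the compact generator side, $\mathcal{E}$ is a localising triangulated subcategory of $Ho(G_+\mathrm{-Mod})$. By the previous paragraph $\mathcal{E}$ contains the objects $G_+ \wedge E$ for $E$ in a set of compact generators of $Ho(\mathrm{Sp}^\Sigma)$, and these form a set of compact generators of $Ho(G_+\mathrm{-Mod})$ since the forgetful functor $U$ is conservative on homotopy categories and preserves coproducts. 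Hence $\mathcal{E} = Ho(G_+\mathrm{-Mod})$, completing the proof. The main obstacle is the bookkeeping around the map $\kappa$: verifying its naturality in both variables carefully enough that it is genuinely compatible with the free-forgetful adjunction (so that it reduces to the $\mathrm{Sp}^\Sigma$ case) and with the $\mathrm{Sp}^\Sigma$-Quillen equivalences used in the reduction of $X$.
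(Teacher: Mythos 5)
Your argument is correct, but it takes a genuinely different and somewhat longer route than the paper. The paper's proof stays inside $\mathrm{Sp}^\Sigma_X$ for an arbitrary simplicial set $X$: it uses the $\mathrm{Sp}^\Sigma$-enriched base change adjunction $(x_!\dashv x^\ast)$ along point inclusions to identify $\underline{\mathrm{Sp}}^\Sigma_X(x_!\mathbb{S},P)\cong x^\ast P$ and $\underline{H\mathbb{Q}\mathrm{-Mod}}_X(H\mathbb{Q}\wedge x_!\mathbb{S},P_\mathbb{Q})\cong x^\ast P_\mathbb{Q}$, so that $\kappa_{x_!\mathbb{S},P}$ becomes $\pi^\mathrm{st}_\ast x^\ast P\otimes_\mathbb{Z}\mathbb{Q}\to\pi^\mathrm{st}_\ast x^\ast P_\mathbb{Q}$, an isomorphism by Proposition \ref{prop:RatisSmashing}; since the $(x_i)_!\mathbb{S}$, for a section of $X\to\pi_0(X)$, compactly generate $Ho(\mathrm{Sp}^\Sigma_X)$, the localising-subcategory argument closes immediately, with no connectivity or reducedness hypotheses and no transport of the property across Quillen equivalences. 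Your route instead reduces to $G_+\mathrm{-Mod}$ via Theorem \ref{thm:ParamSpecandLoop} and in effect reproves Lemma \ref{lem:GmodSmashRat} (which the paper establishes separately by the same free-forgetful identification you use, specialised to $E=\mathbb{S}$); this buys a single uniform statement covering both Lemmas \ref{lem:XSpecisSmashinglyRat} and \ref{lem:GmodSmashRat} and anticipating Theorem \ref{thm:RatSmash}, but at the cost of three auxiliary reductions that each need a routine but nonzero check: the decomposition $\mathrm{Sp}^\Sigma_X\cong\prod_{\pi_0(X)}\mathrm{Sp}^\Sigma_{X_i}$ as model categories, the invariance of the smashingly rational property under $\mathrm{Sp}^\Sigma$-Quillen equivalences (which the paper also asserts without proof just before Theorem \ref{thm:RatSmash}), and the replacement of a connected $X$ by a reduced model. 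On the last point, $\overline{W}\mathbb{G}(-)$ is not the right device, since $\mathbb{G}$ as used here takes reduced simplicial sets as input; the standard zig-zag is $X\to X'\leftarrow E_1(X',x)$ with $X'$ a Kan fibrant replacement and $E_1$ the first Eilenberg subcomplex, as in Remark \ref{rem:RedsSet}.
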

\begin{proof}
Fix a fibrant object $P\in \mathrm{Sp}^\Sigma_X$ and a fibrant replacement $H\mathbb{Q}\owedge_X P\to P_\mathbb{Q}$ in $H\mathbb{Q}\mathrm{-Mod}_X$.
Let $\mathcal{E}_P$ be the localising triangulated subcategory of $Ho(\mathrm{Sp}^\Sigma_X)$  of parametrised symmetric $X$-spectra $A$ for which 
\[
\kappa_{A,P}\colon \{A, P\}^\ast_{X}\otimes_\mathbb{Z}\mathbb{Q}\longrightarrow \{H\mathbb{Q}\wedge A, P_\mathbb{Q}\}_{H\mathbb{Q}\mathrm{-Mod}_X}^\ast
\]
is a natural isomorphism.

Taking $A = x_! \mathbb{S}$ to be the pushforward of the symmetric sphere spectrum along the point inclusion $x\colon\ast \to X$, we get 
\[
\underline{\mathrm{Sp}}^\Sigma_X\big(x_! \mathbb{S}, P\big) \cong x^\ast P
\quad
\mbox{ and }
\quad
\underline{H\mathbb{Q}\mathrm{-Mod}}_X\big(H\mathbb{Q}\wedge x_! \mathbb{S}, P_\mathbb{Q}\big)
\cong x^\ast P_\mathbb{Q}
\]
using the $\mathrm{Sp}^\Sigma$-Quillen adjunction $(x_!\dashv x^\ast)\colon \mathrm{Sp}^\Sigma\to \mathrm{Sp}^\Sigma_X$.
In this case, $\kappa_{A,P}$ is isomorphic to $
\pi^\mathrm{st}_\ast x^\ast P\otimes_\mathbb{Z}\mathbb{Q}\to \pi^\mathrm{st}_\ast x^\ast P_\mathbb{Q}
$. Since there is a stable weak equivalence $H\mathbb{Q}\wedge^\mathbf{L} x^\ast P\to x^\ast P_\mathbb{Q}$ (cf~\cite[Lemma 2.69]{braunack-mayer_combinatorial_2019}), Proposition \ref{prop:RatisSmashing} implies that this map is a natural isomorphism.

Pick a section $i\mapsto x_i$ for the projection $X\to \pi_0(X)$ so that $\{(x_i)_!\mathbb{S}\}$ is a set of compact generators for $Ho(\mathrm{Sp}^\Sigma_X)$ (cf~\cite[Remark 2.18]{braunack-mayer_combinatorial_2019}).
The localising triangulated subcategory $\mathcal{E}_P$ contains $x_! \mathbb{S}$ for any $x\colon \ast \to X$, from which it follows that $\mathcal{E}_P$ is the whole homotopy category.
As the fibrant object $P$ was arbitrary, the result is proven. 
\end{proof}
\begin{corollary}
For any simplicial set $X$, the derived adjunction 
\[
\begin{tikzcd}
Ho(\mathrm{Sp}^\Sigma_X)
\ar[rr, shift left=1.1ex, "{\mathbf{L}(H\mathbb{Q}\owedge_X(-))}"]
 \ar[rr, leftarrow, shift left=-1.1ex, "\bot"]
&&
Ho(H\mathbb{Q}\mathrm{-Mod}_X)
\end{tikzcd}
\]
is the localisation of $Ho(\mathrm{Sp}^\Sigma_X)$ at the class of rational stable homotopy equivalences.
\end{corollary}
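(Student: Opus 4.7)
The corollary is essentially immediate from the two preceding lemmas, and my plan is simply to combine them. By definition, $H\mathbb{Q}\mathrm{-Mod}_X$ is the category $T_\mathbb{Q}\mathrm{-Alg}(\mathrm{Sp}^\Sigma_X)$ of $T_\mathbb{Q}$-algebras in the $\mathrm{Sp}^\Sigma$-model category $\mathrm{Sp}^\Sigma_X$, and the Quillen adjunction in the statement is precisely the free-forgetful adjunction set up in Construction \ref{con:Rationalisation}.

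The plan is therefore to invoke Lemma \ref{lem:SmashRat}, whose hypothesis is that the source $\mathrm{Sp}^\Sigma$-model category be smashingly rational in the sense of Definition \ref{defn:SmashinglyRat}. This hypothesis is verified by Lemma \ref{lem:XSpecisSmashinglyRat}, which shows that $\mathrm{Sp}^\Sigma_X$ is smashingly rational for every simplicial set $X$. Combining these two lemmas yields the claim.

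There is no genuine obstacle; the only thing worth remarking is that the stable combinatorial model structure on $\mathrm{Sp}^\Sigma_X$ is a cofibrantly generated $\mathrm{Sp}^\Sigma$-model category (as recalled in Section \ref{ss:Comb}), so that Construction \ref{con:Rationalisation} and Lemma \ref{lem:SmashRat} apply with $\mathcal{M} = \mathrm{Sp}^\Sigma_X$.
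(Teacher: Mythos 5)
Your proposal is correct and is exactly the argument the paper intends: the corollary is stated immediately after Lemma \ref{lem:XSpecisSmashinglyRat} precisely so that it follows by feeding that lemma into Lemma \ref{lem:SmashRat}. Your remark that the stable model structure on $\mathrm{Sp}^\Sigma_X$ is a cofibrantly generated $\mathrm{Sp}^\Sigma$-model category, so that Construction \ref{con:Rationalisation} applies, is the only hypothesis worth checking and you have checked it.
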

\begin{remark}
The proof of Lemma \ref{lem:XSpecisSmashinglyRat} provides an important consistency check; a morphism of symmetric $X$-spectra $f\colon A\to B$ is a rational stable equivalence if and only if the induced maps 
$
\mathbf{R}x^\ast (f)\colon \mathbf{R} x^\ast (A)\to \mathbf{R} x^\ast (B)
$
of stable homotopy fibres are rational stable equivalences for all $x\colon \ast \to X$.
\end{remark}

\begin{lemma}
\label{lem:GmodSmashRat}
$G_+\mathrm{-Mod}$ is smashingly rational for any simplicial group $G$.
\end{lemma}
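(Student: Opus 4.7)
The plan is to follow the same compact-generators strategy used for Lemma \ref{lem:XSpecisSmashinglyRat}, reducing the claim to the already established fact (Proposition \ref{prop:RatisSmashing}) that $\mathrm{Sp}^\Sigma$ itself is smashingly rational. The natural candidate for a compact generator of $Ho(G_+\mathrm{-Mod})$ is the free $G_+$-module on the sphere spectrum, $G_+\wedge \mathbb{S}$, which generates in the sense of triangulated categories since the free-forgetful adjunction $\mathrm{Sp}^\Sigma \rightleftarrows G_+\mathrm{-Mod}$ is a $\mathrm{Sp}^\Sigma$-enriched Quillen adjunction and the forgetful functor detects stable equivalences.

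First I would fix a fibrant $N\in G_+\mathrm{-Mod}$ together with a fibrant replacement $H\mathbb{Q}\wedge N \to N_\mathbb{Q}$ in $(G_+,H\mathbb{Q})\mathrm{-Bimod}$ and let $\mathcal{E}_N\hookrightarrow Ho(G_+\mathrm{-Mod})$ denote the localising triangulated subcategory on which $\kappa_{-,N}$ is an isomorphism. Since the free-forgetful adjunction preserves $\mathrm{Sp}^\Sigma$-tensors and commutes (on underlying objects) with the free functor $H\mathbb{Q}\wedge(-)$, there are natural isomorphisms of symmetric spectra
\[
\underline{G_+\mathrm{-Mod}}(G_+\wedge \mathbb{S}, N)\;\cong\; \underline{\mathrm{Sp}}^\Sigma(\mathbb{S}, UN)
\]
and analogously
\[
\underline{(G_+,H\mathbb{Q})\mathrm{-Bimod}}(H\mathbb{Q}\wedge G_+\wedge \mathbb{S}, N_\mathbb{Q})\;\cong\;\underline{H\mathbb{Q}\mathrm{-Mod}}(H\mathbb{Q}\wedge \mathbb{S}, U N_\mathbb{Q}),
\]
where $U$ denotes the forgetful functor down to $\mathrm{Sp}^\Sigma$. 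Because fibrations and weak equivalences in $(G_+,H\mathbb{Q})\mathrm{-Bimod}$ and in $H\mathbb{Q}\mathrm{-Mod}$ are both created by the forgetful functor to $\mathrm{Sp}^\Sigma$, the map $U(H\mathbb{Q}\wedge N)\to U N_\mathbb{Q}$ is a fibrant replacement of $H\mathbb{Q}\wedge UN$ in $H\mathbb{Q}\mathrm{-Mod}$.

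Under these identifications, $\kappa_{G_+\wedge\mathbb{S}, N}$ is the map
\[
\pi^{\mathrm{st}}_\ast(UN)\otimes_{\mathbb{Z}}\mathbb{Q}\longrightarrow \{H\mathbb{Q}\wedge\mathbb{S}, UN_\mathbb{Q}\}^\ast_{H\mathbb{Q}\mathrm{-Mod}},
\]
which is precisely the smashing-rationalisation map for $\mathrm{Sp}^\Sigma$ applied to $UN$, hence an isomorphism by Proposition \ref{prop:RatisSmashing}. The same argument applied to shifts and suspensions shows $G_+\wedge\mathbb{S}[n]\in \mathcal{E}_N$ for all $n$. Since $\{G_+\wedge\mathbb{S}\}$ is a set of compact generators of the stable homotopy category $Ho(G_+\mathrm{-Mod})$, the localising triangulated subcategory $\mathcal{E}_N$ must equal the whole homotopy category. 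As $N$ was an arbitrary fibrant object, this proves $G_+\mathrm{-Mod}$ is smashingly rational.

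The only subtlety I foresee is the compatibility of the fibrant replacement across the different model categories—specifically, verifying that a fibrant replacement of $H\mathbb{Q}\wedge N$ in $(G_+,H\mathbb{Q})\mathrm{-Bimod}$ simultaneously serves as the fibrant replacement used in the definition of $\kappa$ for the underlying symmetric spectrum $UN$—but this is immediate from the fact that in both Construction \ref{con:Rationalisation} applied to $G_+\mathrm{-Mod}$ and to $\mathrm{Sp}^\Sigma$, fibrations and weak equivalences in the $T_\mathbb{Q}$-algebra categories are created by forgetting down to the underlying $\mathrm{Sp}^\Sigma$-model category. Everything else is a direct transcription of the generators-and-triangulated-subcategory argument of Lemma \ref{lem:XSpecisSmashinglyRat}.
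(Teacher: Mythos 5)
Your proof is correct and follows essentially the same route as the paper: both fix a fibrant $N$, form the localising triangulated subcategory on which $\kappa_{-,N}$ is an isomorphism, check the compact generator $\Sigma^\infty G_+ \cong G_+\wedge\mathbb{S}$ by reducing via the free-forgetful adjunction to Proposition \ref{prop:RatisSmashing}, and conclude by compact generation. The paper's version is terser—it simply notes that $\kappa_{\Sigma^\infty G_+, N}$ takes the form $\pi^{\mathrm{st}}_\ast N\otimes_{\mathbb{Z}}\mathbb{Q}\to\pi^{\mathrm{st}}_\ast N_{\mathbb{Q}}$—but your more explicit handling of the adjunction identifications and the fibrant-replacement compatibility is the same argument.
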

\begin{proof}
Fix a fibrant object $M\in G_+\mathrm{-Mod}$ and  a fibrant replacement $H\mathbb{Q}\wedge M \to M_\mathbb{Q}$ of $(G_+,H\mathbb{Q})$-bimodule spectra .
Let $\mathcal{E}_M$ be the localising triangulated subcategory of $Ho(G_+\mathrm{-Mod})$ of $G_+$-module spectra $N$ for which 
\[
\kappa_{N,M} \colon \{N, M\}^\ast_{G_+\mathrm{-Mod}}\otimes_\mathbb{Z}\mathbb{Q}\longrightarrow 
\{H\mathbb{Q}\wedge N, M_\mathbb{Q}\}_{(G_+, H\mathbb{Q})\mathrm{-Bimod}}^\ast
\]
is a natural isomorphism.
For $N = \Sigma^\infty G_+$, $\kappa_{N,M}$ takes the form $\pi^\mathrm{st}_\ast M\otimes_\mathbb{Z}\mathbb{Q}\to \pi^\mathrm{st}_\ast M_\mathbb{Q}$ hence is an isomorphism by Proposition \ref{prop:RatisSmashing}.
Since $\Sigma^\infty G_+$ is a compact generator of $Ho(G_+\mathrm{-Mod})$, it follows that $\mathcal{E}_M = Ho(G_+\mathrm{-Mod})$.
As the fibrant object $M$ was arbitrary, this shows that $G_+\mathrm{-Mod}$ is smashingly rational.
\end{proof}
\begin{corollary}
For any simplicial group $G$, the derived adjunction
\[
\begin{tikzcd}
Ho(G_+\mathrm{-Mod})
\ar[rr, shift left=1.1ex, "{\mathbf{L}(H\mathbb{Q}\wedge(-))}"]
\ar[rr, shift left=-1.1ex, "\bot", leftarrow]
&&
Ho((G_+,H\mathbb{Q})\mathrm{-Bimod})
\end{tikzcd}
\]
is the localisation of $Ho(G_+\mathrm{-Mod})$ at the class of rational stable homotopy equivalences.
\end{corollary}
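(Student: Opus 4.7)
The corollary follows almost immediately by chaining together the two preceding results. The plan is to identify the target model category $(G_+, H\mathbb{Q})\mathrm{-Bimod}$ with the category $T_\mathbb{Q}\mathrm{-Alg}(G_+\mathrm{-Mod})$ of $H\mathbb{Q}$-algebras in the $\mathrm{Sp}^\Sigma$-model category of $G_+$-module spectra, and then to invoke the general machinery of Lemma \ref{lem:SmashRat} using Lemma \ref{lem:GmodSmashRat} as input.

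First I would observe that, by definition, $(G_+,H\mathbb{Q})\mathrm{-Bimod}$ is precisely the category $T_\mathbb{Q}\mathrm{-Alg}(G_+\mathrm{-Mod})$: an object of either category is a $G_+$-module spectrum $M$ together with a compatible left $H\mathbb{Q}$-action $H\mathbb{Q}\wedge M\to M$, where compatibility here reduces (since $H\mathbb{Q}$ is central, being a commutative symmetric ring spectrum acting through the symmetry isomorphism on the opposite side to $G_+$) to an equality of $G_+$-module maps. Under this identification, the free-forgetful adjunction in the statement is exactly the adjunction of Construction \ref{con:Rationalisation} applied to the cofibrantly generated $\mathrm{Sp}^\Sigma$-model category $\mathcal{M} = G_+\mathrm{-Mod}$; in particular the model structure transferred by Kan's lemma agrees with the bimodule model structure.

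Next I would apply Lemma \ref{lem:GmodSmashRat}, which asserts that $G_+\mathrm{-Mod}$ is smashingly rational in the sense of Definition \ref{defn:SmashinglyRat}. With both hypotheses of Lemma \ref{lem:SmashRat} verified, the conclusion of that lemma gives precisely what is required: the derived free-forgetful adjunction
\[
\begin{tikzcd}
Ho(G_+\mathrm{-Mod})
\ar[rr, shift left=1.1ex, "{\mathbf{L}(H\mathbb{Q}\wedge(-))}"]
\ar[rr, shift left=-1.1ex, "\bot", leftarrow]
&&
Ho(T_\mathbb{Q}\mathrm{-Alg}(G_+\mathrm{-Mod}))
\end{tikzcd}
\]
exhibits the right-hand side as the localisation of $Ho(G_+\mathrm{-Mod})$ at the class of rational stable homotopy equivalences.

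There is no genuine obstacle here, since both the identification of model categories and the inputs to Lemma \ref{lem:SmashRat} are already in place. The only minor point worth spelling out is the bookkeeping confirming that the $\mathrm{Sp}^\Sigma$-tensoring and enriched hom on $T_\mathbb{Q}\mathrm{-Alg}(G_+\mathrm{-Mod})$ described in Construction \ref{con:Rationalisation} agree with the standard tensoring and internal hom on bimodule spectra — this is routine but must be mentioned to legitimise applying Lemma \ref{lem:SmashRat} verbatim.
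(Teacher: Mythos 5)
Your proposal is correct and follows exactly the route the paper intends: the paper \emph{defines} $(G_+,H\mathbb{Q})\mathrm{-Bimod}$ as $T_\mathbb{Q}\mathrm{-Alg}(G_+\mathrm{-Mod})$, so the corollary is precisely Lemma \ref{lem:SmashRat} applied with Lemma \ref{lem:GmodSmashRat} as input. Nothing further is needed.
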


\begin{lemma}
For a morphism of simplicial groups $\psi\colon G\to K$, the induced $\mathrm{Sp}^\Sigma$-Quillen adjunction
\[
\begin{tikzcd}
(G_+, H\mathbb{Q})\mathrm{-Bimod} 
\ar[rr, shift left=1.1ex, "\psi_!"]
\ar[rr, shift left=-1.1ex, "\psi^\ast"', "\bot", leftarrow]
&&
(K_+, H\mathbb{Q})\mathrm{-Bimod} 
\end{tikzcd}
\]
is a Quillen equivalence if and only if $\psi$ is an isomorphism on rational homology.
\end{lemma}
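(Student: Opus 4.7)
The plan is to exploit that both bimodule categories are stable and compactly generated, so that the Quillen equivalence condition reduces to evaluating the derived unit on a single compact generator. The free--forgetful adjunction
\[
\mathrm{Sp}^\Sigma\;\leftrightarrows\; (G_+,H\mathbb{Q})\mathrm{-Bimod},
\qquad E\longmapsto G_+\wedge H\mathbb{Q}\wedge E,
\]
realises $G_+\wedge H\mathbb{Q}$ as a compact generator of $Ho((G_+,H\mathbb{Q})\mathrm{-Bimod})$, and analogously for $K$. Since $G_+\wedge H\mathbb{Q}$ is cofibrant one computes $\mathbf{L}\psi_!(G_+\wedge H\mathbb{Q})\cong K_+\wedge H\mathbb{Q}$, and the component of the derived unit at this generator is the evident bimodule map $\eta\colon G_+\wedge H\mathbb{Q}\to \psi^\ast(K_+\wedge H\mathbb{Q})$ induced by $\psi_+$.

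Because weak equivalences and fibrations in both bimodule categories are created by the forgetful functor to $\mathrm{Sp}^\Sigma$, the restriction functor $\psi^\ast$ is conservative, and $\eta$ is a stable weak equivalence precisely when its underlying map $G_+\wedge H\mathbb{Q}\to K_+\wedge H\mathbb{Q}$ of symmetric spectra is. Passing to stable homotopy groups and using the natural identification $\pi_\ast^\mathrm{st}(X_+\wedge H\mathbb{Q})\cong H_\bullet(X;\mathbb{Q})$, this occurs exactly when $\psi\colon G\to K$ induces an isomorphism on rational homology. This identification is the only nontrivial calculation in the proof.

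The forward direction is immediate: if the adjunction is a Quillen equivalence, its derived unit is a weak equivalence at every cofibrant object, in particular at $G_+\wedge H\mathbb{Q}$, and the previous paragraph gives that $\psi$ is a rational homology isomorphism. For the converse, assume $\psi$ is a rational homology isomorphism, so that the derived unit is a weak equivalence at $G_+\wedge H\mathbb{Q}$. Since $\mathbf{L}\psi_!$ is exact and coproduct preserving (being left Quillen between stable model categories), the full subcategory of $Ho((G_+,H\mathbb{Q})\mathrm{-Bimod})$ on which the derived unit is an isomorphism is a localising triangulated subcategory; containing the compact generator $G_+\wedge H\mathbb{Q}$, it coincides with the whole homotopy category. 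Combined with conservativity of $\psi^\ast$, this upgrades to the statement that $(\psi_!\dashv \psi^\ast)$ is a Quillen equivalence. I expect no serious obstacle beyond the generator-level identification above, since the rest is the standard compact-generator triangulation argument already invoked implicitly in the \emph{smashingly rational} machinery established earlier in the section.
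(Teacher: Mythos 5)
Your proposal is correct and follows essentially the same route as the paper: identify the derived unit at the compact generator $G_+\wedge H\mathbb{Q}$ with $H_\bullet(\psi)$, run the localising-triangulated-subcategory argument for the converse, and use that $\psi^\ast$ creates (hence reflects) weak equivalences and fibrations to conclude the Quillen equivalence. The only cosmetic difference is that the paper writes the derived unit explicitly through a fibrant replacement $(K_+\wedge H\mathbb{Q})^f$, which your argument implicitly absorbs when passing to true stable homotopy groups.
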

\begin{proof}
The  derived unit at $G_+\wedge H\mathbb{Q}$ is
$
G_+\wedge H\mathbb{Q}\to  K_+ \wedge \mathbb{Q} \to \big(K_+ \wedge H\mathbb{Q}\big)^f$,
which becomes $H_\bullet(\psi)\colon H_\bullet(G;\mathbb{Q})\to H_\bullet(K;\mathbb{Q})$ under passing to stable homotopy groups.
The \lq\lq only if'' part of the claim follows.

Conversely, if $\psi$ is a rational homology isomorphism then the derived unit is a natural isomorphism; this follows from an abstract argument using that $Ho\big((G_+,H\mathbb{Q})\mathrm{-Bimod}\big)$ is triangulated with compact generator $G_+\wedge H\mathbb{Q}$.
The right adjoint $\psi^\ast$ preserves and reflects weak equivalences and fibrations, hence $(\psi_!\dashv \psi^\ast)$ is a Quillen equivalence by \cite[Lemma 4.1.7]{hovey_symmetric_2000}.
\end{proof}

\begin{lemma}
\label{lem:RatParamSpec}
For any reduced simplicial set $X$, the $\mathrm{Sp}^\Sigma$-Quillen adjunction
\[
\begin{tikzcd}
H\mathbb{Q}\mathrm{-Mod}_X
\ar[rr, shift left=1.1ex, "\mathfrak{f}_X"]
\ar[rr, shift left=-1.1ex, "\mathfrak{b}_X"', "\bot", leftarrow]
&&
(\mathbb{G}X_+, H\mathbb{Q})\mathrm{-Bimod} 
\end{tikzcd}
\]
is a Quillen equivalence.
For any map $f\colon X\to Y$ of reduced simplicial sets, there is a natural isomorphism of left $\mathrm{Sp}^\Sigma$-Quillen functors $\mathbb{G}f_!\circ \mathfrak{f}_X\cong \mathfrak{f}_Y\circ f_!$.
\end{lemma}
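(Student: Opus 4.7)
My plan is to apply pseudofunctoriality of the $T_\mathbb{Q}$-algebra construction (Construction \ref{con:Rationalisation}) to the underlying $\mathrm{Sp}^\Sigma$-Quillen equivalence $(\mathfrak{f}_X\dashv\mathfrak{b}_X)\colon \mathrm{Sp}^\Sigma_X \to \mathbb{G}X_+\mathrm{-Mod}$ provided by Theorem \ref{thm:ParamSpecandLoop}, and then verify that the resulting rationalised adjunction remains a Quillen equivalence. Since $H\mathbb{Q}\mathrm{-Mod}_X = T_\mathbb{Q}\mathrm{-Alg}(\mathrm{Sp}^\Sigma_X)$ and $(\mathbb{G}X_+,H\mathbb{Q})\mathrm{-Bimod} = T_\mathbb{Q}\mathrm{-Alg}(\mathbb{G}X_+\mathrm{-Mod})$, pseudofunctoriality immediately produces a $\mathrm{Sp}^\Sigma$-Quillen adjunction whose underlying left and right adjoints agree with $\mathfrak{f}_X$ and $\mathfrak{b}_X$.

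To prove this adjunction is a Quillen equivalence, I would exploit the fact that cofibrant and fibrant replacements in the $T_\mathbb{Q}$-algebra categories are compatible with those in the underlying categories. On the fibrant side this is automatic: fibrations and weak equivalences in both $H\mathbb{Q}\mathrm{-Mod}_X$ and $(\mathbb{G}X_+,H\mathbb{Q})\mathrm{-Bimod}$ are created by the forgetful functors, so any fibrant replacement computed in the rationalised setting is automatically a fibrant replacement in the underlying setting. On the cofibrant side, I would use that $H\mathbb{Q}$ is a cofibrant symmetric ring spectrum so that tensoring with $H\mathbb{Q}$ preserves cofibrations of any $\mathrm{Sp}^\Sigma$-model category; as the generating cofibrations of $T_\mathbb{Q}\mathrm{-Alg}(\mathcal{M})$ obtained via Kan transfer have the form $H\mathbb{Q}\wedge i$ for $i$ a generating cofibration of $\mathcal{M}$, every cofibrant $T_\mathbb{Q}$-algebra has cofibrant underlying object. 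Given these two compatibilities, for cofibrant $P\in H\mathbb{Q}\mathrm{-Mod}_X$ together with a fibrant replacement $\mathfrak{f}_X P \to (\mathfrak{f}_X P)_\mathbb{Q}$ in $(\mathbb{G}X_+,H\mathbb{Q})\mathrm{-Bimod}$, the derived unit $P \to \mathfrak{b}_X(\mathfrak{f}_X P)_\mathbb{Q}$ coincides with a derived unit of the underlying Quillen equivalence, and so is a weak equivalence; a symmetric argument handles fibrant $N\in(\mathbb{G}X_+,H\mathbb{Q})\mathrm{-Bimod}$, showing the derived counit is also a weak equivalence.

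The naturality statement should follow directly from pseudofunctoriality. Theorem \ref{thm:ParamSpecandLoop} provides a natural isomorphism $\mathbb{G}f_! \circ \mathfrak{f}_X \cong \mathfrak{f}_Y \circ f_!$ of $\mathrm{Sp}^\Sigma$-enriched left Quillen functors; since all four functors commute with smashing by $H\mathbb{Q}$ by virtue of being $\mathrm{Sp}^\Sigma$-enriched, this natural isomorphism lifts canonically to the induced functors on $T_\mathbb{Q}$-algebra categories. The main technical subtlety in the whole argument is the verification that cofibrant replacements in the transferred model structure have cofibrant underlying object, which is resolved by the cofibrancy of $H\mathbb{Q}$; once that observation is in place, the Quillen equivalence ultimately reduces to the underlying statement of Theorem \ref{thm:ParamSpecandLoop}.
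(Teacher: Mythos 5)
Your proposal is correct and follows essentially the same route as the paper: the paper's proof is the one-line observation that the forgetful functors $H\mathbb{Q}\mathrm{-Mod}_X\to \mathrm{Sp}^\Sigma_X$ and $(\mathbb{G}X_+,H\mathbb{Q})\mathrm{-Bimod}\to \mathbb{G}X_+\mathrm{-Mod}$ preserve fibrations, cofibrations, and weak equivalences, so the result follows from Theorem \ref{thm:ParamSpecandLoop}. You have simply unpacked why that observation suffices (compatibility of fibrant and cofibrant replacements, cofibrancy of $H\mathbb{Q}$, and pseudofunctoriality of $T_\mathbb{Q}\mathrm{-Alg}$ from Construction \ref{con:Rationalisation}), all of which is already established in Section \ref{S:ratsmash}.
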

\begin{proof}
Since the forgetful functors $H\mathbb{Q}\mathrm{-Mod}_X\to \mathrm{Sp}^\Sigma_X$ and $(\mathbb{G}X_+,H\mathbb{Q})\mathrm{-Bimod}\to \mathbb{G}X_+\mathrm{-Mod}$ preserve fibrations, cofibrations and weak equivalences the result follows from  Theorem \ref{thm:ParamSpecandLoop}.
\end{proof}

We conclude this section with a general result on smashing rationalisations.
Let $\mathcal{M}$ be a $\mathrm{Sp}^\Sigma$-model category with a set $\mathcal{G}$ of compact generators, which may be taken to be cofibrant-fibrant without loss of generality.
Let $\mathcal{E}(\mathcal{G})$ denote the spectral endomorphism category of $\mathcal{G}$, with objects $G\in\mathcal{G}$ and mapping spectra $\mathcal{E}(G,G) =\underline{\mathcal{M}}(G,G')$.
Schwede and Shipley prove a $\mathrm{Sp}^\Sigma$-Quillen equivalence
\[
\begin{tikzcd}
\mathcal{M}
\ar[rr,leftarrow, shift left=1.1ex, ""]
\ar[rr, shift left=-1.1ex, "\bot", ""]
&&
\mathcal{E}(\mathcal{G})\mathrm{-Mod}
\end{tikzcd}
\]
between $\mathcal{M}$ and the category of $\mathcal{E}(\mathcal{G})$-module spectra \cite[Theorem 3.9.3]{schwede_stable_2003}.
The proof of Lemma \ref{lem:GmodSmashRat} is readily adapted to show that $\mathcal{E}(\mathcal{G})\mathrm{-Mod}$ is smashingly rational and since the property of being smashingly rational is clearly preserved by $\mathrm{Sp}^\Sigma$-Quillen equivalences we have essentially proven the following
\begin{theorem}
\label{thm:RatSmash}
Let $\mathcal{M}$ be a $\mathrm{Sp}^\Sigma$-model category with a set $\mathcal{G}$ of compact generators.
Then $\mathcal{M}$ is smashingly rational.
In particular, the derived functor $Ho(\mathcal{M})\to Ho(T_\mathbb{Q}\mathrm{-Alg}(\mathcal{M}))$ is the localisation of $Ho(\mathcal{M})$ at the class of rational stable homotopy equivalences.
\end{theorem}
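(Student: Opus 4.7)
The plan is to reduce to the case of modules over a spectral category via Schwede--Shipley, where the argument of Lemma \ref{lem:GmodSmashRat} applies essentially verbatim. The statement decomposes into three steps: (a) check that the property of being smashingly rational is invariant under $\mathrm{Sp}^\Sigma$-Quillen equivalence, (b) verify $\mathcal{E}(\mathcal{G})\mathrm{-Mod}$ is smashingly rational, and (c) apply Lemma \ref{lem:SmashRat} to extract the \emph{in particular} clause.

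For (a), I would observe that the natural transformation $\kappa_{x,y}$ is built from three pieces of data: the $\mathrm{Sp}^\Sigma$-enriched hom, the action of $H\mathbb{Q}$ given by the $T_\mathbb{Q}$-algebra structure, and a functorial fibrant replacement $H\mathbb{Q}\wedge y \to y_\mathbb{Q}$ in $T_\mathbb{Q}\mathrm{-Alg}$. All three pieces transfer across a $\mathrm{Sp}^\Sigma$-Quillen equivalence $(L\dashv R)\colon \mathcal{M}\to \mathcal{N}$: the right adjoint $R$ preserves enriched homs, commutes with $T_\mathbb{Q}$ up to canonical isomorphism (since it is strict $\mathrm{Sp}^\Sigma$-linear), and sends fibrant replacements in $T_\mathbb{Q}\mathrm{-Alg}(\mathcal{N})$ to ones in $T_\mathbb{Q}\mathrm{-Alg}(\mathcal{M})$. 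Putting this together, $\kappa_{x, Ry}^{\mathcal{M}}$ is identified with the image of $\kappa_{Lx,y}^{\mathcal{N}}$ under the equivalence of stable homotopy categories, so bijectivity transfers.

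For (b), I mimic the proof of Lemma \ref{lem:GmodSmashRat} directly. Fix a fibrant $\mathcal{E}(\mathcal{G})$-module $M$ with fibrant replacement $H\mathbb{Q}\wedge M\to M_\mathbb{Q}$ in $T_\mathbb{Q}\mathrm{-Alg}(\mathcal{E}(\mathcal{G})\mathrm{-Mod})$, and let $\mathcal{E}_M \hookrightarrow Ho(\mathcal{E}(\mathcal{G})\mathrm{-Mod})$ be the localising triangulated subcategory on objects $N$ for which $\kappa_{N,M}$ is an isomorphism. For each $G\in \mathcal{G}$ the free module $F_G = \mathcal{E}(\mathcal{G})(-,G)$ satisfies
\[
\underline{\mathcal{E}(\mathcal{G})\mathrm{-Mod}}(F_G, M) \;\cong\; M(G),
\qquad
\underline{\mathcal{E}(\mathcal{G})\mathrm{-Mod}}_{T_\mathbb{Q}}(H\mathbb{Q}\wedge F_G, M_\mathbb{Q}) \;\cong\; M_\mathbb{Q}(G),
\]
so $\kappa_{F_G, M}$ reduces (up to canonical isomorphism) to the rationalisation comparison map
\[
\pi^\mathrm{st}_\ast M(G)\otimes_\mathbb{Z}\mathbb{Q} \longrightarrow \pi^\mathrm{st}_\ast M_\mathbb{Q}(G),
\]
which is an isomorphism by Proposition \ref{prop:RatisSmashing} applied to the fibrant symmetric spectrum $M(G)$. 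Hence every $F_G$ lies in $\mathcal{E}_M$. Since $\{F_G\}_{G\in\mathcal{G}}$ is a set of compact generators for $Ho(\mathcal{E}(\mathcal{G})\mathrm{-Mod})$, the localising triangulated subcategory $\mathcal{E}_M$ exhausts the whole homotopy category. As $M$ was arbitrary, $\mathcal{E}(\mathcal{G})\mathrm{-Mod}$ is smashingly rational, and combining with (a) so is $\mathcal{M}$.

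The expected main obstacle is step (a): one must be careful that the pseudofunctoriality $\mathcal{M}\mapsto T_\mathbb{Q}\mathrm{-Alg}(\mathcal{M})$ recorded in Construction \ref{con:Rationalisation} actually upgrades a $\mathrm{Sp}^\Sigma$-Quillen equivalence into one between the categories of $T_\mathbb{Q}$-algebras in a way that is compatible with the formation of $\kappa$. Everything else is structural: step (c) is immediate from Lemma \ref{lem:SmashRat}, and step (b) is a direct analogue of Lemma \ref{lem:GmodSmashRat} with $\Sigma^\infty G_+$ replaced by the family of free modules $F_G$.
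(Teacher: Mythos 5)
Your proposal is correct and follows the paper's own argument: reduce to $\mathcal{E}(\mathcal{G})\mathrm{-Mod}$ via the Schwede--Shipley equivalence, adapt the proof of Lemma \ref{lem:GmodSmashRat} by testing $\kappa$ on the free modules and invoking Proposition \ref{prop:RatisSmashing}, note that being smashingly rational is preserved by $\mathrm{Sp}^\Sigma$-Quillen equivalences, and conclude with Lemma \ref{lem:SmashRat}. The paper leaves steps (a) and (b) as brief remarks, so your fuller treatment of the transfer of $\kappa$ across the equivalence is a welcome elaboration rather than a deviation.
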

\begin{remark}
By the main result of \cite{schwede_stable_2003}, if $\mathcal{M}_0$ is a stable simplicial, cofibrantly generated, proper model category with a set of compact generators $\mathcal{G}$ then there is a chain of simplicial Quillen equivalences between $\mathcal{M}_0$ and $\mathcal{E}(\mathcal{G})\mathrm{-Mod}$.
The hypotheses on $\mathcal{M}_0$ are required in order to apply the symmetric stabilisation machine, for which the suspension spectrum functor $\Sigma^\infty\colon \mathcal{M}_0\to \mathrm{Sp}^\Sigma(\mathcal{M}_0)$ is a Quillen equivalence.
Theorem \ref{thm:RatSmash} then applies to $\mathrm{Sp}^\Sigma(\mathcal{M}_0)$.
\end{remark}

\subsection{Fibrewise smash products}
The fibrewise smash product of retractive spaces defines a symmetric monoidal structure on symmetric $X$-spectra.
However, unless $X=\ast$ this monoidal structure is not a Quillen bifunctor so does not readily descend to a monoidal structure on the homotopy category (cf~\cite[Remark 1.11]{braunack-mayer_combinatorial_2019}).

In this section, we show that both $G_+\mathrm{-Mod}$ and its rationalisation carry symmetric monoidal model structures.
In the case that $G=\mathbb{G}X$ for a reduced simplicial set $X$, we show that this enables us to present the fibrewise smash product of $X$-spectra via the Quillen equivalences of Theorem \ref{thm:ParamSpecandLoop} and Lemma \ref{lem:RatParamSpec}.
\begin{lemma}
\label{lem:GSpaceMonoidal}
For any simplicial group $G$, $G\mathrm{-sSet}$ is a symmetric monoidal model category with respect to the product of $G$-spaces, equipped with the diagonal action.
\end{lemma}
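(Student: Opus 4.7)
The plan is to establish the standard projective (or Borel) model structure on $G\text{-}\mathrm{sSet}$, in which a morphism is a weak equivalence or fibration iff its image under the forgetful functor $U\colon G\text{-}\mathrm{sSet}\to \mathrm{sSet}$ is such. This is obtained by Kan transfer along the free-forgetful adjunction $(G\times(-)\dashv U)$, so the generating (acyclic) cofibrations are $\{G\times \partial\Delta^n\hookrightarrow G\times \Delta^n\}$ and $\{G\times \Lambda^n_k\hookrightarrow G\times \Delta^n\}$ respectively. The symmetric monoidal structure is the Cartesian product $K\times L$ with $G$ acting diagonally; the unit is the terminal object $\ast$ with trivial action; and the internal hom is the simplicial mapping space $\underline{\mathrm{sSet}}(K,L)$ equipped with the conjugation $G$-action $(g\cdot f)(k)=g\cdot f(g^{-1}k)$. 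A routine end calculation shows this is closed symmetric monoidal.

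The key computational input for the pushout-product axiom is the \emph{absorption isomorphism}: for any $Y\in \mathrm{sSet}$ and any $K\in G\text{-}\mathrm{sSet}$, the map
\[
\phi\colon (G\times Y)\times K \xrightarrow{\;\;\cong\;\;} G\times(Y\times UK)\,,\qquad ((a,y),k)\longmapsto (a,(y,a^{-1}k)),
\]
is a $G$-equivariant isomorphism, where $Y\times UK$ carries trivial $G$-action and $G$ acts on the right by translation on the first factor. In other words, a product with a free $G$-space is again free. Consequently, for generating cofibrations $i\colon G\times A\hookrightarrow G\times B$ and $j\colon G\times X\hookrightarrow G\times Y$ coming from cofibrations $i_0,j_0$ in $\mathrm{sSet}$, the pushout-product $i\,\square\, j$ in $G\text{-}\mathrm{sSet}$ is isomorphic to $G\times (G\times (i_0\,\square\, j_0))$, where $i_0\,\square\, j_0$ is the pushout-product in $\mathrm{sSet}$. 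Since $\mathrm{sSet}$ is a symmetric monoidal model category, $i_0\,\square\, j_0$ is a cofibration, acyclic if either $i_0$ or $j_0$ is; applying $G\times(G\times -)$ preserves this, giving a (possibly acyclic) cofibration in $G\text{-}\mathrm{sSet}$. The pushout-product axiom on all cofibrations then follows by the standard reduction to generators (Hovey, Lemma 4.2.4).

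The unit axiom requires care because $\ast$ with trivial $G$-action is \emph{not} cofibrant for nontrivial $G$. However, any contractible free $G$-space, for instance $WG\to \ast$, provides a cofibrant replacement $Q\ast\xrightarrow{\sim}\ast$ in $G\text{-}\mathrm{sSet}$. For any cofibrant $X$, the projection $Q\ast\times X\to \ast\times X=X$ is identified, after forgetting the $G$-action, with the map $WG\times UX\to UX$; since $WG$ is contractible, this is a weak equivalence in $\mathrm{sSet}$, hence in $G\text{-}\mathrm{sSet}$ by definition of the projective model structure. This verifies the unit axiom.

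The main obstacle is essentially the unit issue just discussed, but it is resolved at once by the contractibility of $WG$; the remainder of the verification is a bookkeeping exercise with the absorption isomorphism. (The same argument template will be reused in the pointed and stable settings later in Section \ref{sec:Rect}, so it is worth stating the absorption isomorphism cleanly as a lemma before invoking it.)
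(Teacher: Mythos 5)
Your proposal is correct and follows the same strategy as the paper: transfer the projective model structure along the free--forgetful adjunction, exploit the fact that a product with a free $G$-space is free, reduce the pushout-product axiom to generating (acyclic) cofibrations, and verify the unit axiom with the cofibrant resolution $WG\to\ast$. The only difference is in how the pushout-product of generators is analysed: the paper decomposes $i_m\,\square\,i_n$ cell-by-cell using the shuffle description of $\Delta^m\times\Delta^n$ and then handles acyclicity by a separate 2-out-of-3 argument, whereas your absorption isomorphism identifies $(G\times i_0)\,\square\,(G\times j_0)$ with $G\times\big(G\times(i_0\,\square\,j_0)\big)$ outright and imports both the cofibrancy and the acyclicity statements directly from the pushout-product axiom in $\mathrm{sSet}$ — a mild but genuine streamlining that treats the two cases uniformly.
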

\begin{proof}
We expect this result is well-known but were unable to find a proof in the literature.
It suffices to verify that the product bifunctor $(M,N)\mapsto M\times N$ satisfies the unit and pushout-product axioms.

To verify the pushout-product axiom, we use that
\[
\mathcal{I}_G :=\{G\times i_n\mid i_n\colon \partial \Delta^n \hookrightarrow \Delta^n,\;n\geq 0\, \}
\qquad
\mbox{ and }
\qquad
\mathcal{J}_G :=\{G\times h^n_k\mid i_n\colon \Lambda^n_k \hookrightarrow \Delta^n,\;n\geq k\geq  0\, \}
\]
are, respectively, sets of generating cofibrations and generating acyclic cofibrations for $G\mathrm{-sSet}$.
Writing $\Delta_G = (G\times G)\big/ G$ for the quotient by the diagonal action, there is an isomorphism of $G$-spaces $(G\times K)\times (G\times L)\cong G\times \Delta_G \times K\times L$ for any $K,L \in \mathrm{sSet}$.
In particular, the product of free $G$-spaces is free.
Fixing $n,m\geq 0$, by using the standard description of the non-degenerate simplices of $\Delta^m \times \Delta^n$ in terms of $(m,n)$-shuffles we can write the pushout-product
\[
i_m\,\square\, i_n\colon (\partial\Delta^m \times \Delta^n)\coprod_{(\partial\Delta^m\times \partial\Delta^n)} (\Delta^m\times \partial\Delta^n)\longrightarrow \Delta^m \times \Delta^n
\]
as a finite sequence of pushouts along maps $i_k\colon \partial\Delta^k\to \Delta^k$.
Combining these facts, we obtain a presentation of the pushout-product 
$
(G\times i_m)\,\square\, (G\times i_n)$ as a finite sequence of pushouts along the cofibrations $G\times \Delta_G \times i_k \colon G\times \Delta_G \times \partial\Delta^k \to G\times \Delta_G \times \Delta^k$ of free $G$-spaces.
This shows that the set $\mathcal{I}_G\,\square\, \mathcal{I}_G$ consists of cofibrations, hence the pushout-product of any pair of cofibrations is a cofibration.
Forming products of $G$-spaces preserves weak equivalences in each variable, so that for any $i= (G\times i_m)\in\mathcal{I}_G$ and $j= (G\times h^n_k)\in \mathcal{J}_G$ the commuting diagram of cofibrations
\begin{equation}
\label{eqn:PPAcycDiag}
\begin{tikzcd}
(G\times \partial\Delta^m)\times (G\times \Lambda^n_k)
\ar[r, rightarrowtail]
\ar[d, rightarrowtail, "\sim"']
\ar[dr, phantom, "\lrcorner" very near end]
&
(G\times \partial\Delta^m)\times (G\times \Lambda^n_k)
\ar[d, rightarrowtail, "\sim"']
\ar[dr, rightarrowtail, bend left= 15, "\sim"]
\\
(G\times \partial\Delta^m)\times (G\times \Lambda^n_k)
\ar[r, rightarrowtail]
&
{\text{(pushout)}}
\ar[r, rightarrowtail, "i\,\square\, j"]
&
(G\times \partial\Delta^m)\times (G\times \Lambda^n_k)
\end{tikzcd}
\end{equation}
has weak equivalences as marked.
Hence $i\,\square\, j$ is a weak equivalence by the 2-out-of-3 property.
By symmetry and cofibrant generation of the model structure, this is sufficient to prove the pushout-product axiom.

Finally, to verify the unit axiom we observe that $WG\to \ast$ is a cofibrant resolution in $G\mathrm{-sSet}$ and that the map $WG\times N\to \ast \times N\cong N$ is a weak equivalence for any $G$-space $N$.
\end{proof}
\begin{corollary}
For any simplicial group $G$, $G_+\mathrm{-Mod}_u$ is a symmetric monoidal model category with respect to the smash product of pointed $G$-spaces, equipped with the diagonal action.
\end{corollary}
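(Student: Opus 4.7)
The plan is to deduce this directly from Lemma~\ref{lem:GSpaceMonoidal} by transporting the monoidal model structure on $G\mathrm{-sSet}$ to its category of pointed objects, which is exactly $G_+\mathrm{-Mod}_u$ (pointed $G$-spaces with $G$-fixed basepoint). The smash product of pointed $G$-spaces is the usual $M\wedge N = (M\times N)/(M\vee N)$, equipped with the diagonal $G$-action descending to the quotient, with internal hom $\underline{\mathrm{Map}}_\ast(M,N)$ making the structure closed symmetric monoidal.

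To verify the pushout-product axiom, I would first note that applying $(-)_+$ to the sets $\mathcal{I}_G$ and $\mathcal{J}_G$ from the proof of Lemma~\ref{lem:GSpaceMonoidal} yields sets of generating cofibrations and acyclic cofibrations for $G_+\mathrm{-Mod}_u$. For maps $i$, $j$ of $G$-spaces, the key observation is the natural isomorphism $i_+\,\square\, j_+\cong (i\,\square\, j)_+$, which follows from the identity $X_+\wedge Y_+\cong (X\times Y)_+$ together with the fact that pointed pushouts of objects of the form $Z_+$ are computed as $(\text{unpointed pushout})_+$. The pushout-product axiom for $(G_+\mathrm{-Mod}_u, \wedge)$ then reduces on generators to the corresponding statement for $(G\mathrm{-sSet}, \times)$, which was established in Lemma~\ref{lem:GSpaceMonoidal}.

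For the unit axiom, the monoidal unit is $S^0$ with trivial $G$-action, and I would take $(WG)_+\to S^0$ as a cofibrant replacement (covered by the cofibrant replacement $WG\to \ast$ used in the lemma). Given a cofibrant $N\in G_+\mathrm{-Mod}_u$, the induced map $(WG)_+\wedge N \cong (WG\times N)/(WG\times\{\ast_N\})\to N$ is a weak equivalence: this follows by comparing the defining quotient square for $(WG)_+\wedge N$ with the analogous square in which $WG$ is replaced by $\ast$, invoking left properness of $G\mathrm{-sSet}$, the fact that $WG$ is contractible (so $WG\times N\to N$ is a weak equivalence), and the observation that $\{\ast_N\}\hookrightarrow N$ is a cofibration whenever $N$ is cofibrant. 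I do not foresee substantial obstacles, as the essential combinatorial content---the pushout-product analysis on generators and the construction of $WG$---was already dealt with in Lemma~\ref{lem:GSpaceMonoidal}; what remains is routine manipulation of the pointing functor $(-)_+$ and its interaction with pushouts and quotients.
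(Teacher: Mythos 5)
Your proposal is correct and follows essentially the same route as the paper: both reduce the pushout-product axiom to Lemma~\ref{lem:GSpaceMonoidal} via the strong monoidality of $(-)_+$ and the generating sets $(\mathcal{I}_G)_+$, $(\mathcal{J}_G)_+$, and both take $WG_+\to S^0$ as the cofibrant replacement of the unit. The only (immaterial) difference is in the unit axiom, where you argue via left properness and the quotient squares while the paper simply invokes the fact that the smash product of pointed simplicial sets preserves weak equivalences in each variable.
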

\begin{proof}
The category $G_+\mathrm{-Mod}_u$ is isomorphic to the category of pointed objects in $G\mathrm{-sSet}$, and adjoining a $G$-fixed point defines a left Quillen functor $(-)_+\colon G\mathrm{-sSet}\to G_+\mathrm{-Mod}_u$.
Since $(-)_+$ strong symmetric monoidal and $(\mathcal{I}_G)_+$ and $(\mathcal{J}_G)_+$ are sets of generating cofibrations and generating acyclic cofibrations for $G_+\mathrm{-Mod}_u$, the pushout-product axiom for the smash product of pointed $G$-spaces follows from the Lemma.
We also have that $WG_+ \to S^0$ is a cofibrant replacement of the monoidal unit in $G_+\mathrm{-Mod}_u$.
The smash product of pointed simplicial sets preserves weak equivalences in both arguments, hence $WG_+\wedge N \to S^0\wedge N\cong N$ is a weak equivalence for all $N\in G_+\mathrm{-Mod}_u$, establishing the unit axiom.
\end{proof}

\begin{corollary}
\label{cor:GModSpecMonoidal}
For any simplicial group $G$, $G_+\mathrm{-Mod}$ is a symmetric monoidal model category with respect to the smash product of $G_+$-module spectra, equipped with the diagonal $G_+$-action.
\end{corollary}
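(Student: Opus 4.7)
The plan is to deduce this from the previous corollary by applying Hovey's symmetric stabilization machine to the (unstable) symmetric monoidal $\mathrm{sSet}_\ast$-model structure on $G_+\mathrm{-Mod}_u$. Using Remark \ref{rem:GModSpecvsGSpecInMod}, I identify $G_+\mathrm{-Mod}$ with $\mathrm{Sp}^\Sigma(G_+\mathrm{-Mod}_u, S^1)$, the symmetric stabilization of $G_+\mathrm{-Mod}_u$ at the stabilizing object $S^1 = \Delta^1/\partial\Delta^1$ carrying the trivial $G$-action. The previous corollary supplies the required symmetric monoidal model structure on $G_+\mathrm{-Mod}_u$, which is moreover combinatorial and left proper since fibrations, cofibrations and weak equivalences are created from the left proper combinatorial category $\mathrm{sSet}_\ast$.

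Next I would invoke Hovey's theorems (\cite[Theorems 8.11 and 9.3]{hovey_spectra_2001}): for a sufficiently nice (left proper, cellular or combinatorial) symmetric monoidal model category $\mathcal{C}$ and a cofibrant object $K$, the category $\mathrm{Sp}^\Sigma(\mathcal{C}, K)$ inherits a symmetric monoidal model structure from $\mathcal{C}$ provided the cyclic permutation $\tau\colon K^{\wedge 3}\to K^{\wedge 3}$ is homotopic to the identity in $\mathcal{C}$. The monoidal structure thereby obtained is the usual smash product of symmetric spectra constructed via left Kan extension, which under the identification of Remark \ref{rem:GModSpecvsGSpecInMod} is precisely the smash product of $G_+$-module spectra with the diagonal $G_+$-action. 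The entire argument therefore reduces to verifying the cyclic permutation hypothesis for $(S^1)^{\wedge 3}$ in $G_+\mathrm{-Mod}_u$ and checking the unit axiom (which Hovey's theorem subsumes under the cofibrancy of the stabilizing object).

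The main obstacle is the cyclic permutation condition on $S^3 = S^1 \wedge S^1 \wedge S^1$. This is classical in $\mathrm{sSet}_\ast$: the cyclic shift is pointedly homotopic to the identity via an explicit simplicial homotopy. Since we are taking $S^3$ with trivial $G$-action and the construction of this homotopy is natural in pointed simplicial sets, the resulting homotopy lives in $G_+\mathrm{-Mod}_u$ (with every map respecting the trivial $G$-action). With this hypothesis in place, Hovey's theorem yields the pushout-product axiom, the monoidal unit $\mathbb{S}$ with trivial $G_+$-action has cofibrant replacement $\Sigma^\infty_{G_+\mathrm{-Mod}_u}(WG_+)$ (where $WG_+\to S^0$ is the cofibrant replacement in $G_+\mathrm{-Mod}_u$ used in the previous corollary), and the unit axiom is verified by combining the unit axiom for $G_+\mathrm{-Mod}_u$ with the fact that the levelwise smash of a cofibrant $G_+$-module spectrum with a weak equivalence of cofibrant objects in $G_+\mathrm{-Mod}_u$ is a stable weak equivalence. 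This establishes the claim.
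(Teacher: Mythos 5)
There is a genuine gap in the appeal to Hovey's monoidal stabilisation theorem. The stabilising object here is $S^1$ equipped with the \emph{trivial} $G$-action (the $\mathrm{sSet}_\ast$-tensor $S^1\owedge M$ coincides with the internal smash $S^1_{\mathrm{triv}}\wedge M$ with diagonal action), and this object is \emph{not} cofibrant in $G_+\mathrm{-Mod}_u$: the cofibrant objects there are retracts of pointed $G$-complexes that are free away from the basepoint, which $S^1_{\mathrm{triv}}$ is not for nontrivial $G$. Hovey's Theorem 8.11 explicitly requires the stabilising object $K$ to be cofibrant in $\mathcal{C}$, so it cannot be quoted off the shelf; this is precisely the reason the situation requires a bespoke argument. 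Relatedly, the cyclic permutation condition on $K^{\wedge 3}$, which you single out as the main obstacle, is not the hypothesis governing monoidality of the stable model structure at all — it governs the comparison between symmetric and sequential stabilisations (Hovey's Section 10) — so verifying it does not close the gap.

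What actually needs to be checked by hand is the pushout-product axiom for the stable model structure. The cofibration part goes through essentially as you suggest, via the isomorphism $\Sigma^{\infty-k}(-)\wedge\Sigma^{\infty-l}(-)\cong\Sigma^{\infty-(k+l)}((-)\wedge(-))$ on the generating cofibrations $\mathcal{I}_G^\Sigma$ together with the unstable corollary. The missing step is the \emph{acyclicity} clause: the generating acyclic cofibrations of the localised (stable) model structure are not explicit, so one cannot argue generator-by-generator. The paper's proof circumvents this by observing that the forgetful functor $U\colon G_+\mathrm{-Mod}\to\mathrm{Sp}^\Sigma$ is strictly monoidal, creates stable weak equivalences and fibrations, and preserves cofibrations (because $\Sigma^\infty G_+$ is cofibrant); hence $U(i\,\square\,j)=U(i)\,\square\,U(j)$ is an acyclic cofibration of symmetric spectra whenever $j$ is acyclic, and $U$ reflects this back to $G_+\mathrm{-Mod}$. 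You would need to supply this step (or an equivalent one, e.g.\ reworking Hovey's localisation argument under the weaker hypothesis that $-\wedge K$ is left Quillen) for the proof to be complete. Your treatment of the unit axiom via $\Sigma^\infty WG_+\to\mathbb{S}$ is correct and matches the paper.
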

\begin{proof}
We first verify the pushout-product axiom for the smash product of $G_+$-module spectra.
For each $k\geq 0$ there is an adjunction $(\Sigma^{\infty-k}\dashv \widetilde{\Omega}^{\infty-k})\colon
G_+\mathrm{-Mod}_u\to 
G_+\mathrm{-Mod}$ arising from the symmetric stabilisation machine (these adjunctions are denoted $(F_n\dashv \mathrm{Ev}_n)$ in \cite{hovey_spectra_2001}).
These adjunctions jointly produce the set  $\mathcal{I}_G^\Sigma := \bigcup_{k\geq 0} \Sigma^{\infty-k} (\mathcal{I}_G)_+$ of generating cofibrations of $G_+\mathrm{-Mod}$ and for $k, l\geq 0$ there is a natural isomorphism of bifunctors 
\[
\Sigma^{\infty-k} (-)\wedge \Sigma^{\infty-l} (-) \cong \Sigma^{\infty-(k+l)}((-)\wedge (-))\colon G_+\mathrm{-Mod}_u\times G_+\mathrm{-Mod}_u\longrightarrow G_+\mathrm{-Mod}\,,
\]
which, in light of the previous result, implies that the set $\mathcal{I}_G^\Sigma\,\square\,\mathcal{I}_G^\Sigma$ consists of cofibrations.
As $G_+\mathrm{-Mod}$ is cofibrantly generated, it follows that the pushout-product of any pair of cofibrations is again a cofibration.

Identifying $G_+\mathrm{-Mod}$ with the category of $\Sigma^\infty G_+$-modules in $\mathrm{Sp}^\Sigma$ (Remark \ref{rem:GModSpecvsGSpecInMod}), there is a natural functor $U\colon G_+\mathrm{-Mod}\to \mathrm{Sp}^\Sigma$ forgetting the stable $G_+$-actions.
The forgetful functor $U$ is strictly monoidal and creates stable weak equivalences and fibrations.
Furthermore, as $\Sigma^\infty G_+$ is cofibrant the functor $U$ preserves cofibrations.
Hence for $i$ and $j$ respectively a cofibration and acyclic cofibration in $G_+\mathrm{-Mod}$, the morphism $U(i\,\square\, j) = U(i)\, \square\, U(j)$ is an acyclic cofibration of symmetric spectra.
Thus $i\,\square\, j$ is an acyclic cofibration in $G_+\mathrm{-Mod}$, which completes the proof of the pushout-product axiom.

To prove the unit axiom, we use that $\Sigma^\infty WG_+\to \Sigma^\infty S^0$  is a cofibrant resolution of the monoidal unit in $G_+\mathrm{-Mod}$, so in particular $\Sigma^\infty WG_+$ is cofibrant when considered as an object of $\mathrm{Sp}^\Sigma$.
For any cofibrant $N\in G_+\mathrm{-Mod}$, the comparison morphism $\Sigma^\infty WG_+\wedge N\to \Sigma^\infty S^0\wedge N\cong N$ is a stable weak equivalence by \cite[Corollary 5.3.10]{hovey_symmetric_2000}.
\end{proof}

\begin{remark}
\label{rem:FibSmashviaModules}
For a reduced simplicial set $X$, the symmetric monoidal  structure on $\mathbb{G}X_+\mathrm{-Mod}$ models the fibrewise smash product of $X$-parametrised spectra;
this follows from Remark \ref{rem:FibSpec} combined with the fact that the forgetful functor $\mathbb{G}X_+\mathrm{-Mod}\to \mathrm{Sp}^\Sigma$ is strictly monoidal.
The homotopically correct fibrewise smash product of symmetric $X$-spectra is thus presented by the bifunctor
$
(A,B) \mapsto \mathbf{R}\mathfrak{b}_X \big(\mathbf{L}\mathfrak{f}_X(A)\wedge \mathbf{L}\mathfrak{f}_X(B)\big)
$.
\end{remark}

Fix a simplicial group $G$ and a cofibrant commutative symmetric ring spectrum $R$.
Taking coequalisers of $R$-module structure maps gives rise to a bifunctor
\[
(M, N) \longmapsto
M\wedge_R N 
:= \mathrm{coeq}
\left(\!
\begin{tikzcd}
M\wedge R\wedge N
\ar[r, shift left=0.75ex]
\ar[r, shift left=-0.75ex]
&
M\wedge N
\end{tikzcd}\!
\right).
\]
As $R$ is commutative, $M\wedge_R N$ inherits a canonical $R$-module structure which commutes with the diagonal $G_+$-action.
This bifunctor defines a closed symmetric monoidal structure on the category of $(G_+, R)$-bimodules in symmetric spectra.
\begin{proposition}
For any simplicial group $G$ and cofibrant commutative symmetric ring spectrum $R$, the category $(G_+, R)\mathrm{-Bimod}$ of bimodule spectra is a symmetric monoidal model category with respect to the bifunctor $(M,N)\mapsto M\wedge_R N$.
\end{proposition}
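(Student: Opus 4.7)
The plan is to emulate the strategy of Corollary \ref{cor:GModSpecMonoidal}, transferring the situation through the free-forgetful adjunction
\[
R\wedge(-)\colon G_+\mathrm{-Mod} \rightleftarrows (G_+, R)\mathrm{-Bimod}: U\,.
\]
Kan's transfer theorem endows $(G_+, R)\mathrm{-Bimod}$ with a cofibrantly generated model structure whose fibrations and weak equivalences are created by $U$, with generating (acyclic) cofibrations $R\wedge i$ for $i$ a generating (acyclic) cofibration of $G_+\mathrm{-Mod}$. The monoidal unit for $\wedge_R$ is $R$ itself, with trivial $G_+$-action.

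For the pushout-product axiom, the key is the freeness isomorphism $(R\wedge M)\wedge_R(R\wedge N) \cong R\wedge (M\wedge N)$ (using commutativity of $R$), which shows that $R\wedge(-)$ is strong symmetric monoidal. Applying this throughout the pushout-product square yields a natural identification
\[
(R\wedge f)\,\square_R\,(R\wedge g) \;\cong\; R\wedge (f\,\square\, g)
\]
with the right-hand pushout-product formed in $G_+\mathrm{-Mod}$. By Corollary \ref{cor:GModSpecMonoidal}, $f\,\square\, g$ is a cofibration (acyclic if either $f$ or $g$ is). Since $R\wedge(-)$ is left Quillen it preserves cofibrations, and since $R$ is cofibrant in $\mathrm{Sp}^\Sigma$, Hovey's \cite[Corollary 5.3.10]{hovey_symmetric_2000}, combined with the fact that the forgetful $G_+\mathrm{-Mod}\to \mathrm{Sp}^\Sigma$ is strict monoidal and creates weak equivalences, implies that $R\wedge(-)$ preserves all stable equivalences. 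This delivers the pushout-product axiom on generating pairs, and hence (by the standard reduction in the cofibrantly generated setting) on all cofibrations.

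For the unit axiom, I would use $R\wedge \Sigma^\infty WG_+ \to R$ as a cofibrant replacement of the unit: it is cofibrant because $\Sigma^\infty WG_+$ is cofibrant in $G_+\mathrm{-Mod}$ and $R\wedge(-)$ is left Quillen, and it is a weak equivalence because the stable equivalence $\Sigma^\infty WG_+\to \mathbb{S}$ smashes with cofibrant $R$ to a stable equivalence (again by \cite[Corollary 5.3.10]{hovey_symmetric_2000}), and $U$ creates weak equivalences. For a cofibrant bimodule $M$, the freeness identification $(R\wedge \Sigma^\infty WG_+)\wedge_R M \cong \Sigma^\infty WG_+\wedge M$ rewrites the comparison map as $\Sigma^\infty WG_+\wedge M \to \mathbb{S}\wedge M\cong M$. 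Invoking \cite[Corollary 5.3.10]{hovey_symmetric_2000} once more, this is a stable equivalence provided $M$ is cofibrant as a symmetric spectrum.

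The main technical point is therefore showing that the forgetful $(G_+,R)\mathrm{-Bimod}\to \mathrm{Sp}^\Sigma$ preserves cofibrations; this reduces to the generating cofibrations $R\wedge i$, whose underlying symmetric spectrum maps are $R\wedge U(i)$ with $U(i)$ a cofibration of symmetric spectra (by the argument of Corollary \ref{cor:GModSpecMonoidal}) and $R$ cofibrant, so that they are cofibrations of symmetric spectra by the pushout-product axiom in $\mathrm{Sp}^\Sigma$. Pushouts and transfinite compositions of such maps then supply all cofibrations of bimodules with cofibrant underlying symmetric spectra, completing the verification of the unit axiom.
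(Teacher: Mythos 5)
Your proposal is correct and follows essentially the same route as the paper, which simply observes that the argument of Corollary \ref{cor:GModSpecMonoidal} carries over once one knows that the forgetful functor $(G_+,R)\mathrm{-Bimod}\to\mathrm{Sp}^\Sigma$ preserves cofibrations (because $R$ is cofibrant) and that smashing with cofibrant symmetric spectra preserves stable equivalences --- the two points on which your verification of the pushout-product and unit axioms also turns. Your use of the freeness isomorphism $(R\wedge M)\wedge_R(R\wedge N)\cong R\wedge(M\wedge N)$ to reduce to generating (acyclic) cofibrations is just an explicit spelling-out of the paper's ``\emph{mutatis mutandis}''.
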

\begin{proof}
Since $R$ is cofibrant, the forgetful functor $(G_+,R)\mathrm{-Bimod}\to \mathrm{Sp}^\Sigma$ preserves cofibrations.
Smash products with cofibrant symmetric spectra preserve stable weak equivalences, so the proof of Corollary \ref{cor:GModSpecMonoidal} applies \emph{mutatis mutandis}.
\end{proof}
\begin{remark}
\label{rem:FibrewiseRatSmash}
Setting $R=H\mathbb{Q}$, the last result implies that the rationalisation of $G_+\mathrm{-Mod}$ is a symmetric monoidal model category.
For any reduced simplicial set $X$ the symmetric monoidal model structure on $(\mathbb{G}X_+,H\mathbb{Q})\mathrm{-Bimod}$ models the fibrewise smash product in the rational homotopy category of $X$-parametrised spectra (cf~Remark \ref{rem:FibSmashviaModules}).
\end{remark}

\section{Rectification of rational bimodule spectra}
\label{sec:Rect}
In this section we prove the first first key result of our rectification program, Theorem \ref{thm:StrictGrpRing}.
For any simplicial group $G$, this result provides a Quillen equivalence between the model category of symmetric $(G_+, H\mathbb{Q})$-bimodule spectra considered previously and the model category of symmetric spectra in simplicial rational vector spaces equipped with a $\mathbb{Q}[G]$-action (discussed below).

We begin by recalling the definition and basic properties of symmetric spectra in simplicial rational vector spaces.
\begin{definition}
A \emph{symmetric spectrum in $\mathrm{sVect}_\mathbb{Q}$} is a sequence of pairs $V= \{(V(n), \sigma_n)\}_{n\geq 0}$, where $V(n)$ is a simplicial rational vector space with a (left) $\Sigma_n$-action, each $\sigma_n \colon \widetilde{\mathbb{Q}}[S^1]\otimes V(n)\to V(n+1)$ is a 
$\Sigma_n$-equivariant map of simplicial rational vector spaces, and for all $n,m$ the composite
\[
\widetilde{\mathbb{Q}}[S^n]\otimes V(m)
\xrightarrow{\;\;
\widetilde{\mathbb{Q}}[S^{n-1}]\otimes \sigma_m
\;\;} 
\widetilde{\mathbb{Q}}[S^{n-1}]\otimes V(m+1)
\xrightarrow{\;\;
\widetilde{\mathbb{Q}}[S^{n-2}]\otimes \sigma_{m+1}
\;\;} 
\dotsb
\xrightarrow{\;\; \sigma_{m+n-1}
\;\;} 
V(m+n)
\]
is $(\Sigma_n \times \Sigma_m)$-equivariant.
A morphism $V\to W$ in $\mathrm{Sp}^\Sigma(\mathrm{sVect}_\mathbb{Q})$ is a sequence of equivariant morphisms $f_n\colon V_n \to W_n$ commuting with the structure maps.
The category of symmetric spectra in simplicial rational vector spaces is denoted $\mathrm{Sp}^\Sigma(\mathrm{sVect}_\mathbb{Q})$.
\end{definition}

\begin{remark}
\label{rem:SymSeqinsVectQ}
The category $\mathrm{Sp}^\Sigma(\mathrm{sVect}_\mathbb{Q})$ is obtained from $\mathrm{sVect}_\mathbb{Q}$ by using Hovey's symmetric stabilisation machine \cite{hovey_spectra_2001}.
The stabilisation is performed so as to invert the left Quillen endofunctor
\begin{equation}
\label{eqn:RatSuspend}
\Sigma_\mathbb{Q} \colon V\longmapsto \widetilde{\mathbb{Q}}[S^1]\otimes V
\end{equation}
computing $\mathrm{sSet}_\ast$-tensors with the simplicial circle (which models the suspension endofunctor on $Ho(\mathrm{sVect}_\mathbb{Q})$.

We recall some details of the symmetric stabilisation procedure that we require in the sequel.
Let $\mathbf{\Sigma}$ be the symmetric monoidal groupoid with objects $n$ for $n\geq 0$ and morphisms $\mathbf{\Sigma}(n,n) = \Sigma_n$, with no morphisms between $m$ and $n$ for $m\neq n$.
The symmetric monoidal structure is given on objects by $(n,m)\mapsto n+m$ and is extended to morphisms by means of the canonical inclusions $\Sigma_n\times \Sigma_m\hookrightarrow \Sigma_{n+m}$.

A \emph{symmetric sequence} of simplicial rational vector spaces is a functor $V\colon \mathbf{\Sigma}\to\mathrm{sVect}_{\mathbb{Q}}$; that is, a sequence $\{V(n)\}_{n\geq 0}$ of simplicial rational vector spaces such that $V(n)$ has a (left) $\Sigma_n$-action.
The \emph{Day convolution product} of symmetric sequences $W, V$ is the symmetric sequence 
\[
(W\otimes^\mathrm{Day} V )(n)
= \bigoplus_{p+q=n} (\Sigma_n)_+ \times_{(\Sigma_p\times \Sigma_q)_+} (W(p)\otimes V(q))\,.
\]
The Day convolution product furnishes the category of symmetric sequences with a symmetric monoidal structure, with respect to which the symmetric sequence
\[
H\mathbb{Q}\colon n\longmapsto \widetilde{\mathbb{Q}}[S^n] \cong \underbrace{\widetilde{\mathbb{Q}}[S^1]\otimes \dotsb \otimes \widetilde{\mathbb{Q}}[S^1]}_{\text{$n$ factors}}
\]
is a commutative monoid.
The category $\mathrm{Sp}^\Sigma(\mathrm{sVect}_\mathbb{Q})$ is isomorphic to the category of $H\mathbb{Q}$-modules in $\mathrm{Fun}(\mathbf{\Sigma},\mathrm{sVect}_\mathbb{Q})$ (with respect to the Day convolution product).
Since $H\mathbb{Q}$ is a commutative monoid, $\mathrm{Sp}^\Sigma(\mathrm{sVect}_\mathbb{Q})$ inherits a symmetric monoidal structure; for $V, W\in \mathrm{Sp}^\Sigma(\mathrm{sVect}_\mathbb{Q})$ their \emph{smash-tensor product} is 
\begin{equation}
\label{eqn:SmashTensor}
V\otimes W := W\otimes^\mathrm{Day}_{H\mathbb{Q}} V
=
\mathrm{colim}
\left(
\!
\begin{tikzcd}
W\otimes^\mathrm{Day} H\mathbb{Q}\otimes^\mathrm{Day} V
\ar[r, shift left=0.75ex]
\ar[r, shift left=-0.75ex]
&
W\otimes^\mathrm{Day} V
\end{tikzcd}\!
\right)\,.
\end{equation}
The monoidal unit for the smash-tensor product is clearly $H\mathbb{Q}$, so that $\mathrm{Sp}^\Sigma(\mathrm{sVect}_\mathbb{Q})$ is canonically identified with the category of $H\mathbb{Q}$-modules in $\mathrm{Sp}^\Sigma(\mathrm{sVect}_\mathbb{Q})$.
\end{remark}

By general properties of the symmetric stabilisation machine, the adjunction \eqref{eqn:psSetQVect} prolongs to an adjunction of symmetric spectrum objects
\begin{equation}
\label{eqn:SpQVectSpec}
\begin{tikzcd}
\mathrm{Sp}^\Sigma
\ar[rr, shift left=1.1ex, "\widetilde{\mathbb{Q}}"]
\ar[rr, leftarrow, shift right=1.1ex, "U"', "\bot"]
&&
\mathrm{Sp}^\Sigma(\mathrm{sVect}_\mathbb{Q})
\end{tikzcd}
\end{equation}
by levelwise application.
There are now two obvious ways of equipping $\mathrm{Sp}^\Sigma(\mathrm{sVect}_\mathbb{Q})$ with a $\mathrm{Sp}^\Sigma$-model structure: directly via the symmetric stabilisation machine or alternatively by applying Kan's transfer theorem to the adjunction \eqref{eqn:SpQVectSpec}.
A minor adaptation of \cite[Proposition 4.1]{shipley_HZ_2007} shows that the stabilised and transferred model structures are identical.
An immediate consequence of this is the following
\begin{lemma}
\label{lem:ForgetCreate}
The forgetful functor $U\colon \mathrm{Sp}^\Sigma(\mathrm{sVect}_\mathbb{Q})\to \mathrm{Sp}^\Sigma$ creates fibrations and weak equivalences.
\end{lemma}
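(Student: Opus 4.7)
The plan is to exploit the immediately preceding observation that the stabilised and transferred model structures on $\mathrm{Sp}^\Sigma(\mathrm{sVect}_\mathbb{Q})$ coincide. Recall that the transferred model structure along the adjunction
\[
\begin{tikzcd}
\mathrm{Sp}^\Sigma
\ar[rr, shift left=1.1ex, "\widetilde{\mathbb{Q}}"]
\ar[rr, leftarrow, shift right=1.1ex, "U"', "\bot"]
&&
\mathrm{Sp}^\Sigma(\mathrm{sVect}_\mathbb{Q})
\end{tikzcd}
\]
is, by definition, the one in which a morphism $f$ is a fibration or weak equivalence exactly when $U(f)$ is; existence of this model structure is exactly what Kan's transfer theorem ensures in this setting, and this is the content of the adaptation of \cite[Proposition 4.1]{shipley_HZ_2007} invoked above.

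With that in hand, the proof is a one-liner: since the $\mathrm{Sp}^\Sigma$-model structure on $\mathrm{Sp}^\Sigma(\mathrm{sVect}_\mathbb{Q})$ coming from the symmetric stabilisation machine agrees with the transferred model structure, its fibrations and weak equivalences are precisely those morphisms whose images under $U$ are fibrations and weak equivalences in $\mathrm{Sp}^\Sigma$. This is the statement that $U$ creates both classes of maps.

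There is no real obstacle here; the substantive work is packaged into the cited coincidence of stabilised and transferred model structures, and the lemma is simply the defining property of transferred model structures read off that identification. One could also note that $U$ visibly preserves limits (it is a right adjoint, and is moreover computed levelwise) so that once one knows $U$ detects fibrations and weak equivalences, the creation statement is automatic.
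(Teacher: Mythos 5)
Your proof is correct and follows exactly the paper's reasoning: the lemma is stated there as an immediate consequence of the identification of the stabilised model structure with the one transferred along $(\widetilde{\mathbb{Q}}\dashv U)$, whose fibrations and weak equivalences are by definition created by $U$. Nothing further is needed.
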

\begin{corollary}
\label{cor:AcycFibinSpSvect}
Acyclic fibrations in $\mathrm{Sp}^\Sigma(\mathrm{sVect}_\mathbb{Q})$ are levelwise weak equivalences.
\end{corollary}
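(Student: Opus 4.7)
The plan is to deduce the corollary essentially immediately from Lemma \ref{lem:ForgetCreate} together with standard facts about left Bousfield localizations. I would first apply Lemma \ref{lem:ForgetCreate}: if $f\colon V\to W$ is an acyclic fibration in $\mathrm{Sp}^\Sigma(\mathrm{sVect}_\mathbb{Q})$, then $U(f)$ is an acyclic fibration in $\mathrm{Sp}^\Sigma$. So it suffices to show that acyclic fibrations in the stable model structure on $\mathrm{Sp}^\Sigma$ are levelwise weak equivalences (of pointed simplicial sets), because $U$ is defined levelwise and weak equivalences in $\mathrm{sVect}_\mathbb{Q}$ are created by the forgetful functor to $\mathrm{sSet}_\ast$.

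The key observation is then that the stable model structure on $\mathrm{Sp}^\Sigma$ is a left Bousfield localization of the projective (level) model structure, as recalled in Section \ref{ss:Comb}. A left Bousfield localization preserves the class of cofibrations, and the acyclic fibrations in any model structure are characterized by the right lifting property against all cofibrations; hence the acyclic fibrations coincide in the two model structures. Since the acyclic fibrations of the projective model structure on $\mathrm{Sp}^\Sigma$ are by construction the levelwise acyclic Kan fibrations, they are in particular levelwise weak equivalences.

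Stringing these two steps together yields the conclusion: $U(f)$ is a levelwise weak equivalence of pointed simplicial sets, and therefore $f$ itself is a levelwise weak equivalence in $\mathrm{sVect}_\mathbb{Q}$. I do not anticipate any real obstacle here; the only point worth spelling out carefully is the identification of acyclic fibrations between the stable and projective model structures, which is a general feature of left Bousfield localization rather than anything special to symmetric spectra.
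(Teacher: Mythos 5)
Your proof is correct and takes essentially the same route as the paper: the paper likewise reduces via Lemma \ref{lem:ForgetCreate} to the statement that acyclic fibrations in the stable model structure on $\mathrm{Sp}^\Sigma$ are levelwise weak equivalences, which it simply cites from Hovey--Shipley--Smith. Your left-Bousfield-localization argument merely supplies a proof of that cited fact, so there is no substantive difference.
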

\begin{proof}
Acyclic fibrations in $\mathrm{Sp}^\Sigma$ are levelwise weak equivalences \cite[Lemma 3.4.15]{hovey_symmetric_2000}.
\end{proof}

\begin{lemma}
\label{lem:RectMonoidal}
$\mathrm{Sp}^\Sigma(\mathrm{sVect}_\mathbb{Q})$ is a symmetric monoidal model category. The functor $\widetilde{\mathbb{Q}}\colon \mathrm{Sp}^\Sigma\to \mathrm{Sp}^\Sigma(\mathrm{sVect}_\mathbb{Q})$ is strongly monoidal left Quillen.
\end{lemma}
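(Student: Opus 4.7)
The plan is to invoke Hovey's general criterion for when the symmetric stabilisation of a symmetric monoidal model category is itself symmetric monoidal, and then verify the hypotheses directly in the case of $(\mathrm{sVect}_\mathbb{Q}, \otimes)$. Recall from \cite{hovey_spectra_2001} that if $\mathcal{C}$ is a cellular, left proper, symmetric monoidal model category and $K\in \mathcal{C}$ is a cofibrant object for which the cyclic permutation of $K\otimes K\otimes K$ is stably homotopic to the identity in $\mathrm{Sp}^\Sigma(\mathcal{C}, K)$, then the stable model structure on $\mathrm{Sp}^\Sigma(\mathcal{C}, K)$ is a symmetric monoidal model category with respect to the smash-tensor product of \eqref{eqn:SmashTensor}. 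The plan is to apply this with $\mathcal{C} = \mathrm{sVect}_\mathbb{Q}$ and $K = \widetilde{\mathbb{Q}}[S^1]$.

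First I would record that $(\mathrm{sVect}_\mathbb{Q}, \otimes)$ is a cellular, left proper, symmetric monoidal model category. The pushout-product and unit axioms follow because $\widetilde{\mathbb{Q}}\colon (\mathrm{sSet}_\ast, \wedge)\to (\mathrm{sVect}_\mathbb{Q}, \otimes)$ is strongly monoidal left Quillen and sends a set of generating (acyclic) cofibrations of $\mathrm{sSet}_\ast$ to one for $\mathrm{sVect}_\mathbb{Q}$, so that pushout-products of generators in $\mathrm{sVect}_\mathbb{Q}$ are images of pushout-products in $\mathrm{sSet}_\ast$. Left properness is immediate since every object of $\mathrm{sVect}_\mathbb{Q}$ is fibrant (underlying a simplicial abelian group) and cofibrations are monomorphisms; cellularity is standard for such a combinatorial model category. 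Cofibrancy of $\widetilde{\mathbb{Q}}[S^1]$ is automatic since $\widetilde{\mathbb{Q}}$ is left Quillen and $S^1$ is cofibrant in $\mathrm{sSet}_\ast$.

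Next I would verify the cyclic permutation condition. The classical fact that the cyclic permutation of $(S^1)^{\wedge 3}$ is stably homotopic to the identity in $\mathrm{Sp}^\Sigma$ (which is precisely what makes $\mathrm{Sp}^\Sigma$ itself symmetric monoidal in the first place) transports through the levelwise application of $\widetilde{\mathbb{Q}}$: the prolonged functor $\widetilde{\mathbb{Q}}\colon \mathrm{Sp}^\Sigma \to \mathrm{Sp}^\Sigma(\mathrm{sVect}_\mathbb{Q})$ is strongly monoidal at the underlying categorical level and left Quillen, so preserves the stable homotopy witnessing this condition. Hovey's theorem then furnishes the symmetric monoidal model structure on $\mathrm{Sp}^\Sigma(\mathrm{sVect}_\mathbb{Q})$.

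For the second claim, strong monoidality of $\widetilde{\mathbb{Q}}\colon \mathrm{Sp}^\Sigma \to \mathrm{Sp}^\Sigma(\mathrm{sVect}_\mathbb{Q})$ is inherited from strong monoidality at the unstable level: under the presentation of both categories as module categories over commutative monoids in symmetric sequences (Remark \ref{rem:SymSeqinsVectQ}), the levelwise application of $\widetilde{\mathbb{Q}}$ to the Day convolution products is strongly monoidal and sends $\mathbb{S}$ to $H\mathbb{Q}$, inducing a strongly monoidal functor on modules. Left Quillenness is immediate from Lemma \ref{lem:ForgetCreate}, since the right adjoint $U$ creates fibrations and weak equivalences. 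The main obstacle in this plan is the cyclic permutation verification; if one wishes to avoid invoking Hovey's theorem as a black box, one may instead adapt directly the argument of \cite[Proposition 4.1]{shipley_HZ_2007}, checking the pushout-product axiom on generators of the stable model structure by comparing to the analogous generators in $\mathrm{Sp}^\Sigma$ via the strongly monoidal prolongation of $\widetilde{\mathbb{Q}}$, and handling the stable acyclic generators using Lemma \ref{lem:ForgetCreate} together with the monoid axiom for $\mathrm{Sp}^\Sigma$.
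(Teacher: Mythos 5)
Your proposal is correct and follows essentially the same route as the paper, which simply appeals to Hovey's result that the symmetric stabilisation of a (left proper, cellular) symmetric monoidal model category at a cofibrant object is again a symmetric monoidal model category, and then observes that the stabilisation of the strongly monoidal left Quillen functor $\widetilde{\mathbb{Q}}$ is strongly monoidal left Quillen. One small remark: the cyclic-permutation condition on $\widetilde{\mathbb{Q}}[S^1]^{\otimes 3}$ that you go to some trouble to verify is a hypothesis of Hovey's \emph{comparison} theorem between symmetric and sequential spectra, not of the theorem establishing monoidality of the stable model structure on $\mathrm{Sp}^\Sigma(\mathcal{C},K)$, so that step is harmless but not needed for this lemma.
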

\begin{proof}
The levelwise tensor product of simplicial rational vector spaces makes $\mathrm{sVect}_\mathbb{Q}$ a symmetric monoidal model category. 
Upon forming symmetric spectrum objects, this gives rise to the symmetric monoidal model structure \eqref{eqn:SmashTensor} on $\mathrm{Sp}^\Sigma(\mathrm{sVect}_\mathbb{Q})$ (cf~\cite[Section 6]{hovey_spectra_2001}).
Since the rational chains functor is strongly monoidal left Quillen, so is its stabilisation.
\end{proof}

\begin{lemma}
\label{lem:SmashTensCofibObject}
If $V$ is a cofibrant object of $\mathrm{Sp}^\Sigma(\mathrm{sVect}_\mathbb{Q})$ the smash-tensor product $V\otimes (-)$ preserves weak equivalences.
\end{lemma}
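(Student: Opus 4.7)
The plan is to combine the pushout-product axiom of Lemma \ref{lem:RectMonoidal}, which together with Ken Brown's lemma already yields that $V\otimes(-)$ preserves weak equivalences between cofibrant objects, with the flatness properties available when working over the field $\mathbb{Q}$.

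First I would reduce to the case in which $V$ is a cell complex built from the generating cofibrations of $\mathrm{Sp}^\Sigma(\mathrm{sVect}_\mathbb{Q})$. The class of objects $V$ satisfying the conclusion is closed under retracts (trivially), under transfinite composition (since $V\otimes(-)$ preserves filtered colimits), and under pushouts along cofibrations (via the standard gluing lemma, using that $\mathrm{Sp}^\Sigma(\mathrm{sVect}_\mathbb{Q})$ is left proper---a property inherited from $\mathrm{sVect}_\mathbb{Q}$ through the symmetric stabilisation machine---together with the pushout-product axiom to guarantee that the relevant maps $A\otimes W \to B\otimes W$ are cofibrations). Since the generating cofibrations of $\mathrm{Sp}^\Sigma(\mathrm{sVect}_\mathbb{Q})$ are obtained from those of $\mathrm{Sp}^\Sigma$ by applying the strong monoidal left Quillen functor $\widetilde{\mathbb{Q}}$, the small object argument reduces us to base cases of the form $V = \widetilde{\mathbb{Q}}(F_n K_+)$, with $K$ a simplicial set and $F_n$ the $n$-th free symmetric spectrum functor.

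For such a $V$, I would give an explicit computation of $V\otimes W$ at each simplicial level: it is a symmetric-group coinvariant of a tensor product of $\widetilde{\mathbb{Q}}[K_+]$ with a shift of $W$. Both operations---tensoring with a simplicial rational vector space and taking coinvariants under a finite group---are exact over $\mathbb{Q}$, the latter by Maschke's theorem. Hence $V\otimes(-)$ sends levelwise weak equivalences to levelwise weak equivalences, and because the suspension functor $\Sigma_\mathbb{Q}$ of \eqref{eqn:RatSuspend} becomes a Quillen self-equivalence after stabilisation, preservation of levelwise equivalences promotes to preservation of stable weak equivalences.

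The main obstacle is the explicit identification in the base case: unwinding that the smash-tensor with a free spectrum $\widetilde{\mathbb{Q}}(F_n K_+)$, a priori defined by a coequaliser of Day-convolution products over $H\mathbb{Q}$, collapses to the level-wise shift-and-tensor formula sketched above, with careful tracking of the $\Sigma_n$-actions that appear throughout. Once this identification is established, flatness of tensor over $\mathbb{Q}$ and exactness of finite-group coinvariants in characteristic zero close the argument cleanly.
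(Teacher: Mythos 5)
Your base case is essentially right: for $V=\widetilde{\mathbb{Q}}(F_nK_+)$ the smash-tensor $V\otimes W$ is, at level $m$, an induction of $\widetilde{\mathbb{Q}}[K_+]\otimes W(m-n)$ from $\Sigma_n\times\Sigma_{m-n}$ up to $\Sigma_m$, and over $\mathbb{Q}$ this is exact, so levelwise weak equivalences are preserved. The serious gap is the final promotion from levelwise to stable weak equivalences. A general stable weak equivalence in $\mathrm{Sp}^\Sigma(\mathrm{sVect}_\mathbb{Q})$ is \emph{not} a levelwise weak equivalence (the stable structure is a left Bousfield localisation of the levelwise one), and the fact that $\Sigma_\mathbb{Q}$ becomes invertible on the homotopy category does not convert ``preserves levelwise equivalences'' into ``preserves stable equivalences''; that inference would require, e.g., showing that the right adjoint $[V,-]$ preserves injective $\Omega$-spectra, as in Hovey--Shipley--Smith, Section 5.3. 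The paper instead closes this gap with a factorisation you are missing: write a stable equivalence $f$ as $p\circ i$ with $i$ an acyclic cofibration and $p$ an acyclic fibration; then $V\otimes i$ is an acyclic cofibration by the pushout-product axiom of Lemma \ref{lem:RectMonoidal} (here cofibrancy of $V$ enters), while $p$ \emph{is} a levelwise equivalence by Corollary \ref{cor:AcycFibinSpSvect}, so the levelwise statement applies to $V\otimes p$. Without one of these two mechanisms the argument does not terminate.

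There is a second, quieter gap in your cellular induction. The gluing and transfinite-composition steps need the maps $A\otimes W\to B\otimes W$ (for $A\to B$ a generating cofibration) to be cofibrations for an \emph{arbitrary} $W$; the pushout-product axiom of the stable model structure only gives this when $W$ is cofibrant, and the non-cofibrant case is the whole point of the lemma. The repair is to run the entire induction in the injective (levelwise) model structure, where these maps are monomorphisms, every object is cofibrant, and Ken Brown's lemma applies. This is precisely what the paper's notion of $H\mathbb{Q}$-cofibration and the identity $(H\mathbb{Q}\otimes^{\mathrm{Day}}i)\,\square\,f=i\,\square^{\mathrm{Day}}f$ achieve: a cofibrant $V$ tensored with any levelwise (acyclic) cofibration yields a levelwise (acyclic) cofibration, so $V\otimes(-)$ is a left Quillen endofunctor for the injective structure and hence preserves all levelwise weak equivalences. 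Your base-case computation is a special instance of this; the bookkeeping in the inductive steps must be done at the levelwise, not the stable, level, and only then combined with the factorisation above.
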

\begin{proof}[Sketch of proof.]
The argument is modelled on \cite[Section 5.3]{hovey_symmetric_2000}.
%
The Day convolution product with $H\mathbb{Q}$ defines a functor $\mathrm{Fun}(\mathbf{\Sigma},\mathrm{sVect})\to \mathrm{Sp}^\Sigma(\mathrm{sVect}_\mathbb{Q})$ (see Remark \ref{rem:SymSeqinsVectQ}).
Letting $\mathcal{I}$ denote the class of monomorphisms in $\mathrm{Fun}(\mathbf{\Sigma};\mathrm{sVect}_\mathbb{Q})$, the class of \emph{$H\mathbb{Q}$-cofibrations}
is $\mathrm{cof}(H\mathbb{Q}\otimes^\mathrm{Day} \mathcal{I})$; that is, the class of morphisms of $\mathrm{Sp}^\Sigma(\mathrm{sVect}_\mathbb{Q})$ obtained as retracts of transfinite compositions of pushouts of coproducts of elements in $H\mathbb{Q}\otimes^\mathrm{Day} \mathcal{I}$.

We claim that cofibrations in $\mathrm{Sp}^\Sigma(\mathrm{sVect}_\mathbb{Q})$ are $H\mathbb{Q}$-cofibrations.
To see this, observe that for each $k\geq 0$ we have a composite functor
\[
\begin{tikzcd}
\mathrm{sSet}_\ast
\ar[r, "\widetilde{\mathbb{Q}}"]
&
\mathrm{sVect}_\mathbb{Q}
\ar[r, "G_k"]
&
\mathrm{Fun}(\mathbf{\Sigma},\mathrm{sVect}_{\mathbb{Q}})\,,
\end{tikzcd}
\]
where $G_k$ is left adjoint to evaluation at $k\in \mathbf{\Sigma}$ and sends the rational simplicial vector space $A$ to the symmetric sequence given in degree $k$ by $(\Sigma_k)_+\wedge A$ and in all other degrees by $0$.
If $\mathcal{I}_\mathrm{Kan}$ denotes the standard set of generating cofibrations for $\mathrm{sSet}$, 
\[
H\mathbb{Q}\otimes^\mathrm{Day} \left(\bigcup_{k\geq 0} G_k \big(\widetilde{\mathbb{Q}}[\mathcal{I}_\mathrm{Kan}]\big)\right)
\]
comprises a set of generating cofibrations for $\mathrm{Sp}^\Sigma(\mathrm{sVect}_\mathbb{Q})$ (by general properties of the symmetric stabilisation machine, using that $\widetilde{\mathbb{Q}}[\mathcal{I}_\mathrm{Kan}]$ is a set of generating cofibrations for $\mathrm{sVect}_\mathbb{Q}$).
Each of the morphisms in $\bigcup_{k\geq 0} G_k \big(\widetilde{\mathbb{Q}}[\mathcal{I}_\mathrm{Kan}]\big)$ is a monomorphism so that cofibrations in $\mathrm{Sp}^\Sigma(\mathrm{sVect}_\mathbb{Q})$ are $H\mathbb{Q}$-cofibrations.

If $i\colon A\to B$ is a morphism in $\mathrm{Fun}(\mathbf{\Sigma},\mathrm{sVect}_\mathbb{Q})$ and $f\colon V\to W$ is a morphism in $\mathrm{Sp}^\Sigma(\mathrm{sVect}_\mathbb{Q})$, then
\begin{equation}
\label{eqn:PProd1}
(H\mathbb{Q}\otimes^\mathrm{Day} i)\, \square\, f = i\,\square^\mathrm{Day}\, f
\end{equation}
as maps of symmetric sequences, where the pushout-product on the left is taken with respect to the smash-tensor product and is taken on the right with respect to the Day convolution product.
If $i$ is a monomorphism and $f$ is a levelwise cofibration, the equation \eqref{eqn:PProd1} implies that $(H\mathbb{Q}\otimes^\mathrm{Day} i)\, \square\, f$ is a levelwise cofibration in $\mathrm{Sp}^\Sigma(\mathrm{sVect}_\mathbb{Q})$, which is moreover a levelwise acyclic cofibration if $f$ is.
An argument on adjoints (using that the monoidal structures $\otimes$ and $\otimes^\mathrm{Day}$ are both closed) shows that for any $H\mathbb{Q}$-cofibration $i$ and levelwise cofibration $j$, the pushout-product $i\,\square\, j$ is a levelwise cofibration which is moreover levelwise acyclic if $f$ is.

If $W$ is $H\mathbb{Q}$-cofibrant (that is, the initial morphism $0\to W$ is a $H\mathbb{Q}$-cofibration), the previous paragraph shows that the functor $W\otimes (-)$ preserves levelwise cofibrations and levelwise acyclic cofibrations in $\mathrm{Sp}^\Sigma(\mathrm{sVect}_\mathbb{Q})$.
This implies that $W\otimes (-)$ is a Quillen endofunctor for the \emph{injective} model structure on $\mathrm{Sp}^\Sigma(\mathrm{sVect}_\mathbb{Q})$ (for which the cofibrations and weak equivalences are levelwise).
As all objects of $\mathrm{Sp}^\Sigma(\mathrm{sVect}_\mathbb{Q})$ are injectively cofibrant, Ken Brown's lemma implies that $W\otimes (-)$ preserves \emph{all} levelwise weak equivalences for any $H\mathbb{Q}$-cofibrant $W$.

Suppose that $V$ is a cofibrant object of $\mathrm{Sp}^\Sigma(\mathrm{sVect}_\mathbb{Q})$.
Any weak equivalence $f\colon A\to B$ in $\mathrm{Sp}^\Sigma(\mathrm{sVect}_\mathbb{Q})$ factors as an acyclic cofibration $i$ followed by an acyclic fibration $p$.
Since $V$ is cofibrant, $V\otimes i$ is an acyclic cofibration by Lemma \ref{lem:RectMonoidal}.
Any cofibrant object is $H\mathbb{Q}$-cofibrant and $p$ is a levelwise equivalence by Corollary \ref{cor:AcycFibinSpSvect}, hence $V\otimes p$ is a levelwise weak equivalence by the above argument.
As levelwise weak equivalences are weak equivalences (for the stable model structure) the  composite $V\otimes f = (V\otimes p)\circ (V\otimes i)$ is a weak equivalence.
\end{proof}

For a symmetric spectrum object $V\in \mathrm{Sp}^\Sigma(\mathrm{sVect}_\mathbb{Q})$, the composite
\[
\begin{tikzcd}
H\mathbb{Q}\wedge V
\ar[r, "\eta_{H\mathbb{Q}\wedge V}"]
&
\widetilde{\mathbb{Q}}[H\mathbb{Q}\wedge V]
\cong
\widetilde{\mathbb{Q}}[H\mathbb{Q}]\otimes \widetilde{\mathbb{Q}}[V]
\ar[r, "\epsilon_{H\mathbb{Q}}\otimes \epsilon_V"]
&
H\mathbb{Q}\otimes V
\cong 
V
\end{tikzcd}
\]
furnishes the underlying symmetric spectrum with the structure of a $H\mathbb{Q}$-module.
The forgetful functor $\mathrm{Sp}^\Sigma(\mathrm{sVect}_\mathbb{Q})\to \mathrm{Sp}^\Sigma$ thus factors as
\[
\begin{tikzcd}
\mathrm{Sp}^\Sigma(\mathrm{sVect}_\mathbb{Q})
\ar[r, "\mathfrak{M}"]
&
H\mathbb{Q}\mathrm{-Mod}
\ar[r]
&
\mathrm{Sp}^\Sigma\,.
\end{tikzcd}
\]
The $(\widetilde{\mathbb{Q}}\dashv U)$-counit gives rise to a morphism $\zeta\colon \widetilde{\mathbb{Q}}[H\mathbb{Q}]\to H\mathbb{Q}$ of commutative monoids in $\mathrm{Sp}^\Sigma(\mathrm{sVect}_\mathbb{Q})$, and the rectification functor
\[
\mathfrak{R}\colon M \longmapsto \zeta_! (\widetilde{\mathbb{Q}}[M]) = H\mathbb{Q}\bigotimes_{\widetilde{\mathbb{Q}}[H\mathbb{Q}]} \widetilde{\mathbb{Q}}[M]
\]
is left adjoint to $\mathfrak{M}$.
The rectification functor $\mathfrak{R}$ is the composite
\[
\begin{tikzcd}
H\mathbb{Q}\mathrm{-Mod}
\ar[r, "\widetilde{\mathbb{Q}}"]
&
{\widetilde{\mathbb{Q}}[H\mathbb{Q}]\mathrm{-Mod}}
\ar[r, "\zeta_!"]
&
\mathrm{Sp}^\Sigma(\mathrm{sVect}_\mathbb{Q})\,,
\end{tikzcd}
\]
where $\zeta_!$ is extension of scalars along the morphism of commutative monoids $\zeta \colon \widetilde{\mathbb{Q}}[H\mathbb{Q}]\to H\mathbb{Q}$ and we have used that $\mathrm{Sp}^\Sigma(\mathrm{sVect}_\mathbb{Q})$ is isomorphic to the category of $H\mathbb{Q}$-modules in $\mathrm{Sp}^\Sigma(\mathrm{sVect}_\mathbb{Q})$ (cf~Remark \ref{rem:SymSeqinsVectQ}).
Since both $\widetilde{\mathbb{Q}}$ and $\zeta_!$ are strongly symmetric monoidal, so is $\mathfrak{R}$.
A useful consequence of this is that $\mathfrak{R}$ preserves $\mathrm{Sp}^\Sigma$-tensors.
\begin{lemma}
\label{lem:Rect1}
The rectification adjunction
\[
\begin{tikzcd}
H\mathbb{Q}\mathrm{-Mod}
\ar[rr, shift left=1.1ex, "\mathfrak{R}" ]
\ar[rr, shift left=-1.1ex, "\bot", "\mathfrak{M}"', leftarrow]
&&
\mathrm{Sp}^\Sigma(\mathrm{sVect}_\mathbb{Q})
\end{tikzcd}
\]
is a strongly symmetric monoidal $\mathrm{Sp}^\Sigma$-Quillen equivalence.
\end{lemma}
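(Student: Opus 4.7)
The approach is to verify in turn that the adjunction is a $\mathrm{Sp}^\Sigma$-Quillen adjunction, is strongly symmetric monoidal, and is a Quillen equivalence; only the last step carries substantive content.

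For the Quillen adjunction, I would note that $\mathfrak{M}$ factors as the composite of forgetful functors $\mathrm{Sp}^\Sigma(\mathrm{sVect}_\mathbb{Q})\to H\mathbb{Q}\mathrm{-Mod}\to \mathrm{Sp}^\Sigma$, each of which creates fibrations and weak equivalences (using Lemma \ref{lem:ForgetCreate} together with the construction of $H\mathbb{Q}\mathrm{-Mod}$). Consequently $\mathfrak{M}$ preserves fibrations and weak equivalences, the latter between all objects. Strong monoidality and the $\mathrm{Sp}^\Sigma$-enrichment follow from the factorisation already recorded above: $\mathfrak{R}=\zeta_!\circ \widetilde{\mathbb{Q}}$, where $\widetilde{\mathbb{Q}}:H\mathbb{Q}\mathrm{-Mod}\to \widetilde{\mathbb{Q}}[H\mathbb{Q}]\mathrm{-Mod}$ is lifted from the strongly monoidal left Quillen functor of Lemma \ref{lem:RectMonoidal}, and $\zeta_!$ is extension of scalars along the morphism $\zeta$ of commutative monoids. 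Both factors are strongly monoidal, as is their composite, and strong monoidality delivers the preservation of $\mathrm{Sp}^\Sigma$-tensors required for the enrichment.

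For the Quillen equivalence, since $\mathfrak{M}$ preserves and reflects weak equivalences between all objects, it is enough to show that the unit $\eta_M: M\to \mathfrak{M}\mathfrak{R}(M)$ is a weak equivalence for every cofibrant $M$ in $H\mathbb{Q}\mathrm{-Mod}$. Let $\mathcal{C}\subseteq \mathrm{Ho}(H\mathbb{Q}\mathrm{-Mod})$ denote the class of objects at which the derived unit is an isomorphism. The left derived functor $\mathbf{L}\mathfrak{R}$ is exact and preserves coproducts, while $\mathfrak{M}$ (which is already its own right derived functor, since it preserves weak equivalences) is exact and preserves coproducts because coproducts in both categories are computed on underlying symmetric spectra. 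Hence $\mathcal{C}$ is a localising triangulated subcategory of $\mathrm{Ho}(H\mathbb{Q}\mathrm{-Mod})$.

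To finish, I would verify that $\mathcal{C}$ contains the compact generator $H\mathbb{Q}$. The $H\mathbb{Q}$-module $H\mathbb{Q}$ is cofibrant (since $H\mathbb{Q}$ is cofibrant in $\mathrm{Sp}^\Sigma$), and direct computation gives $\mathfrak{R}(H\mathbb{Q}) = H\mathbb{Q}\otimes_{\widetilde{\mathbb{Q}}[H\mathbb{Q}]}\widetilde{\mathbb{Q}}[H\mathbb{Q}]\cong H\mathbb{Q}$ with $\eta_{H\mathbb{Q}}$ the identity, so $H\mathbb{Q}\in \mathcal{C}$. Since $H\mathbb{Q}$ generates $\mathrm{Ho}(H\mathbb{Q}\mathrm{-Mod})$ as a localising subcategory, we conclude $\mathcal{C}$ is the whole homotopy category. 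The main technical obstacle to guard against is ensuring that the point-set tensor product $H\mathbb{Q}\otimes_{\widetilde{\mathbb{Q}}[H\mathbb{Q}]}(-)$ defining $\mathfrak{R}$ is homotopically correct on cofibrant $\widetilde{\mathbb{Q}}[H\mathbb{Q}]$-modules, so that the identification $\mathbf{L}\mathfrak{R}(H\mathbb{Q})\simeq H\mathbb{Q}$ and the triangulated-closure properties of $\mathcal{C}$ are legitimate; both points are controlled by Lemma \ref{lem:SmashTensCofibObject}.
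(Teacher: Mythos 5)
Your proof is correct and follows essentially the same route as the paper: strong monoidality via the factorisation $\mathfrak{R}=\zeta_!\circ\widetilde{\mathbb{Q}}$, reduction to the unit on cofibrant objects since $\mathfrak{M}$ creates weak equivalences, and a localising-subcategory argument anchored by the computation $\eta_{H\mathbb{Q}}=\mathrm{id}$ at the compact generator. The only cosmetic difference is that the paper justifies closure of the subcategory under coproducts by corepresenting stable homotopy groups via $[H\mathbb{Q},-]$ and passing through the adjunction, rather than asserting directly that $\mathfrak{M}$ preserves coproducts.
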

\begin{proof}
We have already observed that rectification $M\mapsto \mathfrak{R}M$ is a strongly symmetric monoidal functor, so in particular preserves $\mathrm{Sp}^\Sigma$-tensors.
Since the right adjoint $\mathfrak{M}$ creates fibrations and weak equivalences by Lemma \ref{lem:ForgetCreate}, it is sufficient to check that the unit $\eta_M \colon M\to \mathfrak{M}\mathfrak{R}(M)$ is a weak equivalence for all cofibrant $H\mathbb{Q}$-module spectra $M$.

To this end, let $\mathcal{E}\hookrightarrow Ho(\mathrm{Sp}^\Sigma(\mathrm{sVect}_\mathbb{Q}))$ be the full subcategory spanned by those cofibrant $M$ for which $\eta_M$ is a stable weak equivalence.
By stability, $Ho(\mathrm{Sp}^\Sigma(\mathrm{sVect}_\mathbb{Q}))$ is a triangulated category and the derived functors $\mathbf{L}\mathfrak{R}$ and $\mathbf{R}\mathfrak{M}$ are exact.
The functor computing stable homotopy groups of $H\mathbb{Q}$-module spectra is corepresented by $H\mathbb{Q}$, and since $\mathbf{L}\mathfrak{R}(H\mathbb{Q}) = H\mathbb{Q}$ we have natural isomorphisms of abelian groups
\begin{align*}
\Big[H\mathbb{Q}, (\mathbf{R}\mathfrak{M}\circ \mathbf{L}\mathfrak{R})\big(\bigoplus\!{\vphantom{\big)}}_{i\in \mathcal{I}}\, M_i\big)\Big]_{Ho(H\mathbb{Q}\mathrm{-Mod})}
&
\cong
\Big[H\mathbb{Q}, \mathbf{L}\mathfrak{R}\big(\bigoplus\!{\vphantom{\big)}}_{i\in \mathcal{I}}\, M_i\big)\Big]_{Ho(\mathrm{Sp}^\Sigma(\mathrm{sVect}_\mathbb{Q}))}
\\
&
\cong 
\bigoplus\!{\vphantom{\big)}}_{i\in \mathcal{I}}\,\big[H\mathbb{Q}, \mathbf{L}\mathfrak{R}( M_i)\big]_{Ho(\mathrm{Sp}^\Sigma(\mathrm{sVect}_\mathbb{Q}))}
\\
&
\cong 
\bigoplus\!{\vphantom{\big)}}_{i\in \mathcal{I}}\,\big[H\mathbb{Q}, (\mathbf{R}\mathfrak{M}\circ \mathbf{L}\mathfrak{R})( M_i)\big]_{Ho(H\mathbb{Q}\mathrm{-Mod})}
\\
&\cong
\Big[H\mathbb{Q}, \bigoplus\!{\vphantom{\big)}}_{i\in \mathcal{I}}\,(\mathbf{R}\mathfrak{M}\circ \mathbf{L}\mathfrak{R})\big( M_i\big)\Big]_{Ho(H\mathbb{Q}\mathrm{-Mod})}
\end{align*}
for any small set $\mathcal{I}$.
From this it follows that $\mathcal{E}$ is a localising triangulated subcategory.
Regarded as a module over itself, $H\mathbb{Q}$ is cofibrant and presents a compact generator of the homotopy category $Ho(H\mathbb{Q}\mathrm{-Mod})$.
The component of the adjunction unit at $H\mathbb{Q}$ is in fact an isomorphism, so that $H\mathbb{Q}\in \mathcal{E}$.
Since $\mathcal{E}$ is localising, we must have $\mathcal{E}=Ho(H\mathbb{Q}\mathrm{-Mod})$, which completes the proof.
\end{proof}

\begin{definition}
For a simplicial group $G$, $\Sigma^\infty G_+$ is a cofibrant symmetric ring spectrum and the $\mathrm{Sp}^\Sigma$-tensoring defines a monad
 $T_G = \Sigma^\infty G_+ \wedge(-) \cong \Sigma^\infty_\mathbb{Q} \mathbb{Q}[G]\otimes (-)$ on $\mathrm{Sp}^\Sigma(\mathrm{sVect}_\mathbb{Q})$.
Write $\mathbb{Q}[G]\mathrm{-Mod}$ for category of $T_G$-algebras, which is equipped with a $\mathrm{Sp}^\Sigma$-model structure via the free-forgetful adjunction
\[
\begin{tikzcd}
\mathrm{Sp}^\Sigma(\mathrm{sVect}_\mathbb{Q})
\ar[rr, shift left=1.1ex, "\Sigma^\infty G_+\wedge(-)"]
\ar[rr, shift left=-1.1ex, ""', "\bot", leftarrow]
&&
\mathbb{Q}[G]\mathrm{-Mod}\,.
\end{tikzcd}
\]
\end{definition}
\begin{remark}
Let $\mathbb{Q}[G]\mathrm{-Mod}_u$ denote the category of simplicial rational vector spaces equipped with an action of the rational group ring $\mathbb{Q}[G]$ of $G$.
A straightforward exercise in unravelling the definitions shows that the category $\mathbb{Q}[G]\mathrm{-Mod}$ is canonically isomorphic to the category of symmetric spectrum objects in $\mathbb{Q}[G]\mathrm{-Mod}$ with respect to the suspension endofunctor $\Sigma_\mathbb{Q}$ of \eqref{eqn:RatSuspend}.
\end{remark}

\begin{theorem}
\label{thm:StrictGrpRing}
For any simplicial group $G$ there is a $\mathrm{Sp}^\Sigma$-Quillen equivalence
\[
\begin{tikzcd}
(G_+,H\mathbb{Q})\mathrm{-Bimod}
\ar[rr, shift left=1.1ex, "\mathfrak{R}_G"]
\ar[rr, shift left=-1.1ex, "\mathfrak{M}_G"', "\bot", leftarrow]
&&
\mathbb{Q}[G]\mathrm{-Mod}\,.
\end{tikzcd}
\]
\end{theorem}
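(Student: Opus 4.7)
The plan is to promote the strong symmetric monoidal Quillen equivalence $(\mathfrak{R} \dashv \mathfrak{M})$ of Lemma \ref{lem:Rect1} to module categories over an appropriate monoid. The starting observation is that $(G_+, H\mathbb{Q})\mathrm{-Bimod}$ is equivalent to the category of modules over the monoid $R_G := \Sigma^\infty G_+ \wedge H\mathbb{Q}$ in the symmetric monoidal category $H\mathbb{Q}\mathrm{-Mod}$, while $\mathbb{Q}[G]\mathrm{-Mod}$ is equivalent to the category of modules over $\mathfrak{R}(R_G) \cong \Sigma^\infty_\mathbb{Q}\mathbb{Q}[G]$ in $\mathrm{Sp}^\Sigma(\mathrm{sVect}_\mathbb{Q})$.

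First, I would define the adjunction: strong monoidality of $\mathfrak{R}$ supplies a natural isomorphism $\mathfrak{R}(R_G) \otimes \mathfrak{R}(M) \cong \mathfrak{R}(R_G \wedge_{H\mathbb{Q}} M)$, which equips $\mathfrak{R}_G(M) := \mathfrak{R}(M)$ with a canonical $\mathfrak{R}(R_G)$-action; the right adjoint $\mathfrak{M}_G(N) := \mathfrak{M}(N)$ inherits an $R_G$-action via the lax monoidal structure of $\mathfrak{M}$. Next I would verify this is a $\mathrm{Sp}^\Sigma$-Quillen adjunction. Both model structures are created from $H\mathbb{Q}\mathrm{-Mod}$ and $\mathrm{Sp}^\Sigma(\mathrm{sVect}_\mathbb{Q})$ by Kan's transfer theorem, so because $\mathfrak{M}_G$ commutes with the forgetful functors to these base categories and $\mathfrak{M}$ preserves fibrations and weak equivalences (Lemmas \ref{lem:Rect1} and \ref{lem:ForgetCreate}), $\mathfrak{M}_G$ does too.

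For the Quillen equivalence, the key point I would establish is that every cofibrant $R_G$-module is cofibrant in $H\mathbb{Q}\mathrm{-Mod}$ (and likewise for $\mathfrak{R}(R_G)$-modules in $\mathrm{Sp}^\Sigma(\mathrm{sVect}_\mathbb{Q})$). This follows from $R_G$ being cofibrant in $H\mathbb{Q}\mathrm{-Mod}$ — it is $H\mathbb{Q} \wedge \Sigma^\infty G_+$ with $\Sigma^\infty G_+$ a cofibrant suspension spectrum — combined with the pushout-product axiom, which ensures the generating cofibrations $R_G \wedge_{H\mathbb{Q}} i$ of $R_G\mathrm{-Mod}$ are cofibrations of $H\mathbb{Q}$-modules. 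With this in hand, the derived unit of $(\mathfrak{R}_G \dashv \mathfrak{M}_G)$ on a cofibrant bimodule $M$ is, on the underlying $H\mathbb{Q}$-module, precisely the derived unit of $(\mathfrak{R} \dashv \mathfrak{M})$ at $U M$. Since Lemma \ref{lem:Rect1} ensures the latter is a weak equivalence, and weak equivalences on both sides are detected by the forgetful functors (Lemma \ref{lem:ForgetCreate}), the derived unit of $(\mathfrak{R}_G \dashv \mathfrak{M}_G)$ is a weak equivalence. A dual argument using fibrant replacement handles the derived counit.

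The main obstacle will be the careful bookkeeping around transferred model structures — in particular, ensuring that cofibrations of module spectra behave well under the forgetful functors to $H\mathbb{Q}\mathrm{-Mod}$ and $\mathrm{Sp}^\Sigma(\mathrm{sVect}_\mathbb{Q})$. This reduces to the pushout-product axiom together with cofibrancy of the defining monoid $R_G$, both of which are available in our setup. Once the reduction to Lemma \ref{lem:Rect1} is made precise, the result follows essentially formally from the strong monoidality of rectification.
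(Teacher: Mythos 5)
Your proposal is correct and follows essentially the same route as the paper: identify the bimodule category with algebras over the monad $\Sigma^\infty G_+\wedge(-)$ (equivalently, modules over the monoid $\Sigma^\infty G_+\wedge H\mathbb{Q}$), note that $\mathfrak{R}_G$ and $\mathfrak{M}_G$ agree with $\mathfrak{R}$ and $\mathfrak{M}$ on underlying objects with $\mathfrak{M}_G$ creating fibrations and weak equivalences, observe that cofibrant bimodules are cofibrant as $H\mathbb{Q}$-modules, and reduce the derived unit to that of Lemma \ref{lem:Rect1}. The separate ``dual argument for the counit'' is unnecessary — since the right adjoint creates weak equivalences, the weak-equivalence property of the derived unit on cofibrant objects already yields the Quillen equivalence, which is exactly how the paper concludes.
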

\begin{proof}
We identify the category of $(G_+,H\mathbb{Q})$-bimodule spectra with the category of $T_G$-algebras in $H\mathbb{Q}\mathrm{-Mod}$.
The adjunction $(\mathfrak{R}_G\dashv \mathfrak{M}_G)$ is thus obtained from the rectification adjunction $(\mathfrak{R}\dashv \mathfrak{M})$ by passing to categories of algebras over the monad $T_G= \Sigma^\infty G_+\wedge (-)$.
The functors $\mathfrak{R}_G$ and $\mathfrak{M}_G$ coincide with $\mathfrak{R}$ and $\mathfrak{M}$ respectively on underlying objects (that is, after forgetting $\Sigma^\infty G_+$-actions).
In particular, $\mathfrak{M}_G$ creates fibrations and weak equivalences so that the $(\mathfrak{R}_G\dashv \mathfrak{M}_G)$-adjunction is Quillen.

Fix a cofibrant object $B\in (G_+,H\mathbb{Q})\mathrm{-Bimod}$, noting that $B$ is also cofibrant when regarded as an object of $H\mathbb{Q}\mathrm{-Mod}$.
The map of $H\mathbb{Q}$-module spectra $B\to \mathfrak{M}\mathfrak{R}(B)$ is a weak equivalence per the proof of Lemma \ref{lem:Rect1}.
For an arbitrary $V\in \mathbb{Q}[G]\mathrm{-Mod}$ and morphism of $(G_+,H\mathbb{Q})$-bimodule spectra $\psi\colon B\to M_G (V)$, there is a commuting diagram of morphisms of $H\mathbb{Q}$-module spectra
\[
\begin{tikzcd}
B 
\ar[r, "\psi"]
\ar[d, "\eta_B"', "\sim"]
& 
\mathfrak{M}_G (V)
\\
\mathfrak{M}_G \mathfrak{R}_G (B) = \mathfrak{M}\mathfrak{R}(B)
\ar[ur, bend right=20, "\mathfrak{M}_G (\psi^\vee)"']
&
\end{tikzcd}
\]
with $\psi^\vee$ the $(\mathfrak{R}_G\dashv \mathfrak{M}_G)$-adjunct of $\psi$ and $\eta$ the adjunction unit.
$M_G$ creates weak equivalences, hence $\psi$ is a weak equivalence precisely if its adjunct $\psi^\vee$ is.
\end{proof}
\begin{remark}
For each morphism of simplicial groups $\phi\colon G\to K$, there is a base change  $\mathrm{Sp}^\Sigma$-Quillen adjunction 
$(\mathbb{Q}[\phi]_!\dashv \mathbb{Q}[\phi]^\ast)\colon \mathbb{Q}[G]\mathrm{-Mod}\to \mathbb{Q}[K]\mathrm{-Mod}$
obtained in the usual fashion.
The right adjoint $\mathbb{Q}[\phi]^\ast$, defined on objects by restriction along $\phi$, is the identity on underlying objects of $\mathrm{Sp}^\Sigma(\mathrm{sVect}_\mathbb{Q})$ so preserves and reflects weak equivalences and fibrations.
\end{remark}
\begin{lemma}
\label{lem:PseudoNatRect}
For any morphism of simplicial groups $\phi\colon G\to K$, there is a natural isomorphism of left $\mathrm{Sp}^\Sigma$-Quillen functors $\mathfrak{R}_K\circ \phi_!\cong \mathbb{Q}[\phi]_! \circ \mathfrak{R}_G$ from $(G_+,H\mathbb{Q})\mathrm{-Bimod}$ to $\mathbb{Q}[K]\mathrm{-Mod}$.
\end{lemma}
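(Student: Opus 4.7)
The plan is to deduce the natural isomorphism of left $\mathrm{Sp}^\Sigma$-Quillen functors by checking that the corresponding right adjoints are literally equal (not merely isomorphic). The right adjoint of $\mathfrak{R}_K\circ\phi_!$ is the composite $\phi^\ast\circ \mathfrak{M}_K$, and the right adjoint of $\mathbb{Q}[\phi]_!\circ \mathfrak{R}_G$ is $\mathfrak{M}_G\circ \mathbb{Q}[\phi]^\ast$. Both of these functors send a $\mathbb{Q}[K]$-module spectrum $V$ to the same underlying symmetric spectrum (namely $V$ viewed as a symmetric spectrum via $\mathfrak{M}$), equipped with the $(G_+, H\mathbb{Q})$-bimodule structure in which the $G_+$-action factors through $\phi_+\colon G_+\to K_+$. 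Unravelling the definitions of $\mathfrak{M}_G, \mathfrak{M}_K$ (which merely forget the strict $H\mathbb{Q}$-linearity to a coherent $H\mathbb{Q}$-action via the counit $\widetilde{\mathbb{Q}}[H\mathbb{Q}]\to H\mathbb{Q}$) and of $\phi^\ast, \mathbb{Q}[\phi]^\ast$ (both acting by restriction along $\phi$), one sees that these two composites coincide on the nose as $\mathrm{Sp}^\Sigma$-enriched functors. Uniqueness of left adjoints then furnishes a canonical natural isomorphism $\mathfrak{R}_K\circ\phi_!\cong \mathbb{Q}[\phi]_!\circ\mathfrak{R}_G$.

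A more hands-on verification, which also clarifies why the isomorphism is natural in $\phi$, proceeds by using that $\mathfrak{R}$ is strongly symmetric monoidal (Lemma \ref{lem:Rect1}) and hence preserves extension of scalars along morphisms of monoids. Concretely, for a cofibrant $(G_+, H\mathbb{Q})$-bimodule $B$, the functor $\phi_!$ is computed by the relative smash product $\phi_!(B) = K_+\wedge_{G_+} B$ and $\mathbb{Q}[\phi]_!$ by the relative tensor $\mathbb{Q}[\phi]_!(V) = \mathbb{Q}[K]\otimes_{\mathbb{Q}[G]} V$. Since $\mathfrak{R}$ commutes with colimits and the monoidal structure, we obtain
\[
\mathfrak{R}_K(\phi_! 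B) = \mathfrak{R}(K_+\wedge_{G_+} B) \cong \mathfrak{R}(K_+)\otimes_{\mathfrak{R}(G_+)} \mathfrak{R}(B) \cong \mathbb{Q}[K]\otimes_{\mathbb{Q}[G]} \mathfrak{R}_G(B) = \mathbb{Q}[\phi]_!(\mathfrak{R}_G B),
\]
using the canonical identifications $\mathfrak{R}(G_+)\cong \mathbb{Q}[G]$ and $\mathfrak{R}(K_+)\cong \mathbb{Q}[K]$ as commutative monoids together with the fact that the morphism of monoids induced by $\mathfrak{R}$ on $\phi_+$ is precisely $\mathbb{Q}[\phi]$. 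This chain of isomorphisms is visibly natural in $B$ and in $\phi$.

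The main (mild) obstacle is bookkeeping: one must check that the identifications $\mathfrak{R}(G_+)\cong \mathbb{Q}[G]$ are compatible with the monoid structures and that the strong monoidality of $\mathfrak{R}$ interacts correctly with the relative smash/tensor constructions. Both of these are formal consequences of the strongly monoidal structure already established in Lemma \ref{lem:Rect1}, together with the fact that $\widetilde{\mathbb{Q}}$ sends a simplicial group $G$ (viewed as a monoid in pointed simplicial sets via $G_+$) to the rational group ring $\mathbb{Q}[G]$ as a monoid in $\mathrm{sVect}_\mathbb{Q}$. Once those compatibilities are recorded, the natural isomorphism of left Quillen functors follows immediately.
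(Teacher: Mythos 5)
Your first argument is exactly the paper's proof: the paper simply observes that $\mathfrak{M}_G\circ\mathbb{Q}[\phi]^\ast\cong\phi^\ast\circ\mathfrak{M}_K$ (leaving the verification to the reader, which you correctly sketch) and concludes by uniqueness of adjoints. Your second, monoidal argument via relative smash/tensor products is a valid alternative but is not needed; note only that the cofibrancy assumption on $B$ there is superfluous, since the isomorphism of left Quillen functors should hold on all objects (as your first argument delivers).
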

\begin{proof}
The reader can convince herself that there is a natural isomorphism $\mathfrak{M}_G\circ \mathbb{Q}[\phi]^\ast\cong \phi^\ast \circ \mathfrak{M}_K$, from which the result follows by uniqueness of adjoints.
\end{proof}

\subsection{Rectification and tensor products}
\label{ssec:RectandTens}
Any simplicial group $G$ may be regarded as a (coassociative cocommutative) Hopf algebra object in $(\mathrm{sSet}, \times)$ with  coproduct given by the diagonal map $\Delta\colon G\to G\times G$. 
The functor
\begin{align*}
(\mathrm{sSet},\times)&\longrightarrow (\mathrm{sVect}_\mathbb{Q},\otimes)\\
K&\longmapsto \mathbb{Q}[K] \cong \widetilde{\mathbb{Q}}[K_+]
\end{align*}
is strongly symmetric monoidal, so that in particular the rational group ring $\mathbb{Q}[G]$ is a Hopf algebra and we can equip the category of $\mathbb{Q}[G]$-modules with a symmetric monoidal structure by forming levelwise tensor products.
The goal of this section is to show that $\mathbb{Q}[G]\mathrm{-Mod}_u$ is a symmetric monoidal model category.
Applying the symmetric stabilisation machine, we can therefore equip $\mathbb{Q}[G]\mathrm{-Mod}$ with a symmetric monoidal model strucutre.
With respect to this monoidal structure, the rectification adjunction $(\mathfrak{R}_G\dashv \mathfrak{M}_G)\colon (G_+,H\mathbb{Q})\to \mathbb{Q}[G]\mathrm{-Mod}$ is a strongly monoidal Quillen equivalence.
We shall require the following
\begin{lemma}
\label{lem:FreeHopf}
Let $H$ be a Hopf algebra over $\mathbb{Q}$.
The tensor product of free left $H$-modules is free.
\end{lemma}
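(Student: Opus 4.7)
The plan is to use the Hopf algebra structure (specifically, the coproduct $\Delta$ and antipode $S$) to produce an explicit $H$-module isomorphism between the diagonal action and a left-action-on-one-factor that manifestly exhibits freeness.

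First I would reduce the problem to a single model case. A free left $H$-module is (up to isomorphism) of the form $H \otimes V$ for some $\mathbb{Q}$-vector space $V$, with $H$ acting through the first tensor factor. Given two free modules $H \otimes V$ and $H \otimes W$, their tensor product (as $H$-modules, via the coproduct $\Delta$) is $H \otimes V \otimes H \otimes W$ with diagonal action on the two $H$-factors. I would claim that this module is isomorphic to $H \otimes (V \otimes H \otimes W)$ with $H$ acting only on the outermost factor, which would immediately exhibit it as a free $H$-module on the vector space $V \otimes H \otimes W$.

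The core construction, which I would carry out in detail, is the following pair of maps on $H \otimes H$, using Sweedler notation $\Delta(h) = \sum h_{(1)} \otimes h_{(2)}$. Let $\psi(h \otimes k) = \sum h_{(1)} \otimes h_{(2)} k$ and $\phi(h \otimes k) = \sum h_{(1)} \otimes S(h_{(2)}) k$. A direct check using the fact that $\Delta$ is an algebra map shows $\psi$ is $H$-linear from the left action (on the first factor) to the diagonal action. A direct check using coassociativity, the antipode axiom $\sum S(h_{(1)}) h_{(2)} = \epsilon(h) 1$, and the counit axiom shows $\phi \circ \psi = \mathrm{id}$, and symmetrically $\psi \circ \phi = \mathrm{id}$. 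Tensoring with the identity on $V$ and $W$ (which carry trivial $H$-actions and so do not interact with either action on the $H$-factors) promotes this to the desired isomorphism of $H$-modules $(H \otimes V) \otimes (H \otimes W) \cong H \otimes (V \otimes H \otimes W)$.

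There is no real obstacle here; the only thing to be careful about is keeping track of which tensor factors are being acted on in the two descriptions (diagonal versus left), and verifying the four Sweedler identities correctly. The argument is essentially formal Hopf-algebraic bookkeeping, and no finiteness or grading hypothesis on $V, W$ or $H$ is needed beyond what is already assumed.
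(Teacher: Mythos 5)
Your argument is correct, and the computation goes through exactly as you describe: $\psi$ intertwines the first-factor action with the diagonal action, $\phi$ is its inverse by the antipode and counit axioms, and tensoring with $V\otimes W$ (after harmlessly permuting tensor factors) exhibits $(H\otimes V)\otimes(H\otimes W)$ as free on $V\otimes H\otimes W$. The paper reaches the same conclusion by a slightly different packaging of the same idea: it observes that $H\otimes H$ with the diagonal action and the coaction $a\otimes b\mapsto a_{(0)}\otimes a_{(1)}\otimes b$ is a left Hopf module, and then invokes the fundamental theorem of Hopf modules to get $H\otimes(H\otimes H)_H\cong H\otimes H$, so the tensor product is free on $(H\otimes H)_H\otimes V\otimes W$. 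Your $\psi$ and $\phi$ are precisely the maps that prove the fundamental theorem in this special case, so your version is the more elementary and self-contained one; what the paper's formulation buys is the identification of the cogenerating space as the coinvariant submodule $(H\otimes H)_H$, which is functorial in $H$ and is reused later (e.g.\ in Corollary \ref{cor:HopfPowers} and in the proof of Lemma \ref{lem:BaseChangeforSimplHopf}, where a weak equivalence $\phi\colon H\to K$ must induce a comparison map $(H\otimes H)_H\to(K\otimes K)_K$ on coinvariants). Either route is perfectly adequate for the lemma as stated.
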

\begin{proof}
The tensor product $H\otimes H$ is a left Hopf module over $H$ with respect to the left $H$-action
$
h\cdot (a\otimes b) =  h_{(0)}\cdot a\otimes h_{(1)}\cdot b
$
and coaction 
$
\kappa \colon a\otimes b\mapsto a_{(0)}\otimes a_{(1)}\otimes b
$,
with $\Delta h =  h_{(0)}\otimes h_{(1)}$ in sumless Sweedler notation.
Writing $(H\otimes H)_H :=\{v\in H\otimes H\mid \kappa(v) =1\otimes v\}$ for the $H$-coinvariant submodule, the map
$
H\otimes (H\otimes H)_H \to H\otimes H
$, $
h\otimes v\mapsto h\cdot v
$,
is an isomorphism of left $H$-modules by the fundamental theorem of Hopf modules.
The tensor product of free $H$-modules $H\otimes V$ and $H\otimes W$ is thus
$
(H\otimes V)\otimes (H\otimes W)\cong H\otimes \left[ (H\otimes H)_H \otimes V\otimes W\right]
$, which is free on $(H\otimes H)_H \otimes V\otimes W$.
\end{proof}
\begin{corollary}
\label{cor:HopfPowers}
Let $H$ be a Hopf algebra over $\mathbb{Q}$.
Then $H^{\otimes n}$ is a free $H$-module for any $n\geq 1$.
\end{corollary}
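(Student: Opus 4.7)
The plan is to proceed by induction on $n$, using Lemma \ref{lem:FreeHopf} as the inductive step. The base case $n=1$ is immediate: $H$ is free of rank one over itself. For the inductive step, assuming $H^{\otimes n}$ is a free $H$-module, I would write $H^{\otimes(n+1)} \cong H^{\otimes n} \otimes H$ (with diagonal $H$-action on the tensor product coming from the iterated coproduct on $H$) and then apply Lemma \ref{lem:FreeHopf} to conclude that the tensor product of two free $H$-modules is again free.

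The only subtlety worth flagging is to confirm that the diagonal $H$-action on $H^{\otimes n}$ built from the iterated coproduct $\Delta^{(n-1)}\colon H \to H^{\otimes n}$ is consistent with viewing $H^{\otimes(n+1)}$ as the tensor product of the $H$-modules $H^{\otimes n}$ and $H$. This follows from coassociativity of $\Delta$: the diagonal action on $H^{\otimes n}\otimes H$ is $h\cdot(v\otimes a) = h_{(0)}\cdot v \otimes h_{(1)}\cdot a$, and unfolding $h_{(0)}\cdot v$ using the inductive diagonal action recovers exactly the full iterated diagonal action on $H^{\otimes(n+1)}$. Once this identification is made, the induction goes through with no further work.

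No serious obstacle is anticipated, since Lemma \ref{lem:FreeHopf} does all the heavy lifting; the corollary is essentially a bookkeeping statement iterating that lemma.
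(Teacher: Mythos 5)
Your proof is correct and is exactly the argument the paper gives: the proof of Corollary \ref{cor:HopfPowers} opens with ``This follows from the Lemma by an inductive argument,'' and your coassociativity remark just fills in the routine check that the diagonal action built from the iterated coproduct agrees with the tensor-product-of-modules action. (The paper also notes an alternative one-step proof, exhibiting $H^{\otimes n}$ directly as a Hopf module via $\Delta\otimes\mathrm{id}$ and invoking the fundamental theorem of Hopf modules, but your inductive route is the one the paper states first.)
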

\begin{proof}
This follows from the Lemma by an inductive argument.
Alternatively, $H^{\otimes n}$ is a left Hopf module over $H$ with respect to the coaction $\Delta\otimes \mathrm{id}$ and left $H$-action 
\[
h\cdot (a_1\otimes \dotsb \otimes a_n) =  h_{(0)}\cdot a_1\otimes \dotsb \otimes h_{(n-1)}\cdot a_n\,,
\]
so that $H^{\otimes n}\cong H\otimes H^{\otimes n}_H$ by the fundamental theorem of Hopf modules.
\end{proof}
\begin{corollary}
\label{cor:SimplicialHopfMod}
Let $H$ be a simplicial Hopf algebra over $\mathbb{Q}$.
Then the levelwise tensor product of free $H$-modules is free.
In particular, $H^{\otimes n}$ is a free $H$-module for any $n\geq 1$.
\end{corollary}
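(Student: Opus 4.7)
The plan is to bootstrap from the non-simplicial Lemma \ref{lem:FreeHopf} and Corollary \ref{cor:HopfPowers} by applying them degreewise. A simplicial Hopf algebra over $\mathbb{Q}$ is by definition a simplicial object in the category of Hopf algebras, so in each simplicial degree $n$ the vector space $H_n$ is a Hopf algebra over $\mathbb{Q}$, and every face and degeneracy map $d_i, s_j\colon H_n \to H_{n\pm 1}$ is a morphism of Hopf algebras. The levelwise tensor product of simplicial $H$-modules is an $H$-module by means of the diagonal action induced by the coproduct of $H$, which is applied degreewise.

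Given simplicial $\mathbb{Q}$-vector spaces $V$ and $W$, I would first observe that in degree $n$ the vector space $(H_n\otimes V_n)\otimes (H_n\otimes W_n)$ is a left $H_n$-module via the diagonal action, and that Lemma \ref{lem:FreeHopf} supplies an $H_n$-linear isomorphism
\[
H_n \otimes \bigl[(H_n\otimes H_n)_{H_n} \otimes V_n \otimes W_n\bigr] \xrightarrow{\;\;\cong\;\;} (H_n\otimes V_n)\otimes (H_n\otimes W_n),
\]
sending $h \otimes (v \otimes x \otimes y) \mapsto h\cdot(v \otimes x \otimes y)$, where $(H_n\otimes H_n)_{H_n}$ is the submodule of coinvariants for the coaction $\Delta\otimes \mathrm{id}$.

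The essential point is then to verify that these degreewise identifications assemble into an isomorphism of simplicial $H$-modules. This reduces entirely to naturality: both the formation of coinvariants $A \mapsto (A\otimes A)_A$ and the isomorphism furnished by the fundamental theorem of Hopf modules are natural in the Hopf algebra variable. Consequently, every simplicial structure map of $H$, being a Hopf algebra morphism, induces a compatible map on coinvariants, so that $n\mapsto (H_n\otimes H_n)_{H_n}$ is a well-defined simplicial vector space and the displayed isomorphism is simplicial in $n$. I do not anticipate any genuine obstacle; the argument is essentially a bookkeeping of naturality statements already present in the proof of Lemma \ref{lem:FreeHopf}. The second assertion, that $H^{\otimes n}$ is a free $H$-module for each $n\geq 1$, follows either by iterating the first assertion or, equivalently, by applying Corollary \ref{cor:HopfPowers} degreewise in exactly the same manner.
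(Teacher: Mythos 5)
Your proof is correct and follows the same route as the paper, which simply applies Lemma \ref{lem:FreeHopf} and Corollary \ref{cor:HopfPowers} in each simplicial degree. The naturality bookkeeping you supply (compatibility of the coinvariants construction and the fundamental theorem of Hopf modules with the Hopf algebra morphisms $d_i$, $s_j$) is exactly the implicit content of the paper's one-line argument, spelled out.
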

\begin{proof}
Apply Lemma \ref{lem:FreeHopf} and Corollary \ref{cor:HopfPowers} in each simplicial degree.
\end{proof}

Fixing a simplicial Hopf algebra $H$ over $\mathbb{Q}$, we write $H\mathrm{-Mod}_u$ for the category of $H$-modules in simplicial vector spaces.
Kan's transfer theorem applies to the free-forgetful adjunction
\begin{equation}
\label{eqn:uHmodAdj}
\begin{tikzcd}
\mathrm{sVect}_{\mathbb{Q}}
\ar[rr, "H\otimes (-)", shift left =1.1ex]
\ar[rr, "\bot", leftarrow, shift left =-1.1ex]
&&
H\mathrm{-Mod}_u\,,
\end{tikzcd}
\end{equation}
so that $H\mathrm{-Mod}_u$ is a combinatorial $\mathrm{sSet}_\ast$-model category.
Sets of generating cofibrations and generating acyclic cofibrations are provided by 
\[
\mathcal{I}_H :=\{H\otimes \mathbb{Q}[i_n]\mid i_n\colon \partial \Delta^n \hookrightarrow \Delta^n,\;n\geq 0\, \}
\quad
\mbox{ and }
\quad
\mathcal{J}_H :=\{H\otimes \mathbb{Q}[h^n_k]\mid i_n\colon \Lambda^n_k \hookrightarrow \Delta^n,\;n\geq k\geq  0\, \}
\]
respectively.
As all Hopf algebras we consider are assumed coassociative and cocommutative, the levelwise tensor product of simplicial vector spaces gives rise to a symmetric monoidal structure on $H\mathrm{-Mod}_u$.
That is, for $H$-modules $(M,\rho_M)$ and $(N,\rho_N)$, the levelwise tensor product $M\otimes N$ is regarded as a $H$-module via the action
\[
\begin{tikzcd}
H\otimes M\otimes N
\ar[r, "\Delta\otimes \mathrm{id}_{M\otimes N}"]
&
H\otimes H\otimes M\otimes N
\ar[r, "\cong"]
&
H\otimes M\otimes H\otimes N
\ar[r, "\rho_M\otimes \rho_N"]
&
M\otimes N\,.
\end{tikzcd}
\]
\begin{remark}
The symmetric monoidal structure on $H\mathrm{-Mod}_u$ is closed, with internal hom  objects given by the levelwise internal hom $\mathrm{Hom}(M,N)$ of simplicial vector spaces equipped with $H$-action
$
h\cdot \lambda \colon m\mapsto  h_{(0)}\cdot \lambda \big(Sh_{(1)}\cdot m\big)
$,
where $S$ is the antipode.
\end{remark}
\begin{lemma}
\label{lem:SimplHopfModMonoidal}
Let $H$ be a simplicial Hopf algebra over $\mathbb{Q}$.
Then $H\mathrm{-Mod}_u$ is a symmetric monoidal model category.
\end{lemma}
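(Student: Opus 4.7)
The plan is to verify the pushout-product axiom and the unit axiom for the levelwise tensor product on $H\mathrm{-Mod}_u$, equipped with the diagonal $H$-action, by reducing both to properties of the tensor product on $\mathrm{sVect}_\mathbb{Q}$; the fact that the ground field is $\mathbb{Q}$ will do much of the work.

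For the pushout-product axiom, cofibrant generation reduces the task to verifying $\mathcal{I}_H\,\square\,\mathcal{I}_H \subseteq \mathrm{cof}(\mathcal{I}_H)$ and $\mathcal{I}_H\,\square\,\mathcal{J}_H\subseteq \mathrm{cof}(\mathcal{J}_H)$, where $\mathrm{cof}(-)$ denotes retracts of transfinite compositions of pushouts of coproducts. Let $\alpha\colon A\to B$ and $\beta\colon C\to D$ be monomorphisms in $\mathrm{sVect}_\mathbb{Q}$, so that $H\otimes\alpha$ and $H\otimes\beta$ are representative elements of $\mathcal{I}_H$ (or $\mathcal{J}_H$, if also weak equivalences). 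The key step is to apply Corollary \ref{cor:SimplicialHopfMod} to obtain a natural isomorphism
\[
(H\otimes V)\otimes (H\otimes W)\;\cong\; H\otimes\bigl[(H\otimes H)_H\otimes V\otimes W\bigr]
\]
of left $H$-modules, in which the diagonal $H$-action on the left-hand side becomes the free action on the first tensor factor on the right. Taking pushout-products and using that the free functor $H\otimes(-)\colon \mathrm{sVect}_\mathbb{Q}\to H\mathrm{-Mod}_u$ preserves colimits yields
\[
(H\otimes \alpha)\,\square\,(H\otimes\beta)\;\cong\;H\otimes\bigl[(H\otimes H)_H\otimes (\alpha\,\square\,\beta)\bigr]\,,
\]
with the inner pushout-product computed in $\mathrm{sVect}_\mathbb{Q}$. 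Since $\mathrm{sVect}_\mathbb{Q}$ is a symmetric monoidal model category and tensoring over a field preserves all weak equivalences, $\alpha\,\square\,\beta$ is a cofibration, acyclic if either $\alpha$ or $\beta$ is. Applying the left Quillen functor $H\otimes(-)$ then yields the desired (resp. acyclic) cofibration in $H\mathrm{-Mod}_u$.

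For the unit axiom, the monoidal unit $I$ is the constant simplicial vector space $\mathbb{Q}$ equipped with the trivial $H$-action via the counit $\varepsilon\colon H\to \mathbb{Q}$. Fix any cofibrant replacement $QI\to I$ in $H\mathrm{-Mod}_u$ and any $M\in H\mathrm{-Mod}_u$. Because the forgetful functor to $\mathrm{sVect}_\mathbb{Q}$ creates weak equivalences and the tensor product in $H\mathrm{-Mod}_u$ is computed on underlying simplicial rational vector spaces, the map $QI\otimes M\to I\otimes M\cong M$ is the tensor product over $\mathbb{Q}$ of the weak equivalence $QI\to\mathbb{Q}$ with $\mathrm{id}_M$. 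Over a field this is always a weak equivalence, so the unit axiom holds in a form even stronger than required (cofibrancy of $M$ is unnecessary).

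The main obstacle is the diagonal-to-free conversion via the fundamental theorem of Hopf modules, and in particular checking that the identification of $(H\otimes V)\otimes(H\otimes W)$ with a free left $H$-module is natural in $V$ and $W$ and compatible with formation of pushouts. Once this has been established using Corollary \ref{cor:SimplicialHopfMod}, the pushout-product axiom collapses to the corresponding monoidal model structure on $\mathrm{sVect}_\mathbb{Q}$, and the unit axiom becomes a direct consequence of working over a field.
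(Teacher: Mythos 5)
Your proof is correct and follows essentially the same route as the paper, which verifies the pushout-product axiom by arguing on the generating sets $\mathcal{I}_H$, $\mathcal{J}_H$ and using the fundamental theorem of Hopf modules (Lemma \ref{lem:FreeHopf} / Corollary \ref{cor:SimplicialHopfMod}) to convert the diagonal action on a tensor product of free modules into a free action, thereby reducing everything to the symmetric monoidal model structure on $\mathrm{sVect}_\mathbb{Q}$, and which handles the unit axiom by noting that the levelwise tensor product preserves weak equivalences over $\mathbb{Q}$. Your explicit identity $(H\otimes\alpha)\,\square\,(H\otimes\beta)\cong H\otimes\bigl[(H\otimes H)_H\otimes(\alpha\,\square\,\beta)\bigr]$ is a slightly cleaner packaging of the same argument, made possible by the fact that every monomorphism in $\mathrm{sVect}_\mathbb{Q}$ is a cofibration, which lets you avoid the cell-by-cell decomposition of $i_m\,\square\,i_n$ used in the simplicial-set analogue (Lemma \ref{lem:GSpaceMonoidal}) to which the paper defers.
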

\begin{proof}
The unit axiom follows from the fact that the levelwise tensor product of simplicial rational vector spaces preserves weak equivalences in both variables.
Arguing on the sets $\mathcal{I}_H$ and $\mathcal{J}_H$, the pushout-product axiom following the same argument as in Lemma \ref{lem:GSpaceMonoidal}.
\end{proof}

\begin{construction}
\label{cons:HopfCofibUnit}
It will prove useful to a specfic cofibrant resolution of the monoidal unit of $H\mathrm{-Mod}_u$ in mind.
Iterated application of the comonad of the adjunction \eqref{eqn:uHmodAdj} to $\mathbb{Q}$ produces an augmented simplicial object
\[
\underline{\mathbb{Q}}_\bullet^H =
\left\{
\begin{tikzcd}
\dotsb 
\ar[r]
\ar[r,shift left=1.5ex]
\ar[r,shift left=-1.5ex]
&
H\otimes H
\ar[r,shift left=0.75ex]
\ar[r,shift left=-0.75ex]
&
H
\end{tikzcd}
\right\}
\xrightarrow{\;\;\varrho\;\;}
\mathbb{Q}\,,
\]
where face maps and degeneracies are given respectively by inserting the counit and comultiplication of $H$ in different tensor factors.
With reference to Corollary \ref{cor:SimplicialHopfMod} one shows that $\underline{\mathbb{Q}}_\bullet^H$ is Reedy cofibrant in $\mathrm{Fun}(\Delta^\mathrm{op}, H\mathrm{-Mod}_u)$, so that the realisation
\[
\mathbb{Q}^H := \underset{\Delta^\mathrm{op}}{\mathrm{colim}}\, \mathbb{Q}[\Delta^n]\otimes H^{\otimes (n+1)} \cong 
\underset{\Delta^\mathrm{op}}{\mathrm{hocolim}}\, H^{\otimes (n+1)}
\]
is a cofibrant object presenting the homotopy colimit
(this is the Bousfield--Kan formula for homotopy colimits, see eg.~\cite{hirschhorn_model_2003}).
The augmentation map  $\varrho$ then induces a map of $H$-modules $\rho\colon \mathbb{Q}^H\to \mathbb{Q}$ which exhibits a cofibrant resolution of the monoidal unit.
Indeed, after forgetting the $H$-actions, the simplicial object $\underline{\mathbb{Q}}^H_\bullet\in \mathrm{Fun}(\Delta^\mathrm{op},\mathrm{sVect}_\mathbb{Q})$ can be endowed with the system of extra degeneracies
\begin{align*}
s_{-1} \colon H^{\otimes (n+1)}&\longrightarrow H^{\otimes (n+2)}
\\
a_0\otimes \dotsb \otimes a_n &\longmapsto 1\otimes a_0\otimes \dotsb \otimes a_n
\end{align*}
defined by inserting the unit $\eta\colon \mathbb{Q}\to H$.
Using these extra degeneracies we construct a simplicial homotopy $\Delta^1\otimes \mathbb{Q}^H_\bullet\to \mathbb{Q}^H_\bullet$ between the identity and $s_{-1}\circ \varrho$, which after taking colimits exhibits $\rho\colon \mathbb{Q}^H\to \mathbb{Q}$ as a deformation retraction in $\mathrm{sVect}_\mathbb{Q}$.
Hence $\rho$ is a weak equivalence of $H$-modules with cofibrant domain.
\end{construction}

Write $H\mathrm{-Mod} := \mathrm{Sp}^\Sigma(H\mathrm{-Mod}_u)$ for the category of symmetric spectrum objects in $H\mathrm{-Mod}_u$ with respect to the suspension endofunctor $\Sigma_\mathbb{Q}$.
\begin{lemma}
\label{lem:SpectralHopfMod}
Let $H$ be a simplicial Hopf algebra over $\mathbb{Q}$.
Then $H\mathrm{-Mod}$ is a stable symmetric monoidal $\mathrm{Sp}^\Sigma$-model category.
\end{lemma}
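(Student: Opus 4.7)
The plan is to apply Hovey's symmetric stabilisation machine \cite{hovey_spectra_2001} to the symmetric monoidal model category $(H\mathrm{-Mod}_u, \otimes)$ from Lemma \ref{lem:SimplHopfModMonoidal}, stabilising with respect to the left Quillen endofunctor $\Sigma_\mathbb{Q} = \widetilde{\mathbb{Q}}[S^1]\otimes (-)$. Since $H\mathrm{-Mod}_u$ is a proper combinatorial $\mathrm{sSet}_\ast$-model category, the machine produces $H\mathrm{-Mod} = \mathrm{Sp}^\Sigma(H\mathrm{-Mod}_u)$ as a proper combinatorial stable model category in which $\Sigma_\mathbb{Q}$ has been promoted to a Quillen self-equivalence. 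Stability is therefore automatic from the construction.

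For the symmetric monoidal structure, I would invoke Hovey's criterion: if $\mathcal{C}$ is a symmetric monoidal model category and the cyclic permutation of the suspending object $K^{\otimes 3}$ is homotopic (in $\mathcal{C}$) to the identity, then $\mathrm{Sp}^\Sigma(\mathcal{C}, K)$ inherits a symmetric monoidal model structure with tensor product obtained from the Day convolution analogously to \eqref{eqn:SmashTensor}. In our case $K = \widetilde{\mathbb{Q}}[S^1]$, so the cyclic permutation condition is pulled back from the classical statement for $S^1$ in $\mathrm{sSet}_\ast$ via the strong symmetric monoidal left Quillen functor $H\otimes \widetilde{\mathbb{Q}}[-]\colon \mathrm{sSet}_\ast \to H\mathrm{-Mod}_u$. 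The pushout-product axiom for $H\mathrm{-Mod}$ then reduces, via the generating sets obtained by prolonging $\mathcal{I}_H$ and $\mathcal{J}_H$ to $H\mathrm{-Mod}$, to the corresponding statement already established unstably in Lemma \ref{lem:SimplHopfModMonoidal}.

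The step I expect to require the most care is the unit axiom, since the monoidal unit $\Sigma^\infty_\mathbb{Q}\mathbb{Q}$ need not be cofibrant. Here I would use the cofibrant resolution $\rho\colon \mathbb{Q}^H \to \mathbb{Q}$ of Construction \ref{cons:HopfCofibUnit}: applying the (strong monoidal, left Quillen) suspension spectrum functor gives a cofibrant resolution $\Sigma^\infty_\mathbb{Q}(\rho)\colon \Sigma^\infty_\mathbb{Q}\mathbb{Q}^H \to \Sigma^\infty_\mathbb{Q}\mathbb{Q}$ of the monoidal unit in $H\mathrm{-Mod}$. Since $\rho$ is a deformation retraction after forgetting the $H$-action, tensoring with any cofibrant $M \in H\mathrm{-Mod}$ preserves this homotopy, so $\Sigma^\infty_\mathbb{Q}\mathbb{Q}^H \otimes M \to \Sigma^\infty_\mathbb{Q}\mathbb{Q}\otimes M\cong M$ is a levelwise weak equivalence and hence a stable weak equivalence.

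Finally, the $\mathrm{Sp}^\Sigma$-enrichment follows formally: the composition $\mathrm{Sp}^\Sigma \xrightarrow{\widetilde{\mathbb{Q}}} \mathrm{Sp}^\Sigma(\mathrm{sVect}_\mathbb{Q})\xrightarrow{H\otimes(-)} H\mathrm{-Mod}$ is a strong symmetric monoidal left Quillen functor with right adjoint forgetting the $H$-module structure, which by the standard recipe (cf.\ \cite{hovey_model_1999}) makes $H\mathrm{-Mod}$ into a $\mathrm{Sp}^\Sigma$-model category whose tensoring and powering agree on underlying objects with those of $\mathrm{Sp}^\Sigma(\mathrm{sVect}_\mathbb{Q})$. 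The enriched pushout-product axiom with respect to $\mathrm{Sp}^\Sigma$ is then a consequence of the monoidal pushout-product axiom verified above together with the fact that the unit-object-like map $\Sigma^\infty\widetilde{\mathbb{Q}}[S^0] \to \Sigma^\infty_\mathbb{Q}\mathbb{Q}^H$ is a cofibrant resolution.
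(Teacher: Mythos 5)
Your proposal is correct and takes exactly the same route as the paper, whose entire proof is the single sentence ``Apply the symmetric stabilisation machine to Lemma \ref{lem:SimplHopfModMonoidal}.'' Your write-up simply unpacks the details that the paper delegates to \cite{hovey_spectra_2001} (stability, the monoidal pushout-product via prolonged generating sets, the unit axiom via $\mathbb{Q}^H\to\mathbb{Q}$, and the $\mathrm{Sp}^\Sigma$-enrichment), all of which are handled appropriately.
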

\begin{proof}
Apply the symmetric stabilisation machine to Lemma \ref{lem:SimplHopfModMonoidal}.
\end{proof}
\begin{remark}
\label{rem:StabHopfModGenCofib}
Generating cofibrations and acyclic cofibrations for $H\mathrm{-Mod}$ are given by the sets
$
\mathcal{I}^\Sigma_H := \bigcup_{k\geq 0} \Sigma^{\infty-k}_\mathbb{Q} \big(\mathcal{I}_H\big)
$ and 
$\mathcal{J}^\Sigma_H := \bigcup_{k\geq 0} \Sigma^{\infty-k}_\mathbb{Q} \big(\mathcal{J}_H\big)
$
respectively, with $\Sigma^{\infty-k}_\mathbb{Q}\colon \mathrm{sVect}_\mathbb{Q}\to \mathrm{Sp}^\Sigma(\mathrm{sVect}_\mathbb{Q})$ the (shifted) symmetric stabilisation functor\footnote{Called $F_k$ in \cite{hovey_spectra_2001}.}.
\end{remark}
\begin{remark}
\label{rem:EquivHopfModels}
Let $H\in\mathrm{sVect}_\mathbb{Q}$ be any associative algebra object (not necessarily a Hopf algebra).
There is a canonical isomorphism of categories between $H\mathrm{-Mod}$ and the category $T_H\mathrm{-Alg}$ of algebras in $\mathrm{Sp}^\Sigma(\mathrm{sVect}_\mathbb{Q})$ for the monad $T_H := \Sigma^\infty_\mathbb{Q} H\otimes (-)$.
The category $T_H\mathrm{-Alg}$ is equipped with a model structure by transferring along the free-forgetful adjunction, which is identified with the stable model structure on $H\mathrm{-Mod}$ considered above.
\end{remark}

In the case that $H = \mathbb{Q}[G]$ is the rational group ring of a simplicial group $G$, Lemma \ref{lem:SpectralHopfMod} furnishes $\mathbb{Q}[G]\mathrm{-Mod}$ with a symmetric monoidal $\mathrm{Sp}^\Sigma$-model structure.
\begin{lemma}
\label{lem:RectGrpRing}
For any simplicial group $G$, the rectification Quillen equivalence 
\[
\begin{tikzcd}
(G_+,H\mathbb{Q})\mathrm{-Bimod}
\ar[rr, shift left=1.1ex, "\mathfrak{R}_G"]
\ar[rr, shift left=-1.1ex, "\mathfrak{M}_G"', "\bot", leftarrow]
&&
\mathbb{Q}[G]\mathrm{-Mod}
\end{tikzcd}
\]
is strongly symmetric monoidal.
\end{lemma}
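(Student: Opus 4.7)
The plan is to promote the strong symmetric monoidality of the underlying rectification Quillen equivalence $(\mathfrak{R}\dashv \mathfrak{M})$ from Lemma \ref{lem:Rect1} to the equivariant setting. Since $\mathfrak{R}_G$ agrees with $\mathfrak{R}$ on underlying objects (that is, after forgetting $\Sigma^\infty G_+$- or $\Sigma^\infty_\mathbb{Q}\mathbb{Q}[G]$-actions), and since the two ambient monoidal structures are obtained by equipping the underlying smash-/tensor-product with the diagonal action induced by the Hopf structures on $G_+$ and $\mathbb{Q}[G]$ respectively, the heart of the argument reduces to verifying that $\mathfrak{R}$ intertwines these diagonal actions.

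First I would verify functorial strong monoidality: the natural isomorphism $\mathfrak{R}(B\wedge_{H\mathbb{Q}} B')\xrightarrow{\;\cong\;} \mathfrak{R}(B)\otimes \mathfrak{R}(B')$ of Lemma \ref{lem:Rect1} is to be shown $G$-equivariant when each side carries the diagonal action. This reduces to the observation that $\widetilde{\mathbb{Q}}\colon (\mathrm{sSet}_\ast,\wedge)\to (\mathrm{sVect}_\mathbb{Q},\otimes)$ is strongly symmetric monoidal, hence sends the Hopf comonoid $G$ (with diagonal $G\to G\times G$) to the Hopf coproduct $\mathbb{Q}[G]\to \mathbb{Q}[G]\otimes \mathbb{Q}[G]$ which defines the diagonal $\mathbb{Q}[G]$-action. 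Writing $\mathfrak{R}= \zeta_!\circ \widetilde{\mathbb{Q}}$ as in the preamble to Lemma \ref{lem:Rect1}, the same conclusion propagates through extension of scalars along the monoidal unit morphism $\zeta\colon \widetilde{\mathbb{Q}}[H\mathbb{Q}]\to H\mathbb{Q}$. Unit compatibility is analogous: $\mathfrak{R}$ sends $H\mathbb{Q}$ equipped with the trivial $G_+$-action (the monoidal unit of $(G_+,H\mathbb{Q})\mathrm{-Bimod}$) to $H\mathbb{Q}$ equipped with the trivial $\mathbb{Q}[G]$-action via the Hopf counit, which is precisely the monoidal unit of $\mathbb{Q}[G]\mathrm{-Mod}$.

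To conclude that we have a strongly symmetric monoidal Quillen equivalence in the Schwede--Shipley sense it remains to verify the unit axiom at the level of Quillen functors, namely that the canonical map $\mathfrak{R}_G(QI)\to \mathfrak{R}_G(I)$ from a cofibrant replacement of the monoidal unit is a weak equivalence. An explicit cofibrant resolution of the unit may be constructed as in Construction \ref{cons:HopfCofibUnit}, using the bar resolution of the Hopf algebra $H\mathbb{Q}\wedge \Sigma^\infty G_+$; $\mathfrak{R}_G$ sends this Reedy cofibrant simplicial object to the corresponding bar resolution built from $\mathbb{Q}[G]$, which is a cofibrant resolution of the monoidal unit in $\mathbb{Q}[G]\mathrm{-Mod}$ by the same argument applied to the simplicial Hopf algebra $\mathbb{Q}[G]$. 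The comparison map is then a weak equivalence because both realisations compute the homotopy colimit of bar complexes and the underlying map is already a weak equivalence by Lemma \ref{lem:Rect1}.

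The main obstacle is bookkeeping the diagonal actions through the composite defining $\mathfrak{R}$, but this should be routine once we note that both $\widetilde{\mathbb{Q}}$ and extension of scalars $\zeta_!$ are strongly symmetric monoidal and thus preserve the comonoid structure that encodes the diagonal action.
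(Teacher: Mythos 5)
Your proof is correct and its first half coincides with the paper's argument: the paper likewise deduces strong symmetric monoidality of $\mathfrak{R}_G$ from that of $\mathfrak{R}$ via the strictly monoidal forgetful functors to $H\mathbb{Q}\mathrm{-Mod}$ and $\mathrm{Sp}^\Sigma(\mathrm{sVect}_\mathbb{Q})$ (your more explicit tracking of the diagonal actions through $\widetilde{\mathbb{Q}}$ and $\zeta_!$ is just an unpacking of that step). Where you diverge is the unit axiom. The paper takes the cofibrant replacement $H\mathbb{Q}\wedge\Sigma^\infty WG_+\to H\mathbb{Q}$ of the monoidal unit, so that the comparison map is literally the image of $WG_+\to S^0$ under the composite left Quillen functor $\Sigma^\infty_\mathbb{Q}\circ\widetilde{\mathbb{Q}}$, and Ken Brown's lemma finishes in one line. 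You instead use the two-sided bar resolution of $H\mathbb{Q}\wedge\Sigma^\infty G_+$ in the style of Construction \ref{cons:HopfCofibUnit}; this is a legitimate cofibrant replacement (Reedy cofibrancy follows from freeness of the diagonal $G_+$-action on smash powers, as in Lemma \ref{lem:GSpaceMonoidal}), and since $\mathfrak{R}_G$ is a strongly monoidal left Quillen functor it carries this resolution to the bar resolution $\mathbb{Q}^{\mathbb{Q}[G]}$ of $\Sigma^\infty_\mathbb{Q}\mathbb{Q}$ in $\mathbb{Q}[G]\mathrm{-Mod}$, whose augmentation is a weak equivalence by the extra-degeneracies argument. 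The only imprecision is your final appeal to Lemma \ref{lem:Rect1}: the relevant fact is not the Quillen equivalence itself but either the simplicial contraction of the bar complex or, more simply, Ken Brown's lemma applied to $\mathfrak{R}$ on the underlying weak equivalence of cofibrant $H\mathbb{Q}$-modules, combined with the fact that weak equivalences in $\mathbb{Q}[G]\mathrm{-Mod}$ are created by the forgetful functor. Your route costs a little more bookkeeping but has the virtue of matching the resolution used later in Lemma \ref{lem:BaseChangeforSimplHopf} and Theorem \ref{thm:SimpLieMonoidalEquivs}; the paper's choice of $WG_+$ is the more economical of the two.
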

\begin{proof}
There is a commuting diagram
\[
\begin{tikzcd}
(G_+,H\mathbb{Q})\mathrm{-Bimod} 
\ar[r,"\mathfrak{R}_G"]
\ar[d]
&
\mathbb{Q}[G]\mathrm{-Mod}
\ar[d]
\\
H\mathbb{Q}\mathrm{-Mod}
\ar[r,"\mathfrak{R}"]
&
\mathrm{Sp}^\Sigma (\mathrm{sVect})
\end{tikzcd}
\]
in which the vertical forgetful functors are strictly monoidal.
Since the rectification functor $\mathfrak{R}$ is strongly symmetric monoidal, it follows that $\mathfrak{R}_G$ is too.

To complete the proof we must show that for some (hence any) cofibrant resolution $C\to H\mathbb{Q}$ of the monoidal unit in $(G_+,H\mathbb{Q})\mathrm{-Bimod}$, the comparison morphism 
\[
\gamma_C\colon \mathfrak{R}_G(C)\longrightarrow \mathfrak{R}_G(H\mathbb{Q}) \cong \Sigma^\infty\mathbb{Q}
\]
is a weak equivalence of $\mathbb{Q}[G]$-module spectra.
Taking $C= \Sigma^\infty WG_+$, there is an isomorphism of $\mathbb{Q}[G]$-module spectra $\mathfrak{R}_G (\Sigma^\infty WG_+) \cong \Sigma^\infty_\mathbb{Q} \mathbb{Q}[WG]$ as $\mathbb{Q}[G]$-modules and the underlying morphism of $\gamma_{\Sigma^\infty WG_+}$
coincides with the image of the weak equivalence $WG_+ \to S^0$ under the composite left Quillen functor
\[
\begin{tikzcd}
\mathrm{sSet}_+
\ar[r, "\widetilde{\mathbb{Q}}"]
&
\mathrm{sVect}_\mathbb{Q}
\ar[r, "\Sigma^\infty_\mathbb{Q}"]
&
\mathrm{Sp}^\Sigma(\mathrm{sVect}_{\mathbb{Q}})\,,
\end{tikzcd}
\]
so is a weak equivalence by Ken Brown's lemma.
Since weak equivalences in $\mathbb{Q}[G]\mathrm{-Mod}$ are created by the forgetful functor to $\mathrm{Sp}^\Sigma(\mathrm{sVect}_\mathbb{Q})$, the comparison map $\gamma_{\Sigma^\infty WG_+}$ is a weak equivalence. 
\end{proof}

A morphism of simplicial Hopf algebras $\phi\colon H\to K$ gives rise to a base change adjunction between stable model categories of modules.
If $\phi$ is a weak equivalence, this base change adjunction induces a symmetric monoidal equivalence on homotopy categories.
\begin{lemma}
\label{lem:BaseChangeforSimplHopf}
Let $\phi\colon H\to K$ be a weak equivalence of simplicial Hopf algebras over $\mathbb{Q}$.
Then restriction and extension of scalars define a weakly monoidal $\mathrm{Sp}^\Sigma$-Quillen equivalence
\[
\begin{tikzcd}
H\mathrm{-Mod}
\ar[rr, shift left=1.1ex, "\phi_!"]
\ar[rr, shift left=-1.1ex, "\bot", "\phi^\ast"',leftarrow]
&&
K\mathrm{-Mod}\,.
\end{tikzcd}
\]
\end{lemma}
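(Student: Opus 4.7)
The statement has three ingredients: $(\phi_! \dashv \phi^\ast)$ is a $\mathrm{Sp}^\Sigma$-Quillen adjunction, it is moreover a Quillen equivalence, and it carries a weak monoidal structure in the sense of \cite{schwede_monoidal_2003}. For the Quillen adjunction, observe that the restriction functor $\phi^\ast$ acts as the identity on underlying objects of $\mathrm{Sp}^\Sigma(\mathrm{sVect}_\mathbb{Q})$, merely re-labelling the action. Since weak equivalences and fibrations in both $H\mathrm{-Mod}$ and $K\mathrm{-Mod}$ are created by the forgetful functors to $\mathrm{Sp}^\Sigma(\mathrm{sVect}_\mathbb{Q})$ (cf.\ Lemma \ref{lem:ForgetCreate} combined with Remark \ref{rem:EquivHopfModels}), the functor $\phi^\ast$ preserves and reflects both classes, making $(\phi_! \dashv \phi^\ast)$ a $\mathrm{Sp}^\Sigma$-Quillen adjunction.

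\textbf{Quillen equivalence.} Since $\phi^\ast$ preserves all weak equivalences it coincides with its derived functor, so it suffices to verify that the unit $\eta_M \colon M \to \phi^\ast \phi_! M$ is a weak equivalence for every cofibrant $M$. On a generating cofibrant target $M = \Sigma^{\infty-k}_\mathbb{Q}(H \otimes \widetilde{\mathbb{Q}}[\Delta^n])$ from the set $\mathcal{I}^\Sigma_H$ of Remark \ref{rem:StabHopfModGenCofib}, the unit becomes $\Sigma^{\infty-k}_\mathbb{Q}(\phi \otimes \mathrm{id})$, which is a weak equivalence by Lemma \ref{lem:SmashTensCofibObject} applied to the cofibrant object $\widetilde{\mathbb{Q}}[\Delta^n]$ and the weak equivalence $\phi$. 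Let $\mathcal{E} \hookrightarrow Ho(H\mathrm{-Mod})$ be the full subcategory of objects where the derived unit is invertible; $\mathcal{E}$ is closed under exact triangles and arbitrary coproducts (both $\mathbf{L}\phi_!$ and $\phi^\ast$ are exact and preserve coproducts), and it contains the compact generators $\Sigma^{\infty-k}_\mathbb{Q} H$. Hence $\mathcal{E} = Ho(H\mathrm{-Mod})$ and $(\phi_! \dashv \phi^\ast)$ is a Quillen equivalence.

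\textbf{Weak monoidality.} The Hopf-algebra map $\phi$ endows $\phi^\ast$ with a strict symmetric monoidal structure: because $\phi$ is a coalgebra map, the two diagonal $H$-actions on $\phi^\ast(M \otimes_K N) = \phi^\ast M \otimes_H \phi^\ast N$ coincide. By the mate construction, $\phi_!$ acquires an oplax symmetric monoidal structure with comparison maps $\nabla_{A,B} \colon \phi_!(A \otimes B) \to \phi_!(A) \otimes \phi_!(B)$ and $\mu \colon \phi_!(H\mathbb{Q}) \to H\mathbb{Q}$. For the unit axiom, take $C \to H\mathbb{Q}$ to be the cofibrant resolution obtained by $\Sigma^\infty_\mathbb{Q}$-stabilising the Hopf bar resolution $\mathbb{Q}^H \to \mathbb{Q}$ of Construction \ref{cons:HopfCofibUnit}. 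Applying $\phi$ in each tensor factor produces a morphism $\phi_!(C) \to \Sigma^\infty_\mathbb{Q}(\mathbb{Q}^K)$ of Reedy cofibrant simplicial $K$-modules which is a levelwise weak equivalence (each level is $\mathrm{id}_K \otimes \phi^{\otimes n}$ on cofibrant objects, to which Lemma \ref{lem:SmashTensCofibObject} applies), hence a weak equivalence after realisation. Composing with $\Sigma^\infty_\mathbb{Q}(\mathbb{Q}^K) \to H\mathbb{Q}$ gives the required weak equivalence $\phi_!(C) \to H\mathbb{Q}$. For the tensor axiom, check $\nabla_{A,B}$ first on free modules $A = H \otimes V$, $B = H \otimes W$ with cofibrant $V, W$. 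By the fundamental theorem of Hopf modules (Lemma \ref{lem:FreeHopf}), shearing furnishes an isomorphism of $H$-modules $(H \otimes V) \otimes (H \otimes W) \cong H \otimes (H \otimes H)_H \otimes V \otimes W$, and analogously for $K$; under these identifications $\nabla_{A,B}$ becomes $\mathrm{id}_K \otimes \phi_\sharp \otimes \mathrm{id}_{V \otimes W}$, with $\phi_\sharp \colon (H \otimes H)_H \to (K \otimes K)_K$ the map of coinvariants induced by $\phi$, which is a weak equivalence by another application of Lemma \ref{lem:SmashTensCofibObject}. A cell-induction argument patterned on the proof of Lemma \ref{lem:SmashTensCofibObject} (passing through the injective model structure on $\mathrm{Sp}^\Sigma(\mathrm{sVect}_\mathbb{Q})$) then extends this to all cofibrant $A, B$.

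\textbf{Main obstacle.} The subtlest ingredient is the tensor compatibility $\nabla_{A,B}$: without the Hopf structure, extension of scalars would not interact sensibly with tensor products at all, and Lemma \ref{lem:FreeHopf} is precisely the lever that converts $\nabla_{A,B}$, after shearing, into an instance of $\phi$ tensored with cofibrant objects, so that Lemma \ref{lem:SmashTensCofibObject} can finish the job.
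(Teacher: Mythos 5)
Your proposal is correct and follows essentially the same route as the paper: restriction of scalars creates weak equivalences and fibrations, the derived unit is checked on the compact generator(s) and propagated through a localising subcategory, the unit axiom is handled via the stabilised Hopf bar resolution $\mathbb{Q}^H$ of Construction \ref{cons:HopfCofibUnit}, and the oplax comparison maps are reduced on free modules to a map of coinvariants via the fundamental theorem of Hopf modules before a cell induction extends to all cofibrant objects. The only cosmetic difference is that you attribute the weak equivalence $(H\otimes H)_H\to(K\otimes K)_K$ to Lemma \ref{lem:SmashTensCofibObject}, whereas it really follows from the K\"{u}nneth theorem together with the shearing identification exhibiting the coinvariants as compatible retracts; this does not affect the argument.
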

\begin{proof}
The restriction of scalars functor  $\phi^\ast$ is the identity on underlying objects, so preserves and reflects weak equivalences and fibrations.
The cofibrant object $\Sigma^\infty_\mathbb{Q} H\in H\mathrm{-Mod}$ presents a compact generator of the homotopy category, and the underlying morphism of the derived unit $\eta_{\Sigma^\infty_\mathbb{Q} H}\colon \Sigma^\infty_\mathbb{Q} H \to \phi^\ast \phi_! \Sigma^\infty_\mathbb{Q} H \cong \Sigma^\infty_\mathbb{Q} K$ coincides with the image of $\phi$ under the left Quillen functor $\Sigma^\infty_\mathbb{Q}\colon \mathrm{sVect}_\mathbb{Q}\to \mathrm{Sp}^\Sigma(\mathrm{sVect}_\mathbb{Q})$.
Since $H, K\in \mathrm{sVect}_\mathbb{Q}$ are cofibrant and $\phi$ is a weak equivalence by hypothesis, $\eta_{\Sigma^\infty_\mathbb{Q} H}$ is also a weak equivalence by Ken Brown's lemma.
The argument of Lemma \ref{lem:Rect1} now applies \emph{mutatis mutandis} to show that $(\phi_!\dashv \phi^\ast)$ is a Quillen equivalence.

We now turn to the monoidal properties of the adjunction.
The monoidal structures of $H\mathrm{-Mod}$ and $K\mathrm{-Mod}$ are created in $\mathrm{Sp}^\Sigma(\mathrm{sVect}_\mathbb{Q})$, so that the restriction of scalars functor $\phi^\ast$ is strictly symmetric monoidal.
Write $\nu \colon \phi_! (\Sigma^\infty_\mathbb{Q}\mathbb{Q})\to \Sigma^\infty_\mathbb{Q}\mathbb{Q}$ for the adjunct of the isomorphism of monoidal units $\mathrm{id}\colon \Sigma^\infty_\mathbb{Q}\mathbb{Q}\to \Sigma^\infty_\mathbb{Q}\mathbb{Q} \cong \phi^\ast \Sigma^\infty_\mathbb{Q}\mathbb{Q}$ and consider the natural map
\[
\Lambda_{M,N}\colon \phi_!(M\otimes N)\longrightarrow \phi_!(M)\otimes \phi_!(N)
\] 
adjunct to the composite
$
\begin{tikzcd}[cramped]
M\otimes N 
\ar[r, "\eta_M\otimes \eta_N"]
&
\phi^\ast \phi_! (M)\otimes \phi^\ast \phi_! (M)
=
\phi^\ast \big(\phi_! (M) \otimes \phi_! (N)\big)
\end{tikzcd}
$.
The maps $\nu$ and $\Lambda_{M,N}$ furnish $\phi_!$ with the structure of an oplax symmetric monoidal functor, and in order for $\phi_!$ to be a weakly monoidal Quillen equivalence we must show that:
\begin{enumerate}[label=(\alph*)]
  \item $\Lambda_{M,N}$ is a weak equivalence for all cofibrant $M$ and $N$; and
  \item for some (hence any) cofibrant replacement $C\to \Sigma^\infty_\mathbb{Q}\mathbb{Q}$ of the monoidal unit in $H\mathrm{-Mod}$, the comparison map 
  \[
  \gamma_{C}\colon \phi_! (C)\longrightarrow \phi_! (\Sigma^\infty_\mathbb{Q} \mathbb{Q})
  \xrightarrow{\;\;\nu\;\;}
  \Sigma^\infty_\mathbb{Q} \mathbb{Q}
  \]
  is a weak equivalence in $K\mathrm{-Mod}$.
\end{enumerate}
We prove (a) in a series of steps; first consider the case that $M$ and $N$ are free $H$-modules on shifted suspension spectra, that is
\[
M = \Sigma^{\infty-k}_\mathbb{Q}(H\otimes V)\cong \Sigma^\infty_\mathbb{Q} H\otimes \Sigma^{\infty-k}_\mathbb{Q}V
\qquad
\mbox{ and }
\qquad
N = \Sigma^{\infty-l}_\mathbb{Q}(H\otimes W)\cong \Sigma^\infty_\mathbb{Q} H\otimes \Sigma^{\infty-l}_\mathbb{Q}W
\]
for $k, l\geq 0$ and simplicial rational vector spaces $V$ and $W$.
By general properties of symmetric stabilisation and Corollary \ref{cor:SimplicialHopfMod} there are isomorphisms
\[
M\otimes N \cong \Sigma^{\infty-(k+l)}_\mathbb{Q} \big(H\otimes H \otimes V\otimes W\big)
\cong
\Sigma^\infty_\mathbb{Q} H\otimes \Sigma^{\infty-(k+l)}_\mathbb{Q} \big((H\otimes H)_H \otimes V\otimes W\big)
\]
in $H\mathrm{-Mod}$.
The weak equivalence $\phi\otimes \phi\colon H\otimes H\to K\otimes K$ induces a weak equivalence on coinvariants $\Phi\colon (H\otimes H)_H \to (K\otimes K)_K$, and the oplax structure map $\Lambda_{M,N}$ is isomorphic to the map of $K$-module spectra
\[
\Sigma^\infty_\mathbb{Q} K \otimes  \Sigma^{\infty-(k+l)}_\mathbb{Q} \big((H\otimes H)_H \otimes V\otimes W\big)
\longrightarrow
\Sigma^\infty_\mathbb{Q} K \otimes  \Sigma^{\infty-(k+l)}_\mathbb{Q} \big((K\otimes K)_K \otimes V\otimes W\big)
\]
induced by $\Phi$, so is a weak equivalence by Ken Brown's lemma.
Fixing $M = \Sigma^\infty_\mathbb{Q} H\otimes \Sigma^{\infty-k}_\mathbb{Q}V$, let ${}_M \mathcal{C}$ be the class of objects $B\in H\mathrm{-Mod}$ for which $\Lambda_{M,B}$ is a weak equivalence.
The class ${}_M \mathcal{C}$ is closed under forming retracts, and by above argument contains the domains and codomains of the set of generating cofibrations $\mathcal{I}^\Sigma_H$ (Remark \ref{rem:StabHopfModGenCofib}).
Moreover
\begin{itemize}
  \item ${}_M \mathcal{C}$ is closed under forming pushouts along maps in $\mathcal{I}^\Sigma_H$.
  Indeed, let $i\colon A\to B$ be such a map and consider the pushout 
  \[
  \begin{tikzcd}
  A \ar[r]
  \ar[d, rightarrowtail, "i"'] 
    \ar[dr, phantom, "\lrcorner", very near end]& C\ar[d] 
\\
  B\ar[r] & D
  \end{tikzcd}
  \]
  with $C\in {}_M\mathcal{C}$.
  The oplax structure maps give rise to a map of pushout diagrams
  \[
  \begin{tikzcd}[sep=small]
  & \phi_! (M) \otimes \phi_!(A) \ar[rr]\ar[dd, rightarrowtail] && \phi_! (M) \otimes \phi_!(C)
  \ar[dd]
  \\
  \phi_!(M\otimes A) 
  \ar[ur, "\sim"]
  \ar[rr, crossing over] 
  \ar[dd, rightarrowtail]
  &&
  \phi_!(M\otimes C)
  \ar[ur, "\sim" ] 
  &
  \\
  & \phi_! (M) \otimes \phi_!(B)\ar[rr]&& \phi_! (M) \otimes \phi_!(D)
  \\
  \phi_!(M\otimes B) \ar[rr]
  \ar[ur, "\sim"]&&
  \phi_!(M\otimes D) 
  \ar[ur]
  \ar[from=uu, crossing over]&
  \end{tikzcd}
  \]
  with cofibrations and weak equivalences as marked.
  All objects are cofibrant, so that the morphism on pushouts $\Lambda_{M,D}\colon \phi_! (M\otimes D)\to \phi_!(M)\otimes \phi_!(D)$ is a weak equivalence by Kan's cube lemma \cite[Lemma 5.2.6]{hovey_model_1999}. 
  
  \item For any cofibrant object $B\in {}_M \mathcal{C}$ and transfinite composition 
  \[
  B \longrightarrow
  B(1)
  \longrightarrow
  \dotsb 
  \longrightarrow 
  B(\kappa)
  \]
  of pushouts along maps in $\mathcal{I}^\Sigma_H$, the object $B(\kappa)\in {}_M\mathcal{C}$ by transfinite induction.
  For successor ordinals this follows from the above; if $\alpha_0$ is a limit ordinal then for $\alpha< \alpha_0$ we have weak equivalences of sequences of cofibrations
  \[
  \begin{tikzcd}
  \dotsb
  \ar[r, rightarrowtail]
  &
  \phi_! (M\otimes B(\alpha))
  \ar[r, rightarrowtail]
  \ar[d, "\Lambda_{M,B(\alpha)}"', "\sim"]
  &
  \phi_!(M\otimes B(\alpha+1))
  \ar[r, rightarrowtail]
    \ar[d, "\Lambda_{M,B(\alpha+1)}", "\sim"']
  &
  \dotsb
  \\
    \dotsb
  \ar[r, rightarrowtail]
  &
  \phi_! (M)\otimes \phi_!(B(\alpha))
  \ar[r, rightarrowtail]
  &
  \phi_!(M)\otimes \phi_!(B(\alpha+1))
  \ar[r, rightarrowtail]
  &
  \dotsb
  \end{tikzcd}
  \]
  Both top and bottom sequences are cofibrant for the Reedy model structure on $\alpha$-indexed cotowers, so the morphism of colimits $\Lambda_{M, B(\alpha_0)}$ is a weak equivalence by Ken Brown's lemma.
\end{itemize}
By cofibrant generation of the stable model structure on $H\mathrm{-Mod}$, the class ${}_M \mathcal{C}$ thus contains all cofibrant objects.
For any cofibrant object $N \in H\mathrm{-Mod}$ let $\mathcal{C}_N$ denote the class of objects $B$ for which $\Lambda_{B,N}$ is a weak equivalence.
 $\mathcal{C}_N$ contains the domains and codomains of all morphisms in $\mathcal{I}^\Sigma_H$ by the above, hence $\mathcal{C}_N$ contains all cofibrant objects by a similar argument, which proves (a).

We prove (b) by using the cofibrant replacements $\mathbb{Q}^H\to \mathbb{Q}$ of Construction \ref{cons:HopfCofibUnit}.
After taking symmetric stabilisations we have a cofibrant replacement $\Sigma^\infty_\mathbb{Q} \mathbb{Q}^H\to \Sigma^\infty_\mathbb{Q} \mathbb{Q}$ of the monoidal unit in $H\mathrm{-Mod}$ (similarly replacing $H$ by $K$).
Applying the extension of scalars functor we get
\[
\phi_!(\mathbb{Q}^H) \cong  \underset{\Delta^\mathrm{op}}{\mathrm{colim}}\, \phi_!\big(H^{\otimes(\bullet +1)}\big) \cong \underset{\Delta^\mathrm{op}}{\mathrm{hocolim}}\, \phi_!\big(H^{\otimes(\bullet +1)}\big)\,.
\]
By Corollary \ref{cor:SimplicialHopfMod} there is an equivalence of $H$-modules $H^{\otimes(n+1)}\cong H\otimes (H^{\otimes (n+1)})_H$ for each $n\geq 0$ (and similarly for $K$).
The weak equivalences $\phi^{\otimes(n+1)}\colon H^{\otimes(n+1)}\to K^{\otimes(n+1)}$ give rise to a compatible family of weak equivalences of coinvariant submodules $\Phi_n \colon (H^{\otimes(n+1)})_H \to (K^{\otimes(n+1)})_K$.
From these weak equivalences we obtain a levelwise weak equivalence of Reedy cofibrant simplicial objects in $K\mathrm{-Mod}_u$
\[
\phi_! (H^{\otimes (\bullet+1)}) \cong K\otimes H^{\otimes (\bullet +1)}_H
\xrightarrow{\;\;K\otimes \Phi_\bullet\;\,}
K\otimes K^{\otimes (\bullet +1)}_K\,,
\]
and passing to realisations yields a weak equivalence $\phi_!(\mathbb{Q}^H)\to \mathbb{Q}^K$ in $K\mathrm{-Mod}_u$.
By Ken Brown's lemma, the morphism $\gamma\colon \Sigma^\infty_\mathbb{Q}\phi_!(\mathbb{Q}^H)\cong \phi_!\Sigma^\infty_\mathbb{Q} \mathbb{Q}^H\to \Sigma^\infty_\mathbb{Q}\mathbb{Q}$ is a weak equivalence in $\mathrm{Sp}^\Sigma(\mathrm{sVect}_\mathbb{Q})$, hence also in $K\mathrm{-Mod}$.
Since $\gamma$ is isomorphic to the comparison map $\gamma_{\Sigma^\infty_\mathbb{Q} \mathbb{Q}^H}$, this proves (b).
\end{proof}

\section{Homotopical Lie representations}
\label{sec:LieRep}
Let $\mathfrak{g}$ be a simplicial Lie algebra.
The goal of this section is to prove a zig-zag of (weakly) monoidal Quillen equivalences between symmetric spectra in $\mathfrak{g}$-representations on the one hand and unbounded dg representations of the Lie algebra $N\mathfrak{g}$ of normalised chains on the other.
This plays a key part in the proofs of Section \ref{sec:RatStabParamHom}, providing the link between the simplicial and differential graded settings.

\subsection{The stable Dold--Kan correspondence}
The Dold--Kan correspondence is a classical equivalence between simplicial objects in an abelian category $A$ and non-negatively graded chain complexes in $A$.
In this article we are of course  exclusively interested in the case that $A= \mathrm{Vect}_\mathbb{Q}$ is the category of rational vector spaces, so that the Dold--Kan correspondence
\[
\begin{tikzcd}
\mathrm{sVect}_\mathbb{Q}
\ar[rr, "N", shift left=1.1ex]
\ar[rr, "\Gamma"', shift left=-1.1ex, leftarrow, "\simeq"]
&&
\mathrm{Ch}_+
\end{tikzcd}
\]
relates simplicial rational vector spaces with non-negatively graded rational chain complexes.
The \emph{normalisation functor} $N$ sends a simplicial rational vector space $V_\bullet$ to its \emph{normalised chain complex} $NV$, defined in degree $n\geq 0$ as the joint kernel
\[
NV_n := \bigcap_{i=1}^n \ker(d_i\colon V_n \to V_{n-1})
\]
and equipped with differential by restricting the remaining face map $d_0$ to this mutual kernel.
The inverse equivalence $\Gamma$ sends the non-negatively graded rational chain complex $M_\ast$ to the simplicial vector space
\[
[n]\longmapsto \Gamma(M)_n := \mathrm{Ch}_+ \big(N(\mathbb{Q}[\Delta^n]), M\big)
\]
defined by taking maps out of the normalised chain complex $N(\mathbb{Q}[\Delta^n])$ of the combinatorial $n$-simplex.
\begin{remark}
The Dold--Kan functors $N$ and $\Gamma$ exchange homotopy groups of simplicial vector spaces with homology groups of chain complexes.
The inverse equivalences of categories $N$ and $\Gamma$ can be promoted to Quillen equivalences $(N\dashv \Gamma)$ and $(\Gamma\dashv N)$ (with respect to the projective model structure on $\mathrm{Ch}_+$).
\end{remark}

Though the normalisation functor $N$ does not send the levelwise tensor product of simplicial vector spaces to tensor products of chain complexes, these monoidal structures are related to each in various ways.
The \emph{shuffle map} is a natural quasi-isomorphism
\[
\nabla_{V,W}\colon NV \otimes NW \longrightarrow N(V\otimes W)
\]
exhibiting $N$ as a lax symmetric monoidal functor (details are recalled in \cite[Section 2.3]{schwede_monoidal_2003}).
Passing to adjoints, there are induced natural maps of simplicial rational vector spaces
\[
\Phi_{M,N}\colon \Gamma (M\otimes N)\longrightarrow 
\Gamma M\otimes \Gamma N
\]
that make $\Gamma$ oplax symmetric monoidal.
The $\Phi_{M,N}$ are natural weak equivalences, so that the left derived functor $\mathbf{L}\Gamma$ is strongly symmetric monoidal.
This is the ur-example of a weakly monoidal Quillen equivalence:
\begin{theorem}[{\cite{schwede_monoidal_2003}}]
The Dold--Kan correspondence
\[
\begin{tikzcd}
\mathrm{Ch}_+
\ar[rr, "\Gamma", shift left=1.1ex]
\ar[rr, , "\bot", leftarrow, "N"', shift left=-1.1ex]
&&
\mathrm{sVect}_\mathbb{Q}
\end{tikzcd}
\]
is a weakly monoidal Quillen equivalence.
\end{theorem}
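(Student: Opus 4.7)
The plan is to verify the Schwede--Shipley criteria for a weakly monoidal Quillen equivalence, namely: (a) $(\Gamma\dashv N)$ is a Quillen adjunction with right adjoint $N$ lax symmetric monoidal; (b) $(\Gamma\dashv N)$ is a Quillen equivalence; (c) the oplax structure maps $\Phi_{M,N}\colon \Gamma(M\otimes N)\to \Gamma M\otimes \Gamma N$ are weak equivalences for cofibrant $M,N$; and (d) the composite comparison map from $\Gamma$ applied to a cofibrant resolution of the unit is a weak equivalence. Items (a) and (b) have already been recalled in the text, so the work lies in (c) and (d).

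Item (d) is immediate in the rational setting: the monoidal unit $\mathbb{Q}[0]\in \mathrm{Ch}_+$ is already cofibrant (every complex of rational vector spaces is projectively cofibrant, so no nontrivial resolution is needed), $\Gamma(\mathbb{Q}[0])$ is the constant simplicial vector space $\mathbb{Q}$ which is the monoidal unit of $\mathrm{sVect}_\mathbb{Q}$, and the comparison map is an isomorphism.

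For (c), I would pass to adjoints. By construction $\Phi_{M,N}$ is the $(\Gamma\dashv N)$-adjunct of the composite
\[
M\otimes N \xrightarrow{\;\eta_M\otimes \eta_N\;} N\Gamma M\otimes N\Gamma N \xrightarrow{\;\nabla_{\Gamma M,\Gamma N}\;} N(\Gamma M\otimes \Gamma N),
\]
where $\eta$ is the adjunction unit, which is a natural isomorphism since $\Gamma$ is fully faithful. By the classical Eilenberg--Zilber theorem, $\nabla$ is a natural quasi-isomorphism, so the displayed composite is a weak equivalence in $\mathrm{Ch}_+$. Reading the triangle identity in reverse, $N(\Phi_{M,N})$ coincides up to the canonical isomorphism $N\Gamma\cong \mathrm{id}$ with this adjunct; hence $N(\Phi_{M,N})$ is a quasi-isomorphism. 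Since $N$ is an equivalence of underlying categories it reflects weak equivalences, so $\Phi_{M,N}$ is itself a weak equivalence.

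The only nontrivial input is the Eilenberg--Zilber theorem, i.e.\ the naturality and quasi-isomorphism property of the shuffle map $\nabla$; this is the main obstacle but is entirely classical, following either from the explicit shuffle/Alexander--Whitney formulae together with a chain homotopy, or from an acyclic models argument. The passage to a field of characteristic zero further simplifies matters, since every object of both $\mathrm{Ch}_+$ and $\mathrm{sVect}_\mathbb{Q}$ is cofibrant-fibrant, so no resolutions intervene in the verification of (c) and (d).
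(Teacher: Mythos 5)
Your proof is correct and follows exactly the route the paper has in mind: the paper cites this result to Schwede--Shipley and only sketches the key point in the surrounding text (the shuffle map makes $N$ lax symmetric monoidal and the adjoint comonoidal maps $\Phi_{M,N}$ are natural weak equivalences), and your verification of the weak monoidal Quillen equivalence axioms -- in particular deducing that $\Phi_{M,N}$ is a weak equivalence from the Eilenberg--Zilber theorem together with the fact that $N$ and $\Gamma$ are inverse equivalences reflecting weak equivalences, and the observation that the unit needs no resolution over $\mathbb{Q}$ -- is precisely the standard argument being invoked.
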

\begin{corollary}
\label{cor:NormLaxClosed}
For simplicial rational vector spaces $V$ and $W$ the shuffle map induces a natural quasi-isomorphism
$
\Xi_{V,W}\colon N([V, W])\to [NV, NW]$
exhibiting $N$ as a lax closed functor.
\end{corollary}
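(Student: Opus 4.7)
The plan has three steps, and I would organize them as follows.

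First, I would construct $\Xi_{V,W}$ by adjunction. Since $\mathrm{Ch}$ is closed symmetric monoidal, giving $\Xi_{V,W}\colon N[V,W] \to [NV, NW]$ is equivalent to giving a natural map
\[
\chi_{V,W}\colon N[V,W] \otimes NV \longrightarrow NW.
\]
I take $\chi_{V,W}$ to be the composite of the shuffle map $\nabla_{[V,W], V}\colon N[V,W] \otimes NV \to N([V,W] \otimes V)$ with $N$ applied to the evaluation counit $\mathrm{ev}\colon [V,W] \otimes V \to W$ of the internal hom adjunction in $\mathrm{sVect}_\mathbb{Q}$. Naturality of $\Xi_{V,W}$ in $V$ and $W$ is immediate from the naturality of $\nabla$ and $\mathrm{ev}$.

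Second, I would verify that $(\nabla, \Xi)$ endows $N$ with a lax closed structure. There are two coherence axioms to check: compatibility of $\Xi$ with the composition morphism $[U,V] \otimes [V,W] \to [U,W]$ and with the unit $\mathbb{Q} \to [V,V]$. Both are routine consequences of the associativity and unit coherences of $\nabla$ as a lax monoidal transformation, obtained by unpacking the tensor--hom adjunctions on both sides and performing the standard diagram chases for closed monoidal categories.

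Third, and most substantively, I would prove $\Xi_{V,W}$ is a quasi-isomorphism. The plan is to invoke the Eilenberg--Zilber theorem, which provides a natural chain-homotopy inverse to $\nabla$ via the Alexander--Whitney map $AW$, with $AW\circ \nabla = \mathrm{id}$ and $\nabla\circ AW \simeq \mathrm{id}$ through natural simplicial homotopies. Using $AW$ in place of $\nabla$, I would construct a natural candidate $\Psi_{V,W}\colon [NV, NW] \to N[V,W]$ and verify that $\Xi\circ\Psi$ and $\Psi\circ\Xi$ are each chain homotopic to the identity by propagating the Eilenberg--Zilber homotopies through the tensor--hom adjunction. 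The main obstacle is precisely this last step, namely keeping the Eilenberg--Zilber homotopies natural after passing through the internal hom. A more pedestrian alternative is to verify the result first on $V = \widetilde{\mathbb{Q}}[\Delta^n_+]$ — both sides collapse to copies of $NW$ up to quasi-isomorphism, since $N\widetilde{\mathbb{Q}}[\Delta^n_+]$ is contractible — and then bootstrap to general $V$ along the skeletal filtration by induction on cell attachments, applying the five-lemma at each stage together with the fact that $[-, W]$ converts the resulting pushouts of cofibrations in $\mathrm{sVect}_\mathbb{Q}$ into pullbacks of fibrations in $\mathrm{Ch}_+$.
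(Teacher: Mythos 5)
Your construction of $\Xi_{V,W}$ as the adjunct of $N(\mathrm{ev})\circ\nabla_{V,[V,W]}$ is exactly the paper's, and the coherence check for the lax closed structure is dispatched there just as briefly as you propose. Where you genuinely diverge is the proof that $\Xi_{V,W}$ is a quasi-isomorphism. The paper touches neither Eilenberg--Zilber nor cell structures: it observes that postcomposition with $\Xi_{V,W}$ on hom-sets $\mathrm{Ch}_+(M,-)$ is, by adjunction, precomposition with the natural weak equivalence $\xi_{V,M}\colon \Gamma(NV\otimes M)\to V\otimes\Gamma M$ built from the oplax structure map $\Phi$ of $\Gamma$ (a weak equivalence because $(N,\Gamma)$ is a weakly monoidal Quillen equivalence), then runs this over a Reedy cosimplicial framing $M^\ast$ to conclude that $\Xi_{V,W}$ induces weak equivalences of homotopy function complexes out of every cofibrant object; since everything in sight is fibrant-cofibrant, $\Xi_{V,W}$ is a quasi-isomorphism. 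Of your two suggested routes, the first (transporting the Eilenberg--Zilber homotopies through the internal hom) is the one to drop: as you concede, the candidate inverse $\Psi$ and the required homotopies are never actually produced, and there is no evident way to make $AW$ interact with the hom adjunction, essentially because $AW$ is only oplax and not symmetric. Your second route---verify the claim on the cells $\widetilde{\mathbb{Q}}[\Delta^n_+]$ (via naturality along $\Delta^0\to\Delta^n\to\Delta^0$ and two-out-of-three) and induct along the skeletal filtration---is sound over $\mathbb{Q}$, where every simplicial vector space is a cell complex in the skeletal sense and both functors are homotopy invariant; but it costs you the bookkeeping of infinite coproducts of cells, towers of fibrations and $\lim^1$ terms that the paper's representability argument sidesteps by concentrating all the homotopical content in the single already-established fact that $\Phi_{M,N}$ is a natural weak equivalence.
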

\begin{proof}
For simplicial rational vector spaces $V, W$, the 
\[
\begin{tikzcd}
NV\otimes N([V,W])
\ar[r, "\nabla"]
&
N(V\otimes [V, W])
\ar[r, "N(\mathrm{ev})"]
&
NW
\end{tikzcd}
\]
is the adjunct to a map of non-negatively graded chain complexes $\Xi_{V,W}\colon N([V,W])\to [NV, NW]$.
The map $\Xi_{V,W}$ is manifestly natural in $V$ and $W$, and exhibits a lax closed structure on $N$ by a standard argument.

To complete the proof, we must show that $\Xi_{V,W}$ is a quasi-isomorphism for all simplicial vector spaces $V,W$.
For any non-negatively graded chain complex $M$, arguing on adjoints shows that the natural transformation 
$
\mathrm{Ch}_+\big(M, N([V,W])\big)\to 
\mathrm{Ch}_+ \big(M, [NV, NW]\big)
$
given by postcomposition with $\Xi_{V,W}$
is identified with the natural transformation
$
\mathrm{sVect}_\mathbb{Q}\big(V\otimes \Gamma M, W\big)
\to
\mathrm{sVect}_\mathbb{Q}\big(\Gamma (NV \otimes M), W\big)
$
obtained by precomposition with the natural weak equivalence
\[
\xi_{V,M}
\colon
\begin{tikzcd}
\Gamma (NV\otimes M)
\ar[r, "\Phi_{NV, M}"]
&
\Gamma NV \otimes \Gamma M
\ar[r, "\cong "]
&
V\otimes \Gamma M\,.
\end{tikzcd}
\]
Note that all objects of both $\mathrm{sVect}_\mathbb{Q}$ and $\mathrm{Ch}_+$ are fibrant-cofibrant.
By picking a (functorial) Reedy cosimplicial framing $M^\ast$ of $M$, the map of homotopy function complexes induced by $\Xi_{V,W}$ is presented by
\begin{equation}
\label{eqn:ReedyCof1}
\mathrm{map}_{\mathrm{Ch}_+} \big(M^\ast, \Xi_{V,W}\big)
\colon
\mathrm{map}_{\mathrm{Ch}_+} \big(M^\ast, N([V,W])\big)
\longrightarrow 
\mathrm{map}_{\mathrm{Ch}_+} \big(M^\ast, [NV, NW]\big)\,,
\end{equation}
which is identified with 
\begin{equation}
\label{eqn:ReedyCof2}
\mathrm{map}_{\mathrm{sVect}_\mathbb{Q}} \big(\xi_{V,M^\ast}, W\big)
\colon
\mathrm{map}_{\mathrm{sVect}_\mathbb{Q}} \big( V\otimes \Gamma M^\ast,W\big)
\longrightarrow 
\mathrm{map}_{\mathrm{sVect}_\mathbb{Q}} \big(\Gamma (NV\otimes M^\ast) ,W\big)
\end{equation}
by the above.
The properties of the Dold--Kan correspondence guarantee that $V\otimes \Gamma M^\ast$ and $\Gamma (NV\otimes M^\ast)$ are Reedy cosimplicial coframings of $V\otimes M$ and $\Gamma(NV\otimes M)$ respectively, so that  \eqref{eqn:ReedyCof2} does indeed present a map of homotopy function complexes as indicated by the notation. 
As $\xi_{V,M^\ast}$ is a natural weak equivalence, \eqref{eqn:ReedyCof2} is a natural weak equivalence of simplicial sets for all $V, W$, and $M$.
It follows that \eqref{eqn:ReedyCof1} is a weak equivalence of homotopy function complexes for all $V, W$, and $M$. 

In any model category $\mathcal{M}$, a morphism of fibrant objects $f\colon x\to y$ is a weak equivalence if and only if the map of homotopy function complexes $\mathrm{map}_\mathcal{M} (c,f)$ is a weak equivalence for all cofibrant $c\in \mathcal{M}$.
As observed above, all objects of $\mathrm{sVect}_\mathbb{Q}$ and $\mathrm{Ch}_+$ are fibrant-cofibrant hence $\Xi_{V,W}$ is a quasi-isomorphism for all $V, W\in \mathrm{sVect}_\mathbb{Q}$.
\end{proof}

In \cite{shipley_HZ_2007}, Shipley produces a stabilisation of the Dold--Kan correspondence relating symmetric $H\mathbb{Z}$-module spectra to unbounded chain complexes of abelian groups.
Essentially the  same arguments apply with rational coefficents and we get a zig-zag of (weakly) monoidal Quillen equivalences between $\mathrm{Sp}^\Sigma(\mathrm{sVect}_\mathbb{Q})$ and unbounded rational chain complexes:
\begin{theorem}[Stable Dold--Kan correspondence]
\label{thm:StabDK}
There is a zig-zag of Quillen equivalences
\[
\begin{tikzcd}
\mathrm{Sp}^\Sigma(\mathrm{sVect}_\mathbb{Q})
\ar[rr, shift left=1.1ex, leftarrow, "N_!"]
\ar[rr, shift left=-1.1ex, "N^\ast"', "\bot"]
&&
\mathrm{Sp}^\Sigma(\mathrm{Ch}_+)
\ar[rr, shift left=1.1ex, "\mathfrak{A}"]
\ar[rr, shift left=-1.1ex, leftarrow, "\mathfrak{D}"', "\bot"]
&&
\mathrm{Ch}
\end{tikzcd}
\]
in which $(N_!\dashv N^\ast)$ is a weakly monoidal Quillen equivalence and $(\mathfrak{A}\dashv \mathfrak{D})$ is a strongly monoidal Quillen equivalence.
\end{theorem}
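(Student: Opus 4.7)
The plan is to adapt Shipley's zig-zag from the integral case \cite{shipley_HZ_2007} to the rational setting; working over $\mathbb{Q}$ in fact streamlines matters, since flatness trivialises several Tor-vanishing arguments that are needed integrally.

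For the first equivalence $(N_! \dashv N^\ast)$, I would prolong the classical Dold--Kan Quillen equivalence $(\Gamma \dashv N)\colon \mathrm{Ch}_+ \to \mathrm{sVect}_\mathbb{Q}$ to the level of symmetric spectrum objects. The shuffle map $\nabla$, together with its oplax adjoint $\Phi$ (which is a natural weak equivalence by the Schwede--Shipley result recalled above), equips $N$ with the data required to produce a prolonged right adjoint $N^\ast$ between symmetric spectrum categories: concretely, $N^\ast$ applies $N$ levelwise and uses $\nabla$ together with a fixed weak equivalence $N\widetilde{\mathbb{Q}}[S^1] \to \mathbb{Q}[-1]$ to assemble the structure maps. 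The left adjoint $N_!$ exists by general adjoint-functor arguments. That this adjunction is Quillen follows from the general prolongation machinery of \cite{hovey_spectra_2001}, while the Quillen-equivalence property is inherited from the underlying unstable equivalence because stabilisation (performed identically on both sides) preserves and reflects stable weak equivalences on cofibrant objects. Weak monoidality in the sense of \cite{schwede_monoidal_2003} then reduces to checking that $\nabla$ remains a weak equivalence on cofibrant symmetric spectra, which I would verify cell-by-cell using Lemma \ref{lem:SmashTensCofibObject} and Ken Brown's lemma.

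For the second adjunction $(\mathfrak{A} \dashv \mathfrak{D})$, the guiding observation is that $\mathrm{Ch}$ is already stable under the shift functor $s$, so stabilising $\mathrm{Ch}_+$ with respect to $N\widetilde{\mathbb{Q}}[S^1]$ (which is quasi-isomorphic to $\mathbb{Q}[-1]$) should collapse to unbounded chain complexes. I would take $\mathfrak{D}(M)$ to be the symmetric chain spectrum with $n$-th level $c_0(M[-n])$, the $0$-connective cover of the $n$-fold desuspension, with structure maps $\mathbb{Q}[-1] \otimes c_0(M[-n]) \to c_0(M[-(n+1)])$ induced by the natural transformation $\mathbb{Q}[-1] \otimes c_0(-) \to c_0(\mathbb{Q}[-1] \otimes (-))$; its left adjoint $\mathfrak{A}$ is then the colimit of iterated suspensions, $\mathfrak{A}(X) = \mathrm{colim}_n\, X(n)[n]$. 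Strong monoidality on both sides is a direct computation, since tensor products in $\mathrm{Ch}$ commute with the colimit defining $\mathfrak{A}$ and $c_0$ of a tensor is computed by honest cycles.

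The main obstacle is verifying that $(\mathfrak{A} \dashv \mathfrak{D})$ is a Quillen equivalence. For this I would exploit that $\mathrm{Sp}^\Sigma(\mathrm{Ch}_+)$ is a stable model category with a compact generator $\mathfrak{D}(\mathbb{Q})$, and dually that $\mathrm{Ch}$ is compactly generated by $\mathbb{Q}$. A direct computation shows that the derived unit and counit are isomorphisms on these generators. The localising triangulated subcategories on which the unit (respectively counit) is an equivalence are therefore the whole homotopy categories, following the same schema used in the proof of Lemma \ref{lem:Rect1}. Strong monoidality at the homotopy-category level is immediate once strong monoidality of $\mathfrak{A}$ and $\mathfrak{D}$ is in hand and one observes that $\mathfrak{D}(\mathbb{Q})$ is a cofibrant model for the monoidal unit of $\mathrm{Sp}^\Sigma(\mathrm{Ch}_+)$.
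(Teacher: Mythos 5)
Your overall strategy coincides with the paper's: both follow Shipley's zig-zag, defining $N^\ast$ by levelwise normalisation with shuffle-map structure maps and $\mathfrak{D}$ via connective covers $c_0(M[-n])$ with sign actions. Two points need attention, one of which is a genuine error.

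First, your formula $\mathfrak{A}(X)=\mathrm{colim}_n\,X(n)[n]$ is not the left adjoint of $\mathfrak{D}$ as you have defined it. A morphism of symmetric spectrum objects $X\to\mathfrak{D}(M)$ consists of maps $X(n)\to c_0(M[-n])$ that are $\Sigma_n$-equivariant for the sign representation on the target; adjointly, the maps $X(n)[n]\to M$ must therefore coequalise the (sign-twisted) $\Sigma_n$-actions on $X(n)[n]$, a condition the sequential colimit does not impose. The correct left adjoint is the colimit of the functor $\langle n\rangle\mapsto X(n)[n]$ over the category $\mathfrak{Fin}$ of finite sets and injections, with $\sigma\in\Sigma_n$ acting by $(-1)^{\mathrm{sgn}(\sigma)}\rho(\sigma)$ --- equivalently, $\mathrm{colim}_n\big(X(n)[n]\big)_{\Sigma_n}$ with the twisted coinvariants. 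With the sequential colimit the adjunction, and hence everything downstream of it, fails; with the corrected formula your compact-generator argument for the Quillen equivalence (modelled on Lemma \ref{lem:Rect1}) goes through, since the twisted $\Sigma_n$-action on $(s^\infty\mathbb{Q})(n)[n]$ is trivial and the derived unit and counit are isomorphisms on the generators $s^\infty\mathbb{Q}$ and $\mathbb{Q}$. (Also note that for a \emph{strongly} monoidal Quillen pair it is only $\mathfrak{A}$ that must be strongly monoidal; $\mathfrak{D}$ is merely lax, as $c_0(M)\otimes c_0(N)\to c_0(M\otimes N)$ is not an isomorphism.)

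Second, the phrase ``stabilisation performed identically on both sides'' for $(N_!\dashv N^\ast)$ glosses over the point where the work lies. The two stabilisations are taken over different commutative monoids in symmetric sequences, namely $n\mapsto\mathbb{Q}[-n]$ and $n\mapsto N\widetilde{\mathbb{Q}}[S^n]$; these agree at level $1$ (indeed $N\widetilde{\mathbb{Q}}[S^1]\cong\mathbb{Q}[-1]$ is an isomorphism, not merely a weak equivalence), but at levels $n\geq 2$ the comparison map $\mathbb{Q}[-n]\to N\widetilde{\mathbb{Q}}[S^n]$ is only a weak equivalence. $N^\ast$ is then levelwise normalisation followed by restriction of scalars along this map of monoids, which is precisely why $N_!$ is not given by $\Gamma$ levelwise and why the Quillen-equivalence property requires an argument (a comparison along a weak equivalence of monoids, or again a generator computation) rather than being formally inherited. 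This is not fatal --- it is exactly the content of Shipley's Proposition 2.10, which both you and the paper ultimately invoke --- but as written your justification is incomplete.
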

\begin{proof}[Sketch of proof.]
We briefly recall the salient points of \cite[Proposition 2.10]{shipley_HZ_2007}.
Suspension in the homotopy category $Ho(\mathrm{Ch}_+)$  is modelled by the shift endofunctor 
\begin{equation}
\label{eqn:ShiftEndoFun}
s\colon M \longmapsto M[-1]= M\otimes \mathbb{Q}[-1]
\end{equation}
with $\mathbb{Q}[-1]$ the chain complex given by $\mathbb{Q}$ concentrated in degree 1.
The symmetric stabilisation machine gives a symmetric monoidal model category $\mathrm{Sp}^\Sigma(\mathrm{Ch}_+) = \mathrm{Sp}^\Sigma(\mathrm{Ch}_+; s)$ and a left Quillen symmetric stabilisation functor $s^\infty\colon\mathrm{Ch}_+\to \mathrm{Sp}^\Sigma(\mathrm{Ch}_+)$ which is strongly symmetric monoidal.
$\mathrm{Sp}^\Sigma(\mathrm{Ch}_+)$ is identified with the category of modules over the symmetric sequence $s^\infty\mathbb{Q}\colon n \mapsto \mathbb{Q}[-n]$, which is a commutative monoid for the Day convolution product in $\mathrm{Fun}(\mathbf{\Sigma}, \mathrm{Ch}_+)$. (cf~Remark \ref{rem:SymSeqinsVectQ}).
The identification $\mathbb{Q}[-1] \cong N\widetilde{\mathbb{Q}}[S^1]$ gives rise to a morphism of commutative monoids in symmetric sequences
\begin{equation}
\label{eqn:NormalisationMonoid}
\mathbb{Q}[-n]\longrightarrow \mathcal{N}(n) := N\widetilde{\mathbb{Q}}[S^n]\,.
\end{equation}
Levelwise application of $N$ to objects of $\mathrm{Sp}^\Sigma(\mathrm{sVect}_\mathbb{Q})$  gives $\mathcal{N}$-modules in $\mathrm{Fun}(\mathbf{\Sigma}, \mathrm{Ch}_+)$, so that after taking restriction of scalars along \eqref{eqn:NormalisationMonoid} we get a functor
\[
N^\ast \colon \mathrm{Sp}^\Sigma(\mathrm{sVect}_\mathbb{Q})\longrightarrow \mathrm{Sp}^\Sigma(\mathrm{Ch}_+)\,.
\]
The functor $N^\ast$ is thus given on underlying symmetric sequences by $N$; $N^\ast$ preserves fibrations and (stable) weak equivalences.
This functor admits a left adjoint $N_!$ (which in general is \emph{not} given by $\Gamma$ on underlying symmetric sequences) such that the adjunction $(N_!\dashv N^\ast)$ is a weakly monoidal Quillen equivalence.

\begin{construction}
Let $\mathfrak{Fin}$ be the category with objects $\langle n\rangle:= \{1, \dotsc, n\}$ for $n\geq 0$ for which the morphisms $\langle n\rangle \to \langle m\rangle$ are the injections $\{1, \dotsc, n\}\hookrightarrow \{1, \dotsc, m\}$.
Associated to a symmetric spectrum object $P\in \mathrm{Sp}^\Sigma(\mathrm{Ch}_+)$ we construct a functor $\overline{P}\colon \mathfrak{Fin}\to \mathrm{Ch}$ by sending $\langle n\rangle$ to the shifted chain complex $P(n) [n] = P(n) \otimes \mathbb{Q}[n]$.
The functor $\overline{P}$ is uniquely determined on morphisms by the specifications  that
\begin{itemize}
  \item the canonical inclusion $\langle n\rangle\hookrightarrow \langle m\rangle$ of the first $n\leq m$ elements is sent to the composite
  \[
  P(n) [n]
  \xrightarrow{\sigma_n[n+1]}
  P(n+1) [n+1]
  \xrightarrow{\sigma_{n+1}[n+2]}
  \dotsb
  \xrightarrow{\sigma_{m-1}[m]}
  P(k)[k]
  \] 
  of shifted symmetric spectrum structure maps $\sigma_k \colon P(k) \otimes \mathbb{Q}[-1]\to P(k+1)$; and
  \item $\sigma\in \mathfrak{Fin}(\langle n\rangle, \langle n \rangle) =\Sigma_n$ is sent to the automorphism $(-1)^{\mathrm{sgn}(\sigma)}\rho(\sigma)$ of $P(n)[n]$, with $\rho$ the $\Sigma_n$-action on $P(n)$.
\end{itemize}
The \emph{assembly complex} of $P$ is defined as the
unbounded chain complex
$
\mathfrak{A}(P) := \mathrm{colim}_\mathfrak{Fin} \overline{P}
$.
The assembly functor $\mathfrak{A}\colon \mathrm{Sp}^\Sigma(\mathrm{Ch}_+)\to \mathrm{Ch}$ is strongly symmetric monoidal, interchanging the smash-tensor product of symmetric spectra of non-negatively graded chain complexes with the tensor product of unbounded chain complexes.
\end{construction}
\begin{construction}
Given an unbounded chain complex $M\in \mathrm{Ch}$, its \emph{spectral disassembly} $\mathfrak{D}(M)$ is a symmetric spectrum object in $\mathrm{Ch}_+$.
The underlying symmetric sequence of $\mathfrak{D}(M)$ is given at level $n$ by the connective cover 
$
\mathfrak{D}(M)(n) :=  c_0(M[-n])
$,
with $\Sigma_n$ acting in the sign representation.
Symmetric spectrum structure maps $\sigma_n \colon \mathfrak{D}(M)(n)[-1] \to \mathfrak{D}(M)(n+1)$ arise from the natural factorisations
\[
\begin{tikzcd}
c_0\big(M[-n]\big)\otimes \mathbb{Q}[-1]
\ar[r, "\sigma_n"]
\ar[d]
&
c_0\big(M[-n-1]\big)
\ar[d]
\\
\ar[r]
M[-n]\otimes \mathbb{Q}[-1]
\ar[r, "\cong"]
&
M[-n+1]\,.
\end{tikzcd}
\]
\end{construction}
The disassembly functor $\mathfrak{D}\colon \mathrm{Ch}\to \mathrm{Sp}^\Sigma (\mathrm{Ch}_+)$ is right adjoint to $\mathfrak{A}$ and the adjunction $(\mathfrak{A}\dashv \mathfrak{D})$ is a strongly monoidal Quillen equivalence.
\end{proof}
\begin{remark}
\label{rem:StabShuffle}
The stabilised normalisation functor $N^\ast \colon \mathrm{Sp}^\Sigma(\mathrm{sVect}_\mathbb{Q})\to \mathrm{Sp}^\Sigma(\mathrm{Ch}_+)$ is lax symmetric monoidal.
The lax symmetric monoidal transformation $\nabla_{V,W}\colon N^\ast(V)\otimes N^\ast(W)\to N^\ast(V\otimes W)$ is induced by the shuffle map.
\end{remark}
\begin{lemma}
\label{lem:StabDKNorm}
$N^\ast \colon \mathrm{Sp}^\Sigma(\mathrm{sVect}_\mathbb{Q})\to \mathrm{Sp}^\Sigma(\mathrm{Ch}_+)$ preserves and reflects weak equivalences between fibrant objects.
\end{lemma}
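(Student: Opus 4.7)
The plan is to reduce the statement to the classical (unstable) Dold--Kan equivalence together with the standard fact that weak equivalences between fibrant objects in a symmetric stabilisation coincide with levelwise ones. Since $N^\ast$ acts on underlying symmetric sequences as the normalisation functor $N$, and $N$ is an equivalence of categories that creates quasi-isomorphisms, the rest is bookkeeping.

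In detail, I would argue as follows. By Theorem \ref{thm:StabDK}, $(N_! \dashv N^\ast)$ is a Quillen adjunction, so the right adjoint $N^\ast$ preserves fibrant objects; hence $N^\ast V$ and $N^\ast W$ are fibrant in $\mathrm{Sp}^\Sigma(\mathrm{Ch}_+)$ whenever $V$ and $W$ are fibrant in $\mathrm{Sp}^\Sigma(\mathrm{sVect}_\mathbb{Q})$. In both categories, fibrant objects are $\Omega$-spectra with respect to the relevant suspension endofunctor ($\Sigma_\mathbb{Q}$ and $s$ respectively), and between $\Omega$-spectra the stable weak equivalences are precisely the levelwise ones---this is a standard feature of Hovey's symmetric stabilisation machine. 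Consequently, for a morphism $f\colon V\to W$ between fibrant objects, we have the chain of equivalences: $f$ is a stable weak equivalence $\iff$ each $f(n)\colon V(n)\to W(n)$ is a weak equivalence in $\mathrm{sVect}_\mathbb{Q}$ $\iff$ each $N(f(n))$ is a quasi-isomorphism (by the classical Dold--Kan correspondence, which identifies $\pi_\ast V(n) \cong H_\ast NV(n)$) $\iff$ $N^\ast f$ is a levelwise quasi-isomorphism $\iff$ $N^\ast f$ is a stable weak equivalence in $\mathrm{Sp}^\Sigma(\mathrm{Ch}_+)$. This chain yields both preservation and reflection of weak equivalences between fibrant objects.

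There is no substantive obstacle: the argument is essentially formal, the only real input being the unstable Dold--Kan equivalence together with the characterisation of weak equivalences between $\Omega$-spectra. The only point to be a little careful about is the identification of fibrant objects in the symmetrically stabilised model structures as the $\Omega$-spectra one expects, but since both $\mathrm{Sp}^\Sigma(\mathrm{sVect}_\mathbb{Q})$ and $\mathrm{Sp}^\Sigma(\mathrm{Ch}_+)$ are produced directly by the symmetric stabilisation machine from their (cofibrantly generated) base model categories, this is the standard case; in particular one need not invoke the lax closed structure of Corollary \ref{cor:NormLaxClosed} for the argument, though it provides independent verification that $N^\ast$ sends $\Omega_\mathbb{Q}$-spectra to $\Omega_s$-spectra.
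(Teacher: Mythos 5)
Your proposal is correct and follows essentially the same route as the paper's proof: reduce to levelwise weak equivalences via the fact that stable weak equivalences between fibrant objects in both stabilised categories are exactly the levelwise ones, then use that $N^\ast$ acts levelwise by the normalisation functor $N$, which preserves and reflects weak equivalences. Your explicit remark that $N^\ast$, being right Quillen, preserves fibrancy (needed for the reflection direction) is a point the paper leaves implicit, but otherwise the arguments coincide.
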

\begin{proof}
$N^\ast$ is given on underlying symmetric sequences by levelwise application of the normalisation functor $N$.
Since $N$ preserves and reflects weak equivalences, $N^\ast$ preserves and reflects levelwise weak equivalences.
In both $\mathrm{Sp}^\Sigma(\mathrm{sVect}_\mathbb{Q})$ and $\mathrm{Sp}^\Sigma(\mathrm{Ch}_+)$, the (stable) weak equivalences between (stably) fibrant objects are precisely the levelwise weak equivalences.
\end{proof}

\begin{lemma}
\label{lem:RatSymSpecFibCofib}
For any $V\in \mathrm{sVect}_\mathbb{Q}$, the symmetric spectrum object $\Sigma^\infty_\mathbb{Q} V\in \mathrm{Sp}^\Sigma(\mathrm{sVect}_\mathbb{Q})$ is fibrant-cofibrant.
\end{lemma}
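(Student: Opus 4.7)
Cofibrancy of $\Sigma^\infty_\mathbb{Q} V$ is immediate. Every object of $\mathrm{sVect}_\mathbb{Q}$ is cofibrant: under the Dold--Kan correspondence this amounts to the statement that every connective rational chain complex is cofibrant in the projective model structure on $\mathrm{Ch}_+$, which holds because every vector space over the field $\mathbb{Q}$ is projective. Since $\Sigma^\infty_\mathbb{Q}\colon \mathrm{sVect}_\mathbb{Q}\to \mathrm{Sp}^\Sigma(\mathrm{sVect}_\mathbb{Q})$ is left Quillen by the symmetric stabilisation machine, $\Sigma^\infty_\mathbb{Q} V$ is cofibrant.

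For fibrancy I would verify the two conditions characterising fibrant symmetric spectrum objects: that each level $\widetilde{\mathbb{Q}}[S^n]\otimes V$ is fibrant in $\mathrm{sVect}_\mathbb{Q}$ and that each adjoint structure map $\sigma_n^\vee\colon \widetilde{\mathbb{Q}}[S^n]\otimes V\to \mathrm{Hom}_{\mathrm{sVect}_\mathbb{Q}}(\widetilde{\mathbb{Q}}[S^1], \widetilde{\mathbb{Q}}[S^{n+1}]\otimes V)$ is a weak equivalence. The first is automatic since every simplicial $\mathbb{Q}$-vector space is a Kan complex. For the second I plan to transfer through the normalisation functor $N$, which reflects weak equivalences. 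Iterating the shuffle quasi-isomorphism provides a natural identification of $N(\widetilde{\mathbb{Q}}[S^n]\otimes V)$ with $N\widetilde{\mathbb{Q}}[S^n]\otimes NV\cong \mathbb{Q}[-n]\otimes NV = NV[-n]$, and combining the shuffle with the lax closed structure $\Xi$ from Corollary \ref{cor:NormLaxClosed} naturally identifies $N\mathrm{Hom}_{\mathrm{sVect}_\mathbb{Q}}(\widetilde{\mathbb{Q}}[S^1], \widetilde{\mathbb{Q}}[S^{n+1}]\otimes V)$ with $\mathrm{Hom}_{\mathrm{Ch}_+}(\mathbb{Q}[-1], NV[-n-1])$, which is in turn canonically isomorphic to $NV[-n]$ (no connective cover is needed since $NV[-n-1]$ is concentrated in degrees $\geq n+1$). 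Under these identifications $N(\sigma_n^\vee)$ corresponds to the identity on $NV[-n]$, using that $\sigma_n^\vee$ is adjoint to the canonical isomorphism $\widetilde{\mathbb{Q}}[S^1]\otimes \widetilde{\mathbb{Q}}[S^n]\otimes V\cong \widetilde{\mathbb{Q}}[S^{n+1}]\otimes V$; hence $N(\sigma_n^\vee)$ is a quasi-iso and so $\sigma_n^\vee$ is a weak equivalence.

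The main technicality will be verifying commutativity of the comparison square relating $N(\sigma_n^\vee)$ to the identity on $NV[-n]$, which reduces to a routine but somewhat tedious adjointness calculation invoking naturality of the shuffle and lax closed structures together with their compatibility with the loops--suspension adjunction. A more streamlined alternative would be to assemble the shuffle maps directly into a natural morphism $s^\infty NV\to N^\ast \Sigma^\infty_\mathbb{Q} V$ in $\mathrm{Sp}^\Sigma(\mathrm{Ch}_+)$, observe that this is a levelwise quasi-isomorphism (since each component is a shuffle map), note that $s^\infty NV$ is a fibrant $\Omega$-spectrum because its adjoint structure maps are literally identities on $NV[-n]$, and then transfer the $\Omega$-spectrum property along the levelwise quasi-iso using two-out-of-three in the commuting squares of adjoint structure maps, before descending to $\Sigma^\infty_\mathbb{Q} V$ by Lemma \ref{lem:StabDKNorm}.
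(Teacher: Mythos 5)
Your proof is correct and follows essentially the same route as the paper: cofibrancy from all objects of $\mathrm{sVect}_\mathbb{Q}$ being cofibrant plus $\Sigma^\infty_\mathbb{Q}$ being left Quillen, and fibrancy by normalising the adjoint structure maps and identifying both sides with $NV[-n]$ via the shuffle quasi-isomorphism and the lax closed comparison $\Xi$ of Corollary \ref{cor:NormLaxClosed}, then using that $N$ reflects weak equivalences. The paper records exactly this comparison as a commuting square of quasi-isomorphisms and concludes by two-out-of-three.
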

\begin{proof}
All objects of $\mathrm{sVect}_\mathbb{Q}$ are cofibrant and $\Sigma^\infty_\mathbb{Q}\colon \mathrm{sVect}_\mathbb{Q}\to \mathrm{Sp}^\Sigma(\mathrm{sVect}_\mathbb{Q})$ is left Quillen, so that $\Sigma^\infty_\mathbb{Q} V$ is cofibrant.

An object $W$ of $\mathrm{Sp}^\Sigma(\mathrm{sVect}_\mathbb{Q})$ is fibrant if and only if the underlying object of $\mathrm{Sp}^\Sigma$ is fibrant, that is if each $W(n)$ is a Kan complex and the adjoint structure maps $W(n) \to \Omega W(n+1)$ are weak equivalences.
All simplicial rational vector spaces are Kan complexes so the first condition is always satisfied.
For $W= \Sigma^\infty_\mathbb{Q} V$ we have 
\[
W(n) = \widetilde{\mathbb{Q}}[S^n]\otimes V\cong \underbrace{\widetilde{\mathbb{Q}}[S^1]\otimes \dotsb \otimes \widetilde{\mathbb{Q}}[S^1]}_{\text{$n$ times}}\otimes V
\]
and the adjoint spectrum structure maps are
\[
\widetilde{\mathbb{Q}}[S^n] \otimes V
\longrightarrow
\big[
\widetilde{\mathbb{Q}}[S^1],
\widetilde{\mathbb{Q}}[S^{n+1}] \otimes V
\big]\,.
\]
Taking normalised chains and using the shuffle map, Corollary \ref{cor:NormLaxClosed}, and $N\widetilde{\mathbb{Q}}[S^1] = \mathbb{Q}[-1]$, we get a commuting diagram of quasi-isomorphisms
\[
\begin{tikzcd}[row sep =small]
V[-n]
\ar[rr, "\nabla", "\sim"']
\ar[dd, "\cong"']
&&
N\big(
\widetilde{\mathbb{Q}}[S^n]\otimes V
\big)
\ar[d]
\\
&&
N\big(\big[
\widetilde{\mathbb{Q}}[S^1], \widetilde{\mathbb{Q}}[S^{n+1}]\otimes V
\big]\big)
\ar[d, "\sim"]
\\
V[-(n+1)][+1] \cong \big[\mathbb{Q}[-1], V[-(n+1)]\big]
\ar[rr, "{[\mathbb{Q}[-1], \nabla]}", "\sim"']
&&
\big[
\mathbb{Q}[-1], \widetilde{\mathbb{Q}}[S^{n+1}]\otimes V\,.
\big]
\end{tikzcd}
\]
This shows that $\Sigma^\infty_\mathbb{Q} V$ is fibrant.
\end{proof}

\begin{lemma}
\label{lem:StabDKNorm2}
$\mathfrak{D}\colon \mathrm{Ch}\to \mathrm{Sp}^\Sigma(\mathrm{Ch}_+)$ preserves and reflects weak equivalences.
\end{lemma}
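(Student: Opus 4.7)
The plan is to prove both assertions via a direct computation of the structure and homology of $\mathfrak{D}(M)$, combined with the standard fact (invoked in the proof of Lemma \ref{lem:StabDKNorm}) that stable weak equivalences between stably fibrant symmetric spectra in $\mathrm{Sp}^\Sigma(\mathrm{Ch}_+)$ are precisely the levelwise weak equivalences.

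First I would verify that $\mathfrak{D}(M)$ is always an $\Omega$-spectrum for every $M\in \mathrm{Ch}$. Unwinding the definition of the connective cover and shift functors,
\[
\mathfrak{D}(M)(n)_k = \begin{cases} M_{k-n} & k\geq 1, \\ \ker(M_{-n}\to M_{-n-1}) & k = 0. \end{cases}
\]
The loop object $\Omega \mathfrak{D}(M)(n+1) = c_0\bigl(\mathfrak{D}(M)(n+1)[1]\bigr)$ can be computed in exactly the same fashion, and one checks that it has precisely the same underlying chain complex, with the adjoint structure map of $\mathfrak{D}(M)$ being the identity at each level. Since all objects of $\mathrm{Ch}_+$ are fibrant in the projective model structure, it follows that $\mathfrak{D}(M)$ is stably fibrant.

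Next I would compute $H_k(\mathfrak{D}(M)(n)) = H_k(c_0(M[-n])) = H_{k-n}(M)$ for every $k,n\geq 0$: the connective cover does not affect homology in non-negative degrees, and the shift by $[-n]$ shifts homology by $n$. This identification makes preservation immediate: if $f\colon M\to N$ is a quasi-isomorphism, then each $\mathfrak{D}(f)(n)$ induces the isomorphisms $H_{k-n}(f)$ on homology, so is a levelwise quasi-isomorphism in $\mathrm{Ch}_+$; hence $\mathfrak{D}(f)$ is a level weak equivalence and a fortiori a stable weak equivalence.

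For the reflection claim, suppose $\mathfrak{D}(f)$ is a stable weak equivalence. Both $\mathfrak{D}(M)$ and $\mathfrak{D}(N)$ are stably fibrant by the first step, so $\mathfrak{D}(f)$ is necessarily a levelwise weak equivalence. Consequently each $\mathfrak{D}(f)(n) = c_0(f[-n])$ is a quasi-isomorphism, whence $H_{k-n}(f)$ is an isomorphism for every $k,n\geq 0$. Letting $j := k-n$ range over all integers as $k,n\geq 0$ vary, we conclude that $f$ is a quasi-isomorphism. The only genuinely computational content lies in the first step, where one must carefully verify the interaction of the connective cover with the loop functor on $\mathrm{Ch}_+$; the remaining arguments are then formal.
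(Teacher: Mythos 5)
Your proposal is correct and follows essentially the same route as the paper: establish that $\mathfrak{D}(M)$ is stably fibrant, use that stable weak equivalences between stably fibrant objects of $\mathrm{Sp}^\Sigma(\mathrm{Ch}_+)$ are exactly the levelwise quasi-isomorphisms, and observe that $c_0(f[-n])$ being a quasi-isomorphism for all $n\geq 0$ is equivalent to $f$ being one. The only cosmetic difference is that the paper deduces fibrancy of $\mathfrak{D}(M)$ from the fact that all objects of $\mathrm{Ch}$ are fibrant and $\mathfrak{D}$ is right Quillen, whereas you verify the $\Omega$-spectrum condition by hand.
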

\begin{proof}
All objects of $\mathrm{Ch}$ are fibrant, so that $\mathfrak{D}(M)\in \mathrm{Sp}^\Sigma(\mathrm{Ch}_+)$ is (stably) fibrant for any $M\in \mathrm{Ch}$.
Fix a morphism of chain complexes $f\colon M\to N$.
The (stable) weak equivalences between (stably) fibrant objects of $\mathrm{Sp}^\Sigma(\mathrm{Ch}_+)$ are precisely the levelwise quasi-isomorphisms.
Unwinding the definitions, we therefore find that $\mathfrak{D}(f) \colon \mathfrak{D}(M)\to \mathfrak{D}(N)$ is a (stable) weak equivalence precisely if $c_0 (M[-n]) \to c_0(N[-n])$ is a quasi-isomorphism for all $n\geq 0$.
This latter condition is equivalent to the requirement that $f$ is a quasi-isomorphism.
\end{proof}

\begin{remark}
\label{rem:HisPi}
Similarly to the classical case, the stable Dold--Kan correspondence interchanges stable homotopy for homology.
More precisely, it is relatively straightforward to show that there is a diagram of functors
\[
\begin{tikzcd}[row sep = small]
Ho\big(\mathrm{Sp}^\Sigma(\mathrm{sVect}_\mathbb{Q})\big)
\ar[r, "\mathbf{R}N^\ast", "\cong"']
\ar[dr, bend right=20, "\pi^\mathrm{st}_\ast"']
&
Ho\big(\mathrm{Sp}^\Sigma(\mathrm{Ch}_+)\big)
\ar[r, leftarrow, "\mathbf{R}\mathfrak{D}", "\cong"']
&
Ho(\mathrm{Ch})
\ar[dl, bend left=25, "H_\bullet"]
\\
&
\mathrm{Fun}(\mathbb{Z},\mathrm{Vect}_\mathbb{Q})
&
\end{tikzcd}
\]
commuting up to natural isomorphism.
\end{remark}

\subsection{Comparing simplicial and dg Lie representations}
\label{SS:SimplicialDGLieComp}
Let $\mathfrak{g}$ be a simplicial Lie algebra over $\mathbb{Q}$.
Using the properties of the shuffle map, Quillen observed that $N\mathfrak{g}$ is a differential graded Lie algebra
with respect to the bracket
\[
[\![-,- ]\!]
\colon N\mathfrak{g}\otimes N\mathfrak{g}
\xrightarrow{\;\;\nabla_{\mathfrak{g},\mathfrak{g}}\;\;}
N(\mathfrak{g}\otimes \mathfrak{g})
\xrightarrow{\;\;N([-,-])\;\;}
N\mathfrak{g}\,.
\]
The goal of this section is to extend the stable Dold--Kan correspondence (Theorem \ref{thm:StabDK}) to a zig-zag of weakly monoidal Quillen equivalences between stable model categories of $\mathfrak{g}$-representations and $N\mathfrak{g}$-representations.

Given a (simplicial or dg) Lie algebra $\mathfrak{a}$, we denote the universal enveloping algebra by $\mathcal{U}\mathfrak{a}$.
In either setting, $\mathcal{U}\mathfrak{a}$ is a Hopf algebra and we write the multiplication and comultiplication maps  as $\mu_\mathfrak{a}$ and $\kappa_\mathfrak{a}$ respectively.
The first result of this section provides a comparison between the universal enveloping algebra of a simplicial Lie algebra $\mathfrak{g}$ and that of the normalisation $N\mathfrak{g}$:
\begin{lemma}
\label{lem:Chi}
For $\mathfrak{g}$ a simplicial Lie algebra, there is a natural quasi-isomorphism $\chi_\mathfrak{g}\colon \mathcal{U}N\mathfrak{g}\to N\mathcal{U}\mathfrak{g}$.
The weak equivalence $\chi_\mathfrak{g}$ is \emph{multiplicative} in the sense that there is a commuting diagram of chain complexes
\begin{equation}
\label{eqn:ChiMult}
\begin{tikzcd}
\mathcal{U}N\mathfrak{g}\otimes \mathcal{U}N\mathfrak{g}
\ar[rr, "\mu_{N\mathfrak{g}}"]
\ar[d, "\chi_\mathfrak{g}\otimes \chi_\mathfrak{g}"']
&&
\mathcal{U}N\mathfrak{g}
\ar[d, "\chi_\mathfrak{g}"]
\\
N\mathcal{U}\mathfrak{g}\otimes
N\mathcal{U}\mathfrak{g}
\ar[r, "\nabla"]
&
N(\mathcal{U}\mathfrak{g}\otimes \mathcal{U}\mathfrak{g})
\ar[r, "N\mu_\mathfrak{g}"]
&
N\mathcal{U}\mathfrak{g}
\end{tikzcd}
\end{equation}
with $\nabla$ the shuffle map.
Moreover, the map $\chi_\mathfrak{g}$ \emph{comultiplicative} in the sense that there is a commuting diagram of chain complexes  
\begin{equation}
\label{eqn:ChiComult}
\begin{tikzcd}
\mathcal{U}N\mathfrak{g}
\ar[rr, "\chi_\mathfrak{g}"]
\ar[d, "\kappa_{N\mathfrak{g}}"']
&&
N\mathcal{U}\mathfrak{g}
\ar[d, "N\kappa_\mathfrak{g}"]
\\
\mathcal{U}N\mathfrak{g}\otimes 
\mathcal{U}N\mathfrak{g}
\ar[r, "\chi_\mathfrak{g}\otimes \chi_\mathfrak{g}"]
&
N\mathcal{U}\mathfrak{g}\otimes 
N\mathcal{U}\mathfrak{g}
\ar[r, "\nabla"]
&
N(\mathcal{U}\mathfrak{g}\otimes \mathcal{U}\mathfrak{g})\,.
\end{tikzcd}
\end{equation}
\end{lemma}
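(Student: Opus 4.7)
The plan is to build $\chi_\mathfrak{g}$ via the universal property of $\mathcal{U}N\mathfrak{g}$, then verify multiplicativity and comultiplicativity directly, and finally show it is a quasi-isomorphism by a Poincar\'e--Birkhoff--Witt filtration argument.

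\emph{Construction and multiplicativity.} The inclusion $\iota\colon \mathfrak{g}\hookrightarrow \mathcal{U}\mathfrak{g}$ is a map of simplicial Lie algebras when $\mathcal{U}\mathfrak{g}$ carries the commutator bracket. Endow $N\mathcal{U}\mathfrak{g}$ with the Eilenberg--Zilber shuffle product $N\mu_\mathfrak{g}\circ \nabla$ and with the induced graded-commutator bracket. Using naturality of $\nabla$ together with the compatibility $\nabla\circ \tau_{\mathrm{gr}} = N\tau\circ \nabla$ between the Koszul graded swap $\tau_{\mathrm{gr}}$ and the simplicial swap $\tau$, one verifies that $N\iota\colon N\mathfrak{g}\to N\mathcal{U}\mathfrak{g}$ is a map of dg Lie algebras. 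The universal property of $\mathcal{U}N\mathfrak{g}$ then supplies a unique extension $\chi_\mathfrak{g}\colon \mathcal{U}N\mathfrak{g}\to N\mathcal{U}\mathfrak{g}$ of $N\iota$ to a dg algebra map; by construction \eqref{eqn:ChiMult} is simply the restatement of multiplicativity.

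\emph{Comultiplicativity.} Equip $N(\mathcal{U}\mathfrak{g}\otimes \mathcal{U}\mathfrak{g})$ with the shuffle product associated to the simplicial algebra $\mathcal{U}\mathfrak{g}\otimes \mathcal{U}\mathfrak{g}$. I claim both sides of \eqref{eqn:ChiComult} are dg algebra maps from $\mathcal{U}N\mathfrak{g}$ into this target. For the right-hand side this holds because both $\chi_\mathfrak{g}$ and $N\kappa_\mathfrak{g}$ are dg algebra maps, the latter because $N$ is lax symmetric monoidal via $\nabla$ and therefore carries the simplicial algebra map $\kappa_\mathfrak{g}$ to a dg algebra map for shuffle products. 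For the left-hand side, $\kappa_{N\mathfrak{g}}$ is a dg algebra map into the tensor product of algebras, $\chi_\mathfrak{g}\otimes \chi_\mathfrak{g}$ is a tensor of dg algebra maps, and crucially $\nabla\colon NA\otimes NB\to N(A\otimes B)$ is itself a dg algebra map between shuffle algebras for any simplicial algebras $A, B$ (the bialgebra coherence inherent in Eilenberg--Zilber). Since $\mathcal{U}N\mathfrak{g}$ is algebra-generated by $N\mathfrak{g}$, it suffices to check agreement on $x\in N\mathfrak{g}_n$. Using primitivity of $\iota(x)$ in $\mathcal{U}\mathfrak{g}$ and the unit property of $\nabla$ (i.e.\ $\nabla(a\otimes 1)$ and $\nabla(1\otimes a)$ act as the obvious inclusions), both sides evaluate to $s_0^n(1)\otimes \iota(x) + \iota(x)\otimes s_0^n(1)$ in $N(\mathcal{U}\mathfrak{g}\otimes \mathcal{U}\mathfrak{g})_n$.

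\emph{Quasi-isomorphism and main obstacle.} Filter $\mathcal{U}N\mathfrak{g}$ by the standard PBW word-length filtration in $N\mathfrak{g}$, and $\mathcal{U}\mathfrak{g}$ by the levelwise PBW filtration in $\mathfrak{g}$; since $N$ is exact, applying it produces a filtration on $N\mathcal{U}\mathfrak{g}$. The map $\chi_\mathfrak{g}$ preserves these filtrations as it sends generators to generators, and by Poincar\'e--Birkhoff--Witt (valid over $\mathbb{Q}$ in both the dg and the simplicial settings) the associated gradeds identify with $S(N\mathfrak{g})$ and $NS(\mathfrak{g})$ respectively. Unwinding the construction of $\chi_\mathfrak{g}$, the induced map on $k$-th graded pieces is the $\Sigma_k$-coinvariants of the iterated shuffle $\nabla^{(k)}\colon (N\mathfrak{g})^{\otimes k}\to N(\mathfrak{g}^{\otimes k})$, which is a $\Sigma_k$-equivariant quasi-isomorphism by iterated Eilenberg--Zilber; since $\mathbb{Q}[\Sigma_k]$ is semisimple, $(-)_{\Sigma_k}$ preserves quasi-isomorphisms, so $\mathrm{gr}\,\chi_\mathfrak{g}$ is a quasi-isomorphism on each $S^k$ summand. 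A standard spectral sequence comparison on the bounded-below exhaustive PBW filtrations then yields that $\chi_\mathfrak{g}$ itself is a quasi-isomorphism. The main obstacle I anticipate is the comultiplicativity step: establishing that both sides of \eqref{eqn:ChiComult} are algebra maps into the shuffle algebra on $N(\mathcal{U}\mathfrak{g}\otimes \mathcal{U}\mathfrak{g})$ requires the delicate multiplicative coherence of the shuffle map across a tensor product of simplicial algebras, which must be carefully disentangled from the analogous Alexander--Whitney coherence on the coproduct side.
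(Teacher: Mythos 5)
Your proposal is correct and rests on the same two pillars as the paper's proof (the lax symmetric monoidal shuffle map and Poincar\'e--Birkhoff--Witt over $\mathbb{Q}$), but it executes two of the three steps by genuinely different routes. For the construction, the paper descends the iterated shuffle map $TN\mathfrak{g}\to NT\mathfrak{g}$ through the ideals defining the enveloping algebras, whereas you invoke the universal property of $\mathcal{U}N\mathfrak{g}$ applied to $N\iota$; these are two phrasings of the same fact and both make \eqref{eqn:ChiMult} automatic. For comultiplicativity, the paper checks \eqref{eqn:ChiComult} on generators and then runs an induction over the degree of monomials using \eqref{eqn:ChiMult}; your observation that both composites are dg algebra maps into the shuffle algebra on $N(\mathcal{U}\mathfrak{g}\otimes \mathcal{U}\mathfrak{g})$ --- which holds because $\nabla_{A,B}\colon NA\otimes NB\to N(A\otimes B)$ is a monoid map for any simplicial algebras $A,B$, a formal consequence of lax symmetric monoidality --- reduces everything to the generator check in one stroke and is arguably cleaner; the ``Alexander--Whitney coherence'' you worry about never enters, since the coproduct $\kappa_\mathfrak{g}$ is a map of simplicial algebras and $N$ of it is therefore a shuffle-algebra map for free. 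For the quasi-isomorphism, the paper uses the characteristic-zero symmetrisation section $SV\to TV$ to exhibit $SN\mathfrak{g}\to NS\mathfrak{g}$ as a retract of the tensor-algebra comparison map and then transports along the PBW isomorphisms, while you filter by PBW word-length and compare associated gradeds via a spectral sequence; both arguments ultimately reduce to the same input (iterated Eilenberg--Zilber plus exactness of $\Sigma_k$-coinvariants over $\mathbb{Q}$), with the paper's retract trick avoiding any convergence discussion and your filtration argument avoiding the explicit section. Either route is complete.
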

\begin{proof}
Denoting the free tensor algebra functor by $T$ (in both the dg and simplicial settings), the interated shuffle maps give rise to a natural weak equivalence $\eta_V\colon TNV\to NTV$ for any simplicial rational vector space $V$.
Since the shuffle map is a lax monoidal transformation, $\eta_V$ is multiplicative in the sense of the diagram \eqref{eqn:ChiMult}.

For $V=\mathfrak{g}$,  $\eta_\mathfrak{g}$ sends the two-sided tensor ideal
$
\mathcal{I}_{N\mathfrak{g}} := \big\langle
a\otimes b - (-1)^{|a||b|} b\otimes a -[\![a,b]\!] 
\big\rangle$ of $TN\mathfrak{g}
$
into the the image by $N$ of the two-sided tensor ideal
$
\mathcal{I}_\mathfrak{g} := \big\langle
x\otimes y - y\otimes x - [x,y]\big\rangle\subset T\mathfrak{g}
$,
that is $\eta_\mathfrak{g}(\mathcal{I}_{N\mathfrak{g}})\subset N(\mathcal{I}_\mathfrak{g})$.
Passing to quotients, we get a map $\chi_\mathfrak{g}\colon \mathcal{U}N\mathfrak{g}\to N\mathcal{U}\mathfrak{g}$ that inherits the multiplicativity property from $\eta_\mathfrak{g}$.
The map $\chi_\mathfrak{g}$ is manifestly natural in the simplicial Lie algebra $\mathfrak{g}$.

We now show that $\chi_\mathfrak{g}$ is a quasi-isomorphism.
Working over $\mathbb{Q}$, the quotient map $TV \to SV$ defining the free symmetric algebra on $V$ has a section 
\[
\sigma_V\colon v_1 \dotsb v_n \longmapsto \frac{1}{n!}\sum_{\sigma\in \Sigma_n} \pm x_{\sigma(1)}\otimes \dotsb 
\otimes
x_{\sigma(n)}\,,
\]
with \lq\lq$\pm$'' the Koszul sign (here $V$ is either a chain complex or simplicial vector space).
The shuffle map is a lax symmetric monoidal transformation, so that $\eta_V\colon TNV\to NTV$ descends to define a map $\xi_V \colon SNV\to NSV$.
As $\xi_V$ is a retract of $\eta_V$ by the above, it is also a natural quasi-isomorphism.
For $V=\mathfrak{g}$, there is a commuting diagram of maps of chain complexes 
\[
\begin{tikzcd}
SN\mathfrak{g}
\ar[d, "\xi_\mathfrak{g}"']
\ar[r, "\sigma_{N\mathfrak{g}}"]
\ar[rr, bend left=30, "\mathrm{PBW}_{N\mathfrak{g}}"]
&
TN\mathfrak{g}
\ar[d, "\eta_\mathfrak{g}"]
\ar[r, two heads]
&
\mathcal{U}N\mathfrak{g}
\ar[d, "\chi_\mathfrak{g}"]
\\
NS\mathfrak{g}
\ar[r, "N\sigma_{\mathfrak{g}}"]
\ar[rr, bend right=30,"N(\mathrm{PBW}_{\mathfrak{g}})"']
&
NT\mathfrak{g}
\ar[r, two heads]
&
N\mathcal{U}\mathfrak{g}\,,
\end{tikzcd}
\]
where the maps labelled \lq\lq$\mathrm{PBW}$'' are the Poincar\'{e}--Birkhoff--Witt isomorphisms.
Hence $\chi_\mathfrak{g}$ is a quasi-isomorphism.

To prove comultiplicativity of $\chi_\mathfrak{g}$, recall that the coproduct on $\mathcal{U}N\mathfrak{g}$ is defined on the image of the inclusion $N\mathfrak{g}\hookrightarrow \mathcal{U}N\mathfrak{g}$ by $v\mapsto 1\otimes v + v\otimes 1$ and then extended to all of $\mathcal{U}N\mathfrak{g}$ as an algebra homomorphism (similarly for $\mathcal{U}\mathfrak{g}$).
By construction, the restriction of $\chi_\mathfrak{g}$ to $N\mathfrak{g}\hookrightarrow \mathcal{U}N\mathfrak{g}$ coincides with $N(\mathfrak{g}\hookrightarrow \mathcal{U}\mathfrak{g})$.
It follows that  the comultiplicativity diagram \eqref{eqn:ChiComult} commutes for elements of $N\mathfrak{g}\hookrightarrow \mathcal{U}N\mathfrak{g}$. 
An inductive argument over degree using the  multipliciativity diagram \eqref{eqn:ChiMult} and the properties of the shuffle map shows that \eqref{eqn:ChiComult} commutes on all monomials on elements in $N\mathfrak{g}$.
By the Poincar\'{e}--Birkhoff--Witt theorem, these monomials span $\mathcal{U}N\mathfrak{g}$.
\end{proof}

\begin{definition}
Any simplicial Lie algebra $\mathfrak{g}$ induces a monad $T_\mathfrak{g} := \Sigma^\infty_\mathbb{Q} \mathcal{U}\mathfrak{g}\otimes (-)$ on $\mathrm{Sp}^\Sigma(\mathrm{sVect}_\mathbb{Q})$ and we write $\mathfrak{g}\mathrm{-Rep}_\Delta:= T_\mathfrak{g}\mathrm{-Alg}$ for the category of algebras over this monad.
As $\Sigma^\infty_\mathbb{Q} \mathcal{U}\mathfrak{g}$ is cofibrant in $\mathrm{Sp}^\Sigma(\mathrm{sVect}_\mathbb{Q})$, the free-forgetful adjunction
\[
\begin{tikzcd}
\mathrm{Sp}^\Sigma(\mathrm{sVect}_\mathbb{Q})\ar[rr, "\Sigma^\infty_\mathbb{Q}\mathcal{U}\mathfrak{g}\otimes (-)", shift left=1.1ex]
\ar[rr, leftarrow, "\bot", shift left=-1.1ex]
&&
\mathfrak{g}\mathrm{-Rep}_\Delta
\end{tikzcd}
\]
equips $\mathfrak{g}\mathrm{-Rep}_\Delta$ with a stable model structure, with weak equivalences and fibrations created by the forgetful functor.
\end{definition}
\begin{remark}
As a model category,
$\mathfrak{g}\mathrm{-Rep}_\Delta$ is canonically isomorphic to the symmetric stabilisation of the category $\mathfrak{g}\mathrm{-Rep}_{\Delta}^u$ of $\mathfrak{g}$-modules in simplicial vector spaces with respect the suspension endofunctor \eqref{eqn:RatSuspend}.
\end{remark}

\begin{definition}
For a dg Lie algebra $\mathfrak{h}$, the $\mathcal{U}\mathfrak{h}$-modules in chain complexes organise into a combinatorial model category $\mathfrak{h}\mathrm{-Rep}_\mathrm{dg}$.
Weak equivalences and fibrations in $\mathfrak{h}\mathrm{-Rep}_\mathrm{dg}$ are the quasi-isomorphisms and degreewise epimorphisms respectively, and
the shift endofunctor $M\mapsto M[-1]$ models suspension on $Ho(\mathfrak{h}\mathrm{-Rep}_\mathrm{dg})$.

The non-negatively graded $\mathcal{U}\mathfrak{h}$-modules organise into a combinatoral model category $\mathfrak{h}\mathrm{-Rep}_\mathrm{dg}^+$ with similar features.
The fully faithful functor $\mathfrak{h}\mathrm{-Rep}^+_\mathrm{dg}\hookrightarrow \mathfrak{h}\mathrm{-Rep}_\mathrm{dg}$ is left Quillen with right adjoint $M\mapsto c_0M$ (the connective cover $c_0 M$ inherits a $\mathcal{U}\mathfrak{h}$-module structure since this algebra is non-negatively graded).
Write
$
\mathfrak{h}\mathrm{-Rep}_\mathrm{dg}^\Sigma:= \mathrm{Sp}^\Sigma(\mathfrak{h}\mathrm{-Rep}^+_\mathrm{dg}; s)
$
for the symmetric stabilisation of $\mathfrak{h}\mathrm{-Rep}^+_\mathrm{dg}$ with respect to the shift endofunctor.
\end{definition}
\begin{remark}
$\mathfrak{h}\mathrm{-Rep}_\mathrm{dg}^\Sigma$ is canonically isomorphic to the category of $s^\infty \mathcal{U}\mathfrak{h}$-modules in $\mathrm{Sp}^\Sigma(\mathrm{Ch}_+)$ (recall that $s^\infty\colon \mathrm{Ch}_+\to\mathrm{Sp}^\Sigma(\mathrm{Ch}_+)$ is the functor that sends a connective chain complex to its symmetric suspension spectrum).
This latter category inherits a model structure from $\mathrm{Sp}^\Sigma(\mathrm{Ch}_+)$ via the free-forgetful adjunction of the monad $s^\infty \mathcal{U}\mathfrak{h}$, and the isomorphism of categories identifies the model structures.
\end{remark}

\begin{theorem}
\label{thm:StabDKLie}
For any simplicial Lie algebra $\mathfrak{g}$, there is a zig-zag of Quillen equivalences 
\[
\begin{tikzcd}
\mathfrak{g}\mathrm{-Rep}_\Delta
\ar[rr, shift left=1.1ex, leftarrow, "N_!^\mathfrak{g}"]
\ar[rr, shift left=-1.1ex, "N^\ast_\mathfrak{g}"', "\bot"]
&&
N\mathfrak{g}\mathrm{-Rep}_\mathrm{dg}^\Sigma
\ar[rr, shift left=1.1ex, "\mathfrak{A}_{N\mathfrak{g}}"]
\ar[rr, shift left=-1.1ex, leftarrow, "\mathfrak{D}_{N\mathfrak{g}}"', "\bot"]
&&
N\mathfrak{g}\mathrm{-Rep}_\mathrm{dg}
\end{tikzcd}
\]
inducing an equivalence of homotopy categories $Ho(\mathfrak{g}\mathrm{-Rep}_\Delta)\cong Ho(N\mathfrak{g}\mathrm{-Rep}_\mathrm{dg})$.
\end{theorem}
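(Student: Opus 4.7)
The plan is to lift the zig-zag of Quillen equivalences of Theorem \ref{thm:StabDK} from ambient stable categories to categories of modules over the relevant enveloping algebras. The strategy closely mirrors Shipley's integral argument in \cite{shipley_HZ_2007}: since $\mathfrak{g}\mathrm{-Rep}_\Delta$, $N\mathfrak{g}\mathrm{-Rep}^\Sigma_\mathrm{dg}$, and $N\mathfrak{g}\mathrm{-Rep}_\mathrm{dg}$ are modules over the monoids $\Sigma^\infty_\mathbb{Q}\mathcal{U}\mathfrak{g}$, $s^\infty \mathcal{U}N\mathfrak{g}$, and $\mathcal{U}N\mathfrak{g}$ in $\mathrm{Sp}^\Sigma(\mathrm{sVect}_\mathbb{Q})$, $\mathrm{Sp}^\Sigma(\mathrm{Ch}_+)$, and $\mathrm{Ch}$ respectively, it suffices to transfer each Quillen equivalence of Theorem \ref{thm:StabDK} across an appropriate comparison of monoids.

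The strongly monoidal Quillen equivalence $(\mathfrak{A}\dashv \mathfrak{D})$ is the easier half. Because $\mathfrak{A}$ is strongly symmetric monoidal and left Quillen, it sends the cofibrant monoid $s^\infty\mathcal{U}N\mathfrak{g}$ to $\mathcal{U}N\mathfrak{g}$, inducing the adjunction $(\mathfrak{A}_{N\mathfrak{g}}\dashv \mathfrak{D}_{N\mathfrak{g}})$ on module categories. A routine application of the standard lifting lemma for strongly monoidal Quillen equivalences (using that $s^\infty\mathcal{U}N\mathfrak{g}$ and $\mathcal{U}N\mathfrak{g}$ are cofibrant as underlying objects, that weak equivalences and fibrations on both sides are created by the forgetful functor, and Lemma \ref{lem:StabDKNorm2}) shows that $(\mathfrak{A}_{N\mathfrak{g}}\dashv \mathfrak{D}_{N\mathfrak{g}})$ is a Quillen equivalence.

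For the weakly monoidal adjunction $(N_!\dashv N^\ast)$, the key ingredient is Lemma \ref{lem:Chi}. Combining the multiplicative quasi-isomorphism $\chi_\mathfrak{g}:\mathcal{U}N\mathfrak{g}\to N\mathcal{U}\mathfrak{g}$ with the shuffle map and the identification $\mathbb{Q}[-1]\cong N\widetilde{\mathbb{Q}}[S^1]$, I would construct a levelwise quasi-isomorphism of monoids
\[
\tilde\chi_\mathfrak{g}\colon s^\infty\mathcal{U}N\mathfrak{g}\longrightarrow N^\ast \Sigma^\infty_\mathbb{Q}\mathcal{U}\mathfrak{g}
\]
in $\mathrm{Sp}^\Sigma(\mathrm{Ch}_+)$, whose level-$n$ component is the composite
\[
\mathcal{U}N\mathfrak{g}[-n]\;\cong\; N\widetilde{\mathbb{Q}}[S^n]\otimes \mathcal{U}N\mathfrak{g}\xrightarrow{\,\mathrm{id}\otimes \chi_\mathfrak{g}\,}N\widetilde{\mathbb{Q}}[S^n]\otimes N\mathcal{U}\mathfrak{g}\xrightarrow{\,\nabla\,}N\bigl(\widetilde{\mathbb{Q}}[S^n]\otimes \mathcal{U}\mathfrak{g}\bigr).
\]
The functor $N^\ast_\mathfrak{g}$ is then the composite of applying $N^\ast$ to a $\Sigma^\infty_\mathbb{Q}\mathcal{U}\mathfrak{g}$-module (using lax monoidality to obtain an $N^\ast\Sigma^\infty_\mathbb{Q}\mathcal{U}\mathfrak{g}$-module) and restriction of scalars along $\tilde\chi_\mathfrak{g}$, with left adjoint $N_!^\mathfrak{g}$ the corresponding composite of extension of scalars and the module-lift of $N_!$. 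That this is a Quillen equivalence follows by combining two standard module-lifting results: the weakly monoidal lifting theorem applied to $(N_!\dashv N^\ast)$ (using that $\Sigma^\infty_\mathbb{Q}\mathcal{U}\mathfrak{g}$ is cofibrant), and the observation that restriction-extension of scalars along a levelwise weak equivalence between cofibrant monoids is a Quillen equivalence (an argument analogous to Lemma \ref{lem:BaseChangeforSimplHopf} and Lemma \ref{lem:ForgetCreate}).

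The main obstacle is verifying that $\tilde\chi_\mathfrak{g}$ is genuinely a morphism of monoids in $\mathrm{Sp}^\Sigma(\mathrm{Ch}_+)$, that is, that the associativity square compatible with the symmetric spectrum module structure of $s^\infty\mathcal{U}N\mathfrak{g}$ over itself commutes. This demands essentially the full multiplicativity statement \eqref{eqn:ChiMult} of Lemma \ref{lem:Chi}, together with careful bookkeeping concerning how the shuffle map interacts with the commutative monoid structure on the symmetric sequence $n\mapsto N\widetilde{\mathbb{Q}}[S^n]$. Once this is in place, the composite of the two constructed Quillen equivalences yields the desired equivalence of homotopy categories $Ho(\mathfrak{g}\mathrm{-Rep}_\Delta)\cong Ho(N\mathfrak{g}\mathrm{-Rep}_\mathrm{dg})$.
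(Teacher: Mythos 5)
Your proposal is correct and rests on the same two pillars as the paper's proof: the multiplicative quasi-isomorphism $\chi_\mathfrak{g}\colon \mathcal{U}N\mathfrak{g}\to N\mathcal{U}\mathfrak{g}$ of Lemma \ref{lem:Chi} combined with the shuffle map to define the module structures, and the fact that $\mathfrak{A}$ is strongly monoidal so that assembly/disassembly pass directly to modules. Where you differ is in the verification: the paper argues directly, noting that $N^\ast_\mathfrak{g}$ preserves and reflects weak equivalences (Lemmas \ref{lem:StabDKNorm} and \ref{lem:StabDKNorm2}), that $s^\infty\mathcal{U}N\mathfrak{g}$ is a compact generator, and that the derived unit at this generator is exactly $s^\infty(\chi_\mathfrak{g})$, whence a localising-subcategory argument finishes the job; you instead factor $N^\ast_\mathfrak{g}$ through the intermediate monoid $N^\ast\Sigma^\infty_\mathbb{Q}\mathcal{U}\mathfrak{g}$ and outsource the conclusion to the Schwede--Shipley weak-monoidal module-lifting theorem plus base change along $\tilde\chi_\mathfrak{g}$. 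Your packaging is cleaner conceptually but imports hypotheses the paper's direct argument avoids: you must check that $N^\ast\Sigma^\infty_\mathbb{Q}\mathcal{U}\mathfrak{g}$-modules carry a transferred model structure (this monoid is not obviously cofibrant in $\mathrm{Sp}^\Sigma(\mathrm{Ch}_+)$, so one needs the monoid axiom or a substitute), and the level-$n$ identification you write as $\mathcal{U}N\mathfrak{g}[-n]\cong N\widetilde{\mathbb{Q}}[S^n]\otimes\mathcal{U}N\mathfrak{g}$ is only the quasi-isomorphism \eqref{eqn:NormalisationMonoid} induced by the iterated shuffle map, not an isomorphism, so $\tilde\chi_\mathfrak{g}$ must be assembled accordingly. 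Neither point is an obstruction, and the "main obstacle" you identify (monoid-morphism compatibility of $\tilde\chi_\mathfrak{g}$) is precisely what the multiplicativity diagram \eqref{eqn:ChiMult} is designed to supply.
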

\begin{proof}
First, let us consider the left hand adjunction. 
Let $M$ be a (left) $\mathcal{U}\mathfrak{g}$-module in simplicial rational vector spaces, then the multiplicativity property of the comparison map $\chi_\mathfrak{g}$ (Lemma \ref{lem:Chi}) allows us to equip $NM$ with a $N\mathcal{U}\mathfrak{g}$-module structure via the map
\[
\mathcal{U}N\mathfrak{g}\otimes NM
\xrightarrow{\;\chi_\mathfrak{g}\otimes NM\;}
N\mathcal{U\mathfrak{g}}\otimes NM
\xrightarrow{\;\nabla_{\mathcal{U}\mathfrak{g}, M}\;}
N(\mathcal{U}\mathfrak{g}\otimes M)
\longrightarrow
N M\,.
\]
The stable Dold--Kan functor $N^\ast \colon \mathrm{Sp}^\Sigma(\mathrm{sVect}_\mathbb{Q})\to \mathrm{Sp}^\Sigma(\mathrm{Ch}_+)$ is determined on underlying symmetric sequences by $N$ so that there is a commuting diagram of functors
\[
\begin{tikzcd}
\mathfrak{g}\mathrm{-Rep}_\Delta
\ar[r, "N^\ast_\mathfrak{g}"]
\ar[d]
&
N\mathfrak{g}\mathrm{-Rep}_\mathrm{dg}^\Sigma
\ar[d]
\\
\mathrm{Sp}^\Sigma(\mathrm{sVect}_\mathbb{Q})
\ar[r, "N^\ast"]
&
\mathrm{Sp}^\Sigma(\mathrm{Ch}_+)\,,
\end{tikzcd}
\]
where the vertical arrows are the obvious forgetful functors.
The functor $N^\ast_\mathfrak{g}$ thus preserves limits and filtered colimits, so has a left adjoint $N^\mathfrak{g}_!$ by the adjoint functor theorem (there is a more explicit description of $N^\mathfrak{g}_!$ using monadicity, but we shall not need it).
The adjunction $(N_!^\mathbb{g}\dashv N^\ast_\mathfrak{g})$ is Quillen by the properties of $N^\ast$.

The right adjoint $N^\ast_\mathfrak{g}$ preserves and reflects stable weak equivalences by Lemma \ref{lem:StabDKNorm}.
To prove the Quillen equivalence, it is sufficient to show that the restriction of the derived unit to cofibrant objects is a natural weak equivalence.
To this end, let $\mathcal{E}\hookrightarrow Ho(N\mathfrak{g}\mathrm{-Rep}_\mathrm{dg}^\Sigma)$ denote the subcategory on objects for which the derived unit is an isomorphism, then it is not hard to see that  $\mathcal{E}$ is a localising subcategory.
The cofibrant object $s^\infty \mathcal{U}N\mathfrak{g}$ presents a compact generator of $Ho(N\mathfrak{g}\mathrm{-Rep}_\mathrm{dg}^\Sigma)$ and the (derived) unit morphism
\[
\eta_{s^\infty \mathcal{U}N\mathfrak{g}}\colon s^\infty \mathcal{U}N\mathfrak{g}
\to N^\ast_\mathfrak{g} N_!^\mathfrak{g} s^\infty \mathcal{U}N\mathfrak{g}
\cong s^\infty (N\mathcal{U} \mathfrak{g})
\]
is isomorphic to the map obtained by applying the stabilisation functor $s^\infty\colon \mathrm{Ch}_+ \to \mathrm{Sp}^\Sigma (\mathrm{Ch}_+)$ to the quasi-isomorphism $\chi_\mathfrak{g}\colon \mathcal{U}N\mathfrak{g}\to N\mathcal{U}\mathfrak{g}$.
By Ken Brown's lemma, $\eta_{s^\infty \mathcal{U}N\mathfrak{g}}$ is a weak equivalence in $\mathrm{Sp}^\Sigma(\mathrm{Ch}_+)$ and thus also in $N\mathfrak{g}\mathrm{-Rep}_\mathrm{dg}^\Sigma$.
This shows that $\mathcal{E}$ contains the compact generator, hence coincides with the full homotopy category and hence $(N_!^\mathfrak{g}\dashv N_!^\mathfrak{g})$ is a Quillen equivalence.

We now turn to the right hand adjunction.
The assembly functor $\mathfrak{A}\colon \mathrm{Sp}^\Sigma(\mathrm{Ch}_+)\to \mathrm{Ch}$ sends $s^\infty \mathcal{U}N\mathfrak{g}$ to $\mathcal{U}N\mathfrak{g}$, hence sends $s^\infty \mathcal{U}N\mathfrak{g}$-modules in $\mathrm{Sp}^\Sigma(\mathrm{Ch}_+)$ to $\mathcal{U}N\mathfrak{g}$-modules in $\mathrm{Ch}$.
Hence we get a functor $\mathfrak{A}_{N\mathfrak{g}}\colon N\mathfrak{g}\mathrm{-Rep}_\mathrm{dg}^\Sigma\to N\mathfrak{g}\mathrm{-Rep}_\mathrm{dg}$ which is given on underlying objects by $\mathfrak{A}$.
Since the dg algebra $\mathcal{U}N\mathfrak{g}$ is connective, the connective cover $c_0 M$ of any $\mathcal{U}N\mathfrak{g}$-module $M$ is naturally a  $\mathcal{U}N\mathfrak{g}$-module.
In view of the construction of the disassembly functor $\mathfrak{D}$, one shows that there is a functor $\mathfrak{D}_{N\mathfrak{g}}\colon N\mathfrak{g}\mathrm{
-Rep}\to N\mathfrak{g}\mathrm{-Rep}_\mathrm{dg}^\Sigma$ that coincides with $\mathfrak{D}$ on underlying objects.
It is easily checked that $\mathfrak{D}_{N\mathfrak{g}}$ is right adjoint to $\mathfrak{A}_{N\mathfrak{g}}$ and the adjunction $(\mathfrak{A}_{N\mathfrak{g}}\dashv \mathfrak{D}_{N\mathfrak{g}})$ is Quillen by the properties of $\mathfrak{D}$.
We prove that $(\mathfrak{A}_{N\mathfrak{g}}\dashv \mathfrak{D}_{N\mathfrak{g}})$ is in fact a Quillen equivalence by an adaptation of the previous argument for the left hand adjunction; this requires Lemma \ref{lem:StabDKNorm2}, the fact that $s^\infty \mathcal{U}N\mathfrak{g}$ presents a compact generator of $Ho(N\mathfrak{g}\mathrm{-Rep}_\mathrm{dg}^\Sigma)$, and the fact that the derived unit at $s^\infty \mathcal{U}N\mathfrak{g}$ is an isomorphism.
\end{proof}

\begin{remark}
For any simplicial Lie algebra $\mathfrak{g}$, the homotopy groups $\pi_\ast (\mathfrak{g})$ assemble into a (non-negatively) graded rational Lie algebra.
By the Dold--Kan correspondence $\pi_\ast(\mathfrak{g})\cong H_\bullet (N\mathfrak{g})$ as graded Lie algebras, and upon passing to stable homotopy and homology groups the above result implies a diagram of functors
\[
\begin{tikzcd}
Ho\big(\mathfrak{g}\mathrm{-Rep}_\Delta\big)
\ar[r, "\mathbf{R}N^\ast_\mathfrak{g}", "\simeq"']
\ar[dr, bend right=20, "\pi^\mathrm{st}_\ast"']
&
Ho\big(N\mathfrak{g}\mathrm{-Rep}_\mathrm{dg}^\Sigma\big)
\ar[r, leftarrow, "\mathbf{R}\mathfrak{D}_{N\mathfrak{g}}", "\simeq"']
&
Ho\big(N\mathfrak{g}\mathrm{-Rep}_\mathrm{dg}\big)
\ar[dl, bend left=19, "H_\bullet"]
\\
&
\pi_\ast(\mathfrak{g})\mathrm{-Rep}
&
\end{tikzcd}
\]
commuting up to natural isomorphism (cf~Remark \ref{rem:HisPi}).
\end{remark}

\subsection{Tensor products and base change}
For a simplicial Lie algebra $\mathfrak{g}$, setting $H=\mathcal{U}\mathfrak{g}$ in Lemma \ref{lem:SpectralHopfMod} shows that $\mathfrak{g}\mathrm{-Rep}_\Delta$ is a symmetric monoidal model category.
Similarly, the tensor product of chain complexes furnishes $N\mathfrak{g}\mathrm{-Rep}_\mathrm{dg}$ with a symmetric monoidal model structure (set $H = \mathcal{U}N\mathfrak{g}$ in Lemma \ref{lem:dgHopfMonoidal} below).
By exploiting the multiplicativity and comultiplicativity properties of $\chi_\mathfrak{g}\colon \mathcal{U}N\mathfrak{g}\to N\mathcal{U}\mathfrak{g}$, we show that the zig-zag of Quillen equivalences of Theorem \ref{thm:StabDKLie} is (weakly) monoidal.
\begin{lemma}
\label{lem:dgHopfMonoidal}
Let $H$ be a non-negatively graded dg Hopf algebra over $\mathbb{Q}$.
Then the categories $H\mathrm{-Mod}_+$ and $H\mathrm{-Mod}$ of, respectively, non-negatively graded and unbounded $H$-modules in chain complexes are symmetric monoidal model categories.
\end{lemma}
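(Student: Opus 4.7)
My plan is to imitate the proofs of Lemmas \ref{lem:GSpaceMonoidal} and \ref{lem:SimplHopfModMonoidal} in the differential graded setting, treating the non-negatively graded and unbounded cases in parallel (the arguments are formally identical once one fixes sets of generating (acyclic) cofibrations for $\mathrm{Ch}_+$ or $\mathrm{Ch}$ respectively). Throughout I will use that the tensor product of $H$-modules acquires an $H$-action via the coproduct $\Delta\colon H\to H\otimes H$, and that both model structures on $H\mathrm{-Mod}_{(+)}$ are obtained from those on $\mathrm{Ch}_{(+)}$ by Kan's transfer theorem along the free-forgetful adjunction $(H\otimes(-)\dashv U)$. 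Sets of generating cofibrations and generating acyclic cofibrations are then of the form $\mathcal{I}_H = H\otimes \mathcal{I}$ and $\mathcal{J}_H = H\otimes \mathcal{J}$ where $\mathcal{I}, \mathcal{J}$ denote standard generating sets for $\mathrm{Ch}_{(+)}$.

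The first step is the dg analogue of Lemma \ref{lem:FreeHopf} and Corollary \ref{cor:HopfPowers}: the tensor product of free dg $H$-modules is free, and in particular $H^{\otimes n}$ is a free $H$-module for every $n\geq 1$. The fundamental theorem of Hopf modules holds over any commutative ring, and its proof extends verbatim to the dg setting since the relevant coaction/multiplication diagrams commute in each homological degree. Concretely, for any $V, W\in \mathrm{Ch}$ there is a natural isomorphism of $H$-modules $(H\otimes V)\otimes (H\otimes W)\cong H\otimes \bigl[(H\otimes H)_H\otimes V\otimes W\bigr]$.

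The pushout-product axiom now reduces to the corresponding axiom in $\mathrm{Ch}_{(+)}$: for $(H\otimes i), (H\otimes j)\in \mathcal{I}_H$ the previous step identifies $(H\otimes i)\,\square\,(H\otimes j)$ with $H\otimes\bigl[(H\otimes H)_H \otimes (i\,\square\, j)\bigr]$, where $i\,\square\, j$ is a cofibration in $\mathrm{Ch}_{(+)}$ (acyclic if $i$ or $j$ is) by the monoidality of the tensor product on chain complexes. Since $H\otimes(-)$ is left Quillen, the result is a cofibration in $H\mathrm{-Mod}_{(+)}$ (acyclic when appropriate). Cofibrant generation then extends this to arbitrary cofibrations by the usual transfinite/retract closure argument, exactly as in the diagram \eqref{eqn:PPAcycDiag}.

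For the unit axiom I will construct an explicit cofibrant resolution $\mathbb{Q}^H\to \mathbb{Q}$ of the monoidal unit by mirroring Construction \ref{cons:HopfCofibUnit}: apply the comonad $H\otimes(-)$ iteratively to $\mathbb{Q}$ to produce an augmented simplicial object $\underline{\mathbb{Q}}^H_\bullet\to \mathbb{Q}$ in $H\mathrm{-Mod}_{(+)}$, and realize via the normalized chains of the simplicial diagram (equivalently, totalize the associated bar bicomplex). The dg Hopf-module lemma of step one shows that this simplicial object is Reedy cofibrant, so its realization is a cofibrant $H$-module. Forgetting the $H$-action, the unit $\eta\colon \mathbb{Q}\to H$ supplies extra degeneracies exhibiting the augmentation as a chain homotopy equivalence in $\mathrm{Ch}_{(+)}$; consequently $\mathbb{Q}^H\otimes M\to \mathbb{Q}\otimes M\cong M$ is a quasi-isomorphism for \emph{every} $M$, since the tensor product of chain complexes preserves chain homotopy equivalences in each variable. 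The main subtlety lies here — verifying Reedy cofibrancy of $\underline{\mathbb{Q}}^H_\bullet$ in $H\mathrm{-Mod}_{(+)}$, for which the dg Hopf-module freeness result is indispensable — but once this is in place the unit axiom follows without further work, completing the proof for both $H\mathrm{-Mod}_+$ and $H\mathrm{-Mod}$ simultaneously.
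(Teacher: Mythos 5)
Your proof is correct and takes essentially the same route as the paper, which simply states that the proof of Lemma \ref{lem:SimplHopfModMonoidal} carries over \emph{mutatis mutandis}: transfer the model structure along the free--forgetful adjunction and use the dg fundamental theorem of Hopf modules to identify pushout-products of generating (acyclic) cofibrations with free $H$-modules on pushout-products in $\mathrm{Ch}_{(+)}$. The only place you do more work than needed is the unit axiom: over $\mathbb{Q}$ the tensor product of chain complexes preserves quasi-isomorphisms in both variables by the K\"{u}nneth theorem, so $QI\otimes X\to X$ is a weak equivalence for \emph{any} cofibrant replacement $QI\to \mathbb{Q}$, making the explicit bar resolution unnecessary (though your construction is correct and mirrors Construction \ref{cons:HopfCofibUnit}, which the paper does need elsewhere).
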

\begin{proof}
For non-negatively graded modules the result is essentially the differential graded version of Lemma \ref{lem:SimplHopfModMonoidal} and the same proof applies \emph{mutatis mutandis}; a similar argument also applies in the unbounded case.
\end{proof}

\begin{corollary}
For any non-negatively graded dg Hopf $H$ algebra over $\mathbb{Q}$, the stable model category $H\mathrm{-Mod}^\Sigma := \mathrm{Sp}^\Sigma(H\mathrm{-Mod}_+; s)$ is a symmetric monoidal model category.
\end{corollary}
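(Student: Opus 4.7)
The plan is to apply Hovey's symmetric stabilisation machine \cite{hovey_spectra_2001} to the symmetric monoidal model category $(H\mathrm{-Mod}_+, \otimes)$ provided by Lemma \ref{lem:dgHopfMonoidal}, mirroring the proof of Lemma \ref{lem:SpectralHopfMod} in the differential graded setting. The suspension endofunctor $s\colon M\mapsto M\otimes \mathbb{Q}[-1]$ that is being inverted is tensoring with $\mathbb{Q}[-1]$, which is a cofibrant $H$-module (it is the free $H$-module on a generator in degree one, or one may observe that over $\mathbb{Q}$ every chain complex is projectively cofibrant). Consequently the symmetric sequence $s^\infty\mathbb{Q}\colon n\mapsto \mathbb{Q}[-n]$ is a commutative monoid for the Day convolution product in $\mathrm{Fun}(\mathbf{\Sigma}, H\mathrm{-Mod}_+)$, and $H\mathrm{-Mod}^\Sigma$ is canonically identified with the category of $s^\infty\mathbb{Q}$-modules, which inherits a closed symmetric monoidal structure with monoidal unit $s^\infty\mathbb{Q}$.

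I would then verify the pushout-product and unit axioms. Generating cofibrations and acyclic cofibrations for $H\mathrm{-Mod}^\Sigma$ are obtained by freely extending those of $H\mathrm{-Mod}_+$ (themselves free extensions of generators for $\mathrm{Ch}_+$) along the shifted stabilisation functors $\Sigma^{\infty-k}_\mathrm{dg}\colon H\mathrm{-Mod}_+\to H\mathrm{-Mod}^\Sigma$ for $k\geq 0$. Since $\Sigma^{\infty-k}_\mathrm{dg}(-)\otimes \Sigma^{\infty-l}_\mathrm{dg}(-)\cong \Sigma^{\infty-(k+l)}_\mathrm{dg}\big((-)\otimes (-)\big)$, the pushout-product of two generating cofibrations reduces to the pushout-product axiom in $H\mathrm{-Mod}_+$. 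The acyclic case is more delicate: one argues as in Lemma \ref{lem:SmashTensCofibObject} by introducing an intermediate notion of $s^\infty\mathbb{Q}$-cofibration and reducing the acyclicity clause to levelwise weak equivalences in an injective model structure, where one can invoke Ken Brown's lemma.

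For the unit axiom, $s^\infty\mathbb{Q}$ itself is already cofibrant so no resolution is required. The fact that the resulting monoidal structure is symmetric (rather than merely associative) is automatic in our setting: the only obstruction in Hovey's construction concerns the $\Sigma_n$-action on $\mathbb{Q}[-1]^{\otimes n}$, which acts by the Koszul sign, and over $\mathbb{Q}$ this presents no homotopical obstacle since symmetric group actions on rational chain complexes are unproblematic.

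The main technical point is the acyclicity half of the pushout-product axiom, but as indicated this is a direct transcription of the argument of Lemma \ref{lem:SmashTensCofibObject} to the differential graded setting; no new ideas are required beyond the observation that tensoring with a cofibrant symmetric spectrum object of $H\mathrm{-Mod}^\Sigma$ preserves all weak equivalences.
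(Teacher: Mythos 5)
Your overall strategy coincides with the paper's: the proof there is literally ``apply the symmetric stabilisation machine'' to Lemma \ref{lem:dgHopfMonoidal}, and your fleshing-out of the pushout-product axiom (reduce to generating cofibrations via $\Sigma^{\infty-k}(-)\otimes\Sigma^{\infty-l}(-)\cong\Sigma^{\infty-(k+l)}((-)\otimes(-))$, then handle acyclicity by passing to an underlying category where tensoring with cofibrant objects preserves weak equivalences) is exactly the pattern of Corollary \ref{cor:GModSpecMonoidal} and Lemma \ref{lem:SmashTensCofibObject}.

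There are, however, two false cofibrancy claims, both coming from conflating cofibrancy in $\mathrm{Ch}_+$ with cofibrancy in the transferred model structure on $H\mathrm{-Mod}_+$. First, $\mathbb{Q}[-1]$ with its counit-induced $H$-action is \emph{not} the free $H$-module on a degree-one generator (that would be $H\otimes\mathbb{Q}[-1]\cong H[-1]$), and it is generally not cofibrant: cofibrant objects in the transferred structure are retracts of semifree $H$-modules, and the trivial module is not such a retract unless $H=\mathbb{Q}$ (e.g.\ $\mathrm{Ext}^{>0}_H(\mathbb{Q},\mathbb{Q})\neq 0$ already for $H=\mathbb{Q}[x]$). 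This matters only insofar as you lean on Hovey's cofibrancy hypothesis for the object being inverted; the construction survives because the shift is still a left Quillen endofunctor sending free modules to free modules, or because one identifies $H\mathrm{-Mod}^\Sigma$ with $s^\infty H$-modules in $\mathrm{Sp}^\Sigma(\mathrm{Ch}_+)$ and argues there. Second, and more seriously, your disposal of the unit axiom --- ``$s^\infty\mathbb{Q}$ itself is already cofibrant so no resolution is required'' --- is wrong for the same reason: the monoidal unit is $s^\infty$ of the \emph{trivial} $H$-module, which is not cofibrant in general. This is precisely why the paper constructs the explicit cofibrant resolution $\mathbb{Q}^H\to\mathbb{Q}$ in Construction \ref{cons:HopfCofibUnit}. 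The unit axiom therefore does require an argument; the correct (and easy) one is that of Lemma \ref{lem:SimplHopfModMonoidal}: for any cofibrant resolution $QI\to s^\infty\mathbb{Q}$ and any cofibrant $X$, the comparison map $QI\otimes X\to X$ is a weak equivalence because the tensor product over $\mathbb{Q}$ preserves quasi-isomorphisms in each variable (combined, in the stable setting, with the dg analogue of Lemma \ref{lem:SmashTensCofibObject}). With these two points repaired the proof is complete and agrees with the paper's.
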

\begin{proof}
This follows by applying the symmetric stabilisation machine.
\end{proof}
\begin{remark}
The stable model category $\mathrm{Sp}^\Sigma(H\mathrm{-Mod};s)$ is canonically isomorphic to the stable model category of $s^\infty H$-modules in $\mathrm{Sp}^\Sigma(\mathrm{Ch}_+)$.
By a mild abuse of notation $H\mathrm{-Mod}^\Sigma$ is used to refer to either of these isomorphic model categories.
\end{remark}

\begin{theorem}
\label{thm:SimpLieMonoidalEquivs}
For any simplicial Lie algebra $\mathfrak{g}$, $(N_!^\mathfrak{g}\dashv N^\ast_\mathfrak{g})$ is a weakly monoidal Quillen equivalence and $(\mathfrak{A}_{N\mathfrak{g}}\dashv \mathfrak{D}_{N\mathfrak{g}})$ is a strongly monoidal Quillen equivalence.
\end{theorem}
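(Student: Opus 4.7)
The plan is to lift the underlying (weakly) monoidal Quillen equivalences from Theorem \ref{thm:StabDK} to the module categories, exploiting that the monoidal structures on $\mathfrak{g}\mathrm{-Rep}_\Delta$ and $N\mathfrak{g}\mathrm{-Rep}^\Sigma_\mathrm{dg}$ (resp.\ $N\mathfrak{g}\mathrm{-Rep}_\mathrm{dg}$) are inherited from their ambient symmetric monoidal model categories via diagonal actions of the Hopf algebras $\mathcal{U}\mathfrak{g}$, $\mathcal{U}N\mathfrak{g}$. The assembly equivalence $(\mathfrak{A}_{N\mathfrak{g}}\dashv \mathfrak{D}_{N\mathfrak{g}})$ is handled first and is essentially formal: since $\mathfrak{A}$ is strongly symmetric monoidal, it carries the Hopf algebra $s^\infty\mathcal{U}N\mathfrak{g}$ to $\mathcal{U}N\mathfrak{g}$ preserving both product and coproduct, so the monoidal isomorphism $\mathfrak{A}(V\otimes W)\cong \mathfrak{A}V\otimes \mathfrak{A}W$ automatically intertwines the diagonal $\mathcal{U}N\mathfrak{g}$-actions, and the unit $s^\infty\mathbb{Q}$ (as trivial module) is preserved up to isomorphism.

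For the weakly monoidal case $(N_!^\mathfrak{g}\dashv N^\ast_\mathfrak{g})$, I would first promote the lax monoidal structure on $N^\ast$ (Remark \ref{rem:StabShuffle}) to a lax structure on $N^\ast_\mathfrak{g}$. The essential check is that the shuffle map $\nabla_{M,N}\colon N^\ast M\otimes N^\ast N\to N^\ast(M\otimes N)$ is $\mathcal{U}N\mathfrak{g}$-equivariant for the two diagonal actions on domain and codomain: on the domain the action factors through $\kappa_{N\mathfrak{g}}$ and componentwise action via $\chi_\mathfrak{g}$, whereas on the codomain it factors through $\chi_\mathfrak{g}$ followed by $N^\ast$ applied to the diagonal $\mathcal{U}\mathfrak{g}$-action via $\kappa_\mathfrak{g}$. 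The comultiplicativity of $\chi_\mathfrak{g}$ (diagram \eqref{eqn:ChiComult}) combined with associativity and naturality of the shuffle map is exactly what forces agreement. Taking adjoints then endows $N_!^\mathfrak{g}$ with an oplax monoidal structure.

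To verify the weakly monoidal conditions, I would use that the forgetful functors to $\mathrm{Sp}^\Sigma(\mathrm{sVect}_\mathbb{Q})$ and $\mathrm{Sp}^\Sigma(\mathrm{Ch}_+)$ create weak equivalences, and that cofibrant objects in the module categories have cofibrant underlying spectra (since $\Sigma^\infty_\mathbb{Q}\mathcal{U}\mathfrak{g}$ and $s^\infty\mathcal{U}N\mathfrak{g}$ are cofibrant, the transferred generating cofibrations remain cofibrations underlyingly). Both the oplax comparison map and the unit comparison $N_!^\mathfrak{g}(C)\to \Sigma^\infty_\mathbb{Q}\mathbb{Q}$ for a cofibrant replacement $C\to s^\infty\mathbb{Q}$ thus reduce to the corresponding statements for $(N_!\dashv N^\ast)$ already established in Theorem \ref{thm:StabDK}.

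The hard part will be the equivariance diagram chase, which is where all the structural content resides: although the comultiplicativity of $\chi_\mathfrak{g}$ encodes exactly the information needed, carefully matching the two diagonal $\mathcal{U}N\mathfrak{g}$-actions requires tracking the interplay of $\nabla$ with several Hopf coproducts, together with the associativity of the lax structure on $N^\ast$ and the monoidal compatibility of the respective forgetful functors. A secondary technical point will be ensuring that underlying cofibrancy propagates properly through tensor products so the reduction to Theorem \ref{thm:StabDK} genuinely applies on the full class of cofibrant pairs $(A,B)$ rather than only on free modules.
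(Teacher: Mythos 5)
Your treatment of the lax structure on $N^\ast_\mathfrak{g}$ (equivariance of the shuffle map via the comultiplicativity of $\chi_\mathfrak{g}$) and of the assembly adjunction matches the paper's strategy. However, there is a genuine gap in your main verification step for $(N_!^\mathfrak{g}\dashv N^\ast_\mathfrak{g})$: you claim that the oplax comparison map $\Lambda_{A,B}\colon N_!^\mathfrak{g}(A\otimes B)\to N_!^\mathfrak{g}A\otimes N_!^\mathfrak{g}B$ and the unit comparison ``reduce to the corresponding statements for $(N_!\dashv N^\ast)$'' because the forgetful functors create weak equivalences and preserve cofibrancy. This reduction fails because it is the \emph{right} adjoints that commute with the forgetful functors, not the left adjoints: one has $N_!^\mathfrak{g}\circ(s^\infty\mathcal{U}N\mathfrak{g}\otimes(-))\cong(\Sigma^\infty_\mathbb{Q}\mathcal{U}\mathfrak{g}\otimes(-))\circ N_!$, but the underlying object of $N_!^\mathfrak{g}(X)$ is \emph{not} $N_!(UX)$ (recall the paper's warning that $N_!$ is not even given by $\Gamma$ levelwise). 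So $\Lambda_{A,B}$ cannot be identified with the comparison map of Theorem~\ref{thm:StabDK} evaluated on underlying spectra, and your appeal to that theorem does not apply. The remark you relegate to a ``secondary technical point'' about cofibrancy propagating through tensor products is in fact the heart of the matter, and it is not a cofibrancy issue.

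What is actually needed, and what the paper does, is the following: first identify $N_!^\mathfrak{g}$ explicitly on free modules $s^{\infty-k}(\mathcal{U}N\mathfrak{g}\otimes M)$ by uniqueness of adjoints; then use the fundamental theorem of Hopf modules (Lemma~\ref{lem:FreeHopf}, Corollary~\ref{cor:SimplicialHopfMod} and its dg analogue) to recognise the tensor product of two free modules as again free on the coinvariants $(\mathcal{U}N\mathfrak{g}\otimes\mathcal{U}N\mathfrak{g})_{\mathcal{U}N\mathfrak{g}}\otimes A\otimes B$, so that $\Lambda_{A,B}$ becomes a map induced by $\overline{\chi}_\mathfrak{g}$, which is a weak equivalence by the comultiplicativity diagram \eqref{eqn:ChiComult}; and finally run a cell induction (cube lemma plus transfinite composition) to pass from free modules to arbitrary cofibrant objects. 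The unit condition likewise requires an explicit cofibrant resolution of $s^\infty\mathbb{Q}$ (which is \emph{not} cofibrant as a trivial module) — the paper uses the simplicial bar-type resolution $\mathbb{Q}^{N\mathfrak{g}}$ of Construction~\ref{cons:HopfCofibUnit} — and a levelwise comparison via the $\overline{\chi}^{\,n}_\mathfrak{g}$; this also affects your assembly argument, where the unit condition still has to be checked even though $\mathfrak{A}_{N\mathfrak{g}}$ is strongly monoidal.
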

\begin{proof}
The left Quillen functor $\mathfrak{A}_{N\mathfrak{g}}$ is given on underlying objects by $\mathfrak{A}\colon \mathrm{Sp}^\Sigma(\mathrm{Ch}_+)\to \mathrm{Ch}$.
Since $\mathfrak{A}$ is strongly monoidal, so is $\mathfrak{A}_{N\mathfrak{g}}$.
To verify that $(\mathfrak{A}_{N\mathfrak{g}}\dashv \mathfrak{D}_{N\mathfrak{g}})$ is a strongly monoidal Quillen equivalence, it is therefore sufficient to check that for some (hence any) cofibrant replacement $\mathbb{Q}^c\to \Sigma^\infty\mathbb{Q}$ of the monoidal unit in $N\mathfrak{g}\mathrm{-Rep}_\mathrm{dg}^\Sigma$, the morphism $\mathfrak{A}_{N\mathfrak{g}}\mathbb{Q}^c\to \mathbb{Q}$ is a quasi-isomorphism of chain complexes.
For this, note that Construction \ref{cons:HopfCofibUnit} carries over to the dg setting and  so 
\begin{equation}
\label{eqn:CofibUnit}
\mathbb{Q}^{N\mathfrak{g}} := \underset{\Delta^\mathrm{op}}{\mathrm{colim}}\, s^\infty \mathcal{U}N\mathfrak{g}^{\otimes (n+1)} \longrightarrow s^\infty \mathbb{Q}
\end{equation}
is a cofibrant resolution in $N\mathfrak{g}\mathrm{-Rep}_\mathrm{dg}^\Sigma$.
Applying $\mathfrak{A}_{N\mathfrak{g}}$ we get a map of chain complexes 
\[
\underset{\Delta^\mathrm{op}}{\mathrm{colim}}\, \mathcal{U}N\mathfrak{g}^{\otimes (n+1)} \longrightarrow  \mathbb{Q}\,,
\]
which is a quasi-isomorphism by the same argument.

We now turn to the $(N_\mathfrak{g}\dashv N^\ast_\mathfrak{g})$-adjunction.
Let $V$ and $W$ be $\Sigma^\infty_\mathbb{Q}\mathcal{U}\mathfrak{g}$-modules with actions $\rho_V$ and $\rho_W$ respectively.
We write $\otimes$ for the smash-tensor monoidal structures on both $\mathrm{Sp}^\Sigma(\mathrm{sVect}_\mathbb{Q})$ and $\mathrm{Sp}^\Sigma(\mathrm{Ch}_+)$ and abuse notation by writing $\Sigma^\infty_\mathbb{Q} K \otimes V \equiv K\otimes V$ and $s^\infty L \otimes M \equiv L\otimes M$ (the smash-tensor product with a suspension spectrum is the levelwise tensor product). 
The comultiplicativity of $\chi_\mathfrak{g}$ (Lemma \ref{lem:Chi}) and the lax symmetric monoidal structure on $N^\ast$ (Remark \ref{rem:StabShuffle}) imply that we have a commuting diagram:
\[
\begin{tikzcd}[row sep = small, column sep =tiny]
\mathcal{U}N\mathfrak{g}\otimes N^\ast V\otimes N^\ast W
\ar[rr, "\mathrm{id}\otimes \nabla"]
\ar[dd, "\kappa_{N\mathrm{g}}\otimes \mathrm{id}"']
\ar[dr, "\chi_\mathfrak{g}\otimes \mathrm{id}"']
&&
\mathcal{U}N\mathfrak{g}\otimes N^\ast (V\otimes W)
\ar[dd, "\chi_\mathfrak{g}\otimes \mathrm{id}"]
\\
&
N\mathcal{U}\mathfrak{g}\otimes N^\ast V\otimes N^\ast W
\ar[dd, "N\kappa_\mathfrak{g}\otimes \mathrm{id}"]
\ar[dr, "\mathrm{id}\otimes \nabla"]
&
\\
\mathcal{U}N\mathfrak{g}\otimes 
\mathcal{U}N\mathfrak{g}\otimes
N^\ast V\otimes N^\ast W
\ar[dd, "\chi_\mathfrak{g}\otimes \chi_\mathfrak{g}\otimes \mathrm{id}"']
&&
N\mathcal{U}\mathfrak{g}\otimes N^\ast (V\otimes W)
\ar[dd, "N\kappa_\mathfrak{g}\otimes \mathrm{id}"]
\\
&
N(\mathcal{U}\mathfrak{g}\otimes \mathcal{U}\mathfrak{g})\otimes N^\ast(V)\otimes N^\ast(W)
\ar[from=dl, "\nabla\otimes \mathrm{id}"]
\ar[dr,"\mathrm{id}\otimes \nabla"]
&
\\
N\mathcal{U}\mathfrak{g}\otimes 
N\mathcal{U}\mathfrak{g}\otimes
N^\ast V\otimes N^\ast W
\ar[d, "\cong"']
&&
N(\mathcal{U}\mathfrak{g}\otimes \mathcal{U}\mathfrak{g})\otimes N^\ast(V\otimes W)
\ar[d, "\nabla"]
\\
N\mathcal{U}\mathfrak{g}\otimes 
N^\ast V\otimes 
N\mathcal{U}\mathfrak{g}\otimes
N^\ast W 
\ar[d, "\nabla\otimes \nabla"']
&&
N^\ast(\mathcal{U}\mathfrak{g}\otimes \mathcal{U}\mathfrak{g}\otimes V\otimes W)
\ar[d, "\cong"]
\\
N^\ast(\mathcal{U}\mathfrak{g}\otimes V)\otimes
N^\ast(\mathcal{U}\mathfrak{g}\otimes W)
\ar[d, "N^\ast\rho_V\otimes N^\ast \rho_W"']
\ar[rr, "\nabla\;\;"]
&&
N^\ast(\mathcal{U}\mathfrak{g}\otimes V
\otimes \mathcal{U}\mathfrak{g}\otimes W)
\ar[d, "N^\ast(\rho_V\otimes \rho_W)"]
\\
N^\ast V\otimes N^\ast W
\ar[rr, "\;\;\nabla"]
&&
N^\ast(V\otimes W)\,.
\end{tikzcd}
\]
The left vertical composite exhibits the $s^\infty\mathcal{U}N\mathfrak{g}$-action on $N^\ast V\otimes N^\ast W$, whereas the right vertical composite exhibits the $s^\infty\mathcal{U}N\mathfrak{g}$-action on $N^\ast (V\otimes W)$.
Commutativity of the diagram shows that the stabilised shuffle map $\nabla\colon N^\ast V\otimes N^\ast W\to N^\ast(V\otimes W)$ is a morphism of $s^\infty\mathcal{U}N\mathfrak{g}$-modules.
Using the multiplicativity of $\chi_\mathfrak{g}$ and the map \eqref{eqn:NormalisationMonoid}, we deduce from this that $N^\ast_\mathfrak{g}$ is lax symmetric monoidal.

By adjointness (compare Lemma  \ref{lem:BaseChangeforSimplHopf}) the left adjoint $N^\mathfrak{g}_!$ is oplax symmetric monoidal, with structure maps
\begin{align*}
\Lambda_{A,B}\colon N_!^\mathfrak{g}(A\otimes B)&\longrightarrow N_!^\mathfrak{g}A \otimes N_!^\mathfrak{g} B
\\
\lambda\colon N^\mathfrak{g}_! s^\infty \mathbb{Q} & 
\longrightarrow 
\Sigma^\infty_\mathbb{Q} \mathbb{Q}\,.
\end{align*}
Writing $\widetilde{\Omega}^{\infty-k}\colon \mathfrak{g}\mathrm{-Rep}_\Delta\to \mathfrak{g}\mathrm{-Rep}_\Delta^u$ and $\ell^{\infty-k}\colon N\mathfrak{g}\mathrm{-Rep}_\mathrm{dg}^\Sigma \to N\mathfrak{g}\mathrm{-Rep}_\mathrm{dg}^+$ for the functors that extract the $k$-th space of a symmetric spectrum (right adjoint to $\Sigma^{\infty-k}_\mathbb{Q}$ and $s^{\infty-k}$ respectively), we have for each $k\geq 0$ a commuting diagram of functors
\[
\begin{tikzcd}
\mathfrak{g}\mathrm{-Rep}_\Delta
\ar[r," \widetilde{\Omega}^{\infty-k}"]
\ar[d, "N^\ast_\mathfrak{g}"']
&
\mathfrak{g}\mathrm{-Rep}^u_\Delta
\ar[r]
\ar[d]
&
\mathrm{sVect}_\mathbb{Q}
\ar[d, "N"]
\\
N\mathfrak{g}\mathrm{-Rep}_\mathrm{dg}^\Sigma
\ar[r, "\ell^{\infty-k}"]
&
N\mathfrak{g}\mathrm{-Rep}^+_\mathrm{dg}
\ar[r]
&
\mathrm{Ch}_+\,.
\end{tikzcd}
\]
For each $k\geq 0$ and non-negatively graded chain complex $M$, there is a natural isomorphism
$
N^\mathfrak{g}_! \big(s^{\infty-k}(\mathcal{U}N\mathfrak{g}\otimes M)\big) \cong \Sigma^{\infty-k}_\mathbb{Q} \big( \mathcal{U}\mathfrak{g}\otimes \Gamma M\big)
$
by essential uniqueness of adjoints.
For $k,l \geq 0$ and $A,B\in \mathrm{Ch}_+$, the smash-tensor product of $s^{\infty-k}(\mathcal{U}N\mathfrak{g}\otimes A)$ and $s^{\infty-l} (\mathcal{U}N\mathfrak{g}\otimes B)$ in $\mathrm{Sp}^\Sigma(\mathrm{Ch}_+)$ is isomorphic to
\[
s^{\infty-(k+l)}
\big(\mathcal{U}N\mathfrak{g}\otimes \mathcal{U}N\mathfrak{g}\otimes A\otimes B
\big)
\cong 
s^{\infty-(k+l)}
\big(\mathcal{U}N\mathfrak{g}\otimes( \mathcal{U}N\mathfrak{g}\otimes \mathcal{U}N\mathfrak{g})_{\mathcal{U}N\mathfrak{g}} \otimes A\otimes B\big)
\]
using the evident dg version of Lemma \ref{lem:FreeHopf}.
For such objects the oplax structure map is  isomorphic to a map of the form
\begin{equation}
\label{eqn:FreeRectComp}
\Sigma^{\infty-(k+l)}_\mathbb{Q}
\Big(
\mathcal{U}\mathfrak{g}\otimes\Gamma\big(( \mathcal{U}N\mathfrak{g}\otimes \mathcal{U}N\mathfrak{g})_{\mathcal{U}N\mathfrak{g}} \otimes A\otimes B\big)
\xrightarrow{\;\mathrm{id}\otimes \overline{\chi}_\mathfrak{g}^{A,B}\;}
\mathcal{U}\mathfrak{g}\otimes (\mathcal{U}\mathfrak{g}\otimes \mathcal{U}\mathfrak{g})_{\mathcal{U}\mathfrak{g}}\otimes \Gamma A \otimes \Gamma B\Big)\,.
\end{equation}
Let us examine the morphism $\chi_\mathfrak{g}^{A,B}$ in more detail.
Comultiplicativity of $\chi_\mathfrak{g}$ (Lemma \ref{lem:Chi}) implies that $\chi_\mathfrak{g}^{\otimes n}$ preserves coinvariant submodules for each $n\geq 0$, in the sense that there is a commuting diagram of quasi-isomorphisms
\begin{equation}
\label{eqn:ChiBar}
\begin{tikzcd}[row sep =tiny]
\mathcal{U}N\mathfrak{g}^{\otimes n}
\ar[r, "\chi_\mathfrak{g}^{\otimes n}"]
\ar[d, equal, "\wr"']
&
N\mathcal{U}\mathfrak{g}^{\otimes n}
\ar[r, "\nabla"]
&
N(\mathcal{U}\mathfrak{g}^{\otimes n})
\ar[d, equal, "\wr"]
\\
\mathcal{U}N\mathfrak{g}\otimes \mathcal{U}N\mathfrak{g}^{\otimes n}_{\mathcal{U}N\mathfrak{g}}
\ar[r, "\chi_\mathfrak{g}\otimes \overline{\chi}^n_\mathfrak{g}"]
&
N\mathcal{U}\mathfrak{g}
\otimes 
N\big(
\mathcal{U}\mathfrak{g}^{\otimes n}_{\mathcal{U}\mathfrak{g}}
\big)
\ar[r, "\nabla"]
&
N\big(
\mathcal{U}\mathfrak{g}
\otimes 
\mathcal{U}\mathfrak{g}^{\otimes n}_{\mathcal{U}\mathfrak{g}}
\big)\,,
\end{tikzcd}
\end{equation}
in which the vertical isomorphisms come from the fundamental theorem of Hopf modules (in the simplicial and dg settings respectively).
The morphism in parantheses in \eqref{eqn:FreeRectComp} is the composite of $\Gamma(\overline{\chi}^2_\mathfrak{g}\otimes A\otimes B)$ with the oplax monoidal transformation $\Phi$.
Applying the normalisation functor, we get a commuting diagram
\[
\begin{tikzcd}[row sep =small]
N\Big(\mathcal{U}\mathfrak{g}\otimes\Gamma\big(( \mathcal{U}N\mathfrak{g}\otimes \mathcal{U}N\mathfrak{g})_{\mathcal{U}N\mathfrak{g}} \otimes A\otimes B\big)\Big)
\ar[d, "N (\mathrm{id}\otimes\overline{\chi}_\mathfrak{g}^{A,B})"']
\ar[r]
&
N\mathcal{U}\mathfrak{g}\otimes 
( \mathcal{U}N\mathfrak{g}\otimes \mathcal{U}N\mathfrak{g})_{\mathcal{U}N\mathfrak{g}} \otimes A\otimes B
\ar[d, "\nabla\circ (\mathrm{id}\otimes \overline{\chi}^2_\mathfrak{g}\otimes \mathrm{id})"]
\\
N\big(
\mathcal{U}\mathfrak{g}\otimes (\mathcal{U}\mathfrak{g}\otimes \mathcal{U}\mathfrak{g})_{\mathcal{U}\mathfrak{g}}\otimes \Gamma A \otimes \Gamma B
\big)
\ar[r]
&
N\big(\mathcal{U}\mathfrak{g}\otimes (\mathcal{U}\mathfrak{g}\otimes \mathcal{U}\mathfrak{g})_{\mathcal{U}\mathfrak{g}}\big)
\otimes  A\otimes B\,,
\end{tikzcd}
\]
in which the horizontal arrows are the evident composites of the shuffle map and the natural isomorphism $N\Gamma\cong \mathrm{id}$.
Each arrow in this diagram is a quasi-isomorphism:
 for the horizontal arrows this is due to the properties of the shuffle map, the right hand vertical arrow is a quasi-isomorphism by \eqref{eqn:ChiBar}, and the left hand vertical arrow is a quasi-isomorphism by the 2-out-of-3 property.
Normalisation preserves and reflects weak equivalences, so that $\mathrm{id}\otimes \overline{\chi}_\mathfrak{g}^{A,B}$ is a weak equivalence of simplicial vector spaces.
Domain and codomain of this map are cofibrant (in both $\mathfrak{g}\mathrm{-Rep}^u_\Delta$ and $\mathrm{sVect}_\mathbb{Q}$) so that \eqref{eqn:FreeRectComp} is a weak equivalence by Ken Brown's lemma.

The oplax monoidal transformation $\Lambda_{A,B}$ is thus a weak equivalence for $A$, $B$ domains or codomains of generating cofibrations\footnote{We recall that the generating cofibrations of $N\mathfrak{g}\mathrm{-Rep}_\mathrm{dg}^\Sigma$ are obtained by applying the functors $s^{\infty-k}\circ (\mathcal{U}N\mathfrak{g}\otimes(-))$, $k\geq 0$, to the generating cofibrations of $\mathrm{Ch}_+$.}.
Following the argument of Lemma \ref{lem:BaseChangeforSimplHopf}, we deduce that $\Lambda_{A,B}$ is a weak equivalence for all cofibrant $A, B\in N\mathfrak{g}\mathrm{-Rep}_\mathrm{dg}^\Sigma$.

To complete the proof, we must verify that for some (hence any) cofibrant replacement of the monoidal unit $\mathbb{Q}^c\to s^\infty\mathbb{Q}$ in $N\mathfrak{g}\mathrm{-Rep}_\mathrm{dg}^\Sigma$, the composite with the oplax monoidal transformation
$
N_!^\mathfrak{g}\mathbb{Q}^c \to N^\mathfrak{g}_! s^\infty \mathbb{Q}\to
\Sigma^\infty_\mathbb{Q} \mathbb{Q}
$
is a weak equivalence.
Taking $\mathbb{Q}^c= \mathbb{Q}^{N\mathfrak{g}}$ as in \eqref{eqn:CofibUnit} and using the evident dg version of Corollary \ref{cor:HopfPowers}, we calculate
\[
N_!^\mathfrak{g} \mathbb{Q}^{N\mathfrak{g}}
\cong \underset{\Delta^\mathrm{op}}{\mathrm{colim}}\, \Sigma^\infty \left[\mathcal{U}\mathfrak{g} \otimes \Gamma \big(\mathcal{U}N\mathfrak{g}^{\otimes (n+1)}_{\mathcal{U}N\mathfrak{g}}\big)\right]\,.
\]
The maps
$
\mathrm{id}\otimes \Gamma \overline{\chi}^{n+1}_\mathfrak{g}
\colon
\mathcal{U}\mathfrak{g} \otimes \Gamma \big(\mathcal{U}N\mathfrak{g}^{\otimes (n+1)}_{\mathcal{U}N\mathfrak{g}}\big)
\to
\mathcal{U}\mathfrak{g} \otimes \Gamma N \big(\mathcal{U}\mathfrak{g}^{\otimes (n+1)}_{\mathcal{U}\mathfrak{g}}\big)
\cong
\mathcal{U}\mathfrak{g}^{\otimes (n+1)}
$
together determine a levelwise weak equivalence of Reedy cofibrant simplicial objects, so taking colimits and applying the left Quillen functor $\Sigma^\infty_\mathbb{Q}$ gives a weak equivalence $N_!^\mathfrak{g}\mathbb{Q}^{N\mathfrak{g}}\to \Sigma^\infty_\mathbb{Q} \mathbb{Q}^{\mathcal{U}\mathfrak{g}}$ (cf~Construction \ref{cons:HopfCofibUnit}).
But $\Sigma^\infty_\mathbb{Q} \mathbb{Q}^{\mathcal{U}\mathfrak{g}}\to \Sigma^\infty_\mathbb{Q} \mathbb{Q}$ is a cofibrant replacement of the monoidal unit in $\mathfrak{g}\mathrm{-Rep}_\Delta$ and the composite $N_!^\mathfrak{g}\mathbb{Q}^{N\mathfrak{g}}\to \Sigma^\infty_\mathbb{Q} \mathbb{Q}$ factors through the oplax monoidal structure map $\lambda$ by construction.
This completes the proof that $(N^\mathfrak{g}_!\dashv N^\ast_\mathfrak{g})$ is a weakly monoidal Quillen equivalence.
\end{proof}

We will also need the following monoidal base change result:
\begin{lemma}
\label{lem:QIsodgLieBChange}
Let $f\colon \mathfrak{h}\to \mathfrak{k}$ be a quasi-isomorphism of dg Lie algebras.
Then there is a weakly monoidal Quillen equivalence
\[
\begin{tikzcd}
\mathfrak{h}\mathrm{-Rep}_\mathrm{dg}
\ar[rr, shift left =1.1ex, "\mathcal{U}f_!"]
\ar[rr, leftarrow, shift left=-1.1ex, "\mathcal{U}f^\ast"', "\bot"] 
&&
\mathfrak{k}\mathrm{-Rep}_\mathrm{dg}\,.
\end{tikzcd}
\]
\end{lemma}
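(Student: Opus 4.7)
The plan is to adapt the proof of Lemma \ref{lem:BaseChangeforSimplHopf} to the dg setting, since $\mathcal{U}\mathfrak{h}$ and $\mathcal{U}\mathfrak{k}$ are (non-negatively graded) dg Hopf algebras and $\mathcal{U}f\colon \mathcal{U}\mathfrak{h}\to \mathcal{U}\mathfrak{k}$ is a quasi-isomorphism by the Poincar\'{e}--Birkhoff--Witt theorem. The restriction of scalars $\mathcal{U}f^\ast$ is the identity on underlying chain complexes, so it preserves and reflects quasi-isomorphisms and degreewise epimorphisms. In particular $(\mathcal{U}f_! \dashv \mathcal{U}f^\ast)$ is a Quillen adjunction.

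First I would establish that this is a Quillen equivalence. Let $\mathcal{E}\hookrightarrow Ho(\mathfrak{h}\mathrm{-Rep}_\mathrm{dg})$ be the full subcategory of objects at which the derived unit is an isomorphism; then $\mathcal{E}$ is a localising triangulated subcategory. Regarded as a module over itself, $\mathcal{U}\mathfrak{h}$ is cofibrant and is a compact generator of $Ho(\mathfrak{h}\mathrm{-Rep}_\mathrm{dg})$. The derived unit at $\mathcal{U}\mathfrak{h}$ is the map $\mathcal{U}\mathfrak{h}\to \mathcal{U}f^\ast\mathcal{U}f_!\mathcal{U}\mathfrak{h}\cong \mathcal{U}\mathfrak{k}$, which coincides with the quasi-isomorphism $\mathcal{U}f$. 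Hence $\mathcal{U}\mathfrak{h}\in \mathcal{E}$, so $\mathcal{E}= Ho(\mathfrak{h}\mathrm{-Rep}_\mathrm{dg})$ and $(\mathcal{U}f_!\dashv \mathcal{U}f^\ast)$ is a Quillen equivalence.

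For the weakly monoidal structure, the monoidal structures of both $\mathfrak{h}\mathrm{-Rep}_\mathrm{dg}$ and $\mathfrak{k}\mathrm{-Rep}_\mathrm{dg}$ are created in $\mathrm{Ch}$, so $\mathcal{U}f^\ast$ is strictly symmetric monoidal and $\mathcal{U}f_!$ inherits an oplax symmetric monoidal structure given by morphisms $\Lambda_{A,B}\colon \mathcal{U}f_!(A\otimes B)\to \mathcal{U}f_!(A)\otimes \mathcal{U}f_!(B)$ and $\nu\colon \mathcal{U}f_!(\mathbb{Q})\to \mathbb{Q}$. I would verify the two conditions from \cite{schwede_monoidal_2003}: (a) $\Lambda_{A,B}$ is a quasi-isomorphism for cofibrant $A,B$; (b) for some cofibrant replacement $C\to \mathbb{Q}$ of the monoidal unit in $\mathfrak{h}\mathrm{-Rep}_\mathrm{dg}$, the comparison $\mathcal{U}f_!(C)\to \mathbb{Q}$ is a quasi-isomorphism in $\mathfrak{k}\mathrm{-Rep}_\mathrm{dg}$. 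Both arguments transport the proof of Lemma \ref{lem:BaseChangeforSimplHopf} from the simplicial Hopf algebra context to the dg context using the dg analogues of Lemma \ref{lem:FreeHopf} and Corollary \ref{cor:HopfPowers} (the fundamental theorem of Hopf modules is valid for dg Hopf algebras over $\mathbb{Q}$). Specifically, for $A=\mathcal{U}\mathfrak{h}\otimes V$ and $B=\mathcal{U}\mathfrak{h}\otimes W$ free with shifts absorbed, one rewrites $A\otimes B\cong \mathcal{U}\mathfrak{h}\otimes (\mathcal{U}\mathfrak{h}\otimes \mathcal{U}\mathfrak{h})_{\mathcal{U}\mathfrak{h}}\otimes V\otimes W$, so $\Lambda_{A,B}$ is induced by the quasi-isomorphism on coinvariants $(\mathcal{U}\mathfrak{h}\otimes \mathcal{U}\mathfrak{h})_{\mathcal{U}\mathfrak{h}}\to (\mathcal{U}\mathfrak{k}\otimes \mathcal{U}\mathfrak{k})_{\mathcal{U}\mathfrak{k}}$ arising from $\mathcal{U}f\otimes \mathcal{U}f$. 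Ken Brown's lemma then handles the case of generating cofibrations, and the extension to arbitrary cofibrant pairs proceeds by Kan's cube lemma and a transfinite induction on cell attachments, exactly as in the proof of Lemma \ref{lem:BaseChangeforSimplHopf}.

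For condition (b), I would use the dg version of Construction \ref{cons:HopfCofibUnit}: the iterated bar construction yields a cofibrant resolution $\mathbb{Q}^{\mathcal{U}\mathfrak{h}}=\underset{\Delta^\mathrm{op}}{\mathrm{hocolim}}\,\mathcal{U}\mathfrak{h}^{\otimes(\bullet+1)}\to \mathbb{Q}$ (using extra degeneracies from the unit $\mathbb{Q}\to \mathcal{U}\mathfrak{h}$ to witness that the augmentation is a deformation retraction of chain complexes). Applying $\mathcal{U}f_!$ yields the levelwise quasi-isomorphism $\mathcal{U}\mathfrak{k}\otimes\mathcal{U}\mathfrak{h}^{\otimes(\bullet+1)}_{\mathcal{U}\mathfrak{h}}\to \mathcal{U}\mathfrak{k}\otimes \mathcal{U}\mathfrak{k}^{\otimes(\bullet+1)}_{\mathcal{U}\mathfrak{k}}$ of Reedy cofibrant simplicial objects, and passage to realisations gives a quasi-isomorphism $\mathcal{U}f_!(\mathbb{Q}^{\mathcal{U}\mathfrak{h}})\to \mathbb{Q}^{\mathcal{U}\mathfrak{k}}\to \mathbb{Q}$, proving the comparison map is a weak equivalence. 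The main potential obstacle is simply checking that the dg analogues of Lemmas \ref{lem:FreeHopf}--\ref{lem:SimplHopfModMonoidal} and Construction \ref{cons:HopfCofibUnit} hold with no surprises arising from signs or the unbounded grading; but since we work over $\mathbb{Q}$ and the arguments are entirely formal, each step goes through \emph{mutatis mutandis}.
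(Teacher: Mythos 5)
Your proposal is correct and follows essentially the same route as the paper, whose proof simply records that $\mathcal{U}f$ is a quasi-isomorphism (Lemma \ref{lem:UniEnvAlgQIso}, which is the PBW--K\"{u}nneth argument you invoke) and asserts that the argument of the simplicial Hopf algebra base change lemma carries over to the dg setting. You have merely spelled out in detail the transport of Lemma \ref{lem:BaseChangeforSimplHopf} (compact generator argument for the Quillen equivalence, free modules and Hopf-module coinvariants for the oplax structure maps, and the dg version of Construction \ref{cons:HopfCofibUnit} for the unit condition) that the paper leaves implicit.
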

\begin{proof}
By Lemma \ref{lem:UniEnvAlgQIso} below, the induced map of universal enveloping algebras $\mathcal{U}f\colon \mathcal{U}\mathfrak{h}\to \mathcal{U}\mathfrak{k}$ is a quasi-isomorphism.
The proof Lemma \ref{lem:SimplHopfModMonoidal} carries over to the differential graded setting. 
\end{proof}
\begin{lemma}
\label{lem:UniEnvAlgQIso}
A quasi-isomorphism of dg Lie algebras $f\colon\mathfrak{h}\to \mathfrak{k}$ induces a weak equivalence $\mathcal{U}\mathfrak{g}\colon \mathcal{U}\mathfrak{h}\to \mathcal{U}\mathfrak{k}$ of universal enveloping algebras.
\end{lemma}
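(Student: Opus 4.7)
The plan is to exploit the Poincar\'{e}--Birkhoff--Witt theorem to reduce the statement about universal enveloping algebras to a statement about symmetric algebras, where exactness in characteristic zero makes things easy. Concretely, both $\mathcal{U}\mathfrak{h}$ and $\mathcal{U}\mathfrak{k}$ carry the natural ascending PBW filtration $F_p$ (by degree in the generators) and the map $\mathcal{U}f$ is filtration-preserving. Over $\mathbb{Q}$, PBW identifies the associated graded $\mathrm{gr}_F^p\mathcal{U}\mathfrak{a}\cong S^p\mathfrak{a}$ compatibly in $\mathfrak{a}$, so $\mathrm{gr}_F(\mathcal{U}f)$ is the map of symmetric algebras $Sf\colon S\mathfrak{h}\to S\mathfrak{k}$.

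First I would verify that $Sf$ is a quasi-isomorphism. Since we are working over a field of characteristic zero, $S^n\mathfrak{a}= (\mathfrak{a}^{\otimes n})_{\Sigma_n}\cong (\mathfrak{a}^{\otimes n})^{\Sigma_n}$ via the averaging isomorphism, and taking $\Sigma_n$-(co)invariants is exact. As the tensor product of chain complexes over $\mathbb{Q}$ preserves quasi-isomorphisms in each variable, $f^{\otimes n}$ is a quasi-isomorphism, hence so is $S^n f$. Summing over $n$ yields the claim for $Sf$.

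Next I would run an induction on the filtration degree $p$ to show that each restricted map $F_p\mathcal{U}f\colon F_p\mathcal{U}\mathfrak{h}\to F_p\mathcal{U}\mathfrak{k}$ is a quasi-isomorphism. The base case $p=0$ gives the identity $\mathbb{Q}\to\mathbb{Q}$. For the inductive step, compare the short exact sequences of chain complexes
\[
0\longrightarrow F_{p-1}\mathcal{U}\mathfrak{h}\longrightarrow F_p\mathcal{U}\mathfrak{h}\longrightarrow S^p\mathfrak{h}\longrightarrow 0
\]
and its analogue for $\mathfrak{k}$, linked by $F_\bullet\mathcal{U}f$ and $S^p f$. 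The five lemma applied to the induced long exact sequences in homology, together with the inductive hypothesis and the previous paragraph, shows $F_p\mathcal{U}f$ is a quasi-isomorphism.

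Finally, since the filtration is exhaustive and filtered colimits in $\mathrm{Ch}$ preserve quasi-isomorphisms (homology commutes with filtered colimits), passing to the colimit gives that $\mathcal{U}f=\mathrm{colim}_p F_p\mathcal{U}f$ is a quasi-isomorphism. The main subtlety to be checked is that the PBW theorem as usually stated for ungraded Lie algebras carries over to the dg setting with the same proof; this is standard but worth recording, since it requires only that the base ring contains $\mathbb{Q}$ and that the Koszul sign conventions are respected when symmetrising.
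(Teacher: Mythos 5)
Your proof is correct, and it reaches the conclusion by a somewhat different mechanism than the paper's. Both arguments hinge on the same two inputs — the Poincar\'{e}--Birkhoff--Witt theorem and the fact that the free symmetric algebra functor preserves quasi-isomorphisms in characteristic zero — but they deploy PBW differently. The paper uses the symmetrisation map $S\mathfrak{a}\to\mathcal{U}\mathfrak{a}$ as an honest isomorphism of chain complexes (available over $\mathbb{Q}$), combines it with the K\"{u}nneth identification $SH_\bullet\mathfrak{a}\cong H_\bullet S\mathfrak{a}$, and concludes from a naturality cube: $H_\bullet\mathcal{U}f$ is conjugate to $SH_\bullet f$, which is an isomorphism by hypothesis. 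No filtration is needed. You instead invoke only the associated-graded form of PBW, $\mathrm{gr}_F\mathcal{U}\mathfrak{a}\cong S\mathfrak{a}$, and then pay for it with the induction on filtration degree (five lemma on the long exact sequences) and the passage to the exhaustive colimit. What your version buys is robustness: it does not require the chain-level symmetrisation isomorphism and would survive in settings where only the filtered PBW statement holds; what the paper's version buys is brevity, since over $\mathbb{Q}$ the stronger form of PBW is freely available. All the steps you outline go through: the PBW filtration is by subcomplexes because the bracket relations strictly lower word length, the induced differential on the associated graded is the one on $S^p\mathfrak{a}$, exactness of $\Sigma_n$-coinvariants over $\mathbb{Q}$ together with K\"{u}nneth gives that $S^pf$ is a quasi-isomorphism, and homology commutes with the filtered colimit.
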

\begin{proof}
For any rational chain complex $M$, the K\"{u}nneth theorem implies a natural isomorphism $SH_\bullet M \to H_\bullet SM$, where $S$ is the free symmetric algebra functor.
Using the Poincar\'{e}--Birkhoff--Witt theorem and the universal property of the universal enveloping algebra, there is a commuting diagram of graded rational vector spaces 
\[
  \begin{tikzcd}[column sep=small, row sep=small]
  &
  SH_\bullet\mathfrak{k}
   \ar[rr]\ar[dd] && 
  \mathcal{U}H_\bullet \mathfrak{k}
  \ar[dd]
  \\
  SH_\bullet \mathfrak{h}
  \ar[ur]
  \ar[rr, crossing over] 
  \ar[dd]
  &&
  \mathcal{U}H_\bullet\mathfrak{h}
  \ar[ur] 
  &
  \\
  & 
  H_\bullet S\mathfrak{k}
  \ar[rr]
  && 
  H_\bullet \mathcal{U}\mathfrak{k}
  \\
  H_\bullet S\mathfrak{h}
  \ar[rr]
  \ar[ur]&&
  H_\bullet \mathcal{U}\mathfrak{h}
  \ar[ur, "H_\bullet \mathcal{U}f"']
  \ar[from=uu, crossing over]&
  \end{tikzcd}
  \]
in which all horizontal arrows are PBW isomorphisms or the image of such in homology, so that each arrow in the front and back face of this diagram is an isomorphism.
The hypothesis on $f$ implies that $SH_\bullet \mathfrak{h}\to SH_\bullet\mathfrak{k}$ is an isomorphism, from which it follows that $H_\bullet\mathcal{U}f$ is an isomorphism too.
\end{proof}

\begin{lemma}
\label{lem:LieRepCompPNat}
Let $f\colon \mathfrak{g}\to \mathfrak{l}$ be a morphism of simplicial Lie algebras.
Then there is a diagram of left Quillen functors 
\[
\begin{tikzcd}
\mathfrak{g}\mathrm{-Rep}_\Delta
\ar[r, leftarrow]
\ar[d]
&
N\mathfrak{g}\mathrm{-Rep}_\mathrm{dg}^\Sigma
\ar[r]
\ar[d]
&
N\mathfrak{g}\mathrm{-Rep}_\mathrm{dg}
\ar[d]
\\
\mathfrak{l}\mathrm{-Rep}_\Delta
\ar[r, leftarrow]
&
N\mathfrak{l}\mathrm{-Rep}_\mathrm{dg}^\Sigma
\ar[r]
&
N\mathfrak{l}\mathrm{-Rep}_\mathrm{dg}
\end{tikzcd}
\]
commuting up to natural isomorphism.
If $f$ is a weak equivalence then all of the above arrows are Quillen equivalences.
\end{lemma}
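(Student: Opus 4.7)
The plan is as follows. First I would identify the three vertical arrows explicitly: the left arrow is extension of scalars along the morphism of simplicial Hopf algebras $\mathcal{U}f\colon \mathcal{U}\mathfrak{g}\to \mathcal{U}\mathfrak{l}$, while the middle and right arrows are extensions of scalars along the morphism of (spectral) dg Hopf algebras induced by $\mathcal{U}Nf\colon \mathcal{U}N\mathfrak{g}\to \mathcal{U}N\mathfrak{l}$. Each of these is a left Quillen functor, in the simplicial case by Lemma~\ref{lem:BaseChangeforSimplHopf} and in the dg cases by the evident analogue (as used in the proof of Lemma~\ref{lem:QIsodgLieBChange}).

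To establish commutativity of the two squares, I would pass to right adjoints and show that the corresponding diagrams of right Quillen functors commute on the nose (after which uniqueness of adjoints yields the desired natural isomorphism between left adjoint composites). For the right-hand square this is essentially trivial: $\mathfrak{D}_{N\mathfrak{g}}$ and $\mathfrak{D}_{N\mathfrak{l}}$ agree with $\mathfrak{D}$ on underlying objects, and restriction of scalars along $\mathcal{U}Nf$ is the identity on underlying objects in both categories. For the left-hand square, $N^\ast_\mathfrak{g}$ and $N^\ast_\mathfrak{l}$ likewise agree with the stable normalisation $N^\ast$ on underlying objects; the key point is that the $\mathcal{U}N\mathfrak{g}$-module structure on $N^\ast_\mathfrak{g}(f^\ast W)$ coincides with that on $(Nf)^\ast (N^\ast_\mathfrak{l} W)$. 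This follows from the naturality square $N\mathcal{U}f \circ \chi_\mathfrak{g} = \chi_\mathfrak{l}\circ \mathcal{U}Nf$, which is built into the construction of $\chi_\mathfrak{g}$ in Lemma~\ref{lem:Chi}, combined with naturality of the shuffle map. This compatibility of module structures — as opposed to mere compatibility of underlying spectra — is the one place where care is required.

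For the second claim, suppose $f$ is a weak equivalence. Then $Nf$ is a quasi-isomorphism of dg Lie algebras by the Dold--Kan correspondence, so Lemma~\ref{lem:QIsodgLieBChange} immediately gives that the right vertical arrow is a Quillen equivalence. Since the right-hand horizontal arrows are Quillen equivalences by Theorem~\ref{thm:StabDKLie}, commutativity of the right square together with the 2-out-of-3 property for Quillen equivalences forces the middle vertical arrow to be a Quillen equivalence as well. For the left vertical arrow I would first observe that $\mathcal{U}f\colon \mathcal{U}\mathfrak{g}\to \mathcal{U}\mathfrak{l}$ is a weak equivalence of simplicial Hopf algebras; this is the simplicial analogue of Lemma~\ref{lem:UniEnvAlgQIso}, proved by applying PBW and the K\"{u}nneth theorem in each simplicial degree. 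Lemma~\ref{lem:BaseChangeforSimplHopf} then gives that $f_!$ is a Quillen equivalence. (Alternatively, since the left-hand horizontal arrows are Quillen equivalences by Theorem~\ref{thm:StabDKLie}, commutativity of the left square and 2-out-of-3 deliver this conclusion from that of the middle vertical arrow.)

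The main obstacle, such as it is, is the verification that the left square commutes compatibly with module structures; this is a routine but somewhat delicate diagram chase that ultimately reduces to the naturality of $\chi_\mathfrak{g}$ established in Lemma~\ref{lem:Chi}. Everything else is essentially a combination of previously established base-change results with the 2-out-of-3 property.
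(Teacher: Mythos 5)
Your proposal is correct and follows essentially the same route as the paper: commutativity is checked on right adjoints (which agree with the underlying functors and restriction of scalars, with the module-structure compatibility coming from the naturality of $\chi_\mathfrak{g}$ in Lemma \ref{lem:Chi}), and the Quillen-equivalence claim follows from Lemma \ref{lem:QIsodgLieBChange} applied to the quasi-isomorphism $Nf$ together with the 2-out-of-3 property against the horizontal Quillen equivalences of Theorem \ref{thm:StabDKLie}. Your extra direct argument for the left vertical arrow via the simplicial analogue of Lemma \ref{lem:UniEnvAlgQIso} is a valid alternative but not needed, as the paper's 2-out-of-3 shortcut already covers it.
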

\begin{proof}
For both the left- and right-hand squares is it easier to argue with right adjoints.
We spell this out for the left-hand square, the right-hand square is similar.
For both of the base change adjunctions 
\[
(\mathcal{U}f_!\dashv \mathcal{U}f^\ast) \colon\mathfrak{g}\mathrm{-Rep}_\Delta\to \mathfrak{l}\mathrm{-Rep}_\Delta
\quad 
\mbox{ and }
\quad 
(\mathcal{U}Nf_!\dashv \mathcal{U}Nf^\ast)\colon \mathcal{U}N\mathfrak{g}\mathrm{-Rep}_\mathrm{dg}^\Sigma\to \mathcal{U}N\mathfrak{l}\mathrm{-Rep}_\mathrm{dg}^\Sigma\,,
\] 
the right adjoint commutes with the forgetful functors (to $\mathrm{Sp}^\Sigma(\mathrm{sVect}_\mathbb{Q})$ and $\mathrm{Sp}^\Sigma(\mathrm{Ch}_+)$ respectively).
As a consequence, one checks the diagram of right adjoints
\[
\begin{tikzcd}
\mathfrak{l}\mathrm{-Rep}_\Delta
\ar[r, "\mathcal{U}f^\ast"]
\ar[d, "N^\ast_\mathfrak{l}"']
&
\mathfrak{g}\mathrm{-Rep}_\Delta
\ar[r]
\ar[d, "N^\ast \mathfrak{g}"']
&
\mathrm{Sp}^\Sigma(\mathrm{sVect}_\mathbb{Q})
\ar[d, "N^\ast"]
\\
N\mathfrak{l}\mathrm{-Rep}^\Sigma_\mathrm{dg}
\ar[r, "\mathcal{U}Nf^\ast"]
&
N\mathfrak{g}\mathrm{-Rep}^\Sigma_\mathrm{dg}
\ar[r]
&
\mathrm{Sp}^\Sigma(\mathrm{Ch}_+)
\end{tikzcd}
\]
commutes up to natural isomorphism.
Adjointness implies that the corresponding diagram of left adjoints also commutes up to natural isomorphism.

All horizontal arrows in the diagram are Quillen equivalences by the the results of the previous section.
If $f$ is a weak equivalence, then $Nf \colon N\mathfrak{g}\to N\mathfrak{l}$ is a quasi-isomorphism and the middle vertical arrow represents a Quillen equivalence by Lemma \ref{lem:QIsodgLieBChange}.
By the 2-out-of-3 property, all arrows are Quillen equivalences in this case.
\end{proof}

\section{Koszul duality}
\label{sec:Koszul}
In this section we study the \emph{Koszul duality} between $C$-comodules and modules over the cobar construction $\Omega C$, where $C$ is a $2$-reduced cocommutative dg coalgebra.
Using twisted extensions and their induced spectral sequences we prove Quillen equivalences between model categories of bounded below modules and comodules.
In the unbounded case these spectral sequences generally fail to converge and we must consider the homotopy category of unbounded comodules with respect to a relation that is more discerning than quasi-isomorphism.
We do not claim any great originality for these results---see \cite{positselski_two_2011}, for example, for an alternative approach to the results of Sections \ref{subsec:TwistingChains} and \ref{subsec:KQEquiv}.

\subsection{Twisting chains}
\label{subsec:TwistingChains}
Let $C$ be a non-negatively graded rational dg coalgebra with comultiplication $\Delta$ and counit $\epsilon$.
We shall take $C$ to be strictly coassociative and cocommutative, and we furthermore suppose that we are given a coaugmentation $\eta\colon \mathbb{Q}\to C$.
After some general remarks, we shall impose the additional hypothesis that $C$ is $2$-reduced, that is $C_0 =\mathbb{Q}$ and $C_1 =0$, in which case the coaugmentation is canonical.

\begin{construction}[Cobar construction]
\label{cons:Cobar}
Write $\overline{C} =\mathrm{coker}(\eta)$ with $\pi\colon C\to \overline{C}$ the quotient map.
The \emph{cobar construction} of $C$ is the dg algebra $\Omega C$ with underlying graded algebra $T\overline{C}[1]$.
The differential is defined on elements of the form $tc:=s^{-1}\pi(c)\in \overline{C}[1]$ by
\[
d(tc) = -t (dc) - (-1)^{|c_{(0)}|} tc_{(0)}\otimes tc_{(1)}
\]
and extended to all of $\Omega C$ as an algebra derivation.
The cobar construction has a coproduct defined on algebra generators by $tc\mapsto tc\otimes 1+1\otimes tc$ and extended to all of $\Omega C$ as an algebra homomorphism, with counit determined by the assignment $tc\mapsto 0$.
Since the coproduct of $C$ is coassociative and cocommutative, $\Omega C$ is a dg Hopf algebra, which is non-negatively graded precisely if $C$ is \emph{reduced} (that is, $C_0 =\mathbb{Q}$).
\end{construction}
\begin{definition}
For $A$ a dg algebra, a \emph{twisting chain} $\tau\colon C\rightsquigarrow A$ is a map of graded rational vector spaces of degree $-1$ such that
\[
d(\tau c) = -\tau (dc) -(-1)^{|c_{(0)}|} \tau(c_{(0)})\cdot \tau(c_{(1)})
\]
for all $c\in C$.
The collection of all twisting chains $\tau\colon C\rightsquigarrow A$ forms a rational vector space $\mathrm{Tw}(C,A)$.
\end{definition}
\begin{remark}
It is easy to check that there is a natural bijection $\mathrm{Tw}(C,A)\cong \mathrm{dgAlg}(\Omega C, A)$.
Under this bijection, the identity map on $\Omega C$ corresponds to the \emph{universal twisting chain} $t\colon C\rightsquigarrow \Omega C$ sending $c\mapsto tc$ (compare Construction \ref{cons:Cobar}).
\end{remark}
\begin{construction}
\label{cons:TwistedExt}
Fix a dg algebra $A$ and a twisting chain $\tau\colon C\rightsquigarrow A$. 
For a (left) $A$-module $M$, the \emph{$\tau$-twisted coextension of $M$} is the graded vector space $C\otimes M$ equipped with the differential defined on homogeneous elements by
\[
d(c\otimes m)  = dc\otimes m + (-1)^{|c|}c\otimes dm + (-1)^{|c_{(0)}|} c_{(0)}\otimes \tau c_{(1)}\cdot m\,.
\]
The resulting chain complex is denoted $\tau_\ast M$, or equivalently $C\otimes^\tau M$, and inherits the structure of a left (dg) $C$-comodule from the coproduct of $C$.
The assignment $M\mapsto \tau_\ast M$ defines a functor $\tau_\ast \colon A\mathrm{-Mod}\to C\mathrm{-Comod}$.

For a (left) $C$-comodule $N$, the \emph{$\tau$-twisted extension} is the graded vector space $A\otimes N$ equipped with differential defined on homogeneous elements by
\[
d(a\otimes n) = da \otimes n + (-1)^{|a|} a\otimes dn - (-1)^{|a|} a\cdot \tau n_{(0)}\otimes n_{(1)}\,.
\]
The resulting chain complex is denoted $\tau^! N$ or $A\otimes_\tau N$, and is a (left) dg $A$-module in the obvious way.
The assignment $N\mapsto \tau^! N$ determines a functor $\tau^! \colon C\mathrm{-Comod}\to A\mathrm{-Mod}$.
\end{construction}
\begin{proposition}
For any twisting chain $\tau\colon C\rightsquigarrow A$ there is an adjunction
\[
\begin{tikzcd}
C\mathrm{-Comod}
\ar[rr, shift left =1.1ex, "\tau^!"]
\ar[rr, shift left =-1.1ex, leftarrow, "\tau_\ast"', "\bot"]
&&
A\mathrm{-Mod}
\end{tikzcd}
\]
given by forming $\tau$-twisted extensions and coextensions.
If $A$ is connective, this adjunction restricts to an adjunction between subcategories of $k$-connective objects
\[
\begin{tikzcd}
C\mathrm{-Comod}_{\geq k}
\ar[rr, shift left =1.1ex, "\tau^!"]
\ar[rr, shift left =-1.1ex, leftarrow, "\tau_\ast"', "\bot"]
&&
A\mathrm{-Mod}_{\geq k}
\end{tikzcd}
\]
for any $k\in \mathbb{Z}$.
\end{proposition}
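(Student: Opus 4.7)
The plan is to construct the hom-set bijection underlying the adjunction by observing that both sides are naturally identified with the set of degree-zero graded linear maps $\tilde f\colon N\to M$. Forgetting differentials, the graded $A$-module underlying $\tau^!N=A\otimes N$ is free on the graded vector space underlying $N$, so graded $A$-linear maps $A\otimes N\to M$ correspond bijectively to graded linear maps $\tilde f\colon N\to M$ via $\tilde f\mapsto (a\otimes n\mapsto a\cdot \tilde f(n))$. Dually, the graded $C$-comodule underlying $\tau_\ast M=C\otimes M$ is cofree on the graded vector space underlying $M$, so graded $C$-colinear maps $N\to C\otimes M$ correspond bijectively to graded linear maps $\tilde f\colon N\to M$ via $\tilde f\mapsto (n\mapsto n_{(0)}\otimes \tilde f(n_{(1)}))$, in Sweedler notation $\rho_N(n)=n_{(0)}\otimes n_{(1)}$ with $n_{(0)}\in C$, $n_{(1)}\in N$.

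The core step is to verify that, under these identifications, the chain-map conditions on the two sides coincide. Writing $f_A$ and $f_C$ for the images of $\tilde f$ on the module and comodule sides respectively, expanding $df_A = f_Ad$ via $A$-linearity and specialising to $a=1$ yields the pointwise identity
\[
d_M\tilde f(n)-\tilde f(d_N n)=-\tau(n_{(0)})\cdot \tilde f(n_{(1)})\qquad (n\in N).
\]
On the comodule side, expanding $df_C=f_Cd$ using the twisted differential on $C\otimes^\tau M$ together with the coassociativity of $\rho_N$, and then composing with $\epsilon\otimes 1$, produces the same identity; this last step loses no information because the twisted differential on $C\otimes^\tau M$ is itself a $C$-comodule map with respect to the cofree coaction $\Delta\otimes 1$, so any comodule-valued difference $N\to C\otimes M$ is determined by its composite with $\epsilon\otimes 1$. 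The main obstacle is disciplined bookkeeping of Koszul signs throughout these two expansions; the precise form of the twisted differentials in Construction \ref{cons:TwistedExt} is engineered exactly so that the signs match on both sides. Naturality in $M$ and $N$ is inherited from the free and cofree bijections, while well-definedness of $\tau^!$ and $\tau_\ast$ on objects reduces to checking $d^2=0$ for the twisted differentials, a routine consequence of the twisting-chain identity.

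For the $k$-connective statement, assume $A$ is connective, $A_{<0}=0$. Since $C$ is non-negatively graded by our standing hypotheses, the graded vector spaces underlying $A\otimes N$ and $C\otimes M$ are concentrated in degrees $\geq k$ whenever $N$ and $M$ are, and the twisted differentials (of homological degree $-1$) preserve these bounds. Hence $\tau^!$ and $\tau_\ast$ restrict to the $k$-connective subcategories, and the hom-set bijection restricts automatically to yield the claimed adjunction on $C\mathrm{-Comod}_{\geq k}$ and $A\mathrm{-Mod}_{\geq k}$.
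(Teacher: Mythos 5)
Your proposal is correct and follows essentially the same route as the paper: both hom-sets are identified with graded linear maps $\tilde f\colon N\to M$ satisfying the single twisted identity $d_M\tilde f - \tilde f d_N = -\tau(n_{(0)})\cdot\tilde f(n_{(1)})$ (what the paper calls a modular $\tau$-twist), via freeness of $A\otimes N$ and cofreeness of $C\otimes M$, and the $k$-connective restriction is handled by the same connectivity bookkeeping. The only cosmetic difference is that the paper names the intermediate bifunctor $\mathrm{MTw}_\tau(-,-)$ explicitly, whereas you pass through it implicitly.
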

\begin{proof}
For $N\in C\mathrm{-Comod}$ and $M\in A\mathrm{-Mod}$ a modular $\tau$-twist $\theta\colon N\rightsquigarrow M$ is a map of graded vector spaces $\theta\colon N\to M$ such that $d\theta(n) =\theta(dn) - \tau n_{(0)}\cdot \theta (n_{(1)})$ for all $n\in N$.
Writing $\mathrm{MTw}_\tau (N, M)$ for the rational vector space of modular $\tau$-twists $N\rightsquigarrow M$, we get have a bifunctor
\[
\mathrm{MTw}_\tau(-,-)\colon C\mathrm{-Comod}^\mathrm{op}\times A\mathrm{-Mod}\longrightarrow \mathrm{Vect}_\mathbb{Q}\,.
\]
Given a map of $A$-modules $f\colon \tau^! N\to M$, setting $\theta[f](n) := f(1\otimes n)$ determines a modular $\tau$-twist $\theta[f]\colon N\rightsquigarrow M$.
Conversely, given a modular $\tau$-twist $\theta\colon N\rightsquigarrow M$ a straightforward calculation verifies that $a\otimes n \mapsto a\cdot \theta(n)$ defines a morphism $f[\theta]\colon \tau^! N\to M$ in $A\mathrm{-Mod}$.
The assignments $\theta\mapsto f[\theta]$ and $f\mapsto \theta[f]$ are inverse to one another, so we get a natural isomorphism of bifunctors $A\mathrm{-Mod}(\tau^! N, M)\cong \mathrm{MTw}_\tau (N, M)$.

On the other hand, after forgetting differentials $\tau_\ast M$ is the cofree graded $C$-comodule cogenerated by $M$.
It follows that any map of graded $C$-comodules $g\colon N \to C\otimes M$ is necessarily of the form
$
n\mapsto n_{(0)} \otimes \theta(g) (n_{(1)})
$ 
for some map of graded rational vector spaces $\theta(g)\colon N\to M$.
One shows that $g\colon N\to \tau_\ast M$ respects the differentials if and only if $\theta(g)$ is a modular $\tau$-twist.
This establishes a natural isomorphism $\mathrm{MTw}_\tau (N,M) \cong C\mathrm{-Comod}(N, \tau_\ast M)$, which together with the above implies an adjunction $(\tau^!\dashv \tau_\ast)$.

Finally, we remark that $\tau_\ast$ preserves $k$-connectivity since $C$ is connective.
If $A$ is connective then the functor $\tau^!$ also preserves $k$-connectivity, hence the $(\tau^!\dashv \tau_\ast)$-adjunction restricts to full subcategories of $k$-connective objects.
\end{proof}

Throughout the remainder of this section, we assume that the dg coalgebra $C$ is moreover $2$-reduced.
This assumption allows us to construct spectral sequences for $\tau$-twisted extensions and coextensions with particularly nice $E^2$-pages:
\begin{lemma}
\label{lem:TwistingSS}
Let $C$ be a 2-reduced cocommuative dg coalgebra and $\tau\colon C\rightsquigarrow A$ a twisting chain with $A$ connective.
Let $M\in A\mathrm{-Mod}$ and $N\in C\mathrm{-Comod}$ be bounded below.
Then there are convergent spectral sequences
\begin{align*}
H_q(C) \otimes H_q(M) 
&
\Longrightarrow
H_{p+q}(C\otimes^\tau M)
\\
H_k(A) \otimes H_l(N) 
&
\Longrightarrow
H_{k+l}(A\otimes_\tau N)\,.
\end{align*} 
\end{lemma}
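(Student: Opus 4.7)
My approach is to construct each spectral sequence as the one associated with a suitable bounded-below filtration on the twisted complex, identifying $E^1$ and $E^2$ explicitly and using the $2$-reducedness of $C$ to kill the contribution of the twisting term on the first two pages. I read the statement as $H_p(C)\otimes H_q(M)\Rightarrow H_{p+q}(C\otimes^\tau M)$, correcting what appears to be a typographical repetition of the index $q$.

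For the coextension, I would use the increasing filtration $F_p(C\otimes^\tau M):=C_{\leq p}\otimes M$. Each summand of the twisted differential preserves $F_\bullet$: $d_C\otimes\mathrm{id}$ strictly decreases $C$-degree, $\mathrm{id}\otimes d_M$ preserves it, and the twisting piece $c_{(0)}\otimes\tau(c_{(1)})\cdot m$ satisfies $|c_{(0)}|\leq |c|$ by coassociativity. On the associated graded the twisting term can only hit $F_p/F_{p-1}$ through summands with $|c_{(0)}|=p$, forcing $c_{(1)}\in C_0=\mathbb{Q}$; but $\tau(c_{(1)})=0$ there, since the connectivity of $A$ together with the twisting-chain equation forces $\tau\eta=0$. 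Thus $d^0=\mathrm{id}\otimes d_M$ and $E^1_{p,q}=C_p\otimes H_q(M)$. For $d^1$, the piece $d_C\otimes\mathrm{id}$ contributes as expected, while any twisting contribution to $F_{p-1}/F_{p-2}$ would need $|c_{(1)}|=1$, which vanishes by the $2$-reduced hypothesis $C_1=0$. Higher twisting contributions decrease filtration by at least two and so feed into $d^r$ for $r\geq 2$. This yields $E^2_{p,q}=H_p(C)\otimes H_q(M)$.

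The second spectral sequence is constructed by the symmetric device: filter $A\otimes_\tau N$ by $F_p:=A\otimes N_{\leq p}$. The twisting piece $a\cdot\tau(n_{(0)})\otimes n_{(1)}$ strictly decreases $N$-degree by $|n_{(0)}|$, which is at least $2$ after discarding the zero contributions arising from $n_{(0)}\in C_0$ (killed by $\tau\eta=0$) and from $C_1=0$. Consequently $d^0=d_A\otimes\mathrm{id}$ and $d^1=\mathrm{id}\otimes d_N$ on $E^0_{p,q}=A_q\otimes N_p$, so $E^2_{p,q}=H_q(A)\otimes H_p(N)$, matching the claimed abutment up to reindexing.

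Convergence in both cases follows from the boundedness hypotheses: in the first instance, $M$ bounded below together with the (already assumed) connectivity of $C$ forces the filtration to stabilise in each total degree and to be exhaustive and Hausdorff; in the second, connectivity of $A$ plays the same role as connectivity of $C$ did before, and $N$ bounded below plays the role previously played by $M$. Both spectral sequences are therefore strongly convergent. The main obstacle I anticipate is simply the filtration bookkeeping required to isolate which twisting contributions live on which page; once organised, the $2$-reduced hypothesis is exactly what kills the twisting on $E^0$ and $E^1$. These convergence arguments will fail in the unbounded setting, which is precisely what motivates the finer notion of $t$-equivalence introduced later in this section.
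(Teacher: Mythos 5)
Your proof is correct and follows essentially the same route as the paper: filter $C\otimes^\tau M$ by coalgebra degree and $A\otimes_\tau N$ by comodule degree, observe that $2$-reducedness of $C$ together with connectivity of $A$ forces the twisting term to drop the filtration by at least $2$ so that it does not interfere with $d^0$ or $d^1$, and deduce convergence from the boundedness hypotheses (which make the filtration finite in each total degree). Your explicit identification of $d^0$ and $d^1$, and your reading of the abutment as $H_p(C)\otimes H_q(M)$, agree with the paper's intent.
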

\begin{proof}
For the first spectral sequence, consider the filtration $\mathcal{F}_p C :=\bigoplus_{n\leq p} C_n$, which induces the differential filtration $\mathcal{F}_p C\otimes M$ of $\tau_\ast M = C\otimes^\tau M$.
Our hypotheses on $A$ and $C$ imply that the last term in the formula
\[
d(c\otimes m) = dc\otimes m + (-1)^{|c|} c\otimes dm + (-1)^{|c_{(0)}|} c_{(0)}\otimes \tau c_{(1)}\cdot m
\]
is a sum over terms $c_i\otimes m_i$ for which $|c_i|\leq |c|-2$.
We therefore have
$
E^1_{p,q}\cong C_p \otimes H_q(M)
$ and $
E^2_{p,q} \cong H_p(C) \otimes H_q(M)$.
The boundedness assumption on $M$ guarantees convergence of the spectral sequence.

The second spectral sequence arises from the differential filtration $A\otimes \mathcal{F}_k N$ of $\tau^! N$, where $\mathcal{F}_k N := \bigoplus_{p\leq k} N_p$.
Examining the formula for the differential on $\tau^! N$
\[
d(a\otimes n) = da \otimes n + (-1)^{|a|}a\otimes dn + (-1)^{|a|} a\cdot \tau n_{(0)} \otimes n_{(1)}\,,
\] 
our hypotheses on $A$ and $C$ imply that the last term is a sum over elements $a_i\otimes n_i$ such that $|n_i|\leq |n|- 2$.
We can thus identify 
$
E^1_{k,l}\cong H_k(A) \otimes N_l
$ and $
E^2_{k,l} \cong H_k(A) \otimes H_l(N)$
.
Convergence of the spectral sequence is guaranteed by the boundedness assumption on $N$.
\end{proof}

Further specialising to the case that $\tau =t\colon C\rightsquigarrow \Omega C$ is the universal twisting chain, we can say more about the unit and counit of the $(t^!\dashv t_\ast)$-adjunction:
\begin{lemma}
\label{lem:KoszulCounit}
For $C$ a 2-reduced cocommutative dg coalgebra, the counit  $t^!t_\ast M\to M$ is a natural quasi-isomorphism of $\Omega C$-modules.
\end{lemma}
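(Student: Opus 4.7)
The plan is to prove this by constructing an explicit $\mathbb{Q}$-linear contracting homotopy for the augmented complex $t^!t_\ast M \xrightarrow{\epsilon_M} M \to 0$. First I would unwind the adjunction, via the correspondence with modular twists used in the proof of the $(\tau^!\dashv \tau_\ast)$-adjunction, to write the counit explicitly as
\[
\epsilon_M\colon \Omega C \otimes_t C \otimes^t M \longrightarrow M, \qquad a\otimes c\otimes m\longmapsto \epsilon(c)\,(a\cdot m),
\]
where $\epsilon\colon C\to\mathbb{Q}$ is the counit of $C$. A $\mathbb{Q}$-linear section is $\iota_M\colon M\to t^!t_\ast M$, $m\mapsto 1\otimes 1\otimes m$, and the strategy is to show that $\iota_M$ is a chain-homotopy inverse of $\epsilon_M$; this will imply that $\epsilon_M$ is a chain homotopy equivalence, hence in particular a natural quasi-isomorphism of $\Omega C$-modules.

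Exploiting the decomposition $C=\mathbb{Q}\oplus \overline{C}$ coming from the coaugmentation, together with the tensor-algebra description $\Omega C = T(s^{-1}\overline{C})$, I would define a degree $+1$ map $h\colon t^!t_\ast M\to t^!t_\ast M$ by
\[
h(a\otimes 1\otimes m) = 0, \qquad h(a\otimes c\otimes m) = (-1)^{|a|}\,(a\cdot tc)\otimes 1\otimes m \quad (c\in \overline{C}),
\]
where $a\cdot tc$ denotes multiplication in the cobar algebra. The key step is the direct verification of the chain-homotopy identity
\[
d\,h + h\,d \;=\; \mathrm{id}_{t^!t_\ast M} \;-\; \iota_M\circ \epsilon_M.
\]
Granted this, naturality in $M$ is immediate from the formula, completing the proof.

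The main obstacle is the sign bookkeeping in the verification of the homotopy identity. The computation hinges on a three-way cancellation among cross-terms coming from: (i) the twist $(-1)^{|c_{(0)}|}c_{(0)}\otimes tc_{(1)}\cdot m$ in the differential of $t_\ast M$; (ii) the twist $-(-1)^{|a|}a\cdot tc_{(0)}\otimes c_{(1)}$ in the differential of $t^!N$; and (iii) the quadratic term $-(-1)^{|c_{(0)}|}tc_{(0)}\cdot tc_{(1)}$ in the cobar differential $d(tc) = -t(dc) - (-1)^{|c_{(0)}|}tc_{(0)}\cdot tc_{(1)}$ from Construction~\ref{cons:Cobar}. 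The $2$-reducedness hypothesis enters crucially at two points: it guarantees that $\overline{C}$ is concentrated in degrees $\geq 2$, so the reduced coproduct $\Delta'c := \Delta c - c\otimes 1 - 1\otimes c$ lands in $\overline{C}\otimes \overline{C}$ and hence the quadratic part of $d(tc)$ only involves generators of $\Omega C$ (this is what makes the formula for $h$ self-consistent on $\overline{C}$), and it ensures that $\Omega C$ is connective so that the definition of $h$ is compatible with the filtration by word-length in $\Omega C$ used to organise the verification.
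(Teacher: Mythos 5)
Your reduction of the counit to the explicit formula $\epsilon_M(a\otimes c\otimes m)=\epsilon(c)\,(a\cdot m)$ is correct, and since we work over $\mathbb{Q}$ a quasi-isomorphism of the underlying complexes would indeed be a chain homotopy equivalence, so looking for a contraction is not unreasonable. However, the specific homotopy you propose does not satisfy the identity $dh+hd=\mathrm{id}-\iota_M\circ\epsilon_M$, and no adjustment of signs can fix it. The obstruction is structural: your $h$ takes values in $\Omega C\otimes \eta(\mathbb{Q})\otimes M$, and this is a $D$-stable subcomplex of $t^!t_\ast M$ (when the middle slot is $1$ all three twist terms vanish, leaving the untwisted differential on $\Omega C\otimes M$). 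Hence $dh+hd$ always lands in $\Omega C\otimes \eta(\mathbb{Q})\otimes M$, whereas $\mathrm{id}-\iota_M\epsilon_M$ sends $1\otimes c\otimes m$ with $c\in\overline{C}$ to itself, which does not lie in that subcomplex. If you carry out the computation, the three-way cancellation you describe does occur, but what survives is not the identity: on $a\otimes c\otimes m$ with $c\in\overline C$ one is left with $\pm (a\cdot tc_{(0)}')\otimes 1\otimes tc_{(1)}'\cdot m$ (the $h$-image of the twist acting on $M$), and on $a\otimes 1\otimes m$ one gets $0$ rather than $a\otimes 1\otimes m - 1\otimes 1\otimes a\cdot m$. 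The term $(-1)^{|c_{(0)}|}c_{(0)}\otimes tc_{(1)}\cdot m$ twisting the $M$-factor is precisely what a one-line tensor-shuffling contraction of the two-sided object cannot absorb.

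The paper's proof therefore only uses an explicit contracting homotopy in the special case $M=\mathbb{Q}$, where the twist on $M$ is absent and the one-sided complex $\Omega C\otimes_t C$ is contracted by moving the last cobar letter back into the $C$-slot. For general bounded-below $M$ it filters $t^!t_\ast M$ by degree in $\Omega C\otimes C$; because $C$ is $2$-reduced and $\Omega C$ is connective, every twist term strictly decreases filtration, yielding a convergent spectral sequence with $E^2_{p,q}\cong H_p(\Omega C\otimes_t C)\otimes H_q(M)$, and acyclicity of $\Omega C\otimes_t C$ finishes that case. Unbounded $M$ is then handled by writing $M\cong \mathrm{colim}_{k\to-\infty}\,c_kM$ and using that $t^!t_\ast$ preserves filtered colimits. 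If you want to salvage your approach, the honest version is a homological perturbation argument: start from the contraction of $\Omega C\otimes_t C$ tensored with $M$ and perturb by the twist on $M$; but the convergence of the resulting series $\sum_n h(\delta h)^n$ requires exactly the filtration and connectivity hypotheses that drive the paper's spectral-sequence and colimit arguments, so nothing is gained.
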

\begin{proof}
The canonical augmentation $\epsilon_{\Omega C}\colon\Omega C\to \mathbb{Q}$ equips $\mathbb{Q}$ with the structure of an $\Omega C$-module.
The $(t^!\dashv t_\ast)$-counit $\eta_\mathbb{Q}\colon t^! t_\ast \mathbb{Q} = \Omega C\otimes_t C \to \mathbb{Q}$ sends is given by $\epsilon_{\Omega C}\otimes \epsilon_C$.
The chain homotopy 
\[
h\colon (tc_1\otimes \dotsb tc_k)\otimes c
\longmapsto
\begin{cases}
\qquad\qquad\qquad\qquad \quad 0& k=0\\
(-1)^{\sum_{i=1}^{k-1} |tc_i|}
(tc_1\otimes\dotsb \otimes tc_{k-1})\otimes \pi(tc_{k})\epsilon(c)
& \mbox{otherwise}
\end{cases}
\] 
shows that $\epsilon_{\mathbb{Q}}$ is a quasi-isomorphism.

Suppose now that the $\Omega C$-module $M$ is bounded below and consider the differential filtration $\mathcal{F}_p:= \bigoplus_{n\leq p}  (\Omega C\otimes C)_n \otimes M$ on $t^! t_\ast M = \Omega C \otimes_{t}C \otimes^{t} M$.
Since $C$ is 2-reduced and $M$ is bounded below, a consideration of the differential as in Lemma \ref{lem:TwistingSS} leads to a convergent spectral sequence
\[
E^2_{p,q} = H_p( \Omega C\otimes_{t} C)\otimes H_q(M) \Longrightarrow H_{p+q}(\Omega C \otimes_{t}C \otimes^{t} M)\,.
\]
The acyclicity of $\Omega C\otimes_{t} C$ thus implies that the counit $t^! t_\ast M\to M$ is a quasi-isomorphism.

The boundedness assumption on $M$ is removed by working stage-by-stage over the complicial Whitehead tower.
As $\Omega C$ is connective, for any $\Omega C$-module $M$ the $k$-connective cover $c_k M$ inherits a canonical $\Omega C$-module structure and we have $M\cong \mathrm{colim}_{k\to -\infty} c_k M$ as $\Omega C$-modules.
The endofunctor $t^! t_\ast \colon \Omega C\mathrm{-Mod}\to \Omega C\mathrm{-Mod}$ preserves filtered colimits, so that the counit map $t^! t_\ast M \cong \mathrm{colim}_{k\to -\infty} t^! t_\ast c_k M \to \mathrm{colim}_{k\to -\infty}  c_k M \cong M$ is a quasi-isomorphism. 
\end{proof}
\begin{lemma}
\label{lem:KoszulUnit}
For $C$ a 2-reduced  cocommutative dg coalgebra, the unit $N\to t_\ast t^! N$ is a natural quasi-isomorphism of $C$-comodules.
\end{lemma}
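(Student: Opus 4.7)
The plan is to establish Lemma~\ref{lem:KoszulUnit} by mirroring the three-stage structure of the proof of Lemma~\ref{lem:KoszulCounit}, with the roles of $t^!$ and $t_\ast$ interchanged throughout.

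First I would handle the base case $N=\mathbb{Q}$, with the $C$-comodule structure coming from the coaugmentation. Then $t^!\mathbb{Q}=\Omega C$ and $t_\ast t^!\mathbb{Q}=C\otimes^t\Omega C$; the unit $\eta_\mathbb{Q}$ sends $1\mapsto 1_C\otimes 1_{\Omega C}$, and has left inverse $\pi_\mathbb{Q}=\epsilon_C\otimes\epsilon_{\Omega C}$. A short check (using that $C_1=0$ and that $\overline{C}$ lies in degrees $\geq 2$) confirms $\pi_\mathbb{Q}$ is a chain map. To prove $\eta_\mathbb{Q}$ is a quasi-isomorphism it suffices to exhibit a degree-$+1$ contracting homotopy $h$ on $C\otimes^t\Omega C$ with $dh+hd=\mathrm{id}-\eta_\mathbb{Q}\pi_\mathbb{Q}$. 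Such an $h$ is given by an explicit formula dual to the contraction used in the proof of Lemma~\ref{lem:KoszulCounit}, where one moves the leading $\overline{C}[1]$-factor of an element of $\Omega C$ back into the $C$-slot, with the appropriate signs.

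For $N\in C\mathrm{-Comod}$ bounded below, I would run a spectral sequence argument on $t_\ast t^! N = C\otimes \Omega C\otimes N$ using the increasing $N$-degree filtration $F_p=C\otimes\Omega C\otimes\bigl(\bigoplus_{j\leq p}N_j\bigr)$. The key point is that $2$-reducedness of $C$ forces the twist term of $t^! N$, which is $c\otimes x\cdot tn_{(0)}\otimes n_{(1)}$, to satisfy $|n_{(1)}|\leq |n|-2$ (since $n_{(0)}\in\overline{C}$ lives in degrees $\geq 2$), so that it drops filtration by at least~$2$; the only piece of the total differential that drops filtration by exactly~$1$ is $c\otimes x\otimes dn$. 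The $E^0$-page is therefore $(C\otimes^t\Omega C)\otimes N^\#$ (with $N^\#$ denoting $N$ shorn of its differential), with differential induced from $C\otimes^t\Omega C$. Step~1 collapses $E^1$ to $\mathbb{Q}\otimes N^\# = N^\#$, and $d^1$ reduces to the intrinsic $d_N$, yielding $E^2\cong H_\bullet(N)$. Boundedness below ensures the filtration is bounded below and exhaustive, hence convergence. Finally $\eta_N(n)=n_{(0)}\otimes 1\otimes n_{(1)}=1_C\otimes 1_{\Omega C}\otimes n+(\text{strictly lower filtration})$ by the same reducedness argument, so $\eta_N$ induces the identity on $E^2$ and is therefore a quasi-isomorphism.

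For general (possibly unbounded) $N$, I would reduce to the bounded case via the fundamental theorem of coalgebras in the dg setting: every dg $C$-comodule $N$ is the filtered colimit $N\cong\mathrm{colim}_\lambda N^{(\lambda)}$ of its finite-dimensional dg sub-comodules. Each such $N^{(\lambda)}$ is bounded, so by Step~2 the map $\eta_{N^{(\lambda)}}$ is a quasi-isomorphism. As the endofunctor $t_\ast t^!=C\otimes\Omega C\otimes(-)$ is a tensor product on underlying graded vector spaces it preserves filtered colimits, and filtered colimits of quasi-isomorphisms are quasi-isomorphisms, whence $\eta_N=\mathrm{colim}_\lambda\,\eta_{N^{(\lambda)}}$ is a quasi-isomorphism. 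The principal technical obstacle is the spectral-sequence bookkeeping in Step~2, where $2$-reducedness must be used carefully to ensure that the twist in $t^! N$ drops filtration by at least~$2$ and so does not pollute $d^1$; Step~3 is essentially formal once one has the dg fundamental theorem of coalgebras available.
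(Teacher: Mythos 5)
Your proposal is correct, and Steps 1 and 2 track the paper's argument closely: the paper also starts from the explicit contracting homotopy on $C\otimes^t\Omega C$ and then filters $t_\ast t^! N$ by $N$-degree for bounded below $N$, using $2$-reducedness in exactly the way you describe to control which terms of the differential drop filtration. (The paper phrases the bounded case as a morphism of spectral sequences plus Zeeman's comparison theorem rather than computing $E^1$ and $E^2$ directly, but this is the same filtration and the same mechanism.) Where you genuinely diverge is Step 3. The paper reduces the unbounded case by building a ``Whitehead tower for comodules'': it uses local presentability of $C\mathrm{-Comod}$ and the adjoint functor theorem to construct $k$-connective cover functors $c_{k;C}$ as equalisers of cofree comodules, checks via comonadicity that $N\cong\mathrm{colim}_{k\to-\infty}c_{k;C}N$, and then uses that $t_\ast t^!$ preserves filtered colimits. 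You instead invoke the (dg version of the) fundamental theorem of comodules to write $N$ as the filtered union of its finite-dimensional dg subcomodules, each of which is bounded, and conclude the same way. Your route is more elementary and avoids the adjoint-functor machinery; its one prerequisite is the local finiteness statement itself, which does hold here (take the finite-dimensional graded subcomodule generated by a homogeneous element and close it under the differential, which keeps it finite-dimensional since $\rho$ commutes with $d$). The paper's heavier construction has the side benefit of producing the connective cover adjunction, which it reuses elsewhere, but for the purposes of this lemma your reduction is entirely adequate.
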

\begin{proof}
The coaugmentation $\mathbb{Q}\to C$ furnishes $\mathbb{Q}$ with the structure of a (left) $C$-comodule and the $(t^! \dashv t_\ast)$-unit $\eta_\mathbb{Q}\colon \mathbb{Q}\to t_\ast t^! \mathbb{Q} = C\otimes^\tau \Omega C$ sends $1\mapsto 1\otimes 1$.
The chain homotopy
\[
h\colon c\otimes (tc_1\otimes \dotsb \otimes tc_k)
\longmapsto 
\begin{cases}
\qquad\qquad \qquad \quad 0& k =0
\\
(-1)^{|c|}\epsilon(c) \pi(c_1) \otimes (tc_2 \otimes \dotsb\otimes tc_k)
& \mbox{otherwise}
\end{cases}
\]
shows that $\eta_\mathbb{Q}$ is a quasi-isomorphism.

For a bounded below $C$-comodule $N$, the unit $\eta_N\colon N\to t_\ast t^\ast N$ sends $n\mapsto n_{(0)}\otimes 1 \otimes n_{(1)}$.
The differential filtration $\mathcal{F}_k N :=\bigoplus_{l\leq k} N_l$ of $N$ induces a differential filtration $C\otimes \Omega C \otimes \mathcal{F}_k N$ of $t_\ast t^! N$ and the unit $\eta_N$ becomes a map of filtered complexes.
By our hypotheses on $C$ and $N$ we get a morphism of convergent spectral sequences
\[
\begin{tikzcd}[row sep = small]
H_p(\mathbb{Q}) \otimes H_q(N) 
\ar[r, Rightarrow]
\ar[d, "\cong"']
&
H_{p+q}(N)
\ar[d]
\\
H_p(t_\ast t^\ast \mathbb{Q}) \otimes H_q(N) 
\ar[r, Rightarrow]
&
H_{p+q}(t_\ast t^! N)\,,
\end{tikzcd}
\]
so that $N\to t_\ast t^! N$ is a quasi-isomorphism by Zeeman's comparison theorem.

We remove the boundedness condition by working with an analogue of the Whitehead tower for comodules.
For any $k\in \mathbb{Z}$, the fully faithful embedding $C\mathrm{-Comod}_{\geq k}\hookrightarrow C\mathrm{-Comod}$ preserves colimits (which are in either case created by the forgetful functors).
The comonad $V\mapsto C\otimes V$ preserves colimits so the categories $C\mathrm{-Comod}$ and $C\mathrm{-Comod}_{\geq k}$ are locally presentable by  \cite[Proposition A.1]{ching_coalgebraic_2014}.
The adjoint functor theorem supplies a right adjoint $c_{k;C}\colon C\mathrm{-Comod}\to C\mathrm{-Comod}_{\geq k}$; explicitly, this functor sends the comodule $\rho\colon N\to C\otimes N$ to the equaliser of the diagram of cofree $k$-connective $C$-comodules
\[
c_{k;C}N \cong \mathrm{lim}
\left(\!
\begin{tikzcd}
C\otimes c_k N
\ar[r, shift left =0.8ex, "C\otimes c_k\rho"]
\ar[r, shift left = -0.8ex, "\alpha_k"']
&
C\otimes c_k (C\otimes N)
\end{tikzcd}\!
\right)\,,
\]
where $c_k\colon \mathrm{Ch}\to \mathrm{Ch}_{\geq k}$ computes the usual $k$-connective cover and $\alpha_k$ is the composite 
\[
\begin{tikzcd}
C\otimes c_k N
\ar[r, "\Delta \otimes c_k N"]
&
C\otimes C\otimes c_k N
\ar[r]
&
C\otimes c_k(C\otimes N)\,,
\end{tikzcd}
\]
where the monomorphism $C\otimes c_k N\to c_k(C\otimes N)$ is a consequence of the connectivity of $C$.
The functor $C\otimes (-)$ is exact so that finite limits in $C\mathrm{-Comod}$ (and $C\mathrm{-Comod}_{\geq k}$) are created by the forgetful functor.
Finite limits commute with filtered colimits in $\mathrm{Ch}$ (and $\mathrm{Ch}_{\geq k}$), hence there are isomorphisms of $C$-comodules
\[
\underset{k\to -\infty}{\mathrm{colim}}\;c_{k;C} N
\cong 
\mathrm{lim}
\left(\!
\begin{tikzcd}
C\otimes N
\ar[r, shift left =0.8ex, "C\otimes \rho"]
\ar[r, shift left = -0.8ex, "\Delta \otimes N"']
&
C\otimes C\otimes N
\end{tikzcd}\!
\right)\cong N
\]
by comonadicity.
The endofunctor $t_\ast t^! $ commutes with filtered colimits, hence the unit
\[
N 
\cong
\underset{k\to -\infty}{\mathrm{colim}}\,c_{k;C} N
\longrightarrow
\underset{k\to -\infty}{\mathrm{colim}}\,
t_\ast t^! \big(c_{k; C}N\big)
\cong
t_\ast t^!\Big(\underset{k\to -\infty}{\mathrm{colim}}\,c_{k; C}N\Big)
\cong 
t_\ast t^! N
\]
is a quasi-isomorphism.
\end{proof}

\subsection{Koszul--Quillen equivalences}
\label{subsec:KQEquiv}
Let $C$ be a 2-reduced cocommutative dg coalgebra with universal twisting chain $t\colon C\rightsquigarrow \Omega C$.
Using the results of the previous section, we promote the $(t^!\dashv t_\ast)$-adjunction to a Quillen equivalence.
Due to the fact that the spectral sequences of Lemma \ref{lem:TwistingSS} need not converge for unbounded comodules, however, we are forced to tweak the class of weak equivalences on $C\mathrm{-Comod}$.
\begin{definition}
A morphism of $C$-comodules $f\colon M\to N$ is a \emph{$t$-equivalence} if $t^! f \colon t^! M\to t^! N$ is a quasi-isomorphism of $\Omega C$-modules.
\end{definition}
\begin{proposition}
$t_\ast\colon \Omega C\mathrm{-Mod}\to C\mathrm{-Comod}$ preserves quasi-isomorphisms.
\end{proposition}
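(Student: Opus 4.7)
The plan is to prove the claim first for bounded below $\Omega C$-modules using a spectral sequence argument, and then bootstrap to the unbounded case via a connective-cover filtration analogous to that used in the proof of Lemma~\ref{lem:KoszulCounit}.

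\smallskip

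For the bounded below case, let $f\colon M\to N$ be a quasi-isomorphism of bounded below $\Omega C$-modules. Since $\Omega C$ is connective and $C$ is $2$-reduced, Lemma~\ref{lem:TwistingSS} (applied to the universal twisting chain $t\colon C\rightsquigarrow \Omega C$) furnishes convergent spectral sequences
\[
E^2_{p,q}(M) = H_p(C)\otimes H_q(M)\Longrightarrow H_{p+q}(t_\ast M)
\]
and similarly for $N$, constructed from the filtration $\mathcal{F}_p(t_\ast M) = \mathcal{F}_p(C)\otimes M$ where $\mathcal{F}_p(C) = \bigoplus_{n\leq p} C_n$. This filtration is natural in $M$, so $f$ induces a morphism of spectral sequences. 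On $E^2$-pages this morphism is $H_p(C)\otimes H_q(f)$, which is an isomorphism since $f$ is a quasi-isomorphism. Convergence (and e.g.\ the classical Eilenberg--Moore / Zeeman comparison theorem) then yields that $t_\ast f$ is a quasi-isomorphism.

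\smallskip

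For the unbounded case, I will mimic the Whitehead-tower argument from Lemma~\ref{lem:KoszulCounit}. For any $\Omega C$-module $M$ and any $k\in\mathbb{Z}$, the connective cover $c_kM$ inherits a canonical $\Omega C$-module structure because $\Omega C$ is non-negatively graded, and the natural map $c_k M\to M$ is a homology isomorphism in degrees $\geq k$. Consequently there is an isomorphism of $\Omega C$-modules $M\cong \mathrm{colim}_{k\to -\infty} c_k M$, and a quasi-isomorphism $f\colon M\to N$ restricts to a quasi-isomorphism of bounded below $\Omega C$-modules $c_k f\colon c_k M\to c_k N$ for every $k$. By the bounded below case, $t_\ast(c_k f)$ is a quasi-isomorphism for every $k$.

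\smallskip

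To conclude, observe that the endofunctor $t_\ast$ commutes with filtered colimits: its underlying graded vector space functor is $C\otimes(-)$, which preserves colimits, and the twisting term in the differential is defined pointwise. Therefore $t_\ast M\cong \mathrm{colim}_{k\to-\infty} t_\ast(c_k M)$ (and similarly for $N$), and $t_\ast f$ is the colimit over $k$ of the maps $t_\ast(c_k f)$. Since homology of chain complexes commutes with filtered colimits, $H_\bullet(t_\ast f)$ is a filtered colimit of isomorphisms, hence an isomorphism. The main obstacle is the non-convergence of the twisting chain spectral sequence in the unbounded setting; this is precisely what the connective-cover reduction is designed to circumvent, just as in the proof of Lemma~\ref{lem:KoszulCounit}.
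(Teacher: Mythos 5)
Your proof is correct and follows essentially the same route as the paper: the bounded below case via the natural filtration and the spectral sequences of Lemma \ref{lem:TwistingSS} together with the comparison theorem, and the unbounded case by exhausting $M$ by connective covers and using that $t_\ast$ and homology commute with filtered colimits. The only cosmetic difference is that you justify why $t_\ast$ preserves filtered colimits, which the paper simply asserts.
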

\begin{proof}
This is an immediate consequence of Lemma \ref{lem:KoszulCounit}, using the naturality square for the adjunction counit.
The spectral sequences of Lemma \ref{lem:TwistingSS} together with Zeeman's comparison theorem imply that $t_\ast$ preserves quasi-isomorphisms of bounded-below $\Omega C$-modules.
Now let $f\colon A\to B$ be a quasi-isomorphism of (not necessarily bounded below) $\Omega C$-modules.
Taking connective covers, we have that $c_k f\colon c_k A\to c_k B$ is a quasi-isomorphism of $\Omega C$-modules for all $k\in \mathbb{Z}$.
The functor $t_\ast$ preserves filtered colimits, hence
\[
t_\ast A \cong \underset{k\to -\infty}{\mathrm{colim}}\, t_\ast c_kA
\longrightarrow 
 \underset{k\to -\infty}{\mathrm{colim}}\, t_\ast c_kB \cong t_\ast B
\]
is a quasi-isomorphism of $C$-comodules.
\end{proof}
\begin{lemma}
\label{lem:TvcQIso}
$t$-equivalences are quasi-isomorphisms.
Furthermore, quasi-isomorphisms of bounded below $C$-comodules are $t$-equivalences.
\end{lemma}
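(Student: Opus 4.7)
For the first claim, the plan is to exploit the natural quasi-isomorphism $\eta_N\colon N\xrightarrow{\sim} t_\ast t^! N$ furnished by Lemma \ref{lem:KoszulUnit}. Given a $t$-equivalence $f\colon M\to N$, naturality of the $(t^!\dashv t_\ast)$-unit yields a commutative diagram
\[
\begin{tikzcd}
M \ar[r, "f"] \ar[d, "\eta_M"', "\sim"]
&
N \ar[d, "\eta_N", "\sim"']
\\
t_\ast t^! M \ar[r, "t_\ast t^! f"]
&
t_\ast t^! N\,.
\end{tikzcd}
\]
By hypothesis $t^! f$ is a quasi-isomorphism of $\Omega C$-modules, and the preceding proposition shows that $t_\ast$ preserves quasi-isomorphisms. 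Hence $t_\ast t^! f$ is a quasi-isomorphism and the 2-out-of-3 property for quasi-isomorphisms in $\mathrm{Ch}$ (applied after forgetting coactions) implies that $f$ is a quasi-isomorphism.

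For the second claim, I would appeal directly to the twisted-extension spectral sequence of Lemma \ref{lem:TwistingSS}. Specialising to the universal twisting chain $t\colon C\rightsquigarrow\Omega C$ and to a bounded below $C$-comodule $N$, the filtration $A\otimes\mathcal{F}_k N$ of $t^! N=\Omega C\otimes_t N$ produces a convergent spectral sequence
\[
E^2_{k,l}\cong H_k(\Omega C)\otimes H_l(N)\Longrightarrow H_{k+l}(t^! N)\,,
\]
and this construction is manifestly functorial in $N$. Given a quasi-isomorphism $f\colon M\to N$ between bounded below $C$-comodules, it therefore induces a morphism of convergent spectral sequences which is an isomorphism on $E^2$-pages (by the K\"unneth formula applied to $H_\bullet(\Omega C)\otimes H_\bullet(f)$). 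Zeeman's comparison theorem then shows $t^! f$ is a quasi-isomorphism, so $f$ is a $t$-equivalence.

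Neither step looks particularly difficult given the work already done: the first part is purely formal, leveraging Lemma \ref{lem:KoszulUnit} and the preceding proposition, while the second part is a direct application of the spectral sequence of Lemma \ref{lem:TwistingSS}. The only subtlety worth highlighting is that the assumption \emph{bounded below} in the second part is essential precisely because it guarantees convergence of the spectral sequence; without it one would need to confront the failure of convergence that motivates the introduction of $t$-equivalences in the first place.
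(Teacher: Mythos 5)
Your proof is correct and follows essentially the same route as the paper: part one is the naturality square for the $(t^!\dashv t_\ast)$-unit combined with Lemma \ref{lem:KoszulUnit} and the fact that $t_\ast$ preserves quasi-isomorphisms, and part two is the convergent spectral sequence of Lemma \ref{lem:TwistingSS} together with the comparison theorem. Your remark that bounded-belowness is exactly what secures convergence is also the point the paper is implicitly making.
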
 
\begin{proof}
Suppose that $f\colon M\to N$ is a $t$-equivalence of $C$-comodules.
Since $t_\ast$ preserves quasi-isomorphisms and the $(t^!\dashv t_\ast)$-unit is a natural quasi-isomorphism (Lemma \ref{lem:KoszulUnit}), the naturality square 
\[
\begin{tikzcd}
N
\ar[r, "\eta_N"]
\ar[d, "f"']
&
t_\ast t^! N
\ar[d, "t_\ast t^! f"]
\\
M
\ar[r, "\eta_M"]
&
t_\ast t^! M
\end{tikzcd}
\]
shows that $f$ is a quasi-isomorphism.
If $f\colon N\to M$ is a $t$-equivalence of bounded below comodules, then $f$ is also a quasi-isomorphism by Lemma \ref{lem:TwistingSS} and Zeeman's comparison theorem.
\end{proof}
\begin{lemma}
\label{lem:tastQIsotoTequiv}
$t_\ast\colon \Omega C\mathrm{-Mod}\to C\mathrm{-Comod}$ sends quasi-isomorphisms to $t$-equivalences
\end{lemma}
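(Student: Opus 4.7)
The plan is to reduce this to Lemma \ref{lem:KoszulCounit} by a short naturality argument. Let $g\colon A\to B$ be a quasi-isomorphism of $\Omega C$-modules. By definition, I need to show that $t^!(t_\ast g)\colon t^! t_\ast A \to t^! t_\ast B$ is a quasi-isomorphism of $\Omega C$-modules, and that is essentially forced by the counit of the $(t^!\dashv t_\ast)$-adjunction.

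Specifically, I would consider the naturality square for the counit $\epsilon\colon t^! t_\ast \Rightarrow \mathrm{id}$:
\[
\begin{tikzcd}
t^! t_\ast A \ar[r, "\epsilon_A"] \ar[d, "t^! t_\ast g"'] & A \ar[d, "g"]\\
t^! t_\ast B \ar[r, "\epsilon_B"] & B\,.
\end{tikzcd}
\]
By Lemma \ref{lem:KoszulCounit}, both $\epsilon_A$ and $\epsilon_B$ are quasi-isomorphisms of $\Omega C$-modules, and the right-hand vertical arrow is a quasi-isomorphism by hypothesis. The 2-out-of-3 property for quasi-isomorphisms in $\Omega C\mathrm{-Mod}$ then forces $t^! t_\ast g$ to be a quasi-isomorphism, which is exactly the statement that $t_\ast g$ is a $t$-equivalence.

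There is no real obstacle here; the work has already been done in establishing that the counit is a natural quasi-isomorphism (Lemma \ref{lem:KoszulCounit}), which itself required the spectral sequence plus Whitehead-tower argument to handle unbounded modules. Once that is in hand, the present lemma is a one-line diagram chase. The only thing worth flagging in the write-up is that we are using 2-out-of-3 in the category of chain complexes (i.e.\ after forgetting the $\Omega C$-action), which is harmless because quasi-isomorphisms of $\Omega C$-modules are created by the forgetful functor to $\mathrm{Ch}$.
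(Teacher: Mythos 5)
Your proof is correct and is exactly the argument the paper intends: its proof reads ``immediate consequence of Lemma \ref{lem:KoszulCounit}, using the counit naturality diagrams,'' which is precisely your naturality square plus 2-out-of-3. Your remark about quasi-isomorphisms of $\Omega C$-modules being created in $\mathrm{Ch}$ is a fair point of care but does not change the argument.
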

\begin{proof}
This is an immediate consequence of Lemma \ref{lem:KoszulCounit}, using the counit naturality diagrams.
\end{proof}

\begin{remark}
The previous results show that the class of $t$-equivalences is contained in the class of quasi-isomorphisms.
We expect this inclusion to be strict in general, though we do not have an explicit counterexample.
\end{remark}
\begin{lemma}
\label{lem:UnitTEquiv}
The $(t^!\dashv t_\ast)$-unit is a natural $t$-equivalence.
\end{lemma}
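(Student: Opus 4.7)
The plan is to prove this by a direct application of the triangle identity for the $(t^!\dashv t_\ast)$-adjunction, together with Lemma \ref{lem:KoszulCounit}. By definition, we need to show that for every $C$-comodule $N$, applying $t^!$ to the unit $\eta_N \colon N \to t_\ast t^! N$ yields a quasi-isomorphism of $\Omega C$-modules $t^!(\eta_N) \colon t^! N \to t^! t_\ast t^! N$.

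First I would invoke the triangle identity, which gives the factorisation of the identity
\[
\mathrm{id}_{t^! N} \;=\; \epsilon_{t^! N} \circ t^!(\eta_N)\,,
\]
where $\epsilon \colon t^! t_\ast \Rightarrow \mathrm{id}$ is the counit of the adjunction. By Lemma \ref{lem:KoszulCounit}, the counit component $\epsilon_{t^! N}$ is a quasi-isomorphism of $\Omega C$-modules. The 2-out-of-3 property applied to the displayed composite forces $t^!(\eta_N)$ to be a quasi-isomorphism as well. By definition of $t$-equivalence this says precisely that $\eta_N$ is a $t$-equivalence, and naturality in $N$ is inherited from the naturality of the unit.

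There is essentially no obstacle here: everything has already been done in Lemmas \ref{lem:KoszulCounit} and \ref{lem:KoszulUnit}, and the statement is the formal shadow of these results once the notion of $t$-equivalence has been set up so that $t^!$ reflects weak equivalences tautologically. The only thing to be careful of is that the argument is completely uniform in $N$ — no boundedness hypothesis is needed, because Lemma \ref{lem:KoszulCounit} itself was proven in the unbounded generality via the Whitehead tower $c_k M$, so the triangle identity argument applies to arbitrary unbounded $C$-comodules without any additional work.
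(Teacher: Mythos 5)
Your argument is correct and is exactly the paper's proof: apply $t^!$ to the unit, use the triangle identity $\epsilon_{t^! N}\circ t^!(\eta_N)=\mathrm{id}_{t^! N}$, note that $\epsilon_{t^! N}$ is a quasi-isomorphism by Lemma \ref{lem:KoszulCounit}, and conclude by 2-out-of-3. Your closing observation that no boundedness hypothesis is needed (since Lemma \ref{lem:KoszulCounit} was already established in unbounded generality) is also accurate.
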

\begin{proof}
For any $C$-comodule $N$, consider the commuting  diagram expressing the triangle identity
\[
\begin{tikzcd}
t^! N
\ar[r, "t^! \eta_N"]
\ar[rr, bend right= 15, "\mathrm{id}"']
&
t^! t_\ast t^! N
\ar[r, "\epsilon_{t^! N}"]
&
t^! N\,.
\end{tikzcd}
\]
The counit $\epsilon_{t^! N}$ is a quasi-isomorphism by Lemma \ref{lem:KoszulCounit}, so $\eta_N$ is a $t$-equivalence by the 2-out-of-3 property.
\end{proof}

\begin{thmdef}
\label{thm:KozulQuillen}
The adjunction
\[
\begin{tikzcd}
C\mathrm{-Comod}
\ar[rr, shift left =1.1ex, "t^!"]
\ar[rr, shift left =-1.1ex, leftarrow, "t_\ast"', "\bot"]
&&
\Omega C \mathrm{-Mod}
\end{tikzcd}
\]
induces a combinatorial model structure on $C\mathrm{-Comod}$ called the \emph{$t$-local model structure} and denoted  $C\mathrm{-Comod}_{(t)}$.
The $t$-local model structure has
\begin{itemize}
  \item weak equivalences the $t$-equivalences; and
  \item cofibrations the monomorphisms.
\end{itemize}
The adjunction $(t^! \dashv t_\ast)$ is a Quillen equivalence with respect to the $t$-local model structure.
\end{thmdef}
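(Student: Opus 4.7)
The plan is to produce the model structure by \emph{left transfer} along the adjunction $(t^!\dashv t_\ast)$ from the injective model structure on $\Omega C\mathrm{-Mod}$, in which cofibrations are monomorphisms, weak equivalences are quasi-isomorphisms, and fibrations are determined by right lifting. This injective structure exists by the standard construction for Grothendieck abelian categories of dg modules. Since $\Omega C$ is free as a graded module, $t^!(f)=\Omega C\otimes_t f$ is a monomorphism of underlying chain complexes if and only if $f$ is a monomorphism of $C$-comodules; combined with the defining property of $t$-equivalences, this shows that the classes created by $t^!$ from the injective cofibrations and weak equivalences on $\Omega C\mathrm{-Mod}$ are precisely the monomorphisms and the $t$-equivalences in $C\mathrm{-Comod}$.

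To invoke a left-transfer theorem for locally presentable model categories, I would first verify local presentability of $C\mathrm{-Comod}$: it is the category of coalgebras for the accessible comonad $C\otimes(-)$ on chain complexes, cf.~\cite[Proposition A.1]{ching_coalgebraic_2014}. The substantive hypothesis to check is the \emph{acyclicity condition}: every pushout of a monomorphic $t$-equivalence along an arbitrary morphism is again a $t$-equivalence. Since $t^!$ preserves colimits and monomorphisms, this reduces to the corresponding statement in $\Omega C\mathrm{-Mod}$ with its injective structure, namely that pushouts of trivial cofibrations along arbitrary maps are trivial cofibrations---a standard feature of injective structures on Grothendieck abelian categories. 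The accessibility of the weak equivalences (closure under retracts, 2-out-of-3, and filtered colimits) follows because $t^!$ preserves filtered colimits and quasi-isomorphisms in $\Omega C\mathrm{-Mod}$ are closed under filtered colimits.

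The principal technical obstacle is producing the generating set of trivial cofibrations required for the transfer and confirming the full acyclicity hypothesis; an alternative is to apply Smith's recognition theorem directly, with generating cofibrations a set of monomorphisms whose weak saturation is all monomorphisms (available by local presentability). The most delicate axiom in that approach is that maps with right lifting against all monomorphisms are $t$-equivalences, which one handles by analysing the injective comodules that arise as kernels of such maps and using the acyclicity of $\Omega C\otimes_t C$ established in the course of proving Lemma~\ref{lem:KoszulCounit}.

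Once the model structure exists, the Quillen equivalence is a formal consequence of the preceding lemmas. Every object of $C\mathrm{-Comod}_{(t)}$ is cofibrant, so the derived unit at $N$ is simply $\eta_N\colon N\to t_\ast t^! N$, which is a $t$-equivalence by Lemma~\ref{lem:UnitTEquiv}. For the derived counit, the counit $\epsilon_M\colon t^! t_\ast M\to M$ is already a quasi-isomorphism by Lemma~\ref{lem:KoszulCounit}, and this property persists after fibrant replacement in $\Omega C\mathrm{-Mod}$. Hence $(t^!\dashv t_\ast)$ is a Quillen equivalence between $C\mathrm{-Comod}_{(t)}$ and $\Omega C\mathrm{-Mod}$.
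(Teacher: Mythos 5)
Your framework is the right one---left transfer along $t^!$ using local presentability of $C\mathrm{-Comod}$ via \cite{ching_coalgebraic_2014}, all objects cofibrant, and the Quillen equivalence then following formally from Lemma \ref{lem:KoszulCounit}---and that closing part of your argument matches the paper. But the existence of the model structure, which is the substance of the theorem, is not established. You correctly isolate the hard hypothesis (that every map with the right lifting property against all monomorphisms is a $t$-equivalence, equivalently that one can produce the generating acyclic cofibrations) and then explicitly defer it. Pushout-stability of monic $t$-equivalences, which you call the acyclicity condition, is necessary but far from sufficient for either the left-transfer theorem or Smith's recognition principle, and your proposed analysis of the ``injective comodules arising as kernels'' of maps with lifting against all monomorphisms is not carried out; it is exactly the direct attack that is hard to complete in the unbounded setting. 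The paper closes this gap by a different device: it invokes the dual of Quillen's path-object argument, \cite[Theorem A.4]{hess_waldhausen_2016}, whose hypotheses are that all objects are $t^!$-cofibrant and that every comodule $N$ admits a good cylinder object. The cylinder is $\mathrm{Cyl}(N)=N\otimes N(\mathbb{Q}[\Delta^1])$ with the induced coaction; the vertex inclusions give a monomorphism $N\oplus N\rightarrowtail \mathrm{Cyl}(N)$, and applying $t^!$ together with a chain contraction of $N(\mathbb{Q}[\Delta^1])$ onto $\mathbb{Q}$ shows that $\mathrm{Cyl}(N)\to N$ is a $t$-equivalence. This single construction replaces all of the acyclicity checking you flag as the principal obstacle, so you should supply it (or an equivalent verification) before the proof can be considered complete.

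A secondary point: you transfer from the injective model structure on $\Omega C\mathrm{-Mod}$, whereas the paper transfers from the projective one (weak equivalences the quasi-isomorphisms, fibrations the degreewise epimorphisms), which is the structure carried by $\mathcal{L}C_X\mathrm{-Rep}_\mathrm{dg}$ everywhere else in the paper. Your choice makes the identification of the cofibrations immediate, but it yields a Quillen equivalence with a different (albeit Quillen-equivalent) model structure on the module side than the one the rest of the paper uses; with the projective structure one must instead check that $t^!$ of a monomorphism is a projective cofibration, which uses that the twisted extensions $\Omega C\otimes_t(-)$ have graded-free, suitably filtered cokernels.
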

\begin{proof}
To obtain the $t$-local model structure, we invoke the left transfer theorem (specifically, we use \cite[Theorem A.4]{hess_waldhausen_2016}).
To do this, we first observe that a map of $C$-comodules $f\colon M\to N$ is a monomorphism if and only if $t^! f$ is a cofibration, as is readily checked.
In particular, all objects of $C\mathrm{-Comod}$ are $t^!$-cofibrant (that is, the image of the zero morphism under $t^!$ is a cofibration).

To apply the left transfer theoerem, since $C\mathrm{-Comod}$ is locally presentable (as remarked in the proof of Lemma \ref{lem:KoszulUnit}) it suffices to verify the existence of good cylinder objects
\[
\begin{tikzcd}
N\oplus N
\ar[r, rightarrowtail]
&
\mathrm{Cyl}(N)
\ar[r, "\sim_t"]
&
N
\end{tikzcd}
\]
for all $C$-comodules $N$.
To this end, we set $\mathrm{Cyl}(N) := N\otimes N(\mathbb{Q}[\Delta^1])$ with its induced $C$-coaction.
The vertex inclusions $\{0\}, \{1\}\hookrightarrow \Delta^1$ induce a monomorphism $N\oplus N\to \mathrm{Cyl}(N)$ of comodules, and the terminal map $\Delta^1\to \ast$ induces a comodule map $\mathrm{Cyl}(N)\to N$.
Applying $t^!$, we get $\Omega C\otimes_t (N\otimes N(\mathbb{Q}[\Delta^1]))\to \Omega C\otimes_t N$ and choosing a chain deformation retraction of $N(\mathbb{Q}[\Delta^1])$ onto $\mathbb{Q}$ shows that this map is a quasi-isomorphism.

By definition of the model structure, the left adjoint $t_!$ creates cofibration and weak equivalences so is left Quillen.
The $(t_!\dashv t^\ast)$-counit is a natural weak equivalence by Lemma \ref{lem:KoszulCounit}. It now follows that the adjunction is a Quillen equivalence.
\end{proof}

\begin{remark}
A similar argument shows that for all $k\geq -\infty$, the forgetful-cofree adjunction 
\[
\begin{tikzcd}
C\mathrm{-Comod}_{\geq k}
\ar[rr, shift left=1.1ex]
\ar[rr, shift left=-1.1ex, leftarrow, "\bot", "C\otimes (-)"']
&&
\mathrm{Ch}_{\geq k}
\end{tikzcd}
\]
induces a model structure on $k$-connective comodules for which the weak equivalences are quasi-isomorphisms.
For each $k\in \mathbb{Z}$, there is a commuting diagram of left Quillen functors 
\[
\begin{tikzcd}
C\mathrm{-Comod}
\ar[dr, hookleftarrow, bend right=20]
&
C\mathrm{-Comod}_{(t)}
\ar[r,"t^!", "\sim"']
\ar[l, "\mathrm{id}"']
&
\Omega C\mathrm{-Mod}\phantom{{}_{\geq k}}
\\
&
C\mathrm{-Comod}_{\geq k}
\ar[r,"t^!", "\sim"']
\ar[u, hookrightarrow]
&
\Omega C\mathrm{-Mod}_{\geq k}
\ar[u, hookrightarrow]
\end{tikzcd}
\]
where arrows labelled \lq\lq$\sim$'' are moreover Quillen equivalences.
\end{remark}

\begin{remark}
Let $C=\mathbb{Q}$ regarded as a coalgebra in the obvious way.
The $t$-local model structure on $\mathbb{Q}-\mathrm{Comod} = \mathrm{Ch}$ coincides with the projective model structure on unbounded rational chain complexes.
\end{remark}

Let $\psi\colon C\to D$ be a morphism of (2-reduced) dg coalgebras.
Any $C$-comodule $\rho_N\colon N\to C\otimes N$ becomes a $D$-comodule with respect to the coaction
\[
\begin{tikzcd}
N
\ar[r, "\rho_N"]
&
C\otimes N
\ar[r, "\psi\otimes N"]
&
D\otimes N\,,
\end{tikzcd}
\]
giving a functor $\psi_!\colon C\mathrm{-Comod}\to D\mathrm{-Comod}$.
The functor $\psi_!$ has a right adjoint $\psi^\ast$ that sends the $D$-comodule $\rho_X\colon X\to D\otimes X$ to
\begin{equation}
\label{eqn:ComodMorphismRightAdjoint}
\psi^\ast (X)  =
\mathrm{lim}
\left(\!
\begin{tikzcd}
C \otimes X
\ar[rr,shift left=0.8ex, "C\otimes \rho_X"]
\ar[rr,shift left=-0.8ex, "((C\otimes \psi)\circ \Delta)\otimes X"']
&&
C\otimes D\otimes X
\end{tikzcd}
\!\right)\,
\end{equation}
where $C$ is regarded as a right $D$-comodule via $\psi$. 
Comparing with modules over $\Omega C$ and $\Omega D$ gives a means by which we can analyse the homotopical properties of the $(\psi_!\dashv \psi_\ast)$-adjunction.
\begin{lemma}
\label{lem:KoszulNat}
Let $\psi\colon C\to D$ be a morphism of 2-reduced cocommutative dg coalgebras, with universal twisting chains $t_C\colon C\rightsquigarrow \Omega C$ and $t_D\colon D\rightsquigarrow \Omega D$.
There is a commuting diagram of left Quillen functors
\[
\begin{tikzcd}
C\mathrm{-Comod}_{(t)}
\ar[r, "t_C^!"]
\ar[d, "\psi_!"']
&
\Omega C\mathrm{-Mod}
\ar[d, "\Omega \psi_!"]
\\
D\mathrm{-Comod}_{(t)}
\ar[r, "t_D^!"]
&
\Omega D\mathrm{-Mod}
\end{tikzcd}
\]
commuting up to natural isomorphism.
If $\psi$ is a quasi-isomorphism, this is a diagram of Quillen equivalences.
\end{lemma}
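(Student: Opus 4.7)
My plan is to proceed in three steps: (1) verify commutativity of the diagram by a direct calculation using the universal property relating $\Omega$ and twisting chains; (2) deduce the left Quillen property of $\psi_!$ from the commuting square together with the Quillen equivalences $t_C^!$ and $t_D^!$; and (3) when $\psi$ is a quasi-isomorphism, reduce the Quillen equivalence claim to showing that $\Omega\psi$ is a quasi-isomorphism of dg algebras.

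For step (1), given a $C$-comodule $N$, both composites $\Omega\psi_!(t_C^! N) = \Omega D \otimes_{\Omega C}(\Omega C \otimes_{t_C} N)$ and $t_D^!(\psi_! N) = \Omega D \otimes_{t_D}(\psi_! N)$ have underlying graded vector space $\Omega D \otimes N$ with $\Omega D$-action by left multiplication. The content is to match the twist terms in the differentials, which reduces to the identity of twisting chains $\Omega\psi\circ t_C = t_D \circ \psi$ as maps $C\rightsquigarrow \Omega D$; this holds because $\Omega\psi$ is uniquely determined on algebra generators by $tc\mapsto t\psi(c)$. Naturality in $N$ is immediate.

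For step (2), the functor $\psi_!$ acts as the identity on underlying chain complexes and hence preserves monomorphisms, i.e.\ cofibrations in the $t$-local model structure. To check preservation of acyclic cofibrations, let $f$ be such a morphism in $C\mathrm{-Comod}_{(t)}$. By Theorem/Definition \ref{thm:KozulQuillen}, $t_C^! f$ is an acyclic cofibration in $\Omega C\mathrm{-Mod}$; the extension of scalars $\Omega\psi_!$ is left Quillen with respect to the projective model structures, so $\Omega\psi_!(t_C^! f)$ is an acyclic cofibration in $\Omega D\mathrm{-Mod}$. Via the natural isomorphism of step (1), this coincides with $t_D^!(\psi_! f)$, so $\psi_! f$ is a $t$-equivalence and a monomorphism, hence an acyclic cofibration in $D\mathrm{-Comod}_{(t)}$.

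For step (3), assume $\psi$ is a quasi-isomorphism. The heart of the matter is to show that $\Omega\psi$ is a quasi-isomorphism of dg algebras. I would filter $\Omega C = T(\overline{C}[1])$ by word length; since $C$ is 2-reduced, $\overline{C}[1]$ is concentrated in degrees $\geq 2$, so the filtration is locally finite in each degree and the associated spectral sequence converges. On the $E^0$-page the cobar twist term vanishes, leaving only the internal differential, and K\"{u}nneth over $\mathbb{Q}$ identifies $E^1 \cong T(H_\bullet(\overline{C})[1])$ functorially in $C$; the hypothesis on $\psi$ thus propagates from $E^1$ to $E^\infty$. Once $\Omega\psi$ is known to be a quasi-isomorphism, $(\Omega\psi_!\dashv\Omega\psi^\ast)$ is a Quillen equivalence by the standard compact-generator argument used in Lemma \ref{lem:Rect1} (the right adjoint creates fibrations and weak equivalences, and the derived unit at the compact generator $\Omega C$ coincides with $\Omega\psi$). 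Combining this with the Quillen equivalences $t_C^!$ and $t_D^!$ via the commuting square and 2-out-of-3 for Quillen equivalences yields the final claim. The main obstacle is the cobar-preserves-quasi-isomorphism step, which relies crucially on 2-reducedness to guarantee convergence of the word-length spectral sequence; the remaining steps are formal consequences of the commuting diagram and Theorem/Definition \ref{thm:KozulQuillen}.
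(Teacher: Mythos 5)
Your argument is correct and its overall skeleton matches the paper's: commutativity is obtained from the factorisation of the twisting chain corresponding to $\Omega\psi$ through both $\Omega\psi\circ t_C$ and $t_D\circ\psi$, the left Quillen property of $\psi_!$ is deduced from the commuting square together with Theorem/Definition \ref{thm:KozulQuillen}, and the equivalence claim is reduced to showing that $\Omega\psi$ is a quasi-isomorphism, after which one applies the base change Quillen equivalence for dg algebras and 2-out-of-3. The one place you genuinely diverge is the key sub-step that the cobar construction preserves quasi-isomorphisms of 2-reduced coalgebras. You filter $\Omega C$ by word length, using that the generators $\overline{C}[1]$ sit in strictly positive degrees (degrees $\geq 1$ with the paper's shift conventions, not $\geq 2$ as you wrote, though this does not affect the argument) so that the filtration is bounded in each total degree, and identify $E^1\cong T(H_\bullet(\overline{C})[1])$ by K\"{u}nneth. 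The paper instead applies its already-established twisting spectral sequence (Lemma \ref{lem:TwistingSS}) to the map $\Omega\psi\otimes\psi$ between the acyclic complexes $\Omega C\otimes_{t_C}C$ and $\Omega D\otimes_{t_D}D$ (Lemma \ref{lem:KoszulCounit}) and invokes Zeeman's comparison theorem to conclude that $H_\bullet(\Omega\psi)$ is an isomorphism. Your route is more self-contained and makes the role of 2-reducedness transparent (it is exactly what guarantees convergence of the word-length filtration), whereas the paper's route recycles machinery it has already built. Both are valid, and the remaining formal steps agree with the paper's.
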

\begin{proof}
The cobar construction for (coaugmented) coalgebras is functorial, hence $\psi$ gives rise to a morphism of cobar constructions $\Omega \psi\colon \Omega C\to \Omega D$.
The dg algebra map $\Omega \psi$ corresponds to a twisting chain $t_\psi\colon C\rightsquigarrow \Omega D$ that factors as
\[
\begin{tikzcd}[cramped, sep = small]
C
\ar[r, rightsquigarrow, "t_C"]
&
\Omega C
\ar[r, "\Omega \psi"]
&
\Omega D
\end{tikzcd}
\qquad 
\mbox{ and as}
\qquad
\begin{tikzcd}[cramped, sep = small]
C
\ar[r, "\psi"]
&
D
\ar[r, rightsquigarrow, "t_D"]
&
\Omega D\,,
\end{tikzcd}
\]
(compare Construction \ref{cons:Cobar}).
For any $C$-comodule $N$, it follows that there are natural isomorphisms
\[
\Omega D\otimes_{t_D} \psi_! N
\cong \Omega D \otimes_{t_\psi} N\cong
\Omega D \otimes_{\Omega C} \big(\Omega C\otimes_{t_C} N\big)\,,
\]
hence the functors $\Omega \psi_!\circ t^!_C$ and $t^!_D \circ \psi_!$ are naturally isomorphic.
The functor $\psi_!$ clearly preserves monomorphisms.
For a $t_C$-equivalence of $C$-comodules $f\colon M\to N$, $\Omega \psi_! \circ t^!_C (f)$ is a quasi-isomorphism of $\Omega D$-modules by Ken Brown's lemma applied to the quasi-isomorphism $t^!_C$.
This shows that $(\psi_!\dashv \psi^\ast)$ is Quillen for the $t$-local model structures.

The (graded) map $\Omega \psi\otimes \psi\colon \Omega C\otimes C\to \Omega D \otimes D$ induces a morphism of spectral sequences
\[
\begin{tikzcd}[row sep = small]
H_p(\Omega C) \otimes H_q(C) 
\ar[r, Rightarrow]
\ar[d, "H_p(\Omega \psi)\otimes H_q(\psi)"']
&
H_{p+q}(\Omega C\otimes_{t_C} C)
\ar[d,"\cong"]
\\
H_p(\Omega D) \otimes H_q(D) 
\ar[r, Rightarrow]
&
H_{p+q}(\Omega D\otimes_{t_D} D)\,,
\end{tikzcd}
\]
where we have an isomorphism of $E^\infty$-pages as both $\Omega C\otimes_{t_C} C$ and $\Omega D \otimes_{t_D} D$ are acyclic (Lemma \ref{lem:KoszulCounit}).
If $\psi$ is a quasi-isomorphism, Zeeman's comparison theorem implies that $\Omega \psi\colon \Omega C\to \Omega D$ is one too and, consequently, that the base change adjunction $(\Omega \psi_!\dashv \Omega \psi^\ast)$ is a Quillen equivalence (compare Lemma \ref{lem:QIsodgLieBChange}).
Then $(\psi_!\dashv \psi^\ast)$ is a Quillen equivalence by the 2-out-of-3 property.
\end{proof}

\subsection{Cotensor products and cotorsion}
\label{subsec:Cotensor}
Let $C$ be a dg coalgebra (coassociative but for the time being not necessarily cocommutative).
The \emph{cotensor product} of a right $C$-comodule $M$ and a left $C$-comodule $N$ is the equaliser
\[
\begin{tikzcd}
M\,\square_C\, N
\ar[r]
&
M\otimes N
\ar[r, shift left =0.8ex, "\vphantom{\big(}M\otimes \rho_N"]
\ar[r, shift left =-0.8ex, "\vphantom{\big(}\rho_M \otimes N"']
&
M\otimes C\otimes N\,,
\end{tikzcd}
\]
with $\rho_M$ and $\rho_N$ the $C$-coactions on $M$ and $N$ respectively (note that \eqref{eqn:ComodMorphismRightAdjoint} expresses $\psi^\ast N$ as the cotensor product $\psi^\ast(X) = C\,\square_D\, X$).
If $C$ is cocommutative, the twist isomorphism allows us to identity left and right $C$-comodules and the cotensor product $N\,\square_C\, M$ becomes a (left) $C$-comodule with respect to the coaction
\[
M\, \square_C\, N \ni
\sum_i m^i\otimes n^i
\longmapsto
\sum_i n_{(0)}^i\otimes n_{(1)}^i\otimes m = \sum_{i} (-1)^{|n^i||m_{(0)}^i|} m_{(0)}^i\otimes n^i\otimes m_{(1)}^i\,.
\]
Forming cotensor products thus determines a bifunctor $C\mathrm{-Comod}\times C\mathrm{-Comod}\to C\mathrm{-Comod}$.

Eilenberg and Moore define the \emph{cotorsion functor} $\mathrm{Cotor}^C_\bullet(M,N)$ for connective $C$-comodules $M$ and $N$ as the right derived functor of the cotensor product \cite{eilenberg_homology_1966}.
We recall that a (connective) $C$-comodule $A$ is \emph{injective} if it is a summand of a cofree $C$-comodule, equivalently if the coaction $\rho_A\colon A\to C\otimes A$ participates in a retract diagram of $C$-comodules
\[
\begin{tikzcd}
A
\ar[rr, bend right=20, "\mathrm{id}"']
\ar[r, "\rho_A"]
&
C\otimes  A
\ar[r, "f"]
&
A\,.
\end{tikzcd}
\]
Given injective resolutions $M\to A_\bullet $ and $N \to B_\bullet $ the cotorsion functor is then defined as the homology of any of the quasi-isomorphic complexes
\[
\mathrm{Tot}^{\,\prod} \big(A_\bullet \,\square_C\, N\big)
\longrightarrow 
\mathrm{Tot}^{\,\prod}\big(A_\bullet\,\square_C\, B_\bullet\big)
\longleftarrow
\mathrm{Tot}^{\,\prod} \big(M \,\square_C\, B_\bullet\big)\,.
\]
\begin{remark}
If $C$ is 2-reduced and cocommuative, the adjunction unit $\eta_N\colon N \to t_\ast t^! N$ is an injective resolution. 
Indeed, $\eta_N$ is a quasi-isomorphism by Lemma \ref{lem:KoszulUnit} and the comodule map
\begin{align*}
C\otimes \big(C\otimes^{t} \Omega C\otimes_t N \big)& \longrightarrow C\otimes^{t} \Omega C\otimes_t N
\\
c\otimes \big(c' \otimes (tc_1\otimes\dotsb \otimes tc_k)\otimes n\big)
&\longmapsto
\epsilon(c)\cdot c' \otimes (tc_1\otimes\dotsb \otimes tc_k)\otimes n
\end{align*}
witnesses injectivity of $t_\ast t^! N = C\otimes^t \Omega C\otimes_t N$.
For $M$ and $N$ connective, we therefore have $H_\bullet(t_\ast t^! M\,\square_C\, t_\ast t^! N) \cong \mathrm{Cotor}_\bullet^C(M,N)$.
\end{remark}
\begin{remark}
More generally, if $C$ is equipped with a coaugmentation then the cotensor product $t_\ast t^! M \,\square_C\, t_\ast t^! N$  is isomorphic to the  two-sided cobar construction
$
\Omega (M;N;C) = M\otimes_t \Omega C \otimes_t N
$,
with $M$ regarded as a right $C$-comodule by the symmetry isomorphism.
\end{remark}

We are now able show that the Eilenberg--Moore cotorsion functor extends to a symmetric monoidal structure on the homotopy category of $t$-local $C$-comodules. 
\begin{theorem}
\label{thm:DerivedCotensor}
For $C$ a 2-reduced cocommutative dg coalgebra, the assignment 
\[
\square_C^\mathbf{R}\colon (M,N)\longmapsto t_\ast t^! M\,\square_C\, t_\ast t^! N
\]
determines a symmetric monoidal structure
\[
\square_C^\mathbf{R}\colon
Ho(C\mathrm{-Comod}_{(t)})\times
Ho(C\mathrm{-Comod}_{(t)})
\longrightarrow
Ho(C\mathrm{-Comod}_{(t)})
\]
such that $H_\bullet (M\,\square_C^\mathbf{R}\, N) = \mathrm{Cotor}^C_\bullet(M,N)$.
\end{theorem}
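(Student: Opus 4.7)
The plan is to transport the symmetric monoidal model structure from $\Omega C\mathrm{-Mod}$ across the Koszul Quillen equivalence of Theorem \ref{thm:KozulQuillen}. Since $C$ is cocommutative, Construction \ref{cons:Cobar} equips $\Omega C$ with the structure of a cocommutative dg Hopf algebra, so the dg analogue of Lemma \ref{lem:dgHopfMonoidal} endows $\Omega C\mathrm{-Mod}$ with a symmetric monoidal model structure whose product is the levelwise tensor of chain complexes with diagonal $\Omega C$-action. Passing to homotopy categories, the Quillen equivalence $(t^!\dashv t_\ast)$ of Theorem \ref{thm:KozulQuillen} transports this to a symmetric monoidal structure on $Ho(C\mathrm{-Comod}_{(t)})$, represented on objects by $(M,N)\mapsto t_\ast(t^!M\otimes t^!N)$.

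The key step is to identify this transported bifunctor with the formula $\square^{\mathbf R}_C$ of the theorem. To this end I would construct a natural isomorphism of $C$-comodules
\[
t_\ast(A\otimes B) \xrightarrow{\;\cong\;} t_\ast A\,\square_C\,t_\ast B
\]
for all $\Omega C$-modules $A,B$, where the domain carries the diagonal $\Omega C$-action and the codomain uses cocommutativity of $C$ to convert the left $C$-coaction on $t_\ast A$ into a right coaction. A direct Sweedler-notation check produces a candidate $C$-colinear isomorphism sending $c\otimes a\otimes b\mapsto (c_{(0)}\otimes a)\otimes(c_{(1)}\otimes b)$, whose inverse is built out of the counit of $C$. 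The hard part is verifying compatibility with the twisted differentials on both sides: this will require combining coassociativity and cocommutativity of $\Delta$, the Hopf-algebra compatibility between product and coproduct on $\Omega C$, and the universal twisting-chain identity $d(tc)=-t(dc)-(-1)^{|c_{(0)}|}tc_{(0)}\cdot tc_{(1)}$ to reconcile the three cross-terms contributed by the $\Omega C$-actions on $A$ and $B$ with the single twisted term appearing in $t_\ast(A\otimes B)$. Specialising to $A=t^!M$, $B=t^!N$ then produces the claimed comparison, and the unit, associativity and symmetry constraints transport automatically from those of $(\Omega C\mathrm{-Mod},\otimes)$. Homotopical invariance of $\square^{\mathbf R}_C$ in both variables is then automatic from Ken Brown's lemma applied to the left Quillen functors $t_\ast(t^!M\otimes-)$ and $t_\ast(-\otimes t^!N)$, transported from $\Omega C\mathrm{-Mod}$.

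For the homology identification, recall from the remarks preceding the theorem that $\eta_N\colon N\to t_\ast t^!N$ is an injective resolution of $N$ as a $C$-comodule: it is a quasi-isomorphism by Lemma \ref{lem:KoszulUnit}, and $t_\ast t^!N=C\otimes^{t}\Omega C\otimes_t N$ is exhibited as a retract of the cofree $C$-comodule cogenerated by $\Omega C\otimes_t N$ via the map using $\epsilon_C$ in the first tensor factor. Consequently, by the Eilenberg--Moore definition,
\[
H_\bullet\bigl(M\,\square^{\mathbf R}_C\, N\bigr)=H_\bullet\bigl(t_\ast t^!M\,\square_C\,t_\ast t^!N\bigr)=\mathrm{Cotor}^C_\bullet(M,N),
\]
as required. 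The main technical obstacle, as noted, is the twisted-differential bookkeeping in the monoidal comparison of Step~2; everything else is either formal transport of structure or a direct application of the injective resolution formula for $\mathrm{Cotor}$.
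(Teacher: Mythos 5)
Your proposal follows essentially the same route as the paper: transport the tensor product of $\Omega C$-modules (a symmetric monoidal model structure via the dg Hopf algebra structure on $\Omega C$) across the Koszul--Quillen equivalence, identify the transported bifunctor with $\square^{\mathbf R}_C$ through the natural isomorphism $t_\ast(A\otimes B)\cong t_\ast A\,\square_C\, t_\ast B$, and read off the $\mathrm{Cotor}$ identification from the injective resolution $N\to t_\ast t^! N$. The only quibble is that $t_\ast(t^!M\otimes -)$ is not a left Quillen functor (since $t_\ast$ is a right adjoint), so homotopy invariance in each variable should instead be deduced, as in the paper, from the K\"unneth theorem over $\mathbb{Q}$ together with the fact that $t_\ast$ carries quasi-isomorphisms to $t$-equivalences (Lemma \ref{lem:tastQIsotoTequiv}).
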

\begin{proof}
The cobar construction $\Omega C$ is a dg Hopf algebra, so by Lemma \ref{lem:dgHopfMonoidal} $\Omega C\mathrm{-Mod}$ is a symmetric monoidal model category with respect to the tensor product.
Concretely, for $A, B\in \Omega C\mathrm{-Mod}$ the tensor product $A\otimes B$ is an $\Omega C$-module with respect to the action determined on generators by
\[
tc\cdot (a\otimes b) := tc\cdot a\otimes b + (-1)^{|a|(|c|-1)} c\otimes tc\cdot b\,.
\]
There is a natural isomorphism of $C$-comodules  $(C\otimes^t A)\,\square_C\, (C\otimes^t B) \cong C\otimes^t(A\otimes B)$,  that is $t_\ast (A\otimes B)\cong t_\ast A\,\square_C\, t_\ast B$, and hence for $M, N\in C\mathrm{-Comod}$ we get a natural isomorphism
\begin{equation}
\label{eqn:CotorvsTensor}
t_\ast t^! M\,\square_C\, t_\ast t^! N
\cong
t_\ast \big(t^! M\otimes t^! N\big)\,.
\end{equation}
Let $f\colon M\to M'$ be a $t$-equivalence of $C$-comodules, so that $t^! M\otimes t^! N \to t^! M'\otimes t^!N$ is a quasi-isomorphism of $\Omega C$-modules by the K\"{u}nneth theorem.
By 
Lemma \ref{lem:tastQIsotoTequiv} and \eqref{eqn:CotorvsTensor}, we get that $t_\ast t^! M\,\square_C\, t_\ast t^! N \to t_\ast t^! M'\,\square_C\, t_\ast t^! N$ is a $t$-equivalence.
Similarly, the bifunctor $\square^\mathbf{R}_C$ preserves $t$-equivalences in the second argument hence there is a diagram of functors commuting up to natural isomorphism
\[
\begin{tikzcd}[column sep=small]
C\mathrm{-Comod}\times C\mathrm{-Comod}\ar[rr, "{\square^\mathbf{R}_C}"]
\ar[d]
&&
C\mathrm{-Comod}
\ar[d]
\\
Ho\big(C\mathrm{-Comod}_{(t)}\big)\times
Ho\big(C\mathrm{-Comod}_{(t)}\big)
\ar[rr, dashed, "{\exists}"]
&&
Ho\big(C\mathrm{-Comod}_{(t)}\big)\,.
\end{tikzcd}
\]
For $\Omega C$-modules $A$ and $B$ the $(t^!\dashv t_\ast)$-counit is a cofibrant resolution and hence
\[
\mathbf{R}t_\ast \big(A\otimes^\mathbf{L} B  \big)
\cong t_\ast \big(t^!t_\ast A\otimes t^! t_\ast B \big)
\cong 
(t_\ast t^!t_\ast A)\,\square_C\, (t_\ast t^! t_\ast B )
= t_\ast A \,\square^\mathbf{R}_C\, t_\ast B\,.
\]
The derived equivalence of homotopy categories  $(\mathbf{L}t^!\dashv \mathbf{R}t_\ast)$ identifies $\otimes^\mathbf{L}$ with $\square^\mathbf{R}_C$, so the latter indeed determines a symmetric monoidal structure.
\end{proof}

\section{Stable rational parametrised homotopy theory}
\label{sec:RatStabParamHom}
Quillen's rational homotopy theory is motivated by the mod-torsion Hurewicz theorem:
\begin{theorem}
For a map $f\colon X\to Y$ of simply-connected spaces, the following are equivalent:
\begin{itemize}
  \item $f$ is a rational homotopy equivalence;
  \item $f$ is a rational homology equivalence.
\end{itemize}
\end{theorem}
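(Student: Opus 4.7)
The plan is to reduce the claim to a vanishing criterion for the homotopy fibre. Replace $f$ by a fibration, and let $F$ denote its homotopy fibre. Since $X$ and $Y$ are simply connected, the long exact sequence forces $F$ to be connected and exhibits $\pi_1(F)$ as a quotient of $\pi_2(Y)$; in particular $\pi_1(F)$ is abelian and (since the base is simply connected) acts trivially on each $\pi_n(F)$, so $F$ is a nilpotent space.

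First, I would show the equivalence $f$ is a rational homotopy equivalence $\Longleftrightarrow$ $\pi_n(F)\otimes_\mathbb{Z}\mathbb{Q}=0$ for all $n\geq 1$. This is immediate from the homotopy long exact sequence combined with exactness of $(-)\otimes_\mathbb{Z}\mathbb{Q}$. Next, I would show the equivalence $f$ is a rational homology equivalence $\Longleftrightarrow$ $\widetilde{H}_n(F;\mathbb{Q})=0$ for all $n\geq 1$, using the Serre spectral sequence
\[
E^2_{p,q}=H_p\big(Y;H_q(F;\mathbb{Q})\big)\Longrightarrow H_{p+q}(X;\mathbb{Q})\,,
\]
with trivial local coefficients (since $\pi_1(Y)=0$). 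The backwards direction is the collapse of this spectral sequence onto the horizontal axis. The forwards direction is by induction on degree: if $n$ were the smallest index with $\widetilde{H}_n(F;\mathbb{Q})\neq 0$, then the only nontrivial contributions to the filtration of $H_n(X;\mathbb{Q})$ come from $E^2_{n,0}$ and $E^2_{0,n}$, yielding a short exact sequence $0\to H_n(F;\mathbb{Q})\to H_n(X;\mathbb{Q})\to H_n(Y;\mathbb{Q})\to 0$; the hypothesis on $f$ then forces $H_n(F;\mathbb{Q})=0$, a contradiction.

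The main obstacle is the final step: the equivalence between vanishing of rational homotopy and of reduced rational homology of $F$. This is the rational Hurewicz theorem, that is, Serre's mod-$\mathcal{C}$ Hurewicz theorem applied to the Serre class $\mathcal{C}$ of torsion abelian groups. Because $F$ is only nilpotent (not necessarily simply connected), the proof cannot directly invoke the classical Hurewicz statement. Instead, I would argue via the Postnikov tower of $F$: each stage $F\langle n\rangle\to F\langle n-1\rangle$ is a principal fibration with fibre an Eilenberg--MacLane space $K(\pi_n(F),n)$ (the nilpotence of $F$ is precisely what makes this principal refinement possible), and then induct on $n$ using that $\widetilde H_\bullet(K(A,n);\mathbb{Q})$ is concentrated in degrees $\geq n$ with bottom group $A\otimes_\mathbb{Z}\mathbb{Q}$, together with the Serre spectral sequence of each principal stage. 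This establishes both implications of the rational Hurewicz theorem for $F$ and completes the proof.
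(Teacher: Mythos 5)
The paper offers no proof of this statement to compare against: it is quoted as the classical mod-torsion Hurewicz theorem of Serre, stated as motivation for Quillen's equivalences, so you are supplying an argument where the paper supplies a citation. Your overall architecture --- reduce to the homotopy fibre $F$, translate both conditions into vanishing statements about $F$, and close the loop with the rational Hurewicz theorem proved up the principal Postnikov tower --- is the standard one and is sound. (One small correction of attribution: $\pi_1(F)$ acts trivially on $\pi_n(F)$ because that action factors through $\pi_1$ of the \emph{total space} $X$, not because the base is simply connected; simple connectivity of $Y$ is what you need for the untwisted coefficients in the Serre spectral sequence.)

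There is, however, one false intermediate claim. In the forward direction of the homology step, the spectral sequence does \emph{not} yield the short exact sequence $0\to H_n(F;\mathbb{Q})\to H_n(X;\mathbb{Q})\to H_n(Y;\mathbb{Q})\to 0$ in the lowest degree $n$ with $\widetilde H_n(F;\mathbb{Q})\neq 0$. What it yields is $0\to E^\infty_{0,n}\to H_n(X;\mathbb{Q})\to H_n(Y;\mathbb{Q})\to 0$, where $E^\infty_{0,n}=\mathrm{coker}\bigl(d_{n+1}\colon E^{n+1}_{n+1,0}\to E^{n+1}_{0,n}\bigr)$: the corner entry $E^2_{0,n}$ supports no differentials but can be \emph{hit} by the transgression from $H_{n+1}(Y;\mathbb{Q})$. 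The Hopf fibration $S^1\to S^3\to S^2$ shows that $H_n(F;\mathbb{Q})\to H_n(X;\mathbb{Q})$ need not be injective, so your sequence cannot be correct as stated and the contradiction you derive does not yet follow. The repair is standard but does require using the hypothesis in degree $n+1$ as well: surjectivity of $f_\ast$ on $H_{n+1}$ forces every class of $E^2_{n+1,0}=H_{n+1}(Y;\mathbb{Q})$ to be a permanent cycle, whence $d_{n+1}=0$; injectivity of $f_\ast$ on $H_n$ then gives $H_n(F;\mathbb{Q})=\mathrm{coker}(d_{n+1})=0$, the desired contradiction. Equivalently, replace your short exact sequence by the Serre exact sequence of low-degree terms $H_{n+1}(X;\mathbb{Q})\to H_{n+1}(Y;\mathbb{Q})\to H_n(F;\mathbb{Q})\to H_n(X;\mathbb{Q})\to H_n(Y;\mathbb{Q})\to 0$. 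With that repair the argument is complete.
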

For $X$ simply-connected, the groups $\pi_{\ast +1 }(X)\otimes_\mathbb{Z}\mathbb{Q}$ form a graded rational Lie algebra with respect to the Whitehead bracket---the \emph{Whitehead Lie algebra of $X$}, denoted $\pi^\mathbb{Q}_{\ast+1} X$.
The mod-torsion Hurewicz theorem can be interpreted as saying that the Whitehead Lie algebra and the rational homology coalgebra are equivalent as algebraic invariants of the homotopy type.

In his foundational work \cite{quillen_rational_1969}, Quillen explains this equivalence of algebraic invariants as a consequence of equivalences of homotopy theories
\begin{equation}
\label{eqn:QuillensEq1}
Ho(\mathrm{sSet}_{\geq 2})_\mathbb{Q}
\xrightarrow{\;\cong\;}
Ho(\mathrm{dgLie}_{\geq 1})
\xrightarrow{\;\cong\;}
Ho(\mathrm{dgCoalg}_{\geq 2})\,,
\end{equation}
where
\begin{itemize}
  \item $\mathrm{sSet}_{\geq 2}$ is the category of 2-reduced simplicial sets (cf~Remark \ref{rem:RedsSet}).
  The homotopy category $Ho(\mathrm{sSet}_{\geq 2})_\mathbb{Q}$ is obtained by localising the class of rational homotopy equivalences. 
  
  \item $\mathrm{dgLie}_{\geq 1}$ is the category of reduced dg Lie algebras over $\mathbb{Q}$.
  The homotopy category is obtained by localising at the class of quasi-isomorphisms of dg Lie algebras.
  
  \item $\mathrm{dgCoalg}_{\geq 2}$ is the category of 2-reduced cocommutative dg coalgebras over $\mathbb{Q}$.
  The homotopy category is obtained by localising at the class of quasi-isomorphisms of dg coalgebras.
\end{itemize}
The upshot of the equivalences \eqref{eqn:QuillensEq1} is that the rational homotopy theory of simply-connected spaces is completely controlled by relatively simple algebraic objects.
A consequence of immense practical import is that topological constructions admit translations into algebra, where the analysis is usually much simpler.

Quillen's proof is rather convoluted and relies on the machinery of model categories.
Quillen constructs a sequence of model categories and Quillen equivalences between them:
\begin{equation}
\label{eqn:QuillenEquiv2}
\begin{tikzcd}
\mathrm{sSet}_{\geq 2}^\mathbb{Q}
\ar[r, shift left =1.1ex, "\mathbb{G}" ]
\ar[r, leftarrow, shift left =-1.1ex, "\bot", "\overline{W}"']
&
\mathrm{sGrp}_{\geq 1}^\mathbb{Q}
\ar[r, shift left =1.1ex, "\widehat{\mathbb{Q}}"]
\ar[r, leftarrow, shift left =-1.1ex, "\bot", "\mathcal{G}"']
&
\mathrm{scHopf}_{\geq 1}
\ar[r, leftarrow, shift left =1.1ex, "\widehat{\mathcal{U}}"]
\ar[r, shift left =-1.1ex, "\bot", "\mathcal{P}"']
&
\mathrm{sLie}_{\geq 1}
\ar[r, shift left =1.1ex, leftarrow, "N^!"]
\ar[r, shift left =-1.1ex, "\bot", "N"']
&
\mathrm{dgLie}_{\geq 1}
\ar[r, shift left =1.1ex, leftarrow, "\mathcal{L}"]
\ar[r, shift left =-1.1ex, "\bot", "\mathcal{C}"']
&
\mathrm{dgCoalg}_{\geq 2}\,.
\end{tikzcd}
\end{equation}
The derived functors participate in adjoint equivalences of homotopy categories, proving \eqref{eqn:QuillensEq1}.
Let us briefly recall the model categories appearing in this zig-zag of adjunctions (only two out of the three classes of weak equivalences, cofibrations, and fibrations is specified as any two determines the third).
\begin{itemize}
  \item $\mathrm{sSet}_{\geq 2}^\mathbb{Q}$ is the category of 2-reduced simplicial sets. Weak equivalences are rational homotopy equivalences and cofibrations are the monomorphisms.
  
  \item $\mathrm{sGrp}_{\geq 1}^\mathbb{Q}$ is the category of 1-reduced simplicial groups.
  Weak equivalences are rational homotopy equivalences and cofibrations are retracts of free maps of simplicial groups.
  
  \item $\mathrm{scHopf}_{\geq 1}$ is the category of reduced complete simplicial Hopf algebras over $\mathbb{Q}$.
  Reduced in this case means that the unit $\eta\colon \mathbb{Q}\to H$ is an isomorphism in simplicial degree zero; complete that the filtration by powers of the augmentation ideal $\overline{H}= \mathrm{ker}(\epsilon\colon H\to \mathbb{Q})$ is complete.
  Weak equivalences and fibrations are created by the functor $\mathcal{P}$ computing simplicial Lie algebras of primitives.
  
  \item $\mathrm{sLie}_{\geq 1}$ is the category of reduced simplicial Lie algebras over $\mathbb{Q}$.
  Reduced in this case means that the inclusion of the zero vector is an isomorphism in simplicial degree zero.
  Weak equivalences and fibrations are created by the forgetful functor $\mathrm{sLie}_{\geq 1}\to \mathrm{sSet}_{\geq 1}$.
  
  \item $\mathrm{dgLie}_{\geq 1}$ is the category of reduced dg Lie algebras.
  A map of reduced dg Lie algebras is a  fibration if and only if it is an epimorphism in degrees $>1$ and is a weak equivalence precisely if the map of underlying chain complexes is a quasi-isomorphism.
  
  \item $\mathrm{dgCoalg}_{\geq 2}$ is the category of 2-reduced cocommutative dg coalgebras.
  Weak equivalences and cofibrations are created by the forgetful functor to rational chain complexes, hence are respectively the quasi-isomorphisms and monomorphisms.
\end{itemize}
The adjunctions in the diagram \eqref{eqn:QuillenEquiv2} are 
\begin{description}
  \item[$(\mathbb{G}\dashv \overline{W})$ --] this is the restriction of Kan's adjunction to 2-reduced simplicial sets and reduced simplicial groups as used in Section \ref{SS:StableLoopSpace} (see \cite{goerss_simplicial_2009} for a modern account).
  The functor $\mathbb{G}$ sends a (2-)reduced simplicial set to its Kan simplicial loop group and $\overline{W}$ sends a (reduced) simplicial group to its classifying space.
  
  \item[$(\widehat{\mathbb{Q}}\dashv \mathcal{G})$ --] the functor $\widehat{\mathbb{Q}}$ sends a reduced simplicial group $G$ to the rational group ring $\mathbb{Q}[G]$ which is then completed at the augmentation ideal. 
  The right adjoint $\mathcal{G}$ computes the simplicial group of group-like elements.
  
  \item[$(\widehat{\mathcal{U}}\dashv \mathcal{P})$ --] the functor $\widehat{\mathcal{U}}$ sends a reduced simplicial Lie algebra $\mathfrak{g}$ to the completion of the universal enveloping algebra $\mathcal{U}\mathfrak{g}$ at the augmentation ideal.
  The right adjoint $\mathcal{P}$ computes simplicial Lie algebras of primitives.
  
  \item[$(N^!\dashv N)$ --] the functor $N$ sends a simplicial Lie algebra $\mathfrak{g}$ to its normalised chain complex, which inherits the structure of a dg Lie algebra (see Section \ref{SS:SimplicialDGLieComp}).
  The left adjoint $N^!$ is defined on free dg Lie algebras via the Dold--Kan correspondence and extended to all dg Lie algebras by monadicity.
  
  \item[$(\mathcal{L}\dashv \mathcal{C})$ --] the functor $\mathcal{L}$ sends a 2-reduced dg coalgebra $C$ to the primitive Lie algebra of the cobar construction $\Omega C$.
  The right adjoint $\mathcal{C}$ sends a dg Lie algebra to its Chevalley--Eilenberg coalgebra, which computes Lie algebra homology.
\end{description}
For what follows, it is useful to have specific algebraic models of the rational homotopy type to hand.
\begin{definition}[Quillen models]
Let $X$ be a 2-reduced simplicial set and define
\begin{equation}
\label{eqn:QuillenModels}
\Lambda_X := N\mathcal{P}\widehat{\mathbb{Q}}[\mathbb{G}X]
\;\;\;\;
\mbox{ and }
\;\;\;\;
C_X :=  \mathcal{C}\Lambda_X\,.
\end{equation}
Both $\Lambda_X$ and $C_X$ correspond to $X$ under the equivalences of rational homotopy theories \eqref{eqn:QuillensEq1} (this is an easy consequence of the fact that all objects of $\mathrm{sSet}_{\geq 2}^\mathbb{Q}$ are cofibrant and all objects of $\mathrm{scHopf}_{\geq 1}$ are fibrant); $\Lambda_X$ computes the Whitehead Lie algebra of $X$ whereas $C_X$ computes the rational homology coalgebra.
$\Lambda_X$ and $C_X$ are called \emph{Quillen's rational models for $X$}.
Both of the assignments $X\mapsto \Lambda_X$ and $X\mapsto C_X$ are functorial.
\end{definition}

For any space $X$, rational stable equivalences of $X$-parametrised spectra are detected on homotopy fibre spectra.
For connected $X$ these fibre spectra carry $\Omega X$-actions and their rational stable homotopy groups are acted upon by $H_\bullet(\Omega X; \mathbb{Q})$ with the Pontrjagin product.
If $X$ is moreover simply-connected the Milnor--Moore theorem says that this $H_\bullet(\Omega X; \mathbb{Q})$-action restricts to define representations of the Whitehead Lie algebra $\pi^\mathbb{Q}_{\ast+1}(X)$.
Fixing a point $x\colon \ast \to X$, any $X$-parametrised spectrum $P$ determines a $\pi^\mathbb{Q}_{\ast+1}(X)$-representation $\mathbold{\theta}_\ast(P) = \pi^\mathrm{st}_\ast (x^\ast P)\otimes_\mathbb{Z}\mathbb{Q}$ by rationalising the fibre spectrum at $x$.
But the rational homotopy theory of spectra is encoded by rational chain complexes and Quillen's rational homotopy theory provides a dg Lie algebra $\Lambda_X$ encoding the rational homotopy type of $X$.
One is naturally led to expect that there is some $\Lambda_X$-representation encoding the rational homotopy type of $P$ whose homology is the $\pi^\mathbb{Q}_{\ast+1}(X)$-representation $\mathbold{\theta}_\ast(P)$.

Another way of extracting algebraic data from an $X$-parametrised spectrum $P$ is to consider the image under the pushforward functor $X_!\colon Ho(\mathrm{Sp}_X)\to Ho(\mathrm{Sp})$.
The pushforward $X_! P$ is an $X_+$-comodule spectrum and the rational stable homotopy groups $\pi^\mathrm{st}_\ast (X_! P)\otimes_\mathbb{Z}\mathbb{Q}$ are coacted upon by rational homology coalgebra $H_\bullet (X; \mathbb{Q})$; we write $\mathbold{H}_\bullet^X(P)$ for this comodule.
When $X$ is simply-connected its rational homotopy type is encoded by the dg coalgebra $C_X$.
Since the rational homotopy theory of spectra is encoded by rational chain complexes, it is natural to expect that the rational homotopy type $P$ is encoded by some $C_X$-comodule whose homology is the $H_\bullet(X;\mathbb{Q})$-comodule $\mathbold{H}_\bullet^X(P)$.

We can now state the main result of this article, which lifts Quillen's algebraic models for rational homotopy theory to categories of representations.
\begin{theorem}
\label{thm:RatParStaHom}
Let $X$ be a simply-connected space and let $\Lambda_X$ and $C_X$ be Quillen's rational models for $X$.
Then there are pseudonatural strongly symmetric monoidal equivalences of categories between
\begin{enumerate}[label=(\roman*)]
  \item $(Ho(\mathrm{Sp}_X)_\mathbb{Q}, \wedge_X)$, the rational homotopy category of $X$-parametrised spectra with the fibrewise smash product,
  \item $(Ho(\Lambda_X\mathrm{-Rep}), \otimes^\mathbf{L})$, the derived category of unbounded $\Lambda_X$-representations with the derived tensor product, and
  \item $(Ho(C_X\mathrm{-Comod}_{(t)}), \square^\mathbf{R}_C)$, the $t$-local derived category of unbounded $C_X$-comodules with the cotorsion monoidal structure.
\end{enumerate}
Moreover, the diagrams 
\[
\begin{tikzcd}
Ho(\mathrm{Sp}_X)_\mathbb{Q}
\ar[r, "\sim"]
\ar[dr, bend right =15, "\mathbold{\theta}_\ast"']
&
Ho(\Lambda_X\mathrm{-Rep})
\ar[d, "H_\bullet"]
\\
&
\pi^\mathbb{Q}_{\ast+1}(X)\mathrm{-Rep}
\end{tikzcd}
\qquad
\mbox{ and }
\qquad
\begin{tikzcd}
Ho(\mathrm{Sp}_X)_\mathbb{Q}
\ar[r, "\sim"]
\ar[dr, bend right =15, "\mathbold{H}_\bullet^X"']
&
Ho(C_X\mathrm{-Comod}_{(t)})
\ar[d, "H_\bullet"]
\\
&
H_\bullet(X)\mathrm{-Comod}
\end{tikzcd}
\]
commute up to natural isomorphism.
\end{theorem}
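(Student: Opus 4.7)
The plan is to concatenate the (weakly) monoidal Quillen equivalences of Sections \ref{sec:ParamSpecLoopspace}--\ref{sec:Koszul} into a zig-zag relating rational $X$-parametrised spectra to $\Lambda_X$-representations and then to $C_X$-comodules, and afterwards to extract pseudonaturality and compatibility with the algebraic invariants. After replacing $X$ by a 2-reduced simplicial model, the first half of the zig-zag
\[
\mathrm{Sp}^\Sigma_X \xrightarrow{\sim} \mathbb{G}X_+\mathrm{-Mod} \xrightarrow{\sim} (\mathbb{G}X_+,H\mathbb{Q})\mathrm{-Bimod} \xrightarrow{\sim} \mathbb{Q}[\mathbb{G}X]\mathrm{-Mod}
\]
assembles from Theorem \ref{thm:ParamSpecandLoop}, the interpretation of smashing rationalisation (Lemma \ref{lem:XSpecisSmashinglyRat}, Lemma \ref{lem:RatParamSpec}) as passage to $H\mathbb{Q}$-module spectra fibrewise, and the rectification Quillen equivalence of Theorem \ref{thm:StrictGrpRing}, which is strongly symmetric monoidal by Lemma \ref{lem:RectGrpRing}. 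To bridge into Lie theory I will set $\mathfrak{g}_X := \mathcal{P}\widehat{\mathbb{Q}}[\mathbb{G}X]$ and exploit the Milnor--Moore weak equivalence $\mathcal{U}\mathfrak{g}_X \xrightarrow{\sim} \widehat{\mathbb{Q}}[\mathbb{G}X]$ together with the completion weak equivalence $\mathbb{Q}[\mathbb{G}X] \xrightarrow{\sim} \widehat{\mathbb{Q}}[\mathbb{G}X]$ (valid because $\mathbb{G}X$ is connected for simply-connected $X$); two applications of Lemma \ref{lem:BaseChangeforSimplHopf} then yield a strongly monoidal Quillen equivalence $\mathbb{Q}[\mathbb{G}X]\mathrm{-Mod} \simeq \mathfrak{g}_X\mathrm{-Rep}_\Delta$, to which Theorem \ref{thm:SimpLieMonoidalEquivs} applies to give $\mathfrak{g}_X\mathrm{-Rep}_\Delta \simeq N\mathfrak{g}_X\mathrm{-Rep}_\mathrm{dg} = \Lambda_X\mathrm{-Rep}$.

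For the comodule side, the classical Koszul quasi-isomorphism of dg Hopf algebras $\Omega C_X \to \mathcal{U}\Lambda_X$ combined with Lemma \ref{lem:QIsodgLieBChange} identifies $\Lambda_X\mathrm{-Rep}$ with $\Omega C_X\mathrm{-Mod}$, and Theorem/Definition \ref{thm:KozulQuillen} then supplies the Koszul Quillen equivalence with $C_X\mathrm{-Comod}_{(t)}$. Since each step in the complete zig-zag is at worst weakly monoidal, the induced equivalence of homotopy categories is strongly symmetric monoidal, the cotensor structure being identified by Theorem \ref{thm:DerivedCotensor}. Pseudonaturality in $X$ will be obtained by composing the pseudonaturality statements at each level (Theorem \ref{thm:ParamSpecandLoop}, Lemma \ref{lem:PseudoNatRect}, Lemma \ref{lem:LieRepCompPNat}, Lemma \ref{lem:KoszulNat}); the final clause, when $f$ is a rational equivalence, will follow from the fact that at each stage the associated map of Hopf, Lie, or coalgebra objects is then itself a weak equivalence.

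Compatibility with $\mathbold{\theta}_\ast$ should be comparatively direct: for a point $x\colon \ast \to X$, every Quillen functor in the chain acts on underlying symmetric spectra either as a forgetful functor or as a base change that preserves the rational stable homotopy groups of the fibre. The $\Lambda_X$-representation associated to $P$ therefore has homology the rational Pontrjagin $H_\bullet(\Omega X;\mathbb{Q})$-module on $\pi^\mathrm{st}_\ast(x^\ast P)\otimes_\mathbb{Z}\mathbb{Q}$, which via Milnor--Moore restricts to the Whitehead representation $\mathbold{\theta}_\ast(P)$.

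The hard part is the compatibility with $\mathbold{H}_\bullet^X$, because the pushforward $X_!$ lives naturally on the parametrised side whereas the Chevalley--Eilenberg coalgebra $C_X$ sits at the far end of a long zig-zag, with the cobar construction $\Omega C_X$ intervening. My plan here is to use Brown--Szczarba twisting chains to present the rationalisation of $X_! P$ as the derived tensor product $\mathbb{Q}\otimes^\mathbf{L}_{\mathcal{U}\Lambda_X} M$, where $M$ is the $\Omega C_X$-module attached to $P$ by the chain, and then to use bar-cobar duality $C_X \simeq B\Omega C_X$ to equip the homology of this complex with a natural $C_X$-coaction. Matching that coaction with the $H_\bullet(X;\mathbb{Q})$-coaction on $\pi^\mathrm{st}_\ast(X_!P)\otimes_\mathbb{Z}\mathbb{Q}$ amounts to identifying the rational Eilenberg--Moore spectral sequence of the path-loop fibration inside Quillen's construction; I expect this step to be the most delicate calculation in the whole proof.
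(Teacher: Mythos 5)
Your overall route is the same as the paper's: rationalise fibrewise, pass to loop-group modules, rectify to $\mathbb{Q}[\mathbb{G}X]$-modules, bridge into simplicial Lie representations via the primitives of the completed group ring, apply the stable Dold--Kan machinery of Section \ref{sec:LieRep}, and finish with Koszul duality; the treatment of $\mathbold{\theta}_\ast$ by inspection of fibrewise suspension spectra and of $\mathbold{H}_\bullet^X$ via Brown--Szczarba twisting chains and bar--cobar duality is also what the paper does. However, the "bridge into Lie theory" step as you describe it has two genuine gaps.

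First, you invoke Lemma \ref{lem:BaseChangeforSimplHopf} twice, for the completion $\mathbb{Q}[\mathbb{G}X]\to\widehat{\mathbb{Q}}[\mathbb{G}X]$ and for $\mathcal{U}\mathfrak{g}_X\to\widehat{\mathbb{Q}}[\mathbb{G}X]$. That lemma requires a weak equivalence of simplicial \emph{Hopf} algebras, but $\widehat{\mathbb{Q}}[\mathbb{G}X]$ (and likewise $\widehat{\mathcal{U}}\mathfrak{g}_X$) is not a Hopf algebra object in $(\mathrm{sVect}_\mathbb{Q},\otimes)$: the completed coproduct lands in the completed tensor product and does not factor through $\widehat{\mathbb{Q}}[\mathbb{G}X]\otimes\widehat{\mathbb{Q}}[\mathbb{G}X]$. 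The base change adjunctions are still Quillen equivalences (the argument of Lemma \ref{lem:BaseChangeforSimplHopf} adapts, since only the algebra structure is needed for that), but the \emph{monoidal} comparison across the completion maps does not come for free and requires a separate argument; the paper supplies one by resolving both sides cofibrantly/fibrantly over the two module categories and using Lemma \ref{lem:SmashTensCofibObject} to see that the two tensor products agree on a common zig-zag of weak equivalences in $\mathrm{Sp}^\Sigma(\mathrm{sVect}_\mathbb{Q})$. Without this, the claim that the composite equivalence $Ho(\mathrm{Sp}_X)_\mathbb{Q}\simeq Ho(\Lambda_X\mathrm{-Rep})$ is strongly symmetric monoidal is unsupported.

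Second, you set $\mathfrak{g}_X:=\mathcal{P}\widehat{\mathbb{Q}}[\mathbb{G}X]$ and assert that $\mathcal{U}\mathfrak{g}_X\to\widehat{\mathbb{Q}}[\mathbb{G}X]$ is a weak equivalence "by Milnor--Moore". The map in question factors through the $(\widehat{\mathcal{U}}\dashv\mathcal{P})$-counit, and the only available criterion for it to be a weak equivalence (Quillen's comparison of the augmentation-ideal filtrations, \cite[Proposition I.6.9]{quillen_rational_1969}) applies to maps of \emph{free} reduced complete simplicial Hopf algebras; $\widehat{\mathcal{U}}\mathcal{P}\widehat{\mathbb{Q}}[\mathbb{G}X]$ is not free unless the simplicial Lie algebra of primitives is cofibrant. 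The Quillen equivalence $(\widehat{\mathcal{U}}\dashv\mathcal{P})$ only guarantees that the \emph{derived} counit is an isomorphism, which presupposes a cofibrant replacement. This is why the paper works with $\mathfrak{g}_X=\mathcal{Q}(\mathcal{P}\widehat{\mathbb{Q}}[\mathbb{G}X])$ and proves Lemma \ref{lem:ThetaisWE} using freeness of both sides; you then pay the small price of an extra base change along the quasi-isomorphism $N\mathfrak{g}_X\to\Lambda_X$, which Lemma \ref{lem:QIsodgLieBChange} handles. Your sketches of the $\mathbold{\theta}_\ast$ and $\mathbold{H}_\bullet^X$ compatibilities are in the right direction but should also record that the extension from fibrewise suspension spectra to all of $Ho(\mathrm{Sp}_X)$ uses that the latter is generated by the former under shifts and sequential homotopy colimits, and (for $\mathbold{H}_\bullet^X$) that a general retractive space is reduced to a $2$-reduced one by double fibrewise suspension and desuspension in the stable homotopy category.
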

\begin{proof}
The pseudonatural monoidal equivalences are proven during the course of Section \ref{sec:Equivalences} (the diagram \eqref{masterplan} is a schematic overview of the argument).
The parts of the Theorem pertaining to algebraic invariants are proven in Sections \ref{SSec:Whiteheadrep} and \ref{SSec:Comod} as Theorems \ref{thm:LieRep} and \ref{thm:Comod} respectively.
\end{proof}

\subsection{Equivalences of rational stable homotopy theories}
\label{sec:Equivalences}
Let $X$ be a simply-connected simplicial set.
In order to access strict algebraic models for the rational homotopy type of $X$, we first replace $X$ by a weakly equivalent 2-reduced model.
One way to do this is the following.
Take a Kan fibrant replacement $X\to X'$ and choose $x\in X'$. 
Let $X'' =E_2(X',x)$ be the second Eilenberg subcomplex of $X'$ at $x$, which is the pullback
\[
\begin{tikzcd}
E_2(X',x)
\ar[r]
\ar[d]
\ar[dr, phantom, "\ulcorner", very near start]
&
X'
\ar[d]
\\
\ast 
\ar[r, "x"]
&
\mathrm{cosk}_1 X'\,.
\end{tikzcd}
\]
Then $X''$ is 2-reduced and $X''\to X'$ is a weak equivalence by fibrancy of $X'$ \cite[Theorem 8.4]{may_simplicial_1967}.
The zig-zag of weak equivalences $X\to X'\leftarrow X''$ gives rise to a zig-zag of Quillen equivalences
\[
\begin{tikzcd}
\mathrm{Sp}^\Sigma_X
\ar[rr, "", shift left=1ex]
\ar[rr, leftarrow, "\bot", shift left=-1ex]
&&
\mathrm{Sp}^\Sigma_{X'}
\ar[rr, leftarrow,"", shift left=1ex]
\ar[rr,  "\bot", shift left=-1ex]
&&
\mathrm{Sp}^\Sigma_{X''}\,.
\end{tikzcd}
\]
Since we are concerned with the homotopy category $Ho(\mathrm{Sp}^\Sigma_{X})$ and its rationalisation, we may take $X$ to be 2-reduced without loss of generality. 

Let $\mathcal{Q}$ be a cofibrant replacement functor on $\mathrm{sLie}_{\geq 1}$ and
for a 2-reduced simplicial set $X$ write
\begin{equation}
\label{eqn:CofibRepl}
\varrho_X\colon \mathfrak{g}_X = \mathcal{Q}(\mathcal{P}\widehat{\mathbb{Q}}[\mathbb{G} X])\xrightarrow{\;\;\sim\;\;} \mathcal{P}\widehat{\mathbb{Q}}[\mathbb{G}X]
\end{equation}
for the corresponding cofibrant resolution.
The assignment $X\mapsto \mathfrak{g}_X$ is functorial and gives rise to a quasi-isomorphism of dg Lie algebras $N\mathfrak{g}_X\to \Lambda_X$ after taking normalised chains.

The main goal of this section is to show that for every 2-reduced simplicial set $X$, there is a diagram of Quillen equivalences (arrows are left Quillen functors)
\begin{equation}
\label{masterplan}
\begin{tikzcd}
H\mathbb{Q}\mathrm{-Mod}_{X}
\ar[r,
"{
\hyperlink{1stStep}{\textcircled{\raisebox{0.3pt}{\tiny{1}}}}
}"
]
&
\widehat{
\mathbb{Q}}[\mathbb{G}X]\mathrm{-Mod}
\ar[r, leftarrow, 
"{
\hyperlink{2ndStep}{\textcircled{\raisebox{0.3pt}{\tiny{2}}}}
}"
]
&
N\mathfrak{g}_X\mathrm{-Rep}^\Sigma_\mathrm{dg}
\ar[r, "\hyperlink{3rdStep}{\textcircled{\raisebox{0.3pt}{\tiny{3}}}}"]
&
N\mathfrak{g}_X\mathrm{-Rep}_\mathrm{dg}
\ar[d,
"\hyperlink{4thStep}{\textcircled{\raisebox{0.3pt}{\tiny{4}}}}"
]
\\
&
C_X\mathrm{-Comod}_{(t)}
\ar[r, "\hyperlink{6thStep}{\textcircled{\raisebox{0.3pt}{\tiny{6}}}}"]
&
\mathcal{L}C_X\mathrm{-Rep}_\mathrm{dg}
\ar[r, "\hyperlink{5thStep}{\textcircled{\raisebox{0.3pt}{\tiny{5}}}}"]
&
\Lambda_X\mathrm{-Rep}_\mathrm{dg}
\end{tikzcd}
\end{equation}
inducing strongly symmetric monoidal equivalences of homotopy categories.
We argue in steps as indicated by the labels in the diagram.

\paragraph*{\raisebox{0.5pt}{{\textcircled{\raisebox{0.3pt}{\tiny{1}}}}}\,-\,\hypertarget{1stStep}{Rational representations of the loop group}.}
As $X$ is 2-reduced, the Kan simplicial loop group $\mathbb{G}X$ is reduced.
Taking rational chains, the rational group ring $\mathbb{Q}[\mathbb{G}X]$ is itself reduced and hence canonically augmented.
The Kan simplicial loop group $\mathbb{G}X$ is free by construction and hence $\mathbb{Q}[\mathbb{G}X]$ is a free simplicial algebra.
Completion at the augmentation ideal
\begin{equation}
\label{eqn:CompletionAtAugIdeal}
\kappa_X
\colon \mathbb{Q}[\mathbb{G}X]\longrightarrow
\widehat{\mathbb{Q}}[\mathbb{G}X]
\end{equation}
is then a weak equivalence by \cite[Theorem I.3.6]{quillen_rational_1969}.
Note that $\widehat{\mathbb{Q}}[\mathbb{G}X]$ is \emph{not} a Hopf algebra in simplicial rational vector spaces as the completed coproduct
\[
\widehat{\mathbb{Q}}[\mathbb{G}X]\longrightarrow 
(\mathbb{Q}[\mathbb{G}X]\otimes \mathbb{Q}[\mathbb{G}X])\,\,{\widehat{}}\,
\cong
\widehat{\mathbb{Q}}[\mathbb{G}X]\,\widehat{\otimes}\,
\widehat{\mathbb{Q}}[\mathbb{G}X]
\]
does not factor through the inclusion $\widehat{\mathbb{Q}}[\mathbb{G}X]\otimes
\widehat{\mathbb{Q}}[\mathbb{G}X]\hookrightarrow (\mathbb{Q}[\mathbb{G}X]\otimes \mathbb{Q}[\mathbb{G}X])\,\,{\widehat{}}$.
Nevertheless, 
$\widehat{\mathbb{Q}}[\mathbb{G}X]$ is still an algebra object in $\mathrm{sVect}_\mathbb{Q}$ and the $\Sigma^\infty \widehat{\mathbb{Q}}[\mathbb{G}X]$-modules in $\mathrm{Sp}^\Sigma(\mathrm{sVect}_\mathbb{Q})$ organise into a stable model category $\widehat{\mathbb{Q}}[\mathbb{G}X]\mathrm{-Mod}$.
The same argument as in Lemma \ref{lem:BaseChangeforSimplHopf} shows that the base change adjunction associated to $\kappa_X$ is a Quillen equivalence.
Combined with Lemmas \ref{lem:RatParamSpec} and \ref{lem:RectGrpRing} we get a composite Quillen equivalence
\[
\begin{tikzcd}
H\mathbb{Q}\mathrm{-Mod}_{X}
\ar[r, shift left=1ex, "\mathfrak{f}_X"]
\ar[r, leftarrow, shift left=-1ex, "\mathfrak{b}_X"', "\bot"]
&
(\mathbb{G}X_+,H\mathbb{Q})\mathrm{-Bimod}
\ar[r, shift left=1ex, "\mathfrak{R}_{\mathbb{G}X}"]
\ar[r, leftarrow, shift left=-1ex, "\mathfrak{M}_{\mathbb{G}X}"', "\bot"]
&
\mathbb{Q}[\mathbb{G}X]\mathrm{-Mod}
\ar[r, shift left=1ex, "(\kappa_X)_!"]
\ar[r, leftarrow, shift left=-1ex, "\kappa_X^\ast"', "\bot"]
&
\widehat{\mathbb{Q}}[\mathbb{G}X]\mathrm{-Mod}\,,
\end{tikzcd}
\]
where left-hand and middle adjunctions induce symmetric monoidal equivalences of homotopy categories (by Remark \ref{rem:FibrewiseRatSmash} and Lemma \ref{lem:RectGrpRing} respectively). 

\paragraph*{\raisebox{0.5pt}{{\textcircled{\raisebox{0.4pt}{\tiny{2}}}}}\,-\,\hypertarget{2ndStep}{Comparison with simplicial Lie representations}.}
The map $\mathfrak{g}_X\to \mathcal{P}\widehat{\mathbb{Q}}[\mathbb{G}X]$ is the $(\widehat{\mathcal{U}}\dashv \mathcal{P})$-adjunct of a morphism of complete simplicial Hopf algebras
\[
\theta_X\colon \widehat{\mathcal{U}}\mathfrak{g}_X\longrightarrow \widehat{\mathbb{Q}}[\mathbb{G}X]\,.
\]
As $\mathfrak{g}_X$ is cofibrant, hence free, $\widehat{\mathcal{U}}\mathfrak{g}_X$ is a \lq\lq free'' completed simplicial Hopf algebra in the sense of Quillen (see \cite[Example A.2.11]{quillen_rational_1969}).
The completed simplicial Hopf algebra $\widehat{\mathbb{Q}}[\mathbb{G}X]$ is also free in this sense since $\mathbb{Q}[\mathbb{G}X]$ is a free algebra in the usual sense.
\begin{lemma}
\label{lem:ThetaisWE}
$\theta_X$ is a natural weak homotopy equivalence.
\end{lemma}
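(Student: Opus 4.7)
The plan is to recognise $\theta_X$ as the $(\widehat{\mathcal{U}}\dashv \mathcal{P})$-adjunct of the cofibrant replacement map $\varrho_X$ and then invoke the defining property of a Quillen equivalence. Indeed, unwinding the adjunction, $\theta_X$ factors as
\[
\widehat{\mathcal{U}}\mathfrak{g}_X \xrightarrow{\;\widehat{\mathcal{U}}\varrho_X\;} \widehat{\mathcal{U}}\mathcal{P}\widehat{\mathbb{Q}}[\mathbb{G}X]\xrightarrow{\;\varepsilon\;}\widehat{\mathbb{Q}}[\mathbb{G}X],
\]
where $\varepsilon$ is the $(\widehat{\mathcal{U}}\dashv \mathcal{P})$-counit, so $\theta_X$ represents the derived counit of the Quillen equivalence at the object $\widehat{\mathbb{Q}}[\mathbb{G}X]$.

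To apply the standard Quillen-equivalence criterion we need (i) cofibrancy of $\mathfrak{g}_X$ in $\mathrm{sLie}_{\geq 1}$ and (ii) fibrancy of $\widehat{\mathbb{Q}}[\mathbb{G}X]$ in $\mathrm{scHopf}_{\geq 1}$. Condition (i) holds tautologically, since $\mathfrak{g}_X = \mathcal{Q}(\mathcal{P}\widehat{\mathbb{Q}}[\mathbb{G}X])$ is obtained by applying the chosen cofibrant replacement functor $\mathcal{Q}$. For (ii), recall that fibrations in $\mathrm{scHopf}_{\geq 1}$ are created by $\mathcal{P}$ and fibrations in $\mathrm{sLie}_{\geq 1}$ are created by the underlying-simplicial-set functor. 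Since the simplicial set underlying any simplicial Lie algebra is an abelian simplicial group and therefore a Kan complex, every object of $\mathrm{sLie}_{\geq 1}$ is fibrant and hence so is every object of $\mathrm{scHopf}_{\geq 1}$, in particular $\widehat{\mathbb{Q}}[\mathbb{G}X]$. Since $\varrho_X$ is a weak equivalence between a cofibrant and a fibrant object, the defining property of a Quillen equivalence yields that $\theta_X$ is a weak equivalence.

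Naturality in $X$ follows from the evident functoriality of the ingredients: Kan's simplicial loop group $\mathbb{G}$, the completed group-ring functor $\widehat{\mathbb{Q}}[-]$, the primitives functor $\mathcal{P}$ (together with its counit $\varepsilon$), and the functorial cofibrant replacement $\mathcal{Q}$ all produce, for each map $f\colon X\to Y$ of $2$-reduced simplicial sets, a commutative square whose $(\widehat{\mathcal{U}}\dashv \mathcal{P})$-adjunct is the naturality square for $\theta$. There is no serious technical obstacle in the argument; the only care required is to arrange $\mathcal{Q}$ to be a functorial cofibrant replacement, which is standard because $\mathrm{sLie}_{\geq 1}$ admits functorial factorisations via the small object argument.
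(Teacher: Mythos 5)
Your argument correctly identifies $\theta_X$ as the composite $\widehat{\mathcal{U}}\varrho_X$ followed by the $(\widehat{\mathcal{U}}\dashv\mathcal{P})$-counit, and correctly deduces from the Quillen equivalence (cofibrancy of $\mathfrak{g}_X$, fibrancy of $\widehat{\mathbb{Q}}[\mathbb{G}X]$) that $\theta_X$ is a weak equivalence \emph{in the model category $\mathrm{scHopf}_{\geq 1}$}. But there is a gap: weak equivalences in $\mathrm{scHopf}_{\geq 1}$ are created by the primitives functor $\mathcal{P}$, not by the forgetful functor to simplicial sets. So what you have actually proven is that $\mathcal{P}\theta_X$ is a weak equivalence of simplicial Lie algebras. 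The lemma asserts that $\theta_X$ is a weak \emph{homotopy} equivalence, i.e.\ a weak equivalence of underlying simplicial sets, and this stronger statement is what is needed downstream: the base change adjunction $((\theta_X)_!\dashv\theta_X^\ast)$ between module categories is a Quillen equivalence only if $\theta_X$ is an underlying weak equivalence (compare the argument for Lemma \ref{lem:BaseChangeforSimplHopf}, whose hypothesis is an underlying weak equivalence of the (Hopf) algebras, used to show the derived unit at the compact generator $\Sigma^\infty_{\mathbb{Q}}H$ is an isomorphism).

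Bridging this gap is the real content of the lemma and occupies the second half of the paper's proof. One filters a reduced complete simplicial Hopf algebra by powers of its augmentation ideal, obtaining a spectral sequence $E^1_{p,q}=\pi_{p+q}(\mathrm{gr}_q A)\Rightarrow \pi_{p+q}(A)$ which is first-quadrant, hence convergent, when $A$ is \emph{free}; for free $A$ one moreover has $\mathrm{gr}_pA\cong T^p\mathrm{gr}_1A$. Quillen's Proposition I.6.9 shows that for a map $\psi$ of free reduced complete simplicial Hopf algebras, $\mathcal{P}\psi$ is a weak equivalence if and only if $\mathrm{gr}_1\psi$ is; the K\"unneth theorem and Zeeman's comparison theorem applied to the map of spectral sequences then upgrade this to $\psi$ itself being an underlying weak equivalence. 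Both ends of $\theta_X$ are free in the required sense ($\widehat{\mathcal{U}}\mathfrak{g}_X$ because $\mathfrak{g}_X$ is cofibrant hence free, and $\widehat{\mathbb{Q}}[\mathbb{G}X]$ because $\mathbb{G}X$ is Kan's free simplicial loop group), so the argument applies. Without some version of this freeness-plus-spectral-sequence step — which is essentially an algebraic mod-torsion Hurewicz theorem — the conclusion does not follow from the Quillen equivalence alone.
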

\begin{proof}
We first show that $\mathcal{P}\theta_X$ is a weak equivalence of simplicial Lie algebras of primitives.
The morphism $\theta_X$ factors via the $(\widehat{\mathcal{U}}\dashv \mathcal{P})$-counit as
\[
\theta_X\colon \widehat{\mathcal{U}}\mathfrak{g}_X\longrightarrow
\widehat{\mathcal{U}}\mathcal{P}\widehat{\mathbb{Q}}[\mathbb{G}X]
\longrightarrow 
\widehat{\mathbb{Q}}[\mathbb{G}X]\,.
\]
Applying the functor $\mathcal{P}$ and using the triangle identity of the adjunction, we get a commuting diagram
\[
\begin{tikzcd}
\mathcal{P}\widehat{\mathcal{U}}\mathfrak{g}_X
\ar[r]
\ar[rr, bend left=20, "\mathcal{P}\theta_X"]
\ar[d, leftarrow]
&
\mathcal{P}\widehat{\mathcal{U}}\mathcal{P}\widehat{\mathbb{Q}}[\mathbb{G}X]
\ar[r]
\ar[d, leftarrow]
&
\mathcal{P}\widehat{\mathbb{Q}}[\mathbb{G}X]
\\
\mathfrak{g}_X
\ar[r, "\sim"]
&
\mathcal{P}\widehat{\mathbb{Q}}[\mathbb{G}X]
\ar[ur, equal]
\end{tikzcd}
\]
where the vertical arrows are components of the $(\widehat{\mathcal{U}}\dashv \mathcal{P})$-unit.
Cofibrancy of $\mathfrak{g}_X$ implies that the left-hand vertical arrow is a weak equivalence, from which  it follows from the 2-out-of-3 property that the same is true of $\mathcal{P}\theta_X$.

Any reduced complete simplicial Hopf algebra $A\in \mathrm{scHopf}_{\geq 1}$ is canonically augmented.
Taking powers of the augmentation ideal gives rise to a filtration on $A$ and a homologically graded spectral sequence 
\[
E^1_{p,q}=\pi_{p+q} (\mathrm{gr}_q A)\Longrightarrow \pi_{p+q} (A)\,.
\]
If $A$ is free this spectral sequence is concentrated in the first quadrant and hence convergent.
Using the induced filtration on $\mathcal{P}A$,
Quillen proves \cite[Proposition I.6.9]{quillen_rational_1969}
 that the following conditions on a map $\psi\colon A \to B$ of free reduced simplicial complete Hopf algebras are equivalent:
\begin{itemize}
  \item  $\mathrm{gr}_1\psi$ is a weak equivalence,
  \item $\mathcal{P}\psi$ is a weak equivalence.
\end{itemize}
For $A$ free we find that $\mathrm{gr}_p A$ is the $p$-th tensor power $T^p \mathrm{gr}_1 A$.
With Zeeman's comparison theorem and the K\"{u}nneth theorem, the map of spectral sequences induced by $\psi$ shows that $\mathrm{gr}_1\psi$ is a weak equivalence if and only if $\psi$ is.

The result now follows from the fact that $\mathcal{P}\theta_X$ is a weak equivalence, since both the domain and codomain of $\theta_X$ are free. 
\end{proof}

\begin{remark}
If $A=\widehat{\mathbb{Q}}[G]$ is the completed rational group ring of a connected simplicial group then the Lie algebra of primitives $\mathcal{P}A$ models the the Whitehead Lie algebra of $BG$.
With this in mind, the above spectral sequence argument can be interpreted as an algebraic incarnation of the mod-torsion Hurewicz theorem.
\end{remark}

Completion at the augmentation ideal $\kappa_{\mathfrak{g}_X}\colon \mathcal{U} \mathfrak{g}_X\to \widehat{\mathcal{U}}\mathfrak{g}_X$ is a weak equivalence of rational simplicial algebras by \cite[Theorem I.3.6]{quillen_rational_1969}.
 The weak equivalences $\kappa_{\mathfrak{g}_X}$ and $\theta_X$ induce base change Quillen equivalences and together with Theorem \ref{thm:StabDKLie} this implies a composite Quillen equivalence
\[
\begin{tikzcd}
N\mathfrak{g}_X\mathrm{-Rep}_\mathrm{dg}^\Sigma
\ar[r, shift left=1ex, "N_!^\mathfrak{g}"]
\ar[r, shift left=-1ex, leftarrow, "\bot", "N^\ast_\mathfrak{g}"']
&
\mathfrak{g}_X\mathrm{-Rep}_\Delta
=\mathcal{U}\mathfrak{g}_X\mathrm{-Mod}
\ar[r, shift left=1ex, "(\kappa_{\mathfrak{g}_X})_!"]
\ar[r, shift left=-1ex, leftarrow, "\bot", "\kappa_{\mathfrak{g}_X}^\ast"']
&
\widehat{\mathcal{U}}\mathfrak{g}_X\mathrm{-Mod}
\ar[r, shift left=1ex, "(\theta_X)_!"]
\ar[r, shift left=-1ex, leftarrow, "\bot", "\theta_X^\ast"']
&
\widehat{\mathbb{Q}}[\mathbb{G}X]\mathrm{-Mod}\,.
\end{tikzcd}
\]
The first of these Quillen equivalences is weakly monoidal by Theorem \ref{thm:SimpLieMonoidalEquivs}.

\paragraph*{Monoidal matters.}
We must now pause to clarify how the zig-zag of Quillen equivalences
\[
\begin{tikzcd}
H\mathbb{Q}\mathrm{-Mod}_X
\ar[rr, shift left=1ex]
\ar[rr, leftarrow, shift left=-1ex, "\bot"]
&&
\widehat{\mathbb{Q}}[\mathbb{G}X]\mathrm{-Mod}
\ar[rr, leftarrow, shift left=1ex]
\ar[rr, shift left=-1ex, "\bot"]
&&
N\mathfrak{g}_X\mathrm{-Rep}_\mathrm{dg}^\Sigma
\end{tikzcd}
\]
obtained thus far induces strongly monoidal equivalences of homotopy categories.
The issue is that the completed simplicial Hopf algebras $\widehat{\mathbb{Q}}[\mathbb{G}X]$ and $\widehat{\mathcal{U}}\mathfrak{g}_X$ are \emph{not} Hopf algebras in simplicial vector spaces (the coproduct involves the completed tensor product in an essential way) so we cannot apply the results of Section \ref{ssec:RectandTens}.
Nonetheless, we have the following
\begin{proposition}
The zig-zag of Quillen equivalences
\[
\begin{tikzcd}
\mathcal{U}\mathfrak{g}_X\mathrm{-Mod}
\ar[rr, shift left=1ex, "(\kappa_{\mathfrak{g}_X})_!"]
\ar[rr, shift left=-1ex, leftarrow, "\kappa_{\mathfrak{g}_X}^\ast"', "\bot"]
&&
\widehat{\mathcal{U}}\mathfrak{g}_X\mathrm{-Mod}
\ar[rr, shift left=1ex, "(\theta_X)_!"]
\ar[rr, shift left=-1ex, leftarrow, "\theta_X^\ast"', "\bot"]
&&
\widehat{\mathbb{Q}}[\mathbb{G}X]\mathrm{-Mod}
\ar[rr, leftarrow, shift left=1ex, "(\kappa_X)_!"]
\ar[rr, shift left=-1ex, "\bot", "\kappa_X^\ast"']
&&
\mathbb{Q}[\mathbb{G}X]\mathrm{-Mod}
\end{tikzcd}
\]
identifies the symmetric monoidal structures of $Ho(\mathfrak{g}_X\mathrm{-Rep}_\Delta)$ and $Ho(\mathbb{Q}[\mathbb{G}X]\mathrm{-Mod})$.
\end{proposition}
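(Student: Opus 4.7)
The plan is to exhibit the composite derived equivalence as strongly symmetric monoidal by constructing, for each left Quillen functor in the zig-zag, an oplax monoidal transformation on cofibrant objects and verifying that each such transformation is a weak equivalence. By Lemma \ref{lem:SpectralHopfMod} both outer categories carry symmetric monoidal model structures induced from the honest simplicial Hopf algebras $\mathcal{U}\mathfrak{g}_X$ and $\mathbb{Q}[\mathbb{G}X]$; in each case the tensor product is computed by the underlying tensor product in $\mathrm{Sp}^\Sigma(\mathrm{sVect}_\mathbb{Q})$ equipped with the diagonal action coming from the (uncompleted) coproduct, and the forgetful functors to $\mathrm{Sp}^\Sigma(\mathrm{sVect}_\mathbb{Q})$ are strict symmetric monoidal.

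For each of the three base change functors in the zig-zag, the oplax comparison $\phi_!(M \otimes N)\to \phi_!(M)\otimes \phi_!(N)$ can be constructed at the level of underlying symmetric spectrum objects, using that on free modules of the form $A\otimes V$ extension of scalars is simply the identity on $V$ after applying the forgetful functor. Following the template of Lemma \ref{lem:BaseChangeforSimplHopf}, I would verify that these oplax transformations are weak equivalences first on domains and codomains of generating cofibrations (where the argument reduces to the evident dg version of Corollary \ref{cor:SimplicialHopfMod}), then extend to all cofibrant objects via Kan's cube lemma and transfinite induction. The unit axiom I would handle by applying the bar-construction resolution of Construction \ref{cons:HopfCofibUnit} to both $\mathcal{U}\mathfrak{g}_X$ and $\mathbb{Q}[\mathbb{G}X]$ and tracking it along the zig-zag, identifying the image of each resolution with a cofibrant replacement of the monoidal unit in the next category.

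The hard part will be controlling the diagonal action for the base change $(\theta_X)_!$, since $\theta_X$ is only a morphism of \emph{complete} simplicial Hopf algebras rather than a Hopf morphism in $\mathrm{sVect}_\mathbb{Q}$, and consequently the middle categories $\widehat{\mathcal{U}}\mathfrak{g}_X\mathrm{-Mod}$ and $\widehat{\mathbb{Q}}[\mathbb{G}X]\mathrm{-Mod}$ do not inherit a monoidal structure directly from Lemma \ref{lem:SpectralHopfMod}. The key input is that $\theta_X$ is a weak equivalence (Lemma \ref{lem:ThetaisWE}), so that on the associated graded for the augmentation-ideal filtration the induced map $\mathrm{gr}\,\theta_X$ is a weak equivalence of graded Hopf algebras in $\mathrm{sVect}_\mathbb{Q}$, and the completed coproducts restrict, filtration-stage by filtration-stage, to genuine uncompleted coproducts on the associated graded pieces. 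This permits the diagonal $\widehat{\mathbb{Q}}[\mathbb{G}X]$-action and $\widehat{\mathcal{U}}\mathfrak{g}_X$-action on tensor products of free cofibrant modules to be compared modulo filtration using the convergent spectral sequence employed in the proof of Lemma \ref{lem:ThetaisWE}; Zeeman's comparison theorem then yields the required weak equivalence at the generating-cofibration level, after which the inductive and transfinite arguments sketched above complete the proof.
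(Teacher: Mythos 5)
Your strategy founders at exactly the point you flag as ``the hard part,'' and the proposed repair does not work. The categories $\widehat{\mathcal{U}}\mathfrak{g}_X\mathrm{-Mod}$ and $\widehat{\mathbb{Q}}[\mathbb{G}X]\mathrm{-Mod}$ carry \emph{no} symmetric monoidal structure at all: for free modules $\widehat{A}\otimes V$ and $\widehat{A}\otimes W$ the would-be diagonal action on $\widehat{A}\otimes\widehat{A}\otimes V\otimes W$ requires the coproduct to land in the uncompleted tensor square $\widehat{A}\otimes\widehat{A}$, whereas it only lands in $\widehat{A}\,\widehat{\otimes}\,\widehat{A}$. Passing to the associated graded of the augmentation-ideal filtration does recover honest coproducts on graded pieces, but an action on $\mathrm{gr}(M\otimes N)$ is not an action on $M\otimes N$, so there is no object in the middle categories for your oplax comparison maps to be maps \emph{of}. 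Consequently the plan ``verify that each left Quillen functor in the zig-zag is weakly monoidal'' is not well posed for the two inner adjunctions, and no amount of Zeeman comparison on spectral sequences restores the missing structure.

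The proof in the paper sidesteps this entirely by never monoidalising the middle categories. Given cofibrant $A,B\in\mathbb{Q}[\mathbb{G}X]\mathrm{-Mod}$, one takes fibrant replacements $(\kappa_X)_!A\to A^f$ and $(\kappa_X)_!B\to B^f$ in $\widehat{\mathbb{Q}}[\mathbb{G}X]\mathrm{-Mod}$ and then cofibrant replacements $A^c\to A^f$, $B^c\to B^f$ in $\mathcal{U}\mathfrak{g}_X\mathrm{-Mod}$ (restricting along $\theta_X$ and $\kappa_{\mathfrak{g}_X}$). All tensor products are computed in $\mathrm{Sp}^\Sigma(\mathrm{sVect}_\mathbb{Q})$, where Lemma \ref{lem:SmashTensCofibObject} gives a zig-zag of underlying stable weak equivalences
\[
(\kappa_X)_!(A\otimes B)\longleftarrow A\otimes B\longrightarrow A^f\otimes B^f\longleftarrow A^c\otimes B^c\,,
\]
with the left two arrows equivariant for the honest Hopf algebra $\mathbb{Q}[\mathbb{G}X]$ and the right arrow equivariant for the honest Hopf algebra $\mathcal{U}\mathfrak{g}_X$. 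This identifies the two monoidal bifunctors on the homotopy categories directly, and the unit is handled by a formal argument. If you want to salvage your outline, you should replace the ``weakly monoidal at every stage'' scheme by this direct comparison of the two endpoint tensor products; your observations about the outer categories (Lemma \ref{lem:SpectralHopfMod}, strict monoidality of the forgetful functors) remain correct and are indeed the inputs the paper uses.
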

\begin{proof}
For cofibrant objects $A, B \in \mathbb{Q}[\mathbb{G}X]\mathrm{-Mod}$ take fibrant resolutions
\[
\begin{tikzcd}
(\kappa_X)_! A 
\ar[r, rightarrowtail, "\sim"]
&
A^f
\ar[r, two heads]
&
0
\end{tikzcd}
\qquad
\mbox{ and }
\qquad
\begin{tikzcd}
(\kappa_X)_! B
\ar[r, rightarrowtail, "\sim"]
&
B^f
\ar[r, two heads]
&
0
\end{tikzcd}
\]
in $\widehat{\mathbb{Q}}[\mathbb{G}X]\mathrm{-Mod}$.
The forgetful functor $\widehat{\mathbb{Q}}[\mathbb{G}X]\mathrm{-Mod}\to \mathrm{Sp}^\Sigma(\mathrm{sVect}_\mathbb{Q})$ preserves cofibrations, fibrations and weak equivalences from which it follows that
$
A\to (\kappa_X)_! A
\to A^f
$
is a sequence of stable weak equivalences of cofibrant objects of $\mathrm{Sp}^\Sigma(\mathrm{sVect}_\mathbb{Q})$ compatible with $\mathbb{Q}[\mathbb{G}X]$-actions,
and likewise for $B$.
Applying Lemma \ref{lem:SmashTensCofibObject} we get a stable weak equivalence of $\mathbb{Q}[\mathbb{G}X]$-module spectra $A\otimes B\to A^f\otimes B^f$.

On the other hand, let $A^c \to \kappa^\ast_{\mathfrak{g}_X}\theta^\ast_X A^f = A^f$ be a cofibrant resolution in $\mathcal{U}\mathfrak{g}_X\mathrm{-Mod}$, similarly $B^c \to B^f$.
Then the same argument as above shows that $A^c \otimes B^c \to A^f \otimes B^f$ is a stable weak equivalence of $\mathcal{U}\mathfrak{g}_X$-module spectra.
We now have a zig-zag of stable weak equivalences in $\mathrm{Sp}^\Sigma(\mathrm{sVect}_\mathbb{Q})$
\[
(\kappa_X)_! (A\otimes B)
\longleftarrow
A\otimes B
\longrightarrow 
A^f\otimes B^f
\longleftarrow
A^c\otimes B^c
\]
in which the left-hand and middle arrows commute with $\mathbb{Q}[\mathbb{G}X]$-actions and the right-hand arrow commutes with $\mathcal{U}\mathfrak{g}_X$-actions (we have omitted explicit reference to forgetful functors in the above).
Hence the zig-zag of Quillen equivalences identifies the monoidal product bifunctors on $Ho(\mathbb{Q}[\mathbb{G}X]\mathrm{-Mod})$ and $Ho(\mathfrak{g}_X\mathrm{-Rep}_\Delta)$. A purely formal argument shows that the composite derived functor $Ho(\mathbb{Q}[\mathbb{G}X]\mathrm{-Mod})\to Ho(\mathfrak{g}_X\mathrm{-Rep}_\Delta)$ preserves the monoidal unit up to natural isomorphism, which proves the claim.
\end{proof}

\paragraph*{\raisebox{0.5pt}{{\textcircled{\raisebox{0.4pt}{\tiny{3}}}}}\,-\,\hypertarget{3rdStep}{Assembly}.}
Use the assembly functor $\mathfrak{A}_{N\mathfrak{g}_X}\colon N\mathfrak{g}_X \mathrm{-Rep}_\mathrm{dg}^\Sigma\to N\mathfrak{g}_X\mathrm{-Rep}_\mathrm{dg}$, which participates in a strongly monoidal Quillen equivalence (Theorem \ref{thm:SimpLieMonoidalEquivs}).

\paragraph*{\raisebox{0.5pt}{{\textcircled{\raisebox{0.4pt}{\tiny{4}}}}}\,-\,\hypertarget{4thStep}{Comparing Lie models}.}
The normalisation functor sends the weak equivalence of simplicial Lie algebras $\mathfrak{g}_X\to \mathcal{P}\widehat{\mathbb{Q}}[\mathbb{G}X]$ to a quasi-isomorphism of dg Lie algebras $N\mathfrak{g}_X\to \Lambda_X$.
By Lemma \ref{lem:QIsodgLieBChange} there is a weakly monoidal Quillen equivalence
between
$
N\mathfrak{g}_X\mathrm{-Rep}_\mathrm{dg}
$ and
$ 
\Lambda_X\mathrm{-Rep}_\mathrm{dg}
$.

\paragraph*{\raisebox{0.5pt}{{\textcircled{\raisebox{0.4pt}{\tiny{5}}}}}\,-\,\hypertarget{5thStep}{Comparing Lie models, redux}.}
The counit of the adjunction $(\mathcal{L}\dashv \mathcal{C})\colon \mathrm{dgCoalg}_{\geq 2}\to \mathrm{dgLie}_{\geq 1}$ is a natural quasi-isomorphism.
The component of the counit at $\Lambda_X$ is $\mathcal{L}\mathcal{C}\Lambda_X = \mathcal{L}C_X\to \Lambda_X$.
Lemma \ref{lem:QIsodgLieBChange} provides the desired weakly monoidal Quillen equivalence.

\paragraph*{\raisebox{0.5pt}{{\textcircled{\raisebox{0.4pt}{\tiny{6}}}}}\,-\,\hypertarget{6thStep}{Koszul duality}.}
The cobar construction of $C_X$ is canonically isomorphic to the universal enveloping algebra $\mathcal{U}\mathcal{L}C_X$ (essentially by construction of $\mathcal{L}$, see \cite[Proposition B.6.2]{quillen_rational_1969}).
Forming twisted extensions and coextensions with respect to the universal twisting chain $t\colon C_X\rightsquigarrow \Omega C_X$ gives a Quillen equivalence
\[
\begin{tikzcd}
C_X\mathrm{-Comod}_{(t)}
\ar[rr, "t^!", shift left=1ex]
\ar[rr, "t_\ast"', "\bot", leftarrow, shift left=-1ex]
&&
\Omega C_X\mathrm{-Mod} = \mathcal{L}C_X\mathrm{-Rep}_\mathrm{dg}
\end{tikzcd}
\]
by
 Theorem \ref{thm:KozulQuillen} which induces a strongly symmetric monoidal equivalence of homotopy categories (by the proof of Theorem \ref{thm:DerivedCotensor}).

\paragraph*{Pseudonaturality.}
The assignment
\[
X
\longmapsto
\left(
\!
\begin{tikzcd}[sep =small]
H\mathbb{Q}\mathrm{-Mod}_{X}
\ar[r]
&
\widehat{
\mathbb{Q}}[\mathbb{G}X]\mathrm{-Mod}
\ar[r, leftarrow]
&
N\mathfrak{g}_X\mathrm{-Rep}_\mathrm{dg}^\Sigma
\ar[r]
&
N\mathfrak{g}_X\mathrm{-Rep}_\mathrm{dg}
\ar[d]
\\
&
C_X\mathrm{-Comod}_{(t)}
\ar[r]
&
\mathcal{L}C_X\mathrm{-Rep}_\mathrm{dg}
\ar[r]
&
\Lambda_X\mathrm{-Rep}_\mathrm{dg}
\end{tikzcd}
\!
\right)
\]
is pseudonatural in the sense that a map of 2-reduced simplicial sets $f\colon X\to Y$ induces Quillen adjunctions at each stage of the above diagram of model categories.
These Quillen adjunctions are such that all possible squares of left Quillen functors commute up to natural isomorphism.
To show this, one uses functoriality of the various constructions involved (eg~$\mathbb{Q}[\mathbb{G}X]$, $\mathfrak{g}_X$, and $C_X$) together with Lemmas \ref{lem:RatParamSpec}, \ref{lem:PseudoNatRect}, \ref{lem:LieRepCompPNat}, and \ref{lem:KoszulNat}.
The precise details are left to the reader.
\begin{remark}
If $f\colon X\to Y$ is a rational homotopy equivalence of 2-reduced simplicial sets then the diagrams of model categories \eqref{masterplan} associated to $X$ and $Y$ are pseudonaturally Quillen equivalent.
\end{remark}

\subsection{Representations of the Whitehead Lie algebra}
\label{SSec:Whiteheadrep}
In this section we prove the part of Theorem \ref{thm:RatParStaHom} relating the rational stable homotopy groups of stable homotopy fibres of parametrised spectra to representations of the rational Whitehead Lie algebra.

Consider first the case of fibrewise suspension spectra over 2-reduced base spaces. 
Unwinding the definitions, we find that the composite left Quillen functor
\[
\begin{tikzcd}[sep =large]
\mathrm{sSet}_{\dslash X}
\ar[r,
"\Sigma^\infty_X"]
&
\mathrm{Sp}^\Sigma_X
\ar[r, "H\mathbb{Q}\owedge_X (-)"]
&
H\mathbb{Q}\mathrm{-Mod}_X
\ar[r, "\mathfrak{R}_{\mathbb{G}X}\circ \mathfrak{f}_X"]
&
\mathbb{Q}[\mathbb{G}X]\mathrm{-Mod}
\end{tikzcd}
\]
sends the retractive space $Y\in\mathrm{sSet}_{\dslash X}$ to $\Sigma^\infty \widetilde{\mathbb{Q}}[\mathbb{P}X_!\pi^\ast_X Y]$.
Recall that for an $X$-parametrised spectrum $P$, we write $\mathbold{\theta}_\ast (P)$ for the rational homology of the stable homotopy fibre  as a $\pi^\mathbb{Q}_{\ast+1}(X)$-representation.
\begin{lemma}
\label{lem:FibSusSpecTheta}
For any retractive space $Y$ over $X$, the image of the $\mathbb{Q}[\mathbb{G}X]$-module $\widetilde{\mathbb{Q}}[\mathbb{P}X_! \pi^\ast_X Y]$ under taking homotopy groups is isomorphic to $\mathbold{\theta}_\ast (\Sigma^\infty_X Y)$.
\end{lemma}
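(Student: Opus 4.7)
The plan is to unwind the left Quillen functors appearing in the composite and then identify the resulting $\mathbb{Q}[\mathbb{G}X]$-module spectrum, up to stable equivalence, with a very explicit model of the homotopy fibre. First I would verify, directly from the constructions of $\mathfrak{f}_X$ (Lemma 2.14) and $\mathfrak{R}_{\mathbb{G}X}$ (Theorem 3.3 combined with Proposition 3.9 on free objects), that for any retractive space $Y$ over $X$ there is a canonical isomorphism of $\mathbb{Q}[\mathbb{G}X]$-module spectra
\[
(\mathfrak{R}_{\mathbb{G}X}\circ \mathfrak{f}_X)\big(H\mathbb{Q}\owedge_X \Sigma^\infty_X Y\big)
\;\cong\;
\Sigma^\infty_\mathbb{Q}\, \widetilde{\mathbb{Q}}\big[\mathbb{P}X_!\,\pi_X^\ast Y\big].
\]
The point is that $\mathfrak{f}_X$ and $\mathfrak{R}_{\mathbb{G}X}$ are left Quillen functors preserving $\mathrm{Sp}^\Sigma$-tensors, so they commute with the suspension spectrum functor, and on unstable retractive spaces they successively send $Y$ to $\mathbb{P}X_!\pi_X^\ast Y$ and then to its levelwise rationalisation with its canonical $\mathbb{Q}[\mathbb{G}X]$-action.

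Next I would compute the stable homotopy groups. Since $\Sigma^\infty_\mathbb{Q}\widetilde{\mathbb{Q}}[-]$ presents rational reduced suspension spectra, we have
\[
\pi^{\mathrm{st}}_\ast\!\big(\Sigma^\infty_\mathbb{Q}\widetilde{\mathbb{Q}}[\mathbb{P}X_!\pi_X^\ast Y]\big)
\;\cong\;
\widetilde{H}_\bullet\!\big(\mathbb{P}X_!\pi_X^\ast Y;\mathbb{Q}\big),
\]
as graded $H_\bullet(\mathbb{G}X;\mathbb{Q})$-modules. By Remark 2.17, the underlying symmetric spectrum of $\mathfrak{f}_X(\Sigma^\infty_X Y)$ models the stable homotopy fibre at the base point $x$, and tracing through Example 2.18 on both the total space $Y$ and the section $X\hookrightarrow Y$ identifies $\mathbb{P}X_!\pi_X^\ast Y$ with a cofibrant, pointed, $\mathbb{G}X$-equivariant model of this homotopy fibre. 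Consequently
\[
\widetilde{H}_\bullet\!\big(\mathbb{P}X_!\pi_X^\ast Y;\mathbb{Q}\big)
\;\cong\;
\pi^{\mathrm{st}}_\ast\!\big(x^\ast \Sigma^\infty_X Y\big)\otimes_\mathbb{Z}\mathbb{Q}
\;=\;\mathbold{\theta}_\ast(\Sigma^\infty_X Y),
\]
as graded rational vector spaces.

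Finally, I must match up the module structures. The $\mathbb{Q}[\mathbb{G}X]$-action on $\pi^{\mathrm{st}}_\ast \Sigma^\infty_\mathbb{Q}\widetilde{\mathbb{Q}}[\mathbb{P}X_!\pi_X^\ast Y]$ factors through $H_\bullet(\mathbb{G}X;\mathbb{Q})\cong H_\bullet(\Omega X;\mathbb{Q})$ with the Pontrjagin product, and Milnor--Moore identifies the latter with $\mathcal{U}\pi^\mathbb{Q}_{\ast+1}(X)$ whose restriction to primitives recovers the Whitehead Lie algebra action. On the other side, the $\pi^\mathbb{Q}_{\ast+1}(X)$-action defining $\mathbold{\theta}_\ast$ comes precisely from the $\Omega X$-action on the fibre spectrum passed through Milnor--Moore. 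Both actions arise from the same $\mathbb{G}X$-action on the equivariant fibre model $\mathbb{P}X\times_X Y$, so their restrictions to primitives coincide under the identification of step two.

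The main obstacle I anticipate is the second step: producing a clean, natural identification of $\mathbb{P}X_!\pi_X^\ast Y$ with a pointed fibrewise-homotopy-fibre model in a way that is \emph{$\mathbb{G}X$-equivariant} and that tracks the basepoint correctly for a general retractive space (rather than just for $Y=Z_{+X}$ as in Example 2.18). Once this equivariant identification is in place, the comparison of stable homotopy groups and the subsequent matching of Whitehead-Lie actions are routine consequences of the Milnor--Moore theorem and the constructions already in hand.
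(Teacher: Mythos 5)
Your proposal is correct and follows essentially the same route as the paper: the identification of the composite left Quillen functor with $Y\mapsto\Sigma^\infty\widetilde{\mathbb{Q}}[\mathbb{P}X_!\pi_X^\ast Y]$ is recorded just before the lemma, and the proof then simply computes the homotopy groups of the simplicial vector space as reduced rational homology of the pointed fibre model. The one obstacle you flag is resolved immediately in the paper: since $Y$ is retractive, the section $\mathbb{P}X\to\pi_X^\ast Y$ is a monomorphism (hence a cofibration of $\mathbb{G}X$-spaces) with contractible domain, so $\mathbb{P}X_!\pi_X^\ast Y$ is a homotopy pushout and the quotient map $\pi_X^\ast Y\to\mathbb{P}X_!\pi_X^\ast Y$ is a $\mathbb{G}X$-equivariant isomorphism on reduced rational homology, with $\pi_X^\ast Y$ already modelling the $\Omega X$-action on the homotopy fibre.
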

\begin{proof}
Pulling back $Y\to X$ along the fibration $\pi_X\colon \mathbb{P}X\to X$ yields a $\mathbb{G}X$-space $\pi^\ast_X Y$ modelling the $\Omega X$-action on the homotopy fibre $\mathrm{fib}(Y\to X)$.
The pushout square of $\mathbb{G}X$-spaces
\[
\begin{tikzcd}
\mathbb{P}X
\ar[r]
\ar[d]
\ar[dr, phantom, "\lrcorner", very near end]
&
\ast
\ar[d]
\\
\pi^\ast_X Y
\ar[r]
&
\mathbb{P}X_! \pi^\ast_X Y
\end{tikzcd}
\]
is also a homotopy pushout, in particular the bottom horizontal arrow is an isomorphism in reduced rational homology. 
The homotopy groups of $\widetilde{\mathbb{Q}}[\mathbb{P}X_! \pi^\ast_X Y]$ compute $\widetilde{H}_\bullet(\mathbb{P}X_! \pi^\ast_X Y;\mathbb{Q})\cong \widetilde{H}_\bullet (\pi^\ast_X Y; \mathbb{Q})$ so the result is proven.
\end{proof}

Observe that $\Sigma^\infty \widetilde{\mathbb{Q}}[\mathbb{P}X_! \pi^\ast_X Y]$ is cofibrant as an object of $\mathbb{Q}[\mathbb{G}X]\mathrm{-Mod}$.
Under the composite Quillen equivalence
\[
\begin{tikzcd}
\mathbb{Q}[\mathbb{G}X]\mathrm{-Mod}
\ar[r, shift left=1ex, "\text{(i)}"]
\ar[r, shift left=-1ex, leftarrow, "\bot"]
&
\widehat{\mathbb{Q}}[\mathbb{G}X]\mathrm{-Mod}
\ar[r, leftarrow, shift left=1ex, "\text{(ii)}"]
\ar[r, shift left=-1ex, "\bot"]
&
N\mathfrak{g}_X\mathrm{-Rep}_\mathrm{dg}^\Sigma
\ar[r, shift left =1ex, "\text{(iii)}"]
\ar[r, leftarrow, shift left=-1ex, "\bot"]
&
\Lambda_X\mathrm{-Rep}_\mathrm{dg}
\end{tikzcd}
\]
we have that
\begin{enumerate}[label=(\roman*)]
  \item $\Sigma^\infty \widetilde{\mathbb{Q}}[\mathbb{P}X_! \pi^\ast_X Y]$ is sent to the cofibrant object $\Sigma^\infty (\kappa_X)_! \widetilde{\mathbb{Q}}[\mathbb{P}X_! \pi^\ast_X Y]$ of $\widehat{\mathbb{Q}}[\mathbb{G}X]\mathrm{-Mod}$.
  \item By Lemma \ref{lem:RatSymSpecFibCofib} $\Sigma^\infty (\kappa_X)_! \widetilde{\mathbb{Q}}[\mathbb{P}X_! \pi^\ast_X Y]$ is fibrant in $\widehat{\mathbb{Q}}[\mathbb{G}X]\mathrm{-Mod}$ and is sent under the right Quillen functor to the fibrant object 
  \[
  N^\ast 
  \big(
  \Sigma^\infty_\mathbb{Q}
  \widetilde{\mathbb{Q}}[S^n]\otimes 
  (\kappa_X)_! \widetilde{\mathbb{Q}}[\mathbb{P}X_! \pi^\ast_X Y]
  \big)\colon
  n \longmapsto 
  N \big(
  \widetilde{\mathbb{Q}}[S^n]\otimes 
  (\kappa_X)_! \widetilde{\mathbb{Q}}[\mathbb{P}X_! \pi^\ast_X Y]
  \big)
  \]
  of $N\mathfrak{g}_X\mathrm{-Rep}_\mathrm{dg}^\Sigma$.
  Using the levelwise weak equivalence of commutative monoids \eqref{eqn:NormalisationMonoid} induced by the shuffle map, we find that there is a levelwise weak equivalence 
  \[
  s^\infty 
  N(\kappa_X)_! \widetilde{\mathbb{Q}}[\mathbb{P}X_! \pi^\ast_X Y]
  \longrightarrow
  \widetilde{\mathbb{Q}}[S^n]\otimes 
  (\kappa_X)_! \widetilde{\mathbb{Q}}[\mathbb{P}X_! \pi^\ast_X Y]
  \]
  of objects in $N\mathfrak{g}_X\mathrm{-Rep}_\mathrm{dg}^\Sigma$.
  Let $\Theta(Y)\to N(\kappa_X)_! \widetilde{\mathbb{Q}}[\mathbb{P}X_! \pi^\ast_X Y]$ be a cofibrant replacement in $N\mathfrak{g}_X\mathrm{-Rep}^+_\mathrm{dg}$ so that we have a stable weak equivalence
  \[
  s^\infty \Theta (Y)
  \longrightarrow 
  s^\infty 
  N(\kappa_X)_! \widetilde{\mathbb{Q}}[\mathbb{P}X_! \pi^\ast_X Y]
  \longrightarrow
  \widetilde{\mathbb{Q}}[S^n]\otimes 
  (\kappa_X)_! \widetilde{\mathbb{Q}}[\mathbb{P}X_! \pi^\ast_X Y]
  \]
  in $N\mathfrak{g}_X\mathrm{-Rep}_\mathrm{dg}^\Sigma$
  with cofibrant domain.
  
  \item The assembly functor $\mathfrak{A}_{N\mathfrak{g}_X}\colon N\mathfrak{g}_X\mathrm{-Rep}_\mathrm{dg}^\Sigma\to N\mathfrak{g}_X\mathrm{-Rep}_\mathrm{dg}$ sends $s^\infty\Theta(Y)$ to $\Theta(Y)$, regarded as a cofibrant object of the category of unbounded $N\mathfrak{g}_X$-representations.
  Under the  Quillen equivalence
  \[
  \begin{tikzcd}
  N\mathfrak{g}_X\mathrm{-Rep}_\mathrm{dg}
  \ar[rr, shift left=1ex, "(\mathcal{U}N\varrho_X)_!"]
  \ar[rr, shift left=-1ex, leftarrow, "\mathcal{U}N\varrho_X^\ast"', "\bot"]
  &&
  \Lambda_X \mathrm{-Rep}_\mathrm{dg}
  \end{tikzcd}
  \]
  induced by the quasi-isomorphism of dg Lie algebras $N\varrho_X\colon N\mathfrak{g}_X\to \Lambda_X$ \eqref{eqn:CofibRepl}, the component of the unit $\Theta(Y)\to \mathcal{U}N\varrho_X^\ast(\mathcal{U}N\varrho_X)_!\Theta(Y) = (\mathcal{U}N\varrho_X)_!\Theta(Y)$ is a quasi-isomorphism.
\end{enumerate}
In summary, together with Lemma \ref{lem:FibSusSpecTheta} this shows that the diagram of functors 
\[
\begin{tikzcd}[sep=tiny]
&
Ho(\mathrm{Sp}_X)_\mathbb{Q}
\ar[rr, "\sim"]
&&
Ho(\Lambda_X\mathrm{-Rep}_\mathrm{dg})
\ar[dr, bend left=10, "H_\bullet"]
&
\\
Ho(\mathrm{sSet}_{\dslash X})
\ar[ur, bend left=10, "\Sigma^\infty_X"]
\ar[drr, bend left=-10, "\Sigma^\infty_X"']
&&&&
\pi^\mathbb{Q}_{\ast+1}(X)\mathrm{-Rep}
\\
&&
Ho(\mathrm{Sp}_X)_\mathbb{Q}
\ar[urr, bend left=-10, "\mathbold{\theta}_\ast"']
&&
\end{tikzcd}
\]
commutes up to natural isomorphism.
$Ho(\mathrm{Sp}_X)$ is generated by fibrewise suspension spectra under shifts and sequential homotopy colimits (see \cite[Lemma 2.19]{braunack-mayer_combinatorial_2019} for a proof of this fact in terms of sequential parametrised spectra) and all of the functors in the above diagram commute with these operations, we have proven the following
\begin{theorem}
\label{thm:LieRep}
The diagram of functors
\[
\begin{tikzcd}
Ho(\mathrm{Sp}_X)_\mathbb{Q}
\ar[r, "\sim"]
\ar[dr, bend right =15, "\mathbold{\theta}_\ast"']
&
Ho(\Lambda_X\mathrm{-Rep})
\ar[d, "H_\bullet"]
\\
&
\pi^\mathbb{Q}_{\ast+1}(X)\mathrm{-Rep}
\end{tikzcd}
\]
commutes up to natural isomorphism.
\end{theorem}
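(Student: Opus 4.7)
The plan is to reduce to the case of fibrewise suspension spectra, track these explicitly through the chain of Quillen equivalences \eqref{masterplan}, and then extend by the generating argument. Concretely, since fibrewise suspension spectra $\Sigma^\infty_X Y_{+X}$ generate $Ho(\mathrm{Sp}_X)$ under shifts and sequential homotopy colimits (compare Example \ref{exm:FibSuSpectraAndLoopModules}), and since both $\mathbold{\theta}_\ast$ and the composite $H_\bullet \circ (\text{equivalence})$ are triangulated functors preserving small sums, it will suffice to produce a natural isomorphism on fibrewise suspension spectra.

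The first step is to compute the image of $\Sigma^\infty_X Y$ under the composite left Quillen functor $\mathfrak{R}_{\mathbb{G}X}\circ \mathfrak{f}_X \circ (H\mathbb{Q}\owedge_X (-))\circ \Sigma^\infty_X$ and verify that it is $\Sigma^\infty\widetilde{\mathbb{Q}}[\mathbb{P}X_!\pi^\ast_X Y]$ in $\mathbb{Q}[\mathbb{G}X]\mathrm{-Mod}$, a cofibrant object. Lemma \ref{lem:FibSusSpecTheta} already identifies the stable homotopy groups of this object with $\mathbold{\theta}_\ast(\Sigma^\infty_X Y)$ as $H_\bullet(\Omega X;\mathbb{Q})$-modules; by the Milnor--Moore theorem this restricts to the desired $\pi^\mathbb{Q}_{\ast+1}(X)$-representation structure.

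The second step is to transport this identification through the remaining Quillen equivalences. Passing to $\widehat{\mathbb{Q}}[\mathbb{G}X]\mathrm{-Mod}$ via $(\kappa_X)_!$ preserves homotopy groups since $\kappa_X$ is a weak equivalence of algebras and the forgetful functors create weak equivalences. Applying the right Quillen functor $N^\ast_\mathfrak{g}\circ \theta^\ast_X\circ \kappa_{\mathfrak{g}_X}^\ast$ into $N\mathfrak{g}_X\mathrm{-Rep}_\mathrm{dg}^\Sigma$ converts the underlying symmetric spectrum $\Sigma^\infty_\mathbb{Q} W$ (with $W=(\kappa_X)_!\widetilde{\mathbb{Q}}[\mathbb{P}X_!\pi^\ast_X Y]$) levelwise into $N(\widetilde{\mathbb{Q}}[S^n]\otimes W)$; as in the excerpt, the stabilised shuffle map provides a stable equivalence $s^\infty NW \xrightarrow{\sim} N^\ast\Sigma^\infty_\mathbb{Q} W$, and a cofibrant replacement $\Theta(Y)\to NW$ in $N\mathfrak{g}_X\mathrm{-Rep}^+_\mathrm{dg}$ yields a cofibrant object $s^\infty\Theta(Y)$ whose image under $\mathfrak{A}_{N\mathfrak{g}_X}$ is simply $\Theta(Y)$. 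Since normalisation preserves and reflects homology and the shuffle map is a quasi-isomorphism, $H_\bullet \Theta(Y)\cong \pi^\mathrm{st}_\ast (\Sigma^\infty W)\cong \mathbold{\theta}_\ast(\Sigma^\infty_X Y)$, and Remark following Theorem \ref{thm:StabDKLie} ensures that the $H_\bullet(N\mathfrak{g}_X) \cong \pi_\ast(\mathfrak{g}_X)$-action is the one induced by rationalising the $\Omega X$-action. Finally, extension of scalars along the quasi-isomorphism $N\varrho_X\colon N\mathfrak{g}_X\to \Lambda_X$ preserves the underlying chain complex and transports the action along the induced isomorphism $H_\bullet(N\mathfrak{g}_X)\xrightarrow{\cong} H_\bullet(\Lambda_X)=\pi^\mathbb{Q}_{\ast+1}(X)$.

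The main technical obstacle is the compatibility of these identifications with the respective Lie-algebraic actions rather than merely with the underlying chain complexes: one must verify that the $\pi^\mathbb{Q}_{\ast+1}(X)$-action produced algebraically via $H_\bullet$ coincides with the topologically defined Whitehead action on $\mathbold{\theta}_\ast$. This is where the multiplicativity of $\chi_\mathfrak{g}$ from Lemma \ref{lem:Chi}, the Milnor--Moore identification $H_\bullet(\Omega X;\mathbb{Q})\cong \mathcal{U}\pi^\mathbb{Q}_{\ast+1}(X)$, and the fact that each base change preserves module structures up to the natural quasi-isomorphisms, must be combined. Naturality of the constructions in $Y$ then promotes the fibrewise identification to a natural isomorphism of functors on the subcategory of suspension spectra, which extends uniquely to all of $Ho(\mathrm{Sp}_X)_\mathbb{Q}$ by the generation argument.
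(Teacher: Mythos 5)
Your proposal follows essentially the same route as the paper's proof: identify the image of a fibrewise suspension spectrum as $\Sigma^\infty\widetilde{\mathbb{Q}}[\mathbb{P}X_!\pi^\ast_X Y]$, invoke Lemma \ref{lem:FibSusSpecTheta}, track this object through the chain $(\kappa_X)_!$, the stabilised shuffle map, the cofibrant replacement $\Theta(Y)$, assembly, and base change along $N\varrho_X$, and then conclude by the generation of $Ho(\mathrm{Sp}_X)$ under shifts and sequential homotopy colimits. The argument is correct and matches the paper's in both structure and detail.
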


\subsection{Rational homology comodules}
\label{SSec:Comod}
Let $X$ be a 2-reduced simplicial set and $P$ an $X$-spectrum.
In this section we prove the part of Theorem \ref{thm:RatParStaHom} relating the homology of the $C_X$-comodule associated to $P$ with the $H_\bullet(X;\mathbb{Q})$-comodule structure of $\widetilde{H}_\bullet (X_!P)$.
The method is to use two-sided cobar constructions to compare various models for $\widetilde{H}_\bullet (X_!P)$ in the case that $P=\Sigma^\infty_X Y$ is a fibrewise suspension spectrum, though the argument is somewhat involved.
Along the way we obtain a new proof of the fact that the dg coalgebra $C_X$ models the rational homology coalgebra of $X$ (Corollary \ref{cor:HomologyModels}).

For a simplicial group $G$ and a simplicial set $K$, recall that a \emph{twisting function} $\varphi\colon K\rightsquigarrow G$ is a family of maps $\varphi_n\colon K_n \to G_{n-1}$ such that 
\begin{align*}
\qquad\qquad
 d_0 (\varphi (k)) \varphi(d_0k)  &= \varphi(d_1 k)
\\
d_i \varphi(k) &= \varphi(d_{i+1} k)\,,&i>0\\
s_i\varphi(k) &= \varphi(s_{i+1} k)\,,& i> 0
\\
\varphi(s_0k) &= e_G\,,
\end{align*}
with $e_G\in G_{n-1}$ the neutral element.
Twisting functions $K\rightsquigarrow G$ are in bijective correspondence with maps of simplicial sets $K\to \overline{W}G$ and hence also with principal simplicial $G$-bundles with base $K$.
To a twisting function $\varphi\colon K\rightsquigarrow G$ is assigned the simplicial principal bundle $G\times_\varphi K$ with $n$-simplices $(G\times_\varphi K)_n = G_n \times K_n$, simplicial structure maps
\begin{align*}
\qquad\qquad\qquad\qquad 
d_i(g,k) &= (d_i g, d_i k)\,,& i>0
\qquad\\
d_0(g,k) &= ( d_0(g)\varphi(k) , d_0(k))\\
s_i(g,k) &= (s_i(g), s_i(k))\,,
\end{align*}
and left $G$-action.
The twisted Cartesian product $G\times_\varphi K$ fits into the pullback diagram of simplicial sets
\[
\begin{tikzcd}
G\times_\varphi K
\ar[r]
\ar[d]
\ar[dr, phantom, "\ulcorner", very near start]
&
WG
\ar[d]
\\
K
\ar[r, "\varphi"]
&
\overline{W}G\,,
\end{tikzcd}
\]
where the right-hand vertical arrow is Kan's model for the universal simplicial principal $G$-bundle.

According to a theorem of E.~Brown, any twisting function $\varphi\colon K\rightsquigarrow G$ gives rise to a natural twisting chain $\tau(\varphi)\colon C_\bullet (K) \rightsquigarrow C_\bullet(K)$ between integral simplicial chain complexes that is compatible with $\varphi$ in an appropriate sense.
The following simplicial version of Brown's theorem was proven by Szczarba \cite{szczarba_homology_1961}:
\begin{theorem}
For each twisting function $\varphi\colon K\rightsquigarrow G$ there is a twisting chain $\tau(\varphi)\colon C_\bullet(K)\rightsquigarrow C_\bullet (G)$ and a strong deformation retract
\[
\begin{tikzcd}
C_\bullet (G)\otimes_{\tau(\varphi)}
C_\bullet (K)
\ar[r, shift left =0.8ex, "\nabla_\varphi"]
\ar[r, leftarrow, shift left=-0.8ex, "f_\varphi"']
&
C_\bullet (G\times_\varphi K)
\arrow[loop right, distance=2em, start anchor={[yshift=1ex]east}, end anchor={[yshift=-1ex]east}]{}{h_\varphi}
\end{tikzcd}
\]
Furthermore, the choices of $\tau(\varphi)$, $\nabla_\varphi$, $f_\varphi$, and $h_\varphi$ can be made naturally. 
\end{theorem}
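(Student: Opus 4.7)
The plan is to prove this as a twisted version of the Eilenberg--Zilber theorem, combining explicit Szczarba-type formulas with the method of acyclic models.

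First I would construct the twisting chain $\tau(\varphi)\colon C_\bullet(K)\rightsquigarrow C_\bullet(G)$ by an explicit formula. On a simplex $k\in K_n$, set
\[
\tau(\varphi)(k) \;=\; \varphi(k) \;-\; \varphi(s_0 d_0 k) \;\in\; C_{n-1}(G),
\]
extended linearly; this is essentially Szczarba's formula, built directly from $\varphi$. One then checks the twisting chain identity
\[
d\,\tau(\varphi)(k) \;=\; -\tau(\varphi)(dk)\;-\;(-1)^{|k_{(0)}|}\,\tau(\varphi)(k_{(0)})\cdot \tau(\varphi)(k_{(1)}),
\]
where $\Delta k = k_{(0)}\otimes k_{(1)}$ is the Alexander--Whitney coproduct; this is a direct simplicial calculation using the four twisting-function axioms relating $\varphi$ to the face and degeneracy operators. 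Naturality of $\tau(\varphi)$ in the pair $(G,K,\varphi)$ is immediate from the formula.

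Next I would construct $\nabla_\varphi$. The classical Eilenberg--Zilber shuffle map $\nabla\colon C_\bullet(G)\otimes C_\bullet(K)\to C_\bullet(G\times K)$ is defined on a tensor of simplices $g\otimes k$ as a signed sum over $(p,q)$-shuffles of $(s_*g,s_*k)$. Taking the same formula with values in $C_\bullet(G\times_\varphi K)$ gives a map $\nabla_\varphi$ of graded modules; the key point is that the twisting of the face $d_0(g,k)=(d_0(g)\varphi(k),d_0 k)$ exactly matches the extra term in the twisted differential on $C_\bullet(G)\otimes_{\tau(\varphi)} C_\bullet(K)$ coming from $\tau(\varphi)$. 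Hence $\nabla_\varphi$ becomes a chain map, and the Alexander--Whitney diagonal furnishes a candidate section $f_\varphi$, again by lifting the classical formula to the twisted setting and checking compatibility with $d_0$.

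Finally, to produce the chain homotopy $h_\varphi$ (and to simultaneously verify $f_\varphi\nabla_\varphi = \mathrm{id}$, $\nabla_\varphi f_\varphi \simeq \mathrm{id}$) I would use the method of acyclic models with models $K=\Delta^n$. For each $n$, a twisting function $\varphi\colon\Delta^n\rightsquigarrow G$ is determined by the image $\varphi(\iota_n)$ of the top simplex, and the twisted Cartesian product $G\times_\varphi \Delta^n$ deformation retracts onto the fibre $G$ (since $\Delta^n$ is contractible and the bundle is principal), so that $C_\bullet(G\times_\varphi\Delta^n)$ is chain-equivalent to $C_\bullet(G)$; dually $C_\bullet(G)\otimes_{\tau(\varphi)} C_\bullet(\Delta^n)$ is also chain equivalent to $C_\bullet(G)$ via the spectral sequence argument of Lemma \ref{lem:TwistingSS}. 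Both functors $(G,\varphi)\mapsto C_\bullet(G\times_\varphi\Delta^n)$ and $(G,\varphi)\mapsto C_\bullet(G)\otimes_{\tau(\varphi)} C_\bullet(\Delta^n)$ are representable and acyclic on the models, so the method of acyclic models produces a natural deformation retract, proving the theorem.

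The hard part will be making the acyclic models argument honestly \emph{functorial} in the triple $(G,K,\varphi)$ rather than only in $G$ or $K$ separately---one must use a category of triples whose morphisms strictly intertwine twisting functions, and verify that in this category the relevant functors are still representable and acyclic on the simplex models. Once this setup is in place, the usual induction on simplicial degree produces $\nabla_\varphi$, $f_\varphi$, and $h_\varphi$ simultaneously, with naturality built in.
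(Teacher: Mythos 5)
First, note that the paper does not prove this statement at all: it is quoted verbatim as Szczarba's theorem and discharged by the citation \cite{szczarba_homology_1961}, so there is no internal proof to compare against. Your overall skeleton (acyclic models over the simplices $\Delta^n$, using contractibility of a principal twisted Cartesian product over a contractible base) is essentially E.~Brown's original strategy and could in principle be pushed through; the modern streamlined route is instead the basic perturbation lemma applied to the classical Eilenberg--Zilber strong deformation retract, with convergence of the perturbation series coming from the filtration by $K$-degree.

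However, two of your concrete steps fail. The formula $\tau(\varphi)(k)=\varphi(k)-\varphi(s_0d_0k)$ is not a twisting chain beyond degree $1$: since $\varphi(s_0z)=e_G$, your formula is just $\varphi(k)-e$, and already for $k\in K_2$ the identity $d(\tau k)=-\tau(dk)-(-1)^{|k_{(0)}|}\tau(k_{(0)})\cdot\tau(k_{(1)})$ cannot hold. Indeed $d_0\varphi(k)=\varphi(d_1k)\varphi(d_0k)^{-1}$ by the first twisting-function axiom, so the left-hand side involves an inverse $\varphi(d_0k)^{-1}$, while the right-hand side involves the Pontrjagin product $\tau(d_2k)\cdot\tau(d_0k)$; no cancellation reconciles these. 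Szczarba's actual twisting chain is a signed sum over roughly $(n-1)!$ terms built from iterated faces, degeneracies and products of values of $\varphi$, and finding it is the real content of his paper. Likewise, the assertion that the untwisted shuffle map ``with the same formula'' already intertwines the twisted differentials is exactly the claim that the twisted Eilenberg--Zilber theorem is trying to establish, and it is false without correction terms: the perturbation of $d_0$ propagates through all the shuffle summands, not just the leading one. A related minor slip: a twisting function $\varphi\colon\Delta^n\rightsquigarrow G$ is determined by the corresponding $n$-simplex of $\overline{W}G$, i.e.\ by the tuple $(\varphi(\iota_n),\varphi(d_0\iota_n),\dots)$, not by $\varphi(\iota_n)$ alone. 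The repair is either to abandon the explicit formulas entirely and construct $\tau(\varphi)$, $\nabla_\varphi$, $f_\varphi$, $h_\varphi$ simultaneously by induction on degree within your acyclic-models framework, or to invoke homological perturbation theory, which produces all four pieces of data (and their naturality) in one stroke.
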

\begin{remark}
We write $N_\bullet K = N\mathbb{Q}[K]$ as shorthand for the complex of normalised rational chains on the simplicial set $K$. 
\end{remark} 
Tensoring with $\mathbb{Q}$, since the subcomplex of normalised chains is naturally a retract we get the following
\begin{corollary}
\label{cor:Brown}
For each $\varphi\colon K\rightsquigarrow G$ there is a natural twisting chain $\tau(\varphi)\colon N_\bullet K\rightsquigarrow N_\bullet G$ and a strong deformation retract
\[
\begin{tikzcd}
N_\bullet G\otimes_{\tau(\varphi)}
N_\bullet K
\ar[r, shift left =0.8ex, "\nabla_\varphi"]
\ar[r, leftarrow, shift left=-0.8ex, "f_\varphi"']
&
N_\bullet (G\times_\varphi K)
\arrow[loop right, distance=2em, start anchor={[yshift=1ex]east}, end anchor={[yshift=-1ex]east}]{}{h_\varphi}
\end{tikzcd}
\]
\end{corollary}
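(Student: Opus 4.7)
The plan is to combine Szczarba's integral SDR with the natural normalisation retract. First, I would tensor the preceding theorem with $\mathbb{Q}$; flatness of $\mathbb{Q}$ over $\mathbb{Z}$ preserves SDRs together with their naturality, so this yields a natural SDR between $C_\bullet(G;\mathbb{Q}) \otimes_{\tau^C(\varphi)} C_\bullet(K;\mathbb{Q})$ and $C_\bullet(G \times_\varphi K;\mathbb{Q})$, where I write $\tau^C(\varphi)$ for the rationalised Szczarba twisting chain.

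The second step uses that for any simplicial set $X$, the normalised complex $N_\bullet X$ is naturally a strong deformation retract of the rational Moore complex $C_\bullet(X;\mathbb{Q})$, the complement being the acyclic subcomplex of degenerate chains; write $(i_X, p_X, H_X)$ for the corresponding SDR data. I would define the required twisting chain on normalised chains by $\tau(\varphi) := p_G \circ \tau^C(\varphi) \circ i_K$. The twisting chain identity for $\tau(\varphi)$ then follows from that of $\tau^C(\varphi)$, using that $p_G$ intertwines the Pontryagin product on $C_\bullet(G;\mathbb{Q})$ with that on $N_\bullet G$ (up to a shuffle correction killed by $p_G$) and that $i_K$ intertwines the Alexander--Whitney coproduct on $N_\bullet K$ with that on $C_\bullet(K;\mathbb{Q})$.

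Finally, to produce the desired SDR I would compose three natural SDRs: the tensor product of the normalisation SDRs for $G$ and $K$, the rationalised Szczarba SDR, and the normalisation SDR for $G \times_\varphi K$. On underlying graded complexes these compose by the standard formula, giving the required $\nabla_\varphi$, $f_\varphi$, and $h_\varphi$, all natural in $\varphi$ by construction. To make the composition compatible with twisted differentials I would invoke the Basic Perturbation Lemma on the tensor-product normalisation SDR, with perturbation induced by $\tau^C(\varphi)$. The main obstacle, and the technical heart of the argument, is showing that this BPL perturbation on $N_\bullet G \otimes N_\bullet K$ is precisely the twisting-chain perturbation attached to the $\tau(\varphi)$ defined above, rather than some a priori more elaborate $A_\infty$-type data. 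This reduces to checking that the higher terms in the BPL series vanish, which I expect to follow from the fact that $H_G \otimes 1$ and $1 \otimes H_K$ land in the respective degenerate subcomplexes, on which the rectified twisting-chain perturbation is zero; verifying this vanishing rigorously is the step that will require the most care.
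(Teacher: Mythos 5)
Your proposal is correct and follows essentially the same route as the paper, which simply tensors Szczarba's integral strong deformation retract with $\mathbb{Q}$ and passes to normalised chains using the fact that they form a natural retract (with acyclic degenerate complement). The paper leaves the compatibility with the twisted differentials implicit, so your perturbation-lemma elaboration---reducing to the vanishing of the Szczarba twisting chain on degenerate simplices---is just a more careful spelling-out of the same argument rather than a different one.
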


\begin{lemma}
\label{lem:LoopSpaceModels}
For any 2-reduced simplicial set $X$ there is a natural diagram 
\[
\begin{tikzcd}
\Omega N_\bullet X
\ar[r, "\sim"]
&
N\widehat{\mathbb{Q}}[\mathbb{G}X]
\ar[r, leftarrow, "\sim"]
&
\Omega C_X
\end{tikzcd}
\]
of quasi-isomorphisms of dg algebras.
\end{lemma}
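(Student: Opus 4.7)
The plan is to build the two arrows separately. The right-hand arrow $\Omega C_X \to N\widehat{\mathbb{Q}}[\mathbb{G}X]$ will be an explicit composite of natural quasi-isomorphisms of dg algebras, while the left-hand arrow $\Omega N_\bullet X \to N\widehat{\mathbb{Q}}[\mathbb{G}X]$ will come from Szczarba's twisting chain (Corollary \ref{cor:Brown}) and will require a comparison argument via twisted tensor products.

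For the right arrow I will use that $\Omega C_X = \Omega\mathcal{C}\Lambda_X$ is canonically isomorphic to $\mathcal{U}\mathcal{L}C_X$ (Quillen, Proposition B.6.2). The $(\mathcal{L}\dashv \mathcal{C})$-counit $\mathcal{L}C_X \to \Lambda_X$ is a natural quasi-isomorphism of reduced dg Lie algebras, and Lemma \ref{lem:UniEnvAlgQIso} lifts it to a quasi-isomorphism $\mathcal{U}\mathcal{L}C_X \to \mathcal{U}\Lambda_X$ of dg algebras. Since $\Lambda_X = N\mathcal{P}\widehat{\mathbb{Q}}[\mathbb{G}X]$, Lemma \ref{lem:Chi} supplies the multiplicative quasi-isomorphism $\chi : \mathcal{U}N\mathcal{P}\widehat{\mathbb{Q}}[\mathbb{G}X] \to N\mathcal{U}\mathcal{P}\widehat{\mathbb{Q}}[\mathbb{G}X]$. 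Applying $N$ to the simplicial-algebra completion map $\mathcal{U}\mathcal{P}\widehat{\mathbb{Q}}[\mathbb{G}X] \to \widehat{\mathcal{U}}\mathcal{P}\widehat{\mathbb{Q}}[\mathbb{G}X]$ (a weak equivalence by Quillen's Theorem I.3.6) and then to the Milnor--Moore counit $\widehat{\mathcal{U}}\mathcal{P}\widehat{\mathbb{Q}}[\mathbb{G}X] \to \widehat{\mathbb{Q}}[\mathbb{G}X]$ (a weak equivalence by the $(\widehat{\mathcal{U}}\dashv\mathcal{P})$-Quillen equivalence) yields the remaining factors. Every stage is a natural dg algebra map and a quasi-isomorphism, so the composite is too.

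For the left arrow, the canonical twisting function $\varphi_X : X\rightsquigarrow \mathbb{G}X$ classifying the path fibration $\mathbb{P}X \cong \mathbb{G}X\times_{\varphi_X} X$ produces via Corollary \ref{cor:Brown} a natural Szczarba twisting chain $\tau_X : N_\bullet X\rightsquigarrow N\mathbb{Q}[\mathbb{G}X]$. The universal property of the cobar construction transposes $\tau_X$ to a natural dg algebra map $\widetilde{\tau}_X : \Omega N_\bullet X \to N\mathbb{Q}[\mathbb{G}X]$, and composing with the quasi-isomorphism $N\mathbb{Q}[\mathbb{G}X] \to N\widehat{\mathbb{Q}}[\mathbb{G}X]$ obtained from Quillen's completion weak equivalence gives the left arrow.

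The main obstacle will be to show that $\widetilde{\tau}_X$ is itself a quasi-isomorphism---the simplicial analogue of Adams's theorem on chains of loop spaces. My strategy is to compare the induced morphism $\widetilde{\tau}_X\otimes \mathrm{id} : \Omega N_\bullet X \otimes_t N_\bullet X \to N\mathbb{Q}[\mathbb{G}X] \otimes_{\tau_X} N_\bullet X$, which is well-defined because $\widetilde{\tau}_X\circ t = \tau_X$. Its target is quasi-isomorphic to $N_\bullet\mathbb{P}X \simeq \mathbb{Q}$ by Corollary \ref{cor:Brown} together with contractibility of $\mathbb{P}X$, while its source is acyclic by Lemma \ref{lem:KoszulCounit}, whose proof only uses $2$-reducedness and coassociativity and therefore applies to $N_\bullet X$ despite the Alexander--Whitney coproduct not being cocommutative. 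Both twisted extensions carry the filtration of Lemma \ref{lem:TwistingSS} by $N_\bullet X$-degree, preserved by $\widetilde{\tau}_X\otimes \mathrm{id}$, yielding convergent first-quadrant spectral sequences with $E^2$-pages $H_\bullet(A)\otimes H_\bullet(X)$ (for $A = \Omega N_\bullet X$ or $N\mathbb{Q}[\mathbb{G}X]$) abutting to $\mathbb{Q}$. Since $\widetilde{\tau}_X\otimes \mathrm{id}$ induces an isomorphism on the $E^2$-column $H_0(A)\otimes H_\bullet(X) \cong H_\bullet(X)$ and on the abutment, Zeeman's comparison theorem forces $H_\bullet(\widetilde{\tau}_X) : H_\bullet(\Omega N_\bullet X) \to H_\bullet(N\mathbb{Q}[\mathbb{G}X])$ to be an isomorphism, as required. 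Naturality in $X$ follows from the naturality of each ingredient.
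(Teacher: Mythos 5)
Your construction of the left-hand arrow is essentially the paper's own argument: transpose the Szczarba twisting chain of $\varphi_X\colon X\rightsquigarrow\mathbb{G}X$ to a dg algebra map $\Omega N_\bullet X\to N_\bullet\mathbb{G}X$, compare the two acyclic twisted extensions over $N_\bullet X$ via the filtration of Lemma \ref{lem:TwistingSS}, and conclude by Zeeman's comparison theorem; your observation that the acyclicity of $\Omega N_\bullet X\otimes_t N_\bullet X$ needs only coassociativity and $2$-reducedness is correct and is exactly how the paper uses Lemma \ref{lem:KoszulCounit} here.

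The right-hand arrow, however, has a genuine gap. You build the same composite $\Omega C_X\cong\mathcal{U}\mathcal{L}C_X\to\mathcal{U}\Lambda_X\xrightarrow{\chi}N\mathcal{U}\mathcal{P}\widehat{\mathbb{Q}}[\mathbb{G}X]\xrightarrow{N\kappa}N\widehat{\mathcal{U}}\mathcal{P}\widehat{\mathbb{Q}}[\mathbb{G}X]\xrightarrow{N\epsilon}N\widehat{\mathbb{Q}}[\mathbb{G}X]$ as the paper, but you justify it by asserting that the last two factors are individually quasi-isomorphisms, and neither assertion is warranted by the results you cite. Quillen's Theorem I.3.6 (as used throughout the paper) makes completion at the augmentation ideal a weak equivalence for \emph{free} simplicial algebras; $\mathcal{U}\mathcal{P}\widehat{\mathbb{Q}}[\mathbb{G}X]$ is not known to be free, because $\mathcal{P}\widehat{\mathbb{Q}}[\mathbb{G}X]$ is not a cofibrant (i.e.\ free) simplicial Lie algebra. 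Likewise, the fact that $(\widehat{\mathcal{U}}\dashv\mathcal{P})$ is a Quillen equivalence only makes the \emph{derived} counit an equivalence; the point-set counit $\widehat{\mathcal{U}}\mathcal{P}\widehat{\mathbb{Q}}[\mathbb{G}X]\to\widehat{\mathbb{Q}}[\mathbb{G}X]$ is evaluated at the non-cofibrant object $\mathcal{P}\widehat{\mathbb{Q}}[\mathbb{G}X]$, and the freeness hypothesis in Quillen's Proposition I.6.9 (which converts ``$\mathcal{P}\psi$ is a weak equivalence'' into ``$\psi$ is a weak equivalence'') fails for its source. The paper repairs exactly this by inserting the cofibrant resolution $\varrho_X\colon\mathfrak{g}_X\to\mathcal{P}\widehat{\mathbb{Q}}[\mathbb{G}X]$: along the top row $\mathcal{U}N\mathfrak{g}_X\to N\mathcal{U}\mathfrak{g}_X\to N\widehat{\mathcal{U}}\mathfrak{g}_X\to N\widehat{\mathbb{Q}}[\mathbb{G}X]$ every factor \emph{is} a quasi-isomorphism (freeness of $\mathcal{U}\mathfrak{g}_X$ for the completion, Lemma \ref{lem:ThetaisWE} for $N\theta_X$), the vertical comparison maps are quasi-isomorphisms by Lemmas \ref{lem:Chi} and \ref{lem:UniEnvAlgQIso}, and the bottom composite is then a quasi-isomorphism by 2-out-of-3 --- without ever claiming that $N\kappa$ and $N\epsilon$ are quasi-isomorphisms separately. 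You should restructure your right-hand argument along these lines (or otherwise prove freeness/cofibrancy where you need it).
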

\begin{proof}
The $(\mathbb{G}\dashv \overline{W})$-unit $X\to \overline{W}\mathbb{G}X$ corresponds to a twisting function $\varphi_X\colon X\rightsquigarrow \mathbb{G}X$ which in turn induces a twisting chain $\tau(\varphi_X)\colon N_\bullet X \rightsquigarrow N_\bullet \mathbb{G}X$ and a quasi-isomorphism
\[
N_\bullet \mathbb{G}X \otimes_{\tau(\varphi_X)}N_\bullet X\longrightarrow
N_\bullet(\mathbb{G}X\times_{\varphi_X} X)\cong N_\bullet \mathbb{P}X\,.
\]
The twisting chain $\tau(\varphi_X)$ also gives rise to a morphism of dg algebras $\Omega N_\bullet X\to N_\bullet\mathbb{G}X$ and hence a composite map of chain complexes
$
\Omega N_\bullet X\otimes_{\tau(\varphi_X)}
N_\bullet X
\to
N_\bullet \mathbb{G}X\otimes_{\tau(\varphi_X)}
N_\bullet X
\to
N_\bullet \mathbb{P}X
$.
Since the domain is acyclic (see eg~Lemma \ref{lem:KoszulCounit}) this is a quasi-isomorphism.
Filtering the complexes $\Omega N_\bullet X\otimes_{\tau(\varphi_X)}
N_\bullet X$ and $N_\bullet \mathbb{G}X\otimes_{\tau(\varphi_X)}
N_\bullet X$ as in Lemma \ref{lem:TwistingSS} and using the fact that $N_\bullet X$ is 2-reduced we get a morphism of convergent spectral sequences
\[
\begin{tikzcd}[row sep = small]
H_p(\Omega N_\bullet X) \otimes H_q(X) 
\ar[r, Rightarrow]
\ar[d]
&
H_{p+q}(\Omega N_\bullet X\otimes_{\tau(\varphi_X)}
N_\bullet X)
\ar[d, "\cong"]
\\
H_p(\mathbb{G}X) \otimes H_q(X) 
\ar[r, Rightarrow]
&
H_{p+q}(N_\bullet \mathbb{G}X\otimes_{\tau(\varphi_X)}
N_\bullet X)\,.
\end{tikzcd}
\]
Zeeman's comparison theorem implies that $\Omega N_\bullet X\to N_\bullet \mathbb{G}X$ is a quasi-isomorphism---note that the hypothesis that $X$ is 2-reduced is crucial to obtaining amenable $E^2$-pages.
Recall that completion at the augmentation ideal $\mathbb{Q}[\mathbb{G}X]\to \widehat{\mathbb{Q}}[\mathbb{G}X]$ is a natural weak equivalence of monoid objects in simplicial rational vector spaces. Taking normalised chains we have a composite quasi-isomorphism of dg algebras $\Omega N_\bullet X\to N_\bullet \mathbb{G}X = N\mathbb{Q}[\mathbb{G}X]\to N\widehat{\mathbb{Q}}[\mathbb{G}X]$.

For the right-hand quasi-isomorphism, first observe that the $(\mathcal{L}\dashv \mathcal{C})$-counit $\mathcal{L}C_X\to \Lambda_X$ is a quasi-isomorphism.
After taking universal enveloping algebras we get a quasi-isomorphism $\Omega C_X =\mathcal{U}\mathcal{L} C_X\to \mathcal{U}\Lambda_X$ (Lemma \ref{lem:UniEnvAlgQIso}).
Making use of the cofibrant replacement $\varrho_X\colon \mathfrak{g}_X\to \mathcal{P}\widehat{\mathbb{Q}}[\mathbb{G}X]$ of simplicial Lie algebras \eqref{eqn:CofibRepl}, we have a commuting diagram of morphisms of dg algebras
\[
\begin{tikzcd}
\mathcal{U}N\mathfrak{g}_X
\ar[r, "\chi", "\sim"']
\ar[d, "\mathcal{U}N \varrho_X"', "\sim"]
&
N\mathcal{U}\mathfrak{g}_X
\ar[r, "N\kappa", "\sim"']
\ar[d, "N\mathcal{U}\varrho_X"', "\sim"]
&
N\widehat{\mathcal{U}}\mathfrak{g}_X
\ar[dr, bend left=10, "N\theta_X", "\sim"']
\\
\mathcal{U}N\mathcal{P}\widehat{\mathbb{Q}}[\mathbb{G}X]
\ar[r, "\chi", "\sim"']
&
N\mathcal{U}\mathcal{P}\widehat{\mathbb{Q}}[\mathbb{G}X]
\ar[r, "N\kappa"]
&
N\widehat{\mathcal{U}}\mathcal{P}\widehat{\mathbb{Q}}[\mathbb{G}X]
\ar[r, "N\epsilon"]
&
N\widehat{\mathbb{Q}}[\mathbb{G}X]\,.
\end{tikzcd}
\]
The quasi-isomorphisms labelled \lq\lq$\chi$'' are from Lemma \ref{lem:Chi}, the quasi-isomorphisms labelled \lq\lq$\kappa$'' arise from taking normalised chains of completion maps, $\theta_X$ is the weak equivalence of Lemma \ref{lem:ThetaisWE}, and the left-hand vertical arrow is a quasi-isomorphism by Lemma \ref{lem:UniEnvAlgQIso}.
The bottom composite $\mathcal{U}N\mathcal{P}\widehat{\mathbb{Q}}[\mathbb{G}X] =\mathcal{U}\Lambda_X\to N\widehat{\mathbb{Q}}[\mathbb{G}X]$ is thus a quasi-isomorphism, hence so too is the composite $\Omega C_X \to \mathcal{U}\Lambda_X \to N\widehat{\mathbb{Q}}[\mathbb{G}X]$.
The reader can check that $X\mapsto (\Omega N_\bullet X\to N\widehat{\mathbb{Q}}[\mathbb{G}X]\leftarrow \Omega C_X)$ is natural.
\end{proof}

\begin{construction}[Bar construction]
Let $A$ be an augmented dg algebra with augmentation ideal $\overline{A}= \ker(A\to \mathbb{Q})$. 
Recall that the \emph{bar construction} of $A$ is the dg coalgebra $BA$ whose underlying graded coalgebra is the (cofree) tensor coalgebra $T \overline{A}[-1]$ on $\overline{A}[-1]$.
The differential on $BA$ is the sum of the differential $d_\otimes$ on $T\overline{A}[-1]$ induced from that of $\overline{A}$ with the unique coderivation $\delta$ for which the following diagram commutes:
\[
\begin{tikzcd}
BA 
\ar[r, "\delta"]
\ar[d, "\text{proj.}"']
&
BA
\ar[d, "\text{proj.}"]
\\
\overline{A}[-1]\otimes \overline{A}[-1]
\ar[r, "s \mu (s^{-1}\otimes s^{-1})"]
&
\overline{A}[-1]\,.
\end{tikzcd}
\]
Here $s\colon \overline{A}\to \overline{A}[-1]$ is the shift functor, $s^{-1}$ is its inverse, and $\mu \colon \overline{A}\otimes \overline{A}\to \overline{A}$ is induced from the multiplication of $A$.
Thus, the differential on $BA$ has the form
\[
d(sa_1\otimes \dotsb sa_n)
= d_{\otimes}
(sa_1\otimes \dotsb \otimes sa_n)
+
\sum_{i=1}^{n-1} (-1)^{i-1 + \sum_{k=1}^i|a_k|}
sa_1\otimes \dotsb \otimes s \mu (a_i, a_{i+1})\otimes
\dotsb
\otimes sa_n\,.
\]
The assignment $A\mapsto BA$ is functorial, preserves quasi-isomorphisms, 
and determines a right adjoint to the cobar construction $\Omega$ (for not necessarily cocommutative dg coalgebras).
In the case that $A$ is connective and $C$ is 2-reduced both the unit $C\to B\Omega C$ and counit $\Omega B A\to A$  are natural quasi-isomorphisms (\cite{loday_algebraic_2012} is a good reference).
\end{construction}

\begin{corollary}
\label{cor:HomologyModels}
Let $X$ be a 2-reduced simplicial set. Then for any 2-reduced simplicial set $X$ there is a natural diagram
\[
\begin{tikzcd}
N_\bullet X
\ar[r, "\sim"]
&
\mathbf{B} X:= B\big(N\widehat{\mathbb{Q}}[\mathbb{G}X]\big)
\ar[r, leftarrow, "\sim"]
&
C_X
\end{tikzcd}
\]
of quasi-isomorphisms of 2-reduced dg coalgebras.
\end{corollary}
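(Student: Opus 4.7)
The plan is to apply the bar construction $B$ to the zig-zag of dg algebra quasi-isomorphisms furnished by Lemma \ref{lem:LoopSpaceModels} and then to use bar-cobar duality to recognise the outer terms as $N_\bullet X$ and $C_X$. First I would check the relevant reduction conditions: since $X$ is 2-reduced, $\mathbb{G}X$ is a reduced simplicial group, and consequently $N\widehat{\mathbb{Q}}[\mathbb{G}X]$ is a reduced (in particular connective) augmented dg algebra; likewise $N_\bullet X$ is a 2-reduced dg coalgebra, $C_X$ is 2-reduced by construction, and a short computation shows the cobar constructions $\Omega N_\bullet X$ and $\Omega C_X$ of 2-reduced dg coalgebras are reduced augmented dg algebras (so their bar constructions are again 2-reduced).

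Next I would apply the bar construction to the natural zig-zag
\[
\Omega N_\bullet X\xrightarrow{\;\sim\;}N\widehat{\mathbb{Q}}[\mathbb{G}X]\xleftarrow{\;\sim\;}\Omega C_X
\]
of Lemma \ref{lem:LoopSpaceModels}. Since $B$ preserves quasi-isomorphisms of connective augmented dg algebras and produces dg coalgebra maps, this yields a natural diagram
\[
B\Omega N_\bullet X\;\xrightarrow{\;\sim\;}\;\mathbf{B}X\;\xleftarrow{\;\sim\;}\;B\Omega C_X
\]
of quasi-isomorphisms of 2-reduced dg coalgebras.

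Finally, I would invoke the bar-cobar duality statement recalled prior to this corollary: for any 2-reduced coaugmented dg coalgebra $C$, the unit $C\to B\Omega C$ is a natural quasi-isomorphism of dg coalgebras. Applied to $C=N_\bullet X$ and to $C=C_X$, this gives natural quasi-isomorphisms $N_\bullet X\xrightarrow{\sim}B\Omega N_\bullet X$ and $C_X\xrightarrow{\sim}B\Omega C_X$. Composing with the two maps of the previous paragraph produces the required natural zig-zag
\[
N_\bullet X\xrightarrow{\;\sim\;}B\Omega N_\bullet X\xrightarrow{\;\sim\;}\mathbf{B}X\xleftarrow{\;\sim\;}B\Omega C_X\xleftarrow{\;\sim\;}C_X
\]
of 2-reduced dg coalgebras, which collapses to the desired diagram upon composing the two forward and two backward arrows. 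Naturality in $X$ is automatic from naturality of Lemma \ref{lem:LoopSpaceModels} and of the bar-cobar unit, and there is no real obstacle beyond this bookkeeping: the substantive homotopical input---the comparison of $\Omega N_\bullet X$, $N\widehat{\mathbb{Q}}[\mathbb{G}X]$, and $\Omega C_X$ via Brown-Szczarba twisting chains together with Zeeman's theorem---has already been carried out in Lemma \ref{lem:LoopSpaceModels}.
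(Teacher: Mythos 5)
Your proposal is correct and follows exactly the paper's argument: apply the bar construction to the zig-zag of Lemma \ref{lem:LoopSpaceModels}, use that $B$ preserves quasi-isomorphisms, compose with the natural bar-cobar units $N_\bullet X\to B\Omega N_\bullet X$ and $C_X\to B\Omega C_X$, and note the reducedness conditions ensure all coalgebras are 2-reduced. No discrepancies.
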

\begin{proof}
Apply the bar construction to the zig-zag of quasi-isomorphisms of Lemma \ref{lem:LoopSpaceModels} and compose with $(\Omega\dashv B)$-units.
Both $N_\bullet X$ and $C_X$ are 2-reduced by construction; the dg algebra $N\widehat{\mathbb{Q}}[\mathbb{G}X]$ is reduced so its bar construction is 2-reduced. 
\end{proof}
\begin{construction}
Let $C$ be a (not necessarily cocommutative) dg coalgebra and $N$, $M$ respectively a right and left $C$-comodule.
The \emph{two-sided cobar construction} is 
\[
\Omega(N;C;M) := \big(N\otimes_{t_C} \Omega C\big)\otimes_{\Omega C}\big( \Omega C\otimes_{t_C} M\big)\cong N\otimes_{t_C} \Omega C\otimes_{t_C} M\,,
\]
with $t_C\colon C\rightsquigarrow \Omega C$ the universal twisting chain (and using the evident notion of $t_C$-twisted extensions of right $C$-comodules, compare Construction \ref{cons:TwistedExt}).
\end{construction}
\begin{lemma}
\label{lem:2SidedCobar}
Let $Z\to X\leftarrow Y$ be a diagram of 2-reduced simplicial sets. 
Then there is a zig-zag of quasi-isomorphisms
\[
\Omega\big(N_\bullet Z; N_\bullet X; N_\bullet Y\big)
\longrightarrow
\Omega\big(\mathbf{B} Z;\mathbf{B} X;\mathbf{B} Y\big)
\longleftarrow
\Omega
 \big(
 C_Z;C_X;C_Y\big)\,.
\]
If $Z=\ast$ then the left-hand arrow is  a map of $\Omega N_\bullet X$-modules and the right-hand arrow is a map of $\Omega C_X$-modules.
\end{lemma}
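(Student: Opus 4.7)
The plan is to exhibit both arrows as $\Omega(-;-;-)$ applied to the natural zig-zag of Corollary \ref{cor:HomologyModels}, and then to show that the resulting maps are quasi-isomorphisms by a word-length filtration on $\Omega C$.

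To construct the maps, observe that the simplicial maps $Z \to X$ and $Y \to X$ equip $N_\bullet Z$ (respectively $\mathbf{B}Z$, $C_Z$) with a right comodule structure over $N_\bullet X$ (respectively $\mathbf{B}X$, $C_X$), and analogously on the left for $Y$. Naturality of Corollary \ref{cor:HomologyModels} in the 2-reduced simplicial set ensures that the quasi-isomorphisms $N_\bullet W \to \mathbf{B}W \leftarrow C_W$ intertwine these coactions along $N_\bullet X \to \mathbf{B}X \leftarrow C_X$. Since $\Omega(-;-;-)$ is manifestly functorial in triples consisting of a coaugmented coassociative dg coalgebra together with compatible one-sided comodules, this produces the candidate morphisms of chain complexes.

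The main step is to show these morphisms are quasi-isomorphisms. I would filter $\Omega(N;C;M) = N \otimes_{t_C} \Omega C \otimes_{t_C} M$ by word length in $\Omega C = T\overline{C}[1]$: let $F_p$ consist of tensors with at most $p$ factors drawn from $\overline{C}[1]$. The internal differentials $d_N, d_C, d_M$ preserve $F_p$, while both the $\Delta_C$-contribution to the cobar differential and the boundary twisting terms at $N$ and $M$ strictly raise $p$ by one. The associated graded is therefore the K\"unneth complex $N \otimes \overline{C}[1]^{\otimes p} \otimes M$ carrying only the internal differentials. Because each of $N_\bullet X$, $\mathbf{B}X$, and $C_X$ is 2-reduced, $\overline{C}[1]^{\otimes p}$ lies in degrees $\geq p$, and since $N_\bullet Z$, $\mathbf{B}Z$, $C_Z$ (and their $Y$-counterparts) are concentrated in non-negative degrees, the filtration is bounded in each total degree; the resulting spectral sequence converges. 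Its $E^2$-page is manifestly functorial in the triple $(H_\bullet(N), H_\bullet(C), H_\bullet(M))$ with its induced coalgebra/comodule structure, so Zeeman's comparison theorem applied to the morphism of convergent spectral sequences induced by our zig-zag reduces the claim to the homology isomorphisms of Corollary \ref{cor:HomologyModels}.

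For the module refinement when $Z = \ast$, one has $\Omega(\ast; C; M) = \Omega C \otimes_{t_C} M$ with its natural left $\Omega C$-action by multiplication on the first factor. Functoriality of the cobar construction attaches to any morphism of coaugmented dg coalgebras $f \colon C \to C'$ a dg algebra morphism $\Omega f \colon \Omega C \to \Omega C'$ compatible with the universal twisting chains, so that the induced map $\Omega C \otimes_{t_C} M \to \Omega C' \otimes_{t_{C'}} M'$ is an $\Omega C$-module map (with $\Omega C$ acting on the target through $\Omega f$). Applied to $N_\bullet X \to \mathbf{B}X$ and $C_X \to \mathbf{B}X$, this yields the claimed $\Omega N_\bullet X$- and $\Omega C_X$-linearity. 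The main subtlety to verify is that the filtration/spectral sequence argument above uses only coassociativity and 2-reducedness of the coalgebra, not cocommutativity---this matters because $\mathbf{B}X$ is only coassociative, as $N\widehat{\mathbb{Q}}[\mathbb{G}X]$ is merely a monoid in $\mathrm{sVect}_\mathbb{Q}$ rather than a Hopf algebra.
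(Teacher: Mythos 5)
Your proposal is correct and follows essentially the same route as the paper: the maps are built from the naturality of Corollary \ref{cor:HomologyModels}, the two-sided cobar construction is filtered by word length so that all twisting contributions to the differential vanish on the associated graded, convergence is secured by 2-reducedness, and the module structures come from functoriality of the cobar construction (the paper likewise only needs coassociativity here). The one cosmetic slip is the direction of your filtration: since the twisting and coproduct terms raise word length, the subspaces ``at most $p$ factors'' are not preserved by the differential, and you should instead use the decreasing word-length filtration (equivalently the paper's $\mathcal{F}_p\Omega C=\{n\geq -p\}$); with that fix the associated graded is exactly the K\"unneth complex you describe, and the ordinary mapping lemma for convergent spectral sequences (rather than Zeeman's theorem) already finishes the argument.
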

\begin{proof}
For $C$ a 2-reduced dg coalgebra, the filtration $\mathcal{F}_p \Omega C =\big\{\sum tc_1\otimes \dotsb \otimes tc_n \;\big|\; n\geq -p\big\}$ is exhaustive and bounded below and induces a filtration on the two-sided bar construction 
\[
\mathcal{F}_p \Omega (N;C;M) = \Big\{
\sum n\otimes t \otimes m \;\Big| \; t\in \mathcal{F}_p\Omega C\Big\}
\]
with the same properties.

The naturality clause of Corollary \ref{cor:HomologyModels} implies that we have a commuting diagram of coalgebra morphisms
\[
\begin{tikzcd}
N_\bullet Z
\ar[r]
\ar[d, "f_Z"', "\sim"]
&
N_\bullet X
\ar[r, leftarrow]
\ar[d, "f_X"', "\sim"]
&
N_\bullet Y
\ar[d, "\sim"', "f_Y"]
\\
\mathbf{B} Z
\ar[r]
&
\mathbf{B} X
\ar[r, leftarrow]
&
\mathbf{B} Y
\end{tikzcd}
\]
in which all vertical arrows are quasi-isomorphisms.
The left-hand square induces a map of right $\Omega N_\bullet X$-modules
$
N_\bullet Z\otimes_{t_{N_\bullet X}} \Omega N_\bullet X
\to
N_\bullet Z\otimes_{t_{\mathbf{B} X}} \Omega \mathbf{B} X
\to
\mathbf{B}Z\otimes_{t_{\mathbf{B} X}} \Omega \mathbf{B} X
$,
similarly the right-hand square induces a map of left $\Omega N_\bullet X$-modules
$
\Omega N_\bullet X\otimes_{t_{N_\bullet X}} N_\bullet Y
\to
\Omega \mathbf{B}X\otimes_{t_{\mathbf{B}X}} \mathbf{B} Y
$.
From this, we obtain a map of two-sided cobar constructions
\[
\Omega\big(N_\bullet Z; N_\bullet X; N_\bullet Y\big)
\longrightarrow
\Omega\big(\mathbf{B} Z;\mathbf{B} X;\mathbf{B} Y\big)\,.
\]
This morphism respects filtrations so determines a map of spectral sequences.
On $E^0$-pages, this map takes the form
\[
N_\bullet Z\otimes \big(s^{-1} \overline{N_\bullet X}\big)^{\otimes p}
\otimes N_\bullet Y
\xrightarrow{
\;
f_Z\otimes (s^{-1}\circ \overline{f}_X \circ s)^{\otimes p}\otimes f_Y
\;}
\mathbf{B} Z\otimes \big(s^{-1} \overline{\mathbf{B} X}\big)^{\otimes p}
\otimes \mathbf{B}Y\,,
\]
where the overline denotes taking the coaugmentation coideal.
The differentials on this page are untwisted (all terms in the differentials generated by twisting chains decrease the filtration degree) hence the induced map on $E^0$-pages is a quasi-isomorphism by Corollary \ref{cor:HomologyModels}.
By convergence, it follows that $\Omega\big(N_\bullet Z; N_\bullet X; N_\bullet Y\big)
\to
\Omega\big(\mathbf{B} Z;\mathbf{B} X;\mathbf{B} Y\big)$
is a quasi-isomorphism.
The quasi-isomorphism $\Omega\big(C_Z;C_X;C_Y\big)
\to
\Omega\big(\mathbf{B} Z;\mathbf{B} X;\mathbf{B} Y\big)$ is derived in a similar fashion.

Finally, if $Z = \ast$ then $N_\bullet Z = \mathbf{B} Z = C_Z = \mathbb{Q}$.
It follows that 
\[
\Omega\big(N_\bullet Z; N_\bullet X; N_\bullet Y\big)= \Omega N_\bullet X\otimes_{t_{N_\bullet X}} N_\bullet Y\longrightarrow
\Omega\big(\mathbf{B} Z; \mathbf{B} X; \mathbf{B} Y\big)=
 \Omega \mathbf{B}X\otimes_{t_{\mathbf{B}X}} \mathbf{B} Y
\] respect the left $\Omega
 N_\bullet X$-actions.
 Similarly for the right-hand map.
\end{proof}
\begin{remark}
The Eilenberg--Moore theorem allows us to interpret this last result as giving three equivalent models for the rational homology of the fibre product $Z\times_X Y$.
\end{remark}

Fix a retractive space $(X\xrightarrow{i}Y\xrightarrow{r}X)$ over $X$.
The pushout
\[
\begin{tikzcd}
X\ar[r]
\ar[d, "i"] 
\ar[dr, phantom, "\lrcorner", very near end]
&
\ast
\ar[d]
\\
Y
\ar[r, "\pi"]
&
X_! Y= Y/X
\end{tikzcd}
\]
is an $X_+$-comodule in $(\mathrm{sSet}_\ast, \wedge)$ with coaction determined by the commuting diagram
\[
\begin{tikzcd}
Y
\ar[r, "\Delta_Y"]
\ar[d, "\pi"']
&
Y\times Y
\ar[d, "r\wedge \pi"]
\\
Y/X
\ar[r, "\rho"]
&
X_+\wedge Y/X
\end{tikzcd}
\]
so that $\rho([y]) = r(y) \wedge [y]$.
Taking (reduced) rational homology, $\widetilde{H}_\bullet (Y/X)$ thus inherits the structure of a $H_\bullet(X)$-comodule.
\begin{lemma}
\label{lem:RatHomComodules2Red}
Let $Y\in \mathrm{sSet}_{\dslash X}$ be 2-reduced.
Then the image of $Y$ under the composite derived functor
\[
\begin{tikzcd}
Ho(\mathrm{sSet}_{\dslash X})
\ar[r, "\Sigma^\infty_X"]
&
Ho(\mathrm{Sp}^\Sigma_X)_\mathbb{Q}
\ar[r, "\sim"]
&
Ho(C_X\mathrm{-Comod}_{(t)})
\ar[r, "H_\bullet"]
&
H_\bullet(X)\mathrm{-Comod}
\end{tikzcd}
\]
is naturally isomorphic to $\widetilde{H}_\bullet(Y/X)$.
\end{lemma}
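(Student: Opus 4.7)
The plan is to identify, for any $2$-reduced retractive space $Y$ over $X$, the $C_X$-comodule associated to $\Sigma^\infty_X Y$ via the chain of Quillen equivalences in \eqref{masterplan} with the reduced normalised chain complex $\widetilde{N}_\bullet(Y/X)$, equipped with the $C_X$-comodule structure transported from its canonical $N_\bullet X$-comodule structure (induced by the retraction $r\colon Y\to X$) along the zig-zag of quasi-isomorphisms $N_\bullet X \to \mathbf{B}X \leftarrow C_X$ from Corollary \ref{cor:HomologyModels}. Once this is established, taking $H_\bullet$ immediately yields $\widetilde{H}_\bullet(Y/X;\mathbb{Q})$ with its natural $H_\bullet(X;\mathbb{Q})$-comodule structure, since the comparison maps in Corollary \ref{cor:HomologyModels} induce the identity on $H_\bullet(X;\mathbb{Q})$ and the base change equivalences of Lemma \ref{lem:KoszulNat} transport the coaction coherently.

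The bridge between the topological and comodule models is furnished by Brown--Szczarba twisting chains. Applied to the twisting function $\varphi_X\colon X\rightsquigarrow \mathbb{G}X$ encoding the principal bundle $\mathbb{P}X\to X$, together with its restriction along $r$, Corollary \ref{cor:Brown} provides natural quasi-isomorphisms of $\mathbb{G}X$-equivariant complexes $N_\bullet\mathbb{G}X\otimes_\tau N_\bullet Y \xrightarrow{\sim} N_\bullet\pi_X^\ast Y$; taking cofibres along the inclusion $N_\bullet X\hookrightarrow N_\bullet Y$ yields a natural quasi-isomorphism
\[
N_\bullet\mathbb{G}X\otimes_\tau \widetilde{N}_\bullet(Y/X)\xrightarrow{\;\sim\;}\widetilde{N}_\bullet(\mathbb{P}X_!\pi_X^\ast Y).
\]
Following this chain-level identification through normalisation, completion at the augmentation ideal, and the rectification adjunction of Section \ref{sec:Rect} — exactly as in the analysis in Section \ref{SSec:Whiteheadrep} leading to $\Theta(Y)$ — identifies the $\Lambda_X$-representation associated to $\Sigma^\infty_X Y$ with a twisted extension quasi-isomorphic to $\Omega C_X\otimes_t \widetilde{N}_\bullet(Y/X)$; here the identification of $N\widehat{\mathbb{Q}}[\mathbb{G}X]$ with $\Omega C_X$ (up to zig-zag) is provided by Lemma \ref{lem:LoopSpaceModels}. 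Invoking the Koszul duality of Section \ref{sec:Koszul}: by Lemma \ref{lem:KoszulCounit} the $\Omega C_X$-module $\Omega C_X\otimes_t M$ represents $\mathbf{L}t^!M$, so under the inverse $\mathbf{R}t_\ast$ it corresponds to $M$ up to $t$-equivalence. Combining this with the base change equivalences of Lemma \ref{lem:KoszulNat} along $C_X\to\mathbf{B}X\leftarrow N_\bullet X$ then transports $\widetilde{N}_\bullet(Y/X)$, naturally a $N_\bullet X$-comodule via $r$, to a $C_X$-comodule that is $t$-equivalent to the image of $\Sigma^\infty_X Y$.

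The main obstacle I expect is the bookkeeping in the first half of the second paragraph: verifying that the simplicial Brown--Szczarba twisting chain $\tau(\varphi_X)\colon N_\bullet X\rightsquigarrow N_\bullet\mathbb{G}X$ translates, through the tower of comparisons in \eqref{masterplan}, to the universal twisting chain $t\colon C_X\rightsquigarrow \Omega C_X$ underlying the Koszul duality. This translation passes successively through rationalisation, rectification of $(G_+,H\mathbb{Q})$-bimodules into $\mathbb{Q}[\mathbb{G}X]$-modules, completion at the augmentation ideal, normalisation (where the shuffle map and Lemma \ref{lem:Chi} play an essential multiplicative and comultiplicative role), symmetric stabilisation/assembly, and finally the dg Lie comparisons of $N\mathfrak{g}_X \to \Lambda_X = \mathcal{L}C_X$. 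Each stage is a weak equivalence and is natural, but assembling the compatibilities coherently so that the twisting data matches on the nose in the homotopy category — so that Koszul duality applied at the end recovers the expected $C_X$-comodule — is where the real work lies. Naturality in $Y$ then follows from the naturality of each of the comparisons employed, and the general case of arbitrary parametrised spectra is reduced to fibrewise suspension spectra by the same density argument used at the end of the proof of Theorem \ref{thm:LieRep} (although the present Lemma concerns only the case of fibrewise suspension spectra).
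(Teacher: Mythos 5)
Your plan follows essentially the same route as the paper's proof: Brown--Szczarba twisting chains applied to $\varphi_X$ and its restriction along $r$, the comparison zig-zag $N_\bullet X\to\mathbf{B}X\leftarrow C_X$ of Corollary \ref{cor:HomologyModels} (via Lemma \ref{lem:LoopSpaceModels}), and Koszul duality are exactly the ingredients used there, and your intermediate object $\Omega N_\bullet X\otimes_{\tau}\widetilde{N}_\bullet(Y/X)$ is the paper's $\mathbf{N}[Y/X]$. The only cosmetic difference is that the paper lands on Quillen's functorial cokernel $\overline{C}_{Y/X}$ (identified with $\widetilde{H}_\bullet(Y/X)$ via the pushout \eqref{eqn:CoalgPushout}) rather than on $\widetilde{N}_\bullet(Y/X)$ transported along the zig-zag, and the \lq\lq bookkeeping'' you defer is precisely the content of the coherence diagrams \eqref{eqn:ComodCoherence}--\eqref{eqn:ComodCoherenceIII} together with the final twisted-coextension comparison via the spectral sequences of Lemma \ref{lem:TwistingSS}.
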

\begin{proof}
The diagram $X\to Y\to X$ is represented in rational homotopy theory by the diagram of 2-reduced dg coalgebras $C_X \to C_Y \to C_X$, where the composite is the identity.
Consider the iterated pushout diagram of chain complexes
\begin{equation}
\label{eqn:CoalgPushout}
\begin{tikzcd}
C_X
\ar[r]
\ar[d]
\ar[dr, phantom, "\lrcorner", very near end]
&
\mathbb{Q}
\ar[d]
\ar[r]
\ar[dr, phantom, "\lrcorner", very near end]
&
0
\ar[d]
\\
C_Y
\ar[r]
&
C_{Y/X}
\ar[r]
&
\overline{C}_{Y/X}\,.
\end{tikzcd}
\end{equation}
The map of coalgebras $C_X\to C_Y$ is injective, hence a cofibration, so that the left-hand square exhibits a homotopy cofibre sequence of 2-reduced dg coalgebras.
In particular, by Quillen's rational homotopy equivalences $C_{Y/X}$ computes computes the rational homology of $Y/X$.

As $C_X$ is a retract of $C_Y$, the exact sequence $0\to C_X\to C_Y\to \overline{C}_{Y/X}\to 0$ splits.
The coalgebra map $C_Y\to C_X$ makes $C_Y$ a left $C_X$-comodule, and $\overline{C}_{Y/X}$ inherits the structure of a $C_X$-comodule via the splitting $C_Y\cong C_X \oplus \overline{C}_{Y/X}$.
As the iterated pushout diagram \eqref{eqn:CoalgPushout} is compatible with $C_X$-coactions in the obvious sense, we have that  $H_\bullet(\overline{C}_{Y/X})\cong\widetilde{H}_\bullet(Y/X)$ and $H_\bullet(X)$-comodules.

Using Lemma \ref{lem:2SidedCobar} we construct a commuting diagram of chain complexes
\begin{equation}
\label{eqn:ComodCoherence}
\begin{tikzcd}
\Omega N_\bullet X\otimes_{t_{N_\bullet X}} N_\bullet X
\ar[r, "\sim"]
\ar[d]
&
\Omega \mathbf{B}X\otimes_{t_{\mathbf{B} X}}
\mathbf{B} X
\ar[r, leftarrow, "\sim"]
\ar[d]
&
\Omega C_X 
\otimes_{t_{C_X}} C_X
\ar[d]
\\
\Omega N_\bullet X\otimes_{t_{N_\bullet X}} N_\bullet Y
\ar[r, "\sim"]
&
\Omega \mathbf{B}X\otimes_{t_{\mathbf{B} X}}
\mathbf{B} Y
\ar[r, leftarrow, "\sim"]
&
\Omega C_X 
\otimes_{t_{C_X}} C_Y
\end{tikzcd}
\end{equation}
in which all horizontal arrows are quasi-isomorphisms, all arrows on the left-hand side are maps of $\Omega N_\bullet X$-modules, and all maps on the right-hand side are maps of $\Omega C_X$-modules.
The complexes in the top row are all acyclic (cf~Lemma \ref{lem:KoszulCounit}).

Composing the twisting function $\varphi_X\colon X\rightsquigarrow \mathbb{G}X$ with the retraction $r\colon Y\to X$ gives a new twisting function $\varphi_r\colon Y\rightsquigarrow \mathbb{G}X$ whose associated simplicial principal bundle is the iterated pullback
\[
\begin{tikzcd}
\pi^\ast_X Y
\cong
\mathbb{G}X\times_{\varphi_r} Y  \ar[r]
\ar[d]
\ar[dr, phantom, "\ulcorner", very near start]
&
\mathbb{P}X \cong \mathbb{G}X\times_{\varphi_X} X
\ar[d]
\ar[r]
\ar[dr, phantom, "\ulcorner", very near start]
&
W\mathbb{G}X
\ar[d]
\\
Y
\ar[r, "r"]
&
X
\ar[r]
&
\overline{W}\mathbb{G}X\,.
\end{tikzcd}
\]
By Brown's theorem (Corollary \ref{cor:Brown}) there is a quasi-isomorphism
$
N_\bullet \mathbb{G} X\otimes_{\tau(\varphi_r)} N_\bullet Y \to N_\bullet \pi^\ast_X Y 
$
commuting with left $N_\bullet \mathbb{G}X$-actions.
Arguing as in Lemma \ref{lem:LoopSpaceModels} we get a quasi-isomorphism of $\Omega N_\bullet X$-modules
$ 
\Omega N_\bullet X\otimes_{t_{N_\bullet X}} N_\bullet Y\to  N_\bullet \pi^\ast_X Y$  (this requires that $Y$ is 1-connected).

Recall \eqref{eqn:CompletionAtAugIdeal} that completion at the augmentation ideal $\kappa_X \colon \mathbb{Q}[\mathbb{G}X]\to \widehat{\mathbb{Q}}[\mathbb{G}X]$ is a weak equivalence.
Since the base space $X$ is fixed throughout, let us write $\kappa = \kappa_X$ to ease the notation in what follows.
As $\mathbb{Q}[\pi^\ast_X Y]$ is a cofibrant $\mathbb{Q}[\mathbb{G}X]$-module, the base change unit map $\mathbb{Q}[\pi^\ast_X Y]\to \kappa^\ast\kappa_! \mathbb{Q}[\pi^\ast_X Y]$ is a weak equivalence of $\mathbb{Q}[\mathbb{G}X]$-modules.
Passing to normalised chains we get a quasi-isomorphism of $N_\bullet \mathbb{G}X$-modules $N_\bullet
 \pi^\ast_X Y \to N \kappa_! \mathbb{Q}[\pi^\ast_X Y]$.
We can thus extend the diagram \eqref{eqn:ComodCoherence} as:
\begin{equation}
\label{eqn:ComodCoherenceII}
\begin{tikzcd}
N \kappa_!\mathbb{Q}[\mathbb{P}X]
\ar[r, leftarrow, "\sim"]
\ar[d]
&
\Omega N_\bullet X\otimes_{t_{N_\bullet X}} N_\bullet X
\ar[r, "\sim"]
\ar[d]
&
\Omega \mathbf{B}X\otimes_{t_{\mathbf{B} X}}
\mathbf{B} X
\ar[r, leftarrow, "\sim"]
\ar[d]
&
\Omega C_X 
\otimes_{t_{C_X}} C_X
\ar[d]
\\
N \kappa_!\mathbb{Q}[\pi^\ast_X Y]
\ar[r, leftarrow, "\sim"]
&
\Omega N_\bullet X\otimes_{t_{N_\bullet X}} N_\bullet Y
\ar[r, "\sim"]
&
\Omega \mathbf{B}X\otimes_{t_{\mathbf{B} X}}
\mathbf{B} Y
\ar[r, leftarrow, "\sim"]
&
\Omega C_X 
\otimes_{t_{C_X}} C_Y\,.
\end{tikzcd}
\end{equation}
There is a split exact sequence of $\mathbb{Q}[\mathbb{G}X]$-modules $
\mathbb{Q}[\mathbb{P}X]\to
\mathbb{Q}[\pi^\ast_X Y]
\to
\widetilde{\mathbb{Q}}[\mathbb{P}X_!\pi^\ast_XY]$ giving rise to a split cofibre sequence of $N\widehat{\mathbb{Q}}[\mathbb{G}X]$-modules
$
N\kappa_!\mathbb{Q}[\mathbb{P}X]\to
N\kappa_!\mathbb{Q}[\pi^\ast_X Y]
\to
N\kappa_!\widetilde{\mathbb{Q}}[\mathbb{P}X_!\pi^\ast_XY]
$.
The cokernels of the vertical maps in \eqref{eqn:ComodCoherenceII} are related by a zig-zag of quasi-isomorphisms
\begin{equation}
\label{eqn:ComodCoherenceIII}
\begin{tikzcd}[sep = tiny]
N\kappa_!\widetilde{\mathbb{Q}}[\mathbb{P}X_!\pi^\ast_XY]
\ar[r, leftarrow]
&
\underbrace{\Omega N_\bullet X\otimes_{t_{N_\bullet X}} \big(N_\bullet Y/N_\bullet X\big)}_{=:\mathbf{N}[Y/X]}
\ar[r]
&
\underbrace{\Omega\mathbf{B}X\otimes_{t_{\mathbf{B}X}}\big(\mathbf{B} Y/\mathbf{B}X\big)}_{=: \mathbf{B}[Y/X]}
\ar[r, leftarrow]
&
\Omega C_X \otimes_{t_{C_X}} \overline{C}_{Y/X}
\end{tikzcd}
\end{equation}
 By restricting  along the algebra quasi-isomorphism $\mathcal{U}\Lambda_X \to N\widehat{\mathbb{Q}}[\mathbb{G}X]$, $N\kappa_!\widetilde{\mathbb{Q}}[\mathbb{P}X_!\pi^\ast_XY]$ inherits the structure of a $\Lambda_X$-representation.
By results of the previous section this representation models the image of $\Sigma^\infty_X Y$ under the equivalence $Ho(\mathrm{Sp}_X)_\mathbb{Q}\to Ho(\Lambda_X\mathrm{-Rep}_\mathrm{dg})$.
Hence, the image of $\Sigma_X^\infty Y$ under the composite equivalence $Ho(\mathrm{Sp}_X)_\mathbb{Q}\to Ho(\Lambda_X\mathrm{-Rep}_\mathrm{dg})\to Ho(C_X\mathrm{-Comod}_{(t)})$ is modelled by the $C_X$-comodule
\[
C_X \otimes^{t_{C_X}}N\kappa_!\widetilde{\mathbb{Q}}[\mathbb{P}X_!\pi^\ast_XY]\,.
\]
Using the twisting chains corresponding to the homomorphisms of dg algebras $\Omega C_X\to N\widehat{\mathbb{Q}}[\mathbb{G}X]$, $\Omega \mathbf{B} X\to N\widehat{\mathbb{Q}}[\mathbb{G}X]$, and $\Omega N_\bullet X\to N\widehat{\mathbb{Q}}[\mathbb{G}X]$ we construct a diagram of twisted coextensions
\[
\begin{tikzcd}[sep=small]
C_X \otimes^{t_{C_X}} N\kappa_! \widetilde{\mathbb{Q}}[\mathbb{P}X_! \pi^\ast_X Y]
\ar[r]
&
\mathbf{B}X\otimes^{t_{\mathbf{B}X}} N\kappa_! \widetilde{\mathbb{Q}}[\mathbb{P}X_! \pi^\ast_X Y]
\ar[r, leftarrow]
&
N_\bullet X\otimes^{t_{N_\bullet X}} N\kappa_! \widetilde{\mathbb{Q}}[\mathbb{P}X_! \pi^\ast_X Y]
\ar[d, leftarrow]
\\
C_X \otimes^{t_{C_X}} \Omega C_X\otimes_{t_{C_X}}\overline{C}_{Y/X}
\ar[d]
&&
N_\bullet X \otimes^{t_{N_\bullet X}} \mathbf{N}[Y/X]
\ar[d]
\\
C_X\otimes^{t_{C_X}}
\mathbf{B}[Y/X]
\ar[r]
&
\mathbf{B} X\otimes^{t_{\mathbf{B} X}}
\mathbf{B}[Y/X]\ar[r,leftarrow]
&
N_\bullet X\otimes^{t_{N_\bullet X}}
\mathbf{B}[Y/X]\end{tikzcd}
\]
in which the vertical arrows are induced by the morphisms \eqref{eqn:ComodCoherenceIII} and each arrow is colinear in the appropriate sense.
Each of the coalgebras $C_X$, $\mathbf{B} X$ and $N_\bullet X$ is 2-reduced so applying the comparison theorem to the spectral sequences of Lemma \ref{lem:TwistingSS} shows that each of the above arrows is a quasi-isomorphism.
Since the unit morphism $\overline{C}_{Y/X} \to C_X\otimes^{t_{C_X}}\Omega C_X\otimes_{t_{C_X}} \overline{C}_{Y/X}$ is a quasi-isomorphism of $C_X$-comodules, the result is proven.
\end{proof}

The previous proof depends crucially on the condition that $Y$ is 2-reduced.
More generally, given an arbitrary retractive space $Z\in \mathrm{sSet}_{\dslash X}$ we reduce to the 2-reduced case as follows.
Assume without loss of generality that $X$ and $Z\in \mathrm{sSet}_{\dslash X}$ are fibrant.
Applying fibrewise suspension to the homotopy fibre sequence $F\to Z\to X$ we get the homotopy fibre sequence
$
\Sigma^2 F\to \Sigma^2_X Z\to X
$, from which it follows that $\Sigma^2_X Z$ is simply-connected.
Take a fibrant replacement $\begin{tikzcd}[cramped, sep=small](\Sigma^2_X Z)^f \ar[r, two heads] &X \end{tikzcd}$ of $\Sigma^2_X Z\to X$ and choose any vertex $z\in (\Sigma^2_X Z)^f$.
By fibrancy, the map from the second Eilenberg subcomplex $E_2((\Sigma^2_X Z)^f, z)\to (\Sigma^2_X Z)^f$ is a weak equivalence (cf~Remark \ref{rem:RedsSet} and Section \ref{sec:Equivalences}).
Applying Lemma \ref{lem:RatHomComodules2Red} to $E_2((\Sigma^2_X Z)^f, z)$ we now find that the image of $\Sigma^2_X Z$ under the composite derived functor 
\begin{equation}
\label{eqn:ComoduleCompositeFun}
\begin{tikzcd}
Ho(\mathrm{sSet}_{\dslash X})
\ar[r, "\Sigma^\infty_X"]
&
Ho(\mathrm{Sp}^\Sigma_X)_\mathbb{Q}
\ar[r, "\sim"]
&
Ho(C_X\mathrm{-Comod}_{(t)})
\ar[r, "H_\bullet"]
&
H_\bullet(X)\mathrm{-Comod}
\end{tikzcd}
\end{equation}
is naturally isomorphic to $\widetilde{H}_\bullet (\Sigma^2_X Z/X)$.
But there are isomorphisms $X_!(\Sigma^2_X Z)\cong \Sigma^2 (Z/X)$ and $\Sigma^\infty_X \Sigma^2_X Z\cong \Sigma^2_X \Sigma^\infty_X Z$, so we find that the composite derived functor \eqref{eqn:ComoduleCompositeFun} sends $\Sigma^2_X Z$ to the $H_\bullet(X)$-comodule $\widetilde{H}_\bullet(\Sigma^2 (Z/X)) \cong \widetilde{H}_{\bullet+2} (Z/X)$.
By the stability of $Ho(\mathrm{Sp}^\Sigma_X)_\mathbb{Q}$, there are equivalences
$
\Sigma^\infty_X Z \cong \Omega^2_X\Sigma^2_X \Sigma^\infty_X Z
\cong 
\Omega^2_X \Sigma^\infty_X \Sigma^2_X Z
$.
The equivalence of categories $Ho(\mathrm{Sp}^\Sigma)_\mathbb{Q}\to Ho(C_X\mathrm{-Comod}_{(t)})$ commutes with shifts, from which it follows that the retractive space $Z$  is sent under the composite derived functor \eqref{eqn:ComoduleCompositeFun} to the $H_\bullet(X)$-comodule
\[
\widetilde{H}_{\bullet-2}\big(\Sigma^2(Z/X)\big) \cong \widetilde{H}_\bullet\big(Z/X\big)\,.
\]
Thus the diagram of functors 
\[
\begin{tikzcd}[sep=tiny]
&
Ho(\mathrm{Sp}_X)_\mathbb{Q}
\ar[rr, "\sim"]
&&
Ho(C_X\mathrm{-Comod}_{(t)})
\ar[dr, bend left=10, "H_\bullet"]
&
\\
Ho(\mathrm{sSet}_{\dslash X})
\ar[ur, bend left=10, "\Sigma^\infty_X"]
\ar[drr, bend left=-10, "\Sigma^\infty_X"']
&&&&
H_\bullet(X)\mathrm{-Comod}
\\
&&
Ho(\mathrm{Sp}_X)_\mathbb{Q}
\ar[urr, bend left=-10, "\mathbold{H}_\bullet^X"']
&&
\end{tikzcd}
\]
commutes up to natural isomorphism.
$Ho(\mathrm{Sp}_X)$ is generated by fibrewise suspension spectra under shifts and sequential homotopy colimits and all of the functors in the above diagram commute with these operations.
This completes the proof of the following
\begin{theorem}
\label{thm:Comod}
The diagram of functors
\[
\begin{tikzcd}
Ho(\mathrm{Sp}_X)_\mathbb{Q}
\ar[r, "\sim"]
\ar[dr, bend right =15, "\mathbold{H}_\bullet^X"']
&
Ho(C_X\mathrm{-Comod}_{(t)})
\ar[d, "H_\bullet"]
\\
&
H_\bullet(X)\mathrm{-Comod}
\end{tikzcd}
\]
commutes up to natural isomorphism.
\end{theorem}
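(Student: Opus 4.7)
The plan is to exploit the fact that both functors in the diagram commute with suspensions, desuspensions, and sequential homotopy colimits, and that $Ho(\mathrm{Sp}_X)_\mathbb{Q}$ is compactly generated by fibrewise suspension spectra of retractive spaces (Example \ref{exm:FibSuSpectraAndLoopModules}). It therefore suffices to produce a natural isomorphism on objects of the form $\Sigma^\infty_X Y$ for $Y\in \mathrm{sSet}_{\dslash X}$. My first reduction will be to treat the case where $Y$ is $2$-reduced as a retractive space (i.e.\ the collapse $Y/X$ is $2$-reduced); the general case is then recovered by a stability argument, replacing $Y$ by the fibrewise double suspension $\Sigma^2_X Y$, truncating the total space by an Eilenberg subcomplex to make it $2$-reduced, and finally desuspending twice in the (stable) homotopy category, using the natural identification $X_!(\Sigma^2_X Y)\cong \Sigma^2 (Y/X)$.

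For the $2$-reduced case I will need to model the image of $\Sigma^\infty_X Y$ under the equivalence $Ho(\mathrm{Sp}_X)_\mathbb{Q}\xrightarrow{\sim}Ho(C_X\mathrm{-Comod}_{(t)})$ explicitly and identify its homology with $\widetilde H_\bullet(Y/X)$ as an $H_\bullet(X;\mathbb{Q})$-comodule. Following the chain of Quillen equivalences established in Section \ref{sec:Equivalences}, $\Sigma^\infty_X Y$ is first carried to the $\mathbb{Q}[\mathbb{G}X]$-module spectrum $\Sigma^\infty \widetilde{\mathbb{Q}}[\mathbb{P}X_!\pi_X^\ast Y]$ via $\mathfrak{f}_X\circ \mathfrak{R}_{\mathbb{G}X}$, and must subsequently be transported across rectification, normalisation, and Koszul duality to land in $C_X\mathrm{-Comod}_{(t)}$. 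To make this tractable I will introduce the intermediate $2$-reduced dg coalgebra $\mathbf{B}X:=B(N\widehat{\mathbb{Q}}[\mathbb{G}X])$, which by the Eilenberg--Zilber/shuffle comparison and the weak equivalences of Lemma \ref{lem:LoopSpaceModels} is sandwiched between $N_\bullet X$ and $C_X$ by natural quasi-isomorphisms of dg coalgebras.

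The analytic core of the proof is the construction of a zig-zag of quasi-isomorphisms between three twisted tensor products modelling the chains on $\pi_X^\ast Y$, namely
\[
\Omega N_\bullet X\otimes_{t_{N_\bullet X}} N_\bullet Y
\;\xrightarrow{\sim}\;
\Omega \mathbf{B}X\otimes_{t_{\mathbf{B}X}} \mathbf{B}Y
\;\xleftarrow{\sim}\;
\Omega C_X\otimes_{t_{C_X}} C_Y,
\]
compatible with the evident left actions; this follows from a filtered comparison of spectral sequences of Lemma \ref{lem:TwistingSS}, where $2$-reducedness is essential to get manageable $E^2$-pages. Using Brown's simplicial twisting-function theorem (Corollary \ref{cor:Brown}) applied to the composite twisting function $Y\xrightarrow{r}X\rightsquigarrow \mathbb{G}X$, I will connect the leftmost term to $N\kappa_!\mathbb{Q}[\pi_X^\ast Y]$, and by taking cokernels of the retraction coming from $X\hookrightarrow Y\to X$ I will obtain a zig-zag of $\Omega C_X$-module quasi-isomorphisms whose right-most term is $\Omega C_X\otimes_{t_{C_X}}\overline{C}_{Y/X}$, where $\overline{C}_{Y/X}$ is the reduced cokernel coalgebra computing $\widetilde H_\bullet(Y/X)$. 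Cotwisting once more with $C_X$ (i.e.\ applying $t_\ast$) and invoking the twisting-chain spectral sequence of Lemma \ref{lem:TwistingSS} together with acyclicity of $\Omega C_X\otimes_{t_{C_X}} C_X$, I will obtain the required isomorphism of $C_X$-comodules, which on homology recovers the $H_\bullet(X;\mathbb{Q})$-comodule $\widetilde{H}_\bullet(Y/X)=\mathbold{H}_\bullet^X(\Sigma^\infty_X Y)$.

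The main obstacle I anticipate is the bookkeeping required to promote the pointwise quasi-isomorphisms to comodule (rather than merely chain-level) maps throughout the zig-zag. Naturality with respect to the retraction $r\colon Y\to X$ must be tracked simultaneously across three different models of the loop space chains and across the completion map $\kappa\colon \mathbb{Q}[\mathbb{G}X]\to\widehat{\mathbb{Q}}[\mathbb{G}X]$; each step requires verifying that the comparison maps intertwine the relevant coactions of $C_X$, $\mathbf{B}X$ or $N_\bullet X$. A secondary subtlety is the passage from the stabilised rational model to the coalgebra model: the composite equivalence must be computed explicitly on $\Sigma^\infty_X Y$ in order to confirm that the dg comodule produced really is $\Omega C_X\otimes_{t_{C_X}}\overline{C}_{Y/X}$ up to $t$-equivalence, which in turn relies crucially on the $2$-reducedness hypothesis in order to invoke Zeeman comparison.
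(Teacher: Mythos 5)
Your proposal follows essentially the same route as the paper's proof: reduction to fibrewise suspension spectra of $2$-reduced retractive spaces via double fibrewise suspension and Eilenberg subcomplexes, the intermediate coalgebra $\mathbf{B}X = B(N\widehat{\mathbb{Q}}[\mathbb{G}X])$ sandwiched between $N_\bullet X$ and $C_X$, the two-sided cobar/twisted tensor product zig-zag compared via Brown--Szczarba twisting chains and Zeeman's theorem, and the final passage through $t_\ast$ and the split cokernel $\overline{C}_{Y/X}$. The only quibble is that the relevant hypothesis is that the total space $Y$ itself be $2$-reduced (so that $C_X\to C_Y$ is a map of $2$-reduced coalgebras), not merely the collapse $Y/X$; your subsequent Eilenberg-subcomplex reduction does arrange exactly this, so the argument is sound.
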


\subsection{Rational homotopy classes of fibrewise stable maps}
Using the equivalences of Theorem \ref{thm:RatParStaHom}, in this section we show that rational homotopy classes of fibrewise stable maps over a simply-connected base can be computed either as
\begin{itemize}
  \item $\mathrm{Ext}$-groups in the homotopy category of modules over the Whitehead Lie algebra (Theorem \ref{thm:RatStabMapExt}), or
  \item Closed colinear maps between homology coalgebra comodules (Theorem \ref{thm:RatStabMapCoext}).
\end{itemize}
For both of these algebraic models we construct a spectral sequence that converges under some additional hypotheses.

Let $X$ be a simplicial set and $P$ and $Q$ respectively cofibrant and fibrant symmetric $X$-spectra, so that
\[
\pi^\mathrm{st}_\ast \underline{\mathrm{Sp}}^\Sigma_X (P,Q) = \{P,Q\}_X^\ast
\]
computes homotopy classes of  fibrewise stable maps $P\to Q$. 
Per the discussion of Section \ref{S:ratsmash}, rational homotopy classes of fibrewise stable maps are computed in terms of fibrewise $H\mathbb{Q}$-module spectra as
\[
\{P,Q\}_X^\ast\otimes_\mathbb{Z}\mathbb{Q}
=
\pi^\mathrm{st}_\ast\big( \underline{H\mathbb{Q}\mathrm{-Mod}}_X (H\mathbb{Q}\wedge P,H\mathbb{Q}\wedge Q)\big)
\,.
\]
\begin{remark}
Suppose that $X$ is connected so that $Q\in \mathrm{Sp}^\Sigma_X$ is equivalent to a fibrant $\Omega X_+$-module spectrum $\mathcal{E}$ (the connectedness constraint is imposed on $X$ in order to reduce the notational burden and is not essential for what follows).
In the case that $P=\Sigma^\infty_{X,+} Y$ is the pointed fibrewise suspension spectrum of a map $\tau\colon Y\to X$, homotopy classes of fibrewise stable maps $P\to Q$ compute the $\tau$-twisted $\mathcal{E}$-cohomology groups of $Y$, that is
$
\{\Sigma^\infty_{X,+} Y, Q\}_X^{k}
\cong \mathcal{E}^{\tau-k} (Y)
$\footnote{this is the very definition of twisted cohomology---see also \cite[Remark 2.68]{braunack-mayer_combinatorial_2019}.}.
\end{remark}

Let $X$ now be 2-reduced with Quillen Lie and coalgebraic models $\Lambda_X$ and $C_X$ respectively.
By Theorem \ref{thm:RatParStaHom} the equivalences of homotopy categories
\[
Ho(\mathrm{Sp}_X)_\mathbb{Q}\cong Ho(H\mathbb{Q}\mathrm{-Mod}_X) \cong Ho(\Lambda_X\mathrm{-Rep}_\mathrm{dg})\cong 
Ho(C_X\mathrm{-Comod}_{(t)})
\]
are strongly symmetric monoidal and pseudonatural in $X$.
\begin{theorem}
\label{thm:RatStabMapExt}
Let the $\Lambda_X$-representations $V$ and $W$ correspond to the $X$-parametrised spectra $P$ and $Q$ respectively under the equivalence of categories $Ho(\mathrm{Sp}_X)_\mathbb{Q}\cong Ho(\Lambda_X\mathrm{-Rep})$.
Then there is an isomorphism of graded rational vector spaces
\[
\mathrm{Ext}^\bullet_{\mathcal{U}\Lambda_X}(V,W)  \cong \{P,Q\}^\bullet_X\otimes_\mathbb{Z}\mathbb{Q}\,.
\]
\end{theorem}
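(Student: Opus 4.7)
The plan is to trace the rationalised graded abelian group $\{P,Q\}^\bullet_X \otimes_\mathbb{Z}\mathbb{Q}$ along the zig-zag of $\mathrm{Sp}^\Sigma$-enriched Quillen equivalences \eqref{masterplan}, at each stage exploiting the general principle that for any $\mathrm{Sp}^\Sigma$-model category $\mathcal{M}$ with cofibrant-fibrant pair $x,y$, the stable homotopy groups $\pi^\mathrm{st}_\ast \underline{\mathcal{M}}(x,y) = \{x,y\}^\ast_\mathcal{M}$ are preserved up to natural isomorphism by any $\mathrm{Sp}^\Sigma$-Quillen equivalence (after suitable fibrant/cofibrant replacement). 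The first move is to invoke Lemma \ref{lem:XSpecisSmashinglyRat}: as $\mathrm{Sp}^\Sigma_X$ is smashingly rational, the map $\kappa_{P,Q}$ identifies $\{P,Q\}^\bullet_X \otimes_\mathbb{Z}\mathbb{Q}$ with $\{H\mathbb{Q}\owedge_X P,\,(H\mathbb{Q}\owedge_X Q)^f\}^\bullet_{H\mathbb{Q}\mathrm{-Mod}_X}$.

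From there I would travel through each leg of \eqref{masterplan} in turn, applying Lemma \ref{lem:RatParamSpec}, Theorem \ref{thm:StrictGrpRing}, the base change equivalences along $\kappa_X$ and $\theta_X$, the stable Dold--Kan zig-zag (Theorem \ref{thm:StabDKLie}), the assembly functor $\mathfrak{A}_{N\mathfrak{g}_X}$, and the base change along the quasi-isomorphism $N\varrho_X\colon N\mathfrak{g}_X \to \Lambda_X$. At each node the enriched hom spectrum of the transported cofibrant-fibrant pair is stably equivalent to its image, and $\pi^\mathrm{st}_\ast$ yields a natural isomorphism of graded rational vector spaces. After reaching $\Lambda_X\mathrm{-Rep}_\mathrm{dg}$, with $V^c \to V$ and $W \to W^f$ a cofibrant-fibrant pair corresponding to $P$ and $Q$, it remains to identify
\[
\pi^\mathrm{st}_\ast \underline{\Lambda_X\mathrm{-Rep}}_\mathrm{dg}(V^c, W^f) \;\cong\; \mathrm{Ext}^\bullet_{\mathcal{U}\Lambda_X}(V,W).
\]
This is a direct consequence of the $\pi^\mathrm{st}_\ast \rightsquigarrow H_\bullet$ translation recorded in Remark \ref{rem:HisPi}: the $\mathrm{Sp}^\Sigma$-enriched hom in $\Lambda_X\mathrm{-Rep}_\mathrm{dg}$ is, up to stable equivalence, the disassembly of the internal dg hom complex $\mathrm{Hom}_{\mathcal{U}\Lambda_X}(V^c, W^f)$, whose stable homotopy groups are its homology groups. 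Since $V^c$ is cofibrant over $\mathcal{U}\Lambda_X$, this homology computes $\mathrm{Ext}^\bullet_{\mathcal{U}\Lambda_X}(V,W)$ by definition.

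The main obstacle will be bookkeeping of enrichments: at every vertex of \eqref{masterplan} one must verify that the relevant enriched hom is exactly the one produced by the symmetric stabilisation machinery of Sections \ref{sec:ParamSpecLoopspace}--\ref{sec:LieRep}, and that the displayed Quillen equivalences are genuinely $\mathrm{Sp}^\Sigma$-enriched rather than merely underlying Quillen equivalences (this is essentially already recorded along the way, but must be threaded together consistently). A subsidiary point requiring care is the final identification of the enriched hom in $\Lambda_X\mathrm{-Rep}_\mathrm{dg}$ with a disassembled dg hom complex, which uses the strong monoidality of $\mathfrak{D}$ in Theorem \ref{thm:StabDK} together with Lemma \ref{lem:QIsodgLieBChange}; once these compatibilities are in hand, the chain of isomorphisms described above delivers the stated identification.
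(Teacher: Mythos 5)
Your route is genuinely different from the paper's. You propose to chase the mapping spectrum $\underline{\mathrm{Sp}}^\Sigma_X(P,Q)$ through every leg of the zig-zag \eqref{masterplan} using $\mathrm{Sp}^\Sigma$-enriched Quillen equivalences. The paper instead argues once and for all at the level of homotopy categories: Theorem \ref{thm:RatParStaHom} identifies the tensoring $(\mathcal{E},P)\mapsto X^\ast\mathcal{E}\wedge_X P$ of $Ho(\mathrm{Sp}_X)_\mathbb{Q}$ over $Ho(\mathrm{Sp})_\mathbb{Q}$ with the tensoring $(M,V)\mapsto V\otimes M$ of $Ho(\Lambda_X\mathrm{-Rep})$ over $Ho(\mathrm{Ch})$, and then passes to right adjoints: the mapping spectrum corresponds to the $\mathcal{U}\Lambda_X$-linear hom complex out of the bar resolution $B(\mathcal{U}\Lambda_X,V)$, whose homology is $\mathrm{Ext}^\bullet_{\mathcal{U}\Lambda_X}(V,W)$ by definition. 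The paper's argument buys an explicit cofibrant replacement (the bar resolution) and sidesteps all enrichment bookkeeping; your argument, if it worked verbatim, would give a spectrum-level rather than merely group-level identification.

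The soft spot in your proposal is precisely the point you wave at as ``essentially already recorded along the way'': it is not. The stable Dold--Kan legs $(N_!\dashv N^\ast)$ and the passage to $N\mathfrak{g}_X\mathrm{-Rep}_\mathrm{dg}$ are established only as \emph{weakly monoidal} Quillen equivalences; the normalisation functor is merely lax monoidal via the shuffle map, the categories $\mathrm{Sp}^\Sigma(\mathrm{Ch}_+)$ and $\mathrm{Ch}$ are never given $\mathrm{Sp}^\Sigma$-model structures compatible with those on the simplicial side, and so these adjunctions do not preserve $\mathrm{Sp}^\Sigma$-enriched homs in the literal sense your argument requires. The repair is standard but should be said: for a cofibrant--fibrant pair one has $\{x,y\}^n_\mathcal{M}\cong[x[n],y]_{Ho(\mathcal{M})}$, so the graded groups you are transporting are homotopy-category data, preserved by any equivalence of triangulated homotopy categories commuting with suspension --- which every leg of \eqref{masterplan} does induce. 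With that substitution (homotopy classes of maps out of shifts, rather than enriched hom spectra), your chain of identifications closes, and the terminal step $[V[n],W]_{Ho(\Lambda_X\mathrm{-Rep})}\cong\mathrm{Ext}^n_{\mathcal{U}\Lambda_X}(V,W)$ is immediate from any cofibrant resolution of $V$.
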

\begin{proof}
Theorem \ref{thm:RatParStaHom} implies that the tensoring
\begin{align*}
Ho(\mathrm{Sp})\times Ho(\mathrm{Sp}_X)
&\longrightarrow
Ho(\mathrm{Sp}_X)
\\
(\mathcal{E}, P)&\longmapsto 
X^\ast \mathcal{E}\wedge_{X} P
\end{align*}
is modelled in terms of Whitehead Lie representations by 
\begin{align*}
\mathrm{Ch} \times \Lambda_X\mathrm{-Rep}_\mathrm{dg}
&\longrightarrow
\Lambda_X\mathrm{-Rep}_\mathrm{dg}
\\
(M, V)&\longmapsto V\otimes M\,,
\end{align*}
where $V\otimes M$ is regarded as a $\Lambda_X$-representation in the obvious way;
note that $V\otimes M$ generally fails to be cofibrant in $\Lambda_X\mathrm{-Rep}_\mathrm{dg}$.
To remedy this issue we make use of the bar resolution $B (\mathcal{U}\Lambda_X, V\otimes M)$, constructed as the realisation of the augmented simplicial object in $\Lambda_X\mathrm{-Rep}_\mathrm{dg}$
\[
\begin{tikzcd}
\dotsb 
\ar[r]
\ar[r,shift left=1.5ex]
\ar[r,shift left=-1.5ex]
&
\mathcal{U}\Lambda_X
\otimes 
\mathcal{U}\Lambda_X
\otimes
V\otimes M
\ar[r,shift left=0.75ex]
\ar[r,shift left=-0.75ex]
&
\mathcal{U}\Lambda_X
\otimes V\otimes M
\ar[r]
&
V\otimes M\,.
\end{tikzcd}
\]
The face maps are induced by the product of $\mathcal{U}\Lambda_X$ and the degeneracy maps are given by insertions of the unit $\mathbb{Q}\to \mathcal{U}\Lambda_X$.
The bar resolution $B(\mathcal{U}\Lambda_X, V\otimes M)\cong B(\mathcal{U}\Lambda_X, V)\otimes M$ is a cofibrant $\Lambda_X$-representation.
By taking adjoints 
\[
\Lambda_X\mathrm{-Rep}_\mathrm{dg}\Big( B(\mathcal{U}\Lambda_X, V\otimes M), W\Big) 
\cong 
\mathrm{Ch}\Big(
M, [B(\mathcal{U}\Lambda_X, V), W]_{\mathcal{U}\Lambda_X} \Big)\,.
\]
We have that
\[
(P, Q)\longmapsto \underline{\mathrm{Sp}}^\Sigma_X(P,Q)
\]
is modelled in terms of Whitehead Lie representations by the chain complex $[B(\mathcal{U}\Lambda_X, V), W]_{\mathcal{U}\Lambda_X}$ of $\mathcal{U}\Lambda_X$-linear maps $B(\mathcal{U}\Lambda_X, V)\to W$.
Hence 
\[
\mathrm{Ext}^\bullet_{\mathcal{U}\Lambda_X}(V,W) =
H_\bullet \Big([B(\mathcal{U}\Lambda_X, V), W]_{\mathcal{U}\Lambda_X} \Big)
\cong
\{P, Q\}^\bullet_X\otimes_\mathbb{Z}\mathbb{Q}
\]
as claimed.
\end{proof}

\begin{remark}
When $X$ is of finite type, Theorem \ref{thm:RatStabMapExt} specialises to the characterisation of rational homotopy classes of fibrewise stable maps given in \cite{felix_fibrewise_2010}.
The result of \emph{loc.~cit.}~requires all spaces involved to have finite rational type, however it can deal with nilpotent $\pi_1$-actions not accessible with the present methods.
\end{remark}

Under favourable circumstances we can compute rational homotopy classes of fibrewise stable maps by means of a hyper-$\mathrm{Ext}$ spectral sequence.
\begin{theorem}[Hyper-$\mathrm{Ext}$ spectral sequence]
Let the $\Lambda_X$-representations $V$ and $W$ be bounded below and bounded above respectively.
Then there is a convergent spectral sequence
\[
\mathrm{Ext}^{p,q}_{H_\bullet(\mathcal{U}\Lambda_X)} (H_\bullet (V), H_\bullet(W))
\Longrightarrow
\mathrm{Ext}^{q-p}_{\mathcal{U}\Lambda_X}(V,W)\,.
\]
\end{theorem}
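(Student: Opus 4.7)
The plan is to construct the spectral sequence by filtering a semi-free resolution of $V$ whose associated graded recovers a projective resolution of $H_\bullet(V)$ over $H_\bullet(\mathcal{U}\Lambda_X)$. First I would choose a free resolution $Q_\bullet \twoheadrightarrow H_\bullet(V)$ of the graded $H_\bullet(\mathcal{U}\Lambda_X)$-module $H_\bullet(V)$, with each $Q_p$ free on a graded $\mathbb{Q}$-vector space; since $V$ is bounded below, hence so is $H_\bullet(V)$, this can be arranged so that the internal degrees in $Q_p$ grow in a controlled way with $p$ (e.g.~by taking a minimal resolution).

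Next, by an inductive obstruction argument I would lift $Q_\bullet$ to a semi-free resolution $\widetilde{Q}_\bullet \xrightarrow{\sim} V$ in $\Lambda_X\mathrm{-Rep}_\mathrm{dg}$, where the underlying graded $\mathcal{U}\Lambda_X$-module of $\widetilde{Q}_p$ is $\mathcal{U}\Lambda_X \otimes Q_p$ and the associated graded of the filtration by $p$ recovers $Q_\bullet$. The totalisation $T$ of $\widetilde{Q}_\bullet$ is then a cofibrant $\mathcal{U}\Lambda_X$-dg module quasi-isomorphic to $V$, so we have $\mathrm{Ext}^\bullet_{\mathcal{U}\Lambda_X}(V,W) \cong H_\bullet\bigl( [T,W]_{\mathcal{U}\Lambda_X} \bigr)$ as in the proof of Theorem~\ref{thm:RatStabMapExt}.

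Filtering $[T,W]_{\mathcal{U}\Lambda_X}$ by the resolution degree induces a spectral sequence whose $E_0$-columns are $[\widetilde{Q}_p, W]_{\mathcal{U}\Lambda_X} \cong [Q_p, W]$ equipped only with the internal differential of $W$. Since each $Q_p$ is free as a graded $\mathbb{Q}$-module, taking homology yields $E_1^{p,q} = [Q_p, H_\bullet(W)]^{q}$, and the remaining differential is induced by the resolution differential of $Q_\bullet$, so
\[
E_2^{p,q} = \mathrm{Ext}^{p,q}_{H_\bullet(\mathcal{U}\Lambda_X)}\bigl(H_\bullet(V), H_\bullet(W)\bigr)
\]
by the freeness of the $Q_p$ as $H_\bullet(\mathcal{U}\Lambda_X)$-modules. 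Convergence follows from the boundedness hypotheses: with $V$ bounded below, the filtration on $[T,W]_{\mathcal{U}\Lambda_X}$ is bounded below, and with $W$ bounded above each filtration quotient contributes only finitely many cells in each total degree, so the spectral sequence converges strongly to $\mathrm{Ext}^{q-p}_{\mathcal{U}\Lambda_X}(V,W)$.

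The main technical obstacle is the inductive construction of the semi-free lift $\widetilde{Q}_\bullet \to V$ with the prescribed associated graded: this requires a stage-by-stage obstruction argument in which the bounded-below hypothesis on $V$ and the projectivity of each $Q_p$ are used to kill the obstructions to extending the differential and lifting the augmentation. Once the lift is in place, identification of the $E_1$- and $E_2$-pages is a matter of chasing the freeness of $Q_p$ through the relevant $\mathrm{Hom}$ computations, and convergence is essentially automatic from the stated boundedness assumptions.
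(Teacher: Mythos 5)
Your construction is correct in outline but takes a genuinely different route from the paper. The paper filters the (canonical, choice-free) bar resolution $B(\mathcal{U}\Lambda_X,V)$ by simplicial skeleta: the cofibre of $B_p\rightarrowtail B_{p+1}$ is the free module $\mathcal{U}\Lambda_X\otimes\overline{\mathcal{U}\Lambda}_X^{\otimes(p+1)}\otimes V[-p-1]$, so after applying $[-,W]_{\mathcal{U}\Lambda_X}$ and taking homology one gets $E_1^{p,q}\cong[H_\bullet\overline{\mathcal{U}\Lambda}_X^{\otimes p}\otimes H_\bullet V,H_\bullet W]_q$, and the $E_2$-page is identified with $\mathrm{Ext}$ by recognising $E_1$ as the complex of $H_\bullet(\mathcal{U}\Lambda_X)$-linear maps out of the bar resolution of $H_\bullet V$. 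You instead build an Eilenberg--Moore-type semi-free resolution of $V$ adapted to a chosen free (minimal) resolution $Q_\bullet$ of $H_\bullet V$. Both are legitimate; the paper's version avoids the lifting argument entirely and is functorial, while yours, with a minimal $Q_\bullet$, yields a smaller $E_1$-page. The two spectral sequences agree from $E_2$ onward by the standard comparison of resolutions. One notational point: the underlying module of $\widetilde{Q}_p$ should be $\mathcal{U}\Lambda_X\otimes\bar{Q}_p$ with $\bar{Q}_p$ the space of \emph{generators} of $Q_p$ (so that $[\widetilde{Q}_p,W]_{\mathcal{U}\Lambda_X}\cong[\bar{Q}_p,W]$ and $E_1^{p,\bullet}\cong[Q_p,H_\bullet W]_{H_\bullet(\mathcal{U}\Lambda_X)}$), not $\mathcal{U}\Lambda_X\otimes Q_p$.

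The one place where your argument as written is too quick is convergence. The assertion that ``the filtration on $[T,W]_{\mathcal{U}\Lambda_X}$ is bounded below'' does not by itself give convergence: the filtration by resolution degree produces an inverse tower indexed by all $p\geq 0$, and for an arbitrary free resolution $Q_\bullet$ infinitely many $p$ could contribute to a fixed total degree. What saves you is exactly the degree control you mention in passing: since $\Lambda_X$ is reduced, $\overline{H_\bullet(\mathcal{U}\Lambda_X)}$ is concentrated in degrees $\geq 1$, so a \emph{minimal} resolution of a module bounded below by $k$ has the generators of $Q_p$ in internal degrees $\geq k+p$; combined with $H_\bullet W$ vanishing above degree $l$ this forces $E_1^{p,q}=0$ for $p+q>l-k$, so only finitely many columns meet each total degree and the complete convergence lemma applies. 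This is precisely the vanishing region the paper extracts from the connectivity of $\overline{\mathcal{U}\Lambda}_X$ in the bar filtration; you should make the minimality hypothesis and this vanishing line explicit rather than folding them into ``essentially automatic''.
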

\begin{proof}
Filtering the simplicial object
\[
\begin{tikzcd}
\dotsb 
\ar[r]
\ar[r,shift left=1.5ex]
\ar[r,shift left=-1.5ex]
&
\mathcal{U}\Lambda_X
\otimes 
\mathcal{U}\Lambda_X
\otimes
V
\ar[r,shift left=0.75ex]
\ar[r,shift left=-0.75ex]
&
\mathcal{U}\Lambda_X
\otimes V
\end{tikzcd}
\]
by skeleta and passing to the normalised chain complexes gives rise to a colimiting sequence of cofibrations of $\Lambda_X$-representations
$
\begin{tikzcd}[cramped, sep =small]
B_0 (\mathcal{U}\Lambda_X, V)
\ar[r, rightarrowtail]
&
B_1 (\mathcal{U}\Lambda_X, V)
\ar[r, rightarrowtail]
&
\dotsb
\ar[r, rightarrowtail]
&
B (\mathcal{U}\Lambda_X, V)
\end{tikzcd}
$
that exhibits $B (\mathcal{U}\Lambda_X, V)\to V$ as a cofibrant resolution.
For $p\geq 0$ there is a (homotopy) cofibre sequence of $\Lambda_X$-representations
\[
\begin{tikzcd}
B_p (\mathcal{U}\Lambda_X, V)
\ar[r, rightarrowtail]
&
B_{p+1}(\mathcal{U}\Lambda_X, V)
\ar[r]
&
\mathcal{U}\Lambda_X\otimes \overline{\mathcal{U}\Lambda}_X^{\otimes(p+1)}\otimes V[-p-1]\,,
\end{tikzcd}
\]
where $\overline{\mathcal{U}\Lambda}_X = \mathrm{coker}(\mathbb{Q}\to \mathcal{U}\Lambda_X)$.
Taking $\mathcal{U}\Lambda_X$-linear hom complexes, we get a limiting tower of fibrations
\[
\begin{tikzcd}
\dotsb
\ar[r, two heads]
&
{[B_2 (\mathcal{U}\Lambda_X, V), W]_{\mathcal{U}\Lambda_X}}
\ar[r, two heads]
&
{[B_1 (\mathcal{U}\Lambda_X, V), W]_{\mathcal{U}\Lambda_X}
\ar[r, two heads]}
&
{[B_0 (\mathcal{U}\Lambda_X, V ), W]_{\mathcal{U}\Lambda_X}}
\\
&
{[\overline{\mathcal{U}\Lambda}_X^{\otimes 2}\otimes V[-2], W]}
\ar[u]
&
{[\overline{\mathcal{U}\Lambda}_X[-1], W]}
\ar[u]
\end{tikzcd}
\]
with homotopy fibres as indicated (using the free-forgetful adjunction).
For rational chain complexes $M, N$ there is an isomorphism $H_q[M,N] \cong [H_\bullet M, H_\bullet N]_q$ so that passing to homology yields an exact couple
\begin{equation}
\label{eqn:HyperExt}
\begin{tikzcd}[sep=small]
H_q
{[B_p (\mathcal{U}V, W), W]_{\mathcal{U}\Lambda_X}}
\ar[r]
&
H_q 
{[B_{p-1} (\mathcal{U}V, W), W]_{\mathcal{U}\Lambda_X}}
\ar[dl, bend left =15, dashed]
\\
{[H_\bullet \overline{\mathcal{U}\Lambda}_X^{\otimes p}\otimes H_\bullet V, H_\bullet W]}_{q+p}
\ar[u]
\end{tikzcd}
\end{equation}
whose spectral sequence has $E_1^{p,q} \cong {[H_\bullet \overline{\mathcal{U}\Lambda}_X^{\otimes p}\otimes H_\bullet V, H_\bullet W]}_{q}$ with differential $d_1$ given as the composite
\[
{[H_\bullet \overline{\mathcal{U}\Lambda}_X^{\otimes p}\otimes H_\bullet V, H_\bullet W]}_{q}
\longrightarrow
H_{q-p} [B_p(\mathcal{U}\Lambda_X, V), W]_{\mathcal{U}\Lambda_X}
\longrightarrow
{[H_\bullet \overline{\mathcal{U}\Lambda}_X^{\otimes {p+1}}\otimes H_\bullet V, H_\bullet W]}_{q}\,.
\]
In order to identify the $E_2$-term, observe that the realisation of the augmented simplicial object
\[
\begin{tikzcd}
\dotsb 
\ar[r]
\ar[r,shift left=1.5ex]
\ar[r,shift left=-1.5ex]
&
H_\bullet \mathcal{U}\Lambda_X
\otimes 
H_\bullet \mathcal{U}\Lambda_X
\otimes
H_\bullet V
\ar[r,shift left=0.75ex]
\ar[r,shift left=-0.75ex]
&
H_\bullet \mathcal{U}\Lambda_X
\otimes H_\bullet V
\ar[r]
&
H_\bullet V
\end{tikzcd}
\]
is a cofibrant resolution of $H_\bullet V$ in $H_\bullet \mathcal{U}\Lambda_X\mathrm{-Mod}$ whose normalised chain complex
\[
\begin{tikzcd}
\dotsb
\ar[r]
&
H_\bullet \mathcal{U}\Lambda_X
\otimes 
H_\bullet\overline{\mathcal{U}\Lambda}_X
\otimes H_\bullet V
\ar[r]
&
H_\bullet \mathcal{U}\Lambda_X
\otimes H_\bullet V
\ar[r]
&
H_\bullet V
\end{tikzcd}
\]
exhibits a free resolution of $H_\bullet V$.
Taking $H_\bullet\mathcal{U}\Lambda_X$-linear maps to $H_\bullet W$ yields the cochain complex in graded rational vector spaces
\[
\dotsb
\longrightarrow
{[H_\bullet \overline{\mathcal{U}\Lambda}_X^{\otimes p}\otimes H_\bullet V, H_\bullet W]}_\bullet
\longrightarrow
{[H_\bullet \overline{\mathcal{U}\Lambda}_X^{\otimes (p+1)}\otimes H_\bullet V, H_\bullet W]}_\bullet
\longrightarrow
\dotsb
\]
and a careful analysis (cf~\cite[Lemma VIII.1.17]{goerss_simplicial_2009}) shows that the differential on this chain complex coincides with $d_1$.
It follows that $E^{p,q}_2 \cong \mathrm{Ext}^{p,q}_{H_\bullet (\mathcal{U}\Lambda_X)} (H_\bullet V, H_\bullet W)$.

Suppose that $V$ is $k$-connected and $W$ is $l$-coconnected; that is $H_{\bullet\leq k } V = 0$ and $H_{\bullet\geq l} W =0$.
Since $H_\bullet\overline{\mathcal{U}\Lambda}_X$ is connected we have $E^{p,q}_1 =0$ for $p+q >l-k$.
The complete convergence lemma \cite[Lemma VI.2.20]{goerss_simplicial_2009} then implies
\[
\mathrm{Ext}^{p,q}_{H_\bullet(\mathcal{U}\Lambda_X)} (H_\bullet (V), H_\bullet(W))
\Longrightarrow
H_{p-q}\big(\underset{\longleftarrow}{\mathrm{lim}}\, [B_n(\mathcal{U}\Lambda , V), W]_{\mathcal{U}\Lambda_X}\big)\,.
\]
But $\underset{\longleftarrow}{\mathrm{lim}}\, [B_n(\mathcal{U}\Lambda , V), W]_{\mathcal{U}\Lambda_X} \cong [B(\mathcal{U}\Lambda_X, V),W)]_{\mathcal{U}\Lambda_X}$ so that 
\[
\mathrm{Ext}^{p,q}_{H_\bullet(\mathcal{U}\Lambda_X)} (H_\bullet (V), H_\bullet(W))
\Longrightarrow \mathrm{Ext}^{q-p}_{\mathcal{U}\Lambda_X}(V,W)
\]
as claimed.
\end{proof}

To describe rational homotopy classes of fibrewise stable maps in terms of homology coalgebra comodules consider the tensoring bifunctor
\begin{align*}
\mathrm{Ch}\times C_X\mathrm{-Comod}
&\longrightarrow
C_X\mathrm{-Comod}
\\
(W,N) &\longmapsto N\otimes W\,,
\end{align*}
where $N\otimes W$ inherits a left $C_X$-coaction from $N$ in the obvious way.
\begin{proposition}
Let $W\in \mathrm{Ch}$ and $N\in C_X\mathrm{-Comod}_{(t)}$ correspond to $\mathcal{E}\in \mathrm{Sp}$ and $P\in \mathrm{Sp}_X$ in rational homotopy theory.
Then $N\otimes W$ corresponds to $X^\ast \mathcal{E}\wedge_X P$.
\end{proposition}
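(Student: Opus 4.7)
The plan is to specialise the pseudonatural symmetric monoidal equivalences of Theorem \ref{thm:RatParStaHom} to the terminal map $X\to \ast$, whose Quillen coalgebra model is the canonical projection $\psi\colon C_X\to C_\ast =\mathbb{Q}$. By Theorem \ref{thm:Main4} the base change adjunction $(X_!\dashv X^\ast)$ corresponds to $(\mathbf{L}\psi_!\dashv \mathbf{R}\psi^\ast)$, so $X^\ast\mathcal{E}$ is represented by $\mathbf{R}\psi^\ast W$. Since every chain complex is fibrant in $\mathrm{Ch}$ and $\psi^\ast$ is right Quillen, $\mathbf{R}\psi^\ast W = \psi^\ast W$ is simply the cofree $C_X$-comodule $C_X\otimes W$ with coaction $c\otimes w\mapsto \Delta(c)\otimes w$.

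Next, by Theorem \ref{thm:Main5} the symmetric monoidal equivalence interchanges the fibrewise smash product $\wedge_X$ with the derived cotensor product $\square^{\mathbf{R}}_{C_X}$. Hence $X^\ast \mathcal{E}\wedge_X P$ corresponds to $(C_X\otimes W)\,\square^{\mathbf{R}}_{C_X}\, N$ in $Ho(C_X\mathrm{-Comod}_{(t)})$, and the proof reduces to establishing the natural equivalence
\[
(C_X\otimes W)\,\square^{\mathbf{R}}_{C_X}\, N \;\simeq\; N\otimes W.
\]

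To establish this last equivalence I would use the description of the derived cotensor from Theorem \ref{thm:DerivedCotensor}, namely $M\,\square^{\mathbf{R}}_{C_X}\, N' \cong t_\ast\bigl(t^! M \otimes t^! N'\bigr)$. The key observation is that because the $C_X$-coaction on $C_X\otimes W$ only engages the $C_X$-tensor factor, the twisting term in the differential on $\Omega C_X\otimes_t (C_X\otimes W)$ does not touch $W$; consequently there is a natural isomorphism of $\Omega C_X$-modules
\[
t^!(C_X\otimes W) \;\cong\; (\Omega C_X\otimes_t C_X)\otimes W,
\]
where $\Omega C_X$ acts trivially on $W$ via the augmentation. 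By Lemma \ref{lem:KoszulCounit}, the counit $\Omega C_X\otimes_t C_X\to \mathbb{Q}$ is a quasi-isomorphism, so $t^!(C_X\otimes W)\to W$ is a quasi-isomorphism of $\Omega C_X$-modules. Combining this with the $t$-equivalence $N\to t_\ast t^! N$ of Lemma \ref{lem:KoszulUnit}, and exploiting the fact that tensoring with the trivial module $W$ commutes with $t_\ast = C_X\otimes^t(-)$, we obtain the string of natural equivalences
\[
(C_X\otimes W)\,\square^{\mathbf{R}}_{C_X}\, N \;\simeq\; t_\ast\bigl(W\otimes t^! N\bigr) \;\cong\; W\otimes t_\ast t^! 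N \;\simeq\; W\otimes N.
\]

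The main obstacle is the bookkeeping around the derived cotensor formula applied to the non-cofibrant comodule $C_X\otimes W$: one must ensure that $t^!$ and $t_\ast$ output the correct homotopy-theoretic objects, and that the chain of equivalences is indeed natural in $W$ and $N$. These checks are direct consequences of the explicit formulas in Section \ref{sec:Koszul}. A pleasant conceptual alternative that I would explore as a sanity check is that the cofree comodule $C_X\otimes W$ is injective in the cotensor sense, so its strict cotensor $(C_X\otimes W)\,\square_{C_X}\, N \cong N\otimes W$ already computes the derived cotensor; this should recover the same conclusion without invoking $t^!$ explicitly.
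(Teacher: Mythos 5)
Your argument is correct and is essentially the paper's own proof: the paper likewise reduces to showing $(C_X\otimes W)\,\square^{\mathbf{R}}_{C_X}\,N\simeq N\otimes W$ via the isomorphism $t^!(C_X\otimes W)\cong(\Omega C_X\otimes_t C_X)\otimes W$, the acyclicity of $\Omega C_X\otimes_t C_X$ from Lemma \ref{lem:KoszulCounit}, and the unit $t$-equivalence of Lemma \ref{lem:KoszulUnit}. The only difference is that you make explicit the preliminary identification of $X^\ast\mathcal{E}$ with the cofree comodule $C_X\otimes W$ via pseudonaturality for $X\to\ast$, which the paper leaves implicit in its appeal to Theorem \ref{thm:RatParStaHom}.
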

\begin{proof}
Theorem \ref{thm:RatParStaHom} implies that $X^\ast \mathcal{E}\wedge_X P$ is modelled in terms of $C_X$-comodules by the derived cotensor product $(t_\ast t^! (C_X\otimes W))\,\square_{C_X}\,(t_\ast t^! N)$.
There is a natural isomorphism of $\Omega C_X$-modules 
\[
t^! (C_X\otimes W) \otimes t^! N \cong (\Omega C_X\otimes_t C_X)\otimes W\otimes (\Omega C_X \otimes_t N)
\]
and the $(t^!\dashv t_\ast)$-counit furnishes a natural quasi-isomorphism of $\Omega C_X$-modules
\[
(\Omega C_X \otimes_t C_X)\otimes W \otimes (\Omega C_X\otimes_t N) \longrightarrow \mathbb{Q}\otimes M\otimes (\Omega C_X\otimes_t N)\cong \Omega C_X \otimes _t (N\otimes W)\,.
\]
Applying $t_\ast \colon \Omega C_X\mathrm{-Mod}\to C_X\mathrm{-Comod}_{(t)}$ we get a zig-zag of $t$-equivalences of $C_X$-comodules
\[
\big(t_\ast t^! (C_X\otimes W)\big)\,\square_{C_X}\, \big(t_\ast t^! N\big)
\cong 
t_\ast \big(
t^! (C_X\otimes W) \otimes t^! N
\big)
\longrightarrow 
C_X\otimes^t \Omega C_X \otimes_t (N\otimes W)
\longleftarrow
N \otimes W\,.
\]
The second arrow is the $(t^! \dashv t_\ast)$-unit, which is a natural $t$-equivalence.
\end{proof}
\begin{lemma}
\label{lem:ComodSmash}
The tensoring bifunctor
\begin{align*}
\mathrm{Ch}\times C_X\mathrm{-Comod}_{(t)}
&\longrightarrow
C_X\mathrm{-Comod}_{(t)}
\\
(W,N) &\longmapsto N\otimes W
\end{align*}
satisfies the pushout-product axiom.
\end{lemma}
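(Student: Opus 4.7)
The plan is to verify the two halves of the pushout-product axiom separately, handling the cofibration part directly and reducing the acyclicity statement through Koszul duality to the known monoidal structure on unbounded rational chain complexes.

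For the cofibration half, I would use that cofibrations in $C_X\mathrm{-Comod}_{(t)}$ are precisely the monomorphisms (Theorem/Definition~\ref{thm:KozulQuillen}). The forgetful functor $C_X\mathrm{-Comod}\to \mathrm{Ch}$, being left adjoint to the cofree comodule construction $W\mapsto C_X\otimes W$, preserves colimits and evidently reflects monomorphisms. It therefore suffices to check that the pushout-product of two monomorphisms of rational chain complexes is itself a monomorphism, which over the field $\mathbb{Q}$ is immediate: splittings of $V\hookrightarrow W$ and $M\hookrightarrow N$ exhibit $i\,\square\, j$ as a direct-summand inclusion.

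For the acyclic half, I would first establish a natural isomorphism of $\Omega C_X$-modules
\[
t^!(N\otimes W)\;\cong\;(t^! N)\otimes W
\]
for $N\in C_X\mathrm{-Comod}$ and $W\in \mathrm{Ch}$, by direct inspection of the twisted differentials defining $t$-coextensions (Construction~\ref{cons:TwistedExt}). The crux is that the coaction on $N\otimes W$ is induced from that of $N$ alone, so the twisting term contributes only via the first tensor factor; the verification is an elementary sign check. Since $t^!$ is a left adjoint it commutes with colimits, and this identification translates the pushout-product $i\,\square\, j$ into the corresponding pushout-product $i\,\square\, (t^! j)$ computed in $\Omega C_X\mathrm{-Mod}$ with respect to the $\mathrm{Ch}$-action on the first factor.

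The remaining step is to verify that when either $i$ or $j$ is a weak equivalence, $i\,\square\,(t^! j)$ is a quasi-isomorphism of $\Omega C_X$-modules. I would apply the forgetful functor $U\colon \Omega C_X\mathrm{-Mod}\to \mathrm{Ch}$, which preserves colimits and creates weak equivalences, to reduce to showing that $i\,\square\, U(t^! j)$ is a quasi-isomorphism in $\mathrm{Ch}$. Since $\mathrm{Ch}$ is a symmetric monoidal model category, the pushout-product of a cofibration with a quasi-isomorphism is itself a quasi-isomorphism, covering both subcases (noting that $t^! j$ is a quasi-isomorphism precisely when $j$ is a $t$-equivalence). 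I expect the main obstacle to be the careful bookkeeping in establishing the identification $t^!(N\otimes W)\cong (t^! N)\otimes W$; once that is in hand, the remainder is a formal consequence of Theorem/Definition~\ref{thm:KozulQuillen} and the monoidal model structure on $\mathrm{Ch}$.
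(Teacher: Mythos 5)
Your proof is correct and follows essentially the same route as the paper: the cofibration half is handled by observing that cofibrations are monomorphisms created by the forgetful functor to $\mathrm{Ch}$, and the acyclicity half reduces via the identification $t^!(N\otimes W)\cong (t^!N)\otimes W$ and the K\"unneth theorem to the monoidal model structure on $\mathrm{Ch}$. The only (cosmetic) difference is that the paper checks preservation of weak equivalences in each variable separately and concludes with a formal two-out-of-three argument in $C_X\mathrm{-Comod}_{(t)}$, whereas you transport the entire pushout-product through $t^!$ and the forgetful functor and invoke the pushout-product axiom in $\mathrm{Ch}$ directly; your version has the minor virtue of making explicit the isomorphism $t^!(N\otimes W)\cong (t^!N)\otimes W$, which the paper uses without comment immediately afterwards.
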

\begin{proof}
The forgetful functor $C_X\mathrm{-Comod}_{(t)}\to \mathrm{Ch}$ creates colimits and cofibrations. It is immediate that the pushout-product of a pair of cofibrations is again a cofibration.
Using the K\"{u}nneth theorem it is straightforward to check that $(W,N )\mapsto N\otimes W$ sends both quasi-isomorphisms in the first variable and $t$-equivalences in the second variable to $t$-equivalences.
A formal argument (see the diagram \eqref{eqn:PPAcycDiag}, for instance) completes the proof of the pushout-product axiom.
\end{proof}

For $C_X$-comodules $M$, $N$ we write
\[
\{N,M\}_{C_X}:=
\mathrm{lim}
\left(
\begin{tikzcd}[row sep =tiny]
{[N,M]}
\ar[rr, "{[N, \rho_M]}"]
\ar[dr, bend left =-15]
&&
{[N, C_X\otimes M]}
\\
&
{[C_X\otimes N, C_X\otimes M]}
\ar[ur, bend left = -15, "{[\rho_N, C_X\otimes M]}"']
\end{tikzcd}
\right)
\]
for the limit in chain complexes.
By construction there is a natural isomorphism
\[
\mathrm{Ch}\big(W, \{N,M\}_{C_X}\big)
\cong C_X\mathrm{-Comod}\big(N\otimes W, M\big)\,,
\]
and a straighforward adjointness argument applied to Lemma \ref{lem:ComodSmash} yields the following
\begin{corollary}
\label{cor:DerivedComodHoms}
Given a cofibration $f\colon \begin{tikzcd}[sep=small, cramped] K \ar[r, rightarrowtail]& L\end{tikzcd}$ and a fibration $g\colon \begin{tikzcd}[sep=small, cramped] N\ar[r, two heads] & M\end{tikzcd}$ in $C_X\mathrm{-Comod}_{(t)}$, the induced map
\[
\{L, N\}_{C_X}\longrightarrow 
\{L, M\}_{C_X}\times_{\{K,M\}_{C_X}}\{K,N\}_{C_X}
\]
is a fibration in $\mathrm{Ch}$, which is moreover a quasi-isomorphism if either of $f$, $g$ is a $t$-equivalence.
In particular, 
\begin{enumerate}[label=\emph{(\roman*)}]
  \item For any $C_X$-comodule $N$ the functor $M\mapsto \{N,M\}_{C_X}$ sends $t$-equivalences between fibrant $C_X$-comodules to quasi-isomorphisms.
  \item For fibrant $M$ the functor $N\mapsto \{N,M\}_{C_X}$ sends $t$-equivalences to quasi-isomorphisms.
\end{enumerate}
\end{corollary}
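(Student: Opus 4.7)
The approach is the standard SM7/pullback–hom argument: translate the statement about the induced map in $\mathrm{Ch}$ into a lifting problem in $C_X\mathrm{-Comod}_{(t)}$ via the defining adjunction $\mathrm{Ch}(W,\{N,M\}_{C_X})\cong C_X\mathrm{-Comod}(N\otimes W, M)$, and then invoke the pushout–product axiom proved in Lemma \ref{lem:ComodSmash}. Nothing more delicate is needed; the technical content has already been absorbed into that lemma.

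First I would establish the main pullback–hom statement. A map in $\mathrm{Ch}$ is a fibration (resp.\ an acyclic fibration) precisely when it has the right lifting property against all acyclic cofibrations (resp.\ all cofibrations) of chain complexes. Given $i\colon A\rightarrowtail B$ in $\mathrm{Ch}$, a lifting square for $i$ against
\[
\{L,N\}_{C_X}\longrightarrow \{L,M\}_{C_X}\times_{\{K,M\}_{C_X}}\{K,N\}_{C_X}
\]
corresponds under the adjunction to a lifting square in $C_X\mathrm{-Comod}$ for the pushout–product $f\,\square\, i$ against $g$. By Lemma \ref{lem:ComodSmash}, if $f$ is a cofibration and $i$ is an acyclic cofibration, then $f\,\square\, i$ is an acyclic cofibration, so the lift exists because $g$ is a fibration. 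Similarly, if $i$ is merely a cofibration but either $f$ or $g$ is a $t$-equivalence, then either $f\,\square\, i$ is an acyclic cofibration or $g$ is an acyclic fibration, and again the lift exists. This simultaneously handles the fibration claim and both quasi-isomorphism cases.

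Next I would derive parts (i) and (ii). Recall that cofibrations in $C_X\mathrm{-Comod}_{(t)}$ are the monomorphisms (Theorem/Definition \ref{thm:KozulQuillen}), so every object is cofibrant. For (i), apply the main statement with $f\colon 0\rightarrowtail N$; this shows that $\{N,-\}_{C_X}$ sends fibrations to fibrations and acyclic fibrations to quasi-isomorphisms, hence is right Quillen, so Ken Brown's lemma yields the claim. For (ii), observe that when $M$ is fibrant, $g\colon M\twoheadrightarrow 0$ is a fibration; applying the main statement with this $g$ shows that $\{-,M\}_{C_X}$ converts cofibrations to fibrations and $t$-equivalences (between necessarily cofibrant objects) to quasi-isomorphisms.

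Since the crux of the argument—the pushout–product property—is already in hand, there is no real obstacle here; the only thing to be careful about is keeping track of which variance each factor sits in when unpacking the adjunction, which is routine.
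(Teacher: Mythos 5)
Your argument is correct and is exactly the ``straightforward adjointness argument applied to Lemma \ref{lem:ComodSmash}'' that the paper invokes: transporting lifting problems across the adjunction $\mathrm{Ch}(W,\{N,M\}_{C_X})\cong C_X\mathrm{-Comod}(N\otimes W,M)$ and quoting the pushout--product axiom, then using that every comodule is cofibrant together with Ken Brown's lemma (in its contravariant form for (ii)) to get the two special cases. No gaps.
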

\begin{definition}
For $C_X$-comodules $N, M$ we define the \emph{Coext-groups} as
\[
\mathrm{Coext}^\bullet_{C_X} (N,M) := H_\bullet\big(\{N, M^f\}_{C_X}\big)
\]
where $M\to M^f$ is any fibrant replacement; this is well-defined by
Corollary \ref{cor:DerivedComodHoms}.
\end{definition}
Recall that the unit morphism $M\to t_\ast t^! M$ is a canonical fibrant replacement.
For any chain complex $W$ we have natural isomorphisms
\begin{align*}
\mathrm{Ch}\big(W, \{N, t_\ast t^! M\}_{C_X}\big)
&\cong 
C_X\mathrm{-Comod}\big(
N\otimes W, t_\ast t^! M
\big)\\
&\cong
\Omega C_X\mathrm{-Mod}\big(
t^! N\otimes W, t^! M
\big)\\
&
\cong
\mathrm{Ch}\big(
W, [t^! N, t^!M]_{\Omega C_X}\big)\,,
\end{align*}
where we have used $t^! (N\otimes W) \cong t^! N\otimes W$.
Since $\Omega C_X = \mathcal{U}\Lambda_X$ and every object in the image of $t^!$ is cofibrant we have
\[
\mathrm{Coext}^\bullet_{C_X}(N,M) \cong 
\mathrm{Ext}^\bullet_{\mathcal{U}\Lambda_X}(t^! N, t^! M)
\]
for all $C_X$-comodules $N$, $M$.
By combining 
Theorems \ref{thm:RatParStaHom} and \ref{thm:RatStabMapExt} we obtain
\begin{theorem}
\label{thm:RatStabMapCoext}
Let the $C_X$-comodules $N$ and $M$ correspond to the $X$-parametrised spectra $P$ and $Q$ respectively under the equivalence of categories $Ho(\mathrm{Sp}_X)_\mathbb{Q}\cong Ho(C_X\mathrm{-Comod}_{(t)})$.
Then there is an isomorphism of graded rational vector spaces 
\[
\mathrm{Coext}^{\bullet}_{C_X} (N,M) \cong \{P,Q\}^\bullet_X\otimes_\mathbb{Z}\mathbb{Q}\,.
\]
\end{theorem}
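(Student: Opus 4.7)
The plan is to reduce the claim to Theorem \ref{thm:RatStabMapExt} by transporting the $\mathrm{Coext}$ computation across the Koszul--Quillen equivalence $(t^!\dashv t_\ast)\colon C_X\mathrm{-Comod}_{(t)}\to \Omega C_X\mathrm{-Mod}$. Essentially all of the work has already been carried out in the paragraph preceding the theorem statement; what remains is to package it correctly and identify the Lie representation attached to $t^! N$ with the one associated to $P$ under the equivalence $Ho(\mathrm{Sp}_X)_\mathbb{Q}\simeq Ho(\Lambda_X\mathrm{-Rep})$ of Theorem \ref{thm:RatParStaHom}.

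First I would spell out the isomorphism $\mathrm{Coext}^\bullet_{C_X}(N,M)\cong \mathrm{Ext}^\bullet_{\mathcal{U}\Lambda_X}(t^! N, t^! M)$. This is obtained from the chain of natural isomorphisms displayed just before the theorem, namely
\[
\{N, t_\ast t^! M\}_{C_X}\cong [t^! N, t^! M]_{\Omega C_X},
\]
combined with the identification $\Omega C_X = \mathcal{U}\mathcal{L}C_X \simeq \mathcal{U}\Lambda_X$ coming from the canonical quasi-isomorphism of dg Lie algebras $\mathcal{L}C_X\to \Lambda_X$ (step \textcircled{\raisebox{0.3pt}{\tiny{5}}} of Section \ref{sec:Equivalences}), together with the fact that $t^! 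N$ and $t^! M$ are automatically cofibrant $\Omega C_X$-modules so that the right-hand side already computes derived maps. The canonical fibrant replacement $M\to t_\ast t^! M$ (which is a $t$-equivalence by Lemma \ref{lem:KoszulUnit}) ensures this computes $\mathrm{Coext}$ in the sense of the definition given above.

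Next I would verify that under the equivalence $Ho(C_X\mathrm{-Comod}_{(t)})\simeq Ho(\Lambda_X\mathrm{-Rep}_\mathrm{dg})$ assembled in \eqref{masterplan} (the composite of steps \textcircled{\raisebox{0.3pt}{\tiny{6}}} and \textcircled{\raisebox{0.3pt}{\tiny{5}}}), the $C_X$-comodules $N, M$ corresponding to $P, Q$ are sent precisely to the $\Lambda_X$-representations obtained by first applying $t^!$ and then restricting along the quasi-isomorphism $\mathcal{U}\Lambda_X\simeq \mathcal{U}\mathcal{L}C_X = \Omega C_X$. This is immediate from the construction of the derived equivalence, since both steps are realised by left Quillen functors whose composition is $t^!$ followed by the base change along $\mathcal{L}C_X\to \Lambda_X$. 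Hence the $\Lambda_X$-representations $V, W$ of Theorem \ref{thm:RatStabMapExt} associated to $P, Q$ are quasi-isomorphic to $t^! N, t^! M$ after restriction of scalars.

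With these identifications in hand, Theorem \ref{thm:RatStabMapExt} yields
\[
\mathrm{Coext}^\bullet_{C_X}(N,M)\cong \mathrm{Ext}^\bullet_{\mathcal{U}\Lambda_X}(t^! N, t^! M)\cong \mathrm{Ext}^\bullet_{\mathcal{U}\Lambda_X}(V,W)\cong \{P,Q\}^\bullet_X\otimes_\mathbb{Z}\mathbb{Q},
\]
which is the desired isomorphism. There is no real obstacle here: the only subtlety is bookkeeping the base change of scalars along the quasi-isomorphism $\Omega C_X\to \mathcal{U}\Lambda_X$ and checking that it induces an isomorphism on the relevant $\mathrm{Ext}$-groups, which follows from Lemma \ref{lem:QIsodgLieBChange} (applied to $\mathcal{L}C_X\to \Lambda_X$) together with Ken Brown's lemma applied to the cofibrant $\Omega C_X$-modules $t^! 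N$ and $t^! M$.
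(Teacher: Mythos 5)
Your proposal is correct and follows essentially the same route as the paper: the displayed chain of adjunction isomorphisms gives $\mathrm{Coext}^\bullet_{C_X}(N,M)\cong \mathrm{Ext}^\bullet_{\Omega C_X}(t^!N,t^!M)$, and the result then follows by identifying $t^!N$, $t^!M$ (up to the base change along $\mathcal{L}C_X\to\Lambda_X$) with the representations $V$, $W$ of Theorem \ref{thm:RatStabMapExt}. Your extra care with the quasi-isomorphism $\Omega C_X=\mathcal{U}\mathcal{L}C_X\to\mathcal{U}\Lambda_X$, which the paper elides by writing $\Omega C_X=\mathcal{U}\Lambda_X$, is a welcome clarification rather than a deviation.
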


Under some additional hypotheses on the $C_X$-comodules $N, M$ there is a spectral sequence computing the Coext-groups $\mathrm{Coext}_{C_X}^\bullet(N,M)$.
Setting $\mathcal{F}_p := \bigoplus_{k\leq p} N_k$ we obtain a sequence of cofibrations of $C_X$-comodules
$
\begin{tikzcd}[cramped, sep=small]
\dotsb \ar[r,rightarrowtail]
&
\mathcal{F}_{p-1} N
\ar[r, rightarrowtail]
&
\mathcal{F}_p N
\ar[r, rightarrowtail]
&
\mathcal{F}_{p+1} N
\ar[r, rightarrowtail]
&
\dotsb
\end{tikzcd}
$
such that $N = \mathrm{colim}_{p}\mathcal{F}_p N$ (this requires connectivity of $C_X$). 
Let $M \to t_\ast t^! M = C\otimes^t \Omega C\otimes_t M$ be the canonical fibrant replacement of $M$  as a $C_X$-comodule, then
by Corollary \ref{cor:DerivedComodHoms} we get a sequence of fibrations of chain complexes
\[
\begin{tikzcd}
\dotsb
\ar[r, two heads]
& \{\mathcal{F}_{p+1} N, t_\ast t^! M\}_{C_X}
\ar[r, two heads]
&
\{\mathcal{F}_{p} N, t_\ast t^! M\}_{C_X}
\ar[r, two heads]
&
\{\mathcal{F}_{p-1} N, t_\ast t^! M\}_{C_X}
\ar[r, two heads]
&
\dotsb
\end{tikzcd}
\]
For each $p\in \mathbb{Z}$ there is a fibre sequence
\[
\{N_p[-p] , t_\ast t^! M\}_{C_X}
\longrightarrow
\{\mathcal{F}_{p} N, t_\ast t^! M\}_{C_X}
\longrightarrow\{\mathcal{F}_{p-1} N, t_\ast t^! M\}_{C_X}\,,
\]
and the $C_X$-coaction on $N_p[-p]$ is trivial for degree reasons. Applying Lemma \ref{lem:KoszulNat} to the coalgebra morphism $\mathbb{Q}\to C_X$ yields  $\{N_p[-p], t_\ast t^! M\}_{C_X} \cong [N_p[-p], (t_\ast t^! M)_{C_X}]$, with
$
(t_\ast t^! M)_{C_X} \cong \Omega C_X\otimes_t M
$ the $C_X$-coinvariant submodule.
Taking homology we get an unrolled exact couple
\[
\begin{tikzcd}
\dotsb
\ar[r]
&
H_\bullet 
\{\mathcal{F}_{p+1}, M\}_{C_X}
\ar[r]
&
H_\bullet 
\{\mathcal{F}_{p}, M\}_{C_X}
\ar[r]
\ar[dl, bend left =15, dashed]
&
H_\bullet 
\{\mathcal{F}_{p-1}, M\}_{C_X}
\ar[r]
\ar[dl, bend left =15, dashed]
&
\dotsb
\\
&
H_\bullet {[N_{p+1}, M_{C_X}]}
\ar[u]
&
H_\bullet {[N_{p}, M_{C_X}]}
\ar[u]&&
\end{tikzcd}
\]
The associated spectral sequence has
$E_1^{p,q}\cong [N_p, H_q (\Omega C_X\otimes_t M)]$ with differential $d_1$ given induced by the boundary map $\partial\colon N_{p+1} \to N_{p}$ so that
$
E_2^{p,q} \cong [H_p (N), H_q (\Omega C_X\otimes_t M)]
$.
Note that although this description of the $E_2$-page does not depend on the $C_X$-comodule structure of $N$, the differentials $d_k$ for $k\geq 2$ certainly do.
\begin{theorem}[$\mathrm{Coext}$ spectral sequence]
Let $N$ be a $C_X$-comodule with bounded homology.
Then for any $C_X$-comodule $M$ there is a convergent spectral sequence
\[
[H_p (N), H_q (\Omega C_X\otimes_t M)]
\Longrightarrow
\mathrm{Coext}^{q-p}_{C_X}(N,M)\,.
\]
\end{theorem}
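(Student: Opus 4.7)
The spectral sequence itself is already constructed in the paragraph preceding the statement and its $E_2$-page identified there; the content of the theorem is strong convergence under the bounded homology hypothesis on $N$. My proposal is to verify the construction carefully and then deduce convergence from the fact that the bounded homology hypothesis concentrates the $E_2$-page in finitely many columns.

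First I would confirm that $\mathcal{F}_p N := \bigoplus_{k \leq p} N_k$ really does define a filtration by $C_X$-subcomodules. Connectivity of $C_X$ forces the coaction on an element of $N_k$ to land in $\bigoplus_{i+j=k,\, i\geq 0} (C_X)_i \otimes N_j \subseteq C_X \otimes \mathcal{F}_k N$, so each $\mathcal{F}_p N$ is a subcomodule of $N$ and the inclusions $\mathcal{F}_{p-1} N \hookrightarrow \mathcal{F}_p N$ are cofibrations in $C_X\mathrm{-Comod}_{(t)}$ with successive cofibres the trivially-coacting complexes $N_p[-p]$. Applying $\{-, t_\ast t^! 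M\}_{C_X}$ and invoking Corollary \ref{cor:DerivedComodHoms} yields the displayed tower of fibrations in $\mathrm{Ch}$; the fibres are identified with $[N_p[-p], \Omega C_X \otimes_t M]$ by applying Lemma \ref{lem:KoszulNat} to the coaugmentation $\mathbb{Q} \to C_X$, which identifies $(t_\ast t^! M)_{C_X}$ with $\Omega C_X \otimes_t M$. This is the construction already displayed in the preamble, whose $E_1$-page and $d_1$-differential were computed there, yielding $E_2^{p,q} \cong [H_p(N), H_q(\Omega C_X \otimes_t M)]$.

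For convergence, choose integers $k_0 \leq k_1$ with $H_p(N) = 0$ for $p \notin [k_0, k_1]$, which is possible by hypothesis. Then $E_2^{p,q}$ vanishes outside the strip $k_0 \leq p \leq k_1$. Since the differential $d_r$ has bidegree $(-r, r-1)$, for $r > k_1 - k_0$ either its source or target column is zero, hence $E_r$ stabilises at a finite stage and $E_\infty^{p,q} \cong E_{k_1 - k_0 + 2}^{p,q}$. To identify the abutment I would establish the natural isomorphism
\[
\{N, t_\ast t^! M\}_{C_X} \xrightarrow{\;\cong\;} \underset{p}{\mathrm{lim}}\, \{\mathcal{F}_p N, t_\ast t^! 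M\}_{C_X}
\]
by combining $N = \mathrm{colim}_p \mathcal{F}_p N$ with the universal property of $\{-,-\}_{C_X}$ as a limit in chain complexes (a contravariant limit-preserving construction in the first argument). Since only finitely many columns of the $E_r$-pages are nonzero for $r$ large, the tower $\{H_\bullet\{\mathcal{F}_p N, t_\ast t^! M\}_{C_X}\}_p$ is Mittag-Leffler in each degree, so the $\lim^1$ obstruction vanishes and $H_\bullet \lim \cong \lim H_\bullet$ in the degrees of interest. Strong convergence to $H_{p-q}(\{N, t_\ast t^! M\}_{C_X}) = \mathrm{Coext}^{q-p}_{C_X}(N,M)$ then follows.

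The main obstacle I anticipate is the careful bookkeeping of the grading, in particular verifying that the filtration induced on $\mathrm{Coext}^{q-p}_{C_X}(N,M)$ matches the associated graded $E_\infty^{p,q}$ with the claimed indexing; the rest of the argument is essentially formal once boundedness of the $E_2$-page has been exploited to force finite stabilisation.
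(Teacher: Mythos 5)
Your proposal is correct and follows essentially the same route as the paper: the exact couple and the identification $E_2^{p,q}\cong[H_p(N),H_q(\Omega C_X\otimes_t M)]$ are taken from the construction preceding the statement, the bounded-homology hypothesis confines $E_2$ to finitely many columns so the spectral sequence degenerates at a finite page, and the abutment is identified through $\{N, t_\ast t^! M\}_{C_X}\cong \lim_p \{\mathcal{F}_p N, t_\ast t^! M\}_{C_X}$. The one step you pass over quickly—that finite-stage degeneration forces the vanishing of $\lim^1$ for the tower of homology groups—is precisely what the complete convergence lemma (\cite[Lemma VI.2.20]{goerss_simplicial_2009}), which the paper invokes here, supplies; be aware that Mittag--Leffler for the tower $H_\bullet\{\mathcal{F}_p N,-\}_{C_X}$ is not immediate from the shape of $E_2$ alone, since $E_1^{p,q}=[N_p,H_q(\Omega C_X\otimes_t M)]$ need not vanish outside the strip when the underlying complex $N$ is unbounded.
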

\begin{proof}
Suppose that $H_p N =0$ for  $p>k$ and $p<l$ so that $E_2^{p,q} = 0$ for $p>k$ and $p<l$.
By the complete convergence lemma we then have
\[
E_2^{p,q}\Longrightarrow
H_{q-p} \Big(\underset{\longleftarrow}{\mathrm{lim}}\, \{\mathcal{F}_n N, t_\ast t^! M\}_{C_X}\Big) = 
H_{q-p} \{N, t_\ast t^! M\}_{C_X} \cong \mathrm{Coext}^{q-p}_{C_X}(N,M)
\]
as claimed.
\end{proof}

\begin{example}[Twisted Atiyah--Hirzebruch spectral sequence]
Let $X$ be simply-connected and $Y$ a finite dimensional cell complex with a map $\tau\colon Y\to X$.
For an $X$-parametrised spectrum $A$, write $\mathcal{E}$ for the stable homotopy fibre of $A$ (at a generic point of $X$).
Suppose that the $C_X$-comodules $N$ and $M$ respectively correspond to the parametrised $X$-spectra $\Sigma^\infty_X Y_+$ and $A$ under Theorem \ref{thm:RatParStaHom}.
The (derived) $C_X$-coinvariant submodule $\Omega C_X \otimes_t M$ models the rational homotopy type of the stable homotopy fibre $\mathcal{E}$; in particular $H_\bullet (\Omega C_X \otimes_t M) = \pi^\mathrm{st}_\ast \mathcal{E}\otimes_\mathbb{Z}\mathbb{Q}$.
Using the universal coefficient theorem the $E_2$-page of the $\mathrm{Coext}$ spectral sequence can be recast in the form
\[
H^p( Y; \pi^\mathrm{st}_q\mathcal{E}\otimes_\mathbb{Z}\mathbb{Q})
\Longrightarrow
\{\Sigma^\infty_X Y_+, A\}^{q-p}_X\otimes_\mathbb{Z}\mathbb{Q}\,.
\]
But $\mathcal{E}^{\tau+p-q}(Y)= \{\Sigma^\infty_X Y_+, A\}^{q-p}_X$ is the $\tau$-twisted $\mathcal{E}$-cohomology of $Y$, so we have obtained the rationalised twisted Atiyah--Hirzebruch spectral sequence.
\end{example}




\begin{thebibliography}{10}
\addcontentsline{toc}{section}{References}

\bibitem[BM19]{braunack-mayer_combinatorial_2019}
V.~Braunack-Mayer.
\newblock \emph{Combinatorial parametrised spectra} (2019), \href{https://arxiv.org/abs/1907.08496}{\tt arXiv:1907.08496}.

\bibitem[BM20]{braunack-mayer_strict_2019}
V.~Braunack-Mayer.
\newblock \emph{Strict algebraic models for rational parametrised spectra
  II} (2020), in preparation.
  
\bibitem[BMSS19]{braunack-mayer_gauge_2019}
V.~Braunack-Mayer, H.~Sati, and U.~Schreiber.
\newblock \emph{Gauge enhancement of super M-branes via parametrized stable homotopy theory}.
\newblock 
Communications in Mathematical Physics, \textbf{371}(1):197--265 (2019).
  
\bibitem[CR14]{ching_coalgebraic_2014}
M.~Ching and E.~Riehl.
\newblock \emph{Coalgebraic models for combinatorial model categories}.
\newblock Homology, Homotopy and Applications, \textbf{16}(2):171--184 (2014).

\bibitem[EM66]{eilenberg_homology_1966}
S.~Eilenberg and J.~Moore.
\newblock \emph{Homology and fibrations I. Coalgebras, cotensor product and its derived functors}.
\newblock Commentarii Mathematici Helvetici, \textbf{40}:199--236 (1966).

\bibitem[FMT10]{felix_fibrewise_2010}
Y.~F\'{e}lix, A.~Murillo, and D.~Tanr\'{e}. 
\newblock \emph{Fibrewise stable rational homotopy}.
\newblock Journal of Topology,
  \textbf{3}(4):743--758 (2010).

\bibitem[GJ09]{goerss_simplicial_2009}
P.~Goerss and J.~Jardine.
\newblock \emph{Simplicial homotopy theory}.
\newblock Birkhäuser, Basel (2009).

\bibitem[HS16]{hess_waldhausen_2016}
K.~Hess and B.~Shipley.
\newblock \emph{Waldhausen {K}-theory of spaces via comodules}.
\newblock Advances in Mathematics, \textbf{290}:1079--1137 (2016).

\bibitem[Hir03]{hirschhorn_model_2003}
P.~Hirschhorn.
\newblock \emph{Model categories and their localizations}.
\newblock American
  Mathematical Society, Providence, RI (2003).


\bibitem[Hov99]{hovey_model_1999}
M.~Hovey.
\newblock \emph{Model categories}.
\newblock American
  Mathematical Society, Providence, RI (1999).

\bibitem[Hov01]{hovey_spectra_2001}
M.~Hovey.
\newblock \emph{Spectra and symmetric spectra in general model categories}.
\newblock Journal of Pure and Applied Algebra, \textbf{165}(1):63--127 (2001).

\bibitem[HSS00]{hovey_symmetric_2000}
M.~Hovey, B.~Shipley, and J.~Smith.
\newblock \emph{Symmetric spectra}.
\newblock Journal of the American Mathematical Society,
  \textbf{13}(1):149--209 (2000).

\bibitem[LV12]{loday_algebraic_2012}
J.-L.~Loday and B.~Vallette.
\newblock \emph{Algebraic operads}.
Grundlehren der mathematischen Wissenschaften, Springer, Heidelberg (2012).

\bibitem[Lur17]{lurie_higher_2017}
J.~Lurie.
\newblock \emph{Higher algebra} (2017), manuscript available at \href{http://www.math.harvard.edu/~lurie/}{\tt math.harvard.edu/lurie}.

\bibitem[May67]{may_simplicial_1967}
J.~May.
\newblock \emph{Simplicial objects in algebraic topology}.
Van Nostrand Mathematical Studies No.~11, Princeton, Toronto, and London (1967).

\bibitem[Pos11]{positselski_two_2011}
L.~Positselski.
\newblock \emph{Two kinds of derived categories, Koszul duality, and comodule-contramodule correspondence}. 
\newblock Memoirs of the American Mathematical Society, \textbf{212}(996) (2011).

\bibitem[Qui69]{quillen_rational_1969}
D.~Quillen.
\newblock \emph{Rational homotopy theory}.
\newblock The Annals of Mathematics, \textbf{90}(2):205--295 (1969).

\bibitem[SS03a]{schwede_monoidal_2003}
S.~Schwede and S.~Shipley.
\newblock \emph{Equivalences of monoidal model categories}.
\newblock
Algebraic \& Geometric Topology, \textbf{3}:287--334 (2003).

\bibitem[SS03b]{schwede_stable_2003}
S.~Schwede and B.~Shipley.
\newblock \emph{Stable model categories are categories of modules}.
\newblock Topology, \textbf{42}(1):103--153 (2003).


\bibitem[Shi07]{shipley_HZ_2007}
B.~Shipley.
\newblock \emph{${H}\mathbb{Z}$-algebra spectra are differential graded
  algebras}.
\newblock American Journal of Mathematics, \textbf{129}(2):351--379 (2007).

\bibitem[Szc61]{szczarba_homology_1961}
R.~Szczarba.
\newblock \emph{The homology of twisted cartesian products}.
\newblock Transactions of the American Mathematical Society, \textbf{100}:197--216 (1961).






\end{thebibliography}
\end{document}